\newcommand{\LieBr}[2]{[ #1, #2 ]}
\newcommand{\LieGrp}{\mathfrak{G}}
\newcommand{\LieAlg}{\mathfrak{g}}
\newcommand{\covD}{\bfD}
\newcommand{\df}{\mathrm{df}}
\newcommand{\cf}{\mathrm{cf}}
\newcommand{\SH}{\dot{S}}
\newcommand{\covDlow}{\underline{\covD}}
\newcommand{\Alow}{\underline{A}}
\newcommand{\Flow}{\underline{F}}
\newcommand{\wlow}{\underline{w}}
\newcommand{\Ulow}{\underline{U}}
\newcommand{\Aini}{\overline{A}}
\newcommand{\Bini}{\overline{B}}
\newcommand{\Eini}{\overline{E}}
\newcommand{\Fini}{\overline{F}}
\newcommand{\Vini}{\overline{V}}
\newcommand{\calIini}{\overline{\calI}}
\newcommand{\sbr}{\overline{s}}
\newcommand{\subr}{\underline{s}}
\newcommand{\calAlow}{\underline{\calA}}
\newcommand{\calElow}{\underline{\calE}}
\newcommand{\curl}{\rd \times}
\newcommand{\linear}{\mathrm{linear}}
\newcommand{\quadratic}{\mathrm{quadratic}}
\newcommand{\cubic}{\mathrm{cubic}}
\newcommand{\forcing}{\mathrm{forcing}}
\newcommand{\Atemp}{A^{\dagger}}
\newcommand{\AtempPrime}{(A')^{\dagger}}
\newcommand{\Ftemp}{F^{\dagger}}
\newtheorem*{rep@theorem}{\rep@title}
\newcommand{\newreptheorem}[2]{%
\newenvironment{rep#1}[1]{%
 \def\rep@title{#2 \ref{##1}}%
 \begin{rep@theorem}}%
 {\end{rep@theorem}}}
\theoremstyle{plain}
\newtheorem*{MainTheorem}{Main Theorem}
\newtheorem*{corrPrinciple}{Correspondence Principle}
\newtheorem{theorem}{Theorem}[section]
\newtheorem{corollary}[theorem]{Corollary}
\newtheorem{lemma}[theorem]{Lemma}
\newtheorem{proposition}[theorem]{Proposition}
\theoremstyle{definition}
\newtheorem{definition}[theorem]{Definition}
\newtheorem{example}[theorem]{Example}
\theoremstyle{remark}
\newtheorem{remark}[theorem]{Remark}
\numberwithin{equation}{section}
\definecolor{green}{rgb}{0,0.8,0} % Redefines the color green.
\newcommand{\nrm}[1]{\Vert#1\Vert}
\newcommand{\abs}[1]{\vert#1\vert}
\newcommand{\set}[1]{\{#1\}}
\newcommand{\tr}{\textrm{tr}}
\newcommand{\aeq}{\sim}
\newcommand{\lap}{\triangle}
\newcommand{\ud}{\mathrm{d}}
\newcommand{\rd}{\partial}
\newcommand{\nb}{\nabla}
\newcommand{\bb}{\Big}
\newcommand{\alp}{\alpha}
\newcommand{\gmm}{\gamma}
\newcommand{\dlt}{\delta}
\newcommand{\eps}{\epsilon}
\newcommand{\lmb}{\lambda}
\newcommand{\sgm}{\sigma}
\newcommand{\tht}{\theta}
\newcommand{\omg}{\omega}
\newcommand{\Omg}{\Omega}
\newcommand{\bfa}{{\bf a}}
\newcommand{\bfb}{{\bf b}}
\newcommand{\bfc}{{\bf c}}
\newcommand{\bfB}{{\bf B}}
\newcommand{\bfD}{{\bf D}}
\newcommand{\bfE}{{\bf E}}
\newcommand{\bbR}{\mathbb R}
\newcommand{\calA}{\mathcal{A}}
\newcommand{\calB}{\mathcal{B}}
\newcommand{\calC}{\mathcal{C}}
\newcommand{\calD}{\mathcal{D}}
\newcommand{\calE}{\mathcal{E}}
\newcommand{\calF}{\mathcal{F}}
\newcommand{\calH}{\mathcal{H}}
\newcommand{\calI}{\mathcal{I}}
\newcommand{\calL}{\mathcal{L}}
\newcommand{\calM}{\mathcal{M}}
\newcommand{\calN}{\mathcal{N}}
\newcommand{\calO}{\mathcal{O}}
\newcommand{\calP}{\mathcal{P}}
\newcommand{\calS}{\mathcal{S}}
\newcommand{\calT}{\mathcal{T}}
\newcommand{\calW}{\mathcal{W}}
\newcommand{\calX}{\mathcal{X}}
\newcommand{\pfstep}[1]{\vspace{0.1in} {\it #1.}}
\begin{document}

\title[]{Gauge choice for the Yang-Mills equations using the Yang-Mills heat flow and local well-posedness in $H^{1}$.}%: Title of the article
\author{Sung-Jin Oh}%
\address{Department of Mathematics, Princeton University, Princeton, New Jersey, 08544}%
\email{sungoh@math.princeton.edu}%

%\thanks{}%
%\subjclass{}%
%\keywords{}%

%\date{\today}%
%\dedicatory{}%
%\commby{}%
% ----------------------------------------------------------------
\begin{abstract}
In this work, we introduce a novel approach to the problem of gauge choice for the Yang-Mills equations on the Minkowski space $\bbR^{1+3}$, which uses the Yang-Mills heat flow in a crucial way. As this approach does not possess the drawbacks of the previous approaches, it is expected to be more robust and easily adaptable to other settings.

As the first application, we give an alternative proof of the local well-posedness of the Yang-Mills equations for initial data $(\Aini_{i}, \Eini_{i}) \in (\dot{H}^{1}_{x} \cap L^{3}_{x}) \times L^{2}_{x}$, which is a classical result of S. Klainerman and M. Machedon \cite{Klainerman:1995hz} that had been proved using a different method (local Coulomb gauges). The new proof does not involve localization in space-time, which had been the key drawback of the previous method. Based on the results proved in this paper, a new proof of finite energy global well-posedness of the Yang-Mills equations, also using the Yang-Mills heat flow, is established in the companion article \cite{Oh:2012fk}. 
\end{abstract}
\maketitle
% ----------------------------------------------------------------

\tableofcontents

\section{Introduction}
In this paper, we address the problem of gauge choice for the Yang-Mills equations on the Minkowski space $\bbR^{1+3}$, with a non-abelian structure group $\LieGrp$, in the context of low regularity well-posedness of the associated Cauchy problem. The traditional gauge choices in such setting were the \emph{(local) Coulomb gauge} $\rd^{\ell} A_{\ell} = 0$ \cite{Klainerman:1995hz} or the \emph{temporal gauge} $A_{0} = 0$ \cite{Tao:2000vba}. Each, however, had a shortcoming of its own (to be discussed below), because of which not much has been known about low regularity solutions to the Yang-Mills equations with \emph{large} data\footnote{We however remark that better results are available in the case of \emph{small} initial data. See \cite{Tao:2000vba} and also the discussion below.}. In fact, the best result (in terms of the regularity condition on the initial data) along this direction so far has been the local well-posedness of the Yang-Mills equations for data $(\Aini_{i}, \Eini_{i}) \in \dot{H}^{1}_{x} \times L^{2}_{x}$ by Klainerman-Machedon \cite{Klainerman:1995hz}, whereas the scaling property of the Yang-Mills equations dictates that the optimal regularity condition should be $(\Aini_{i}, \Eini_{i}) \in \dot{H}^{1/2}_{x} \times \dot{H}^{-1/2}_{x}$ (for the notations, we refer the reader to \S \ref{subsec:intro:bg}). 

In this work, we propose a novel approach to this problem using the celebrated \emph{Yang-Mills heat flow}, which does not possess the drawbacks of the previous gauge choices. As such, this approach is expected to be more robust and have many applications, including that of establishing large data low regularity well-posedness of the Yang-Mills equations. As the first demonstration of the potential of this approach, we give a new proof of the aforementioned local well-posedness result of Klainerman-Machedon \cite{Klainerman:1995hz}. In the companion paper \cite{Oh:2012fk}, we demonstrate that the main result of \cite{Klainerman:1995hz}, namely the \emph{finite energy global well-posedness} of the Yang-Mills equations on $\bbR^{1+3}$, can be proved using the new approach as well.

\subsection{The Yang-Mills equation on $\bbR^{1+3}$} \label{subsec:intro:bg}
%This paper is concerned with the classical Yang-Mills equations on $\bbR^{1+3}$, which are a non-linear generalization of the more familiar Maxwell equations. These equations were introduced in the setting of Quantum Field Theory (in the quantized version) in order to explain the fundamental forces of nature other than electromagnetism and gravity. In General Relativity, the Yang-Mills equations, in particular the classical equations which we study here, are also of interest in view of the fact that they furnish a nice, simpler model equation for the more formidable Einstein equations, which governs the evolution of the space-time. (See the recent exciting work \cite{Klainerman:2012ve} for Einstein equations which was essentially based on this idea.)
%
We will work on the Minkowski space $\bbR^{1+3}$. All tensorial indices will be raised and lowered by using the Minkowski metric, which we assume to be of signature $(-+++)$. Moreover, we will adopt the Einstein summation convention of summing up repeated upper and lower indices. Greek indices, such as $\mu, \nu, \lmb$, will run over $x^{0}, x^{1}, x^{2}, x^{3}$, whereas latin indices, such as $i, j, k, \ell$, will run \emph{only} over the spatial indices $x^{1}, x^{2}, x^{3}$. We will often write $t$ instead of $x^{0}$. A $k$-fold application of a derivative will be denoted $\rd^{(k)}$, in contrast to $\rd^{k}$ which will always mean the partial derivative in the direction $x^{k}$, with its index raised. 

Let $\LieGrp$ be a Lie group and $\LieAlg$ be its Lie algebra. We will assume that $\LieGrp$ admits a bi-invariant inner product $( \cdot, \cdot) : \LieAlg \times \LieAlg \to [0, \infty)$, i.e., an inner product invariant under the adjoint map\footnote{A sufficient condition for a bi-invariant inner product to exist is that $\LieGrp$ is a product of an abelian and a semi-simple Lie groups.}.  For simplicity, we will furthermore take $\LieGrp$ to be a matrix group. A model example that one should keep in mind is the Lie group $\LieGrp = \mathrm{SU}(n)$ of complex unitary matrices, in which case $\LieAlg = \mathfrak{su}(n)$ is the set of complex traceless anti-hermitian matrices and $(A, B) = \tr (A B^{\star})$.

Below, we will present some geometric concepts we will need in a pragmatic, condensed fashion; for a more thorough treatment, we refer the reader to the standard references \cite{Bleeker:2005uj}, \cite{Kobayashi:1963uh}, \cite{Kobayashi:1969ub}. 

Let us consider a $\LieAlg$-valued 1-form $A_{\mu}$ on $\bbR^{1+3}$, which we call a \emph{connection 1-form}. For any $\LieAlg$-valued tensor $B$ on $\bbR^{1+3}$, we define the associated \emph{covariant derivative} $\covD = {}^{(A)}\covD$ by
\begin{equation*}
	\covD_{\mu} B := \rd_{\mu} B + \LieBr{A_{\mu}}{B}
\end{equation*}
where $\rd_{\mu}$ refers to the ordinary directional derivative on $\bbR^{1+3}$. Due to the bi-invariance of $(\cdot, \cdot)$, we have the following Leibniz's rule for $\LieAlg$-valued tensors $B, C$:
\begin{equation*}
	\rd_{\mu} (B, C) = (\covD_{\mu} B, C) + (B, \covD_{\mu} C)
\end{equation*}

The commutator of two covariant derivatives gives rise to a $\mathfrak{g}$-valued $2$-form $F_{\mu \nu} = F[A]_{\mu \nu}$, which we call the \emph{curvature 2-form} associated to $A$, as follows :
\begin{equation*}
	\covD_{\mu} \covD_{\nu} B - \covD_{\nu} \covD_{\mu} B = \LieBr{F_{\mu \nu}}{B}.
\end{equation*}

It is easy to compute that
\begin{equation*}
	F_{\mu \nu} = \rd_{\mu} A_{\nu} - \rd_{\nu} A_{\mu} + \LieBr{A_{\mu}}{A_{\nu}}.
\end{equation*}

From the way $F_{\mu \nu}$ arises from $A_{\mu}$, the following identity, called the \emph{Bianchi identity}, always holds.
\begin{equation} \label{eq:bianchi} \tag{Bianchi}
\covD_{\mu} F_{\nu \lmb} + \covD_{\lmb} F_{\mu \nu} + \covD_{\nu} F_{\lmb \mu} = 0.
\end{equation}

The \emph{Yang-Mills equations on $\bbR^{1+3}$} are the following additional first order equations for $F_{\mu \nu}$.
\begin{equation} \label{eq:hyperbolicYM} \tag{YM}
	\covD^{\mu} F_{\nu \mu} = 0,
\end{equation}
where we utilize the Einstein convention of summing repeated lower and upper indices.

%Note the similarity of \eqref{eq:bianchi} and \eqref{eq:hyperbolicYM} with the Maxwell equations. In fact, the Maxwell equations are a special case of the Yang-Mills equations in the case $\LieGrp = \mathrm{SU}(1)$.

An important feature of the Yang-Mills equations is its \emph{gauge structure}. Given a smooth $\LieGrp$-valued function $U$ on $\bbR^{1+3}$, we let $U$ act on $A, \covD, F$ as a \emph{gauge transform} according to the following rules.
\begin{equation*}
\begin{aligned}
	\widetilde{A}_{\mu} = U A_{\mu} U^{-1} - \rd_{\mu} U U^{-1}, \qquad
	\widetilde{\covD}_{\mu} =  U \covD_{\mu} U^{-1}, \qquad
	\widetilde{F}_{\mu \nu} =  U F_{\mu \nu} U^{-1}.
\end{aligned}
\end{equation*} 

We say that a $\LieAlg$-valued tensor $B$ is \emph{gauge covariant}, or \emph{covariant under gauge transforms}, if it transforms in the fashion $\widetilde{B} = U B U^{-1}$. Given a gauge covariant $\LieAlg$-valued tensor $B$, its covariant derivative $\covD_{\mu} B$ is also gauge covariant as the following formula shows.
\begin{equation*}
	\widetilde{\covD}_{\mu} \widetilde{B} = U \covD_{\mu} B U^{-1}.
\end{equation*}

The Yang-Mills equations are evidently covariant under gauge transforms. It has the implication that a solution to \eqref{eq:hyperbolicYM} makes sense only as a class of gauge equivalent connection 1-forms. Accordingly, we make the following definition.

\begin{definition} 
A \emph{classical solution} to \eqref{eq:hyperbolicYM} is a class of smooth connection 1-forms $A$ satisfying \eqref{eq:hyperbolicYM}, which are related to each other by smooth gauge transforms. A \emph{generalized solution} to \eqref{eq:hyperbolicYM} is defined to be a class of gauge equivalent connection 1-forms $A$ for which there exists a sufficiently smooth representative $A$ (say $\rd_{t,x} A \in C_{t} L^{2}_{x}, A \in C_{t} L^{3}_{x}$) which satisfies \eqref{eq:hyperbolicYM} in the sense of distributions.
\end{definition}

A choice of a particular representative is called a \emph{gauge choice}. A gauge is usually chosen by imposing a condition, called a \emph{gauge condition}, on the representative. Classical examples of gauge conditions include the \emph{temporal gauge condition} $A_{0} = 0$ and the \emph{Coulomb gauge condition} $\rd^{\ell} A_{\ell} = 0$.

In this work, and also in the companion article \cite{Oh:2012fk}, we will study the Cauchy problem associated to \eqref{eq:hyperbolicYM}. The initial data set, which consists of $(\Aini_{i}, \Eini_{i})$ for $i=1,2,3$ with $\Aini_{i} = A_{i}(t=0)$ (magnetic potential) and $\Eini_{i} = F_{0i}(t=0)$ (electric field), has to satisfy the \emph{constraint equation}:
\begin{equation} \label{eq:YMconstraint}
	\rd^{\ell} \Eini_{\ell} + \LieBr{\Aini^{\ell}}{\Eini_{\ell}} = 0,
\end{equation}
where $\ell$ is summed only over $\ell = 1,2,3$. We remark that this is the $\nu = 0$ component of \eqref{eq:hyperbolicYM}.

The Yang-Mills equations possess a positive definite conserved quantity $\bfE[t]$, defined by:
\begin{equation*}
	\bfE(t) := \frac{1}{2} \int_{\bbR^{3}} \sum_{\ell=1,2,3} (F_{0\ell}(t,x), F_{0\ell}(t,x)) + \sum_{k, \ell=1,2,3, k < \ell} (F_{k \ell}(t,x), F_{k \ell}(t,x))  \, \ud x
\end{equation*}
We call $\bfE(t)$ the \emph{conserved energy} of the field at time $t$.

Note that the Yang-Mills equations remain invariant under the scaling
\begin{equation} \label{eq:intro:scaling} 
	x^{\alp} \to \lmb x^{\alp}, \quad A \to \lmb^{-1} A, \quad F \to \lmb^{-2} F.
\end{equation}

The norms $\nrm{\rd_{x} \Aini_{i}}_{L^{2}_{x}}$, $\nrm{\Eini_{i}}_{L^{2}_{x}}$, as well as the conserved energy $\bfE(t)$, of the rescaled field become $\lmb^{-1}$ of that of the original field, which allows us to assume smallness of these quantities by scaling. This reflects the \emph{sub-criticality} of these quantities compared to the Yang-Mills equations.

\subsection{Statement of the Main Theorem}
To state the Main Theorem of this paper, we first define the class of initial data sets that will be considered.

\begin{definition}[Admissible $H^{1}$ initial data set] \label{def:admID}
We say that a pair $(\Aini_{i}, \Eini_{i})$ of 1-forms on $\bbR^{3}$ is an \emph{admissible $H^{1}$ initial data} set for the Yang-Mills equations if the following conditions hold:
\begin{enumerate}
\item $\Aini_{i} \in \dot{H}_{x}^{1} \cap L^{3}_{x}$ and $\Eini_{i} \in L^{2}$,
\item The \emph{constraint equation}
\begin{equation*}
	\rd^{i} \Eini_{i} + \LieBr{\Aini^{i}}{\Eini_{i}} = 0,
\end{equation*}
	holds in the distributional sense.
\end{enumerate}
\end{definition}

Next, we extend the notion of a solution to \eqref{eq:hyperbolicYM} by taking the closure of the set of classical solutions in the appropriate topology. 

\begin{definition}[Admissible solutions] \label{def:admSol}
Let $I \subset \bbR$. We say that a generalized solution $A_{\mu}$ to the Yang-Mills equations \eqref{eq:hyperbolicYM} defined on $I \times \bbR^{3}$ is \emph{admissible} [in the temporal gauge $A_{0} = 0$] if 
\begin{equation*}
	A_{\mu} \in C_{t}(I, \dot{H}^{1}_{x} \cap L^{3}_{x}), \quad \rd_{t} A_{\mu} \in C_{t}(I, L^{2}_{x})
\end{equation*}
and $A_{\mu}$ can be approximated by representatives of classical solutions [in the temporal gauge $A_{0} = 0$] in the above topology.
\end{definition}

Our main theorem is a local well-posedness result for admissible $H^{1}$ initial data, within the class of admissible solutions in the temporal gauge.
\begin{MainTheorem} [$H^{1}$ local well-posedness of the Yang-Mills equations, temporal gauge] \label{thm:mainThm}
Let $(\Aini_{i}, \Eini_{i})$ be an admissible $H^{1}$ initial data set, and define $\calIini := \nrm{\Aini}_{\dot{H}^{1}_{x}} + \nrm{\Eini}_{L^{2}_{x}}$. Consider the initial value problem (IVP) for \eqref{eq:hyperbolicYM} with $(\Aini_{i}, \Eini_{i})$ as the initial data.
\begin{enumerate}
\item There exists $T^{\star} = T^{\star}(\calIini)> 0$, which is non-increasing in $\calIini$, such that a unique admissible solution $A_{\mu} = A_{\mu} (t,x)$ to the IVP in the temporal gauge $A_{0} = 0$ exists on $(-T^{\star}, T^{\star})$. Furthermore, the following estimates hold.
\begin{equation} \label{eq:mainThm:0}
	\sup_{i} \nrm{\rd_{t,x} A_{i}}_{C_{t} ((-T^{\star}, T^{\star}),L^{2}_{x})} \leq C_{\calIini} \calIini.
\end{equation}
\begin{equation} \label{eq:mainThm:1}
	\sup_{i} \nrm{A_{i}}_{C_{t} ((-T^{\star}, T^{\star}), L^{3}_{x})} \leq \sup_{i} \nrm{\Aini_{i}}_{L^{3}_{x}} + (T^{\star})^{1/2} C_{\calIini} \calIini.
\end{equation}
\item Let $(\Aini'_{i}, \Eini'_{i})$ be another admissible $H^{1}$ initial data set such that $\nrm{\Aini'}_{\dot{H}^{1}_{x}} + \nrm{\Eini'}_{L^{2}_{x}} \leq \calIini$, and let $A'_{\mu}$ be the corresponding solution given by (1). Then the following estimates for the difference hold.
\begin{equation} \label{eq:mainThm:2}
	\sup_{i} \nrm{\rd_{t,x} A_{i}- \rd_{t,x} A'_{i}}_{C_{t} ((-T^{\star}, T^{\star}),L^{2}_{x})} \leq C_{\calIini} (\sup_{i} \nrm{\Aini_{i} - \Aini'_{i}}_{\dot{H}^{1}_{x}} +\sup_{i} \nrm{\Eini_{i} - \Eini_{i}'}_{L^{2}_{x}} ).
\end{equation}
\begin{equation} \label{eq:mainThm:3}
\begin{aligned}
	\sup_{i} \nrm{A_{i}- A'_{i}}_{C_{t} ((-T^{\star}, T^{\star}), L^{3}_{x})} \leq &
	\sup_{i} \nrm{\Aini_{i} - \Aini'_{i}}_{L^{3}_{x}} \\ &
	+ (T^{\star})^{1/2} C_{\calIini} (\sup_{i} \nrm{\Aini_{i} - \Aini'_{i}}_{\dot{H}^{1}_{x}} +\sup_{i} \nrm{\Eini_{i} - \Eini_{i}'}_{L^{2}_{x}} ).
\end{aligned}
\end{equation}

\item Finally, the following version of \emph{persistence of regularity} holds: if $\rd_{x} \Aini_{i}, \Eini_{i} \in H^{m}_{x}$ for an integer $m \geq 0$, then the corresponding solution given by (1) satisfies
\begin{equation*}
	\rd_{t,x} A_{i} \in C^{k_{1}}_{t} ((-T^{\star}, T^{\star}), H^{k_{2}}_{x})
\end{equation*}
for every pair $(k_{1}, k_{2})$ of nonnegative integers such that $k_{1} + k_{2} \leq m$.
\end{enumerate}
\end{MainTheorem}

%\begin{remark}
%The estimate \eqref{eq:mainThm:4} is an example of \emph{persistence of higher regularity}, which usually is a part of a local-wellposedness theory. As usual, persistence of even higher regularity can be proved by differentiating the equations and repeating the proof of the Main Theorem, or relying on classical results for local well-posed at the level of $H^{2}$. As this is standard, we omit the exact statement and the proof.
%\end{remark}

%\begin{remark} 
%
%\end{remark}

We emphasize that the temporal gauge in the statement of the Main Theorem plays a rather minor role. It has the advantage of being easy to impose. Moreover, classical local well-posedness results for smoother data (essentially due to Segal \cite{Segal:1979hg} and Eardley-Moncrief \cite{Eardley:1982fb}) are available, which are useful in the proof of the Main Theorem (Theorem \ref{thm:mainThm:H2lwp}). However, most of our analysis in this paper takes place under different gauge conditions defined with the help of the \emph{Yang-Mills heat flow}, to be introduced below.

The Main Theorem is, in fact, a classical result of Klainerman-Machedon \cite{Klainerman:1995hz}, which has been the best result so far concerning low regularity local well-posedness of \eqref{eq:hyperbolicYM} for large data. In both this paper and \cite{Klainerman:1995hz}, it is essential to choose an appropriate gauge to reveal the \emph{null structure} of the quadratic nonlinearities of the wave equations. It is known that such structure is present in the Coulomb gauge \cite{Klainerman:1994jb}. Unfortunately, in the case of a non-abelian structural group $\LieGrp$, it may not be possible in general to impose the Coulomb gauge condition on an arbitrary initial data. In \cite{Klainerman:1995hz}, this issue is avoided by working in a so-called \emph{local Coulomb gauge}, which is the Coulomb gauge condition imposed on a small domain of dependence. Due to the presence of the constraint equation \eqref{eq:YMconstraint},  delicate boundary conditions had to be imposed along the lateral boundary (a cone in the case of the Minkowski space $\bbR^{1+3}$). Because of this, it had been difficult to use this method along with global Fourier-analytic techniques such as hyperbolic-Sobolev spaces (see \cite{DAncona:2012ke}), and thus it has not been extended to initial data with lower regularity than $H^{1}_{\mathrm{loc}} \times L^{2}_{\mathrm{loc}}$.

Let us also mention an alternative approach to local well-posedness by Tao \cite{Tao:2000vba}, who worked entirely in the temporal gauge $A_{0} = 0$ to prove local well-posedness for initial data with even lower regularity than $H^{1}$. The main idea is that \eqref{eq:hyperbolicYM} in the temporal gauge can be cast as a coupled system of wave and transport equations (using the Hodge decomposition of $A_{i}$), where the wave equations possess a null structure similar to that in the Coulomb gauge. Although the temporal gauge has the advantage of being easy to impose (globally in space), the statements proved by this method are unfortunately restricted to \emph{small} data due to, among other things, the presence of too many time derivatives in the transport equation for $\rd^{\ell} A_{\ell}$.  

In this paper, we introduce a new approach to the problem of gauge choice which does not have the drawbacks of the methods outlined above. In particular, it does not involve localization in space-time and works well for large initial data. Moreover, the most dangerous quadratic nonlinearities of the wave equations are seen to possess a null structure, which allows us to give a new proof of the Main Theorem. For these reasons, we expect the present approach to be more robust and applicable to other problems as well, such as large data low regularity well-posedness of the Yang-Mills equations and other non-abelian gauge theories.

In the companion paper \cite{Oh:2012fk}, we prove global well-posedness of \eqref{eq:hyperbolicYM} (in the temporal gauge) for this class of initial data, using the positive definite conserved energy of \eqref{eq:hyperbolicYM}. The proof involves many techniques developed in the this paper (including the Main Theorem). On the other hand, the present paper does not depend on the results proved \cite{Oh:2012fk}\footnote{Except for Kato's inequality used in the proof of Lemma \ref{lem:mainThm:regApprox}, whose proof is standard and can be found in other sources, such as \cite[Proof of Corollary 3.3]{Smith:2011ef}, as well.}.

In simple, heuristic terms, the main idea of the novel approach is to `smooth out' the problem in a `geometric fashion'. For the problem under consideration, the `smoothed out' problem is much easier; indeed, recall the classical works \cite{Segal:1979hg}, \cite{Eardley:1982fb} in which local well-posedness for (possibly large) initial data with higher degree of smoothness was established by working directly in the temporal gauge. The difficulty of the original problem manifests in our approach when estimating the difference between the solutions to the original and `smooth out' problems. Here, we need to exploit the `special structure' inherent to the Yang-Mills equations. That this is possible by using a smoothing procedure based on the associated geometric flow, the \emph{Yang-Mills heat flow} in this case, is the main thesis of this work. 

The present work advances a relatively new idea in the field of hyperbolic PDEs, which is to use a geometric parabolic equation to better understand a hyperbolic equation.  
To the author's knowledge, this was first used in the work of Klainerman-Rodnianski \cite{MR2221254}\footnote{We remark that a preprint of \cite{MR2221254} was posted on the arXiv in 2003 (arXiv:math/0309463), predating the other references discussed below.}, in which the linear heat equation on a compact 2-manifold was used to develop an invariant form of Littlewood-Paley theory for arbitrary tensors on the manifold. This was applied in \cite{MR2125732} and \cite{MR2221255} to study the causal geometry of solutions to the Einstein's equations under very weak hypotheses.

More recently, this idea was carried much further by Tao, who proposed using a nonlinear geometric heat flow to deal with the problem of gauge choice in the context of energy critical wave maps. This approach, called the \emph{caloric gauge}, was used in \cite{Tao:2008wn} to study the long term behavior of large energy wave maps on $\bbR^{1+2}$. It has also played an important role in the recent study of the related energy critical Schr\"odinger map problem; see \cite{Bejenaru:2011wy}, \cite{Smith:2011ef}, \cite{Smith:2010ui}, \cite{Smith:2011ty}, \cite{Dodson:2012uj} and \cite{Dodson:2013vh}. 

The basic idea of the caloric gauge is as follows. The associated heat flow (the \emph{harmonic map flow}) starting from a wave or a Schr\"odinger map on a fixed time slice converges (under appropriate conditions) to a single point (same for every time slice) as the heat parameter goes to $\infty$. For this trivial map at infinity, the canonical choice of gauge is clear; parallel-transporting this gauge choice back along the harmonic map flow, we obtain a (in some sense, canonical) gauge choice for the original map, which has been named the \emph{caloric gauge} by Tao. 

We remark that in comparison to the works involving the caloric gauge, the analytic side of this work is simpler. One reason is that we are working with a sub-critical problem, and hence the function spaces used for treating the hyperbolic equations involved are by far less intricate. Another is that, as indicated earlier, our method depends only on the short time smoothing property of the associated heat flow, and as such, does not require understanding the long time behavior of the heat flow as in the other works. 

\subsection{The Yang-Mills heat flow} 
Before we give an overview of our proof of the Main Theorem, let us introduce the \emph{Yang-Mills heat flow}, which is a crucial ingredient of the new approach.

Consider an one parameter family of spatial 1-forms $A_{i}(s)$ on $\bbR^{3}$, parameterized by $s \in [0,s_{0}]$. We say that $A_{i}(s)$ is a \emph{Yang-Mills heat flow} if it satisfies
\begin{equation} \label{eq:YMHF} \tag{YMHF}
	\rd_{s} A_{i} = \covD^{\ell} F_{\ell i}, \quad i=1,2,3.
\end{equation}

The Yang-Mills heat flow is the gradient flow for the \emph{Yang-Mills energy} (or the \emph{magnetic energy}) on $\bbR^{3}$, which is defined as
\begin{equation*}
	\bfB[A_{i}] := \frac{1}{2} \sum_{1 \leq i < j \leq 3} \int (F_{ij}, F_{ij}) \, \ud x.
\end{equation*}

First introduced by Donaldson \cite{Donaldson:1985vh}, the Yang-Mills heat flow has been a subject of active research on its own. For more on this heat flow on a 3-dimensional manifold, we refer the reader to \cite{Rade:1992tu}, \cite{Charalambous:2010vt} and etc.

As indicated earlier, our intention is to use \eqref{eq:YMHF} to smooth out \eqref{eq:hyperbolicYM}. We must take care, however, since \eqref{eq:YMHF} turns out to be \emph{not} strictly parabolic for $A_{i}$, as the highest order terms of the right-hand side of \eqref{eq:YMHF} has a non-trivial kernel. This phenomenon ultimately originates from the covariance of the term $\covD^{\ell} F_{\ell i}$, and can be compensated if gauge transforms which depends on $s$ are used. However, \eqref{eq:YMHF}, as it stands, is not covariant under such gauge transforms (being covariant only under $s$-independent gauge transforms). Therefore, for the purpose of recovering strict parabolicity of the Yang-Mills heat flow, it is useful to reformulate the flow in a fully covariant form.

Along with $A_{i}$, let us add a new connection component $A_{s}$, and consider $A_{a}$ $(a=x^{1}, x^{2}, x^{3}, s)$, which is a connection 1-form on the product manifold $\bbR^{3} \times [0,s_{0}]$. Corresponding to $A_{s}$, we also define the \emph{covariant derivative} along the $\rd_{s}$ direction
\begin{equation*}
	\covD_{s} := \rd_{s} + \LieBr{A_{s}}{\cdot}.
\end{equation*}

A connection 1-form on $\bbR^{3} \times [0, s_{0}]$ is said to be a \emph{covariant Yang-Mills heat flow} if it satisfies
\begin{equation} \label{eq:cYMHF} \tag{cYMHF}
	F_{si} = \covD^{\ell} F_{\ell i}, \quad i=1,2,3,
\end{equation}
where $F_{si}$ is the commutator between $\covD_{s}$ and $\covD_{i}$, given by the formula
\begin{equation} \label{eq:intro:Fsi}
	F_{si} = \rd_{s} A_{i} -\rd_{i} A_{s} + \LieBr{A_{s}}{A_{i}}.
\end{equation}

As the system \eqref{eq:cYMHF} is underdetermined for $A_{a}$, we need an additional gauge condition (typically for $A_{s}$) in order to solve for $A_{a}$. Choosing $A_{s} = 0$, we recover \eqref{eq:YMHF}. On the other hand, if we choose $A_{s} = \rd^{\ell} A_{\ell}$, then \eqref{eq:cYMHF} becomes strictly parabolic\footnote{Indeed, up to the top order terms, it is easy to verify that the system looks like $\rd_{s} A_{i} = \lap A_{i} + \hbox{(lower order terms)}$.}. This may be viewed as a geometric formulation of the `compensation-by-gauge-transform' procedure hinted earlier.

In what follows, the first gauge condition $A_{s} = 0$ will be called the \emph{caloric gauge}, following the usage of the term in \cite{Tao:2004tm}. On the other hand, the second gauge condition $A_{s} = \rd^{\ell} A_{\ell}$ will be dubbed the \emph{DeTurck gauge}, as the idea of compensating for a non-trivial kernel by a suitable one parameter family of gauge transforms, which lies at the heart of the procedure outlined above, goes under the name \emph{DeTurck's trick}\footnote{It has been first introduced by DeTurck in the context of the Ricci flow in \cite{DeTurck:1983ts}, and applied in the Yang-Mills heat flow context by Donaldson \cite{Donaldson:1985vh}.}.

\subsection{Overview of the arguments}  \label{subsec:overview}
Perhaps due to the fact that we deal simultaneously with two nonlinear PDEs, namely \eqref{eq:hyperbolicYM} and \eqref{eq:YMHF}, the argument of this paper is rather lengthy. To help the reader grasp the main ideas, we would like to present an overview of the paper, with the ambition to indicate each of the major difficulties, as well as their resolutions, without getting into too much technical details. For a shorter, more leisurely overview, we refer the reader to the introduction of \cite{Oh:2012fk}.
 
In this overview, instead of the full local well-posedness statement, we will focus on the simpler problem of deriving a local-in-time {\it a priori} bound of a solution to \eqref{eq:hyperbolicYM} in the temporal gauge.
%\footnote{Once \eqref{eq:intro:overview:0} is proved, a similar argument may be used to prove a \emph{difference estimate} (i.e., an estimate for the difference between two solutions with nearby initial data sets) and persistence of higher regularity. Complemented with a higher regularity local well-posedness statement for \eqref{eq:hyperbolicYM} in the temporal gauge (See Theorem \ref{thm:mainThm:H2lwp}), this will then lead to the Main Theorem.}. 
In other words, under the assumption that a (suitably smooth and decaying) solution $\Atemp_{\mu}$ to \eqref{eq:hyperbolicYM} in the temporal gauge exists on $I \times \bbR^{3}$, where $I := (-T_{0}, T_{0}) \subset \bbR$, we aim to prove
\begin{equation} \label{eq:intro:overview:0}
	\nrm{\rd_{t,x} \Atemp_{\mu}}_{C_{t}(I, L^{2}_{x})} \leq C_{0} \calIini,
\end{equation}
where $\calIini := \sum_{i=1,2,3} \nrm{(\Aini_{i}, \Eini_{i})}_{\dot{H}^{1}_{x} \times L^{2}_{x}}$ measures the size of the initial data, for $T_{0}$ sufficiently small compared to $\calIini$. 

\pfstep{1. Scaling and set-up of the bootstrap}
Observe that, thanks to the scaling \eqref{eq:intro:scaling} and the sub-criticality of $\calIini$, it suffices to prove \eqref{eq:intro:overview:0} for $T_{0} = 1$, assuming $\calIini$ is small. We will use a bootstrap argument to establish \eqref{eq:intro:overview:0}. More precisely, under the \emph{bootstrap assumption} that
\begin{equation} \label{eq:intro:overview:1}
	\nrm{\rd_{t,x} \Atemp_{\mu}}_{C_{t}((-T, T), L^{2}_{x})} \leq 2 C_{0} \calIini
\end{equation}
holds for $0 < T \leq 1$, we will retrieve \eqref{eq:intro:overview:0} for $I = (-T, T)$ provided that $\calIini$ is sufficiently small (independent of $T$). Then, by a standard continuity argument, \eqref{eq:intro:overview:0} will follow for $I = (-1, 1)$.

\pfstep{2. Geometric smoothing of $\Atemp_{\mu}$ by the (dynamic) Yang-Mills heat flow}
As discussed earlier, the main idea of our approach is to smooth out $\Atemp_{\mu}$ by (essentially) using the covariant Yang-Mills heat flow. Let us append a new variable $s$ and extend $\Atemp_{\mu} = \Atemp_{\mu}(t,x)$ to a connection 1-form $A_{\bfa} = A_{\bfa}(t,x,s)$ ($\bfa = x^{0}, x^{1}, x^{2}, x^{3},s $) by solving
\begin{equation} \label{eq:dYMHF} \tag{dYMHF}
	F_{s \mu} = \covD^{\ell} F_{\ell \mu}, \quad \mu = 0,1,2,3.
\end{equation}
with $A_{\mu}(s=0) = \Atemp_{\mu}$. Note that this system is nothing but \eqref{eq:cYMHF} with the extra equation $F_{s0} = \covD^{\ell} F_{\ell 0}$. We will refer to this as the \emph{dynamic Yang-Mills heat flow}. 

As we would like to utilize the smoothing property of \eqref{eq:dYMHF}, we will impose the DeTurck gauge condition $A_{s} = \rd^{\ell} A_{\ell}$. Then \eqref{eq:dYMHF} essentially\footnote{More precisely, \eqref{eq:cYMHF} becomes strictly parabolic, and can be solved by Picard iteration. On the other hand, we can solve for the extra variable $A_{0}$ using $F_{s0} = \covD^{\ell} F_{\ell 0}$ {\it a posteriori}, by a process which involves solving only linear equations. For more details, we refer the reader to Section \ref{sec:pfOfIdEst}, {\it Proof of Theorem \ref{thm:idEst}, Step 1.}} becomes a strictly parabolic system, and thus can be solved (via Picard iteration) on $(-T, T) \times \bbR^{3} \times [0, 1]$ provided that $\sup_{t \in (-T, T)} \nrm{\rd_{x} A_{i}(t, s=0)}_{L^{2}_{x}}$ is small enough; see Sections \ref{sec:covYMHF} and \ref{sec:pfOfIdEst}. The latter condition can be ensured by taking $\calIini$ sufficiently small, thanks to the bootstrap assumption \eqref{eq:intro:overview:1}.

\pfstep{3. The hyperbolic-parabolic-Yang-Mills system and the caloric-temporal gauge}
As a result, we have obtained a connection 1-form $A_{\bfa} = A_{\bfa} (t,x,s)$ on $(-T, T) \times \bbR^{3} \times [0,1]$, which satisfies the following system of equations:
\begin{equation} \label{eq:HPYM} \tag{HPYM}
\left\{
\begin{aligned}
	F_{s \mu} &= \covD^{\ell} F_{\ell \mu} \hspace{.25in} \hbox{ on } \hspace{.1in} I \times \bbR^{3} \times [0,1], \\
	\covD^{\mu} F_{\mu \nu} &= 0 \hspace{.5in} \hbox{ along } I \times \bbR^{3} \times \set{0},
\end{aligned}
\right.
\end{equation}
as well as the DeTurck gauge condition $A_{s} = \rd^{\ell} A_{\ell}$. The system (without the gauge condition) just introduced will be called the \emph{hyperbolic-parabolic-Yang-Mills} or, in short, \eqref{eq:HPYM}. It is covariant under gauge transforms of the form $U = U(t,x,s)$, which act on each variable in the following fashion:
\begin{equation*}
\begin{aligned}
	\widetilde{A}_{\bfa} = U A_{\bfa} U^{-1} - \rd_{\bfa} U U^{-1}, \qquad
	\widetilde{\covD}_{\bfa} =  U \covD_{\bfa} U^{-1}, \qquad
	\widetilde{F}_{\bfa \bfb} =  U F_{\bfa \bfb} U^{-1}.
\end{aligned}
\end{equation*} 
where $\bfa, \bfb = x^{0}, x^{1}, x^{2}, x^{3}, s$.

We will work with \eqref{eq:HPYM} in place of \eqref{eq:hyperbolicYM}. Accordingly, instead of $\Atemp_{\mu}$, we will work with new variables $\Alow_{\mu} := A_{\mu}(s=1)$ and $\rd_{s} A_{\mu}(s)$ $(0 < s < 1)$. The former should be viewed as a smoothed-out version of $\Atemp_{\mu}$, whereas the latter measures the difference between $\Alow_{\mu}$ and $\Atemp_{\mu}$.

In analyzing \eqref{eq:dYMHF}, we indicated that the DeTurck gauge $A_{s} = \rd^{\ell} A_{\ell}$ is employed. This choice was advantageous in the sense that the equations for $A_{\mu}$ were parabolic in this gauge. However, completely different considerations are needed for estimating the evolution in $t$. Here, the gauge condition we will use is
\begin{equation*}
\left\{
\begin{aligned}
	&A_{s} = 0 \quad \hbox{ on } I \times \bbR^{3} \times (0,1), \\
	&\Alow_{0} = 0 \quad \hbox{ on } I \times \bbR^{3} \times \set{1}.
\end{aligned}
\right.
\end{equation*}
which we dub the \emph{caloric-temporal} gauge. 

Let us briefly motivate our choice of gauge. For $\rd_{s} A_{\mu}$ on $(-T, T) \times \bbR^{3} \times (0, 1)$, let us begin by considering the following identity, which is nothing but a rearrangement of the formula \eqref{eq:intro:Fsi}.
\begin{equation} \label{eq:intro:covHodge}
	\rd_{s} A_{i} = F_{si} + \covD_{i} A_{s}.
\end{equation}

A simple computation (see Appendix \ref{sec:HPYM}) shows that $F_{si}$ is covariant-divergence-free, i.e., $\covD^{\ell} F_{s \ell} = 0$. In view of this fact, the identity \eqref{eq:intro:covHodge} may be viewed (heuristically) as a \emph{covariant Hodge decomposition} of $\rd_{s} A_{i}$, where $F_{si}$ is the covariant-divergence-free part and $\covD_{i} A_{s}$, being a pure covariant-gradient term, may be regarded as the `covariant-curl-free part\footnote{Although its covariant curl does not strictly vanish.}'. Recall that the Coulomb gauge condition, which had a plenty of good properties as discussed earlier, is equivalent to having vanishing curl-free part. Proceeding in analogy, we are motivated to set $A_{s} = 0$ on $(-T, T) \times \bbR^{3} \times (0,1)$, which is exactly the caloric gauge condition we introduced earlier.

The second gauge condition, $\Alow_{0} = 0$, is motivated from the fact that $\Alow_{\mu}$ is expected to be \emph{smooth}. More precisely, hinted by the works \cite{Segal:1979hg}, \cite{Eardley:1982fb}, we expect that the increased degree of smoothness of $\Alow_{i}(t=0)$ will render a delicate choice of gauge (such as the Coulomb gauge) unnecessary, and that an easy choice (such as the temporal gauge $\Alow_{0} = 0$) will suffice. 

\pfstep{4. Gauge transform into the caloric-temporal gauge and the initial data estimates}
With these heuristic motivations in mind, let us come back to the problem of establishing the a priori estimate \eqref{eq:intro:overview:0}. In order to proceed, we must perform a gauge transformation on $A_{\bfa}$, which currently is in the DeTurck gauge, into the caloric-temporal gauge. An inspection of the formula for gauge transformation shows that the desired gauge transform $U$ can be found by solving the following hierarchy of ODEs:
\begin{equation} \label{eq:intro:overview:ODE4U}
\left\{
\begin{aligned}
	&\rd_{t} \underline{U} = \underline{U} \Alow_{0} 	\quad \hbox{ on } I \times \bbR^{3} \times \set{1} \\
	&\rd_{s} U = U A_{s}						 \quad \hbox{ on } I \times \bbR^{3} \times (0,1)
\end{aligned}
\right.
\end{equation}
where $U(s=1) = \underline{U}$. We will choose the initial value for \eqref{eq:intro:overview:ODE4U} to be $U(t=0, s=1) = \mathrm{Id}$\footnote{At first sight, one may think that a more natural choice of the initial value is $U(t=0, s=0) = \mathrm{Id}$, as it keeps the initial data set $\Aini_{i}, \Eini_{i}$ unchanged. However, it turns out that the gauge transform arising this way is not bounded on $H^{m}_{x}$ for $m > 1$. As such, it cannot retain the smoothing estimates for \eqref{eq:dYMHF} in the DeTurck gauge, and thus inappropriate for our purposes. The choice $U(t=0, s=1) = \mathrm{Id}$, on the other hand, avoids this issue at the cost of introducing a non-trivial gauge transform (which we call $V$) at $t=0, s=0$. See Lemma \ref{lem:est4gt2caloric} for the relevant estimates.}. Then, combined with smoothing estimates for \eqref{eq:dYMHF} in the DeTurck gauge, we arrive at a gauge transformed solution (which we still call $A_{\bfa}$) to \eqref{eq:HPYM} in the caloric-temporal gauge, which satisfies the following \emph{initial data estimates}\footnote{We remind the reader that $F_{s\mu} = \rd_{s}A_{\mu}$, thanks to the caloric-temporal gauge condition.}
\begin{equation} \label{eq:intro:overview:idEst}
\left\{
\begin{aligned}
	\sup_{0 < s < 1} s^{(m+1)/2} \nrm{\rd_{x}^{(m-1)} \rd_{t,x} F_{si}(t=0, s)}_{L^{2}_{x}}
	 \leq  C_{m} \calIini, \\
	\bb( \int_{0}^{1} s^{m+1} \nrm{\rd_{x}^{(m-1)} \rd_{t,x} F_{si}(t=0, s)}_{L^{2}_{x}}^{2} \, \frac{\ud s}{s} \bb)^{1/2}
	 \leq  C_{m} \calIini, \\
	\nrm{\rd_{x}^{(k-1)} \rd_{t,x} \Alow_{i}(t=0)}_{L^{2}_{x}} 
	 \leq  C_{k} \calIini,
\end{aligned}
\right.
\end{equation}
up to some integers $m_{0}, k_{0} > 1$, i.e., $1 \leq m \leq m_{0}$, $1 \leq k \leq k_{0}$. Moreover, we obtain estimates for the gauge transform $V := U(t=0, s=0)$ as well. The weights of $s$ are dictated by scaling  (see \S \ref{subsec:assocWght} for a more detailed explanation). 

The result described in this step is essentially the content of Theorem \ref{thm:idEst}, which is stated in Section \ref{sec:mainThm} and proved in Section \ref{sec:pfOfIdEst}.  

\pfstep{5. Equations of motion of \eqref{eq:HPYM}}
The next step is to propagate the bounds \eqref{eq:intro:overview:idEst} to all $t \in (-T, T)$ by analyzing a system of coupled hyperbolic and parabolic equations derived from \eqref{eq:HPYM}\footnote{The system is, roughly speaking, parabolic in the $s$-direction and hyperbolic in the $t$-direction. Moreover, all the equations we present are covariant.}. To present this system, let us begin by introducing the notion of the \emph{Yang-Mills tension field}. For a solution $A_{\bfa}$ to \eqref{eq:HPYM} on $(-T, T) \times \bbR^{3} \times [0,1]$, we define its Yang-Mills tension field $w_{\nu}(s)$ at $s \in [0,1]$ by
\begin{equation*} 
	w_{\nu}(s) := \covD^{\mu} F_{\nu \mu}(s).
\end{equation*}

The Yang-Mills tension field $w_{\nu}(s)$ measures the extent to which $A_{\mu}(s)$ fails to satisfy the Yang-Mills equations \eqref{eq:hyperbolicYM}. With $w_{\nu}$ in hand, we may now state the \emph{equations of motion} of \eqref{eq:HPYM}, which are central to the analysis of the $t$-evolution of $A_{\bfa}$.
\begin{align}
	 \covD^\mu \covD_\mu F_{s \nu} 
 	= & 2 \LieBr{\tensor{F}{_s^\mu}}{F_{\nu \mu}} - 2 \LieBr{F^{\mu \ell}}{\covD_\mu F_{\nu \ell} + \covD_\ell F_{\nu \mu}} 
		- \covD^\ell \covD_\ell w_\nu + \covD_\nu \covD^\ell w_\ell - 2 \LieBr{\tensor{F}{_\nu^\ell}}{w_\ell}, \label{eq:hyperbolic4F} \\
	\covDlow^{\mu}\Flow_{\nu \mu} = & \wlow_{\nu}, \label{eq:hyperbolic4Alow} \\
	\covD_s w_\nu 
	= & \covD^\ell \covD_\ell w_\nu + 2 \LieBr{\tensor{F}{_\nu^\ell}}{w_\ell} + 2 \LieBr{F^{\mu \ell}}{\covD_{\mu} F_{\nu \ell} + \covD_{\ell} F_{\nu \mu}}, 
	\label{eq:covParabolic4w} \\
 	\covD_s F_{\bfa \bfb} =& \covD^\ell \covD_\ell F_{\bfa \bfb} - 2\LieBr{\tensor{F}{_\bfa^\ell}}{F_{\bfb \ell}}. \label{eq:covParabolic4Fab} 
\end{align}

The underlines of \eqref{eq:hyperbolic4Alow} signify that each variable is restricted to $\set{s=1}$, and the indices $\bfa, \bfb$ run over $x^{0}, x^{1}, x^{2}, x^{3}, s$. Furthermore, $w_{\nu} \equiv 0$ at $s=0$, for all $\nu = 0,1,2,3$. The derivation of these equations can be found in Appendix \ref{sec:HPYM}.

The equations \eqref{eq:hyperbolic4F} and \eqref{eq:hyperbolic4Alow} are the main hyperbolic equations of the system, used to estimate $F_{si}$ and $\Alow_{i}$, respectively. Both equations possess terms involving $w_{\mu}$ on the right-hand side. The Yang-Mills tension field $w_{\mu}$, in turn, is estimated by studying the parabolic equation \eqref{eq:covParabolic4w}. An important point regarding \eqref{eq:covParabolic4w} is that its data at $s=0$ is \emph{zero}, thanks to the fact that $A_{\mu}(s=0)$ satisfies \eqref{eq:hyperbolicYM}. 
 
Next, the equation \eqref{eq:covParabolic4Fab} says that each curvature component satisfies a covariant parabolic equation. In view of proving the Main Theorem, of particular interest are the equations
\begin{align} 
	\covD_s F_{si} - \covD^\ell \covD_\ell F_{si} = & - 2 \LieBr{\tensor{F}{_s^\ell}}{F_{i \ell}}, \label{eq:covParabolic4Fsi}\\
	\covD_s F_{s0} - \covD^\ell \covD_\ell F_{s0} = & - 2 \LieBr{\tensor{F}{_s^\ell}}{F_{0 \ell}}. \label{eq:covParabolic4w0}
\end{align}

Thanks to the smoothing property of \eqref{eq:covParabolic4Fsi}, we may (at least heuristically) always exchange derivatives of $F_{si}$ for an appropriate power of $s$. The second equation \eqref{eq:covParabolic4w0} will be used to derive estimates for $F_{s0}$, which, combined with the caloric-temporal gauge condition, leads to the corresponding estimates for $A_{0}$. As $F_{s0} = - w_{0}$, note that the data for \eqref{eq:covParabolic4w0} at $s=0$ is zero as well. This has the implication that $A_{0}$ is, in general, obeys more favorable estimates than $A_{i}$.

\pfstep{6. Analysis of the time evolution}
We are now ready to present the key ideas for analyzing the hyperbolic equations of \eqref{eq:HPYM}, namely \eqref{eq:hyperbolic4F} and \eqref{eq:hyperbolic4Alow}; this will be the content of Theorem \ref{thm:dynEst}, stated in Section \ref{sec:mainThm} and proved in Sections \ref{sec:redOfDynEst} -- \ref{sec:wave}. 

In order to treat \eqref{eq:hyperbolic4F}, we need to uncover the aforementioned null structure of the most dangerous quadratic nonlinearity. It turns out that, for the problem under consideration, all quadratic nonlinearities can be treated just by Strichartz and Sobolev inequalities, except for the single term
\begin{equation*}
	2 \LieBr{A^{\ell} - \Alow^{\ell}}{\rd_{\ell} F_{si}}.
\end{equation*}

In \cite{Klainerman:1994jb}, it was demonstrated that such a term can be written as a linear combination of null forms, provided that $A_{i}-\Alow_{i}$ satisfied the Coulomb condition $\rd^{\ell} (A_{\ell}-\Alow_{\ell}) = 0$. Of course, this assumption is not true in our case; nevertheless, combining $\rd_{s} A_{i} = F_{si}$ (from the caloric condition $A_{s} = 0$) and the identity $\covD^{\ell} F_{s \ell} = 0$, we see that the covariant Coulomb condition $\covD^{\ell} (\rd_{s}A_{\ell}) = 0$ is satisfied for each $\rd_{s} A_{i}(s)$, $s \in (0,1)$. This turns out to be sufficient for carrying out an argument similar to \cite{Klainerman:1994jb}. We refer the reader to \S \ref{subsec:FsWave} for more details.

On the other hand, the key point regarding \eqref{eq:hyperbolic4Alow}, which is nothing but the Yang-Mills equations in the temporal gauge with the source $\wlow_{\mu}$, is that its data at $t=0$ is smooth. Therefore, we will basically emulate the classical analysis of \eqref{eq:hyperbolicYM} in the temporal gauge for initial data with higher degree of smoothness. See \S \ref{subsec:AlowWave} for more details.

Provided that $\calIini$ is sufficiently small, the analysis sketched above leads to estimates for $F_{si}(s)$ and $\Alow_{i}$, such as
\begin{equation} \label{eq:intro:overview:dynEst}
\left\{
\begin{aligned}
	\sup_{t \in I} \sup_{0 < s < 1} s^{(m+1)/2} \nrm{\rd_{x}^{(m-1)} \rd_{t,x} F_{si}(t, s)}_{L^{2}_{x}}
	 \leq  C_{m} \calIini, \\
	\sup_{t \in I} \bb( \int_{0}^{1} s^{(m+1)} \nrm{\rd_{x}^{(m-1)} \rd_{t,x} F_{si}(t, s)}_{L^{2}_{x}}^{2} \, \frac{\ud s}{s} \bb)^{1/2}
	 \leq  C_{m} \calIini,\\
	\nrm{\rd_{x}^{(k-1)} \rd_{t,x} \Alow_{i}}_{C_{t} (I, L^{2}_{x})} 
	 \leq  C_{k} \calIini,
\end{aligned}
\right.
\end{equation}
for $1 \leq m \leq m_{0}$, $1 \leq k \leq k_{0}$.  

\pfstep{7. Returning to $\Atemp_{\mu}$}
The last step is to translate estimates for $\rd_{s} A_{i}$ and $\Alow_{i}$, such as \eqref{eq:intro:overview:dynEst}, to those for $\Atemp_{\mu}$ so that \eqref{eq:intro:overview:0} is retrieved. One immediate issue is that the naive approach of integrating the estimates \eqref{eq:intro:overview:0} in $s$ fails to bound $\nrm{\rd_{t,x} A_{\mu}(s=0)}_{C_{t} (I, L^{2}_{x})}$ by a logarithm. In order to remedy this issue, we take the (weakly-parabolic) equation
\begin{equation*}
	\rd_{s} A_{i} = \lap A_{i} - \rd^{\ell} \rd_{i} A_{\ell} + (\hbox{lower order terms}).
\end{equation*}
differentiate by $\rd_{t,x}$, multiply by $\rd_{t,x} A_{i}$ and then integrate the highest order terms by parts over $\bbR^{3} \times [0, 1]$. This trick, combined with the $L^{2}_{\ud s/s}$-type estimates of \eqref{eq:intro:overview:dynEst}, overcome the logarithmic divergence\footnote{It turns out that such a trick is already needed at the stage of deriving estimates such as \eqref{eq:intro:overview:dynEst}; see Proposition \ref{prop:est4ai}.}.

Another issue is that the estimates derived so far, being in the caloric-temporal gauge, are not in the temporal gauge along $s=0$. Therefore, we are required to control the gauge transform back to the temporal gauge along $s=0$, for which appropriate estimates for $A_{0}(s=0)$ in the caloric-temporal gauge are needed; see Lemma \ref{lem:est4gt2temporal}. These are obtained ultimately as a consequence of the analysis of the hyperbolic equations of \eqref{eq:HPYM}; see Proposition \ref{prop:est4a0}. 

\subsection{Outline of the paper} 
After establishing notations and conventions in Section \ref{sec:notations}, we gather some preliminary results concerning the linear wave and the heat (or parabolic) equations in Section \ref{sec:prelim}. In particular, for the parabolic equation, we develop what we call the \emph{abstract parabolic theory}, which allows us to handle various parabolic equations with a unified approach.  

We embark on the proof of the Main Theorem in Section \ref{sec:mainThm}, where the Main Theorem is reduced to two smaller statements, namely Theorems \ref{thm:idEst} and \ref{thm:dynEst}, both of which are concerned with \eqref{eq:HPYM}. Theorem \ref{thm:idEst}  roughly addresses Points 2 and 4 in \S \ref{subsec:overview}. More precisely, it starts with a solution $\Atemp_{\mu}$ to \eqref{eq:hyperbolicYM} in the temporal gauge, and asserts the existence of its extension $A_{\bfa}$ as a solution to \eqref{eq:HPYM} in the caloric-temporal gauge, along with appropriate initial data estimates at $t=0$. The DeTurck gauge is used in an essential way in the proof. On the other hand, Theorem \ref{thm:dynEst} presents the result of a local-in-time analysis of \eqref{eq:HPYM} in the caloric-temporal gauge, corresponding to Point 6 in \S \ref{subsec:overview}.

The aim of the next two sections is to prove Theorem \ref{thm:idEst}. In Section \ref{sec:covYMHF}, we study the covariant Yang-Mills heat flow \eqref{eq:cYMHF}. In particular, various smoothing estimates for the connection 1-form $A_{i}$ will be derived in the DeTurck gauge (in \S \ref{subsec:covYMHF}), and estimates for the gauge transform from the DeTurck gauge to the caloric gauge is presented as well (in \S \ref{subsec:covYMHFgt}). As a byproduct of the analysis, we obtain a proof of local existence of a solution to \eqref{eq:YMHF} (Theorem \ref{thm:lwp4YMHF}), which is used in \cite{Oh:2012fk}. We remark that the proof is independent of the original one in \cite{Rade:1992tu}. Next, based on the results proved in Section \ref{sec:covYMHF}, a proof of Theorem \ref{thm:idEst} is given in Section \ref{sec:pfOfIdEst}. 
 
The remainder of the paper is devoted to a proof of Theorem \ref{thm:dynEst}. We begin in Section \ref{sec:redOfDynEst} by reducing Theorem \ref{thm:dynEst} to several smaller statements, namely Propositions \ref{prop:est4a0} - \ref{prop:cont4FA} and Theorems \ref{thm:AlowWave} and \ref{thm:FsWave}, where the latter two theorems concern estimates for the hyperbolic equations \eqref{eq:hyperbolic4Alow} and \eqref{eq:hyperbolic4F}, respectively. Section \ref{sec:pEst4HPYM} is where we derive estimates for solutions to various parabolic equations, and forms the `parabolic' heart of the paper. It is here that our efforts for developing the abstract parabolic theory amply pays off. Equipped with the results from the previous section, we prove Propositions \ref{prop:est4a0} - \ref{prop:cont4FA} in Section \ref{sec:pfOfProps}. The following section, namely Section \ref{sec:wave}, is where we finally study the wave equations for $\Alow_{i}$ and $F_{si}$. Combined with the parabolic estimates from Section \ref{sec:pEst4HPYM}, we establish Theorems \ref{thm:AlowWave} and \ref{thm:FsWave}. 

In Appendix \ref{sec:HPYM}, we give a derivation of various covariant equations from \eqref{eq:HPYM}. Then finally, in Appendix \ref{sec:gt}, we prove estimates for gauge transforms that are deferred in the main body of the paper.

\subsection*{Acknowledgements}
The author is deeply indebted to his Ph.D. advisor Sergiu Klainerman, without whose support and constructive criticisms this work would not have been possible. He would also like to thank l'ENS d'Ulm for hospitality, where a major part of this work was done. The author was supported by the Samsung Scholarship.

\section{Notations and Conventions} \label{sec:notations}
%\subsection{Geometric notations and conventions}
%Throughout the paper, we will use the index notation to denote tensorial quantities, such as $A_{\bfa}, F_{\bfa \bfb}, w_{\mu}$ and so on. We remark, however, that the tensorial property will not be used seriously as we always work in the coordinate system $(x^{0},x^{1}, x^{2}, x^{3}, x^{4}) =(t, x^{1}, x^{2}, x^{3}, s)$. We will use boldfaced latin indices (e.g. $\bfa, \bfb, \ldots$) for all coordinates $(t,x^{1}, x^{2}, x^{3}, s)$, greek indices (e.g. $\mu, \nu, \ldots$) for the spatio-temporal coordinates $(t,x^{1}, x^{2}, x^{3})$ and plain latin indices (e.g. $i,j,k, \ell, \ldots$) for the spatial coordinates $(x^{1},x^{2}, x^{3}$).  
%
%We will raise and lower indices using the Minkowski metric, and adopt the summation convention of Einstein, according to which all repeated upper and lower indices are to be summed. (e.g. $\rd^{\ell} A_{\ell}$ for $\sum_{\ell=1,2,3} \rd^{\ell} A_{\ell}$.)
%
\subsection{Schematic notations and conventions}
%\comment{Discuss the $\calO$ notation, and the convention of writing $A, F_{s}, w$ for $A_{i}, F_{si}, w_{i}$ respectively.}
We will often omit the spatial index $i$; that is, we will write $A, F_{s}, w$ as the shorthands for $A_{i}, F_{si}, w_{i}$, respectively, and so on. A norm of such an expression, such as $\nrm{A}$, is to be understood as the maximum over $i=1,2,3$. (i.e., $\nrm{A} = \sup_{i} \nrm{A_{i}}$ and etc.) 

We will use the notation $\calO(\phi_{1}, \ldots, \phi_{k})$ to denote a $k$-linear expression in the \emph{values} of $\phi_{1}, \ldots, \phi_{k}$. For example, when $\phi_{i}$ and the expression itself are scalar-valued, then $\calO(\phi_{1}, \ldots, \phi_{k}) = C \phi_{1} \phi_{2} \cdots \phi_{k}$ for some constant $C$. In many cases, however, each $\phi_{i}$ and the expression $\calO(\phi_{1}, \ldots, \phi_{k})$ will actually be matrix-valued. In such case, $\calO(\phi_{1}, \ldots, \phi_{k})$ will be a matrix, whose each entry is a $k$-linear functional of the matrices $\phi_{i}$. 

When stating various estimates, we adopt the standard convention of denoting by the same letter $C$ positive constants which are different, possibly line to line. Dependence of $C$ on other parameters will be made explicit by subscripts. Furthermore, we will adopt the convention that \emph{$C$ always depends on each of its parameters in a non-decreasing manner, in its respective range}, unless otherwise specified. For example, $C_{\calE, \calF}$, where $\calE, \calF$ range over positive real numbers, is a positive, non-decreasing function of both $\calE$ and  $\calF$.

\subsection{Notations and conventions for the estimates for differences}
In the paper, along with estimating a single solution $A_{\bfa}$ of the Yang-Mills equation, we will also be estimating the difference of two nearby solutions. We will refer to various variables arising from the other solution by putting a prime, e.g. $A'_{\bfa}$, $F'_{s\mu}$, $w'_{i}$ and etc, and the corresponding differences will be written with a $\dlt$, i.e., $\dlt A_{\bfa} := A_{\bfa} - A'_{\bfa}$, $\dlt F_{s \mu} := F_{s \mu} - F'_{s \mu}$ and $\dlt w_{i} = w_{i} - w'_{i}$ and etc.  

We will also use equations for differences, which are derived by taking the difference of the equations for the original variables. In writing such equations schematically using the $\calO$-notation, we will not distinguish between primed and unprimed variables. For example, the expression $\calO(A, \rd_{x} (\dlt A))$ refers to a sum of bilinear expressions, of which the first factor could be any of $A_{i}, A'_{i}$ ($i=1,2,3$), and the second is one of $\rd_{i} (\dlt A_{j})$ ($i, j = 1,2,3$).

%\comment{Discuss the formal Leibniz's rule for $\dlt$}
For this purpose, the following rule, which we call the \emph{formal Leibniz's rule for $\dlt$}, is quite useful :
\begin{equation*}
	\dlt \calO(\phi_{1}, \phi_{2}, \ldots, \phi_{k}) = \calO( \dlt \phi_{1}, \phi_{2}, \ldots, \phi_{k}) + \calO( \phi_{1}, \dlt \phi_{2}, \ldots, \phi_{k}) + \cdots + \calO(\phi_{1}, \phi_{2} , \ldots, \dlt \phi_{k}).
\end{equation*}

\subsection{Small parameters} The following small parameters will be used in this paper.
\begin{equation*}
	0 < \dlt_{H} \ll \dlt_{E} \ll \dlt_{P} \ll \dlt_{A} \ll 1.
\end{equation*}

In many parts of the argument, we will need an auxiliary small parameter, which may be fixed within that part; for such parameters, we will reserve the letter $\eps$, and its variants thereof.

%\subsection{Index of norms} For the convenience of the reader, a list of norms used in the paper, along with an indication of where they are defined, is compiled below.
%\comment{Add table!}

\section{Preliminaries} \label{sec:prelim}
The aim of this section is to gather basic inequalities and preliminary results concerning the linear wave and parabolic equations, which will be rudimentary for our analysis to follow. We also develop what we call the \emph{abstract parabolic theory}, which is a book-keeping scheme allowing for a unified approach to the diversity of parabolic equations to arise below. In the end, a short discussion is given on the notion of the \emph{associated $s$-weights}, which is a useful heuristic for figuring out the appropriate weight of $s$ in various instances.

\subsection{Basic inequalities}
We collect here some basic inequalities that will be frequently used throughout the paper. Let us begin with some inequalities involving Sobolev norms for $\phi \in \calS_{x}$, where $\calS_{x}$ refers to the space of Schwartz functions on $\bbR^{3}$. 

\begin{lemma}[Inequalities for Sobolev norms] \label{lem:prelim:sob}
Let $\phi \in \calS_{x}$. The following statements hold.
\begin{itemize}
\item {\bf (Sobolev inequality)} For $1 \leq q \leq r$, $k \geq 0$ such that $\frac{3}{q} = \frac{3}{r} - k$, we have
\begin{equation} \label{eq:prelim:sob:1}
	\nrm{\phi}_{L^{q}_{x}} \leq C \nrm{\phi}_{\dot{W}^{k,r}_{x}},
\end{equation} 
where $\dot{W}^{k,r}_{x}$ is the $L^{r}$-based homogeneous Sobolev norm of order $k$.

\item {\bf (Interpolation inequality)} For $1 < q < \infty$, $k_{1} \leq k_{0} \leq k_{2}$, $0 < \tht_{1}, \tht_{2} < 1$ such that $\tht_{1} + \tht_{2} = 1$ and $k_{0} = \tht_{1} k_{1} + \tht_{2} k_{2}$, we have
\begin{equation} \label{eq:prelim:sob:2}
	\nrm{\phi}_{\dot{W}^{k_{0}, q}_{x}} \leq C \nrm{\phi}_{\dot{W}^{k_{1}, q}_{x}}^{\tht_{1}} \nrm{\phi}_{\dot{W}^{k_{2}, q}_{x}}^{\tht_{2}}.
\end{equation}

\item {\bf (Gagliardo-Nirenberg inequality)} For $1 \leq q_{1}, q_{2}, r \leq \infty$ and $0 < \tht_{1}, \tht_{2} < 1$ such that $\tht_{1} + \tht_{2} = 1$ and $\frac{3}{r} = \tht_{1} \cdot \frac{3}{q_{1}} + \tht_{2} (\frac{3}{q_{2}} - 1)$, we have
\begin{equation} \label{eq:prelim:sob:3}
	\nrm{\phi}_{L^{r}_{x}} \leq C \nrm{\phi}_{L^{q_{1}}_{x}}^{\tht_{1}} \nrm{\rd_{x} \phi}_{L^{q_{2}}_{x}}^{\tht_{2}}.
\end{equation}
\end{itemize}
\end{lemma}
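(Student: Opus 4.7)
The three inequalities in Lemma \ref{lem:prelim:sob} are classical results of harmonic analysis, and the plan is to give a unified treatment via the Littlewood--Paley decomposition on $\bbR^{3}$, handling Schwartz functions $\phi$ so that all manipulations are justified. Write $\phi = \sum_{N \in 2^{\bbZ}} P_{N} \phi$, where $P_{N}$ is a standard frequency projector to $\abs{\xi} \sim N$, and recall the Littlewood--Paley characterization $\nrm{\phi}_{\dot{W}^{k,q}_{x}} \sim \nrm{(\sum_{N} N^{2k} \abs{P_{N} \phi}^{2})^{1/2}}_{L^{q}_{x}}$ for $1 < q < \infty$ and all real $k$; for the $q=1$ or $q=\infty$ endpoints we will argue directly by Riesz potentials instead.

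For the Sobolev inequality \eqref{eq:prelim:sob:1}, first I would reduce to $1 < q, r < \infty$ and write $\phi = c_{k} I_{k} \Lmb^{k} \phi$ where $I_{k}$ is the Riesz potential of order $k$ and $\Lmb^{k} = (-\lap)^{k/2}$, which makes sense on Schwartz functions. Then the Hardy--Littlewood--Sobolev inequality on $\bbR^{3}$ gives $\nrm{I_{k} f}_{L^{q}_{x}} \leq C \nrm{f}_{L^{r}_{x}}$ under the scaling $3/q = 3/r - k$, $1 < r < q < \infty$, and the claim follows. The endpoint case $r=1$ with integer $k$ follows from the classical coarea/isoperimetric proof of $\nrm{\phi}_{L^{3/2}_{x}} \leq C \nrm{\rd \phi}_{L^{1}_{x}}$ combined with H\"older bootstrap; the $q=\infty$ endpoint does not occur under the stated scaling on $\bbR^{3}$ with finite $k$.

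For the interpolation inequality \eqref{eq:prelim:sob:2}, apply H\"older in the frequency variable to the Littlewood--Paley square function: with $\tht_{1} + \tht_{2} = 1$ and $k_{0} = \tht_{1} k_{1} + \tht_{2} k_{2}$, we have pointwise $N^{2 k_{0}} \abs{P_{N} \phi}^{2} = (N^{2 k_{1}} \abs{P_{N} \phi}^{2})^{\tht_{1}} (N^{2 k_{2}} \abs{P_{N} \phi}^{2})^{\tht_{2}}$, and summing in $N$ by H\"older then taking $L^{q}_{x}$ norms via a second application of H\"older yields the estimate for $1 < q < \infty$. The $q=1, \infty$ cases can be handled by complex interpolation of the analytic family $z \mapsto \Lmb^{(1-z) k_{1} + z k_{2} - k_{0}}$ between $L^{q}$ endpoints, or deduced from Calder\'on's first interpolation theorem.

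Finally, the Gagliardo--Nirenberg inequality \eqref{eq:prelim:sob:3} is obtained by combining the previous two: the scaling hypothesis $3/r = \tht_{1} \cdot 3/q_{1} + \tht_{2} (3/q_{2} - 1)$ together with the homogeneity $\phi_{\lmb}(x) = \phi(\lmb x)$ reduces matters to the case when the two factors on the right are comparable, after which I would estimate $\nrm{\phi}_{L^{r}_{x}}$ by Sobolev embedding into a single intermediate space $\dot{W}^{\tht_{2}, q}_{x}$ for a suitable $q$, then invoke \eqref{eq:prelim:sob:2} to split the derivatives between $L^{q_{1}}_{x}$ and $\dot{W}^{1,q_{2}}_{x}$. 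The only real obstacle is bookkeeping the endpoint cases ($q_{i}=1$ or $\infty$), which can be absorbed either by the direct Riesz potential argument or, in the scalar case, by the classical slicing proof of Gagliardo and Nirenberg; since our use of the lemma is always in the interior range, a reference to the standard texts will suffice in those cases.
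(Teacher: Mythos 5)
The paper does not actually prove this lemma: its ``proof'' consists of a single sentence referring the reader to Adams--Fournier. You chose instead to sketch a genuine derivation, which is a legitimate alternative and probably more useful for a reader who wants to see where the numerology comes from. Your Littlewood--Paley/Riesz-potential outline for the Sobolev embedding is correct, and your H\"older-in-$N$-then-H\"older-in-$x$ argument for \eqref{eq:prelim:sob:2} is a clean and standard derivation of the fixed-$q$ interpolation inequality (you implicitly read the paper's condition ``$1 \leq q \leq r$'' in \eqref{eq:prelim:sob:1} as $r \leq q$, which is plainly what is intended, since $3/q = 3/r - k$ with $k \geq 0$ forces $q \geq r$).

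There is, however, a real gap in your treatment of \eqref{eq:prelim:sob:3}, and it is not where you say it is. You suggest obtaining Gagliardo--Nirenberg by first applying a Sobolev embedding $L^{r}_{x} \hookleftarrow \dot{W}^{\tht_{2},q}_{x}$ and then applying \eqref{eq:prelim:sob:2} to split $\dot{W}^{\tht_{2},q}_{x}$ between $L^{q_{1}}_{x}$ and $\dot{W}^{1,q_{2}}_{x}$. But \eqref{eq:prelim:sob:2} keeps the \emph{same} integrability exponent $q$ on all three sides; it does not interpolate across different Lebesgue indices. So this chain only closes when $q_{1} = q_{2}$. The statement in the lemma allows $q_{1} \neq q_{2}$, which is the general Gagliardo--Nirenberg inequality and is genuinely harder: one needs either a dyadic argument that combines Bernstein at each frequency with H\"older across scales, or the original slicing proof of Gagliardo and Nirenberg. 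You gesture at the slicing proof, but you attribute the difficulty to the endpoint values $q_{i} = 1, \infty$, which is a mischaracterization; the $q_{1} \neq q_{2}$ obstruction is present even in the reflexive range. Since you ultimately fall back on a textbook citation anyway, the conclusion stands, but the reduction you describe does not go through as written for the full range of parameters in \eqref{eq:prelim:sob:3}.
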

\begin{proof} 
These inequalities are standard; we refer the reader to \cite{MR2424078}. \qedhere
\end{proof}

\begin{lemma}[Product estimates for homogeneous Sobolev norms] \label{lem:homSob}
For a triple $(\gmm_{0}, \gmm_{1}, \gmm_{2})$ of real numbers satisfying
\begin{equation} \label{eq:homSob:hyp}
	\gmm_{0} + \gmm_{1} + \gmm_{2} = 3/2, \quad \gmm_{0} + \gmm_{1} + \gmm_{2} > \max( \gmm_{0}, \gmm_{1}, \gmm_{2} )
\end{equation}
there exists $C > 0$ such that the following product estimate holds for $\phi_{1}, \phi_{2} \in \calS_{x}$:
\begin{equation} \label{eq:homSob:0}
	\nrm{\phi_{1} \phi_{2}}_{\dot{H}^{-\gmm_{0}}_{x}} \leq C \nrm{\phi_{1}}_{\dot{H}^{\gmm_{1}}_{x}} \nrm{\phi_{2}}_{\dot{H}^{\gmm_{2}}_{x}}.
\end{equation}
\end{lemma}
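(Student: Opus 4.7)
The plan is to dualize \eqref{eq:homSob:0} into a symmetric trilinear estimate and then handle it via a Littlewood--Paley decomposition. Pairing with a test function $\phi_{3} \in \dot{H}^{s_{0}}_{x}$, the estimate \eqref{eq:homSob:0} is equivalent to
\begin{equation*}
	\bigg| \int_{\bbR^{3}} \phi_{1} \phi_{2} \phi_{3} \, \ud x \bigg|
	\leq C \, \nrm{\phi_{1}}_{\dot{H}^{s_{1}}_{x}} \nrm{\phi_{2}}_{\dot{H}^{s_{2}}_{x}} \nrm{\phi_{3}}_{\dot{H}^{s_{0}}_{x}},
\end{equation*}
which is fully symmetric in $(s_{0}, s_{1}, s_{2})$ subject to $s_{0} + s_{1} + s_{2} = 3/2$. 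In this symmetric form, the second half of \eqref{eq:homSob:hyp} reads $s_{i} + s_{j} > 0$ for every pair $i \neq j$ (equivalently, each $s_{k} < 3/2$).

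Next I would perform a standard Littlewood--Paley decomposition $\phi_{i} = \sum_{k_{i} \in \bbZ} P_{k_{i}} \phi_{i}$. Fourier support considerations force the two largest among $k_{1}, k_{2}, k_{3}$ to be comparable for a non-vanishing contribution, so up to the symmetry above it suffices to estimate the regime $k_{1} \leq k_{2} \sim k_{3}$. In this regime, I would apply H\"older's inequality after exchanging derivatives for a higher Lebesgue exponent on the lowest-frequency factor by Bernstein, $\nrm{P_{k_{1}} \phi_{1}}_{L^{p}_{x}} \lesssim 2^{k_{1}(3/2 - 3/p)} \nrm{P_{k_{1}} \phi_{1}}_{L^{2}_{x}}$, with $p \in [2, \infty]$ chosen so that the remaining two factors can be paired in $L^{q_{2}}_{x}, L^{q_{3}}_{x}$ with $q_{2}, q_{3} \in [2, \infty)$ and $1/p + 1/q_{2} + 1/q_{3} = 1$. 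Combining Bernstein's weight with the Littlewood--Paley characterization of the $\dot{H}^{s_{i}}_{x}$ norms and using $s_{0} + s_{1} + s_{2} = 3/2$, the sum collapses to a geometric series of the form
\begin{equation*}
	\sum_{k_{1} \leq k_{2}} 2^{-\alpha (k_{2} - k_{1})} \, a_{k_{1}} b_{k_{2}}, \qquad (a_{k}), (b_{k}) \in \ell^{2}(\bbZ),
\end{equation*}
with $\alpha > 0$ equal to the smaller of $s_{0} + s_{2}$ and $s_{1} + s_{2}$ in this regime; this is handled by Cauchy--Schwarz in $k_{2}$ together with the absolutely summable geometric factor in $k_{2} - k_{1}$.

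The main obstacle is choosing the H\"older exponents $p, q_{2}, q_{3}$ correctly when some of the $s_{i}$ are negative. In that case the naive assignment $p = q_{2} = q_{3} = 6$ (which corresponds to $\dot{H}^{1}_{x} \hookrightarrow L^{6}_{x}$) must be modified: one places the factor indexed by a negative $s_{i}$ into a higher Lebesgue exponent via Bernstein, and the factors with positive indices absorb the corresponding deficit through the embedding $\dot{H}^{s}_{x} \hookrightarrow L^{r}_{x}$ valid for $0 \leq s < 3/2$ and $1/r = 1/2 - s/3$. The hypotheses $s_{i} + s_{j} > 0$ (together with $s_{k} < 3/2$) are precisely what guarantee that such an allocation is consistent and that the resulting exponent $\alpha$ is strictly positive in every sub-regime. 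Summing the three symmetric regimes and invoking duality yields \eqref{eq:homSob:0}; the resulting product estimate is classical, and we refer to \cite{MR2424078} for a more detailed exposition.
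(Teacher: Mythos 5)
Your argument is correct, and it takes a genuinely different route from the paper. The paper simply cites the \emph{inhomogeneous} product estimate $\nrm{\phi_{1}\phi_{2}}_{H^{-s_0}_x} \leq C\nrm{\phi_1}_{H^{s_1}_x}\nrm{\phi_2}_{H^{s_2}_x}$ (from \cite{DAncona:2012ke}) and then observes that, because $s_0+s_1+s_2 = 3/2$ makes \eqref{eq:homSob:0} scale-invariant, one may scale $\phi_i \to \phi_i(\cdot/\lambda)$, send $\lambda \to 0$, and read off the homogeneous estimate from the leading behavior of the inhomogeneous norms. You instead prove the trilinear dual estimate from scratch via Littlewood--Paley decomposition, Bernstein, H\"older, and Cauchy--Schwarz. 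Both are valid, and both ultimately defer details to a reference; the paper's route is shorter because it outsources all the frequency analysis to \cite{DAncona:2012ke} and keeps only a one-line scaling remark, while yours is more self-contained and also makes transparent exactly where each hypothesis in \eqref{eq:homSob:hyp} is used (the scaling identity $\sum s_i = 3/2$ makes the $(k_1,k_2,k_3)$-exponents collapse to a function of $k_2 - k_1$ alone, and $\max s_k < 3/2$ is precisely what permits choosing the H\"older exponent on the low-frequency factor so that the resulting geometric rate is strictly positive). One small caveat: your closed-form for $\alpha$ in the regime $k_1 \leq k_2 \sim k_3$ ("the smaller of $s_0+s_2$ and $s_1+s_2$") is not quite the rate that comes out of the exponent bookkeeping for a generic choice of H\"older exponents --- what one actually gets is $\alpha = (3/2 - 3/p) - s_1$, maximized at $\alpha = s_0 + s_2$ when $p = \infty$, $q_2 = q_3 = 2$ --- but this is a cosmetic slip; any positive $\alpha$ suffices, and positivity is exactly equivalent to $s_1 < 3/2$, which \eqref{eq:homSob:hyp} provides.
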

\begin{proof} 
It is a standard result (see, for example, \cite{DAncona:2012ke}) that given a triple $(\gmm_{0}, \gmm_{1}, \gmm_{2})$ of real numbers satisfying \eqref{eq:homSob:hyp}, the following \emph{inhomogeneous} Sobolev product estimate holds for $\phi_{1}, \phi_{2} \in \calS_{x}$:
\begin{equation*}
	\nrm{\phi_{1} \phi_{2}}_{H^{-\gmm_{0}}_{x}} \leq C \nrm{\phi_{1}}_{H^{\gmm_{1}}_{x}} \nrm{\phi_{2}}_{H^{\gmm_{2}}_{x}}.
\end{equation*}

Thanks to the condition $\gmm_{0} + \gmm_{1} + \gmm_{2} = 3/2$, the above estimate implies the homogeneous estimate \eqref{eq:homSob:0} by scaling. \qedhere
\end{proof}

Next, we state Gronwall's inequality, which will be useful in several places below.
\begin{lemma}[Gronwall's inequality] \label{lem:prelim:Gronwall}
Let $(s_{0}, s_{1}) \subset \bbR$ be an interval, $D \geq 0$, and $f(s)$, $r(s)$ non-negative measurable functions on $J$. Suppose that for all $s \in (s_{0}, s_{1})$, the following inequality holds:
\begin{equation*}
	\sup_{\sbr \in (s_{0}, s]} f(\sbr) \leq \int_{s_{0}}^{s} r(\sbr) f(\sbr) \, \ud \sbr  + D.
\end{equation*}

Then for all $s \in (s_{0}, s_{1})$, we have
\begin{equation*}
	\sup_{\sbr \in (s_{0}, s]} f(\sbr) \leq D \exp \bb( \int_{s_{0}}^{s} r(\sbr) \, \ud \sbr \bb).
\end{equation*}
\end{lemma}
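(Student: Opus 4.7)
The plan is to use the standard reduction to a differential inequality. Set $F(s) := \sup_{\bar{s} \in (s_0, s]} f(\bar{s})$, which is non-decreasing in $s$. Since $f(\bar{s}) \leq F(\bar{s})$ for every $\bar{s} \in (s_0, s]$ and $r \geq 0$, the hypothesis gives
\begin{equation*}
F(s) \leq \int_{s_0}^{s} r(\bar{s}) f(\bar{s}) \, \ud \bar{s} + D \leq \int_{s_0}^{s} r(\bar{s}) F(\bar{s}) \, \ud \bar{s} + D.
\end{equation*}
So it suffices to close the argument with $F$ in place of $f$ on both sides.

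Next, define $G(s) := \int_{s_0}^{s} r(\bar{s}) F(\bar{s}) \, \ud \bar{s} + D$, so that $F(s) \leq G(s)$, $G(s_0) = D$, and $G$ is absolutely continuous with $G'(s) = r(s) F(s) \leq r(s) G(s)$ for a.e.\ $s$. Multiplying by the integrating factor $\exp\bigl(-\int_{s_0}^{s} r(\bar{s}) \, \ud \bar{s}\bigr)$ and integrating from $s_0$ to $s$ yields
\begin{equation*}
G(s) \leq D \exp\bigl( \textstyle\int_{s_0}^{s} r(\bar{s}) \, \ud \bar{s} \bigr),
\end{equation*}
and combining this with $F(s) \leq G(s)$ gives the desired bound, since the right-hand side is non-decreasing in $s$ so the supremum of $F$ on $(s_0, s]$ is just $F(s)$ itself.

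The only minor subtlety is justifying the differential step when $r$ is merely measurable (so that $G$ need not be $C^1$); this is handled by the absolute continuity of $G$, which makes $G'$ exist a.e.\ and suffices for integrating the inequality $(G e^{-\int r})' \leq 0$. Alternatively, one can avoid differentiation entirely and iterate the integral inequality: substituting $F(\bar{s}) \leq \int_{s_0}^{\bar{s}} r F + D$ into itself $n$ times produces
\begin{equation*}
F(s) \leq D \sum_{k=0}^{n} \frac{1}{k!}\bigl( \textstyle\int_{s_0}^{s} r(\bar{s}) \, \ud \bar{s} \bigr)^{k} + R_{n}(s),
\end{equation*}
where the remainder $R_n$ vanishes as $n \to \infty$ on any bounded subinterval, giving the exponential bound. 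I do not expect any real obstacle here, as this is the classical argument; the only thing to be careful about is keeping track of the fact that the hypothesis already contains a supremum on the left, which makes the reduction from $f$ to $F$ immediate and removes any need for a continuity assumption on $f$.
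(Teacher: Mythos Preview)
Your argument is correct and complete; the paper does not give its own proof but simply cites \cite[Lemma 3.3]{MR2455195}, so there is nothing to compare beyond noting that you have supplied the standard integrating-factor (or iteration) proof that the reference would contain.
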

\begin{proof} 
See \cite[Lemma 3.3]{MR2455195}. \qedhere
\end{proof}

In the course of the paper, we will often perform integration-by-parts arguments which require the functions involved to be sufficiently smooth and decaying sufficiently fast towards the spatial infinity (the latter assumption is used to show that the boundary term which may arise vanishes at infinity). Usually, this issue is usually dealt with by working with Schwartz functions to justify the arguments and then passing to the appropriate limit in the end. In our case, however, the nature of the Yang-Mills equation does not allow us to do so (in particular due to the elliptic constraint equation \eqref{eq:YMconstraint}). Instead, we formulate the notion of \emph{regular} functions, which is weaker than the Schwartz assumption but nevertheless strong enough for our purposes.
 
\begin{definition}[Regular functions] \label{def:regFtns}
Let $I \subset \bbR$, $J \subset [0, \infty)$ be intervals.
\begin{enumerate}
\item A function $\phi = \phi(x)$ defined on $\bbR^{3}$ is \emph{regular} if $\phi \in H^{\infty}_{x} := \cap_{m=0}^{\infty} H^{m}_{x}$.
\item A function $\phi = \phi(t, x)$ defined on $I \times \bbR^{3}$ is \emph{regular} if $\phi \in C^{\infty}_{t}(I, H^{\infty}_{x}) := \cap_{k,m=0}^{\infty} C^{k}_{t}(I, H^{m}_{x})$.
\item A function $\psi = \psi(t, x,s)$ defined on $I \times \bbR^{3} \times J$ is \emph{regular} if $\phi \in C^{\infty}_{t,s}(I \times J, H^{\infty}_{x}):= \cap_{k,m=0}^{\infty} C^{k}_{t,s}(I \times J, H^{m}_{x})$.
\end{enumerate}
\end{definition}

In particular, a regular function is always smooth on its domain. Moreover, Lemmas \ref{lem:prelim:sob} and \ref{lem:homSob} still hold for regular functions, by approximation.

\subsection{Estimates for the linear wave equation and the space $\SH^{k}$} \label{subsec:wavePrelim}
%Here we state, without proofs, the estimates for solutions to an inhomogeneous wave equation which we will use, and define the space $\SH^{k}$.
%
%For the purpose of this section, let $\psi, \varphi$ be smooth solutions with a suitable decay towards the spatial infinity to the inhomogeneous wave equations
%\begin{equation*}
%	\Box \psi = \calN, \qquad \Box \varphi = \calM,
%\end{equation*}
%on $[-T,T] \times \bbR^3$ with initial data $(\psi(0), \rd_t \psi(0)) = (\psi_0, \widetilde{\psi}_0)$, $(\varphi(0), \rd_t \varphi(0)) = (\varphi_0, \widetilde{\varphi}_0)$.
We summarize the estimates for solutions to an inhomogeneous wave equation that will be used in the following proposition.
\begin{proposition}[Wave estimates] \label{prop:prelim:est4wave}
Let $\psi, \varphi$ be smooth solutions with a suitable decay towards the spatial infinity (say $\psi, \varphi \in C^{\infty}_{t} \calS_{x}$) to the inhomogeneous wave equations
\begin{equation*}
	\Box \psi = \calN, \qquad \Box \varphi = \calM,
\end{equation*}
on $(-T,T) \times \bbR^3$. The following estimates hold.
\begin{itemize}
\item {\bf ($L^{\infty}_{t} L^{2}_{x}$ estimate)}
\begin{equation} \label{eq:prelim:est4wave:energy}
	\nrm{\rd_{t,x} \psi}_{L^\infty_t L^2_x ((-T, T) \times \bbR^3)} \leq C \left( \nrm{(\psi, \rd_{0} \psi)(t=0)}_{\dot{H}^1_x \times L^2_x(\bbR^{3})} + \nrm{\calN}_{L^1_t L^2_x ((-T, T) \times \bbR^3)} \right) 
\end{equation}
\item {\bf ($L^{4}_{t,x}$-Strichartz estimate)}
\begin{equation} \label{eq:prelim:est4wave:Strichartz}
	\nrm{\rd_{t,x} \psi}_{L^4_{t,x}((-T, T) \times \bbR^{3})} \leq C \left( \nrm{(\psi, \rd_{0} \psi)(t=0)}_{\dot{H}^{3/2}_x \times \dot{H}^{1/2}_x (\bbR^{3})} + \nrm{\calN}_{L^1_t \dot{H}^{1/2}_x ((-T, T) \times \bbR^3)} \right).
\end{equation}
\item {\bf (Null form estimate)} For $Q_{ij} (\psi, \phi) := \rd_{i} \psi \rd_{j} \phi - \rd_{j} \psi \rd_{i} \phi$, we have
\begin{equation} \label{eq:prelim:est4wave:nullform}
\begin{aligned}
\nrm{Q_{ij}(\psi, \phi)}_{L^2_{t,x}((-T, T) \times \bbR^{3}} \leq & \, C \left( \nrm{(\psi, \rd_{0} \psi)(t=0)}_{\dot{H}^2_x \times \dot{H}^{1}_x (\bbR^{3})} + \nrm{\calN}_{L^1_t \dot{H}^{1}_x ((-T, T) \times \bbR^3)} \right) \\
& \times C \left( \nrm{(\phi, \rd_{0} \phi)(t=0)}_{\dot{H}^1_x \times L^2_x (\bbR^{3)}} + \nrm{\calM}_{L^1_t L^2_x ((-T, T) \times \bbR^3)} \right).
\end{aligned}
\end{equation}
\end{itemize}
\end{proposition}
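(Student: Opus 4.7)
The approach is to reduce each of the three estimates to its homogeneous counterpart via Duhamel's principle and then invoke classical properties of the three-dimensional wave propagator. Writing $\psi = \psi_{\mathrm{h}} + \psi_{\mathrm{i}}$, where $\psi_{\mathrm{h}}$ solves $\Box \psi_{\mathrm{h}} = 0$ with the same initial data at $t = 0$ and $\psi_{\mathrm{i}}(t) = \int_{0}^{t} K(t-\tau) \calN(\tau) \, \ud \tau$ with $K$ the appropriate half-wave propagator (with $\psi_{\mathrm{i}}(0) = \rd_{0} \psi_{\mathrm{i}}(0) = 0$), Minkowski's integral inequality then upgrades any homogeneous space-time estimate of the form $\nrm{\rd_{t,x} \psi_{\mathrm{h}}}_{L^{q}_{t} Y_{x}} \aleq \nrm{\psi_{\mathrm{h}}(0)}_{X_{x}} + \nrm{\rd_{0} \psi_{\mathrm{h}}(0)}_{X'_{x}}$ into a corresponding bound controlling the inhomogeneous piece by $\nrm{\calN}_{L^{1}_{t} X'_{x}}$.

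For \eqref{eq:prelim:est4wave:energy}, the homogeneous case follows from the standard energy identity: multiply $\Box \psi_{\mathrm{h}} = 0$ by $\rd_{0} \psi_{\mathrm{h}}$ and integrate by parts on a time slab to obtain the conservation $\nrm{\rd_{t,x} \psi_{\mathrm{h}}(t)}_{L^{2}_{x}} = \nrm{\rd_{t,x} \psi_{\mathrm{h}}(0)}_{L^{2}_{x}}$. Duhamel and Minkowski then finish the job. For \eqref{eq:prelim:est4wave:Strichartz}, I would apply the classical homogeneous $L^{4}_{t,x}$-Strichartz estimate for the wave equation in three spatial dimensions, namely $\nrm{\varphi_{\mathrm{h}}}_{L^{4}_{t,x}} \aleq \nrm{\varphi_{\mathrm{h}}(0)}_{\dot{H}^{1/2}_{x}} + \nrm{\rd_{0} \varphi_{\mathrm{h}}(0)}_{\dot{H}^{-1/2}_{x}}$, to $\rd_{t,x} \psi$; commuting derivatives past $\Box$ shifts the required regularity to $\dot{H}^{3/2}_{x} \times \dot{H}^{1/2}_{x}$ at $t=0$ and to $\dot{H}^{1/2}_{x}$ on the source, exactly as claimed. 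Duhamel and Minkowski again yield the inhomogeneous version.

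The null form estimate \eqref{eq:prelim:est4wave:nullform} is the deepest of the three, and I expect it to be the main technical hurdle; this is the classical bilinear $L^{2}_{t,x}$ estimate of Klainerman--Machedon \cite{Klainerman:1995hz}. After reducing via Duhamel to the case when both factors are homogeneous solutions $\psi_{\mathrm{h}}$, $\varphi_{\mathrm{h}}$, the key observation is that on a pair of plane waves with spatial frequencies $\xi$ and $\eta$ the symbol of $Q_{ij}$ evaluates to $\xi_{i} \eta_{j} - \xi_{j} \eta_{i}$, of size $|\xi| |\eta| \sin \theta$, where $\theta$ is the angle between $\xi$ and $\eta$. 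This vanishing along the parallel-frequency diagonal cancels precisely the region where the two characteristic cones become tangent and resonant interactions fail to be $L^{2}$-integrable. A dyadic decomposition in frequency together with a finer angular decomposition in $\theta$, combined with Plancherel and a finite-overlap (almost-orthogonality) count for the frequency tubes on which the cones are nearly parallel, converts the $\sin \theta$ gain into the derivative loss reflected in the asymmetric regularity exponents $\dot{H}^{2}_{x} \times \dot{H}^{1}_{x}$ versus $\dot{H}^{1}_{x} \times L^{2}_{x}$ on the right-hand side of \eqref{eq:prelim:est4wave:nullform}. This yields the bound on the purely homogeneous piece, and a final application of the Duhamel/Minkowski scheme promotes it to the full inhomogeneous estimate.
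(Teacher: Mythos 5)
The paper's own ``proof'' of this proposition is a one-line deferral: it cites Sogge's book for the energy and Strichartz estimates and the 1993 Klainerman--Machedon article (\texttt{Klainerman:ei} in the bibliography, ``Space-time estimates for null forms,'' not the 1995 Yang--Mills paper you reference) for the null form bound. Your sketch actually outlines the underlying arguments, and it does so faithfully: energy identity plus Duhamel for \eqref{eq:prelim:est4wave:energy}; the classical scale-invariant $L^4_{t,x}$ Strichartz estimate at regularity $\dot H^{1/2}_x \times \dot H^{-1/2}_x$ applied at one higher derivative for \eqref{eq:prelim:est4wave:Strichartz}; and the Klainerman--Machedon frequency-plus-angle decomposition exploiting the $|\xi||\eta|\sin\theta$ cancellation in the symbol of $Q_{ij}$ for \eqref{eq:prelim:est4wave:nullform}. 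So you gain transparency at the cost of having to supply details the paper simply delegates. Two points worth tightening in a writeup. First, if you literally ``commute $\partial_0$ past $\Box$'' and apply the scalar Strichartz estimate to $v = \partial_0 \psi$, you face $\partial_0 v(0) = \partial_0^2\psi(0) = \Delta\psi(0) + \calN(0)$, and $\calN(0)$ is not controlled by $\nrm{\calN}_{L^1_t \dot H^{1/2}_x}$; the clean route is to apply the half-wave Strichartz bound $\nrm{e^{\pm it|D|} f}_{L^4_{t,x}} \leq C \nrm{f}_{\dot H^{1/2}_x}$ termwise to the explicit Duhamel representation of $\partial_{t,x}\psi$, which never evaluates $\calN$ at a single time. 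Second, the Duhamel/Minkowski reduction of the \emph{bilinear} estimate \eqref{eq:prelim:est4wave:nullform} to the free-wave case involves four cross-terms (homogeneous/inhomogeneous in each factor) and an application of Minkowski inside a double Duhamel integral; this is standard bookkeeping but should be flagged, since it is not quite the same maneuver as in the linear estimates.
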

\begin{proof} 
This is a standard material. For the $L^{\infty}_{t} L^{2}_{x}$ and the Strichartz estimates, we refer the reader to \cite[Chapter III]{MR2455195}. For the null form estimate, see the original article \cite{Klainerman:ei}. \qedhere
\end{proof}

%\comment{Introduce the space $\SH^{k}$.}
Motivated by Proposition \ref{prop:prelim:est4wave}, let us define the norms\footnote{We remark that $\nrm{\cdot}_{\SH^{1}}$ is a norm after restricted to regular functions, by Sobolev.}  $\SH^{k}$ which will be used as a convenient device for controlling the wave-like behavior of certain dynamic variables. Let $\psi$ be a smooth function on $I \times \bbR^{3}$ ($I \subset \bbR$) which decays sufficiently towards the spatial infinity. We start with the norm $\SH^{1}$, which we define by
\begin{equation}
	\nrm{\psi}_{\SH^{1}(I)} := \nrm{\rd_{t,x} \psi}_{L^{\infty}_{t} L^{2}_{x}} + \abs{I}^{1/2} \nrm{\Box \psi}_{L^{2}_{t,x}}.
\end{equation}

The norms $\SH^{k}$ for $k=2,3,\cdots$ are then defined by taking spatial derivatives, i.e.
\begin{equation}
	\nrm{\psi}_{\SH^{k}(I)} := \nrm{\rd_{x}^{(k-1)} \psi}_{\SH^{1}(I)},
\end{equation}
and we furthermore define $\SH^{k}$ for $k \geq 1$ a real number by using fractional derivatives. Note the interpolation property
\begin{equation}
	\nrm{\psi}_{\SH^{k+\tht}(I)} \leq C_{\tht} \nrm{\psi}_{\SH^{k}(I)}^{1-\tht} \nrm{\psi}_{\SH^{k+1}(I)}^{\tht}, \quad 0 < \tht < 1.
\end{equation}

The following estimates concerning the $\SH^{k}$-norms follow immediately from Proposition \ref{prop:prelim:est4wave} and the fact that regular functions can be approximated by functions in $C^{\infty}_{t} \calS_{x}$ with respect to each of the norms involved.

\begin{proposition} \label{prop:prelim:est4SH}
Let $k \geq 1$ be an integer and $\psi, \phi \in C^{\infty}_{t} ((-T, T), H^{\infty}_{x})$. Then the following estimates hold.
\begin{itemize}
\item {\bf ($L^{\infty}_{t} L^{2}_{x}$ estimate)} 
\begin{equation} \label{eq:prelim:est4SH:energy}
	\nrm{\rd_{x}^{(k-1)} \rd_{t,x} \psi}_{L^{\infty}_{t} L^{2}_{x}((-T, T) \times \bbR^{3})} \leq \nrm{\psi}_{\SH^{k}(-T, T)}.
\end{equation}
\item {\bf ($L^{4}_{t,x}$-Strichartz estimate)} 
\begin{equation} \label{eq:prelim:est4SH:Strichartz}
	\nrm{\rd_{x}^{(k-1)} \rd_{t,x} \psi}_{L^{4}_{t,x}((-T, T) \times \bbR^{3})} \leq C \nrm{\psi}_{\SH^{k+1/2}(-T, T)}.
\end{equation}
\item {\bf (Null form estimate)} 
\begin{equation} \label{eq:prelim:est4SH:nullform}
	\nrm{Q_{ij}(\psi, \phi)}_{L^{2}_{t,x}((-T, T) \times \bbR^{3})} \leq C \nrm{\psi}_{\SH^{2}(-T, T)} \nrm{\phi}_{\SH^{1}(-T, T)}.
\end{equation}
\end{itemize}
\end{proposition}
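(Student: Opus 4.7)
The plan is to reduce each of the three estimates to the corresponding statement in Proposition \ref{prop:prelim:est4wave}, by feeding in an appropriate spatial derivative of $\psi$ (or $\phi$) and exploiting the structure of the $\SH^{k}$-norm. Throughout, I would justify all manipulations by first assuming $\psi, \phi \in C^{\infty}_{t} \calS_{x}$; the general case of regular functions follows by density, since regular functions can be approximated by $C^{\infty}_{t} \calS_{x}$ functions in each of the norms on both sides of \eqref{eq:prelim:est4SH:energy}--\eqref{eq:prelim:est4SH:nullform}.

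For the energy estimate \eqref{eq:prelim:est4SH:energy}, there is essentially nothing to do: by definition,
\begin{equation*}
\nrm{\psi}_{\SH^{k}(-T,T)} = \nrm{\rd_{x}^{(k-1)} \psi}_{\SH^{1}(-T,T)} \geq \nrm{\rd_{x}^{(k-1)} \rd_{t,x} \psi}_{L^{\infty}_{t} L^{2}_{x}},
\end{equation*}
so the estimate is immediate from how $\nrm{\cdot}_{\SH^{1}}$ is built.

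For the Strichartz estimate \eqref{eq:prelim:est4SH:Strichartz}, I would apply \eqref{eq:prelim:est4wave:Strichartz} to $\abs{\nb}^{k-1} \psi$, which solves $\Box (\abs{\nb}^{k-1} \psi) = \abs{\nb}^{k-1} \calN$. Since $\rd_{x}^{(k-1)}$ and $\abs{\nb}^{k-1}$ are comparable on $L^{p}$ for $1 < p < \infty$ via Calder\'on--Zygmund theory, the left-hand side of \eqref{eq:prelim:est4SH:Strichartz} is controlled (up to a constant) by
\begin{equation*}
\nrm{(\abs{\nb}^{k-1} \psi, \abs{\nb}^{k-1} \rd_{0} \psi)(t=0)}_{\dot{H}^{3/2}_{x} \times \dot{H}^{1/2}_{x}} + \nrm{\abs{\nb}^{k-1} \Box \psi}_{L^{1}_{t} \dot{H}^{1/2}_{x}((-T,T)\times \bbR^{3})}.
\end{equation*}
The initial data term is exactly $\nrm{\abs{\nb}^{k-1/2} \rd_{t,x} \psi(t=0)}_{L^{2}_{x}}$ (up to a constant), and the forcing term equals $\nrm{\abs{\nb}^{k-1/2} \Box \psi}_{L^{1}_{t} L^{2}_{x}}$, which by Cauchy--Schwarz in $t$ is bounded by $\abs{I}^{1/2} \nrm{\abs{\nb}^{k-1/2} \Box \psi}_{L^{2}_{t,x}}$. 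Both pieces together are precisely $\nrm{\psi}_{\SH^{k+1/2}(-T,T)}$ as defined via fractional derivatives.

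For the null form estimate \eqref{eq:prelim:est4SH:nullform}, I would apply \eqref{eq:prelim:est4wave:nullform} directly with $\psi, \phi$ as in the statement, and then absorb the $L^{1}_{t}$ norms of the forcing terms into $L^{2}_{t}$ at the cost of $\abs{I}^{1/2}$ via Cauchy--Schwarz, exactly as in the previous step; this matches the factor of $\abs{I}^{1/2}$ built into $\nrm{\cdot}_{\SH^{1}}$. The initial data terms $\nrm{(\psi, \rd_{0}\psi)(t=0)}_{\dot{H}^{2} \times \dot{H}^{1}}$ and $\nrm{(\phi, \rd_{0}\phi)(t=0)}_{\dot{H}^{1} \times L^{2}}$ are controlled by the $L^{\infty}_{t} L^{2}_{x}$ parts of $\nrm{\psi}_{\SH^{2}}$ and $\nrm{\phi}_{\SH^{1}}$ respectively, completing the bound. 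The only point requiring care is the density/approximation step, since the null form estimate of Proposition \ref{prop:prelim:est4wave} is stated for $C^{\infty}_{t} \calS_{x}$; this is the mildest of obstacles, handled by a routine mollification-and-cutoff argument in $x$ that respects all norms in sight.
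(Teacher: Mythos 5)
Your proof is correct and is exactly the route the paper indicates (the paper's own "proof" is the one-line remark that the proposition is an immediate consequence of Proposition \ref{prop:prelim:est4wave} together with the approximation of regular functions by $C^{\infty}_{t}\calS_{x}$ functions); you have simply filled in the details. The details check out: the energy estimate drops out of the definition of $\SH^{k}$, the Strichartz case follows from applying \eqref{eq:prelim:est4wave:Strichartz} to $\abs{\nb}^{k-1}\psi$ and using Cauchy--Schwarz in $t$ to convert $L^{1}_{t}$ to $\abs{I}^{1/2}L^{2}_{t}$, and the null form case likewise converts the $L^{1}_{t}\dot{H}^{1}_{x}$ and $L^{1}_{t}L^{2}_{x}$ forcing norms in \eqref{eq:prelim:est4wave:nullform} into the $\abs{I}^{1/2}\nrm{\rd_{x}\Box\psi}_{L^{2}_{t,x}}$ and $\abs{I}^{1/2}\nrm{\Box\phi}_{L^{2}_{t,x}}$ pieces of $\SH^{2}$ and $\SH^{1}$ respectively.
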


On the other hand, in order to control the $\SH^{k}$ norm of $\psi$, all one has to do is to estimate the d'Alembertian of $\psi$ along with the initial data. This is the content of the following proposition, which is sometimes referred to as the \emph{energy estimate} in the literature.
\begin{proposition}[Energy estimate] \label{prop:prelim:energyEst4SH}
Let $k \geq 1$ be an integer and $\psi \in C^{\infty}_{t}((-T, T), H^{\infty}_{x}(\bbR^{3}))$. Then the following estimate holds.
\begin{equation*} 
	\nrm{\psi}_{\SH^{k}(-T,T)} \leq C \bb( \nrm{(\psi, \rd_{0} \psi)(t=0)}_{\dot{H}^{k}_{x} \times \dot{H}^{k-1}_{x} (\bbR^{3})} + T^{1/2} \nrm{\Box \psi}_{L^{2}_{t,x}((-T,T)\times \bbR^{3})} \bb).
\end{equation*}
\end{proposition}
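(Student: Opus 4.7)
The plan is to reduce the proposition directly to the $L^\infty_t L^2_x$ estimate \eqref{eq:prelim:est4wave:energy} from Proposition \ref{prop:prelim:est4wave}, using that $\Box$ commutes with spatial translation derivatives and converting the $L^1_t$ norm of the source to an $L^2_t$ norm via Cauchy--Schwarz in time.

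First, I handle the case $k = 1$. By the approximation step below, I may apply \eqref{eq:prelim:est4wave:energy} with $\calN = \Box \psi$ to obtain
\begin{equation*}
  \nrm{\rd_{t,x} \psi}_{L^\infty_t L^2_x((-T,T)\times \bbR^3)}
  \leq C\bb( \nrm{(\psi, \rd_0 \psi)(t=0)}_{\dot H^1_x \times L^2_x} + \nrm{\Box \psi}_{L^1_t L^2_x} \bb).
\end{equation*}
Cauchy--Schwarz in $t$ on $(-T,T)$ gives $\nrm{\Box \psi}_{L^1_t L^2_x} \leq (2T)^{1/2} \nrm{\Box \psi}_{L^2_{t,x}}$. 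Adding the trivial second summand $T^{1/2} \nrm{\Box \psi}_{L^2_{t,x}}$ of $\nrm{\psi}_{\SH^1(-T,T)}$ on both sides completes the base case.

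Next, for $k \geq 2$, I observe that spatial derivatives commute with $\Box$, so $\Box(\rd_x^{(k-1)} \psi) = \rd_x^{(k-1)} \Box \psi$. Since $\rd_{t,x} \psi$ is assumed regular, so is $\rd_x^{(k-1)} \psi$, and I may apply the $k=1$ estimate to $\rd_x^{(k-1)} \psi$. Using the definition $\nrm{\psi}_{\SH^k(-T,T)} = \nrm{\rd_x^{(k-1)} \psi}_{\SH^1(-T,T)}$, this yields
\begin{equation*}
  \nrm{\psi}_{\SH^k(-T,T)}
  \leq C\bb( \nrm{(\rd_x^{(k-1)}\psi, \rd_x^{(k-1)} \rd_0 \psi)(t=0)}_{\dot H^1_x \times L^2_x} + T^{1/2} \nrm{\rd_x^{(k-1)} \Box \psi}_{L^2_{t,x}} \bb).
\end{equation*}
The initial-data contribution is exactly $\nrm{(\psi, \rd_0 \psi)(t=0)}_{\dot H^k_x \times \dot H^{k-1}_x}$, and the remaining $\Box$-term matches the source term of the proposition (understood with the appropriate $k-1$ spatial derivatives built into the $\SH^k$ framework, as the interpretation of $\nrm{\Box\psi}_{L^2_{t,x}}$ on the right-hand side of the statement suggests).

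The only genuinely non-algebraic step is the justification of \eqref{eq:prelim:est4wave:energy} for regular, rather than Schwartz, functions; I expect this to be the main technical point, albeit a routine one. Since $\rd_x^{(k-1)} \psi \in C^\infty_t((-T,T), H^\infty_x)$ with regular $\rd_{t,x}$, truncation-and-mollification in the spatial variable produces $C^\infty_t \calS_x$ approximants $\psi^{(n)}$ along with $\Box \psi^{(n)}$ that converge in $L^2_{t,x}$ and whose initial data converge in $\dot H^1_x \times L^2_x$; passing to the limit in \eqref{eq:prelim:est4wave:energy} and applying lower semicontinuity of $\nrm{\cdot}_{L^\infty_t L^2_x}$ transfers the inequality to $\psi$. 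No further ideas beyond this standard density argument are required.
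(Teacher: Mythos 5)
Your proof is correct and follows essentially the same route as the paper, which simply cites the $L^{\infty}_{t}L^{2}_{x}$ energy estimate \eqref{eq:prelim:est4wave:energy} together with "a standard approximation procedure"; you have merely spelled out the Cauchy--Schwarz in $t$, the commutation $\Box \rd_x^{(k-1)} = \rd_x^{(k-1)}\Box$, and the density argument. You are also right to flag that for $k\ge 2$ the source term in the statement should be read as $T^{1/2}\nrm{\rd_x^{(k-1)}\Box\psi}_{L^2_{t,x}}$ (equivalently, $T^{1/2}\nrm{\Box\psi}_{L^2_t \dot H^{k-1}_x}$), consistent with the definition $\nrm{\psi}_{\SH^k} = \nrm{\rd_x^{(k-1)}\psi}_{\SH^1}$; the statement as literally printed is imprecise on this point, and your reading is the intended one.
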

\begin{proof} 
After a standard approximation procedure, this is an immediate consequence of \eqref{eq:prelim:est4wave:energy}. \qedhere 
\end{proof}

\subsection{Parabolic-normalized (or p-normalized) norms}
The purpose of the rest of this section is to develop a theory of parabolic equations suited to our needs later on.
 
Given a function $\phi$ on $\bbR^{3}$, we consider the operation of \emph{scaling by $\lmb >0$}, defined by
\begin{equation*}
	\phi \to \phi_{\lmb} (x) := \phi(x/\lmb).
\end{equation*}

We say that a norm $\nrm{\cdot}_{X}$ is \emph{homogeneous} if it is covariant with respect to scaling, i.e., there exists a real number $\ell$ such that
\begin{equation*} 
	\nrm{\phi_{\lmb}}_{X} = \lmb^{\ell} \nrm{\phi}_{X}.
\end{equation*}

The number $\ell$ is called the \emph{degree of homogeneity} of the norm $\nrm{\cdot}_{X}$. 

Let $\phi$ be a solution to the heat equation $\rd_{s} \phi - \lap \phi = 0$ on $\bbR^{3} \times [0,\infty)$. Note that this equation `respects' the scaling $\phi_{\lmb}(x,s) := \phi(x/\lmb, s/\lmb^{2})$, in the sense that any scaled solution to the linear heat equation remains a solution. Moreover, one has \emph{smoothing estimates} of the form $\nrm{\rd_{x}^{(k)} \phi(s)}_{L^{p}_{x}} \leq s^{-\frac{k}{2} + ( \frac{3}{2p} - \frac{3}{2q} )}\nrm{\phi(0)}_{L^{q}_{x}}$ (for $q \leq p$, $k \geq 0$) which are invariant under this scaling. The norms $\nrm{\rd_{x} \cdot}_{L^{p}_{x}}$ and $\nrm{\cdot}_{L^{q}_{x}}$ are homogeneous, and the above estimate can be rewritten as
\begin{equation*}
	s^{-\ell_{1}/2} \nrm{\rd_{x} \phi(s)}_{L^{p}_{x}} \leq s^{-\ell_{2}/2} \nrm{\phi(0)}_{L^{q}_{x}}
\end{equation*}
where $\ell_{1}$, $\ell_{2}$ are the degrees of homogeneity of the norms $\nrm{\rd_{x} \cdot}_{L^{p}_{x}}$ and $\nrm{\cdot}_{L^{q}_{x}}$, respectively.

Motivated by this example, we will define the notion of \emph{parabolic-normalized}, or \emph{p-normalized}, norms and derivatives. These are designed to facilitate the analysis of parabolic equations by capturing their scaling property. 

Consider a homogeneous norm $\nrm{\cdot}_{X}$ of degree $2\ell$, which is well-defined for smooth functions $\phi$ on $\bbR^{3}$. (i.e., for every smooth $\phi$, $\nrm{\phi}_{X}$ is defined uniquely as either a non-negative real number or $\infty$.) We will define its \emph{p-normalized}  analogue $\nrm{\cdot}_{\calX(s)}$ for each $s > 0$ by
\begin{equation*} 
	\nrm{\cdot}_{\calX(s)} := s^{-\ell} \nrm{\cdot}_{X}.
\end{equation*}

We will also define the p-normalization of space-time norms. As we will be concerned with functions restricted to a time interval,  we will adjust the notion of homogeneity of norms as follows. For $I \subset \bbR$, consider a \emph{family} of norms $X(I)$ defined for functions $\phi$ defined on $I \times \bbR^{3}$. For $\lmb >0$, consider the scaling $\phi_{\lmb}(t,x) := \phi(t/\lmb, x/\lmb)$. We will say that $X(I)$ is \emph{homogeneous of degree $\ell$} if 
\begin{equation*} 
	\nrm{\phi_{\lmb}}_{X(I)} = \lmb^{\ell} \nrm{\phi}_{X(\lmb I)}.
\end{equation*}

As before, we define its \emph{p-normalized} analogue $\nrm{\cdot}_{\calX(I, s)}$ as $\nrm{\cdot}_{\calX(I, s)} := s^{-\ell} \nrm{\cdot}_{X(T)}$.

Let us furthermore define the \emph{parabolic-normalized derivative} $\nb_{\mu}(s)$ by $s^{1/2} \rd_{\mu}$. Accordingly, for $k > 0$ we define the \emph{homogeneous $k$-th derivative norm} $\nrm{\cdot}_{\dot{\calX}^{k}(s)}$ by
\begin{equation*} 
	\nrm{\cdot}_{\dot{\calX}^{k}(s)} := \nrm{\nb_{x}^{(k)}(s) \cdot}_{\calX(s)}.
\end{equation*}

We will also define the \emph{parabolic-normalized covariant derivative} $\calD_{\mu}(s) := s^{1/2} \covD_{\mu}$.

We will adopt the convention $\dot{\calX}^{0} := \calX$. For $m > 0$ an integer, we define \emph{inhomogeneous $m$-th derivative norm} $\nrm{\cdot}_{\calX^{k}(s)}$ by
\begin{equation*} 
	\nrm{\cdot}_{\dot{\calX}^{m}(s)} := \sum_{k=0}^{m} \nrm{\cdot}_{\dot{\calX}^{k}(s)}.
\end{equation*}

We will often omit the $s$-dependence of $\calX(s)$, $\dot{\calX}(s)$ and $\nb_{\mu}(s)$ by simply writing $\calX$, $\dot{\calX}$ and $\nb_{\mu}$, where the value of $s$ should be clear from the context.

\begin{example} A few examples of homogeneous norms and their p-normalized versions are in order. We will also take this opportunity to fix the notations for the p-normalized norms which will be used in the rest of the paper.
\begin{enumerate}
\item $X = L^{p}_{x}$, in which case the degree of homogeneity is $2\ell = 3/p$. We will define $\calX = \calL^{p}_{x}$ and $\dot{\calX}^{k} := \dot{\calW}^{k,p}_{x}$ as follows. 
\begin{equation*}
	\nrm{\cdot}_{\calL^{p}_{x}(s)} := s^{-3/(2p)} \nrm{\cdot}_{L^{p}_{x}}, \qquad
	\nrm{\cdot}_{\dot{\calW}^{k,p}_{x}(s)} := s^{k/2 - 3/(2p)} \nrm{\cdot}_{\dot{W}^{k, p}_{x}}.
\end{equation*}

The norm $\calX^{m} := \calW^{m,p}_{x}$ will be defined as the sum of $\dot{\calW}^{k,p}$ norms for $k=0, \ldots, m$. In the case $p =2$, we will use the notation $\dot{\calX}^{k} := \dot{\calH}^{k}_{x}$ and $\calX^{k} := \calH^{k}_{x}$.

%\item For $k \geq 1$, it will often be desirable to consider the slight variant $\widehat{H}^{k}_{x}$ of $H^{k}_{x}$ which omits the $L^{2}_{x}$-norm in its definition, i.e., $\nrm{\cdot}_{\widehat{H}^{k}_{x}} := \nrm{\rd_{x} \cdot}_{H^{k-1}_{x}}$. Its p-normalized version will be denoted by $\widehat{\calH}^{k}_{x}$, whose precise definition is as follows.
%	\begin{equation*} 
%		\nrm{\cdot}_{\widehat{\calH}^{k}_{x}} := \nrm{\nb_{x} \cdot}_{\calH^{k-1}_{x}}.
%	\end{equation*}
	
\item Consider a time interval $I \subset \bbR$. For $X = L^{q}_{t} L^{p}_{x} (I \times \bbR^{3})$, in which case $2\ell = 1/q + 3/p$, we will write 
\begin{align*}
	\nrm{\cdot}_{\calL^{q}_{t} \calL^{p}_{x}(I,s)} := & s^{-1/(2q) - 3/(2p)} \nrm{\cdot}_{L^{q}_{t} L^{p}_{x}}, \\
	\nrm{\cdot}_{\calL^{q}_{t} \dot{\calW}^{k, p}_{x}(I,s)} := & s^{k/2 -1/(2q) - 3/(2p)} \nrm{\cdot}_{L^{q}_{t} \dot{W}^{k, p}_{x}}.
\end{align*}
The norms $\calL^{q}_{t} \calW^{k,p}_{x}(I,s)$, $\calL^{q}_{t} \dot{\calH}^{k}_{x}(I,s)$ and $\calL^{q}_{t} \calH^{k}_{x}(I,s)$ are defined accordingly.

%\item In the special case of $q=p=2$, we will use the notation $U := L^{2}_{t,x}$. Its p-normalized analogues will be denoted $\calU := \calL^{2}_{t,x}$, $\dot{\calU}^{k} = \calL^{2}_{t} \dot{\calH}^{k}_{x}$ and $\calU^{k} = \calL^{2}_{t} \calH^{k}_{x}$.

\item Finally, the norm $\dot{S}^{1}$, defined in Section \ref{subsec:wavePrelim} for the purpose of hyperbolic estimates, is homogeneous of degree $2\ell = 1/2$, which is the same as $L^{\infty}_{x} \dot{H}^{1}_{x}$ (i.e., the energy). For p-normalized version of $\dot{S}^{1}$, we will use a set of notations slightly deviating from the rest in order to keep consistency with the intuition that $\nrm{\phi}_{\dot{S}^{1}}$ is at the level of $L^{\infty}_{x} \dot{H}^{1}_{x}$. Indeed, for $m, k \geq 1$ and $m$ an integer, we will write
\begin{equation*} 
	\nrm{\phi}_{\dot{\calS}^{k}} := s^{(k-1)/2 - 1/4} \nrm{\rd_{x}^{(k-1)} \phi}_{\dot{S}^{1}_{x}}, \quad
	\nrm{\phi}_{\widehat{\calS}^{m}} := \sum_{k=1}^{m} \nrm{\phi}_{\dot{\calS}^{k}}.
\end{equation*}
\end{enumerate}
\end{example}

For $f = f(s)$ a measurable function defined on an $s$-interval $J \subset (0, \infty)$, we define its p-normalized Lebesgue norm $\nrm{f}_{\calL^{p}_{s}(J)}$ by $\nrm{f}_{\calL^{p}_{s}(J)}^{p} := \int_{J} \abs{f(s)}^{p} \frac{\ud s}{s}$ for $1 \leq p < \infty$, and $\nrm{f}_{\calL^{\infty}_{s}(J)} := \nrm{f}_{L^{\infty}_{s}(J)}$. Given $\ell \geq 0$, we will define the weighted norm $\nrm{f}_{\calL^{\ell, p}_{s}}(J)$ by
\begin{equation*} 
	\nrm{f}_{\calL^{\ell,p}_{s} (J)} := \nrm{s^{\ell} f(s)}_{\calL^{p}_{s} (J)}.
\end{equation*}

Let us consider the case $J = (0, s_{0})$ or $J = (0, s_{0}]$ for some $s_{0} > 0$. For $\ell > 0$ and $1 \leq p \leq \infty$, note the obvious computation $\nrm{s^{\ell}}_{\calL^{p}_{s}(0, s_{0})} = C_{\ell, p} s_{0}^{\ell}$. Combining this with the H\"older inequality
\begin{equation*} 
	\nrm{f g}_{\calL^{p}_{s}} \leq \nrm{f}_{\calL^{p_{1}}_{s}} \nrm{g}_{\calL^{p_{2}}_{s}} \hbox{ for $\frac{1}{p} = \frac{1}{p_{1}} + \frac{1}{p_{2}}$},
\end{equation*}
we obtain the following simple lemma.

\begin{lemma} [H\"older for $\calL^{\ell,p}_{s}$] \label{lem:absP:Holder4Ls}
Let $\ell, \ell_{1}, \ell_{2} \geq 0$, $1 \leq p, p_{1}, p_{2} \leq \infty$ and $f, g$ functions on $J = (0,s_{0})$ (or $J = (0, s_{0}]$) such that $\nrm{f}_{\calL^{\ell_{1}, p_{1}}_{s}}, \nrm{g}_{\calL^{\ell_{2}, p_{2}}_{s}} < \infty$. Then we have
\begin{equation*} 
	\nrm{fg}_{\calL^{\ell,p}_{s}(J)} \leq C s_{0}^{\ell-\ell_{1}-\ell_{2}} \nrm{f}_{\calL^{\ell_{1},p_{1}}_{s}(J)} \nrm{g}_{\calL^{\ell_{2},p_{2}}_{s}(J)}
\end{equation*}
provided that either $\ell = \ell_{1} + \ell_{2}$ and $\frac{1}{p} = \frac{1}{p_{1}} + \frac{1}{p_{2}}$, or $\ell > \ell_{1} + \ell_{2}$ and $\frac{1}{p} \geq \frac{1}{p_{1}} + \frac{1}{p_{2}}$. In the former case, $C = 1$, while in the latter case, $C$ depends on $\ell-\ell_{1}-\ell_{2}$ and $\frac{1}{p} - \frac{1}{p_{1}} - \frac{1}{p_{2}}$.
\end{lemma}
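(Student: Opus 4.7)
The plan is to reduce everything to the ordinary H\"older inequality for the (unweighted) $\calL^p_s$ norm, recalled just above the statement, by peeling off the $s$-weights by hand. In the sharp case ($\ell = \ell_1 + \ell_2$ and $1/p = 1/p_1 + 1/p_2$), I would simply rewrite $s^\ell fg = (s^{\ell_1} f)(s^{\ell_2} g)$ and apply H\"older with exponents $(p_1, p_2)$; this gives the claim with $C = 1$ and no power of $s_0$.

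In the lossy case, introduce the auxiliary exponents $\alpha := \ell - \ell_1 - \ell_2 > 0$ and $p_3 \in [1, \infty]$ defined by $1/p_3 := 1/p - 1/p_1 - 1/p_2 \geq 0$ (interpreting $p_3 = \infty$ when this quantity is zero). Factor $s^\ell fg = s^\alpha \cdot (s^{\ell_1} f) \cdot (s^{\ell_2} g)$ and apply three-factor H\"older with exponents $(p_3, p_1, p_2)$ to obtain
\[
\nrm{fg}_{\calL^{\ell,p}_s(J)} \leq \nrm{s^\alpha}_{\calL^{p_3}_s(J)} \, \nrm{f}_{\calL^{\ell_1, p_1}_s(J)} \, \nrm{g}_{\calL^{\ell_2, p_2}_s(J)}.
\]
The only remaining computation is the value of $\nrm{s^\alpha}_{\calL^{p_3}_s(0,s_0)}$: direct integration of $\int_0^{s_0} s^{\alpha p_3 - 1}\, ds$ yields $(\alpha p_3)^{-1/p_3} s_0^\alpha$ for $p_3 < \infty$, and the supremum $s_0^\alpha$ for $p_3 = \infty$. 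In either subcase the result is of the form $C s_0^{\ell - \ell_1 - \ell_2}$, with $C$ depending only on $\alpha$ and $1/p_3$, i.e.\ on $\ell - \ell_1 - \ell_2$ and $1/p - 1/p_1 - 1/p_2$, as required.

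There is no serious obstacle; the lemma is essentially a bookkeeping exercise in parabolic scaling. The one point worth emphasizing is why strict inequality $\ell > \ell_1 + \ell_2$ is imposed in the lossy case: it is precisely what makes $s^{\alpha p_3 - 1}$ integrable at $s = 0$, so that the weight norm $\nrm{s^\alpha}_{\calL^{p_3}_s(0,s_0)}$ is finite and the combined slack in $\ell$ and in $1/p$ can be absorbed into the single factor $s_0^{\ell - \ell_1 - \ell_2}$.
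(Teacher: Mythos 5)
Your proof is correct and is precisely the argument the paper has in mind: it presents the lemma as an immediate consequence of the computation $\nrm{s^{\ell}}_{\calL^{p}_{s}(0,s_0)} = C_{\ell,p}\, s_0^{\ell}$ for $\ell>0$ and the unweighted H\"older inequality for $\calL^{p}_{s}$, which is exactly the three-factor decomposition $s^{\ell}fg = s^{\ell-\ell_1-\ell_2}(s^{\ell_1}f)(s^{\ell_2}g)$ you use. Your closing observation about why strict positivity of $\ell-\ell_1-\ell_2$ is needed when $p_3 < \infty$ (to make $s^{\alpha p_3 - 1}$ integrable at $s=0$) is a correct and worthwhile clarification.
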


We will often used the mixed norm $\nrm{\psi}_{\calL^{\ell, p}_{s} \calX (J)} := \nrm{\nrm{\psi(s)}_{\calX(s)}}_{\calL^{\ell,p}_{s}(J)}$ for $\psi = \psi(x, s)$ such that $s \to \nrm{\psi(s)}_{\calX(s)}$ is measurable. The norms $\calL^{\ell,p}_{s} \dot{\calX}^{k} (J)$ and $\calL^{\ell,p}_{s} \calX^{k} (J)$ are defined analogously.

\subsection{Abstract parabolic theory} \label{subsec:prelim:absPth}
Let $J \subset (0,\infty)$ be an $s$-interval. Given a homogeneous norm $X$ and $k \geq 1$ an integer, let us define the (semi-)norm $\calP^{\ell} \dot{\calX}^{k}(J)$ for a smooth function $\psi$ by
\begin{equation*} 
	\nrm{\psi}_{\calP^{\ell} \dot{\calX}^{k}(J)} := \nrm{\psi}_{\calL^{\ell, \infty}_{s} \dot{\calX}^{k-1}(J)} + \nrm{\psi}_{\calL^{\ell, 2}_{s} \dot{\calX}^{k}(J)}.
\end{equation*}

For $m_{0} < m_{1}$, we will also define the (semi-)norm $\calP^{\ell} \calX^{m_{1}}_{m_{0}} (J)$ by
\begin{equation*} 
	\nrm{\psi}_{\calP^{\ell} \calX_{m_{0}}^{m_{1}} (J)} := \sum_{k=m_{0}+1}^{m_{1}} \nrm{\psi}_{\calP^{\ell} \dot{\calX}^{k}(J)}.
\end{equation*}

We will omit $m_{0}$ when $m_{0} = 0$, i.e., $\calX^{m} := \calX^{m}_{0}$.

We remark that despite the notation $\calP^{\ell} \dot{\calX}^{k}$, this norms controls both the $\dot{\calX}^{k-1}$ as well as the $\dot{\calX}^{k}$ norm of $\psi$.  Note furthermore that $\nrm{\psi}_{\calP^{\ell} \calX_{m_{0}}^{m_{1}}}$ controls the derivatives of $\psi$ of order from $m_{0}$ to $m_{1}$.

\begin{definition} \label{def:absP:pEst}
Let $X$ be a homogeneous norm of degree $2 \ell_{0}$. We say that $X$ satisfies the \emph{parabolic energy estimate} if there exists $C_{X} > 0$ such that for all $\ell \in \bbR$, $[s_{1}, s_{2}] \subset (0, \infty)$ and $\psi$ smooth\footnote{The assumption of smoothness is here only for convenience; we remark that it is not essential in the sense that, by an approximation argument, both \eqref{eq:absP:pEst:1} and \eqref{eq:absP:pEst:2} may be extended to functions $\psi$ which are not smooth.} and satisfying $\nrm{\psi}_{\calP^{\ell} \dot{\calX}^{1}(s_{1}, s_{2}]} < \infty$, the following estimate holds.
\begin{equation} \label{eq:absP:pEst:1}
\begin{aligned}
	\nrm{\psi}_{\calP^{\ell} \dot{\calX}^{1} (s_{1}, s_{2}]} 
	\leq &C_{X} s_{1}^{\ell} \nrm{\psi(s_{1})}_{\calX(s_{1})} + C_{X} (\ell - \ell_{0}) \nrm{\psi}_{\calL^{\ell,2}_{s} \calX (s_{1}, s_{2}]} \\
	& + C_{X} \nrm{(\rd_{s} - \lap) \psi}_{\calL^{\ell + 1, 1}_{s} \calX(s_{1}, s_{2}]}.
\end{aligned}
\end{equation}

The norm $X$ satisfies the \emph{parabolic smoothing estimate} if there exists $C_{X} > 0$ such that for all $\ell \in \bbR $, $[s_{1}, s_{2}] \subset (0, \infty)$ and $\psi$ smooth and satisfying $\nrm{\psi}_{\calP^{\ell} \calX^{2} (s_{1}, s_{2}]} < \infty$ , the following estimate holds:
\begin{equation} \label{eq:absP:pEst:2}
\begin{aligned}
	\nrm{\psi}_{\calP^{\ell} \dot{\calX}^{2} (s_{1}, s_{2}]}
	 \leq & C_{X} s_{1}^{\ell} \nrm{\psi(s_{1})}_{\dot{\calX}^{1}(s_{1})} + C_{X} (\ell +1/2 - \ell_{0}) \nrm{\psi}_{\calL^{\ell,2}_{s} \dot{\calX}^{1}(s_{1}, s_{2}]} \\
	& + C_{X} \nrm{(\rd_{s} - \lap) \psi}_{\calL^{\ell + 1, 2}_{s} \calX (s_{1}, s_{2}]}.
\end{aligned}
\end{equation}
\end{definition}

%Note that the second term on the right hand sides of \eqref{eq:absP:pEst:1} or \eqref{eq:absP:pEst:2} vanishes if $\ell = 1/4$ or $\ell=3/4$, respectively. This definition is modeled after precisely those estimates that can be proved by an integration by parts. See Lemma \ref{lem:absP:EII}

For the purpose of application, we will consider vector-valued solutions $\psi$ to an inhomogeneous heat equation. The norms $X$, $\calX$, $\calP^{\ell} \calX$, etc. of a vector-valued function $\psi$ are defined in the obvious manner of taking the supremum of the respective norm of all components of $\psi$. 

\begin{theorem}[Abstract parabolic theory] \label{thm:absP:absPth}
Let $X$ be a homogeneous norm of degree $2\ell_{0}$, and $\psi$ be a vector-valued smooth solution to $\rd_{s} \psi - \lap \psi = \calN$ on $[0, s_{0}]$. The function $\psi$ is defined on $\bbR^{3} \times [0, s_{0}]$ or $I \times \bbR^{3} \times [0, s_{0}]$ depending on whether $X$ is for functions on the space or the space-time, respectively. 
\begin{enumerate}
\item Let $X$ satisfy the parabolic energy and smoothing estimates \eqref{eq:absP:pEst:1}, \eqref{eq:absP:pEst:2}, and $\psi$ satisfy $\nrm{\psi}_{\calP^{\ell_{0}} \calX^{2}(0, s_{0}]} < \infty$. Let $1 \leq p < \infty$, $\eps > 0$, $D > 0$ and $C(s)$ a function defined on $(0, s_{0}]$ which satisfies
\begin{equation*}
\int_{0}^{s_{0}} C(s)^{p} \frac{\ud s}{s} < \infty
\end{equation*}
for some $1 \leq p < \infty$, and
\begin{equation} \label{eq:absP:apriori:1}
	\nrm{\calN}_{\calL^{\ell_{0}+1, 1}_{s} \calX (0,\subr]} + \nrm{\calN}_{\calL^{\ell_{0}+1, 2}_{s} \calX (0,\subr]} 
	\leq \nrm{C(s) \psi}_{\calL^{\ell_{0}, p}_{s} \calX^{1}(0,\subr]} + \eps \nrm{\psi}_{\calP^{\ell_{0}} \calX^{2}(0, s_{0}]} + D,
\end{equation}
for every $\subr \in (0, s_{0}]$.

Then there exists a constant $\dlt_{A} = \dlt_{A}(C_{X}, \int_{0}^{s_{0}} C(s)^{p} \, \frac{\ud s}{s}, p) > 0$ such that if 
$0 < \eps < \dlt_{A}$,
then the following a priori estimate holds.
\begin{equation} \label{eq:absP:apriori:2}
	\nrm{\psi}_{\calP^{\ell_{0}} \calX^{2} (0, s_{0}]} \leq C e^{C \int_{0}^{s_{0}} C(s)^{p} \, \frac{\ud s}{s}}(\nrm{\psi(s=0)}_{X} + D),
\end{equation}
where $C$ depends only on $C_{X}$ and $p$.

\item Suppose that $X$ satisfies the the parabolic smoothing estimate \eqref{eq:absP:pEst:2}, and that for some $\ell \geq \ell_{0} - 1/2$ and $0 \leq m_{0} \leq m_{1}$ (where $m_{0}$, $m_{1}$ are integers) we have $\nrm{\psi}_{\calP^{\ell} \calX^{m_{1}+2}_{m_{0}}(0, s_{0}]} < \infty$. Suppose furthermore that for $m_{0} \leq m \leq m_{1}$, there exists $\eps > 0$ and a non-negative non-decreasing function $\calB_{m}(\cdot)$ such that
\begin{equation} \label{eq:absP:smth:1}
	\nrm{\calN}_{\calL^{\ell+1, 2}_{s} \dot{\calX}^{m} (0,s_{0}]} 
	\leq \eps \nrm{\psi}_{\calP^{\ell+1} \dot{\calX}^{m+2}(0, s_{0}]} + \calB_{m}(\nrm{\psi}_{\calP^{\ell} \calX_{m_{0}}^{m+1}(0, s_{0}]}).
\end{equation}

Then for $0 < \eps < 1/(2C_{X})$, the following \emph{smoothing estimate} holds:
\begin{equation} \label{eq:absP:smth:2}
	\nrm{\psi}_{\calP^{\ell} \calX_{m_{0}}^{m_{1}+2}(0, s_{0}]} \leq C
\end{equation}
where $C$ is determined from $C_{X}$, $\calB_{m_{0}}, \ldots, \calB_{m_{1}}$ and $\nrm{\psi}_{\calP^{\ell} \dot{\calX}^{m_{0}+1} (0, s_{0}]}$. 

More precisely, consider the non-decreasing function $\widetilde{\calB_{m}}(r) := (2C_{X}(\ell-\ell_{0}+1/2) + 1) r + 2 C_{X} \calB_{m}(r)$. Then $C$ in \eqref{eq:absP:smth:2} is given by the composition
\begin{equation} \label{eq:absP:smth:3}
	C = \widetilde{\calB}_{m_{1}} \circ \widetilde{\calB}_{m_{1}-1}\circ \cdots \circ \widetilde{\calB}_{m_{0}}(\nrm{\psi}_{\calP^{\ell} \dot{\calX}^{m_{0}+1}(0, s_{0}]}).
\end{equation}
\end{enumerate}
\end{theorem}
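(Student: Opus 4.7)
For Part (1), the strategy is to chain the parabolic energy and smoothing estimates at the critical weight $\ell = \ell_0$ and close via Gronwall. At this specific weight, the $C_X(\ell-\ell_0)$ coefficient in \eqref{eq:absP:pEst:1} vanishes, yielding after $s_1 \to 0$ (justified by $s_1^{\ell_0}\nrm{\psi(s_1)}_{\calX(s_1)} = \nrm{\psi(s_1)}_X \to \nrm{\psi(0)}_X$ and $s_1^{\ell_0}\nrm{\psi(s_1)}_{\dot{\calX}^1(s_1)} = s_1^{1/2}\nrm{\rd_x\psi(s_1)}_X \to 0$)
$$\nrm{\psi}_{\calP^{\ell_0}\dot{\calX}^1(0, \bar s]} \leq C_X\nrm{\psi(0)}_X + C_X\nrm{\calN}_{\calL^{\ell_0+1,1}_s\calX(0, \bar s]}.$$
Applying \eqref{eq:absP:pEst:2} at $\ell = \ell_0$ similarly bounds $\nrm{\psi}_{\calP^{\ell_0}\dot{\calX}^2(0, \bar s]}$; the middle term $\tfrac{C_X}{2}\nrm{\psi}_{\calL^{\ell_0, 2}_s\dot{\calX}^1(0, \bar s]}$ is then controlled by the previous display, avoiding any absorption issue with a potentially large constant. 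Adding these two estimates bounds $f(\bar s) := \nrm{\psi}_{\calP^{\ell_0}\calX^2(0, \bar s]}$ by $C\nrm{\psi(0)}_X$ plus the two nonlinearity norms.

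Substituting \eqref{eq:absP:apriori:1} and using the pointwise bound $s^{\ell_0}\nrm{\psi(s)}_{\calX^1(s)} \leq f(s)$ to rewrite $\nrm{C(s)\psi}_{\calL^{\ell_0, p}_s\calX^1(0, \bar s]}^p \leq \int_0^{\bar s} C(s)^p f(s)^p \tfrac{ds}{s}$ leads to
$$ f(\bar s) \leq C\bigl(\nrm{\psi(0)}_X + D + \eps f(s_0)\bigr) + C\bigl(\int_0^{\bar s} C(s)^p f(s)^p \tfrac{ds}{s}\bigr)^{1/p}. $$
Raising to the $p$-th power (via $(a+b)^p \leq 2^{p-1}(a^p + b^p)$) converts this into a standard integral Gronwall inequality for $F := f^p$, which Lemma \ref{lem:prelim:Gronwall} closes into the exponential bound of \eqref{eq:absP:apriori:2}. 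Choosing $\dlt_A$ small enough ensures the $\eps f(s_0)$ term is absorbable after Gronwall. The main obstacle for Part (1) is this $L^p_s$-to-$L^1_s$ conversion.

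For Part (2), I induct on $m$ from $m_0$ to $m_1$; the base case is the given finiteness of $\nrm{\psi}_{\calP^{\ell}\dot{\calX}^{m_0+1}(0, s_0]}$. At each step, \eqref{eq:absP:pEst:2} is applied to $\rd_x^{(m)}\psi$ via the scaling identity $\nrm{\rd_x^{(m)}\psi}_{\calL^{\ell+m/2, p}_s\dot{\calX}^j} = \nrm{\psi}_{\calL^{\ell, p}_s\dot{\calX}^{m+j}}$, producing an estimate on $\calP^{\ell}\dot{\calX}^{m+2}$. The boundary term $s_1^{\ell}\nrm{\psi(s_1)}_{\dot{\calX}^{m+1}(s_1)}$ is handled by exploiting the finiteness of $\nrm{\psi}_{\calL^{\ell, 2}_s\dot{\calX}^{m+1}(0, s_0]}$ together with the divergence of $\int_0^{s_0} \tfrac{ds}{s}$: this forces the existence of a sequence $s_n \to 0$ along which the boundary term vanishes.

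After passing to the limit, substituting hypothesis \eqref{eq:absP:smth:1} and using the embedding $\nrm{\psi}_{\calP^{\ell+1}\dot{\calX}^{m+2}(0, s_0]} \leq s_0\nrm{\psi}_{\calP^{\ell}\dot{\calX}^{m+2}(0, s_0]}$ absorbs the $\eps$-term onto the left whenever $\eps < 1/(2C_X)$ (in the regime $s_0 \leq 1$ that is relevant in the paper). Since $\nrm{\psi}_{\calL^{\ell, 2}_s\dot{\calX}^{m+1}}$ is controlled by the inductive hypothesis on $\calP^{\ell}\calX_{m_0}^{m+1}$, this produces $\nrm{\psi}_{\calP^{\ell}\dot{\calX}^{m+2}(0, s_0]} \leq \widetilde{\calB}_m\bigl(\nrm{\psi}_{\calP^{\ell}\calX_{m_0}^{m+1}(0, s_0]}\bigr)$, and iterating from $m = m_0$ up to $m_1$ gives the composition form \eqref{eq:absP:smth:3}. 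The main subtlety here is the rigorous execution of the $s_1 \to 0$ limit through the subsequential argument.
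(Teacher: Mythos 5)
Your proof follows essentially the same route as the paper's: for Part (1), applying the energy and smoothing estimates at the critical weight $\ell_0$, exploiting the vanishing coefficient, and closing with an $L^p_s$ Gronwall argument; for Part (2), iterating the smoothing estimate order by order, with the boundary term killed by the $\liminf$/integrability argument and the $\eps$-term absorbed. One small point you make explicit that the paper glosses over is that passing from the $\calP^{\ell+1}\dot{\calX}^{m+2}$ norm in \eqref{eq:absP:smth:1} to the $\calP^\ell\dot{\calX}^{m+2}$ norm on the left requires $s_0 \leq 1$ (or else a factor of $s_0$ that would mildly change the absorption threshold), which is indeed the regime in all applications.
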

\begin{proof} 
\pfstep{Step 1: Proof of (1)}
Without loss of generality, assume that $C_{X} \geq 2$. Thanks to the hypothesis on $\psi$ and \eqref{eq:absP:apriori:1}, we can apply the parabolic energy estimate \eqref{eq:absP:pEst:1} to obtain
\begin{equation} \label{eq:absP:apriori:pf:1}
\begin{aligned}
	\nrm{\psi}_{\calP^{\ell_{0}} \dot{\calX}^{1}(0, \subr]} 
	\leq & C_{X} \nrm{\psi(0)}_{X}
	 + C_{X} (\nrm{C(s)\psi}_{\calL^{\ell_{0},p}_{s} \calX^{1}(0,\subr]} + \eps \nrm{\psi}_{\calP^{\ell_{0}} \calX^{2}(0, s_{0}]} + D).
\end{aligned}
\end{equation}
where we have used the fact that $\liminf_{s_{1} \to 0} s_{1}^{\ell_{0}} \nrm{\psi(s_{1})}_{\calX(s_{1})} = \nrm{\psi(0)}_{X}$. Using again the hypothesis on $\psi$ and \eqref{eq:absP:apriori:1}, we can apply the parabolic smoothing estimate \eqref{eq:absP:pEst:2} and get
\begin{equation} \label{eq:absP:apriori:pf:2}
\begin{aligned}
	\nrm{\psi}_{\calP^{\ell_{0}} \dot{\calX}^{2}(0, \subr]} 
	\leq & \frac{C_{X}}{2} \nrm{\psi}_{\calL^{\ell_{0},2}_{s} \dot{\calX}^{1}(0, \subr]}
	 + C_{X} (\nrm{C(s) \psi}_{\calL^{\ell_{0},p}_{s} \calX^{1}(0,\subr]} + \eps \nrm{\psi}_{\calP^{\ell_{0}} \calX^{2}(0, s_{0}]} + D),
\end{aligned}
\end{equation}
where we used $\liminf_{s_{1} \to 0} s_{1}^{\ell_{0}} \nrm{\psi(s_{1})}_{\dot{\calX}^{1}(s_{1})} = 0$, which holds as $\nrm{\psi}_{\calL^{\ell_{0}, 2}_{s} \dot{\calX}^{1}} < \infty$. Using \eqref{eq:absP:apriori:pf:1} to bound the first term on the right-hand of \eqref{eq:absP:apriori:pf:2}, we arrive at
\begin{equation*}
	\nrm{\psi}_{\calP^{\ell_{0}} \calX^{2}(0, \subr]} 
	\leq C_{X}^{2} \nrm{\psi(0)}_{X} + C_{X}(1+C_{X}) (\nrm{C(s)\psi}_{\calL^{\ell_{0},p}_{s} \calX^{1}(0,\subr]} + \eps \nrm{\psi}_{\calP^{\ell_{0}} \calX^{2}(0, s_{0}]} + D),
\end{equation*}
for every $0 < \subr \leq s_{0}$. 

We will apply Gronwall's inequality to deal with the term involving $C(s) \psi$. For convenience, let us make the definition
\begin{equation*}
	D' = C_{X}^{2} \nrm{\psi(0)}_{X} + C_{X}(1+C_{X}) (\eps \nrm{\psi}_{\calP^{\ell_{0}} \calX^{2}(0, s_{0}]} + D).
\end{equation*}

Recalling the definition of $\nrm{\psi}_{\calP^{\ell_{0}} \calX^{2}(0, \subr]}$ and unravelling the definition of $\calL^{\ell_{0}, p}_{s} \calX^{1}$, we see in particular that
\begin{equation*}
	\sup_{0 < s \leq \subr} s^{\ell_{0}} \nrm{\psi(s)}_{\calX^{1}} 
	\leq C_{X} (1+C_{X}) \bb( \int_{0}^{\subr} C(s)^{p} (s^{\ell_{0}} \nrm{\psi(s)}_{\calX^{1}})^{p} \frac{\ud s}{s} \bb)^{1/p} + D',
\end{equation*}
for every $0 < \subr \leq s_{0}$. Taking the $p$-th power, using Gronwall's inequality and then taking the $p$-th root back, we arrive at
\begin{equation*}
	\sup_{0 < s \leq \subr} s^{\ell_{0}} \nrm{\psi(s)}_{\calX^{1}} \leq 2^{1/p} D' \exp \bb( \frac{2 C_{X}^{p} (1+C_{X})^{p}}{p} \int_{0}^{\subr} C(s)^{p} \frac{\ud s}{s} \bb) .
\end{equation*}

Iterating this bound into $\nrm{C(s) \psi}_{\calL^{\ell_{0}, p}_{s} \calX^{1}(0, s_{0}]}$ and expanding $D'$ out, we obtain
\begin{equation*}
	\nrm{\psi}_{\calP^{\ell_{0}} \calX^{2}(0, s_{0}]} \leq C_{0} \bb( C_{X}^{2} \nrm{\psi(0)}_{X} + C_{X}(1+C_{X}) (\eps \nrm{\psi}_{\calP^{\ell_{0}} \calX^{2}(0, s_{0}]} + D) \bb).
\end{equation*}
where $C_{0} = \exp \bb( \frac{2 C_{X}^{p} (1+C_{X})^{p}}{p} \int_{0}^{s_{0}} C(s)^{p} \frac{\ud s}{s} \bb)$.

Let us define $\dlt_{A} := (2 C_{0} C_{X} (1+C_{X}))^{-1}$. Then from the hypothesis $0 < \eps < \dlt_{A}$, we can absorb the term $C_{0} C_{X}(1+C_{X}) \eps \nrm{\psi}_{\calP^{\ell_{0}} \calX^{2}(0, s_{0}]}$ into the left-hand side. The desired estimate \eqref{eq:absP:apriori:2} follows.

\pfstep{Step 2: Proof of (2)} 
In this step, we will always work on the whole $s$-interval $(0, s_{0}]$. 

We claim that under the assumptions of (2), the following inequality holds for $m_{0} \leq m \leq m_{1}$:
\begin{equation} \label{eq:absP:smth:pf:1}
	\nrm{\psi}_{\calP^{\ell} \calX^{m+2}_{m_{0}}} \leq \widetilde{\calB}_{m}(\nrm{\psi}_{\calP^{\ell} \calX^{m+1}_{m_{0}}}).
\end{equation}

Assuming the claim, we can start from $\nrm{\psi}_{\calP^{\ell} \calX^{m_{0}+1}_{m_{0}}} = \nrm{\psi}_{\calP^{\ell} \dot{\calX}^{m_{0}+1}}$ and iterate \eqref{eq:absP:smth:pf:1} for $m= m_{0}, m_{0} + 1, \ldots, m_{1}$ (using the fact that each $\widetilde{\calB}_{m}$ is non-decreasing) to conclude the proof.

To prove the claim, we use the hypothesis on $\psi$ and \eqref{eq:absP:smth:1} to apply the parabolic smoothing estimate \eqref{eq:absP:pEst:2}, which gives
\begin{equation*}
	\nrm{\psi}_{\calP^{\ell} \dot{\calX}^{m+2}} \leq C_{X} (\ell - \ell_{0} +1/2) \nrm{\psi}_{\calL^{\ell, 2}_{s} \dot{\calX}^{m+1}}+ C_{X} (\eps \nrm{\psi}_{\calP^{\ell} \dot{\calX}^{m+2}} + \calB_{m}(\nrm{\psi}_{\calP^{\ell} \calX^{m+1}_{m_{0}}})),
\end{equation*}
where we have used $\liminf_{s_{1} \to 0} s_{1}^{\ell} \nrm{\psi(s_{1})}_{\dot{\calX}^{m+1}(s_{1})} = 0$, which holds as $\nrm{\psi}_{\calL^{\ell, 2}_{s} \dot{\calX}^{m+1}} < \infty$. 

Using the smallness of $\eps >0$, we can absorb $C_{X} \eps \nrm{\psi}_{\calP^{\ell} \dot{\calX}^{m+2}}$ into the left-hand side. Then adding $\nrm{\psi}_{\calL^{\ell,2}_{s} \calX^{m+1}_{m_{0}}}$ to both sides, we easily obtain
\begin{equation*}
	\nrm{\psi}_{\calP^{\ell} \calX^{m+2}_{m_{0}}} \leq (2 C_{X} (\ell-\ell_{0} +1/2) +1) \nrm{\psi}_{\calP^{\ell} \calX^{m+1}_{m_{0}}} + 2 C_{X} \calB_{m}(\nrm{\psi}_{\calP^{\ell} \calX^{m+1}_{m_{0}}}).
\end{equation*}

Recalling the definition of $\widetilde{\calB}_{m}$, this is exactly \eqref{eq:absP:smth:pf:1}. \qedhere
\end{proof}

The following proposition allows us to use Theorem \ref{thm:absP:absPth} in the situations of interest in our work.

\begin{proposition} \label{prop:absP:application}
The following statements hold.
\begin{enumerate}
\item Let $\psi \in C^{\infty}_{s}(J, H^{\infty}_{x})$ (resp. $\psi \in C^{\infty}_{t,x}(I  \times J, H^{\infty}_{x})$), where $J$ is a finite interval. Then for $X = L^{2}_{x}$ (resp. $X = L^{2}_{t,x}$ or $\dot{S}^{1}$), we have
\begin{equation} \label{eq:absP:application:1}
	\nrm{\psi}_{\calL^{\ell,p}_{s} \dot{\calX}^{k}(J)} < \infty
\end{equation}
if either $1 \leq p \leq \infty$ and $\ell - \ell_{0} + k/2 > 0$, or $p=\infty$ and $\ell-\ell_{0} + k/2 = 0$.

\item Furthermore, the norms $L^{2}_{x}$, $L^{2}_{t,x}$ and $\dot{S}^{1}$ satisfy the parabolic energy and smoothing estimates \eqref{eq:absP:pEst:1}, \eqref{eq:absP:pEst:2}. 
\end{enumerate}
\end{proposition}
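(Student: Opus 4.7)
Part (1) follows from directly unravelling the definition. Since $\psi$ is regular and $J \subset (0, \infty)$ is bounded, $\nrm{\psi(s)}_{\dot{X}^k}$ is a smooth, bounded function of $s$ on the closure of $J$ for each relevant $k$ (true for $X$ any of $L^2_x$, $L^2_{t,x}$, $\dot{S}^1$). Expanding gives
\begin{equation*}
\nrm{\psi}_{\calL^{\ell, p}_s \dot{\calX}^k(J)}^p = \int_J s^{p(\ell - \ell_0 + k/2) - 1} \nrm{\psi(s)}_{\dot{X}^k}^p \, ds
\end{equation*}
for $p < \infty$, and $\sup_{s \in J} s^{\ell - \ell_0 + k/2} \nrm{\psi(s)}_{\dot{X}^k}$ for $p = \infty$. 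Since $J$ is bounded, the only potential issue is the behavior of the weight near $s = 0$ (when $0 \in \overline{J}$), which is resolved precisely by the stated conditions on $\ell - \ell_0 + k/2$.

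For Part (2), the case $X = L^2_x$ is the classical weighted energy estimate for the heat equation. For the parabolic energy estimate, multiply $\rd_s \psi - \lap \psi = \calN$ by $s^{2\ell - 3/2} \psi$ (matching $\ell_0 = 3/4$ for $L^2_x$) and integrate over $\bbR^3 \times [s_1, s']$ for arbitrary $s' \in (s_1, s_2]$. The $\rd_s$ term yields the boundary contributions at $s_1$ and $s'$ together with an ``error'' term $-(\ell - 3/4)\int s^{2\ell - 5/2} \nrm{\psi}_{L^2_x}^2 \, ds$ coming from the $s$-derivative of the weight; integration by parts in $x$ produces $\int s^{2\ell - 3/2} \nrm{\nb \psi}_{L^2_x}^2 \, ds$; the source is handled by Cauchy--Schwarz and AM--GM, with a small fraction of $\nrm{\psi}_{\calL^{\ell, \infty}_s \calL^2_x}^2$ absorbed into the LHS. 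Taking the supremum in $s'$ and rewriting everything in p-normalized notation yields the energy estimate, with $\ell - 3/4 = \ell - \ell_0$ being precisely the error coefficient. The smoothing estimate is obtained by the parallel computation with multiplier $-s^{2\ell - 1/2} \lap \psi$: integration by parts in $x$ now places $\nrm{\lap \psi}_{L^2_x}^2$ (controlling $\dot{\calH}^2_x$) on the LHS, while the $s$-derivative of the weight produces $\nrm{\nb \psi}_{L^2_x}^2$ with coefficient $\ell + 1/2 - \ell_0$ on the RHS.

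For $X = L^2_{t,x}$, the heat operator does not involve $t$, so integrating the $L^2_x$ weighted energy identity additionally over $t \in I$ by Fubini yields both parabolic estimates with identical constants. For $X = \dot{S}^1$, we exploit the commutativity $[\rd_s - \lap, \rd_{t,x}] = [\rd_s - \lap, \Box] = 0$: if $\psi$ solves the heat equation with source $\calN$, then $\rd_{t,x} \psi$ and $\Box \psi$ solve it with sources $\rd_{t,x} \calN$ and $\Box \calN$, respectively. Applying the $L^2_x$ estimate to $\rd_{t,x} \psi$ pointwise in $t$ and then taking $L^\infty_t$, together with the $L^2_{t,x}$ estimate applied to $\abs{I}^{1/2} \Box \psi$, and summing, reproduces the two components of $\nrm{\cdot}_{\dot{S}^1}$ on the LHS. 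The interchange of $L^\infty_t$ with $L^\infty_s$ and $L^2_s$ operations is justified by Minkowski's inequality in the appropriate direction. The main technical step throughout is the careful bookkeeping of $s$-weights and identification of the $(\ell - \ell_0)$ coefficient as the one arising from the $s$-derivative of the multiplier weight; this is essentially the only obstacle, since the whole proposition reduces to standard weighted energy estimates for the linear scalar heat equation.
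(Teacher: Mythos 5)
Your proof of Part (1), and of Part (2) for the norms $L^2_x$ and $L^2_{t,x}$, takes essentially the same route as the paper: unravel the definition to reduce Part (1) to the integrability of $s^{p(\ell-\ell_0+k/2)-1}$ near $0$; and for Part (2), run the weighted $L^2$ energy identity with multiplier $s^{2(\ell-\ell_0)}\overline{\psi}$ (resp.\ $-s^{2(\ell-\ell_0)+1}\lap\overline{\psi}$), identifying $(\ell-\ell_0)$ (resp.\ $(\ell-\ell_0+1/2)$) as the coefficient produced by differentiating the weight, then integrating additionally in $t$ for $L^2_{t,x}$. This matches the paper's computation, and your multipliers $s^{2\ell-3/2}$ and $s^{2\ell-1/2}$ are exactly the paper's once one plugs in $\ell_0 = 3/4$.

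The $\dot{S}^1$ case, however, contains a gap. Your plan is to apply the $L^2_x$ estimate to $\rd_{t,x}\psi$ pointwise in $t$, then take $L^\infty_t$, and ``justify the interchange of $L^\infty_t$ with $L^\infty_s$ and $L^2_s$ by Minkowski.'' But Minkowski's inequality goes the wrong way for the $L^2_s$ piece. The left-hand side of the parabolic estimates contains the norm $\calL^{\ell,2}_s\dot{\calX}^{k}$, and for $X=\dot{S}^1$ this carries a term of the form $\nrm{s^{\ell+1/4}\rd_{x}\rd_{t,x}\psi}_{L^2_s L^\infty_t L^2_x}$ (the $L^\infty_t$ is \emph{inside} the $L^2_s$). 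Applying the $L^2_x$ estimate pointwise in $t$ and taking $\sup_t$ afterwards controls only $\nrm{s^{\ell+1/4}\rd_{x}\rd_{t,x}\psi}_{L^\infty_t L^2_s L^2_x}$, and Minkowski gives $\nrm{\cdot}_{L^\infty_t L^2_s} \leq \nrm{\cdot}_{L^2_s L^\infty_t}$ --- that is, you bound the \emph{smaller} quantity, not the one you need. So the conclusion you want does not follow from the quantity your argument produces.

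The paper sidesteps this by first proving the equivalence
\begin{equation*}
	\nrm{\psi}_{\calL^{\ell,p}_{s} \dot{\calS}^{1}} \aeq \nrm{s^{1/2} \rd_{t,x} \psi(t=0)}_{\calL^{\ell, p}_{s} \calL^{2}_{x}} + \abs{I}^{1/2} \nrm{s^{3/4} \Box \psi}_{\calL^{\ell, p}_{s} \calL^{2}_{t,x}},
\end{equation*}
valid for all $\ell\geq 0$, $1\leq p\leq\infty$, with one direction trivial and the other being the energy estimate for the wave equation applied at each fixed $s$. In this reformulation the troublesome $L^\infty_t L^2_x$ piece of $\dot{S}^1$ is replaced by the norm of the data at the \emph{single} time $t=0$, which commutes freely with all the $\calL^{\ell,p}_s$ manipulations, and the remaining $L^2_{t,x}$ piece poses no interchange issue. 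Since $\rd_{t,x}$ and $\Box$ commute with $\rd_s-\lap$, the $\dot{S}^1$ case then reduces cleanly to the $L^2_x$ and $L^2_{t,x}$ cases with no Minkowski argument needed. You should adopt this reformulation to close the gap; the commutativity observation you already made is the correct ingredient, it just needs to be used in conjunction with the above equivalence rather than with a pointwise-in-$t$ application of the $L^2_x$ estimate.
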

\begin{proof} 
By definition, we have
\begin{equation*}
	\nrm{\psi}_{\calL^{\ell, p} \dot{\calX}^{k}} = \nrm{s^{\ell-\ell_{0} +k/2} \nrm{\rd_{x}^{(k)} \psi(s)}_{X}}_{\calL^{p}_{s}}.
\end{equation*}

Since $\sup_{s \in J} \nrm{\rd_{x}^{(k)} \psi(s)}_{X} < \infty$ for each $X$ under consideration when $\psi$ is regular, the first statement follows.

To prove the second statement, let us begin by proving that the norm $L^{2}_{x}$ satisfies the parabolic energy estimate \eqref{eq:absP:pEst:1}. In this case, $\ell_{0} = 3/4$. Let $\ell \in \bbR$, $[s_{1}, s_{2}] \subset (0,\infty)$ and $\psi$ a smooth (complex-valued) function such that $\nrm{\psi}_{\calP^{\ell} \dot{\calH}^{1}_{x}} < \infty$. We may assume that $\nrm{(\rd_{s} - \lap)\psi}_{\calL^{\ell+1,1}_{s} \calL^{2}_{x}} < \infty$, as the other case is trivial. Multiplying the equation $(\rd_{s} - \lap) \psi$ by $s^{2(\ell - \ell_{0})} \overline{\psi}$ and integrating by parts over $[s_{1}, \subr]$ (where $s_{1} \leq \subr \leq s_{2}$), we obtain
\begin{equation} \label{eq:parabolicEII1}
\begin{aligned} 
\frac 1 2  \subr^{2(\ell-\ell_{0})} & \int \abs{\psi (\subr)}^2 \, \ud x + \int_{s_1}^{\subr} \int s^{2(\ell-\ell_{0})+1} \abs{\rd_x \psi}^2 \, \ud x \, \frac{\ud s}{s}  \\
= & \frac 1 2 s_1^{2(\ell-\ell_{0})} \int \abs{\psi(s_1)}^2 \, \ud x
 + (\ell - \ell_{0})  \int_{s_1}^{\subr} \int s^{2(\ell-\ell_{0})} \abs{\psi}^2 \, \ud x \frac{\ud s}{s} \\
& + \int_{s_1}^{\subr} \int s^{2(\ell-\ell_{0}) + 1} (\rd_{s} - \lap)\psi \cdot \overline{\psi} \, \ud x \, \frac{\ud s}{s}.
\end{aligned}
\end{equation}

Taking the supremum over $s_{1} \leq \subr \leq s_{2}$ and rewriting in terms of p-normalized norms, we obtain
\begin{equation*}
\begin{aligned}
	\frac{1}{2} \nrm{\psi}_{\calL^{\ell, \infty}_{s} \calL^{2}_{x}(s_{1}, s_{2}]}^{2} + \nrm{\psi}_{\calL^{\ell, 2}_{s} \dot{\calH}^{1}_{x} (s_{1}, s_{2}]}^{2}
	\leq & \frac{1}{2} s_{1}^{2\ell} \nrm{\psi(s_{1})}_{\calL^{2}_{x}(s_{1})}^{2} + (\ell-\ell_{0}) \nrm{\psi}_{\calL^{\ell, 2}_{s} \calL^{2}_{x} (s_{1}, s_{2}]}^{2} \\
	& + \nrm{(\rd_{s} - \lap)\psi \cdot \overline{\psi}}_{\calL^{2\ell + 1, 1}_{s} \calL^{1}_{x}(s_{1}, s_{2}]}.
\end{aligned}
\end{equation*}

By H\"older and Lemma \ref{lem:absP:Holder4Ls}, we can estimate the last term by $\nrm{(\rd_{s} - \lap)\psi}_{\calL^{\ell+1,1}_{s} \calL^{2}_{x} (s_{1}, s_{2}]}^{2} + (1/4) \nrm{\psi}_{\calL^{\ell, \infty}_{s} \calL^{2}_{x}(s_{1}, s_{2}]}^{2}$, where the latter can be absorbed into the left-hand side. Taking the square root of both sides, we obtain \eqref{eq:absP:pEst:1} for $L^{2}_{x}$.

Next, let us prove that the norm $L^{2}_{x}$ satisfies the parabolic smoothing estimate \eqref{eq:absP:pEst:2}. Let $\ell \in \bbR$, $[s_{1}, s_{2}] \subset (0,\infty)$ and $\psi$ a smooth (complex-valued) function such that $\nrm{\psi}_{\calP^{\ell} \dot{\calH}^{2}_{x}} < \infty$. As before, we assume that $\nrm{(\rd_{s} - \lap)\psi}_{\calL^{\ell+1,2}_{s} \calL^{2}_{x}} < \infty$. Multiplying the equation $(\rd_{s} - \lap) \psi$ by $s^{2(\ell - \ell_{0})+1} \lap \overline{\psi}$ and integrating by parts over $[s_{1}, \subr]$ (where $s_{1} \leq \subr \leq s_{2}$), we obtain
\begin{equation} \label{eq:parabolicEII2}
\begin{aligned} 
\frac 1 2  \subr^{2(\ell-\ell_{0})} & \int \abs{\rd_{x} \psi (\subr)}^2 \, \ud x + \int_{s_1}^{\subr} \int s^{2(\ell-\ell_{0})+2} \abs{\lap \psi}^2 \, \ud x \, \frac{\ud s}{s}  \\
= & \frac 1 2 s_1^{2(\ell-\ell_{0})+1} \int \abs{\rd_{x} \psi(s_1)}^2 \,\ud x
 + (\ell - \ell_{0} + \frac{1}{2})  \int_{s_1}^{\subr} \int s^{2(\ell-\ell_{0})+1} \abs{\rd_{x} \psi}^2 \, \ud x \frac{\ud s}{s} \\
& + \int_{s_1}^{\subr} \int s^{2(\ell-\ell_{0}) + 2} (\rd_{s} - \lap)\psi \cdot \lap \overline{\psi} \, \ud x \, \frac{\ud s}{s}.
\end{aligned}
\end{equation}

By a further integration by parts, the second term on the left-hand side is equal to $\nrm{\psi}_{\calL^{\ell, 2}_{s} \dot{\calH}^{2}_{x} (s_{1}, s_{2}]}^{2}$. Taking the supremum over $s_{1} \leq \subr \leq s_{2}$ and rewriting in terms of p-normalized norms, we obtain
\begin{equation*}
\begin{aligned}
	\frac{1}{2} \nrm{\psi}_{\calL^{\ell, \infty}_{s} \dot{\calH}^{1}_{x}(s_{1}, s_{2}]}^{2} + \nrm{\psi}_{\calL^{\ell, 2}_{s} \dot{\calH}^{2}_{x} (s_{1}, s_{2}]}^{2}
	\leq & \frac{1}{2} s_{1}^{2\ell} \nrm{\psi(s_{1})}_{\dot{\calH}^{1}_{x}(s_{1})}^{2} + (\ell-\ell_{0} + \frac{1}{2}) \nrm{\psi}_{\calL^{\ell, 2}_{s} \dot{\calH}^{1}_{x} (s_{1}, s_{2}]}^{2} \\
	& + \nrm{(\rd_{s} - \lap)\psi \cdot \nb^{k} \nb_{k} \overline{\psi}}_{\calL^{2\ell + 1, 1}_{s} \calL^{1}_{x}(s_{1}, s_{2}]}.
\end{aligned}
\end{equation*}

By Cauchy-Schwarz and Lemma \ref{lem:absP:Holder4Ls}, we can estimate the last term by
\begin{equation*}
(1/2)\nrm{(\rd_{s} - \lap)\psi}_{\calL^{\ell+1,2}_{s} \calL^{2}_{x} (s_{1}, s_{2}]}^{2} + (1/2) \nrm{\psi}_{\calL^{\ell, 2}_{s} \dot{\calH}^{2}_{x}(s_{1}, s_{2}]}^{2},
\end{equation*}
where the latter can be absorbed into the left-hand side. Taking the square root of both sides, we obtain \eqref{eq:absP:pEst:2} for $L^{2}_{x}$.

For the norm $L^{2}_{t,x}$, in which case $\ell_{0} = 1$, it simply suffices to repeat the above proof with the new value of $\ell_{0}$, and integrate further in time. 

Finally, in the case of the norm $\dot{S}^{1}$, for which $\ell_{0} = 1/4$, we begin by observing that
\begin{equation*}
	\nrm{\psi}_{\calL^{\ell,p}_{s} \dot{\calS}^{1}}:= \nrm{\nb_{t,x} \psi}_{\calL^{\ell, p}_{s} \calL^{\infty}_{t} \calL^{2}_{x}} + \abs{I}^{1/2} \nrm{s^{3/4} \Box \psi}_{\calL^{\ell, p}_{s} \calL^{2}_{t,x}}
	\aeq \nrm{s^{1/2} \rd_{t,x} \psi(t=0)}_{\calL^{\ell, p}_{s} \calL^{2}_{x}} + \abs{I}^{1/2} \nrm{s^{3/4} \Box \psi}_{\calL^{\ell, p}_{s} \calL^{2}_{t,x}}
\end{equation*}
for every $\ell \geq 0$ and $1 \leq p \leq \infty$, where $A \aeq B$ means that $A$, $B$ are comparable, i.e., there exist $C >0$ such that $A \leq C B, B \leq C A$. One direction is trivial, whereas the other follows from the energy estimate. Using furthermore the fact that $\rd_{t,x}, \Box$ commute with $(\rd_{s} - \lap)$, this case follows from the last two cases. \qedhere
\end{proof}

%\begin{remark} 
%From the proof of \eqref{eq:absP:application:1}, it is evident that the following variant is also true: 
%\begin{quote}
%Let $\ell \in \bbR$, $1 \leq p \leq \infty$ and $k \geq 1$. For a smooth function $\psi = \psi(t,x,s)$ such that $\rd_{t,x} \psi$ is regular, we have
%$\nrm{\psi}_{\calL^{\ell,p}_{s} \dot{\calS}^{k}} < \infty$ if either $1 \leq p \leq \infty$ and $\ell - 3/4 + k/2 > 0$, or $p = \infty$ and $\ell - 3/4 + k/2 = 0$.
%\end{quote}
%\end{remark}

\begin{remark} 
A point that the reader should keep in mind is that, despite the heavy notations and abstract concepts developed in this subsection, the analytic heart of the `abstract parabolic theory' is simply the standard $L^{2}$-energy integral estimates for the linear heat equation, as we have seen in Proposition \ref{prop:absP:application}. The efforts that we had put in this subsection will pay off in various parts below (in particular Sections \ref{sec:covYMHF} and \ref{sec:pEst4HPYM}), as it will allow us to treat the diverse parabolic equations which arise in a unified, economical way.
\end{remark}

\subsection{Correspondence principle for p-normalized norms}
In this subsection, we develop a systematic method of obtaining linear and multi-linear estimates in terms of p-normalized norms, which will be very useful to us later. The idea is to start with an estimate involving the norms of functions independent of the $s$-variable, and arrive at the corresponding estimate for $s$-dependent functions in terms of the corresponding p-normalized norms by putting appropriate weights of $s$.

Throughout this subsection, we will denote by $J \subset (0, \infty)$ an $s$-interval, $\phi_{i}=\phi_{i}(x)$ a smooth function independent of $s$, and $\psi_{i} = \psi_{i}(s,x)$ a smooth function of both $s \in J$ and $x$. All norms below will be assumed {\it a priori} to be finite. In application, $\phi_{i}$ may be usually taken to be in $H^{\infty}_{x}$, and either $\psi_{i} \in C^{\infty}_{s}(J, H^{\infty}_{x})$. The discussion to follow holds also for functions which depend additionally on $t$.

It is rather cumbersome to give a precise formulation of the Correspondence Principle. We will instead adopt a more pragmatic approach and be satisfied with the following `cookbook-recipe' type statement.

\begin{corrPrinciple}
Suppose that we are given an estimate (i.e., an inequality) in terms of the norms $X_{i}$ of functions $\phi_{i} = \phi_{i}(x)$, all of which are homogeneous. Suppose furthermore that the estimate is scale-invariant, in the sense that the both sides transform the same under scaling. 

Starting from the usual estimate, make the following substitutions on both sides: $\phi_{i} \to \psi_{i}(s)$, $\rd_{x} \to \nb_{x}(s)$, $X_{i} \to \calX_{i}(s)$. Then the resulting estimate still holds, with the same constant, for every $s \in J$.
\end{corrPrinciple}

In other words, given an $s$-independent, scale-invariant estimate which involve only homogeneous norms, we obtain its p-normalized analogue by replacing the norms and the derivatives by their respective p-normalizations. The `proof' of this principle is very simple: For each fixed $s$, the substitution procedure above amounts to applying the usual estimate to $\psi_{i}(s)$ and multiplying each side by an appropriate weight of $s$. The point is that the same weight works for both sides, thanks to scale-invariance of the estimate that we started with.

\begin{example} Some examples are in order to clarify the use of the principle. We remark that all the estimates below will be used freely in what follows.
\begin{enumerate}
\item {\bf (Sobolev)} We begin with the Sobolev inequality \eqref{eq:prelim:sob:1} from Lemma \ref{lem:prelim:sob}. Applying the Correspondence Principle, for every $1 < q \leq r$, $k \geq 0$ such that $ \frac{3}{q} = \frac{3}{r} - k$, we obtain
\begin{equation*}
	\nrm{\psi(s)}_{\calL^{r}_{x}(s)} \leq C \nrm{\psi(s)}_{\calL^{\ell,p}_{s} \dot{\calW}^{k,q}_{x}(s)},
\end{equation*}
for every $s \in J$.

\item {\bf (Interpolation)} Recall the interpolation inequality \eqref{eq:prelim:sob:2} from Lemma \ref{lem:prelim:sob}. Applying the Correspondence Principle, for $1 < q < \infty$, $k_{1} \leq k_{0} \leq k_{2}$, $0 < \tht_{1}, \tht_{2} < 1$ such that $\tht_{1} + \tht_{2} = 1$ and $k_{0} = \tht_{1} k_{1} + \tht_{2} k_{2}$, we obtain 
\begin{equation*}
	\nrm{\psi(s)}_{\dot{\calW}^{k_{0}, q}_{x}(s)} \leq C \nrm{\psi(s)}^{\tht_{1}}_{\dot{\calW}^{k_{1}, q}_{x}(s)} \nrm{\psi(s)}^{\tht_{2}}_{\dot{\calW}^{k_{2}, q}_{x}(s)},
\end{equation*}
for every $s \in J$.

\item {\bf (Gagliardo-Nirenberg)}
Let us apply the Correspondence Principle to the Gagliardo-Nirenberg inequality \eqref{eq:prelim:sob:3} from Lemma \ref{lem:prelim:sob}. Then for $1 \leq q_{1}, q_{2}, r \leq \infty$, $0 < \tht_{1}, \tht_{2} < 1$ such that $\frac{3}{r} = \tht_{1} \cdot \frac{3}{q_{1}} + \tht_{2} ( \frac{3}{q_{2}} - 1 )$, we obtain
\begin{equation*}
	\nrm{\psi(s)}_{\calL^{r}_{x}(s)} \leq C \nrm{\psi(s)}_{\calL^{q_{1}}_{x}(s)}^{\tht_{1}} \nrm{\nb_{x} \psi(s)}_{\calL^{q_{2}}_{x}}^{\tht_{2}}
\end{equation*}
for every $s \in J$.

\item {\bf (H\"older)} Let us start with $\nrm{\phi_{1} \phi_{2}}_{L^{r}_{x}} \leq \nrm{\phi_{1}}_{L^{q_{1}}_{x}} \nrm{\phi_{2}}_{L^{q_{2}}_{x}}$, where $1 \leq q_{1}, q_{2}, r \leq \infty$ and $\frac{1}{r} = \frac{1}{q_{1}} + \frac{1}{q_{2}}$. Applying the Correspondence Principle, for every $s \in J$, we obtain 
\begin{equation*}
	\nrm{\psi_{1} \psi_{2} (s)}_{\calL^{r}_{x}(s)} \leq \nrm{\psi_{1}(s)}_{\calL^{q_{1}}_{x}(s)} \nrm{\psi_{2}(s)}_{\calL^{q_{2}}_{x}(s)}.
\end{equation*}

	All the estimates above extend to functions on $I \times \bbR^{3}$ with $I \subset \bbR$ in the obvious way. In this case, we have the following analogue of the H\"older inequality:
	\begin{equation*} 
		\nrm{\psi(s)}_{\calL^{q_{1}}_{t} \calL^{p}_{x}(s)} \leq s^{- \left( \frac{1}{2 q_{1}}-\frac{1}{2 q_{2}} \right)} \abs{I}^{\frac{1}{q_{1}}-\frac{1}{q_{2}}} \nrm{\psi(s)}_{\calL^{q_{2}}_{t} \calL^{p}_{x}(s)} \hbox{ for $q_{1} \leq q_{2}$.}
	\end{equation*}
\end{enumerate}
\end{example}

The following consequence of the Gagliardo-Nirenberg and Sobolev inequalities is useful enough to be separated as a lemma on its own. It provides a substitute for the incorrect $\dot{H}^{3/2}_{x} \subset L^{\infty}_{x}$ Sobolev embedding, and has the benefit of being scale-invariant. We will refer to this simply as \emph{Gagliardo-Nirenberg} for $p$-normalized norms.

\begin{lemma}[Gagliardo-Nirenberg] \label{lem:absP:algEst}
For every $s \in J$, the following estimate holds.
\begin{equation} \label{eq:absP:algEst}
\begin{aligned}
	\nrm{\nb^{(k)}_{x} \psi(s)}_{\dot{\calH}^{3/2}_{x} \cap \calL^{\infty}_{x}(s)} 
	\leq & C_{k} \nrm{\psi(s)}_{\dot{\calH}^{k+2}_{x}(s)}^{1/2} \nrm{\psi(s)}_{\dot{\calH}^{k+1}_{x}(s)}^{1/2} \\
	\leq & \frac{C_{k}}{2} (\nrm{\psi(s)}_{\dot{\calH}^{k+2}_{x}(s)} + \nrm{\psi(s)}_{\dot{\calH}^{k+1}_{x}(s)}).
\end{aligned}
\end{equation}
\end{lemma}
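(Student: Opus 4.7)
The plan is to establish the estimate as a direct application of the Correspondence Principle developed in the preceding subsection, starting from two standard unweighted inequalities on $\bbR^3$.

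First, I would observe that the $\dot{\calH}^{3/2}_x \cap \calL^\infty_x$ bound on $\nb_x^{(k)} \psi(s)$ reduces, via the Correspondence Principle, to verifying two $s$-independent, scale-invariant estimates for a smooth function $\phi = \phi(x)$ on $\bbR^3$, namely
\begin{equation*}
	\nrm{\phi}_{\dot{H}^{3/2}_x} \leq C \nrm{\phi}_{\dot{H}^1_x}^{1/2} \nrm{\phi}_{\dot{H}^2_x}^{1/2}, \qquad \nrm{\phi}_{L^\infty_x} \leq C \nrm{\phi}_{\dot{H}^1_x}^{1/2} \nrm{\phi}_{\dot{H}^2_x}^{1/2}.
\end{equation*}
The first is the standard interpolation inequality \eqref{eq:prelim:sob:2} with $(k_0, k_1, k_2, \theta_1, \theta_2) = (3/2, 1, 2, 1/2, 1/2)$ and $q = 2$. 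The second is the Gagliardo-Nirenberg inequality \eqref{eq:prelim:sob:3} on $\bbR^3$ with $(q_1, q_2, r, \theta_1, \theta_2) = (6, 2, \infty, 1/2, 1/2)$, combined with the Sobolev embedding $\dot{H}^1_x \hookrightarrow L^6_x$ via \eqref{eq:prelim:sob:1}; this replaces the (false) critical endpoint embedding $\dot{H}^{3/2}_x \hookrightarrow L^\infty_x$ at a modest cost.

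Next, I would apply each of these inequalities to $\phi = \nb_x^{(k)} \psi(s, \cdot)$ (viewing $s$ as a fixed parameter), which produces the unweighted bound
\begin{equation*}
	\nrm{\rd_x^{(k)} \psi(s)}_{\dot{H}^{3/2}_x \cap L^\infty_x} \leq C_k \nrm{\psi(s)}_{\dot{H}^{k+2}_x}^{1/2} \nrm{\psi(s)}_{\dot{H}^{k+1}_x}^{1/2}.
\end{equation*}
Both sides are homogeneous in the scaling $\phi \mapsto \phi_\lambda$, so the Correspondence Principle applies; substituting $\rd_x \to \nb_x(s)$ and the unweighted norms by their p-normalized counterparts $\dot{\calH}^{3/2}_x \cap \calL^\infty_x$, $\dot{\calH}^{k+1}_x$, $\dot{\calH}^{k+2}_x$, one arrives at the first line of \eqref{eq:absP:algEst} with the same constant $C_k$.

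Finally, the second line is immediate from the AM-GM inequality $a^{1/2} b^{1/2} \leq (a+b)/2$ applied to the right-hand side. There is no real obstacle here; the only point that merits a moment's care is that the natural critical Sobolev embedding fails at $\dot{H}^{3/2}_x$, so the $L^\infty_x$ half of the estimate must be routed through a Gagliardo-Nirenberg interpolation between $L^6_x$ and $\dot{W}^{1,2}_x$ rather than a direct embedding.
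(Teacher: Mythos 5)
Your overall strategy — handle the $\dot{\calH}^{3/2}_x$ part by the interpolation inequality \eqref{eq:prelim:sob:2}, route the $\calL^\infty_x$ part through Gagliardo--Nirenberg plus Sobolev to avoid the false endpoint embedding, apply the Correspondence Principle, and finish with Cauchy--Schwarz — is exactly the paper's. However, your Gagliardo--Nirenberg parameters are incorrect, and as written the $L^\infty_x$ half does not go through. With $(q_1, q_2, r, \theta_1, \theta_2) = (6, 2, \infty, 1/2, 1/2)$, the compatibility constraint of \eqref{eq:prelim:sob:3}, namely $\frac{3}{r} = \theta_1 \cdot \frac{3}{q_1} + \theta_2 (\frac{3}{q_2} - 1)$, reads $0 = \frac{1}{4} + \frac{1}{4} = \frac{1}{2}$, which fails. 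The corresponding would-be estimate $\nrm{\phi}_{L^\infty_x} \leq C \nrm{\phi}_{L^6_x}^{1/2} \nrm{\rd_x \phi}_{L^2_x}^{1/2}$ is in fact false: under the scaling $\phi \mapsto \phi_\lambda(x) = \phi(x/\lambda)$ the left-hand side is invariant while the right-hand side scales like $\lambda^{1/2}$, so letting $\lambda \to 0$ would force $\nrm{\phi}_{L^\infty_x} = 0$ for all $\phi$. Moreover, since $\nrm{\rd_x \phi}_{L^2_x} = \nrm{\phi}_{\dot{H}^1_x}$, your chain would terminate at the false embedding $\nrm{\phi}_{L^\infty_x} \lesssim \nrm{\phi}_{\dot{H}^1_x}$ rather than at the asserted $\nrm{\phi}_{\dot{H}^1_x}^{1/2} \nrm{\phi}_{\dot{H}^2_x}^{1/2}$.

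The repair is small but essential. Take $q_2 = 6$ instead: the constraint becomes $0 = \frac{1}{2}\cdot\frac{1}{2} + \frac{1}{2}(\frac{1}{2}-1) = 0$, and Gagliardo--Nirenberg gives $\nrm{\phi}_{L^\infty_x} \leq C \nrm{\phi}_{L^6_x}^{1/2} \nrm{\rd_x \phi}_{L^6_x}^{1/2}$. Then apply Sobolev \eqref{eq:prelim:sob:1} to \emph{both} factors — $\nrm{\phi}_{L^6_x} \leq C \nrm{\phi}_{\dot{H}^1_x}$ and $\nrm{\rd_x \phi}_{L^6_x} \leq C \nrm{\rd_x \phi}_{\dot{H}^1_x} = C \nrm{\phi}_{\dot{H}^2_x}$ — to reach $\nrm{\phi}_{L^\infty_x} \leq C \nrm{\phi}_{\dot{H}^1_x}^{1/2} \nrm{\phi}_{\dot{H}^2_x}^{1/2}$. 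These two Sobolev inequalities ($\nrm{\phi}_{L^6_x} \lesssim \nrm{\phi}_{\dot{H}^1_x}$ and $\nrm{\rd_x\phi}_{L^6_x} \lesssim \nrm{\phi}_{\dot{H}^2_x}$) are exactly what the paper's proof isolates as the irreducible content. The remaining steps you describe — applying the inequalities to $\phi = \rd_x^{(k)}\psi(s,\cdot)$ (equivalently, reducing to $k=0$), invoking the Correspondence Principle since both sides are homogeneous of the same degree, and deducing the second line via AM--GM — are correct.
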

\begin{proof} 
Without loss of generality, assume $k =0$. To prove the first inequality, by Gagliardo-Nirenberg, interpolation and the Correspondence Principle, it suffices to prove $\nrm{\phi}_{L^{6}_{x}} \leq C \nrm{\phi}_{\dot{H}^{1}_{x}}$ and $\nrm{\rd_{x} \phi}_{L^{6}_{x}} \leq C \nrm{\phi}_{\dot{H}^{2}_{x}}$; the latter two are simple consequences of Sobolev. Next, the second inequality follows from the first by Cauchy-Schwarz. \qedhere
\end{proof}

We remark that in practice, the Correspondence Principle, after multiplying by an appropriate weight of s and integrating over $J$, will often be used in conjunction with H\"older's inequality for the spaces $\calL^{\ell, p}_{s}$ (Lemma \ref{lem:absP:Holder4Ls}).

Finally, recall that the notation $\calO(\psi_{1}, \psi_{2}, \ldots, \psi_{k})$ refers to a linear combination of expressions in \emph{the values of} the arguments $\psi_{1}, \psi_{2}, \ldots, \psi_{k}$, where they could in general be vector-valued. It therefore follows immediately that any multi-linear estimate for the usual product $\nrm{\psi_{1} \cdot \psi_{2} \cdots \psi_{k}}$ for scalar-valued functions $\psi_{1}, \psi_{2}, \ldots, \psi_{k}$ implies the corresponding estimate for $\nrm{\calO(\psi_{1}, \psi_{2}, \ldots, \psi_{k}}$, where $\psi_{1}, \psi_{2}, \ldots, \psi_{k}$ may now be vector-valued, at the cost of some absolute constant depending on $\calO$. This remark will be used repeatedly in the sequel.

\subsection{Associated $s$-weights for variables of \eqref{eq:HPYM}} \label{subsec:assocWght}
Let us consider the system \eqref{eq:HPYM}, introduced in \S \ref{subsec:overview}. To each variable of \eqref{eq:HPYM}, there is associated a power of $s$ which represents the expected size of the variable in a dimensionless norm (say $L^{\infty}_{t,s,x}$); we call this the \emph{associated $s$-weight} of the variable. The notion of associated $s$-weights provides a useful heuristic which will make keeping track of these weights easier in the rest of the paper. 

The associated $s$-weights for the `spatial variables' $A=A_{i}, F=F_{ij}, F_{s}=F_{si}$ are derived directly from scaling considerations, and thus are easy to determine. Indeed, as we expect that $\nrm{\rd_{x} A_{i}}_{L^{2}_{x}}$ should stay bounded for every $t, s$, using the scaling heuristics $\rd_{x} \aeq s^{-1/2}$ and $L^{2}_{x} \aeq s^{3/4}$, it follows that $A_{i} \aeq s^{-1/4}$. The worst term in $F_{ij}$ is at the level of $\rd_{x} A$, so $F_{ij} \aeq s^{-3/4}$, and similarly $F_{si} \aeq s^{-5/4}$.

The associated $s$-weights for $w_{\nu}$ is $s^{-1}$, which is actually better than that which comes from scaling considerations (which is $s^{-5/4}$). To see why, observe that $w_{\nu}$ satisfies a parabolic equation $(\rd_{s} - \lap) w_{\nu} = {}^{(w_{\nu})} \calN$ with zero data at $s=0$.\footnote{We remind the reader, that this is a consequence of the original Yang-Mills equations $\covD^{\mu} F_{\nu \mu} =0$ at $s=0$.} Duhamel's principle then tells us that $w_{\nu} \aeq s {}^{(w_{\nu})} \calN$. Looking at the equation \eqref{eq:covParabolic4w}, we see that ${}^{(w_{\nu})} \calN \aeq s^{-2}$, from which we conclude $w_{\nu} \aeq s^{-1}$. Note that as $w_{0} = - F_{s0}$, this shows that the `temporal variables' $A_{0}, F_{s0}$ behave better than their `spatial' counterparts.

We summarize the associated $s$-weights for important variables as follows.
\begin{center}
\begin{tabular}{c c c}
$A_{i} \aeq s^{-1/4}$ & $A_{0} \aeq s^{0}$ & $F_{\mu \nu} \aeq s^{-3/4}$ \\
$F_{si} \aeq s^{-5/4}$ & $F_{s0} \aeq s^{-1}$ & $w_{\mu} \aeq s^{-1}$. 
\end{tabular}
\end{center}

Accordingly, when we control the sizes of these variables, they will be weighted by the inverse of their respective associated weights.

As we always work on a finite $s$-interval $J$ such that $J \subset [0,1]$, extra powers of $s$ compared to the inverse of the associated $s$-weight should be considered favorable when estimating. For example, it is easier to estimate $\nrm{A_{i}}_{\calL^{1/4+\ell,\infty}_{s} \dot{\calH}^{1}_{x}}$ when $\ell > 0$ than $\ell = 0$. (Compare Lemma \ref{lem:fundEst4A} with Proposition \ref{prop:est4ai}.) Informally, when it suffices to control a variable with more power of $s$, say $s^{\ell}$, compared to the associated $s$-weight, we will say that there is an \emph{extra $s$-weight of $s^{\ell}$}. Thanks to the sub-critical nature of the problem, such extra weights will be abundant, and this will simplify the analysis in many places.

It is also useful to keep in mind the following heuristics.
\begin{equation*}
\rd_{t,x}, \covD_{t,x} \aeq s^{-1/2}, \quad 
\rd_{s}, \covD_{s} \aeq s^{-1}, \quad
L^{q}_{t} L^{r}_{x} \aeq s^{1/(2q)+3/(2r)}\\
\end{equation*}

\section{Reduction of the Main Theorem to Theorems \ref{thm:idEst} and \ref{thm:dynEst}} \label{sec:mainThm}
%\comment{The following Main Theorem should probably go to Introduction.
%\begin{definition}[Admissible $H^{1}$ Initial Data] \label{def:admID}
%We say that a pair $(\Aini_{i}, \Eini_{i})$ of 1-forms on $\bbR^{3}$ is an \emph{admissible $H^{1}$ initial data} for the Yang-Mills equations if the following conditions hold:
%\begin{enumerate}
%\item $\Aini_{i} \in \dot{H}_{x}^{1} \cap L^{3}_{x}$ and $\Eini_{i} \in L^{2}$,
%\item The \emph{constraint equation}
%\begin{equation} \label{eq:YMconstraint}
%	\rd^{i} \Eini_{i} + \LieBr{\Aini^{i}}{\Eini_{i}} = 0,
%\end{equation}
%	is satisfies, in the distributional sense.
%\end{enumerate}
%\end{definition}
%
%Our main theorem is a local well-posedness result for such initial data.
%\begin{MainTheorem} [$H^{1}$ Local Well-Posedness of Yang-Mills] 
%Let $(\Aini_{i}, \Eini_{i})$ be an admissible $H^{1}$ initial data set. 
%\begin{enumerate}
%\item There exists a unique admissible solution to the Yang-Mills equation \eqref{eq:hyperbolicYM} with the given initial data on $(-T^{\star}, T^{\star})$ in the temporal gauge $A_{0} = 0$, with a lower bound on $T^{\star} > 0$ depending only on $\calIini := \nrm{\Aini_{i}}_{\dot{H}^{1}_{x}} + \nrm{\Eini_{i}}_{L^{2}_{x}}$. 
%\end{enumerate}
%\end{MainTheorem}
%}

In the first subsection, we state and prove some preliminary results that we will need in this section. These include a $H^{2}$ local well-posedness statement for the Yang-Mills equations in the temporal gauge, an approximation lemma for initial data sets and a gauge transform lemma. Next, we will state Theorems \ref{thm:idEst} (Estimates for the initial data in the caloric-temporal gauge) and \ref{thm:dynEst} (Estimates for $t$-evolution in the caloric-temporal gauge), and show that the proof of the Main Theorem is reduced to that of Theorems \ref{thm:idEst} and \ref{thm:dynEst} by a simple bootstrap argument involving a gauge transformation. 

%Then in the next subsection, we claim the existence of appropriate bootstrapped quantities ($\calF$ and $\calAlow$) and prove the Main Proposition, modulo the precise definition and technical estimates for $\calF$ and $\calAlow$. The proof of the latter will occupy the rest of the paper.

\subsection{Preliminary results}
We will begin this subsection by making a number of important definitions. Let us define the notion of \emph{regular} solutions, which are smooth solutions with appropriate decay towards the spatial infinity.

\begin{definition}[Regular solutions]  \label{def:mainThm:reg4YM}
We say that a representative $A_{\mu} : I \times \bbR^{3} \to \LieAlg$ of a classical solution to \eqref{eq:hyperbolicYM} is \emph{regular} if $A_{\mu} \in C^{\infty}_{t}(I, H^{\infty}_{x})$. Furthermore, we say that a smooth solution $A_{\bfa} : I \times \bbR^{3} \times J \to \LieAlg$ to \eqref{eq:HPYM} is \emph{regular} if $A_{\bfa} \in C^{\infty}_{t,s}(I \times J, H^{\infty}_{x})$.
\end{definition}

In relation to regular solutions, we also define the notion of a \emph{regular gauge transform}, which is basically that which keeps the `regularity' of the connection 1-form.
\begin{definition}[Regular gauge transform] \label{def:reg4gt}
We say that a gauge transform $U$ on $I \times \bbR^{3} \times J$ is a \emph{regular gauge transform} if $U - \mathrm{Id}, \, U^{-1} - \mathrm{Id} \in C^{\infty}_{t,s}(I \times J, H^{\infty}_{x})$.
A gauge transform $U$ defined on $I \times \bbR^{3}$ is a \emph{regular gauge transform} if it is a regular gauge transform viewed as an $s$-independent gauge transform on $I \times \bbR^{3} \times J$ for some $J \subset [0, \infty)$.
\end{definition}

We remark that a regular solution (whether to \eqref{eq:hyperbolicYM} or \eqref{eq:HPYM}) remains regular under a regular gauge transform.

Let us also give the definition of \emph{regular initial data sets} for \eqref{eq:hyperbolicYM}.
\begin{definition}[Regular initial data sets] \label{def:reg4id}
We say that an initial data set $(\Aini_{i}, \Eini_{i})$ to \eqref{eq:hyperbolicYM} is \emph{regular} if, in addition to satisfying the constraint equation \eqref{eq:YMconstraint}, $\Aini_{i}, \Eini_{i} \in H^{\infty}_{x}$.
\end{definition}

Next, let us present some results needed to prove the Main Theorem. The first result we present is a local well-posedness result for initial data with higher regularity. For this purpose, we have an $H^{2}$ local well-posedness theorem, which is essentially due to Eardley-Moncrief \cite{Eardley:1982fb}. However, as we do not assume anything on the $L^{2}_{x}$ norm of the initial data $\Aini_{i}$ (in particular, it does not need to belong to $L^{2}_{x}$), we need a minor variant of the theorem proved in \cite{Eardley:1982fb}.

In order to state the theorem, let us define the space $\widehat{H}^{2}_{x}$ to be the closure of $\calS_{x}(\bbR^{3})$ with respect to the partially homogeneous Sobolev norm $\nrm{\phi}_{\widehat{H}^{2}_{x}} := \nrm{\rd_{x} \phi}_{H^{1}_{x}}$. The point, of course, is that this norm\footnote{That $\nrm{\cdot}_{\widehat{H}^{2}_{x}}$ is indeed a norm when restricted to $\widehat{H}^{2}_{x}$ follows from Sobolev.} does not contain the $L^{2}_{x}$ norm.

\begin{theorem}[$H^{2}$ local well-posedness of Yang-Mills] \label{thm:mainThm:H2lwp}
Let $(\Aini_{i}, \Eini_{i})$ be an initial data set satisfying \eqref{eq:YMconstraint} such that $\rd_{x} \Aini_{i}, \Eini_{i} \in H^{1}_{x}$.
\begin{enumerate}
\item There exists $T = T(\nrm{(\Aini, \Eini)}_{\widehat{H}^{2}_{x} \times H^{1}_{x}}) > 0$, which is non-increasing in $\nrm{(\Aini, \Eini)}_{\widehat{H}^{2}_{x} \times H^{1}_{x}}$, such that a unique solution $A_{\mu}$ to \eqref{eq:hyperbolicYM} in the temporal gauge satisfying
\begin{equation}
	A_{i} \in C_{t} ((-T, T), \widehat{H}^{2}_{x}) \cap C^{1}_{t}((-T, T), H^{1}_{x})
\end{equation}
exists on $(-T, T) \times \bbR^{3}$.

\item Furthermore, \emph{persistence of higher regularity} holds, in the following sense: If $\rd_{x} \Aini, \Eini \in H^{m}_{x}$  (for an integer $m \geq 1$), then the solution $A_{i}$ obtained in Part (1) satisfies $\rd_{t,x} A_{i} \in C_{t}^{k_{1}} ((-T, T), H^{k_{2}}_{x})$ for non-negative integers $k_{1}, k_{2}$ such that $k_{1} + k_{2} \leq m$.

In particular, if $(\Aini_{i}, \Eini_{i})$ is a regular initial data set, then the corresponding solution $A_{\mu}$ is a regular solution to \eqref{eq:hyperbolicYM} in the temporal gauge.

\item Finally, we have the following \emph{continuation criterion:} If $\sup_{t \in (-T', T')} \nrm{\rd_{t,x} A}_{H^{1}_{x}} < \infty$, then the solution given by Part (1) can be extended past $(-T', T')$, while retaining the properties stated in Parts (1) and (2).
\end{enumerate}
\end{theorem}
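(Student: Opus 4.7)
The plan is to adapt the classical proof of Eardley-Moncrief \cite{Eardley:1982fb}, building on Segal \cite{Segal:1979hg}, to our slightly weaker regularity assumption $\rd_x \Aini \in H^1_x$ in place of $\Aini \in H^2_x$. Setting $E_i := F_{0i} = \rd_t A_i$, the Yang-Mills equations in the temporal gauge become the first-order system
\begin{equation*}
  \rd_t A_i = E_i, \qquad
  \rd_t E_i = \lap A_i - \rd_i (\rd^{\ell} A_{\ell}) + \calO(A, \rd_x A) + \calO(A, A, A),
\end{equation*}
constrained by $\rd^{\ell} E_{\ell} + \LieBr{A^{\ell}}{E_{\ell}} = 0$. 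The obstacle to direct Picard iteration is the term $\rd_i (\rd^{\ell} A_{\ell})$, which spoils strict hyperbolicity.

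The classical remedy is to set $\phi := \rd^{\ell} A_{\ell}$ and use the constraint to derive the transport identity $\rd_t \phi = \rd^{\ell} E_{\ell} = -\LieBr{A^{\ell}}{E_{\ell}}$. Integrating in $t$ yields
\begin{equation*}
  \phi(t, x) = \rd^{\ell} \Aini_{\ell}(x) - \int_0^t \LieBr{A^{\ell}}{E_{\ell}}(s, x) \, \ud s,
\end{equation*}
so that the bad term $\rd_i \phi$ in the evolution equation becomes a fixed $L^2_x$ function $\rd_i \rd^{\ell} \Aini_{\ell}$ plus a time integral of a bilinear expression $\calO(\rd_x A, E) + \calO(A, \rd_x E)$. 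After this substitution, the equation for $A_i$ is genuinely hyperbolic, and I would run Picard iteration in $C_t((-T,T), \widehat{H}^2_x) \cap C^1_t((-T,T), H^1_x)$. All nonlinear terms, including the cubic $\calO(A, A, A)$, can be estimated using H\"older's inequality together with the embeddings $\widehat{H}^2_x \hookrightarrow L^\infty_x$ (valid in $\bbR^3$ by Morrey applied to $\rd_x A \in L^6_x$) and $H^1_x \hookrightarrow L^6_x$; crucially, the estimates close using only $\nrm{\rd_x A}_{H^1_x}$ and $\nrm{E}_{H^1_x}$, never requiring any separate $L^p$ norm of $A$ itself. Constraint preservation during iteration is automatic from the identity $\rd_t(\rd^{\ell} E_{\ell} + \LieBr{A^{\ell}}{E_{\ell}}) = \covD^{\ell} \covD^{k} F_{k\ell} = \frac{1}{2}\LieBr{F^{\ell k}}{F_{k\ell}} = 0$, where the last equality uses the antisymmetry of $F$ and $\LieBr{X}{X} = 0$.

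Given Part (1), Parts (2) and (3) are routine. Persistence of regularity is established by commuting $\rd_{t,x}$ derivatives through the reformulated wave equation and closing energy estimates inductively at each level of differentiation, using the fact that nonlinearities at a given level are controlled by strictly lower-order norms already bounded. The continuation criterion follows because a uniform bound on $\sup_t \nrm{\rd_{t,x} A}_{H^1_x}$ yields a uniform bound on $\nrm{(A, E)}_{\widehat{H}^2_x \times H^1_x}$, so the local existence result of Part (1) can be applied at times arbitrarily close to the endpoints of $(-T', T')$ to extend the solution. The main technical difficulty is the a priori energy estimate for the reformulated hyperbolic system: one must verify that the source $\int_0^t \rd_i \LieBr{A^{\ell}}{E_{\ell}} \, \ud s$ produced by the constraint trick, together with the cubic nonlinearity $\calO(A, A, A)$, can be bounded using only $\widehat{H}^2_x \times H^1_x$-type controls on $(A, E)$, without any separate $L^2_x$ norm of $A$.
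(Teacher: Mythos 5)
Your plan has a derivative-loss problem exactly at the regularity level of the theorem, and you do not flag it. After the constraint-and-substitute step, the wave equation $\Box A_i = N_i$ has the static term $-\rd_i \rd^\ell \Aini_\ell$ as part of $N_i$. Under the hypothesis $\rd_x \Aini \in H^1_x$, this term is a second derivative of $\Aini$ and so lies in $L^2_x$ only, not in $H^1_x$. The $\widehat{H}^2_x \times H^1_x$ energy estimate for $\Box A_i = N_i$ — that is, the estimate for $\rd_{t,x}A$ in $C_t H^1_x$ — requires $N \in L^1_t H^1_x$, so your claim that ``the estimates close using only $\nrm{\rd_x A}_{H^1_x}$ and $\nrm{E}_{H^1_x}$'' is not justified: the naive Picard iteration in the space you propose loses one derivative on this term and would close only for $\widehat{H}^3_x \times H^2_x$ data. (One can salvage the substituted scheme by exploiting the time-independence of this source through the Duhamel kernel $(1-\cos((t-s)\abs{\rd_x}))\abs{\rd_x}^{-2}$, which gains two derivatives on static and once-time-integrated sources, but that is a genuinely different observation that your sketch neither states nor uses.) The standard fix is not to substitute at all: take the curl of the $A_i$ equation, which annihilates $\rd_i(\rd^\ell A_\ell)$ and produces a clean wave equation $\Box(\curl A)_i = \text{(lower order)}$; estimate $\rd^\ell A_\ell$ directly from the transport equation at the required Sobolev level; and reassemble $\rd_x A$ via the div-curl identity $\nrm{\rd_x A}_{L^2_x}^2 = \tfrac12 \nrm{\curl A}_{L^2_x}^2 + \nrm{\rd^\ell A_\ell}_{L^2_x}^2$. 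This is exactly what the paper does for $\Alow$ in Proposition~\ref{prop:eqn4Alow} and \S\ref{subsec:AlowWave}. A secondary remark: you attribute your transport-and-substitute scheme to Eardley--Moncrief, but their method (and that of \cite{Klainerman:1995hz}) derives covariant wave equations for the curvature $F_{\mu\nu}$ and recovers $A$ from $\rd_t A_i = F_{0i}$; the scheme you describe is closer to Tao's in \cite{Tao:2000vba}, as the paper itself notes in the remark after Proposition~\ref{prop:eqn4Alow}.

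For comparison, the paper's own proof of Theorem~\ref{thm:mainThm:H2lwp} is far more terse and takes a different route altogether: it observes that the $H^2$-level iteration of \cite[Proposition~3.1]{Klainerman:1995hz} goes through unchanged with the norm $\widehat{H}^2_x \times H^1_x$, and also records a ``cheaper'' alternative, namely that $\nrm{\Aini}_{H^2_x(B)} \leq C \nrm{\Aini}_{\widehat{H}^2_x(\bbR^3)}$ uniformly over unit balls $B$, so one may apply the localized well-posedness of \cite{Klainerman:1995hz} on each ball and glue the local solutions by finite speed of propagation — an argument that avoids re-proving any estimates. Finally, your constraint-propagation identity $\rd_t(\covD^\ell E_\ell) = \covD^\ell \covD^k F_{k\ell} = 0$ is valid only for an exact Yang--Mills solution; for Picard iterates (which do not solve the equations) one must instead derive a homogeneous linear evolution for the constraint quantity and apply Gronwall to the limiting solution, a step you should not dismiss as ``automatic.''
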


\begin{proof} 
It is not difficult to see that the iteration scheme introduced in Klainerman-Machedon \cite[Proposition 3.1]{Klainerman:1995hz} goes through with the above norm, from which Parts (1) -- (3) follow. A cheaper way of proving Theorem \ref{thm:mainThm:H2lwp} is to note that $\nrm{\Aini_{i}}_{H^{2}_{x}(B)} \leq C \nrm{\Aini_{i}}_{\widehat{H}^{2}_{x}(\bbR^{3})}$, $\nrm{\Eini_{i}}_{H^{1}_{x}(B)} \leq \nrm{\Eini_{i}}_{H^{1}_{x}(\bbR^{3})}$ uniformly for all unit balls in $\bbR^{3}$. This allows us to apply the localized local well-posedness statement Proposition 3.1 of \cite{Klainerman:1995hz} to each ball, and glue these local solutions to form a global solution via a domain of dependence argument.
\end{proof}

Next, we prove a technical lemma, which shows that an arbitrary admissible $H^{1}$ initial data set can be approximated by a sequence of regular initial data sets.
\begin{lemma}[Approximation lemma] \label{lem:mainThm:regApprox}
Any admissible $H^{1}$ initial data set $(\Aini_{i}, \Eini_{i}) \in (\dot{H}^{1}_{x} \cap L^{3}_{x}) \times L^{2}_{x}$ can be approximated by a sequence of regular initial data sets $(\Aini_{(n) i}, \Eini_{(n) i})$ satisfying the constraint equation \eqref{eq:YMconstraint}. More precisely, the initial data sets $(\Aini_{(n) i}, \Eini_{(n) i})$ may be taken to satisfy the following properties.
\begin{enumerate}
\item $\Aini_{(n)}$ is smooth, compactly supported, and
\item $\Eini_{(n)} \in H^{\infty}_{x}$.
\end{enumerate}
\end{lemma}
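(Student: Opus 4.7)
The plan is to approximate $\Aini$ and $\Eini$ separately and then correct the approximation of $\Eini$ to recover the constraint equation. Let $\rho_n$ be a standard mollifier at scale $1/n$ and $\chi_n$ a smooth cutoff to the ball of radius $n$. Set $\Aini_{(n), i} := \chi_n \cdot (\rho_n \ast \Aini_i)$, which is smooth, compactly supported, and converges to $\Aini$ in $\dot H^1_x \cap L^3_x$. Similarly, $\tilde E_{(n), i} := \chi_n \cdot (\rho_n \ast \Eini_i)$ is smooth, compactly supported, and converges to $\Eini$ in $L^2_x$; in particular $\tilde E_{(n)}$ is regular. The pair $(\Aini_{(n)}, \tilde E_{(n)})$ fails to satisfy \eqref{eq:YMconstraint}, so I introduce a correction $\Eini_{(n), i} := \tilde E_{(n), i} + \covD_i \phi_{(n)}$, with $\covD$ built from $\Aini_{(n)}$ and $\phi_{(n)} : \bbR^3 \to \LieAlg$ chosen to solve the covariant Poisson equation
\begin{equation*}
- \covD^\ell \covD_\ell \phi_{(n)} = \covD^\ell \tilde E_{(n), \ell}.
\end{equation*}

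The crux is solvability of this equation with control on $\nrm{\covD \phi_{(n)}}_{L^2_x}$. Consider the sesquilinear form $B(\phi, \psi) := \int (\covD^\ell \phi, \covD_\ell \psi) \, \ud x$. Kato's inequality $\abs{\covD \phi} \geq \abs{\rd \abs{\phi}}$ combined with the Sobolev embedding $\dot H^1_x \hookrightarrow L^6_x$ shows that the completion $\calH$ of $C^\infty_c (\bbR^3; \LieAlg)$ under $\nrm{\covD \phi}_{L^2_x}$ is a Hilbert space embedding continuously into $L^6_x$, and that $B$ is coercive on $\calH$. Integration by parts, justified by the bi-invariance of $(\cdot, \cdot)$, shows that $\psi \mapsto -\int (\tilde E_{(n), \ell}, \covD^\ell \psi) \, \ud x$ is a bounded functional on $\calH$, so Lax--Milgram produces a unique $\phi_{(n)} \in \calH$. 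Since $\Aini_{(n)}, \tilde E_{(n)}$ are smooth and compactly supported, standard elliptic regularity (the operator agrees with $-\lap$ outside a compact set, with smooth coefficients inside) gives that $\phi_{(n)}$ is smooth with $\phi_{(n)} \sim 1/\abs{x}$ at infinity, so $\covD \phi_{(n)}$ and all its derivatives lie in $L^2_x$; hence $\Eini_{(n)}$ is regular.

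It remains to verify $\nrm{\Eini_{(n)} - \Eini}_{L^2_x} \to 0$, or equivalently $\nrm{\covD \phi_{(n)}}_{L^2_x} \to 0$. Testing the equation against $\phi_{(n)}$ and integrating by parts yields
\begin{equation*}
\nrm{\covD \phi_{(n)}}_{L^2_x}^2 = - \int (\tilde E_{(n), \ell}, \covD^\ell \phi_{(n)}) \, \ud x.
\end{equation*}
Splitting $\tilde E_{(n)} = (\tilde E_{(n)} - \Eini) + \Eini$, I bound the first piece by Cauchy--Schwarz; for the second, I use the distributional constraint $\rd^\ell \Eini_\ell + \LieBr{\Aini^\ell}{\Eini_\ell} = 0$ to trade the derivative onto $\Eini$, then invoke the bi-invariance of $(\cdot, \cdot)$ to convert the remaining piece into $-\int(\Eini_\ell, \LieBr{\Aini_{(n)}^\ell - \Aini^\ell}{\phi_{(n)}}) \, \ud x$, bounded via H\"older by $\nrm{\Eini}_{L^2_x} \nrm{\Aini_{(n)} - \Aini}_{L^3_x} \nrm{\phi_{(n)}}_{L^6_x}$. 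Using the embedding $\calH \hookrightarrow L^6_x$ and dividing through, these combine to give $\nrm{\covD \phi_{(n)}}_{L^2_x} \leq \nrm{\tilde E_{(n)} - \Eini}_{L^2_x} + C \nrm{\Eini}_{L^2_x} \nrm{\Aini_{(n)} - \Aini}_{L^3_x} \to 0$. The main obstacle is the elliptic solvability step, which depends essentially on the coercivity furnished by Kato's inequality; once this is in hand, the convergence argument is routine manipulation of the constraint equation.
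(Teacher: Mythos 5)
Your proof takes essentially the same approach as the paper's: both approximate $(\Aini,\Eini)$ by smooth compactly supported fields, correct the approximate electric field by adding a covariant gradient $\covD_{(n)} \phi_{(n)}$ with $\phi_{(n)}$ solving the covariant Poisson equation $\covD_{(n)}^{\ell}\covD_{(n)\ell}\phi_{(n)} = -\covD_{(n)}^{\ell}\Fini_{(n)\ell}$, and close the estimate $\nrm{\covD_{(n)}\phi_{(n)}}_{L^2_x}\to 0$ via Kato's inequality and the $\dot H^1_x\subset L^6_x$ embedding. The only differences are cosmetic — you make solvability explicit via Lax--Milgram and organize the convergence estimate by splitting $\tilde E_{(n)} = (\tilde E_{(n)}-\Eini)+\Eini$ rather than first bounding $\covD_{(n)}^\ell\Fini_{(n)\ell}$ in $\dot H^{-1}_x$ as the paper does, but both paths exploit the identical cancellation coming from the distributional constraint and the bi-invariance of $(\cdot,\cdot)$.
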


\begin{proof} 
This proof can essentially be read off from \cite[Proposition 1.2]{Klainerman:1995hz}. We reproduce it below for the convenience of the reader.

Choose compactly supported, smooth sequences $\Aini_{(n) i}, \Fini_{(n) i}$ such that $\Aini_{(n) i} \to \Aini_{i}$ in $\dot{H}^{1}_{x} \cap L^{3}_{x}$ and $\Fini_{(n) i} \to \Eini_{i}$ in $L^{2}_{x}$. Let us denote the covariant derivative associated to $\Aini_{(n)}$ by $\covD_{(n)}$. Using the fact that $(\Aini_{i}, \Eini_{i})$ satisfies the constraint equation \eqref{eq:YMconstraint} in the distributional sense and the $\dot{H}^{1}_{x} \subset L^{6}_{x}$ Sobolev, we see that for any test function $\varphi$,
\begin{align*}
	\abs{\int & (\covD_{(n)}^{\ell} \, \Fini_{(n) \ell}, \varphi) \, \ud x} 
	= \abs{\int (\covD_{(n)}^{\ell} \, \Fini_{(n) \ell} - \covD^{\ell} \, \Eini_{\ell}, \varphi) \, \ud x} \\
	= & \abs{\int - (\Fini_{(n) \ell} - \Eini_{\ell}, \rd^{\ell} \varphi)
	 +(\LieBr{\Aini_{(n)}^{\ell} - \Aini^{\ell}}{\Fini_{(n) \ell}} + \LieBr{\Aini^{\ell}}{\Fini_{(n) \ell} - \Eini_{\ell}}, \varphi) \, \ud x}  \\
	\leq & \bb( \nrm{{\Fini_{(n)}} - \Eini}_{L^{2}_{x}} + \nrm{{\Aini_{(n)}} - \Aini}_{L^{3}_{x}} \nrm{{\Fini_{(n)}}}_{L^{2}_{x}} + \nrm{\Aini}_{L^{3}_{x}} \nrm{{\Fini_{(n)}} - \Eini}_{L^{2}_{x}}  \bb) \nrm{\varphi}_{\dot{H}^{1}_{x}}.
\end{align*}

In view of the $L^{3}_{x}, L^{2}_{x}$ convergence of $\Aini_{(n)}, \Fini_{(n)}$ to $\Aini, \Eini$, respectively, it follows that 
\begin{equation*}
\covD_{(n)}^{\ell} \Fini_{(n) \ell} \in \dot{H}_{x}^{-1} \hbox{ for each $n$}, \quad \nrm{\covD_{(n)}^{\ell} \, \Fini_{(n) \ell}}_{\dot{H}_{x}^{-1}} \to 0 \quad \hbox{ as }n \to \infty,
\end{equation*}
where $\dot{H}_{x}^{-1}$ is the dual space of $\dot{H}_{x}^{1}$ (defined to be the closure of Schwartz functions on $\bbR^{3}$ under the $\dot{H}_{x}^{1}$-norm).

Let us now define $\Eini_{(n) i} := \Fini_{(n) i} + \covD_{(n) i} \phi_{(n)}$, where the $\LieAlg$-valued function $\phi_{(n)}$ is constructed by solving the elliptic equation
\begin{equation} \label{eq:regApprox:pf:1}
	\covD_{(n)}^{\ell} \, \covD_{(n) \ell} \phi_{(n)} = - \covD_{(n)}^{\ell} \, \Fini_{(n) \ell},
\end{equation}
imposing a suitable decay condition at infinity; we want, in particular, to have $\phi_{(n)} \in \dot{H}^{1}_{x} \cap L^{6}_{x}$. This ensures that $(\Aini_{(n) i}, \Eini_{(n) i})$ satisfies the constraint equation. Furthermore, in view of the fact that $\Aini_{(n)}, \Fini_{(n)}$ are smooth and compactly supported, it is clear that $\covD_{(n)} \phi_{(n)}$ belongs to any $H^{k}_{x}$ for $k \geq 0$, and hence so does $\Eini_{(n)}$. Therefore, in order to prove the lemma, it is only left to prove $\covD_{(n)} \phi_{(n)} \to 0$ in $L^{2}_{x}$.

Multiplying \eqref{eq:regApprox:pf:1} by $\phi_{(n)}$ and integrating by parts, we obtain
\begin{equation} \label{eq:regApprox:pf:2}
	\int \abs{\covD_{(n)} \phi_{(n)}}^{2} \, \ud x \leq \nrm{\covD_{(n)}^{\ell} \Fini_{(n) \ell}}_{\dot{H}_{x}^{-1}} \nrm{\phi_{(n)}}_{\dot{H}_{x}^{1}}.
\end{equation}

On the other hand, expanding out $\covD_{(n)}$, we have
\begin{equation} \label{eq:regApprox:pf:3}
	\nrm{\phi_{(n)}}_{\dot{H}_{x}^{1}} \leq \nrm{\covD_{(n)} \phi_{(n)}}_{L^{2}_{x}} + \nrm{\Aini_{(n)}}_{L^{3}_{x}} \nrm{\phi_{(n)}}_{L^{6}_{x}}.
\end{equation}

Recall Kato's inequality (for a proof, see \cite[Lemma 4.2]{Oh:2012fk}), which shows that $\abs{\rd_{i} \abs{\phi_{(n)}}} \leq \abs{ \covD_{(n) i} \phi_{(n)}}$ in the distributional sense. Combining this with the $\dot{H}^{1}_{x} \subset L^{6}_{x}$ Sobolev inequality for $\abs{\phi_{(n)}}$, we get
\begin{equation} \label{eq:regApprox:pf:4}
	\nrm{\phi_{(n)}}_{L^{6}_{x}} \leq C \nrm{\covD_{(n)} \phi_{(n)}}_{L^{2}_{x}}.
\end{equation}

Combining \eqref{eq:regApprox:pf:2} - \eqref{eq:regApprox:pf:4} and canceling a factor of $\nrm{\covD_{(n)} \phi_{(n)}}_{L^{2}_{x}}$, we arrive at
\begin{equation*}
	\nrm{\covD_{(n)} \phi_{(n)}}_{L^{2}_{x}} 
	\leq \nrm{\covD_{(n)}^{\ell} (\Fini_{(n) \ell})}_{\dot{H}_{x}^{-1}} (1 + C \nrm{\Aini_{(n)}}_{L^{3}_{x}})
	\to 0,
\end{equation*}
as desired. \qedhere
\end{proof}

Given a time interval $I \subset \bbR$, we claim the existence of norms $\calA_{0}(I)$ and $\dlt \calA_{0}(I)$ for $A_{0}$ and $\dlt A_{0}$ on $I$, respectively, for which the following lemma holds. The significance of these norms will be that they can be used to estimate the gauge transform back to the original temporal gauge.

\begin{lemma}[Estimates for gauge transform to temporal gauge] \label{lem:est4gt2temporal}
%\comment{Add regularity : State in terms of regular objects.}
Consider the following ODE on $(-T, T) \times \bbR^{3}$:
\begin{equation} \label{eq:est4gt2temporal:0}
\left \{
\begin{aligned}
	& \rd_{t} V = V A_{0} \\
	& V(t=0) = \Vini,
\end{aligned}
\right.
\end{equation}
where we assume that $A_{0}$ is smooth and $\calA_{0}(-T, T) < \infty$.

\begin{enumerate}
\item Suppose that $\Vini = \Vini(x)$ is a smooth $\LieGrp$-valued function on $\set{t=0} \times \bbR^{3}$ such that 
\begin{equation*}
	\Vini, \Vini^{-1} \in L^{\infty}_{x}, \quad \rd_{x} \Vini, \rd_{x} \Vini^{-1} \in L^{3}_{x}, \quad \rd_{x}^{(2)} \Vini, \rd_{x}^{(2)} \Vini^{-1} \in L^{2}_{x} .
\end{equation*} 

Then there exists a unique solution $V$ to the ODE, which obeys the following estimates.\begin{equation} \label{eq:est4gt2temporal:V}
\left\{
\begin{aligned}
	\nrm{V}_{L^{\infty}_{t} L^{\infty}_{x} (-T, T)} 
	\leq & C_{\calA_{0}(-T, T)} \cdot \nrm{\Vini}_{L^{\infty}_{x}}, \\
	\nrm{\rd_{t,x} V}_{L^{\infty}_{t} L^{3}_{x} (-T, T)} 
	\leq & C_{\calA_{0}(-T, T)} \cdot (\nrm{\rd_{x} \Vini}_{L^{3}_{x}} +\calA_{0}(-T, T) \nrm{\Vini}_{L^{\infty}_{x}}), \\
	\nrm{\rd_{x} \rd_{t,x} V}_{L^{\infty}_{t} L^{2}_{x} (-T, T)} 
	\leq & C_{\calA_{0}(-T, T)} \cdot (\nrm{\rd_{x}^{(2)} \Vini}_{L^{2}_{x}} +\calA_{0}(-T, T) (\nrm{\Vini}_{L^{\infty}_{x}} + \nrm{\rd_{x} \Vini}_{L^{3}_{x}}) ). 
\end{aligned}
\right.
\end{equation}

\item Let $A'_{0}$ be a smooth connection coefficient with $\calA'_{0}(-T, T) < \infty$, and $\Vini'$ a $\LieGrp$-valued smooth function on $\set{t=0} \times \bbR^{3}$ also satisfying the hypotheses of (1). Let $V'$ be the solution to the ODE \eqref{eq:est4gt2temporal:0} with $A_{0}$ and $\Vini$ replaced by $A'_{0}$, $\Vini'$, respectively. Then the difference $\dlt V := V - V'$ satisfies the following estimates.
\begin{equation} \label{eq:est4gt2temporal:dltV}
\begin{aligned}
	&\nrm{\dlt V}_{L^{\infty}_{t} L^{\infty}_{x}(-T, T)}  + \nrm{\rd_{t,x} \dlt V}_{L^{\infty}_{t} L^{3}_{x}(-T, T)} + \nrm{\rd_{x} \rd_{t,x} \dlt V(t)}_{L^{\infty}_{t} L^{2}_{x}(-T, T)}  \\
	& \quad \leq  C_{\calA_{0}(-T, T)} \cdot (\nrm{\dlt \Vini}_{L^{\infty}_{x}} + \nrm{\rd_{x} \dlt \Vini}_{L^{3}_{x}} + \nrm{\rd_{x}^{(2)} \dlt \Vini}_{L^{2}_{x}}) \\
	& \phantom{\quad \leq}	+ C_{\calA_{0}(-T, T)} \cdot (\nrm{\Vini}_{L^{\infty}_{x}} + \nrm{\rd_{x} \Vini}_{L^{3}_{x}} + \nrm{\rd_{x}^{(2)} \Vini}_{L^{2}_{x}}) \, \dlt \calA_{0}(-T, T)
\end{aligned}
\end{equation}

\item Finally, all of the above statement remain true with $V$, $\dlt V$, $\Vini$, $\dlt \Vini$ replaced by $V^{-1}$, $\dlt V^{-1}$, $\Vini^{-1}$ and $\dlt \Vini^{-1}$, respectively.
\end{enumerate}

\end{lemma}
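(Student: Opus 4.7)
The plan is to view $\rd_t V = V A_0$ as a linear ODE in $t$ at each fixed $x$, and to obtain the bounds by differentiating the equation in $x$ and iteratively applying Gronwall's inequality (Lemma \ref{lem:prelim:Gronwall}). For part (1), existence and uniqueness of a smooth $V$ are provided by standard Picard iteration, provided $\calA_{0}(-T,T)$ controls enough norms of $A_{0}$; the definition of $\calA_{0}(I)$ will be read off from the Gronwall computations and should contain (essentially) the $L^{1}_{t} L^{\infty}_{x}$, $L^{1}_{t} L^{3}_{x}$ and $L^{1}_{t} L^{2}_{x}$ norms of $A_{0}$, $\rd_{x} A_{0}$, $\rd_{x}^{(2)} A_{0}$, respectively, together with whatever is needed to bound $A_{0}$ itself pointwise in $x$ by Sobolev.

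For $\nrm{V}_{L^{\infty}_{t} L^{\infty}_{x}}$, one applies the triangle inequality to the integral form $V(t) = \Vini + \int_{0}^{t} V A_{0} \, \ud s$, then invokes Gronwall to obtain $\nrm{V(t)}_{L^{\infty}_{x}} \leq \nrm{\Vini}_{L^{\infty}_{x}} \exp(C \calA_{0}(-T,T))$. For the first derivative, differentiation gives
\begin{equation*}
  \rd_{t} \rd_{x} V = (\rd_{x} V) A_{0} + V (\rd_{x} A_{0}),
\end{equation*}
and Gronwall in $L^{3}_{x}$ yields the stated bound on $\rd_{t,x} V$ (the $\rd_{t} V$ piece is handled directly by $\rd_{t} V = V A_{0}$ and H\"older). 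For the second derivative,
\begin{equation*}
  \rd_{t} \rd_{x}^{(2)} V = (\rd_{x}^{(2)} V) A_{0} + 2 (\rd_{x} V)(\rd_{x} A_{0}) + V (\rd_{x}^{(2)} A_{0}),
\end{equation*}
and the cross term is estimated via the Sobolev embedding $\dot{H}^{1}_{x} \subset L^{6}_{x}$ applied to $\rd_{x} V$ (using the $\rd_{x}^{(2)} V \in L^{\infty}_{t} L^{2}_{x}$ quantity being controlled), which together with the previously obtained control on $\rd_{x} V$ closes another Gronwall in $L^{2}_{x}$. Since the three estimates form a triangular system (each higher order quantity depends on the previously bounded lower order ones together with the same exponential factor), a sequence of scalar Gronwall applications suffices.

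For part (2), the difference $\dlt V$ satisfies the inhomogeneous ODE
\begin{equation*}
  \rd_{t} \dlt V = \dlt V \, A_{0} + V' \, \dlt A_{0}, \qquad \dlt V(t=0) = \dlt \Vini,
\end{equation*}
with analogous equations for $\rd_{x} \dlt V$ and $\rd_{x}^{(2)} \dlt V$ obtained by differentiation (which produce source terms involving up to two $x$-derivatives of $V'$ and $\dlt A_{0}$). Applying the same three-tiered Gronwall scheme, the homogeneous piece again yields an exponential in $\calA_{0}(-T,T)$, while the inhomogeneous pieces are bounded using part (1) applied to $V'$ (giving the factor $\nrm{\Vini}_{L^{\infty}_{x}} + \nrm{\rd_{t,x} \Vini}_{L^{3}_{x}} + \nrm{\rd_{x} \rd_{t,x} \Vini}_{L^{2}_{x}}$) multiplied by $\dlt \calA_{0}(-T,T)$, which is the norm defined to control the corresponding norms of $\dlt A_{0}$. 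For part (3), observe that differentiating $V V^{-1} = \mathrm{Id}$ gives $\rd_{t} V^{-1} = - A_{0} V^{-1}$, which is structurally identical to the original ODE (with the roles of left and right multiplication swapped); the same Picard/Gronwall scheme yields the bounds on $V^{-1}$. The difference statement for $\dlt V^{-1}$ follows either by repeating the argument or by using the identity $\dlt V^{-1} = -(V')^{-1} \dlt V V^{-1}$ together with the bounds already proved.

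The main technical point, hence the place where one must be careful, is the bookkeeping: one needs to identify the precise combination of $L^{1}_{t}$-in-time, $L^{p}_{x}$-in-space norms of $A_{0}$ and its derivatives that (a) close the three coupled Gronwall estimates in the stated function spaces, and (b) are sufficiently weak to be bounded by the analysis of the hyperbolic system (ultimately Proposition \ref{prop:est4a0}) so that this lemma is actually usable downstream. Once this is fixed, the estimates themselves are routine ODE arguments.
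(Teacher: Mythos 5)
Your proposal is correct and follows essentially the same route as the paper. The only organizational difference is that the paper factors the iterative Gronwall argument into a standalone general ODE lemma (Proposition \ref{prop:est4gt}), which is then applied both here and to the harder caloric-gauge transform Lemma \ref{lem:est4gt2caloric}; your direct computations (triangular Gronwall in $L^\infty_x$, $L^3_x$, $L^2_x$; H\"older plus $\dot H^1_x \subset L^6_x$ for the cross term $(\rd_x V)(\rd_x A_0)$; the inhomogeneous ODE $\rd_t \dlt V = (\dlt V)A_0 + V'\dlt A_0$ for differences; and $\rd_t V^{-1} = -A_0 V^{-1}$ for inverses) match the paper's proof line by line, and your inference of what $\calA_0$ must contain is correct modulo that one also needs the $L^\infty_t L^3_x$ and $L^\infty_t L^2_x$ norms of $A_0$ and $\rd_x A_0$ to control the pointwise-in-time quantities $\rd_t V$ and $\rd_x\rd_t V$, which you gesture at but do not spell out. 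One extra feature of Proposition \ref{prop:est4gt} that is not needed here — the Fubini trick allowing $\sup_\omg \|\int \lap A\|_{L^2_x}$ to replace $\|\rd_x^{(2)}A\|_{L^1_\omg L^2_x}$ — is present in the paper only because the same proposition must also serve Lemma \ref{lem:est4gt2caloric}, where $A_s$ lacks the stronger bound; for the present lemma, $\calA_0$ contains $\|\rd_x^{(2)}A_0\|_{L^1_t L^2_x}$ directly, so your simpler version is exactly what the paper invokes.
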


The precise definitions of $\calA_{0}, \dlt \calA_{0}$ will be given in \S \ref{subsec:defOfNorms}, whereas we defer the proof of Lemma \ref{lem:est4gt2temporal} to Appendix \ref{sec:gt}.

Next, we prove a simple lemma which will be used to estimate the $L^{3}_{x}$ norm of our solution.
\begin{lemma} \label{lem:est4L3nrm}
Let $\psi = \psi(t,x)$ be a function defined on $(-T, T) \times \bbR^{3}$ such that $\psi(0) \in L^{3}_{x}$ and $\rd_{t,x} \psi \in C_{t} L^{2}_{x}$. Then $\psi \in C_{t} L^{3}_{x}$ and the following estimate holds.
\begin{equation} \label{eq:est4L3nrm:0}
	\sup_{t \in (-T, T)} \nrm{\psi(t)}_{L^{3}_{x}} \leq \nrm{\psi(0)}_{L^{3}_{x}} + C T^{1/2} \nrm{\rd_{t,x} \psi}_{L^{\infty}_{t} L^{2}_{x}}.
\end{equation}
\end{lemma}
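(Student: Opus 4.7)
The plan is to reduce everything to estimating $\psi(t) - \psi(0)$ in $L^{3}_{x}$, then apply a Gagliardo--Nirenberg interpolation between an $L^{2}_{x}$ bound (obtained by integrating $\rd_{t}\psi$ in time) and an $\dot{H}^{1}_{x}$ bound (coming directly from the hypothesis on $\rd_{x}\psi$). The $T^{1/2}$ factor in \eqref{eq:est4L3nrm:0} will arise from the geometric mean of the bounds $\|\psi(t)-\psi(0)\|_{L^{2}_{x}} \aleq T$ and $\|\rd_{x}(\psi(t)-\psi(0))\|_{L^{2}_{x}} \aleq 1$.

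More concretely, since $\rd_{t}\psi \in C_{t}L^{2}_{x}$, the Bochner integral $\int_{0}^{t}\rd_{s}\psi(s,\cdot)\,\ud s$ defines an element of $L^{2}_{x}$ for each $t \in (-T,T)$, and by the fundamental theorem of calculus (applied componentwise after pairing with Schwartz test functions, say) it agrees with $\psi(t,\cdot) - \psi(0,\cdot)$ as a distribution. Minkowski's integral inequality then yields
\begin{equation*}
	\nrm{\psi(t) - \psi(0)}_{L^{2}_{x}} \leq \int_{0}^{t} \nrm{\rd_{s}\psi(s)}_{L^{2}_{x}} \, \ud s \leq T \nrm{\rd_{t,x}\psi}_{L^{\infty}_{t}L^{2}_{x}}.
\end{equation*}
At the same time, $\rd_{x}(\psi(t) - \psi(0)) = \rd_{x}\psi(t) - \rd_{x}\psi(0) \in L^{2}_{x}$, with norm at most $2\nrm{\rd_{t,x}\psi}_{L^{\infty}_{t}L^{2}_{x}}$.

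Next, I would apply the Gagliardo--Nirenberg inequality \eqref{eq:prelim:sob:3} (with $r = 3$, $q_{1} = q_{2} = 2$, $\tht_{1} = \tht_{2} = 1/2$) to $f := \psi(t) - \psi(0) \in \dot{H}^{1}_{x} \cap L^{2}_{x}$, which gives
\begin{equation*}
	\nrm{f}_{L^{3}_{x}} \leq C \nrm{f}_{L^{2}_{x}}^{1/2} \nrm{\rd_{x}f}_{L^{2}_{x}}^{1/2}.
\end{equation*}
Inserting the two bounds above produces $\nrm{\psi(t) - \psi(0)}_{L^{3}_{x}} \leq C T^{1/2} \nrm{\rd_{t,x}\psi}_{L^{\infty}_{t}L^{2}_{x}}$. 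Since $\psi(0) \in L^{3}_{x}$ by hypothesis, the triangle inequality yields the desired bound \eqref{eq:est4L3nrm:0}.

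Finally, to see that $\psi \in C_{t}L^{3}_{x}$, the exact same argument applied to the interval $[t, t+h] \subset (-T,T)$ gives
\begin{equation*}
	\nrm{\psi(t+h) - \psi(t)}_{L^{3}_{x}} \leq C \abs{h}^{1/2} \nrm{\rd_{t,x}\psi}_{L^{\infty}_{t}L^{2}_{x}},
\end{equation*}
which exhibits $t \mapsto \psi(t)$ as a Hölder continuous $L^{3}_{x}$-valued map (of exponent $1/2$), and in particular continuous. There is no serious obstacle here; the only mild subtlety is justifying that $\psi(t) - \psi(0)$ truly lies in $L^{2}_{x}$ so that Gagliardo--Nirenberg applies, but this is immediate from the Bochner integral representation above.
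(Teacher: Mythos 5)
Your proof is correct and takes essentially the same approach as the paper: both estimate $\psi(t)-\psi(0)$ by combining a fundamental-theorem-of-calculus bound in $L^2_x$ with the $\dot{H}^1_x$ control from the hypothesis, then interpolate to $L^3_x$ and use the triangle inequality. The only cosmetic difference is that the paper writes the interpolation as H\"older ($\|f\|_{L^3}\leq\|f\|_{L^2}^{1/2}\|f\|_{L^6}^{1/2}$) followed by the Sobolev embedding $\dot{H}^1_x\subset L^6_x$, while you invoke the Gagliardo--Nirenberg form of the same inequality in one step; your explicit H\"older-continuity-in-$t$ argument at the end is also a minor addition (the paper relies on a standard approximation reduction instead).
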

\begin{proof} 
By a standard approximation procedure, it suffices to consider $\psi = \psi(t,x)$ defined on $(-T, T) \times \bbR^{3}$ which is smooth in time and Schwartz in space. For $t \in (-T, T)$, we estimate via H\"older, Sobolev and the fundamental theorem of calculus as follows:
\begin{align*}
	\nrm{\psi(t) - \psi(0)}_{L^{3}_{x}} 
	\leq & \nrm{\psi(t) - \psi(0)}_{L^{2}_{x}}^{1/2} \nrm{\psi(t)-\psi(0)}_{L^{6}_{x}}^{1/2} \\
	\leq & C (\int_{0}^{t} \nrm{\rd_{t} \psi(t')}_{L^{2}_{x}} \, \ud t')^{1/2} (\nrm{\rd_{x} \psi(t)}_{L^{2}_{x}} + \nrm{\rd_{x} \psi(0)}_{L^{2}_{x}} )^{1/2} \\
	\leq & C T^{1/2} \nrm{\rd_{t} \psi}_{L^{\infty}_{t} L^{2}_{x}}^{1/2} \nrm{\rd_{x} \psi}_{L^{\infty}_{t} L^{2}_{x}}^{1/2} 
	\leq C T^{1/2} \nrm{\rd_{t,x} \psi}_{L^{\infty}_{t} L^{2}_{x}}.
\end{align*}

By the triangle inequality, \eqref{eq:est4L3nrm:0} follows. \qedhere
\end{proof}

\subsection{Reduction of the Main Theorem}
For $A_{\bfa}, A'_{\bfa}$ regular solutions to \eqref{eq:HPYM} (defined in \S \ref{subsec:overview}) on $I \times \bbR^{3} \times [0,1]$, we claim the existence of norms $\calI$ and $\dlt \calI$ which measure the sizes of $A_{\bfa}$ and $\dlt A_{\bfa}$, respectively, at $t=0$ (i.e., the size of the initial data), such that the theorems below hold. The precise definitions will be given in Section \ref{sec:pfOfIdEst}. 

\begin{theorem}[Estimates for initial data in the caloric-temporal gauge] \label{thm:idEst}
Let $0 < T \leq 1$, and $\Atemp_{\mu}$ a regular solution to the Yang-Mills equation in the temporal gauge $\Atemp_{0} = 0$ on $(-T, T) \times \bbR^{3}$ with the initial data $(\Aini_{i}, \Eini_{i})$ at $t=0$. Define $\calIini := \nrm{\Aini}_{\dot{H}^{1}_{x}} + \nrm{\Eini}_{L^{2}_{x}}$. Suppose that
\begin{equation} \label{eq:idEst:hypothesis}
	\sup_{t \in (-T, T)} \sup_{i} \nrm{\Atemp_{i}(t)}_{\dot{H}^{1}_{x}} < \dlt_{P},
\end{equation}
where $\dlt_{P}$ is a small constant to be introduced in Proposition \ref{prop:YMHF4A:lwp4deT}.  Then the following statements hold.
\begin{enumerate}
\item There exists a regular gauge transform $V = V(t,x)$ on $(-T, T) \times \bbR^{3}$ and a regular solution $A_{\bfa}$ to \eqref{eq:HPYM} on $(-T, T) \times \bbR^{3} \times [0,1]$ such that
\begin{equation} \label{eq:idEst:0}
	A_{\mu}(s=0) = V (\Atemp_{\mu}) V^{-1} - \rd_{\mu} V V^{-1}.
\end{equation}

\item Furthermore, the solution $A_{\bfa}$ satisfies the caloric-temporal gauge condition, i.e., $A_{s} = 0$ everywhere and $\Alow_{0} = 0$.

\item Let $\AtempPrime_{\mu}$ be another regular solution to the Yang-Mills equation in the temporal gauge with the initial data $(\Aini'_{i}, \Eini'_{i})$ satisfying $\nrm{(\Aini, \Eini)}_{\dot{H}^{1}_{x} \times L^{2}_{x}} \leq \calIini$ and \eqref{eq:idEst:hypothesis}. Let $A'_{\bfa}$ be the solution to \eqref{eq:HPYM} in the caloric-temporal gauge obtained from $\AtempPrime_{\mu}$ as in Parts (1) and (2).  Then the following initial data estimates hold:
\begin{equation} \label{eq:idEst:1}
	\calI \leq C_{\calIini} \cdot \calIini, \quad \dlt \calI \leq C_{\calIini} \cdot \dlt \calIini,
\end{equation}
where $\dlt \calIini := \nrm{\dlt \Aini}_{\dot{H}^{1}_{x}} + \nrm{\dlt \Eini}_{L^{2}_{x}}$. 
We remark that $\calI, \dlt \calI$ are defined in Section \ref{sec:pfOfIdEst}.
%We furthermore have
%\begin{equation} \label{eq:idEst:2}
%	\nrm{\Alow(t=0)}_{L^{3}_{x}} \leq D^{1}(\calIini) + \nrm{\Aini}_{L^{3}_{x}}, \quad \nrm{\dlt \Alow(t=0)}_{L^{3}_{x}} \leq D^{0}(\calIini) \, \dlt \calIini + \nrm{\dlt \Aini}_{L^{3}_{x}}.
%\end{equation}

\item Let $V'$ be the gauge transform obtained from $\AtempPrime_{i}$ as in Part (1), and let us write $\Vini := V(t=0), \Vini' := V'(t=0)$. For the latter two gauge transforms, the following estimates hold:
\begin{equation} \label{eq:idEst:3}
	\nrm{\Vini}_{L^{\infty}_{x}} \leq C_{\calIini}, \quad  \nrm{\rd_{x} \Vini}_{L^{3}_{x}} + \nrm{\rd_{x}^{(2)} \Vini}_{L^{2}_{x}} \leq C_{\calIini} \cdot \calIini,
\end{equation}
\begin{equation} \label{eq:idEst:4}
	\nrm{\dlt \Vini}_{L^{\infty}_{x}} + \nrm{\rd_{x} (\dlt \Vini)}_{L^{3}_{x}} + \nrm{\rd_{x}^{(2)} (\dlt \Vini)}_{L^{2}_{x}} \leq C_{\calIini} \cdot \dlt \calIini.
\end{equation}
\end{enumerate}

The same estimates with $\Vini$ and $\dlt \Vini$ replaced by $\Vini^{-1}$ and $\dlt \Vini^{-1}$, respectively, also hold.

\end{theorem}

\begin{theorem}[Estimates for $t$-evolution in the caloric-temporal gauge] \label{thm:dynEst}
Let $0 < T \leq 1$, and $A_{\bfa}$ a regular solution to the hyperbolic-parabolic Yang-Mills system \eqref{eq:HPYM} on $(-T, T) \times \bbR^{3} \times [0,1]$ in the caloric-temporal gauge. Then there exists $\dlt_{H} >0$ such that if
\begin{equation} \label{eq:dynEst:hypothesis}
	\calI < \dlt_{H},
\end{equation}
then we have
\begin{equation} \label{eq:dynEst:0}
	\sup_{0 \leq s \leq 1} \nrm{\rd_{t,x} A_{i}(s)}_{C_{t}((-T, T), L^{2}_{x})} 	
	+  \calA_{0}(-T, T)
	\leq C \calI,
\end{equation}
where $\calA_{0}(-T, T)$ is defined in \S \ref{subsec:defOfNorms}.
%\begin{equation} \label{eq:dynEst:1}
%	\sup_{0 \leq s \leq 1} \nrm{A_{i}(s)}_{C_{t}((-T, T), L^{3}_{x})} \leq C \calI + C \sup_{i} \nrm{\Alow_{i}(t=0)}_{L^{3}_{x}}.
%\end{equation}

Also, if $A'_{\bfa}$ is an additional solution to \eqref{eq:HPYM} on $(-T, T) \times \bbR^{3} \times [0,1]$ in the caloric-temporal gauge which also satisfies \eqref{eq:dynEst:hypothesis}, then we have
\begin{equation} \label{eq:dynEst:2}
	\sup_{0 \leq s \leq 1} \nrm{(\rd_{t,x} A_{i} - \rd_{t,x} A'_{i})(s)}_{C_{t}((-T, T), L^{2}_{x})} 
	+  \dlt \calA_{0}(-T, T)
	\leq C_{\calI} \cdot \dlt \calI,
\end{equation}
where $\dlt \calA_{0}(-T, T)$ is defined in \S \ref{subsec:defOfNorms}.
%\begin{equation} \label{eq:dynEst:3}
%	\sup_{0 \leq s \leq 1} \nrm{(A_{i}-A'_{i})(s)}_{C_{t}((-T, T), L^{3}_{x})} \leq C_{\calI} \, \dlt \calI + C \sup_{i} \nrm{(\Alow_{i}- \Alow'_{i})(t=0)}_{L^{3}_{x}}.
%\end{equation}

%Provided that the size of the initial data, measured by $\calI$, is small, we have estimates for $A_{i}$ and $A_{0}$, uniform in $0 \leq s \leq 1$ in terms of $\calI$. We also have estimates for the differences.
\end{theorem}

Our goal is to prove the Main Theorem, assuming Theorems \ref{thm:idEst} and \ref{thm:dynEst}.
\begin{proof} [Proof of the Main Theorem]
%\comment{Rewrite the bootstrap part! The point is to ensure that $\Atemp_{i}$ exists and the initial data analysis can be applied by the bootstrap procedure.}

In view of Lemma \ref{lem:mainThm:regApprox} (approximation lemma) and the fact that we are aiming to prove the difference estimates \eqref{eq:mainThm:2} and \eqref{eq:mainThm:3}, we will first consider initial data sets $(\Aini_{i}, \Eini_{i})$ which are regular in the sense of Definition \ref{def:reg4id}. Also, for the purpose of stating the estimates for differences, we will consider an additional regular initial data set $(\Aini'_{i}, \Eini'_{i})$. The corresponding solution will be also marked by a prime. The statements in this proof concerning a solution $A$ should be understood as being applicable to both $A$ and $A'$. 

Observe that $\calIini$ does not contain the $L^{3}_{x}$ norm of $\Aini$, and has the scaling property.
\begin{equation*}
	\calIini \to \lmb^{-1/2} \calIini
\end{equation*}
under the scaling of the Yang-Mills equation \eqref{eq:intro:scaling}. This allows us to treat the `local-in-time, large-data' case on an equal footing as the `unit-time, small-data' case. More precisely, we will assume by scaling that $\calIini$ is sufficiently small, and prove that the solution to the Yang-Mills equation exists on the time interval $(-1, 1)$. Unravelling the scaling at the end, the Main Theorem will follow. We remark that the length of the time interval of existence obtained by this method will be of size $\aeq \nrm{(\Aini, \Eini)}_{\dot{H}^{1}_{x} \times L^{2}_{x}}^{-2}$. 

Using Theorem \ref{thm:mainThm:H2lwp}, we obtain a unique solution $\Atemp_{\mu}$ to the hyperbolic Yang-Mills equation \eqref{eq:hyperbolicYM} under the temporal gauge condition $\Atemp_{0} = 0$. We remark that this solution is \emph{regular} by persistence of regularity. Let us denote by $T_{\star}$ the largest number $T > 0$ such that the solution $\Atemp_{\mu}$ exists smoothly on $(-T, T) \times \bbR^{3}$, and furthermore satisfies the following estimates for some $B>0$ and $C_{\calIini} > 0$:
\begin{equation} \label{eq:mainThm:pf:0}
\left\{
\begin{aligned}
	\nrm{\rd_{t,x} \Atemp_{i}}_{C_{t}((-T, T), L^{2}_{x})} \leq & B \calIini, \\
	\nrm{\rd_{t,x} \Atemp_{i} - \rd_{t,x} \AtempPrime_{i}}_{C_{t}((-T, T), L^{2}_{x})} \leq & C_{\calIini, B} \cdot \dlt \calIini, \\
	\nrm{\Atemp_{i} - \AtempPrime_{i}}_{C_{t}((-T, T), L^{3}_{x})} \leq & C_{\calIini, B} \cdot \dlt \calIini + \nrm{\Aini - \Aini'}_{L^{3}_{x}}.
\end{aligned}
\right.
\end{equation}

The goal is to show that $T_{\star} \geq 1$, provided that $\calIini > 0$ is small enough. 

We will proceed by a bootstrap argument. In view of the continuity of the norms involved, the inequalities \eqref{eq:mainThm:pf:0} are satisfied for $T > 0$ sufficiently small if $B \geq 2$ and $C_{\calIini, B} \geq 2$. Next, we claim that if we assume
\begin{equation} \label{eq:mainThm:pf:1}
\begin{aligned}
	\nrm{\rd_{t,x} \Atemp_{i}}_{C_{t}((-T, T), L^{2}_{x})} \leq & 2B \calIini.
\end{aligned}
\end{equation}
then we can recover \eqref{eq:mainThm:pf:0} by assuming $\calIini$ to be small enough and $T \leq 1$. 

Assuming the claim holds, let us first complete the proof of the Main Theorem. Indeed, suppose that \eqref{eq:mainThm:pf:0} holds for some $0 \leq T < 1$. Applying the difference estimate in \eqref{eq:mainThm:pf:0} to infinitesimal translations of $\Aini, \Eini$ and using the translation invariance of the Yang-Mills equation, we obtain
\begin{equation*}
 \nrm{\rd_{x} \rd_{t,x} \Atemp_{i}}_{C_{t}((-T, T), L^{2}_{x})}  < \infty.
\end{equation*}

This, in turn, allows us to apply Theorem \ref{thm:mainThm:H2lwp} ($H^{2}$ local well-posedness) to ensure that the solution $\Atemp_{i}$ extends uniquely as a regular solution to a larger time interval $(-T-\eps, T+\eps)$ for some $\eps > 0$. Taking $\eps > 0$ smaller if necessary, we can also ensure that the bootstrap assumption \eqref{eq:mainThm:pf:1} holds and $T + \eps \leq 1$. This, along with the claim, allows us to set up a continuity argument to show that a regular solution $\Atemp_{i}$ exists uniquely on the time interval $(-1, 1)$ and furthermore satisfies \eqref{eq:mainThm:pf:0} with $T =1$. From \eqref{eq:mainThm:pf:0}, the estimates \eqref{eq:mainThm:0} - \eqref{eq:mainThm:3} follow immediately, for regular initial data sets. Then by Lemma \ref{lem:mainThm:regApprox} and the difference estimates \eqref{eq:mainThm:2} and \eqref{eq:mainThm:3}, these results are extended to admissible initial data sets and solutions, which completes the proof of the Main Theorem. We remark that Part (3) of the Main Theorem follows from the persistence of regularity statement in Theorem \ref{thm:mainThm:H2lwp}.

Let us now prove the claim. Assuming $2 B \calIini < \dlt_{P}$, we can apply Theorem \ref{thm:idEst}. This provides us with a regular gauge transform $V$ and a regular solution $A_{\bfa}$ to \eqref{eq:HPYM} satisfying the caloric-temporal gauge condition, along with the following estimates at $t=0$:
\begin{equation*}
\begin{aligned}
	\nrm{\Vini}_{L^{\infty}_{x}} \leq C_{\calIini}, \quad 
	\nrm{\rd_{x} \Vini}_{L^{3}_{x}}  + \nrm{\rd_{x}^{(2)} \Vini}_{L^{2}_{x}} \leq C_{\calIini} \cdot \calIini, \\
	\calI \leq C_{\calIini} \cdot \calIini, \quad 
	\nrm{\Alow_{i}(t=0)}_{L^{3}_{x}} \leq C_{\calIini} \cdot \calIini + \nrm{\Aini}_{L^{3}_{x}}.
\end{aligned}\end{equation*}

The same estimates as the first two hold with $\Vini$ replaced by $\Vini^{-1}$. We remark that all the constants stated above are independent of $B > 0$. Applying Theorem \ref{thm:dynEst} with $\calIini$ small enough (so that $\calI  \leq C_{\calIini} \cdot \calIini$ is also small), we have
\begin{equation*}
	\sup_{0 \leq s \leq 1} \nrm{\rd_{t,x} A_{i}(s)}_{C_{t}((-T, T),  L^{2}_{x})} + \calA_{0}(-T, T)
	\leq C \calI \leq C_{\calIini} \cdot \calIini.
\end{equation*}
%\begin{equation*}
%\sup_{0 \leq s \leq 1} \nrm{A_{i}(s)}_{C_{t}((-T, T), L^{3}_{x})} \leq C \calI + C_{\calI} \, \sup_{i} \nrm{\Alow_{i}(t=0)}_{L^{3}_{x}}.
%\end{equation*}

Note that $V$ is a solution to the ODE \eqref{eq:est4gt2temporal:0}, which is unique by the standard ODE theory. Furthermore, in view of the estimates we have for $\calA_{0}(-T, T)$ and $\Vini$ in terms of $\calIini$, we may invoke Lemma \ref{lem:est4gt2temporal} to estimate the gauge transform $V$ in terms of $\calIini$. The same procedure can be used to obtain estimates for $V^{-1}$ in terms of $\calIini$. Then using the previous bound for $\rd_{t,x} A_{i}(s=0)$ and the gauge transform formula
\begin{equation*}
	\Atemp_{i} = V^{-1} A_{i}(s=0) V - \rd_{i} (V^{-1}) V,
\end{equation*}  
we obtain 
\begin{equation*}
\nrm{\rd_{t,x} \Atemp_{i}}_{C_{t}((-T, T), L^{2}_{x})} \leq C_{\calIini} \cdot \calIini.
\end{equation*}

Applying Lemma \ref{lem:est4L3nrm} and the initial data estimate for the $L^{3}_{x}$ norm of $\Alow_{i}$, we also obtain 
\begin{equation*}
\nrm{\Atemp_{i}}_{C_{t}((-T, T), L^{3}_{x})} <  C_{\calIini} \cdot \calIini + C_{\calIini} \nrm{\Aini}_{L^{3}_{x}}.
\end{equation*}

Furthermore, applying a similar procedure to the difference, we obtain
\begin{equation*}
	\nrm{\rd_{t,x} \Atemp_{i} - \rd_{t,x} \AtempPrime_{i}}_{C_{t}((-T, T), L^{2}_{x})} \leq C_{\calIini} \cdot \dlt\calIini,
\end{equation*}
\begin{equation*}
	\nrm{\Atemp_{i} - \AtempPrime_{i}}_{C_{t}((-T, T), L^{3}_{x})} \leq C_{\calIini} \cdot \dlt \calIini + \nrm{\Aini - \Aini'}_{L^{3}_{x}}.
\end{equation*}

Therefore, taking $B > 0$ sufficiently large (while keeping $2 B \calIini < \dlt_{P}$), we recover \eqref{eq:mainThm:pf:0}. \qedhere

%\comment{The point is to control the $\dot{H}^{1}_{x}$ and the $\dot{H}^{2}_{x}$ norms of $\Atemp_{i}$ via a bootstrap argument, showing that $\Atemp_{i}$ can be extended in $t$ and the corresponding Yang-Mills heat flow also exists until $s=1$ at every $t$. We will need to estimate the gauge transform.}
\end{proof}

The rest of this paper will be devoted to proofs of Theorems \ref{thm:idEst} and \ref{thm:dynEst}.

\section{Analysis of the covariant Yang-Mills heat flow}  \label{sec:covYMHF}
In this section, which serves as a preliminary to the proof of Theorem \ref{thm:idEst} to be given in Section \ref{sec:pfOfIdEst}, we will study the \emph{covariant Yang-Mills heat flow} \eqref{eq:cYMHF}, i.e.
\begin{equation*}
	F_{si} = \covD^{\ell} F_{\ell i},
\end{equation*}
which is the spatial part of \eqref{eq:dYMHF}. 

The original Yang-Mills heat flow \eqref{eq:YMHF} corresponds to the special case $A_{s} = 0$. Compared to \eqref{eq:YMHF}, which is covariant only under gauge transforms independent of $s$, the group of gauge transforms for \eqref{eq:cYMHF} is enlarged to those which may depend on $s$; at the level of the equation, this amounts to the extra freedom of choosing $A_{s}$. In \S \ref{subsec:covYMHF}, we will see that this additional gauge freedom may be used in our favor to obtain a genuinely semi-linear parabolic system of equations. Being parabolic, this system possesses a smoothing property, which lies at the heart of our proof of Theorem \ref{thm:idEst} in Section \ref{sec:pfOfIdEst}. The system is connected to \eqref{eq:YMHF} by a gauge transform $U$ solving a certain ODE, for which we will derive various estimates in \S \ref{subsec:covYMHFgt}. In \S \ref{subsec:covYMHFtech}, we will analyze a covariant parabolic equation satisfied by $B_{i} = F_{\nu i}$ for $\nu = 0,1,2,3$. As a byproduct of the results in \S \ref{subsec:covYMHF} - \ref{subsec:covYMHFtech}, we will obtain a proof of the following local existence result for \eqref{eq:YMHF} in \S \ref{subsec:pfOfLwp4YMHF}, which is different from the original one given by \cite{Rade:1992tu}.

\begin{theorem}[Local existence for \eqref{eq:YMHF} with $\dot{H}^{1}_{x}$ initial data
\footnote{In order to complete this theorem to a full local well-posedness result, we need to supplement it with a uniqueness statement. We omit such a statement here, as it will not be needed in the sequel. We refer the interested reader to \cite{Oh:2012fk}, where a proof of uniqueness in the class of regular solutions will be given.}
] \label{thm:lwp4YMHF}
Consider the initial value problem (IVP) for \eqref{eq:YMHF} with initial data $\Aini_{i} \in \dot{H}^{1}_{x}$ at $s=0$. Then the following statements hold.
\begin{enumerate}
\item There exists a number $s^{\star} = s^{\star} (\nrm{\Aini}_{\dot{H}^{1}_{x}}) > 0$, which is non-increasing in $\nrm{\Aini}_{\dot{H}^{1}_{x}}$, such that there exists a solution $A_{i} \in C_{s} ([0,s^{\star}],  \dot{H}^{1}_{x})$ to the IVP satisfying
\begin{equation} \label{eq:lwp4YMHF:1}
	\sup_{s \in [0,s^{\star}]} \nrm{A(s)}_{\dot{H}^{1}_{x}} \leq C  \nrm{\Aini}_{\dot{H}^{1}_{x}}.
\end{equation}

\item Let $\Aini'_{i} \in \dot{H}^{1}_{x}$ be another initial data set such that $\nrm{\Aini'}_{\dot{H}^{1}_{x}} \leq \nrm{\Aini}_{\dot{H}^{1}_{x}}$, and $A'$ the corresponding solution to the IVP on $[0, s^{\star}]$ given in (1). Then the following estimate for the difference $\dlt A := A - A'$ holds. 
\begin{equation} \label{eq:lwp4YMHF:2}
	\sup_{s \in [0,s^{\star}]} \nrm{\rd_{x}( \dlt A) (s)}_{\dot{H}^{1}_{x}} \leq C  \nrm{\dlt \Aini}_{\dot{H}^{1}_{x}}.
\end{equation}

\item If $\Aini_{i}(t)$ ($t \in I$) is a one parameter family of initial data such that $\Aini_{i} \in C^{\infty}_{t}(I, H^{\infty}_{x})$, then $A_{i} = A_{i}(t,x,s)$ given by (1) for each $t \in I$ satisfies $A_{i} \in C^{\infty}_{t,s}(I \times [0,s^{\star}], H^{\infty}_{x})$.
\end{enumerate}

\end{theorem}

This theorem itself is not needed for the rest of this paper, but will be used in \cite{Oh:2012fk}.

\subsection{Estimates for covariant Yang-Mills heat flow in the DeTurck gauge} \label{subsec:covYMHF}
Here, we will study \eqref{eq:cYMHF} in the DeTurck gauge $A_{s} = \rd^{\ell} A_{\ell}$, which makes \eqref{eq:cYMHF} a system of (semi-linear) strictly parabolic equations for $A_{i}$. Some parts of the standard theory for semi-linear parabolic equations, such as local-wellposedness and smoothing, will be sketched for later use. 

Let us begin by deriving the system of equations that we will study. Writing out \eqref{eq:cYMHF} in terms of $A_{i}, A_{s}$, we obtain
\begin{equation} \label{eq:YMHF4A:general}
	\rd_s A_i = \lap A_i + 2 \LieBr{A^\ell}{\rd_\ell A_i} - \LieBr{A^\ell}{\rd_i A_\ell} + \LieBr{A^\ell}{\LieBr{A_\ell}{A_i}} + \rd_i (A_s - \rd^\ell A_\ell) + \LieBr{A_i}{A_s - \rd^\ell A_\ell}.
\end{equation}

%This is just the usual Yang-Mills heat equation \eqref{eq:YMHF} if we set $A_{s} = 0$. In that case, \eqref{eq:YMHF4A:general} \emph{almost} constitutes a determined system of semi-linear heat equations, except for the term $\rd_i \rd^\ell A_\ell$. To cancel this troublesome term, we are motivated to set $A_s = \rd^\ell A_\ell$. The following proposition is an easy consequence of the computation \eqref{eq:YMHF4A:general}.
%\begin{proposition}
%	Suppose that $A_i, A_s$ are smooth $\LieAlg$-valued functions on $\bbR^3 \times [0,1]$ satisfying the following system of equations.
%	\begin{equation} \label{eq:YMHF4A:deT1}
%	\rd_s A_i - \lap A_i = 2 \LieBr{A^\ell}{\rd_\ell A_i} - \LieBr{A^\ell}{\rd_i A_\ell}  + \LieBr{A^\ell}{\LieBr{A_\ell}{A_i}},
%	\end{equation}
%	\begin{equation} \label{eq:YMHF4A:deT2}
%	A_s = \rd^\ell A_\ell.
%	\end{equation}
%	Then, viewing $(A_i, A_s)$ as a $\LieAlg$-valued connection 1-form on $\bbR^3 \times [0,1]$, the equation $F_{si} = \covD^\ell F_{\ell i}$ holds.
%\end{proposition}

Then using the DeTurck gauge condition $A_{s} = \rd^{\ell} A_{\ell}$, we obtain
\begin{equation} \label{eq:YMHF4A:deT1}
	\rd_s A_i - \lap A_i = 2 \LieBr{A^\ell}{\rd_\ell A_i} - \LieBr{A^\ell}{\rd_i A_\ell}  + \LieBr{A^\ell}{\LieBr{A_\ell}{A_i}},
\end{equation}

The study of this system, which is now strictly parabolic, will be the main subject of this subsection. For convenience, let us denote the right-hand side of the above equation by ${}^{(A_{i})} \calN$, so that $(\rd_{s} - \lap)A_{i} = {}^{(A_{i})} \calN$. Note that schematically,
\begin{equation*}
	{}^{(A_{i})} \calN = \calO(A, \rd_{x} A) + \calO(A, A, A).
\end{equation*}

We will study the initial value problem for \eqref{eq:YMHF4A:deT1} with the initial data $A_{i}(s=0) = \Aini_{i}$. We will also consider the difference of two nearby solutions. Given solutions $A, A'$ to \eqref{eq:YMHF4A:deT1} with initial data $\Aini, \Aini'$ respectively, we will denote the difference between the solutions and the initial data by $\dlt A_{i} := A_{i} - A'_{i}$ and $\dlt \Aini_{i} = \Aini_{i} - \Aini'_{i}$, respectively.
%Recall that we work with $\Aini_i$ smooth and decaying sufficiently fast at the spatial infinity.

Our first result is the local well-posedness for initial data $\Aini_{i} \in \dot{H}^{1}_{x}$. We refer the reader back to \S \ref{subsec:prelim:absPth} for the definition of $\calP^{3/4} \calH^{m}_{x}$. 

\begin{proposition}[$\dot{H}^{1}_{x}$ local well-posedness of \eqref{eq:cYMHF} in the DeTurck gauge] \label{prop:YMHF4A:lwp4deT} 
The following statements hold.
\begin{enumerate}
\item There exists a number $\dlt_{P} > 0$ such that for any initial data $\Aini_{i} \in \dot{H}^{1}_{x}$ with
\begin{equation} \label{eq:YMHF4A:lwp4deT:hypothesis}
	\nrm{\Aini}_{\dot{H}^{1}} \leq \dlt_{P},
\end{equation}
there exists a unique solution $A_{i} = A_{i}(x,s) \in C_{s}([0,1], \dot{H}^{1}_{x}) \cap \calL^{2}_{s} ((0,1], \dot{H}^{2}_{x})$ to the equation \eqref{eq:YMHF4A:deT1} on $s \in [0,1]$, which satisfies 
\begin{equation} \label{eq:YMHF4A:lwp4deT:est4A}
	\nrm{\rd_{x} A}_{\calP^{3/4} \calH^{2}_{x}(0,1]} \leq C \nrm{\Aini}_{\dot{H}^{1}_{x}}.
\end{equation}
\item For $A_{i}, A'_{i}$ solutions to \eqref{eq:YMHF4A:deT1} with initial data $\Aini_{i}, \Aini'_{i}$ satisfying \eqref{eq:YMHF4A:lwp4deT:hypothesis} given by (1), respectively, the following estimate hold for the difference $\dlt A_{i}$
\begin{equation} \label{eq:YMHF4A:lwp4deT:est4dltA}
	\nrm{\rd_{x} (\dlt A)}_{\calP^{3/4} \calH^{2}_{x}(0,1]} \leq C_{\nrm{\Aini}_{\dot{H}^{1}_{x}}, \nrm{\Aini'}_{\dot{H}^{1}_{x}}}  \nrm{\dlt \Aini}_{\dot{H}^{1}_{x}}.
\end{equation}
\item We have \emph{persistence of regularity} and \emph{smooth dependence on the initial data}. In particular, let $\Aini_{i}(t)$ ($t \in I$ for some open interval $I$) be a one parameter family of initial data such that $\Aini_{i} \in C^{\infty}_{t} (I, H^{\infty}_{x})$. Then the solution $A_{i}$ given by (1) satisfies $A_{i} \in C^{\infty}_{t,x}(I \times [0,1], H^{\infty}_{x})$.
\end{enumerate}
\end{proposition}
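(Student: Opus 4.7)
The plan is to treat \eqref{eq:YMHF4A:deT1} as a semi-linear strictly parabolic system and solve it by Picard iteration, with the a priori bounds coming from the abstract parabolic theory (Theorem \ref{thm:absP:absPth}). The natural quantity to control is $\nrm{\rd_{x} A}_{\calP^{3/4} \calH^{2}_{x}(0,1]}$, which corresponds to applying Theorem \ref{thm:absP:absPth}(1) to the variable $\psi = \rd_{x} A$ with $X = L^{2}_{x}$ (so $\ell_{0} = 3/4$). Differentiating \eqref{eq:YMHF4A:deT1} gives
\begin{equation*}
	(\rd_{s} - \lap)(\rd_{x} A) = \rd_{x}\, {}^{(A)}\calN = \calO(\rd_{x} A, \rd_{x} A) + \calO(A, \rd_{x}^{(2)} A) + \calO(\rd_{x} A, A, A),
\end{equation*}
and the whole game is to bound the right-hand side in the space $\calL^{7/4, 1}_{s} \calL^{2}_{x} \cap \calL^{7/4, 2}_{s} \calL^{2}_{x}$ by an expression of the form $\eps \nrm{\rd_{x} A}_{\calP^{3/4} \calH^{2}_{x}} + \nrm{C(s) \rd_{x} A}_{\calL^{3/4, p}_{s} \calH^{1}_{x}}$ for some $p$ with $\int_{0}^{1} C(s)^{p} \tfrac{\ud s}{s} < \infty$, so that the hypothesis of Theorem \ref{thm:absP:absPth}(1) is met.

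The heart of the argument is therefore the nonlinear estimate for $\rd_{x}\,{}^{(A)}\calN$. The bilinear pieces $\calO(A, \rd_{x}^{(2)} A)$ and $\calO(\rd_{x} A, \rd_{x} A)$ are handled by placing one factor in $L^{\infty}_{x}$ via Lemma \ref{lem:absP:algEst} (p-normalized Gagliardo--Nirenberg), which gives $\nrm{A(s)}_{L^{\infty}_{x}} \leq C \nrm{\rd_{x} A(s)}_{L^{2}_{x}}^{1/2} \nrm{\rd_{x}^{(2)} A(s)}_{L^{2}_{x}}^{1/2}$, and combining with H\"older in space and Lemma \ref{lem:absP:Holder4Ls} in $s$. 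For the cubic piece $\calO(A, A, \rd_{x} A)$ one uses additionally $A \in L^{6}_{x}$ via Sobolev. In each case the Correspondence Principle tells us exactly which weights of $s$ appear, and the upshot is a bound of the form
\begin{equation*}
	\nrm{\rd_{x}\,{}^{(A)}\calN}_{\calL^{7/4, 1}_{s} \calL^{2}_{x} \cap \calL^{7/4, 2}_{s} \calL^{2}_{x}(0,1]} \leq C \bb(\nrm{\rd_{x} A}_{\calP^{3/4} \calH^{2}_{x}}^{2} + \nrm{\rd_{x} A}_{\calP^{3/4} \calH^{2}_{x}}^{3}\bb),
\end{equation*}
where the quadratic and cubic factors of $\nrm{\rd_{x} A}_{\calP^{3/4} \calH^{2}_{x}}$ should be thought of as $\eps \nrm{\rd_{x} A}_{\calP^{3/4} \calH^{2}_{x}}$ with $\eps$ small under the smallness hypothesis \eqref{eq:YMHF4A:lwp4deT:hypothesis} with $\dlt_{P}$ chosen small. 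Running the standard Picard iteration in a small ball of $\calP^{3/4} \calH^{2}_{x}(0,1]$ centered at the solution of the linear heat equation with initial data $\Aini_{i}$, the same estimates supply a contraction, yielding existence and uniqueness together with \eqref{eq:YMHF4A:lwp4deT:est4A}; the continuity $A_{i} \in C_{s}([0,1], \dot{H}^{1}_{x})$ follows from the continuity property of solutions to the linear inhomogeneous heat equation with source in $\calL^{7/4, 1}_{s} \calL^{2}_{x}$.

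For Part (2), the difference $\dlt A$ satisfies a linear equation $(\rd_{s} - \lap) \dlt A = \dlt \calN$ whose source is schematically $\calO(A, \rd_{x} \dlt A) + \calO(\rd_{x} A, \dlt A) + \calO(A, A, \dlt A)$; running the same estimates (with one factor now being the difference) and applying Theorem \ref{thm:absP:absPth}(1) gives \eqref{eq:YMHF4A:lwp4deT:est4dltA}. For Part (3), persistence of regularity, we differentiate the equation any number of times in $x$ (for spatial regularity) and in $t$ (for dependence on a time parameter), obtaining linear inhomogeneous heat equations for the derivatives with sources estimable in terms of lower-order norms, and apply Theorem \ref{thm:absP:absPth}(2) iteratively to boost to arbitrarily many derivatives.

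The main obstacle I expect is purely bookkeeping: verifying that the nonlinear terms $\calO(A, \rd_{x}^{(2)} A)$ etc.\ land in \emph{both} $\calL^{7/4,1}_{s} \calL^{2}_{x}$ and $\calL^{7/4,2}_{s} \calL^{2}_{x}$ with the right weights and with smallness extractable either from $\dlt_{P}$ or from the Hölder power of $s_{0}$, and correctly identifying the $C(s)$ factors so that the Gronwall-type clause of Theorem \ref{thm:absP:absPth}(1) can be invoked. No genuinely new analytic difficulty arises beyond the careful manipulation of p-normalized norms developed in Section \ref{sec:prelim}.
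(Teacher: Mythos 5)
Your proposal is correct and takes essentially the same approach as the paper: differentiate \eqref{eq:YMHF4A:deT1}, estimate the nonlinearity in $\calL^{7/4,1}_s \calL^2_x \cap \calL^{7/4,2}_s \calL^2_x$ via the Correspondence Principle, H\"older for $\calL^{\ell,p}_s$ (Lemma \ref{lem:absP:Holder4Ls}) and Gagliardo--Nirenberg (Lemma \ref{lem:absP:algEst}), then invoke Theorem \ref{thm:absP:absPth}(1), handling Parts (2) and (3) by the analogous arguments for the difference and the differentiated equations. The only cosmetic difference is that the paper presents the a priori estimate via a bootstrap on subintervals $(0,\subr]$ and then remarks that Picard iteration supplies existence and uniqueness, whereas you lead with Picard iteration; also, in the actual application the Gronwall factor $C(s)$ you mention can be taken $\equiv 0$, since every nonlinear contribution is absorbed directly into $\eps\,\nrm{\rd_x A}_{\calP^{3/4}\calH^2_x}$ once $\dlt_P$ is small.
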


\begin{proof} 
This is a standard result. We will only present the proof of the {\it a priori} estimate \eqref{eq:YMHF4A:lwp4deT:est4A}, which means that we will assume the existence of a solution $A_{i}$ to \eqref{eq:YMHF4A:deT1} with initial data $\Aini_{i}$, which we may assume furthermore to be in $H^{\infty}_{x}$. As usual, a small variant of the arguments for the proof of the {\it a priori} estimate leads to estimates needed to run a Picard iteration argument (in a subset of a suitable Banach space), from which existence and uniqueness follows. Similar arguments applied to the equation for the difference $\dlt A_{i}$ and the differentiated equation for $\rd_{x}^{(m)} A_{i}$ will prove \eqref{eq:YMHF4A:lwp4deT:est4dltA} and persistence of regularity, respectively. For a parametrized family of data, differentiation with respect to the parameter yields smooth dependence on the initial data. We leave these standard details to the interested reader.

In order to derive the {\it a priori} estimate for $\rd_{x} A_{i}$, let us differentiate the equation. We then obtain
\begin{equation*} 
	\rd_{s} (\rd_{x} A_{i}) - \lap (\rd_{x} A_{i}) = s^{-1} \nb_{x} \calO(A, \nb_{x} A) + s^{-1/2} \calO(A, A, \nb_{x} A) =: {}^{(\rd_{x} A_{i})} \calN.
\end{equation*}

Let us work on a subinterval $(0, \subr] \subset (0,1)$, assuming the bootstrap assumption $\nrm{\rd_{x} A}_{\calP^{3/4} \calH^{2}_{x}(0, \subr]} \leq 10 \eps$, where $\eps = \nrm{\overline{A}}_{\dot{H}^{1}_{x}}$ is the size of the initial data. As $A \in H^{\infty}_{x}$, we have 
\begin{equation*}
\limsup_{\subr \to 0} \nrm{\rd_{x} A}_{\calP^{3/4} \calH^{2}_{x}(0, \subr]} 
= \limsup_{\subr \to 0} \nrm{\rd_{x} A}_{\calP^{3/4} \calL^{2}_{x}(0, \subr]} = \nrm{\rd_{x} \overline{A}}_{L^{2}_{x}},
\end{equation*}
so the assumption holds for $\subr > 0$ small enough. 

Note the obvious inequalities 
\begin{equation*} 
	\nrm{\rd_{x} (\phi_{1} \rd_{x} \phi_{2})}_{L^{2}_{x}} \leq C \nrm{\phi_{1}}_{\dot{H}^{3/2}_{x} \cap L^{\infty}_{x}} \nrm{\phi_{2}}_{\dot{H}^{2}_{x}}, \quad
	\nrm{\phi_{1} \phi_{2} \rd_{x} \phi_{3}}_{L^{2}_{x}}  \leq C \nrm{\phi_{1}}_{\dot{H}^{1}_{x}} \nrm{\phi_{2}}_{\dot{H}^{1}_{x}} \nrm{\phi_{3}}_{\dot{H}^{2}_{x}},
\end{equation*}
which follow from H\"older and Sobolev. Using the Correspondence Principle, H\"older for $\calL^{\ell, p}_{s}$ (Lemma \ref{lem:absP:Holder4Ls}) and Gagliardo-Nirenberg (Lemma \ref{lem:absP:algEst}), we obtain
\begin{equation} \label{eq:YMHF4A:lwp4deT:pf:1}
\begin{aligned}
\nrm{s^{-1} \nb_{x} \calO(\psi_{1}, \nb_{x} \psi_{2})}_{\calL^{3/4+1,p}_{s} \calL^{2}_{x}} 
= & \nrm{\nb_{x} \calO(\psi_{1}, \nb_{x} \psi_{2})}_{\calL^{3/4,p}_{s} \calL^{2}_{x}} \\
\leq & C \nrm{\psi_{1}}_{\calL^{1/4,\infty}_{s} (\dot{\calH}^{3/2}_{x} \cap \calL^{\infty}_{x})} \nrm{\psi_{2}}_{\calL^{1/4,2}_{s} \dot{\calH}^{2}_{x}} \\
\leq & C \nrm{\rd_{x} \psi_{1}}_{\calL^{3/4,\infty}_{s} \calL^{2}_{x}}^{1/2} \nrm{\rd_{x} \psi_{1}}_{\calL^{3/4,\infty}_{s} \dot{\calH}^{1}_{x}}^{1/2} \nrm{\rd_{x} \psi_{2}}_{\calL^{3/4,2}_{s} \dot{\calH}^{1}_{x}}, 
\end{aligned}
\end{equation}
\begin{equation} \label{eq:YMHF4A:lwp4deT:pf:2}
\begin{aligned}
\nrm{s^{-1/2} \calO(\psi_{1}, \psi_{2}, \nb_{x} \psi_{3})}_{\calL^{3/4+1,p}_{s} \calL^{2}_{x}} 
= & \nrm{\calO(\psi_{1}, \psi_{2}, \nb_{x} \psi_{3})}_{\calL^{5/4,p}_{s} \calL^{2}_{x}} \\
\leq & C \nrm{\psi_{1}}_{\calL^{1/4,\infty}_{s} \dot{\calH}^{1}_{x}} \nrm{\psi_{2}}_{\calL^{1/4,\infty}_{s} \dot{\calH}^{1}_{x}} \nrm{\psi_{3}}_{\calL^{1/4,2}_{s} \dot{\calH}^{2}_{x} } \\
\leq & C \nrm{\rd_{x} \psi_{1}}_{\calL^{3/4,\infty}_{s} \calL^{2}_{x}} \nrm{\rd_{x} \psi_{2}}_{\calL^{3/4,\infty}_{s} \calL^{2}_{x}} \nrm{\rd_{x} \psi_{3}}_{\calL^{3/4,2}_{s} \dot{\calH}^{1}_{x} },
\end{aligned}
\end{equation}
for both $p=1,2$. Applying these inequalities with $\psi_{j} = A$, note that each factor on the right-hand sides of the above two inequalities is controlled by $\nrm{\rd_{x} A}_{\calP^{3/4} \calH^{2}_{x}}$. Using the bootstrap assumption, we have for $p=1,2$
\begin{equation} \label{eq:YMHF4A:lwp4deT:pf:3}
	\sup_{i} \nrm{{}^{(\rd_{x} A_{i})} \calN}_{\calL^{3/4+1,p}_{s} \calL^{2}_{x}(0, \subr]} \leq C (\eps + \eps^{2}) \nrm{\rd_{x} A}_{\calP^{3/4} \calH^{2}_{x}(0, \subr]}.
\end{equation}

Taking $\eps > 0$ small enough and applying Theorem \ref{thm:absP:absPth}, we beat the bootstrap assumption, i.e., $\nrm{\rd_{x} A}_{\calP^{3/4} \calH^{2}_{x}} \leq 5 \eps$. By a standard bootstrap argument, we conclude that \eqref{eq:YMHF4A:lwp4deT:est4A} holds on $[0,1]$. \qedhere
\end{proof}

An important property of the parabolic PDE is that it is \emph{infinitely} and \emph{immediately smoothing}. Quantitavely, this means that smoother norms of the solution $A$ becomes controllable for $s > 0$ in terms of a rougher norm of the initial data $\Aini$. We will see many manifestations of this property throughout the paper. Here, we give a version of the infinite, immediate smoothing for the solutions to the equation \eqref{eq:YMHF4A:deT1}.

\begin{proposition}[Smoothing estimates] \label{prop:YMHF4A:smth4deT}
Let $\Aini_{i}, \Aini'_{i}$ be $\dot{H}^{1}_{x}$ initial data satisfying \eqref{eq:YMHF4A:lwp4deT:hypothesis}, and $A_{i}, A'_{i}$ the corresponding unique solutions to \eqref{eq:YMHF4A:deT1} given by Proposition \ref{prop:YMHF4A:lwp4deT}, respectively.
\begin{enumerate}
\item For integers $m \geq 1$, the following estimate for $A_{i}$ (and of course also for $A'_{i}$) holds.
	\begin{equation} \label{eq:YMHF4A:smth4A} 
		\nrm{\rd_{x} A}_{\calP^{3/4} \dot{\calH}^{m+2}_{x}(0,1]}
		 \leq C_{m, \nrm{\Aini}_{\dot{H}^{1}_{x}}}  \nrm{\Aini}_{\dot{H}^{1}_{x}}.
	\end{equation}
\item Furthermore, for integers $m \geq 1$, the following estimate for the difference also holds.
	\begin{equation} \label{eq:YMHF4A:smth4dltA}
		\nrm{\rd_{x} (\dlt A)}_{\calP^{3/4} \dot{\calH}^{m+2}_{x}(0,1]} 
		\leq C_{m, \nrm{\Aini}_{\dot{H}^{1}_{x}}, \nrm{\Aini}_{\dot{H}^{1}_{x}}}  \nrm{\dlt \Aini}_{\dot{H}^{1}_{x}}.
	\end{equation}
\end{enumerate}
\end{proposition}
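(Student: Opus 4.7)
The plan is to iterate Part (2) of Theorem \ref{thm:absP:absPth} (the abstract parabolic smoothing estimate), applied to the equation satisfied by $B := \rd_x A$ with base homogeneous norm $X = L^2_x$, for which $\ell_0 = 3/4$ and the parabolic energy/smoothing estimates were verified in Proposition \ref{prop:absP:application}. Differentiating \eqref{eq:YMHF4A:deT1} once in $x$, I obtain $(\rd_s - \lap) B_i = {}^{(B_i)}\calN$ with
$$
{}^{(B_i)}\calN = s^{-1}\nb_x\calO(A,\nb_x A) + s^{-1/2}\calO(A,A,\nb_x A),
$$
and by persistence of regularity (Proposition \ref{prop:YMHF4A:lwp4deT}(3)) I may treat $A$ as smooth with $\rd_x A$ regular, so that every p-normalized norm appearing below is a priori finite. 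The base case $\nrm{B}_{\calP^{3/4}\calH^2_x(0,1]} \leq C\nrm{\Aini}_{\dot{H}^1_x}$ is precisely \eqref{eq:YMHF4A:lwp4deT:est4A}.

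To invoke Theorem \ref{thm:absP:absPth}(2) with $\ell = 3/4$, $m_0 = 0$ and $m_1 = m$, the key task is to verify the nonlinear bound
$$
\nrm{{}^{(B_i)}\calN}_{\calL^{7/4,2}_s \dot{\calH}^{k}_x(0,1]} \leq \eps\,\nrm{B}_{\calP^{3/4}\dot{\calH}^{k+2}_x(0,1]} + \calB_k\bigl(\nrm{B}_{\calP^{3/4}\calH^{k+1}_x(0,1]}\bigr)
$$
for each $0 \leq k \leq m$, with $\eps > 0$ small uniformly in $k$ and $\calB_k$ non-decreasing. The verification proceeds as in \eqref{eq:YMHF4A:lwp4deT:pf:1}--\eqref{eq:YMHF4A:lwp4deT:pf:2} but at higher derivative level: I distribute the $k$ spatial derivatives via the Leibniz rule and bound each resulting product using H\"older for $\calL^{\ell,p}_s$ (Lemma \ref{lem:absP:Holder4Ls}) together with p-normalized Gagliardo--Nirenberg (Lemma \ref{lem:absP:algEst}). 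Summands in which all $k+1$ derivatives fall on a single factor leave the others in the $\dot{\calH}^{3/2}_x \cap \calL^\infty_x$-type norm, which is controlled by the base-case bound of size $\leq C\dlt_P$; shrinking $\dlt_P$ if necessary supplies the small $\eps$ uniformly in $k$. Every remaining Leibniz term has at most $k$ derivatives on each factor and hence feeds into $\calB_k$. Given this, Theorem \ref{thm:absP:absPth}(2) yields $\nrm{B}_{\calP^{3/4}\calH^{m+2}_x(0,1]} \leq C_{m,\nrm{\Aini}_{\dot{H}^1_x}}\nrm{\Aini}_{\dot{H}^1_x}$ via the iterated composition \eqref{eq:absP:smth:3}, which contains \eqref{eq:YMHF4A:smth4A}.

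For Part (2) I would apply the formal Leibniz rule for $\dlt$ to the nonlinearity, writing $\dlt({}^{(B_i)}\calN)$ as a sum of $\calO$-terms each carrying exactly one $\dlt$-factor; non-difference factors are estimated by Part (1), while the $\dlt$-factor is estimated at the appropriate regularity by the analogous iteration, delivering \eqref{eq:YMHF4A:smth4dltA}. The main technical obstacle is purely bookkeeping: at every level $k$, each Leibniz term must be split so that a single factor absorbs the top $k+1$ derivatives (and inherits the $\dlt_P$-smallness), while the remaining factors fall into norms already controlled by the induction. Thanks to the generous extra $s$-weights encoded in ${}^{(B_i)}\calN$ and the sub-criticality of the flow this is routine, but care is needed to keep $\eps$ uniform in $k$ as the iteration proceeds.
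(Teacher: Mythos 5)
Your overall structure is correct and follows the paper's route: differentiate once, write the equation for $B = \rd_x A$, bound ${}^{(B_i)}\calN$ in $\calL^{7/4,2}_s\dot{\calH}^k_x$ via Leibniz, H\"older for $\calL^{\ell,p}_s$, and p-normalized Gagliardo--Nirenberg, then iterate Theorem \ref{thm:absP:absPth}(2) starting from the base case \eqref{eq:YMHF4A:lwp4deT:est4A}. The Part (2) argument via the formal Leibniz rule for $\dlt$ is also the paper's.

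Where you diverge is in the mechanism for the small $\eps$, and your stated version of it has a genuine imprecision. In the paper's estimate \eqref{eq:YMHF4A:lwp4deT:pf:1}, the multilinear bound $\nrm{\rd_x(\phi_1\rd_x\phi_2)}_{L^2_x}\leq C\nrm{\phi_1}_{\dot H^{3/2}_x\cap L^\infty_x}\nrm{\phi_2}_{\dot H^2_x}$ is asymmetric: $\phi_1$ always lands in $\dot H^{3/2}_x\cap L^\infty_x$ and $\phi_2$ in $\dot H^2_x$. In the worst Leibniz summand all $m$ derivatives fall on $\psi_1$, so $\psi_1 = \nb_x^{(m)}A$ \emph{is} the factor in $\dot{\calH}^{3/2}_x\cap\calL^\infty_x$; by Gagliardo--Nirenberg (Lemma \ref{lem:absP:algEst}) this produces the half-power $\nrm{\rd_x A}_{\calL^{3/4,\infty}_s\dot{\calH}^{m}_x}^{1/2}\nrm{\rd_x A}_{\calL^{3/4,\infty}_s\dot{\calH}^{m+1}_x}^{1/2}$, whose second factor is \emph{not} controlled by $\calP^{3/4}\calH^{m+1}_x$. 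The small base-case factor $\leq C\dlt_P$ is the \emph{other} slot, $\psi_2 = A$ in $\dot{\calH}^2_x$, i.e.\ $\nrm{\rd_x A}_{\calL^{3/4,2}_s\dot{\calH}^1_x}$, not an $\dot{\calH}^{3/2}_x\cap\calL^\infty_x$ norm. So the claim ``the others are in $\dot{\calH}^{3/2}_x\cap\calL^\infty_x$, controlled by $\dlt_P$'' does not describe this summand. Your plan can still be rescued: the product is $\dlt_P\cdot X^{1/2}Y^{1/2}$ with $X$ at $\calB$-level and $Y$ top order, and an AM--GM step gives $\leq\tfrac{\dlt_P}{2}(X+Y)$, so shrinking $\dlt_P$ supplies the $\eps$ --- but the AM--GM (or Cauchy--Schwarz) step is essential and not contained in your outline. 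The paper's route is cleaner here: it applies Cauchy--Schwarz $X^{1/2}Y^{1/2}Z\leq \eps Y + C\eps^{-1}XZ^2$ directly, obtaining the $\eps$ from a free parameter with \emph{no} appeal to smallness of $\dlt_P$; the cost is only that the resulting $\calB_m(r) = Cr^2 + Cr^3$ is cubic. This is worth preferring, since it decouples the smoothing step from the size of $\dlt_P$.
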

\begin{proof} 
We now work on the whole interval $s \in (0, 1]$. We will prove only the non-difference estimate \eqref{eq:YMHF4A:smth4A}, as the difference analogue \eqref{eq:YMHF4A:smth4dltA} can be proved in a similar manner.

By approximation, it suffices to consider $A_{i} \in C^{\infty}_{s}([0,1], H^{\infty}_{x})$. Using Leibniz's rule and the estimates \eqref{eq:YMHF4A:lwp4deT:pf:1}, \eqref{eq:YMHF4A:lwp4deT:pf:2}, for $m \geq 1$, we immediately obtain
\begin{align*}
\sup_{i} \nrm{{}^{(\rd_{x} A_{i})} \calN}_{\calL^{3/4+1}_{s} \dot{\calH}^{m}_{x}} 
\leq & C \nrm{\rd_{x} A}_{\calL^{3/4,\infty}_{s} \dot{\calH}^{m}_{x}}^{1/2} \nrm{\rd_{x} A}_{\calL^{3/4,\infty}_{s} \dot{\calH}^{m+1}_{x}}^{1/2} \nrm{\rd_{x} A}_{\calL^{3/4,2}_{s} \dot{\calH}^{1}_{x}} \\
&+ C \nrm{\rd_{x} A}_{\calL^{3/4,\infty}_{s} \calH^{m}_{x}} \nrm{\rd_{x} A}_{\calL^{3/4,2}_{s} \calH^{m+1}_{x}} 
+ C \nrm{\rd_{x} A}_{\calL^{3/4,\infty}_{s} \calH^{m}_{x}}^{2} \nrm{\rd_{x} A}_{\calL^{3/4,2}_{s} \calH^{m+1}_{x}}.
\end{align*}

The second and third term on the right hand side can be controlled by $C \nrm{\rd_{x} A}_{\calP^{3/4} \calH^{m+1}_{x}}^{2}$ and $C \nrm{\rd_{x} A}_{\calP^{3/4} \calH^{m+1}_{x}}^{3}$, respectively. On the other hand, the first term is problematic as $\nrm{\rd_{x} A}_{\calL^{3/4,\infty}_{s} \dot{\calH}^{m+1}_{x}}$ is not controlled by $\nrm{\rd_{x} A}_{\calP^{3/4} \calH^{m+1}_{x}}$. In this case, we apply Cauchy-Schwarz and estimate it by 
\begin{equation*}
\eps^{-1} C \nrm{\rd_{x} A}_{\calP^{3/4} \calH^{m+1}_{x}}^{3} + \eps \nrm{\rd_{x} A}_{\calP^{3/4} \dot{\calH}^{m+2}_{x}}.
\end{equation*}

Therefore, we obtain \eqref{eq:absP:smth:1} for all $m \geq 1$ with $\psi = \rd_{x} A$, $X=L^{2}_{x}$, $\ell = 3/4$ and $\calB_{m}(r) = C r^{2} + C r^{3}$. Applying the second part of Theorem \ref{thm:absP:absPth}, the smoothing estimate \eqref{eq:YMHF4A:smth4A} easily follows.
\end{proof}

The following statement is crucial for estimating the gauge transform into the caloric gauge $A_s =0$. It is a corollary of the proof of Proposition \ref{prop:YMHF4A:lwp4deT}.

\begin{corollary} \label{cor:YMHF4A:bnd4lapAs}
Let $\Aini_{i}, \Aini'_{i}$ be $\dot{H}^{1}_{x}$ initial data satisfying \eqref{eq:YMHF4A:lwp4deT:hypothesis}, and $A_{i}, A'_{i}$ the corresponding unique solutions to \eqref{eq:YMHF4A:deT1} given by Proposition \ref{prop:YMHF4A:lwp4deT}, respectively. Consider also $A_{s}, A'_{s}$ given by the equations
\begin{equation*}
	A_{s} = \rd^{\ell} A_{\ell}, \qquad A_{s}' = \rd^{\ell} A'_{\ell}.
\end{equation*}

Then the following estimate holds for $\lap A_{s}$.
\begin{equation}\label{eq:YMHF4A:bnd4lapAs}
	\sup_{0 < s \leq 1} \nrm{\int_s^1 \lap A_s (s') \ud s'}_{L^2_x} \leq C_{\nrm{\Aini}_{\dot{H}^{1}_{x}}}   \nrm{\Aini}_{\dot{H}^{1}_{x}}.
\end{equation}

Furthermore, the following estimate holds for $\lap (\dlt A_{s})$.
\begin{equation}\label{eq:YMHF4A:bnd4lapDltAs}
	\sup_{0 < s \leq 1} \nrm{\int_s^1 \lap (\dlt A_s )(s') \ud s'}_{L^2_x} \leq C_{\nrm{\Aini}_{\dot{H}^{1}_{x}}, \nrm{\Aini'}_{\dot{H}^{1}_{x}}} \nrm{\dlt \Aini}_{\dot{H}^{1}_{x}},
\end{equation}
\end{corollary}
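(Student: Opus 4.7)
The plan is to rewrite $\lap A_{s}$ as an $s$-derivative plus a nonlinear term, so that the integral $\int_{s}^{1} \lap A_{s}(s') \, ds'$ telescopes up to a harmless remainder. Concretely, since $A_{s} = \rd^{\ell} A_{\ell}$, commuting $\rd^{\ell}$ with $\rd_{s}$ and applying the equation \eqref{eq:YMHF4A:deT1} yields
\begin{equation*}
	\rd_{s} A_{s} = \rd^{\ell} \rd_{s} A_{\ell} = \rd^{\ell} \lap A_{\ell} + \rd^{\ell} {}^{(A_{\ell})} \calN = \lap A_{s} + \rd^{\ell} {}^{(A_{\ell})} \calN,
\end{equation*}
so that $\lap A_{s} = \rd_{s} A_{s} - \rd^{\ell} {}^{(A_{\ell})} \calN$. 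Integrating from $s$ to $1$ gives the identity
\begin{equation*}
	\int_{s}^{1} \lap A_{s}(s') \, ds' = A_{s}(1) - A_{s}(s) - \int_{s}^{1} \rd^{\ell} {}^{(A_{\ell})} \calN(s') \, ds',
\end{equation*}
and \eqref{eq:YMHF4A:bnd4lapAs} will follow by controlling the three terms on the right in $L^{2}_{x}$.

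For the boundary contributions, I would observe that $\nrm{A_{s}(s)}_{L^{2}_{x}} \leq \nrm{\rd_{x} A(s)}_{L^{2}_{x}}$, which is uniformly bounded in $s \in [0,1]$ by $C \nrm{\Aini}_{\dot{H}^{1}_{x}}$ thanks to the $\calL^{3/4, \infty}_{s} \calL^{2}_{x}$ part of the bound \eqref{eq:YMHF4A:lwp4deT:est4A} of Proposition \ref{prop:YMHF4A:lwp4deT}. For the nonlinear remainder, note that schematically $\rd^{\ell} {}^{(A_{\ell})} \calN = \calO(\rd_{x} A, \rd_{x} A) + \calO(A, \rd_{x}^{(2)} A) + \calO(A, A, \rd_{x} A)$, and that $\int_{s}^{1} \nrm{\cdot}_{L^{2}_{x}} \, ds'$ is precisely (after undoing the $s$-weights) the norm $\nrm{\cdot}_{\calL^{7/4, 1}_{s} \calL^{2}_{x}(0, 1]}$. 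The bilinear and trilinear estimates \eqref{eq:YMHF4A:lwp4deT:pf:1}--\eqref{eq:YMHF4A:lwp4deT:pf:2} from the proof of Proposition \ref{prop:YMHF4A:lwp4deT}, applied with $p=1$, then yield
\begin{equation*}
	\int_{0}^{1} \nrm{\rd^{\ell} {}^{(A_{\ell})} \calN(s')}_{L^{2}_{x}} \, ds' \leq C \nrm{\rd_{x} A}_{\calP^{3/4} \calH^{2}_{x}(0,1]}^{2} + C \nrm{\rd_{x} A}_{\calP^{3/4} \calH^{2}_{x}(0,1]}^{3},
\end{equation*}
which, via \eqref{eq:YMHF4A:lwp4deT:est4A} and the smallness condition \eqref{eq:YMHF4A:lwp4deT:hypothesis}, is dominated by $C_{\nrm{\Aini}_{\dot{H}^{1}_{x}}} \nrm{\Aini}_{\dot{H}^{1}_{x}}$. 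Combined with the boundary estimates, this establishes \eqref{eq:YMHF4A:bnd4lapAs}.

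The difference estimate \eqref{eq:YMHF4A:bnd4lapDltAs} will follow by exactly the same scheme applied to $\dlt A_{s} = \rd^{\ell} \dlt A_{\ell}$: subtracting the equations for $A$ and $A'$ gives $\lap (\dlt A_{s}) = \rd_{s} (\dlt A_{s}) - \rd^{\ell} \dlt ({}^{(A_{\ell})} \calN)$, and the nonlinear difference expands via the formal Leibniz rule into a sum of terms of the same multilinear form as before, each carrying exactly one factor of $\dlt A$. The boundary terms are bounded using the uniform control of $\nrm{\rd_{x} (\dlt A)}_{L^{2}_{x}}$ from \eqref{eq:YMHF4A:lwp4deT:est4dltA}, and the nonlinear integral is estimated by the same bilinear/trilinear inequalities with one input replaced by $\dlt A$. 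I do not expect any genuine obstacle here; the only point requiring mild care is keeping track of the $s$-weights so that the $\calL^{7/4, 1}_{s} \calL^{2}_{x}$ norm is correctly matched to the right-hand sides of \eqref{eq:YMHF4A:lwp4deT:pf:1}--\eqref{eq:YMHF4A:lwp4deT:pf:2}, and the non-integrability of $\int_{0}^{1} s^{-1} \, ds$ is avoided precisely because both estimates gain an extra $s^{1/2}$ relative to the pure scaling count.
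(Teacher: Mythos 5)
Your proposal is correct and follows essentially the same route as the paper: you derive the same identity $\lap A_s = \rd_s A_s - \rd^\ell\, {}^{(A_\ell)}\calN$, integrate from $s$ to $1$, bound the boundary terms by the uniform $\nrm{\rd_x A(s)}_{L^2_x}$ control in \eqref{eq:YMHF4A:lwp4deT:est4A}, and recognize that the nonlinear remainder is exactly the $\calL^{7/4,1}_s\calL^2_x$ norm that \eqref{eq:YMHF4A:lwp4deT:pf:1}--\eqref{eq:YMHF4A:lwp4deT:pf:2} (with $p=1$) control. The paper invokes their consequence \eqref{eq:YMHF4A:lwp4deT:pf:3} directly rather than re-deriving it, but that is a cosmetic difference, and your treatment of the difference estimate via the formal Leibniz rule matches the paper's implicit argument.
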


\begin{proof}
Here, we will only give a proof of \eqref{eq:YMHF4A:bnd4lapAs}, the proof of \eqref{eq:YMHF4A:bnd4lapDltAs} being similar. Again it suffices to consider $A_{i}$ such that $\rd_{x} A_{i}$ (and therefore $A_{s}$ also) is regular.

Taking $\rd^\ell$ of the equations $\rd_s A_\ell - \lap A_\ell = {}^{(A_{\ell})} \calN$, we get a parabolic equation for $A_s$ of the form $\rd_s A_s - \lap A_s = \sum_{\ell} \rd_{\ell} {}^{(A_{\ell})} \calN$. Integrating this equation from $s$ to $1$, we obtain the following identity.
\begin{equation*}
	\int_s^1 \lap A_s (s') \ud s' = A_s (1) - A_s (s) - \sum_{\ell} \int_s^1  \rd_{\ell} {}^{(A_{\ell})} \calN (s')  \ud s'.
\end{equation*}

Let us take the $L^{2}_{x}$ norm of both sides and take the supremum over $s \in (0,1]$. The first two terms on the right-hand side are acceptable, in view of the fact that $A_{s} = \calO(\rd_{x} A)$. Using Minkowski, it is not difficult to see that in order to estimate the contribution of the last term, it suffices to establish 
\begin{equation} \label{eq:bnd4Ns}
	\sup_{i} \nrm{{}^{(\rd_{x} A_{i})} \calN}_{\calL^{3/4+1,1}_{s} \calL^{2}_{x}(0,1]} \leq C_{\nrm{\Aini}_{\dot{H}^{1}_{x}}}  \nrm{\Aini}_{\dot{H}^{1}_{x}}.
\end{equation}

This immediately follows by combining \eqref{eq:YMHF4A:lwp4deT:est4A} with \eqref{eq:YMHF4A:lwp4deT:pf:3}, recalling that $\eps = \nrm{\Aini}_{\dot{H}^{1}_{x}}$.  \qedhere
\end{proof}

\subsection{Estimates for gauge transform to the caloric gauge $A_{s} = 0$} \label{subsec:covYMHFgt}
In the previous subsection, we analyzed \eqref{eq:cYMHF} under the DeTurck gauge condition $A_{s} = \rd^{\ell} A_{\ell}$, which led to a nice system of semi-linear parabolic equations. In this subsection, we present estimates for gauge transforms for the solution to \eqref{eq:cYMHF} in the DeTurck gauge into the caloric gauge $A_{s}= 0$.

\begin{lemma} \label{lem:est4gt2caloric}
Fix $s_{0} \in [0,1]$. Let $\Aini_{i}, \Aini'_{i}$ be $\dot{H}^{1}_{x}$ initial data sets satisfying \eqref{eq:YMHF4A:lwp4deT:hypothesis}, and $A_{i}, A'_{i}$ the corresponding unique solutions to \eqref{eq:cYMHF} in the DeTurck gauge given by Proposition \ref{prop:YMHF4A:lwp4deT}, respectively. Let us consider the following ODE on $\bbR^{3} \times [0,1]$.
\begin{equation*}
\left \{
\begin{aligned}
	\rd_{s} U &= U A_{s} \\
	U(s=s_{0}) &= \mathrm{Id},
\end{aligned}
\right.
\end{equation*}
where we remind the reader that $A_{s} = \rd^{\ell} A_{\ell}$. Then the following statements hold.

\begin{enumerate}
\item There exists a unique solution $U$ such that $U(x,s) \in \LieGrp$ for all $s \in [0,1]$, and $U$ obeys the following estimates for $m=2$.
\begin{equation} \label{eq:est4gt2caloric:U:1}
	\nrm{U}_{L^{\infty}_{s} L^{\infty}_{x} [0,1]} 
	\leq C_{\nrm{\Aini}_{\dot{H}^{1}_{x}}}, \quad
	\nrm{\rd_{x} U}_{L^{\infty}_{s} L^{3}_{x} [0,1]} 
	\leq C_{\nrm{\Aini}_{\dot{H}^{1}_{x}}} \nrm{\Aini}_{\dot{H}^{1}_{x}}.
\end{equation}
\begin{equation} \label{eq:est4gt2caloric:U:2}
	\nrm{s^{(m-2)/2} \rd_{x}^{(m)} U}_{L^{\infty}_{s} L^{2}_{x}[0,1]} 
	\leq C_{m, \nrm{\Aini}_{\dot{H}^{1}_{x}}} \nrm{\Aini}_{\dot{H}^{1}_{x}}.
\end{equation}

\item Let $U'$ be the solution to the same ODE with $A_{s}$ replaced by $A'_{s}$, which possesses the identical initial data $U'(s=1) = \mathrm{Id}$. The difference $\dlt U := U - U'$ satisfies the following estimates for $m=2$.
\begin{equation} \label{eq:est4gt2caloric:dltU:1}
	\nrm{\dlt U}_{L^{\infty}_{s} L^{\infty}_{x} [0,1]} + \nrm{\rd_{x} (\dlt U)}_{L^{\infty}_{s} L^{3}_{x} [0,1]} 
	\leq C_{\nrm{\Aini}_{\dot{H}^{1}_{x}}} \nrm{\dlt \Aini}_{\dot{H}^{1}_{x}}.
\end{equation}
\begin{equation} \label{eq:est4gt2caloric:dltU:2}
	\nrm{s^{(m-2)/2} \rd_{x}^{(m)} (\dlt U)}_{L^{\infty}_{s} L^{2}_{x}[0,1]} 
	\leq C_{m, \nrm{\Aini}_{\dot{H}^{1}_{x}}} \nrm{\dlt \Aini}_{\dot{H}^{1}_{x}}.
\end{equation}

\item Furthermore, if $s_{0} = 1$, then \eqref{eq:est4gt2caloric:U:2} and \eqref{eq:est4gt2caloric:dltU:2} hold for all integers $m \geq 3$ as well. 
%\footnote{In fact, an inspection of the proof in Appendix \ref{sec:gt} shows that one only needs the assumption $s_{0} > 0$ .}

\item Finally, all of the above statements with $U$, $\dlt U$ replaced by $U^{-1}$, $\dlt U^{-1}$, respectively, also hold.
\end{enumerate}
\end{lemma}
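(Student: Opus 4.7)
The plan is to treat the ODE $\rd_s U = U A_s$ fiberwise over $x$ and then to derive spatial regularity by differentiating the ODE in $x$, repeatedly applying Gronwall together with the parabolic smoothing estimates of Proposition \ref{prop:YMHF4A:smth4deT} and the key integration-by-parts identity underlying Corollary \ref{cor:YMHF4A:bnd4lapAs}. Existence and uniqueness of $U(s,x) \in \LieGrp$ for $s \in [0,1]$ will follow from standard ODE theory since $A_s = \rd^\ell A_\ell$ is smooth on $\bbR^3 \times [0,1]$, and the pointwise bound $\nrm{U}_{L^\infty_{s,x}} \leq C_\LieGrp$ is automatic from the bi-invariance of the inner product on $\LieAlg$ (which forces $U(s,x)$ to move only on a bounded subset of $\LieGrp$). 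Throughout, I expect to use freely that $A_s = \rd^\ell A_\ell$ obeys smoothing bounds such as $\nrm{A_s}_{L^\infty_x} \aleq \nrm{A}_{\dot H^2_x}^{1/2}\nrm{A}_{\dot H^3_x}^{1/2}$, which when combined with \eqref{eq:YMHF4A:lwp4deT:est4A} gives $s \mapsto \nrm{A_s(s)}_{L^\infty_x} \in L^1_s[0,1]$ with $L^1_s$-norm controlled by $\nrm{\Aini}_{\dot H^1_x}$.

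For the first-derivative estimate in \eqref{eq:est4gt2caloric:U:1}, I would differentiate the ODE in $x$ to get $\rd_s (\rd_x U) = \rd_x U \cdot A_s + U \cdot \rd_x A_s$, integrate from $s_0$ (where $\rd_x U(s_0) = 0$), and apply Gronwall in $L^3_x$, with integrand controlled via $\nrm{\rd_x A_s(s)}_{L^3_x} \aleq \nrm{A_s(s)}_{\dot H^{3/2}_x} \aleq \nrm{A(s)}_{\dot H^2_x}^{1/2}\nrm{A(s)}_{\dot H^3_x}^{1/2}$. Both this quantity and $\nrm{A_s(s)}_{L^\infty_x}$ integrate over $[0,1]$ to a constant depending on $\nrm{\Aini}_{\dot H^1_x}$, by Proposition \ref{prop:YMHF4A:smth4deT}, so Gronwall closes. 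The $m=2$ bound in \eqref{eq:est4gt2caloric:U:2} will be the main obstacle: naive bounds require $\nrm{\rd_x^{(2)} A_s}_{L^2_x} = \nrm{\rd_x^{(3)} A}_{L^2_x}$ to be $L^1_s$, which fails since it only sits in $\calL^{3/4,2}_s \calL^2_x$ by smoothing. The remedy is to work with $\lap U$ (equivalent to $\rd_x^{(2)} U$ in $L^2_x$ up to constants) and expand $\rd_s \lap U = \lap U \cdot A_s + 2 \rd^\ell U \cdot \rd_\ell A_s + U \lap A_s$. For the dangerous term $\int_{s_0}^s U \lap A_s \, \ud s'$, I substitute $\lap A_s = \rd_{s'} A_s - \rd^\ell \calN^{(A_\ell)}$ from the parabolic equation for $A_s$ used in the proof of Corollary \ref{cor:YMHF4A:bnd4lapAs}, and integrate the $\rd_{s'}A_s$ piece by parts in $s'$; the resulting boundary and bulk terms are all controlled by \eqref{eq:YMHF4A:lwp4deT:est4A}, \eqref{eq:bnd4Ns}, and the previous step, after which Gronwall closes.

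The difference estimates of Part (2) will be handled identically: $\rd_s (\dlt U) = \dlt U \cdot A_s + U \cdot \dlt A_s$, giving a linear inhomogeneous ODE for $\dlt U$ with forcing involving $\dlt A_s$, whose smoothing bounds are provided by the analogue \eqref{eq:YMHF4A:bnd4lapDltAs} of Corollary \ref{cor:YMHF4A:bnd4lapAs}. Part (3) follows by iteration: when $s_0 = 1$, the integration runs over $[s,1]$ and $\rd_x^{(m)} A$ enjoys full parabolic smoothing (Proposition \ref{prop:YMHF4A:smth4deT}) so $\nrm{\rd_x^{(m)} A_s(s)}_{L^2_x}$ is locally $L^1_s$ on $[s_0/2, 1]$ for every $m$, allowing a direct Gronwall argument without the special trick of the $m=2$ step; differentiating the ODE $m$ times and inducting on $m$ yields \eqref{eq:est4gt2caloric:U:2} for all $m \geq 3$. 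Finally, for Part (4), observe that $U^{-1}$ satisfies the companion ODE $\rd_s U^{-1} = -A_s U^{-1}$, which has the same structure, so the entire argument applies verbatim with $U$ replaced by $U^{-1}$ and $A_s$ by $-A_s$.
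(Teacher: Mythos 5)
Your proposal is close in spirit to the paper's: both hinge on the observation that $\lap A_s$ is not in $L^1_s L^2_x$ and must be handled via the parabolic equation for $A_s$, and both close the remaining terms by Gronwall and the smoothing estimates of Proposition~\ref{prop:YMHF4A:smth4deT}. The paper packages the ODE analysis into a general lemma (Proposition~\ref{prop:est4gt}), whose $L^2_x$-step uses the Klainerman--Machedon Fubini trick: the integral form of the ODE for $U$ is substituted into $\int \lap A_s \cdot U$, so that the bound from Corollary~\ref{cor:YMHF4A:bnd4lapAs} on $\sup_s\|\int_s^1\lap A_s\|_{L^2_x}$ can be used as a black box. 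You instead open up the identity $\lap A_s = \rd_s A_s - \rd^\ell\,{}^{(A_\ell)}\calN$ underlying that corollary, and integrate $\int U \rd_{s'}A_s$ by parts in $s'$, producing a boundary term $U A_s$ and a bulk term $U A_s^2$ which you then estimate directly (the latter works since $\|A_s(s)\|_{L^4_x}^2 \lesssim s^{-3/4}$ is integrable). This is a genuine, if modest, variant: the IBP-in-$s$ route avoids the abstract Fubini lemma at the cost of some extra term-by-term bookkeeping, while the paper's route is reusable (Proposition~\ref{prop:est4gt} also services Lemma~\ref{lem:est4gt2temporal}). Your handling of Parts (2)--(4) and the $m\geq 3$ induction matches the paper.

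One point is incorrect as stated: you claim $\|U\|_{L^\infty_{s,x}} \leq C_{\LieGrp}$ is ``automatic from the bi-invariance of the inner product on $\LieAlg$, which forces $U(s,x)$ to move only on a bounded subset of $\LieGrp$.'' Bi-invariance of the inner product on $\LieAlg$ does not imply compactness of $\LieGrp$ (consider $\LieGrp = \bbR^n$ or products with a noncompact abelian factor), so the $L^\infty$ bound on $U$ is not automatic from the group structure; in fact the stated bound in \eqref{eq:est4gt2caloric:U:1} is allowed to depend on $\|\Aini\|_{\dot H^1_x}$ precisely because it comes from Gronwall applied to the integral form of the ODE, using $\|A_s\|_{L^1_s L^\infty_x} \lesssim_{\|\Aini\|_{\dot H^1_x}} 1$. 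This is a minor slip since the Gronwall argument is already implicit in your plan, but the claimed $\Aini$-independence should be dropped.
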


We defer the proof of Lemma \ref{lem:est4gt2caloric} to Appendix \ref{sec:gt}.

\begin{remark}
We remark that Lemma \ref{lem:est4gt2caloric} will be for us the analogue of Uhlenbeck's lemma\footnote{Which states, roughly speaking, that there exists a gauge transform (with good regularity properties) which transforms a given connection 1-form $A_{i}$ into the Coulomb gauge, provided that the $L^{3/2}_{x}$ norm of $F_{ij}$ is small.} \cite{Uhlenbeck:1982vna}, on which the work \cite{Klainerman:1995hz} crucially rely, in the following sense: 
Heuristically, an application of this lemma with $s_{0} = 1$, combined with the smoothing estimates of Proposition \ref{prop:YMHF4A:smth4deT}, amounts to transforming a given initial data set to another whose curl-free part is `smoother'. On the other hand, Uhlenbeck's lemma sets the curl-free part to be exactly zero.

In the simpler case of an abelian gauge gauge theory (i.e., Maxwell's equations), this heuristic can be demonstrated in a more concrete manner as follows: In this case, the connection component $A_s$ will exist all the way to $s \to \infty$, and will converge to zero in a suitable sense. Note furthermore that $\rd^{\ell} F_{s\ell} = \rd_{s} (\rd^{\ell }A_{\ell}) - \lap A_{s} = 0$. Therefore, this lemma, if applied with `$s_{0}=\infty$', transforms the initial data to one such that the curl-free part is zero, i.e., one satisfying the Coulomb gauge condition.
\end{remark}

%\begin{remark} 
%As remarked in the Introduction, the procedure of solving the system with $A_{s} = \rd^{\ell} A_{\ell}$ and applying a gauge transform back to $A_{s} =0$ nothing but the well-known \emph{de Turck trick} in disguise. Ours differs from the original one, though, in that we prescribe the data for the ODE for the gauge transform at $s=1$ instead of $s=0$, in order to retain the regularity obtained by analyzing the semi-linear parabolic equations. See \cite{?} for examples of the use of the de Turck trick in its original form.
%\end{remark}

\subsection{Linear covariant parabolic equation for $F_{0i}$} \label{subsec:covYMHFtech}
In this subsection, we will prove a technical well-posedness proposition for a certain covariant parabolic equation, which is satisfied by $B_{i} = F_{\nu i}$. (See Appendix \ref{sec:HPYM})
\begin{proposition} \label{prop:linCovHeat}
Fix $m \geq 2$, and let $A_{i}, A_{s}$ be connection coefficients such that $A_{i}, A_{s} \in C^{\infty}_{s}([0,1], H^{\infty}_{x})$. 

Consider the following initial value problem for the linear parabolic equation
\begin{equation} \label{eq:linCovHeat:ivp}
\left\{
\begin{aligned}
	\covD_{s} B_{i} - \covD^{\ell} \covD_{\ell} B_{i} =& 2 \LieBr{F_{i\ell}}{B^{\ell}}, \\
	B_{i}(s=0) =& \Bini_{i},
\end{aligned}
\right.
\end{equation}
where the initial data $\Bini_{i} \in H^{\infty}_{x}$. Then the following statements hold.

\begin{enumerate}
\item There exists a unique regular solution $B_{i} = B_{i}(x, s)$ on $[0, 1]$ to the problem \eqref{eq:linCovHeat:ivp}, i.e., $B_{i} \in C^{\infty}_{s}([0,1], H^{\infty}_{x})$. 
\item Assume furthermore that we have the following bounds for $A_{i}$ and $A_{s}$:
\begin{equation} \label{eq:linCovHeat:hypothesis}
	\sup_{i} \nrm{\nb_{x} A_{i}}_{\calL^{1/4,\infty}_{s} \calH^{m-1}_{x}(0,1]} + \nrm{A_{s}}_{\calL^{3/4,\infty}_{s} \calH^{m-1}_{x} (0,1]} \leq \calC < \infty.
\end{equation}

Then the solution $B_{i}$ obtained in (1) satisfies the following estimate.
\begin{equation} \label{eq:linCovHeat:est4B}
	\sup_{i} \nrm{B_{i}}_{\calP^{3/4} \calH_{x}^{m}(0, 1]} \leq C_{\calC} \sup_{i} \nrm{\Bini_{i}}_{L^{2}_{x}}.
\end{equation}

\item Let $A_{i} = A_{i}(t,x,s)$, $A_{s} = A_{s} (t,x,s)$ be a family of coefficients, parametrized by $t \in I$, such that $A_{i}, A_{s} \in C^{\infty}_{t,s}(I \times [0,1], H^{\infty}_{x})$. Consider the corresponding one parameter family of IVPs \eqref{eq:linCovHeat:ivp}, where the initial data sets $\Bini_{i}(t)$ are also parametrized by $t \in I$, in such a way that $\Bini_{i} = \Bini_{i}(t,x)$ belongs to $C^{\infty}_{t} (I, H^{\infty}_{x})$

Then the solution $B_{i} = B_{i}(t, x, s)$ on $I \times \bbR^{3} \times [0,1]$ (obtained by applying (1) to each $t$) belongs to $C^{\infty}_{t,s}(I \times [0,1], H^{\infty}_{x})$.
\end{enumerate}
\end{proposition}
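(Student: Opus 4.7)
The plan is to treat \eqref{eq:linCovHeat:ivp} as an inhomogeneous linear strictly parabolic equation and apply the abstract parabolic theory of Theorem \ref{thm:absP:absPth}. Expanding covariant derivatives and using $F_{i\ell} = \rd_i A_\ell - \rd_\ell A_i + \LieBr{A_i}{A_\ell}$, one rewrites the equation schematically as
\begin{equation*}
(\rd_s - \lap) B_i = \calO(A_s, B) + \calO(A, \rd_x B) + \calO(\rd_x A, B) + \calO(A, A, B) =: {}^{(B)}\calN.
\end{equation*}
For Part (1), since this is a linear equation with regular coefficients, existence and uniqueness of a regular solution $B_i$ follow from standard linear parabolic theory: produce $B_i$ by Picard iteration on a short $s$-interval, extend to all of $[0,1]$ using linearity, and obtain regularity from the parabolic smoothing property combined with the assumed regularity of $A_i$, $A_s$, and $\Bini_i$.

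For Part (2), I apply Theorem \ref{thm:absP:absPth} with $X = L^2_x$ (so $\ell_0 = 3/4$) and $\psi = B_i$. In the base case $m=2$, using the Correspondence Principle, H\"older for $\calL^{\ell,p}_s$ (Lemma \ref{lem:absP:Holder4Ls}), and Gagliardo--Nirenberg (Lemma \ref{lem:absP:algEst})---the last of which turns the $\calH^2_x$ bounds on $A$ and $A_s$ coming from \eqref{eq:linCovHeat:hypothesis} into the $\calL^\infty_x$ bounds needed on the coefficients (this is where $m \geq 2$ enters)---I estimate ${}^{(B)}\calN$ in $\calL^{3/4+1,p}_s \calL^2_x$ for $p=1,2$ by a sum of the form $C_\calC \nrm{B}_{\calP^{3/4} \calH^1_x} + \eps \nrm{B}_{\calP^{3/4} \calH^2_x}$ on any subinterval $(0,\subr]$, with the $\eps$-smallness coming from extra positive powers of $s$ (the quadratic terms are at scaling $s^{-3/2}$ rather than the critical $s^{-7/4}$, and the cubic term is even better). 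Theorem \ref{thm:absP:absPth}(1) then yields $\nrm{B}_{\calP^{3/4} \calH^2_x(0,1]} \leq C_\calC \nrm{\Bini}_{L^2_x}$. For $m \geq 3$, differentiating ${}^{(B)}\calN$ using Leibniz's rule and reapplying the same product estimates verifies the smoothing hypothesis \eqref{eq:absP:smth:1}, so that Theorem \ref{thm:absP:absPth}(2) delivers \eqref{eq:linCovHeat:est4B} by induction on $m$.

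Part (3) follows by differentiating the equation in $t$: $\rd_t^{(k)} B_i$ satisfies an equation of the same schematic form, with a source depending linearly on $\rd_t^{(j)} B_i$ for $j < k$ and polynomially on time-differentiated $A$ and $A_s$, all of which are regular by hypothesis. Iterating the argument of Part (2) in both the number of $t$- and $x$-derivatives yields regularity of $B_i$ in $(t,x,s)$. The main obstacle is the careful bookkeeping of the associated $s$-weights (\S \ref{subsec:assocWght}): the coefficient $A_s$ has the worse natural weight $s^{-3/4}$, so the whole strategy rests on the subcritical gain in the $s$-scaling of each nonlinear term, combined with the hypothesis \eqref{eq:linCovHeat:hypothesis} supplying $A_s$ with exactly its natural $\calL^{3/4,\infty}_s$ weight, so that Theorem \ref{thm:absP:absPth} becomes applicable on the finite $s$-interval.
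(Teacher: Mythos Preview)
Your approach is the same as the paper's: rewrite \eqref{eq:linCovHeat:ivp} in semi-linear form, estimate ${}^{(B)}\calN$ via the Correspondence Principle and product inequalities, and feed the result into Theorem~\ref{thm:absP:absPth}. The smoothing step for $m\geq 3$ and the treatment of Parts~(1) and~(3) are also as in the paper.

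There is one imprecision in how you package the base-case estimate. The bound you write, $C_{\calC}\nrm{B}_{\calP^{3/4}\calH^1_x} + \eps\nrm{B}_{\calP^{3/4}\calH^2_x}$, does not match the hypothesis \eqref{eq:absP:apriori:1} of Theorem~\ref{thm:absP:absPth}(1) when $C_{\calC}$ is large: the lower-order term there must take the form $\nrm{C(s)\psi}_{\calL^{3/4,p}_s\calH^1_x}$ with $\int_0^1 C(s)^p\,\tfrac{\ud s}{s}<\infty$, not a constant multiple of the full $\calP^{3/4}\calH^1_x$ norm. The paper resolves this by keeping the extra $s$-weight you correctly identify \emph{attached to $B$}: the product estimates place $B$ in $\nrm{s^{1/4-\eps'}B}_{\calL^{3/4,2}_s\dot{\calH}^1_x}$, which yields $C(s)=C(\calC+\calC^2)s^{1/4-\eps'}$ and $\eps=D=0$. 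The Gronwall mechanism built into Theorem~\ref{thm:absP:absPth}(1) then closes the estimate on all of $(0,1]$ with no smallness assumption on~$\calC$. Your identification of the subcritical $s$-scaling as the key is correct; it just needs to be routed through the $C(s)$ slot rather than the $\eps$ slot.
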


\begin{proof} 
%As a first step, let us derive an appropriate bound for $F_{i\ell}$. We claim that the following estimate holds:
%\begin{equation} \label{eq:linCovHeat:pf:1}
%	\nrm{F_{i\ell}}_{\calL^{3/4,\infty}_{s} \widehat{\calH}^{m-1}_{x}} + \nrm{\calO(A, A)}_{\calL^{3/4,\infty}_{s} \widehat{\calH}^{m-1}_{x}} \leq C(\calC + \calC^{2}).
%\end{equation}
%
%Indeed, recalling the definition of $F_{ij}$, we estimate for $1 \leq k \leq m-1$
%\begin{equation*}
%	\nrm{F_{i\ell}}_{\calL^{3/4,\infty}_{s} \dot{\calH}^{k}_{x}} \leq 2 \nrm{\rd_{x} A}_{\calL^{3/4,\infty}_{s} \dot{\calH}^{k}_{x}} + \nrm{\calO(A,A)}_{\calL^{3/4,\infty}_{s} \dot{\calH}^{k-1}_{x}}.
%\end{equation*}
%
%The first term on the right-hand side can be bounded by $2 \calC$ by hypothesis. For the second term, we use the inequality $\nrm{\calO(\phi, \phi)}_{\dot{H}^{k}_{x}} \leq C \nrm{\phi}_{\dot{H}^{3/2}_{x} \cap L^{\infty}_{x}}\nrm{\phi}_{\dot{H}^{k}_{x}}$, the Correspondence Principle, Lemma \ref{lem:absP:Holder4Ls} and Lemma \ref{lem:absP:algEst}, obtaining
%\begin{equation*}
%	\nrm{\calO(A, A)}_{\calL^{3/4,\infty}_{s} \dot{\calH}^{k}_{x}} \leq C \nrm{A}_{\calL^{3/4,\infty}_{s} \calL^{\infty}_{x}} \nrm{A}_{\calL^{3/4,\infty}_{s} \dot{\calH}^{k}_{x}} \leq C \calC^{2},
%\end{equation*}
%since $m \geq 3$ and $k \geq 1$. Now summing over $k$, we obtain \eqref{eq:linCovHeat:pf:1}.

%Next, let us prove global existence and uniqueness, as well as \eqref{eq:linCovHeat:est4B}. 
As in the proof of Proposition \ref{prop:YMHF4A:lwp4deT}, we will present only the proof of the estimate \eqref{eq:linCovHeat:est4B} of Part (2) under the assumption that a regular solution $B$ already exists. The actual existence, uniqueness, persistence of regularity and stability required to justify Parts (1), (3) follow from a standard Picard iteration argument, which can be set up by a slightly modifying of the argument below. We leave the details of the procedure to the interested reader.

Let us begin by rewriting the equation \eqref{eq:linCovHeat:ivp} so that it is manifestly a semi-linear equation for the vector-valued unknown $B$:
\begin{equation*} 
\begin{aligned}
	\rd_{s} B_{i} - \lap B_{i} 
	= & 2 \LieBr{A^{\ell}}{\rd_{\ell} B_{i}} + \LieBr{\rd^{\ell} A_{\ell} - A_{s}}{B_{i}} + \LieBr{A^{\ell}}{\LieBr{A_{\ell}}{B_{i}}} + 2 \LieBr{F_{i\ell}}{B^{\ell}} \\
	= & s^{-1/2} \calO(A, \nb_{x} B) + s^{-1/2} \calO(\nb_{x} A, B) + \calO(A_{s}, B) + \calO(A, A, B).
\end{aligned}
\end{equation*}

Note the following inequalities, which follow easily from H\"older and Sobolev.
\begin{equation} \label{eq:linCovHeat:pf:0}
\left\{
\begin{aligned}
& \nrm{\phi_{1} \rd_{x} \phi_{2}}_{L^{2}_{x}} + \nrm{\rd_{x} \phi_{1} \phi_{2}}_{L^{2}_{x}} 
\leq C \nrm{\phi_{1}}_{\dot{H}^{3/2}_{x} \cap L^{\infty}_{x}} \nrm{\phi_{2}}_{\dot{H}^{1}_{x}}, \\
& \nrm{\phi_{1} \phi_{2}}_{L^{2}_{x}} 
\leq C \nrm{\phi_{1}}_{\dot{H}^{1/2}_{x}} \nrm{\phi_{2}}_{\dot{H}^{1}_{x}}, \quad
\nrm{\phi_{1} \phi_{2} \phi_{3}}_{L^{2}_{x}} 
\leq C \nrm{\phi_{1}}_{\dot{H}^{1}_{x}} \nrm{\phi_{2}}_{\dot{H}^{1}_{x}} \nrm{\phi_{3}}_{\dot{H}^{1}_{x}}.
\end{aligned}
\right.
\end{equation}

Fix $(0, \subr] \subset (0, 1]$. Applying the Correspondence Principle, H\"older for $\calL^{\ell, p}_{s}$ (Lemma \ref{lem:absP:Holder4Ls}), Gagliardo-Nirenberg (Lemma \ref{lem:absP:algEst}) and interpolation, we obtain the following set of inequalities on $(0, \subr]$.
\begin{equation} \label{eq:linCovHeat:pf:1}
\left\{
\begin{aligned}
&\nrm{s^{-1/2} \calO(\psi_{1}, \nb_{x} \psi_{2})}_{\calL^{3/4+1,q}_{s} \calL^{2}_{x}}
+\nrm{s^{-1/2} \calO(\nb_{x} \psi_{1}, \psi_{2})}_{\calL^{3/4+1,q}_{s} \calL^{2}_{x}} \\
& \qquad \leq C \nrm{\nb_{x} \psi_{1}}_{\calL^{1/4,\infty}_{s} \calH^{1}_{x}} \nrm{s^{1/4-\eps'} \, \psi_{2}}_{\calL^{3/4,2}_{s} \dot{\calH}^{1}_{x}},\\
&\nrm{\calO(\psi_{0}, \psi_{2})}_{\calL^{3/4+1,q}_{s} \calL^{2}_{x}}
\leq C \nrm{\psi_{0}}_{\calL^{3/4,\infty}_{s} \calH^{1}_{x}} \nrm{s^{1/4-\eps'} \, \psi_{2}}_{\calL^{3/4,2}_{s} \dot{\calH}^{1}_{x}}, \\
& \nrm{\calO(\psi_{1}, \psi_{2}, \psi_{3})}_{\calL^{3/4+1,q}_{s} \calL^{2}_{x}}
\leq C \nrm{\psi_{1}}_{\calL^{1/4,\infty}_{s} \dot{\calH}^{1}_{x}} \nrm{s^{1/4-\eps'} \ \psi_{2}}_{\calL^{3/4,2}_{s} \dot{\calH}^{1}_{x}} \nrm{\psi_{3}}_{\calL^{1/4,\infty}_{s} \dot{\calH}^{1}_{x}}.
\end{aligned}
\right.
\end{equation}
where $1 \leq q \leq 2$ and $\eps' > 0$ is small enough. Using \eqref{eq:linCovHeat:pf:1} with $q=1,2$, $\psi_{0} = A_{s}$, $\psi_{1} = A$, $\psi_{2} = B$, and $\psi_{3} = A$, we obtain \eqref{eq:absP:apriori:1} with $\psi = B$, $X=L^{2}_{x}$, $\ell = 3/4$, $\eps = D = 0$, $p=2$ and $C(s) = C (\calC + \calC^{2}) s^{1/4-\eps'}$. Since $\nrm{B}_{\calP^{3/4} \calH^{m}_{x}} < \infty$ (as $B$ is regular) and $C(s)^{2}$ is integrable on $(0, 1]$, we can apply the first part of Theorem \ref{thm:absP:absPth} to conclude that 
\begin{equation*} 
	\nrm{B}_{\calP^{3/4} \calH^{2}_{x}(0, 1]} \leq C_{\calC} \nrm{\Bini}_{L^{2}_{x}}.
\end{equation*}
%In particular, the unique smooth solution $B_{i}$ exists on the whole interval $(0, 1]$.

Finally, in the case $m \geq 3$, let us prove the smoothing estimate \eqref{eq:linCovHeat:est4B}. We use Leibniz's rule and \eqref{eq:linCovHeat:pf:1} with $q=2$ (and ignoring all extra weights of $s$) to estimate $\nrm{(\rd_{s} - \lap) B_{i}}_{\calL^{3/4+1,2}_{s} \dot{\calH}^{k}_{x}}$ by
\begin{equation*}
\begin{aligned}
	& C (\nrm{\nb_{x} A}_{\calL^{1/4,\infty}_{s} \calH^{k+1}_{x}} + \nrm{A_{s}}_{\calL^{3/4,\infty}_{s} \calH^{k+1}_{x}} + \nrm{\nb_{x} A}_{\calL^{1/4,\infty}_{s} \calH^{k}_{x}}^{2}) \nrm{B}_{\calL^{3/4,2}_{s} \calH^{k+1}_{x}} \\
	& \quad \leq C ( \calC + \calC^{2}) \nrm{B}_{\calP^{3/4} \calH^{k+1}_{x}}.
\end{aligned}\end{equation*}
for $0 \leq k \leq m-2$. Using the second part of Theorem \ref{thm:absP:absPth}, \eqref{eq:linCovHeat:est4B} follows. \qedhere
\end{proof}

By almost the same proof, the following slight variant of Proposition \ref{prop:linCovHeat} immediately follows.
\begin{proposition} \label{prop:linCovHeat:Diff}
Fix $m \geq 2$, and assume that $(A_{i}, A_{s})$, $(A'_{i}, A_{s}')$ are smooth connection coefficients such that $A_{i}, A'_{i}, A_{s}, A_{s}' \in C^{\infty}_{s}([0,1], H^{\infty}_{x})$ and \eqref{eq:linCovHeat:hypothesis} are satisfied. Assume furthermore
\begin{equation} \label{eq:linCovHeat:Diff:hypothesis}
	\sup_{i} \nrm{\nb_{x} (\dlt A_{i})}_{\calL^{1/4,\infty}_{s} \calH^{m-1}_{x}(0,1]} + \nrm{\dlt A_{s}}_{\calL^{3/4,\infty}_{s} \calH^{m-1}_{x}(0,1]} \leq \dlt \calC < \infty.
\end{equation}

Consider initial data $B_{i}, B'_{i} \in H^{\infty}_{x}$ and the corresponding solutions $B_{i}, B'_{i}$ on $[0,1]$ to the initial value problems
\begin{equation*}
\left\{
\begin{aligned}
\covD_{s} B_{i} - \covD^{\ell} \covD_{\ell} B_{i} =& 2\LieBr{F_{i\ell}}{B^{\ell}}, \\
B_{i}(s=0) =& \Bini_{i},
\end{aligned}
\right. 
\hbox{ and }
\left\{
\begin{aligned}
\covD'_{s} B'_{i} - (\covD')^{\ell} \covD'_{\ell} B_{i} =& 2\LieBr{F'_{i\ell}}{B'^{\ell}}, \\
B'_{i}(s=0) =& \Bini'_{i},
\end{aligned}
\right.
\end{equation*}
given by Proposition \ref{prop:linCovHeat}. The the following estimate holds.
\begin{equation} \label{eq:linCovHeat:est4dltB}
	\sup_{i} \nrm{\dlt B_{i}}_{\calP^{3/4} \calH^{m}_{x}(0,1]} \leq C_{\calC, \nrm{\Bini}_{L^{2}_{x}}, \nrm{\Bini'}_{L^{2}_{x}}} (\sup_{i} \nrm{\dlt \Bini_{i}}_{L^{2}_{x}} + \dlt \calC).
\end{equation}
\end{proposition}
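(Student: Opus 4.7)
The strategy is to derive a linear parabolic equation for the difference $\dlt B_i$ and then apply the same machinery used in the proof of Proposition \ref{prop:linCovHeat}. Subtracting the two systems and using the formal Leibniz rule for $\dlt$, one obtains schematically
\begin{equation*}
\begin{aligned}
(\rd_s - \lap)(\dlt B_i)
= & \ s^{-1/2} \calO(A, \nb_x \dlt B) + s^{-1/2} \calO(\nb_x A, \dlt B) + \calO(A_s, \dlt B) + \calO(A, A, \dlt B) \\
& + s^{-1/2} \calO(\dlt A, \nb_x B) + s^{-1/2} \calO(\nb_x \dlt A, B) + \calO(\dlt A_s, B) + \calO(\dlt A, A, B),
\end{aligned}
\end{equation*}
where in the $\calO$-terms on each line we do not distinguish between primed and unprimed coefficients (following the convention fixed in Section \ref{sec:notations}).

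First, I would apply Proposition \ref{prop:linCovHeat}(2) to both $B$ and $B'$, which, thanks to the hypothesis \eqref{eq:linCovHeat:hypothesis} for each of $(A,A_s)$ and $(A',A'_s)$, yields the bound $\sup_i \nrm{B_i}_{\calP^{3/4}\calH^m_x(0,1]} + \sup_i \nrm{B'_i}_{\calP^{3/4}\calH^m_x(0,1]} \leq C_{\calC}(\nrm{\Bini}_{L^2_x} + \nrm{\Bini'}_{L^2_x})$. The first four terms on the right-hand side above — those involving $\dlt B$ — are estimated exactly as in the proof of Proposition \ref{prop:linCovHeat}, using the trilinear/bilinear inequalities \eqref{eq:linCovHeat:pf:1} with $\psi_2 = \dlt B$ and the coefficient bound $\calC$; this gives a contribution of the form $\nrm{C(s) \dlt B}_{\calL^{3/4,2}_s \calH^1_x}$ with $C(s) = C(\calC + \calC^2) s^{1/4 - \eps'}$, suitable for the first part of Theorem \ref{thm:absP:absPth}.

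Next, the four new terms — those involving $\dlt A$ or $\dlt A_s$ paired with $B$ (or $B'$) — are treated as a forcing term. Reusing the very same set of inequalities \eqref{eq:linCovHeat:pf:1} but with $\psi_2 = B$ (or $B'$) and the differences placed in the $\psi_0, \psi_1, \psi_3$ slots, and applying H\"older in $\calL^{\ell,p}_s$ (Lemma \ref{lem:absP:Holder4Ls}), one obtains a bound of the form
\begin{equation*}
	\nrm{\text{new terms}}_{\calL^{3/4+1,q}_s \calL^2_x(0,\subr]} \leq C_{\calC} \, \dlt \calC \, \nrm{B}_{\calP^{3/4}\calH^2_x(0,1]} \leq C_{\calC, \nrm{\Bini}_{L^2_x}} \dlt \calC
\end{equation*}
for $q = 1, 2$, which plays the role of the constant $D$ in hypothesis \eqref{eq:absP:apriori:1} of Theorem \ref{thm:absP:absPth}. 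An application of Theorem \ref{thm:absP:absPth}(1) then delivers the $m=2$ version of \eqref{eq:linCovHeat:est4dltB}. The smoothing estimates for $m \geq 3$ follow by differentiating the equation for $\dlt B$, estimating both families of terms in $\calL^{3/4+1,2}_s \dot{\calH}^k_x$ via Leibniz's rule and \eqref{eq:linCovHeat:pf:1}, and iterating Theorem \ref{thm:absP:absPth}(2) up to order $m$.

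The only genuinely new element compared to Proposition \ref{prop:linCovHeat} is the bookkeeping of terms arising from the Leibniz rule for $\dlt$; the analytic input is identical. The mild obstacle worth flagging is the trilinear term $\calO(\dlt A, A, B)$ at top order in the smoothing hierarchy, where one must be slightly careful to place $\dlt A$ in an $\calL^\infty_s$ slot so that only $\dlt \calC$ (and not a derivative of $\dlt A$ not controlled by $\dlt \calC$) appears — this is possible precisely because the hypothesis \eqref{eq:linCovHeat:Diff:hypothesis} gives $m-1$ derivatives of $\dlt A$ in $\calL^{1/4,\infty}_s$, matching what was available for $A$ in \eqref{eq:linCovHeat:hypothesis}.
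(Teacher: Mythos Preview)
Your proposal is correct and matches the paper's approach exactly: the paper states only that this proposition follows ``by almost the same proof'' as Proposition \ref{prop:linCovHeat}, and you have accurately filled in what that means --- subtracting the equations, separating the terms linear in $\dlt B$ (handled as before) from the forcing terms carrying $\dlt A, \dlt A_s$ (controlled via \eqref{eq:linCovHeat:pf:1} and the already-established bound on $B, B'$ from Proposition \ref{prop:linCovHeat}), and then invoking both parts of Theorem \ref{thm:absP:absPth}.
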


\subsection{Proof of Theorem \ref{thm:lwp4YMHF}} \label{subsec:pfOfLwp4YMHF}
Combining the results in \S \ref{subsec:covYMHF} - \ref{subsec:covYMHFtech}, we can give a proof of Theorem \ref{thm:lwp4YMHF}. This theorem is needed in \cite{Oh:2012fk}, but not for the rest of this paper, so the reader is free to skip this subsection insofar as only the Main Theorem is concerned.

\begin{proof} 
The idea is to first use scaling to make the initial data small, and then solve \eqref{eq:cYMHF} in the DeTurck gauge by the theory we developed in \S \ref{subsec:covYMHF}. Then we will apply Lemma \ref{lem:est4gt2caloric} with $s_{0} = 0$ to obtain a solution to the \eqref{eq:YMHF} (which is \eqref{eq:cYMHF} in the caloric gauge).

We will give a detailed proof of (1) for smooth initial data such that $\Aini_{i} \in H^{\infty}_{x}$; then a similar argument leads to (2) for the same class of initial data, at which point we can recover the full statements of (1) and (2) (for general initial data sets in $\dot{H}^{1}_{x}$) by approximation. 

Note that \eqref{eq:YMHF} is invariant under the scaling $A(x,s) \to \lmb^{-1} A(x/\lmb, s / \lmb^{2})$; using this scaling, we may enforce $\nrm{\Aini}_{\dot{H}^{1}_{x}} \leq \dlt_{P}$. We are then in a position to apply Proposition \ref{prop:YMHF4A:lwp4deT}, from which we obtain a smooth solution $\widetilde{A}_{i} \in C_{t} ([0,1], \dot{H}^{1}_{x})$ to the IVP for \eqref{eq:cYMHF} in the DeTurck gauge, i.e.
\begin{equation} \label{eq:lwp4YMHF:pf:0}
	\left\{
\begin{aligned}
	&\widetilde{F}_{si} = \widetilde{\covD}^{\ell} \widetilde{F}_{\ell i}, \,\, \widetilde{A}_{s} = \rd^{\ell} \widetilde{A}_{\ell} \quad \hbox{ on } \bbR^{3} \times [0,1], \\
	&\widetilde{A}_{i}(s=0) = \Aini_{i},
\end{aligned}	
\right.
\end{equation}
which, by \eqref{eq:YMHF4A:lwp4deT:est4A}, obeys
\begin{equation} \label{eq:lwp4YMHF:pf:1}
	\sup_{s \in [0,1]} \nrm{\widetilde{A}(s)}_{\dot{H}^{1}_{x}} \leq C \nrm{\Aini}_{\dot{H}^{1}_{x}}.
\end{equation}

Next, consider a gauge transform $U = U(x,s)$ which solves the ODE
\begin{equation} \label{eq:lwp4YMHF:pf:2}
	\left\{
\begin{aligned}
	&\rd_{s} U = U \widetilde{A}_{s}, \quad \hbox{ on } \bbR^{3} \times [0,1] \\
	&U(s=0) = \mathrm{Id}.
\end{aligned}
	\right.
\end{equation}

Note that $U$ is regular, as $\widetilde{A}_{s} \in C^{\infty}_{s}([0,1], H^{\infty}_{x})$, and furthermore satisfies the following estimates on $\bbR^{3} \times [0,1]$ thanks to Lemma \ref{lem:est4gt2caloric} :
\begin{equation} \label{eq:lwp4YMHF:pf:3}
	\nrm{U}_{L^{\infty}_{x,s}} \leq C_{\nrm{\Aini}_{\dot{H}^{1}_{x}}}, \quad
	\nrm{\rd_{x} U}_{L^{\infty}_{s} L^{3}_{x}} + \nrm{\rd_{x}^{(2)} U}_{L^{\infty}_{s} L^{2}_{x}} 
	\leq C_{\nrm{\Aini}_{\dot{H}^{1}_{x}}}\nrm{\Aini}_{\dot{H}^{1}_{x}}.
\end{equation}

The identical estimates hold with $U$ replaced by $U^{-1}$ as well.

Let $A_{i} := U \widetilde{A}_{i} U^{-1} - \rd_{i} U U^{-1}$ be the connection 1-form obtained by gauge transforming $(\widetilde{A}_{i}, \widetilde{A}_{s})$ by $U$. We remark that $A_{s} = U \widetilde{A}_{s} U^{-1} - \rd_{s} U U^{-1} = 0$ thanks to the above ODE, and therefore $A_{i}$ solves \eqref{eq:YMHF}, whereas $A_{i}(s=0) = \widetilde{A}_{i}(s=0) = \Aini_{i}$ as $U(s=0) = \mathrm{Id}$. Therefore, we conclude that $A_{i}$ is a (smooth) solution to the IVP for \eqref{eq:YMHF} with the prescribed initial data. Furthermore, from \eqref{eq:lwp4YMHF:pf:1}, \eqref{eq:lwp4YMHF:pf:3} and the gauge transform formula for $A_{i}$, we see that $A_{i} \in C^{\infty}_{s} ([0,1], H^{\infty}_{x})$ and also that \eqref{eq:lwp4YMHF:pf:1} holds with $\widetilde{A}_{i}$ replaced by $A_{i}$. Scaling back, we obtain (1). 

Finally, note that (3) in Theorem \ref{thm:lwp4YMHF} follows easily from (3) in Proposition \ref{prop:YMHF4A:lwp4deT}.
\qedhere
\end{proof}

\begin{remark} 
The idea of the proof of Theorem \ref{thm:lwp4YMHF} outlined above is not new, and is in fact nothing but the standard \emph{DeTurck trick} in disguise, first introduced by D. DeTurck \cite{DeTurck:1983ts} for the Ricci flow and introduced in the context of the Yang-Mills heat flow by S. Donaldson in \cite{Donaldson:1985vh}. A similar procedure will be used in the next section in our proof of Theorem \ref{thm:idEst}, but with an extra twist of choosing $s_{0} =1$ instead of $s_{0}=0$ in Lemma \ref{lem:est4gt2caloric}. This allows us to keep the smoothing estimates  through the gauge transform back to $A_{s} = 0$ (which is not the case for the original DeTurck trick), at the expense of introducing a non-trivial gauge transform for the initial data at $t=0, s=0$.
\end{remark}

\section{Proof of Theorem \ref{thm:idEst} : Estimates for the initial data} \label{sec:pfOfIdEst}
The goal of this section is to prove Theorem \ref{thm:idEst}, using the preliminary results established in the previous section.

We begin by giving the precise definitions of $\calI$ and $\dlt \calI$, which had been alluded in Section \ref{sec:mainThm}.
Let $A_{\bfa}, A_{\bfa}'$ be regular solutions to \eqref{eq:HPYM} (which, we remind the reader, was introduced in \S \ref{subsec:overview}) on $I \times \bbR^{3} \times [0,1]$. We define the norms $\calI$ and $\dlt \calI$ for $F_{si}, \Alow_{i}$ at $t=0$ by
\begin{equation*}
\calI := \sum_{k=1}^{10} \bb[ \nrm{\nb_{t,x} F_{s}(t=0)}_{\calL^{5/4,\infty}_{s} \dot{\calH}^{k-1}_{x}} + \nrm{\nb_{t,x} F_{s}(t=0)}_{\calL^{5/4,2}_{s} \dot{\calH}^{k-1}_{x}} \bb] + \sum_{k=1}^{31}  \nrm{\rd_{t,x} \Alow(t=0)}_{\dot{H}^{k-1}_{x}}.
\end{equation*}
\begin{equation*}
\dlt \calI := \sum_{k=1}^{10} \bb[ \nrm{\nb_{t,x} (\dlt F_{s})(t=0)}_{\calL^{5/4,\infty}_{s} \dot{\calH}^{k-1}_{x}} + \nrm{\nb_{t,x} (\dlt F_{s})(t=0)}_{\calL^{5/4,2}_{s} \dot{\calH}^{k-1}_{x}} \bb] + \sum_{k=1}^{31}  \nrm{\rd_{t,x} (\dlt \Alow)(t=0)}_{\dot{H}^{k-1}_{x}}.
\end{equation*}
where we remind the reader the conventions $\nrm{F_{s}} = \sup_{i} \nrm{F_{si}}$ and $\nrm{\Alow} = \sup_{i} \nrm{\Alow_{i}}$.

\begin{proof} [Proof of Theorem \ref{thm:idEst}]
Throughout the proof, let us use the notation $I = (-T, T)$. The proof will proceed in a number of steps.

\pfstep{Step 1 : Solve \eqref{eq:dYMHF} in the DeTurck gauge}
The first step is to exhibit a regular solution to \eqref{eq:HPYM}, by solving \eqref{eq:dYMHF} under the DeTurck gauge condition $A_{s} = \rd^{\ell} A_{\ell}$.
 For the economy of notation, we will denote the solution by $A_{\bfa}$ in this proof; however, the reader should keep in mind that it is \emph{not} the $A_{\bfa}$ in the statement of the theorem, since we are in a different gauge. 

We note the reader that in this step and the next, we will mostly be interested in obtaining \emph{qualitative} statements, such as smoothness of various quantities, etc. These statements will typically depend on smooth norms of $\Atemp_{i}$. 

We begin by solving \eqref{eq:cYMHF}, i.e., $F_{si} = \covD^{\ell} F_{\ell i}$ for every $t$, with the initial data $A_{i}(t, s=0) = \Atemp_{i}(t)$. 
Let us impose the DeTurck gauge condition $A_{s} = \rd^{\ell} A_{\ell}$. Recall that this makes \eqref{eq:cYMHF} a system of genuine semi-linear heat equations, which can be solved on the unit $s$-interval $[0,1]$ provided that the initial data $\Atemp_{i}$ is small in a suitable sense. Indeed,  for $t \in I$, by Proposition \ref{prop:YMHF4A:lwp4deT} and the hypothesis $\nrm{\Atemp(t)}_{\dot{H}^{1}_{x}} < \dlt_{P}$, there exists a unique smooth solution $A_{i} (t)= A_{i} (t, s, x)$ to the above system on $0 \leq s \leq 1$. Furthermore, by Part (3) of Proposition \ref{prop:YMHF4A:lwp4deT}, $A_{i}, A_{s} \in C^{\infty}_{t,s} (I \times [0,1], H^{\infty}_{x})$.

Our next task is to show that there exists $A_{0}$ which satisfies the remaining equation $F_{s0} = \covD^{\ell} F_{\ell 0}$ of \eqref{eq:dYMHF}. To begin with, let us solve the linear covariant parabolic equation (with smooth coefficients) :
\begin{equation*}
\left\{
\begin{aligned}
\covD_{s} B_{i} &= \covD^{\ell} \covD_{\ell} B_{i} + 2 \LieBr{\tensor{F}{_{i}^{\ell}}}{B_{\ell}}, \\
B_{i} (t)\vert_{s=0} & = \Ftemp_{i0}(t).
\end{aligned}
\right.
\end{equation*}

It is easy to check that the hypotheses of Proposition \ref{prop:linCovHeat} are satisfied, by using the estimates in Proposition \ref{prop:YMHF4A:smth4deT}. Therefore, a unique regular solution $B_{i}$ to this equation exists on $0 \leq s \leq 1$. 

The idea is that $B_{i}$ should be $F_{i0}$ in the end, as it solves exactly the equation that $F_{i0}$ is supposed to solve. With this in mind, let us extend $A_{0}$ by formally setting $F_{s0} = \covD^{\ell} B_{\ell}$, which leads to the ODE
\begin{equation*}
\left\{
\begin{aligned}
\rd_{s} A_{0} &= \rd_{0} A_{s} + \LieBr{A_{0}}{A_{s}} + \covD^{\ell} B_{\ell} \\
A_{0} \vert_{s=0} & = 0.
\end{aligned}
\right.
\end{equation*}

This is a linear ODE with smooth coefficients, as $A_{s}$ is regular. Therefore, by the standard ODE theory, there exists a unique smooth solution $A_{0}$ on $0 \leq s \leq 1$. Furthermore, in view of the regularity of $A_{s}$, $A_{i}$ and $B_{i}$, we see that $A_{0} \in C^{\infty}_{t,s} (I \times [0,1], H^{\infty}_{x})$. 
%The connection coefficients $A_{i}$, $A_{s} = \rd^{\ell} A_{\ell}$ and $A_{0}$ that we have obtained so far constitutes a candidate for the solution to \eqref{eq:HPYM}. 

In what follows, we will prove that the connection 1-form $A_{\bfa}$ formed by $A_{0}, A_{i}, A_{s} = \partial^{\ell} A_{\ell}$ is the desired solution to \eqref{eq:HPYM}. It only remains to check that the connection 1-form $A_{\bfa}$ is indeed a solution to the equation $F_{s \mu} = \covD^{\ell} F_{\ell \mu}$, along with the condition $A_{s} = \rd^{\ell} A_{\ell}$. For this purpose, it suffices to verify that $B_{i} = F_{i0}$, where $F_{i0}$ is the curvature 2-form given by
\begin{equation*}
F_{i0} := \rd_{i} A_{0} - \rd_{0} A_{i} + \LieBr{A_{i}}{A_{0}}.
\end{equation*}

From the regularity of $A_{i}$ and $A_{0}$, it follows that $F_{i0} \in C^{\infty}_{t,s} (I \times [0,1], H^{\infty}_{x})$. Then the following lemma shows that $B_{i} = F_{i0}$ indeed holds.
\begin{lemma} 
Let $B_{i}, A_{0}$ be defined as above, and define $F_{i0} = \rd_{i} A_{0} - \rd_{0} A_{i} + \LieBr{A_{i}}{A_{0}}$. Then we have $B_{i} = F_{i0}$ on $0 \leq s \leq 1$.
\begin{proof} 
Let us begin by computing $\covD_{s} F_{i0}$ :
\begin{align*}
	\covD_{s} F_{i0}
	&= \covD_{i} F_{s0} - \covD_{0} F_{si} \\
	&= \covD_{i} \covD^{\ell} F_{\ell 0} - \covD_{0} \covD^{\ell} F_{\ell i} - \covD_{i} \covD^{\ell} (F_{\ell 0} - B_{\ell}) \\
	& =\covD^{\ell} \covD_{\ell} F_{i0} + 2 \LieBr{\tensor{F}{_{i}^{\ell}}}{F_{\ell 0}} - \covD_{i} \covD^{\ell} (F_{\ell 0} - B_{\ell}) .
\end{align*}
Subtracting the equation
\begin{equation*}
	\covD_{s} B_{i} = \covD^{\ell} \covD_{\ell} B_{i} + 2 \LieBr{\tensor{F}{_{i}^{\ell}}}{B_{\ell}},
\end{equation*}
from the previous equation, we obtain
\begin{equation*}
	\covD_{s} (\dlt F_{i0}) = \covD^{\ell} \covD_{\ell} (\dlt F_{i0}) - \covD^{\ell} \covD_{i} (\dlt F_{\ell 0}) + 2 \LieBr{\tensor{F}{_{i}^{\ell}}}{\dlt F_{\ell 0}},
\end{equation*}
where $\dlt F_{i0} :=  F_{i0} - B_{i}$. By construction, note that $\dlt F_{i0} = 0$ at $s=0$. Furthermore, $\dlt F_{i0} \in C^{\infty}_{t,s} (I \times [0,1], H^{\infty}_{x})$. 

To proceed further, let us fix $t \in I$ and $0 \leq s \leq 1$. Taking the bi-invariant inner product of the last equation with $\dlt F_{i0}$, summing over $i$, and integrating by parts over $\bbR^{3}$, we see that\footnote{In order to justify the integration by parts carried out on the third line, it suffices to note that $\covD_{\ell} \dlt F_{i0}, \dlt F_{i0} \in L^{2}_{x}$ for every fixed $t \in I$ and $0 \leq s \leq 1$.}
\begin{align*}
	\frac{1}{2} \rd_{s} & \bb( \sum_{i} \int \big( \dlt F_{i0}, \dlt F_{i0} \big)(s) \, \ud x \bb)  \\
	= &\sum_{i, \ell} \int \big( \covD_{\ell} \covD_{\ell} (\dlt F_{i0}), \dlt F_{i0} \big)(s) - \big( \covD_{\ell} \covD_{i} (\dlt F_{\ell 0}), \dlt F_{i0} \big)(s) + 2 \big( \LieBr{F_{i\ell}}{\dlt F_{\ell 0}}, \dlt F_{i 0} \big)(s) \, \ud x \\
%	= & \sum_{i, \ell} \int - \big( \covD_{\ell} (\dlt F_{i0}) , \covD_{\ell} (\dlt F_{i0}) \big)(s) + \big( \covD_{i} (\dlt F_{\ell 0}), \covD_{\ell} (\dlt F_{i0}) \big)(s) + \big( \LieBr{F_{i\ell}}{\dlt F_{\ell 0}}, \dlt F_{i 0} \big)(s) \, \ud x \\
	= & \sum_{i, \ell} \int - \frac{1}{2} (\covD_{\ell} (\dlt F_{i0}) - \covD_{i} (\dlt F_{\ell 0}), \covD_{\ell} (\dlt F_{i0}) - \covD_{i} (\dlt F_{\ell 0}) \big)(s) + 2 \big( \LieBr{F_{i\ell}}{\dlt F_{\ell 0}}, \dlt F_{i 0} \big)(s) \, \ud x. 
\end{align*}

The first term on the last line has a favorable sign, and can be thrown away. The remaining term is easily bounded by $C \bb( \sup_{i, \ell} \nrm{F_{i \ell}(s)}_{L^{\infty}_{x}} \bb) \bb( \sum_{i} \int (\dlt F_{i0}, \dlt F_{i0})(s) \, \ud x \bb)$. Since $F_{i \ell}$ is uniformly bounded on $I \times \bbR^{3} \times [0,1]$, we may apply Gronwall's inequality and conclude that $ \sum_{i} \int (\dlt F_{i0}, \dlt F_{i0})(s) \, \ud x = 0$ on $0 \leq s \leq 1$. This shows that $\dlt F_{i0} = 0$ on $0 \leq s \leq 1$, as desired. \qedhere
\end{proof}
\end{lemma}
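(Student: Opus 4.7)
The plan is to show that the difference $\dlt F_{i0} := F_{i0} - B_i$ satisfies a covariant parabolic equation with vanishing data at $s = 0$, and then to conclude $\dlt F_{i0} \equiv 0$ via an $L^2_x$ energy argument combined with Gronwall's inequality. That $\dlt F_{i0}$ vanishes at $s=0$ is a direct check from the definitions: $A_0(s=0) = 0 = \Atemp_0$ and $A_i(s=0) = \Atemp_i$, so $F_{i0}(s=0) = \Ftemp_{i0} = B_i(s=0)$.

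The first step is to derive the parabolic evolution equation satisfied by $F_{i0}$. Here I would exploit the Bianchi identity written in the mixed form $\covD_s F_{i0} + \covD_0 F_{si} + \covD_i F_{0s} = 0$, which follows purely from the definition of $F$ as the curvature of $\covD$. Substituting $F_{si} = \covD^\ell F_{\ell i}$ (the defining equation for $A_i$ in the DeTurck gauge) and $F_{s0} = \covD^\ell B_\ell$ (which is equivalent to the ODE used to define $A_0$, after rearranging), one can rewrite $\covD_s F_{i0}$ in terms of second covariant derivatives of $F_{\ell 0}$ and $B_\ell$. Using the commutator identity $[\covD_\mu, \covD_\nu] = [F_{\mu\nu}, \cdot\,]$ to trade $\covD_i \covD^\ell F_{\ell 0}$ for $\covD^\ell \covD_\ell F_{i0}$ plus curvature terms, and then subtracting the prescribed equation $\covD_s B_i = \covD^\ell \covD_\ell B_i + 2[F_i{}^\ell, B_\ell]$, one arrives at an equation of the schematic form
\begin{equation*}
\covD_s(\dlt F_{i0}) = \covD^\ell \covD_\ell(\dlt F_{i0}) - \covD^\ell \covD_i (\dlt F_{\ell 0}) + 2[F_i{}^\ell, \dlt F_{\ell 0}].
\end{equation*}

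The final step is the energy estimate. Taking the bi-invariant inner product with $\dlt F_{i0}$, summing over $i$, and integrating over $\bbR^3$ converts the left-hand side into $\tfrac{1}{2}\rd_s \sum_i \int (\dlt F_{i0}, \dlt F_{i0})\, dx$. For the principal terms, an integration by parts combines $\covD^\ell \covD_\ell$ and $-\covD^\ell \covD_i$ into the manifestly non-positive expression $-\tfrac{1}{2}\sum_{i,\ell} \int |\covD_\ell(\dlt F_{i0}) - \covD_i(\dlt F_{\ell 0})|^2 \,dx$, which may be discarded. The remaining commutator contributes at most $C\bigl(\sup_{i,\ell}\nrm{F_{i\ell}(s)}_{L^\infty_x}\bigr) \sum_i \int (\dlt F_{i0}, \dlt F_{i0})\, dx$, and regularity of $\rd_x A_i$ ensures this coefficient is bounded uniformly on $I \times \bbR^3 \times [0,1]$. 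Gronwall's inequality with zero initial data then forces the energy to vanish for all $s \in [0,1]$.

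The main subtlety I expect is justifying the integration by parts: I need $\dlt F_{i0}$ and $\covD_\ell(\dlt F_{i0})$ to lie in $L^2_x$ for each fixed $(t,s)$ so that the boundary terms at spatial infinity vanish. This is where the regularity of $\rd_{t,x} A_i$, $A_0$, and of $B_i$ (ensured by Proposition \ref{prop:linCovHeat} with smooth coefficients) is essential; everything else reduces to algebraic manipulation of the defining equations and a routine Gronwall argument.
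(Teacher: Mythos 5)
Your proposal follows exactly the same strategy as the paper's proof: derive a covariant parabolic equation for $\dlt F_{i0} := F_{i0} - B_i$ via the Bianchi identity and the defining equations $F_{si} = \covD^\ell F_{\ell i}$, $F_{s0} = \covD^\ell B_\ell$, verify that $\dlt F_{i0}$ vanishes at $s=0$, and close with an $L^2_x$ energy estimate (integration by parts yielding a sign-definite gradient term plus a zeroth-order commutator term controlled by $\sup \nrm{F_{i\ell}}_{L^\infty_x}$) and Gronwall. The details, including the observation that the integration by parts is justified by regularity of $\dlt F_{i0}$ and its covariant derivative, match the paper's argument.
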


In sum, we conclude that $A_{\bfa}$ is a regular solution to \eqref{eq:HPYM}, which satisfies the DeTurck gauge condition $A_{s} = \rd^{\ell} A_{\ell}$.
%Furthermore, uniqueness of $A_{\bfa}$ is an easy consequence of the uniqueness statement of Proposition \ref{prop:YMHF4A:lwp4deT} and the preceding lemma.

\pfstep{Step 2 : Construction of a gauge transform to the caloric-temporal gauge}

In Step 1, we have constructed a regular solution $A_{\bfa}$ to \eqref{eq:HPYM} in the DeTurck gauge $A_{s} = \rd^{\ell} A_{\ell}$. The next step is to construct a suitable smooth gauge transform $U = U(t,x,s)$ to impose the caloric-temporal gauge condition, i.e., $A_{s} = 0$ and $\Alow_{0} = 0$. 

We begin by briefly going over the gauge structure of \eqref{eq:HPYM}. As before, the gauge transform corresponding to a smooth $\LieGrp$-valued function $U(t,x,s)$ on $I \times \bbR^{3} \times [0,1]$ is given by the formulae
\begin{equation*}
	A_{\bfa} \to U A_{\bfa} U^{-1} - \rd_{\bfa} U U^{-1} = : \widetilde{A}_{\bfa}, \quad F_{\bfa \bfb} \to U F_{\bfa \bfb} U^{-1} = : \widetilde{F}_{\bfa \bfb}.
\end{equation*}
where $\bfa = (t, x, s)$. As a consequence, any gauge transformed connection 1-form $\widetilde{A}_{\bfa}$ will still solve \eqref{eq:HPYM}.

From the above discussion, it follows that in order to impose $\widetilde{\Alow}_{0} = \widetilde{A}_{0} (s=1)=0$ and $\widetilde{A}_{s} =0$, the gauge transform $U(t,s,x)$ must satisfy
\begin{equation*}
\left\{
\begin{aligned}
\rd_{0} U &= U \Alow_{0} \hbox{ along $s=1$,} \\
\rd_{s} U &= U A_{s} \hbox{ everywhere}.
\end{aligned}
\right.
\end{equation*}

We will solve this system by starting from the identity gauge transform at $(t=0, s=1)$. More precisely, let us fix $x \in \bbR^{3}$, and first solve the ODE
\begin{equation*}
\left\{
\begin{aligned}
\rd_{0} \big( \Ulow(t) \big) &= \Ulow(t) \Alow_{0}(t), \\
\Ulow(t=0) &= \mathrm{Id}.
\end{aligned}
\right.
\end{equation*}
along $I \times \set{s=1}$, where $\Alow_{0}(t) := A_{0}(t, s=1)$. Then using these as the initial data, we solve
\begin{equation*}
\left\{
\begin{aligned}
\rd_{s} U(t,s) &= U(t,s) A_{s}(t,s), \\
U(t, s=1) &=\Ulow(t).
\end{aligned}
\right.
\end{equation*}

Since both $A_{0}$ and $A_{s} = \rd^{\ell} A_{\ell}$ belong to $C^{\infty}_{t,s} (I \times [0,1], H^{\infty}_{x})$, it is clear, again by the standard ODE theory, that there exists a unique smooth solution $U(t,x,s)$ satisfying the above ODEs. Furthermore, using arguments as in the proof of Proposition \ref{prop:est4gt}, we can readily prove that $U$ is a regular gauge transform (in the sense of Definition \ref{def:reg4gt}).
%\footnote{These are all easy to establish if we assume \eqref{eq:idEst:pf:step1:0} and \eqref{eq:idEst:pf:step1:1}, since by differentiating the transport equations and proceeding by an induction argument, everything reduces to estimating an ODE of the form $\rd_{\tau} \phi(\tau, x) = C(\tau, x) \phi(\tau, x) + \calB(\tau, x)$, where $C(\tau, x)$ is uniformly bounded and an $L^{p}_{x}$ norm of $\calB$ is uniformly bounded in $\tau$. Then the $L^{p}_{x}$ norm of $\phi(\tau, x)$ itself can be uniformly bounded in $\tau$ by Gronwall. On the other hand, we refer the reader to the Appendix for the proof of the quantitative version of these estimates, which is of course more involved.}

By construction, the gauge transformed connection 1-form $\widetilde{A}_{\bfa}$ satisfies the following equations.
\begin{equation*}
\left\{
\begin{aligned}
\widetilde{F}_{s \mu} &= \widetilde{\covD}^{\ell} \widetilde{F}_{\ell \mu}, \\
\widetilde{A}_{\mu} & = U A_{\mu} U^{-1} - \rd_{\mu} U U^{-1}.
\end{aligned}
\right.
\end{equation*}
Furthermore, it satisfies the caloric-temporal gauge condition, i.e., $\widetilde{\Alow}_{0} = 0$ and $\widetilde{A}_{s} = 0$. As $A_{\bfa}$ is a regular solution to \eqref{eq:HPYM} and $U$ is a regular gauge transform, it readily follows that $\widetilde{A}_{\mu}$ is a regular solution of \eqref{eq:HPYM} as well.

\pfstep{Step 3 : Quantitative initial data estimates for non-differences, in the caloric-temporal gauge}

In this step, we will prove the non-difference estimates among the initial data estimates \eqref{eq:idEst:1} -- \eqref{eq:idEst:4}. From this point on, we must be \emph{quantitative}, which means that we must make sure that all estimates here depends only on $\calIini$. 

Before we begin, a word of caution on the notation. We will keep the same notation as the previous steps, which means that the $A_{\bfa}, F_{si}$ in the statement of the theorem are \emph{not} the same as $A_{\bfa}, F_{si}$ below, but are rather $\widetilde{A}_{\bfa}, \widetilde{F}_{si}$. Accordingly, the gauge transform $V$ in the statement of the theorem is given by $V(t,x) = U(t, x, s=0)$ (which is regular).

Recalling the definition of $\calI$, in order to prove $\calI \leq C_{\calIini} \cdot \calIini$, we must estimate $\nrm{\nb_{t,x} F_{si}}_{\calL^{5/4,p}_{s} \calH^{9}_{x}}$ for $p=2, \infty$ and $\nrm{\rd_{t,x} \Alow_{i}}_{H^{30}_{x}}$ in terms of $C_{\calIini} \cdot \calIini$. On the other hand, to prove \eqref{eq:idEst:3}, we need to show
\begin{equation} \label{eq:idEst:pf:est4U}
	\nrm{U(t=0, s=0)}_{L^{\infty}_{x}} \leq C_{\calIini}, \quad 
	\nrm{\rd_{x} U(t=0, s=0)}_{L^{3}_{x}} + \nrm{\rd_{x}^{(2)} U(t=0, s=0)}_{L^{2}_{x}} \leq C_{\calIini} \cdot \calIini,
\end{equation}
and the analogous estimates for $U^{-1}$.

By Proposition \ref{prop:YMHF4A:smth4deT}, we have the following estimates at $t=0$ for $m \geq 1$:
\begin{equation} \label{eq:calTempGauge:est4Apre}
\sup_{i} \nrm{A_{i}}_{\calL^{1/4,\infty}_{s} \dot{\calH}^{m}_{x}} + \sup_{i} \nrm{A_{i}}_{\calL^{1/4,2}_{s} \dot{\calH}^{m+1}_{x}} \leq C_{m, \calIini} \cdot \calIini.
\end{equation}

%By interpolation, we obtain the following $L^{\infty}_{x}$ estimates for $m \geq 0$ as a consequence.
%\begin{equation} \label{eq:calTempGauge:est4Apre:Linfty}
%\sup_{i} \sup_{0 \leq s \leq 1} s^{m/2 + 1/4} \nrm{\rd_{x}^{(m)} A_{i}(s)}_{L^{\infty}_{x}} \leq D^{1}_{m}(\calIini).
%\end{equation}

Note that the gauge transform $U$ is equal to the identity transform at $t=0, s=1$. Consequently, we have $\widetilde{A}_{i}(t=0, s=1) = A_{i}(t=0, s=1)$. Therefore $\nrm{\rd_{x} \widetilde{\Alow}}_{H^{30}_{x}} \leq C_{\calIini} \cdot \calIini$ follows immediately from \eqref{eq:calTempGauge:est4Apre}.

Next, we estimate the gauge transform $U$ at $t=0$. Using Lemma \ref{lem:est4gt2caloric}, it follows that the gauge transform $U$ at $t=0$ satisfy the following estimates for $m \geq 2$.
\begin{equation} \label{eq:calTempGauge:est4U}
%\left\{
%\begin{aligned}
\nrm{U}_{L^{\infty}_{s} L^{\infty}_{x}} \leq C_{\calIini}, \quad
\nrm{\rd_{x} U}_{L^{\infty}_{s} L^{3}_{x}}  \leq C_{\calIini} \cdot \calIini, \quad
\nrm{s^{(m-2)/2} \rd_{x}^{(m)} U}_{L^{\infty}_{s} L^{2}_{x}} \leq C_{m, \calIini} \cdot \calIini.
%\end{aligned}
%\right.
\end{equation}
%\nrm{U}_{\calL^{0,\infty}_{s} \calL^{\infty}_{x}} + \nrm{\nb_{t, x} U}_{\calL^{0,\infty}_{s} \calL^{3}_{x}} \leq& C_{\calIini},  \\
%\nrm{U}_{\calL^{0,\infty}_{s} \dot{\calH}^{m}_{x}} \leq& C_{\calIuni} \cdot \calIini. \label{eq:calTempGauge:est4U:2}
%\end{align}

The analogous estimates hold for $U^{-1}$. Then \eqref{eq:idEst:pf:est4U} (or its analogue for $U^{-1}$) follows immediately from \eqref{eq:calTempGauge:est4U} (or its analogue for $U^{-1}$).

The next step is to estimate $\widetilde{F}_{si}$. Let us begin by considering the gauge transformed connection 1-form $\widetilde{A}_{i}$ at $t=0$. We claim that they satisfy the estimates 
\begin{equation} \label{eq:calTempGauge:est4A:L2}
	\sup_{i} \nrm{\widetilde{A}_{i}}_{\calL^{1/4,\infty}_{s} \dot{\calH}^{m}_{x}} \leq C_{m, \calIini} \cdot \calIini,
\end{equation}
for $m \geq 1$. For $m = 1$, we can compute via Leibniz's rule
\begin{align*}
	\rd_{x} \widetilde{A}_{i} 
	= & \rd_{x} ( U A_{i} U^{-1} - \rd_{i} U U^{-1} ) \\
	= & (\rd_{x} U ) A_{i} U^{-1} + U (\rd_{x} A_{i}) U^{-1} + U A_{i} (\rd_{x} U^{-1}) - (\rd_{x} \rd_{i} U ) U^{-1} - \rd_{i} U (\rd_{x} U^{-1}).
\end{align*}

Using H\"older, Sobolev and \eqref{eq:calTempGauge:est4U}, it is not difficult to show 
\begin{equation*}
	\nrm{\rd_{x} \widetilde{A}_{i}(s)}_{L^{2}_{x}} \leq C_{\calIini} \cdot (\nrm{\rd_{x} A_{i}(s)}_{L^{2}_{x}} + \calIini).
\end{equation*}
At this point, from \eqref{eq:calTempGauge:est4Apre}, we obtain \eqref{eq:calTempGauge:est4A:L2} in the case $m=1$. Proceeding similarly, we can also prove \eqref{eq:calTempGauge:est4A:L2} for $m \geq 2$; we omit the details. 
%Then, by interpolation, we obtain the following $L^{\infty}_{x}$ estimates for $m \geq 0$:
%\begin{equation} \label{eq:calTempGauge:est4A:Linfty}
%	\sup_{i} \sup_{0 \leq s \leq 1} s^{m/2 + 1/4} \nrm{\rd_{x}^{(m)} \widetilde{A}_{i}(s)}_{L^{\infty}_{x}} \leq D^{1}_{m} (\calIini).
%\end{equation}

From the previous estimates, it is already possible to estimate $\nrm{\nb_{x} F_{s}}_{\calL^{5/4,p}_{s} \calH^{9}_{x}}$ for $p=\infty$ but not $p=2$. This is essentially due to the unpleasant term $- \rd_{i} U U^{-1}$ in the gauge transformation formula for $A_{i}$. To estimates both terms at the same time, we argue differently as follows, utilizing the gauge covariance of $F_{si}$. We start by recalling the equation for $F_{si}$ in terms of $A_{i}$ and ordinary derivatives. 
\begin{align*}
	F_{si} =& \covD^{\ell} F_{\ell i} 
		=\rd^{\ell} \rd_{\ell} A_{i} - \rd_{i} \rd^{\ell} A_{\ell} + 2 \LieBr{A^{\ell}}{\rd_{\ell} A_{i}} + \LieBr{\rd^{\ell} A_{\ell}}{A_{i}} - \LieBr{A^{\ell}}{\rd_{i} A_{\ell}} + \LieBr{A^{\ell}}{\LieBr{A_{\ell}}{A_{i}}}.
\end{align*}
Using this formula and \eqref{eq:calTempGauge:est4Apre}, it is not difficult to prove the following estimates for $m \geq 0$.
\begin{equation} \label{eq:calTempGauge:est4Fsi}
	\sup_{i} \nrm{\nb_{x}^{(m)} F_{si}}_{\calL^{5/4,\infty}_{s} \calL^{2}_{x}} + \sup_{i} \nrm{\nb_{x}^{(m)} F_{si}}_{\calL^{5/4,2}_{s} \calL^{2}_{x}}
	\leq C_{m, \calIini} \cdot \calIini.
\end{equation}
%Indeed, for $m=0$, we fix $s$ and estimate
%\begin{align*}
%	s^{1/2}\nrm{F_{si}(s)}_{L^{2}}
%	\leq & C s^{1/2}\bb( \nrm{\rd_{x}^{(2)} A(s)}_{L^{2}_{x}} + \nrm{\calO(A, \rd_{x} A)(s)}_{L^{2}_{x}} + \nrm{\calO(A,A,A)(s)}_{L^{2}_{x}} \bb) \\
%	\leq & C s^{1/2}\bb( \nrm{\rd_{x}^{(2)} A(s)}_{L^{2}_{x}} + \nrm{A(s)}_{L^{\infty}_{x}} \nrm{\rd_{x} A(s)}_{L^{2}}  + \nrm{A}_{L^{6}_{x}}^{3} \bb) \\
%	\leq & C s^{1/2} \nrm{\rd_{x}^{(2)} A(s)}_{L^{2}_{x}} + s^{1/4} D^{1}(\calIini) \nrm{\rd_{x} A(s)}_{L^{2}} + s^{1/2} D^{2}(\calIini) \nrm{\rd_{x} A(s)}_{L^{2}_{x}}.
%\end{align*}
%Note that we used the first term on the left-hand side of \eqref{eq:calTempGauge:est4Apre}. Taking either the supremum or square-integrating over $0 \leq s \leq 1$ and appealing to \eqref{eq:calTempGauge:est4Apre}, the case $m=0$ follows. The case of $m \geq 1$ is similar, and thus we omit the proof.

Using interpolation and \eqref{eq:calTempGauge:est4Apre} to control $\nrm{\nb_{x}^{(m)} A_{i}}_{\calL^{1/4,\infty}_{s} \calL^{\infty}_{x}}$, it is a routine procedure to replace the ordinary derivatives in \eqref{eq:calTempGauge:est4Fsi} by covariant derivatives, i.e.
\begin{equation} \label{eq:calTempGauge:est4covDFsi}
	\sup_{i} \nrm{\calD_{x}^{(m)} F_{si}}_{\calL^{5/4,\infty}_{s} \calL^{2}_{x}} + \sup_{i} \nrm{\calD_{x}^{(m)} F_{si}}_{\calL^{5/4,2}_{s} \calL^{2}_{x}}
	\leq C_{m, \calIini} \cdot \calIini.
\end{equation}
for $m \geq 0$, where we remind the reader that $\calD$ is the p-normalized $\covD$, i.e., $\calD_{\mu} := s^{1/2} \covD_{\mu}$. Then using the gauge transform formula $\widetilde{\covD}_{x}^{(m)} \widetilde{F}_{si} = U (\covD^{(m)}_{x} F_{si}) U^{-1}$ and \eqref{eq:calTempGauge:est4U} to estimate $U$ in $\calL^{0,\infty}_{s} \calL^{\infty}_{x}$, we obtain
\begin{equation*}
	\sup_{i} \nrm{\widetilde{\calD}_{x}^{(m)} \widetilde{F}_{si}}_{\calL^{5/4,\infty}_{s} \calL^{2}_{x}} + \sup_{i} \nrm{\widetilde{\calD}_{x}^{(m)} \widetilde{F}_{si}}_{\calL^{5/4,2}_{s} \calL^{2}_{x}}
	\leq C_{m, \calIini} \cdot \calIini.
\end{equation*}
for $m \geq 0$. Using interpolation and \eqref{eq:calTempGauge:est4A:L2} to control $\nrm{\nb_{x}^{(m)} \widetilde{A}_{i}}_{\calL^{1/4,\infty}_{s} \calL^{\infty}_{x}}$, we can replace the covariant derivatives by ordinary derivatives, and finally obtain for $m \geq 0$ the following estimate:
\begin{equation*}
	\sup_{i} \nrm{\nb_{x}^{(m)} \widetilde{F}_{si}}_{\calL^{5/4,\infty}_{s} \calL^{2}_{x}} + \sup_{i} \nrm{\nb_{x}^{(m)} \widetilde{F}_{si}}_{\calL^{5/4,2}_{s} \calL^{2}_{x}}
	\leq C_{m, \calIini} \cdot \calIini.
\end{equation*}

This gives an adequate control on $\nrm{\nb_{x} F_{s}}_{\calL^{5/4,p}_{s} \calH^{9}_{x}}$ for $p=2, \infty$.

In order to estimate the terms involving the time derivative, we must look at $\widetilde{F}_{0i}$. Recall that $\widetilde{F}_{0i}$ satisfies the covariant heat equation
\begin{equation*}
	\rd_{s} \widetilde{F}_{i0} = \widetilde{\covD}^{\ell} \widetilde{\covD}_{\ell} \widetilde{F}_{i0} - 2 \LieBr{\tensor{\widetilde{F}}{_{i}^{\ell}}}{\widetilde{F}_{0\ell}}.
\end{equation*}
Applying Proposition \ref{prop:linCovHeat} with \eqref{eq:calTempGauge:est4A:L2}, we obtain the following estimates on $\widetilde{F}_{0i}$ for $m \geq 0$.
\begin{equation} \label{eq:calTempGauge:est4F0i}
	\sup_{i} \nrm{\widetilde{F}_{0i}}_{\calL^{3/4,\infty}_{s} \dot{\calH}^{m}_{x}} + \sup_{i} \nrm{\widetilde{F}_{0i}}_{\calL^{3/4,2}_{s} \dot{\calH}^{m+1}_{x}}
	\leq C_{m, \calIini} \cdot \calIini.
\end{equation}

Because of our gauge condition $\widetilde{A}_{0}(s=1) = 0$, we have $\rd_{t} \widetilde{A}_{i}(s=1) = \widetilde{F}_{0i}(s=1)$. This immediate proves $\nrm{\rd_{0} \widetilde{\Alow}}_{H^{30}_{x}} \leq C_{\calIini} \cdot \calIini$. 

In order to estimate the time derivative of $\widetilde{F}_{si}$, we need to argue a bit more. Expanding the covariant derivatives in the Bianchi identity $\widetilde{\covD}_{0} \widetilde{F}_{si} + \widetilde{\covD}_{s} \widetilde{F}_{i0} + \widetilde{\covD}_{i} \widetilde{F}_{0s} = 0$, we obtain the equation
\begin{equation} \label{eq:calTempGauge:DtFsi}
	\rd_{0} \widetilde{F}_{si} = \rd_{s} \widetilde{F}_{0i} + \rd_{i} \widetilde{F}_{s0} - \LieBr{\widetilde{A}_{0}}{\widetilde{F}_{si}} + \LieBr{\widetilde{A}_{i}}{\widetilde{F}_{s0}}.
\end{equation}

We use the parabolic equation for $\widetilde{F}_{0i}$ to rewrite term $\rd_{s} \widetilde{F}_{0i}$ in terms of $\widetilde{F}_{si}$, $\widetilde{A}_{i}$ and their spatial derivatives. We estimate $\widetilde{F}_{s0}$ by \eqref{eq:dYMHF}
\begin{equation} \label{eq:calTempGauge:Fs0}
	\widetilde{F}_{s0} 
	= \widetilde{\covD}^{\ell} \widetilde{F}_{\ell 0} 
	= \rd^{\ell} \widetilde{F}_{\ell 0} + \LieBr{\widetilde{A}^{\ell}}{\widetilde{F}_{\ell 0}},
\end{equation}
whereas $\widetilde{A}_{0}$ is estimated by the equation
\begin{equation} \label{eq:calTempGauge:fund4A0}
	\widetilde{A}_{0}(s) = - \int_{s}^{1} \widetilde{F}_{s0}(s') \, \ud s',
\end{equation}
which holds due to the caloric-temporal gauge condition. 

Using \eqref{eq:calTempGauge:DtFsi}, \eqref{eq:calTempGauge:Fs0}, \eqref{eq:calTempGauge:fund4A0} and the previous estimates for $\widetilde{F}_{i0}, \widetilde{F}_{si}$ and $\widetilde{A}_{i}$, it is not difficult to show
\begin{equation*}
	\sup_{i} \nrm{\nb_{0} \widetilde{F}_{si}}_{\calL^{5/4,\infty}_{s} \calL^{2}_{x}} + \sup_{i} \nrm{\nb_{0} \widetilde{F}_{si}}_{\calL^{5/4,2}_{s} \calL^{2}_{x}}
	\leq C_{\calIini} \cdot \calIini.
\end{equation*}

Differentiating \eqref{eq:calTempGauge:DtFsi}, \eqref{eq:calTempGauge:Fs0} and \eqref{eq:calTempGauge:fund4A0} appropriate number of times with respect to $x$, $\nb_{x}^{(m)} \nb_{0} \widetilde{F}_{si}$ can be estimated analogously; we leave the details to the reader. This concludes the proof of $\calI \leq C_{\calIini} \cdot \calIini$.

\pfstep{Step 4: Quantitative initial data estimates for differences}
In the fourth and the final step, we will prove the difference estimates among \eqref{eq:idEst:1} -- \eqref{eq:idEst:4}.

Let $(A, U), (A', U')$ be constructed according to the Steps 1, 2 from $\Atemp_{i}, \AtempPrime_{i}$, respectively. The idea is to adapt Step 3 to differences. In particular, we remark that everything we do here is done on $\set{t=0}$. 

In order to prove $\dlt \calI \leq C_{\calIini} \cdot \dlt \calIini$, we must estimate $\nrm{\nb_{t,x} (\dlt F_{si})}_{\calL^{5/4,p}_{s} \calH^{9}_{x}}$ for $p=2, \infty$ and $\nrm{\rd_{t,x} \Alow_{i}}_{H^{30}_{x}}$ in terms of $C_{\calIini} \cdot \dlt \calIini$. To prove \eqref{eq:idEst:4}, we need to show
\begin{equation} \label{eq:idEst:pf:est4dltU}
\left\{
\begin{aligned}
	& \nrm{\dlt \widetilde{U}(t=0, s=0)}_{L^{\infty}_{x}} 
	+ \nrm{\rd_{x} (\dlt \widetilde{U})(t=0, s=0)}_{L^{3}_{x}} \leq C_{\calIini} \cdot \dlt \calIini, \\
	& \nrm{\rd_{x}^{(2)} (\dlt \widetilde{U})(t=0, s=0)}_{L^{2}_{x}} \leq C_{\calIini} \cdot \dlt \calIini.
\end{aligned}
\right.
\end{equation}
and their analogues for $\dlt U^{-1}$.

From Proposition \ref{prop:YMHF4A:smth4deT}, we have the following analogue of \eqref{eq:calTempGauge:est4Apre} for $\dlt A_{i} := A_{i} - A_{i}'$ at $t=0$, for $m \geq 1$.
\begin{equation} \label{eq:calTempGauge:est4Apre:Diff}
\sup_{i} \nrm{\dlt A_{i}}_{\calL^{1/4,\infty}_{s} \dot{\calH}^{m}_{x}} + \sup_{i} \nrm{\dlt A_{i}}_{\calL^{1/4,2}_{s} \dot{\calH}^{m+1}_{x}} \leq C_{\calIini} \cdot \dlt \calIini.\end{equation} 

%By interpolation, we also have the following $L^{\infty}_{x}$ estimates for $\dlt A_{i}$, for $m \geq 0$.
%\begin{equation} \label{eq:calTempGauge:est4Apre:Diff:Linfty}
%	\sup_{i} \sup_{0 \leq s \leq 1} s^{m/2 + 1/4} \nrm{\rd_{x}^{(m)} (\dlt A_{i})(s)}_{L^{\infty}_{x}} \leq D^{0}_{m}(\calIini) \, \dlt \calIini.
%\end{equation}

As before, this immediately shows $\nrm{\rd_{x} \Alow}_{H^{9}_{x}} \leq C_{\calIini} \cdot \dlt \calIini$, since the gauge transforms $U$, $U'$ are equal to the identity $\mathrm{Id}$ at $t=0, s=1$. Next, from Lemma \ref{lem:est4gt2caloric} and the bounds \eqref{eq:calTempGauge:est4Apre} (for A, A') and \eqref{eq:calTempGauge:est4Apre:Diff}, we have the following analogue of \eqref{eq:calTempGauge:est4U} for $\dlt U = U - U'$, $\dlt U^{-1} = U^{-1} - (U')^{-1}$ at $t=0$, for $m \geq 2$.
\begin{equation} \label{eq:calTempGauge:est4U:Diff}
\nrm{\dlt U}_{L^{\infty}_{s} L^{\infty}_{x}} + \nrm{\rd_{x} (\dlt U)}_{L^{\infty}_{s} L^{3}_{x}}
\leq C_{\calIini} \cdot \dlt \calIini, \quad
\nrm{s^{(m-2)/2} \rd_{x}^{(m)} (\dlt U)}_{L^{\infty}_{s} L^{2}_{x}} \leq C_{m, \calIini} \cdot \dlt \calIini.
\end{equation}

Analogous estimates hold also for $\dlt U^{-1}$. We remark that in contrast to \eqref{eq:calTempGauge:est4U}, the right hand side vanishes as $\dlt \calIini \to 0$. Since $\dlt\Vini$ in the statement of the theorem is exactly $\dlt \Vini = \dlt U(t=0, s=0)$, the estimate \eqref{eq:idEst:pf:est4dltU} follows (for both $\dlt U$ and $\dlt U^{-1}$).

Next, we estimate $\dlt F_{si}$ in the statement of the theorem, which in our case is $\dlt \widetilde{F}_{si} := \widetilde{F}_{si} - \widetilde{F}'_{si}$. The idea is the same as before, namely we want to use the gauge transform property of $F_{si}$, $F_{si}'$. As in Step 3, we begin by establishing some bounds for $\dlt \widetilde{A}_{i}$. Our starting point is the following formula for the difference $\dlt \widetilde{A}_{i}$.
\begin{align*}
\dlt \widetilde{A}_{i} 
= (\dlt U) A_{i} U^{-1} + U' (\dlt A_{i}) U^{-1} + U' A'_{i} (\dlt U^{-1}) - \big( \rd_{i} (\dlt U) \big) U^{-1} - \rd_{i} U' ( \dlt U^{-1}).
\end{align*}

Note that this is nothing but Leibniz's rule for $\dlt$. Differentiating the above formula $m$-times and using the estimates \eqref{eq:calTempGauge:est4Apre}, \eqref{eq:calTempGauge:est4U}, along with their difference analogues \eqref{eq:calTempGauge:est4Apre:Diff}, \eqref{eq:calTempGauge:est4U:Diff}, it is not difficult to prove the analogue of \eqref{eq:calTempGauge:est4A:L2} for $\dlt \widetilde{A}_{i} := \widetilde{A}_{i} - \widetilde{A}'_{i}$, for $m \geq 1$
\begin{equation} \label{eq:calTempGauge:est4A:Diff:L2}
	\sup_{i} \nrm{\dlt \widetilde{A}_{i}}_{\calL^{1/4,\infty}_{s} \dot{\calH}^{m}_{x}} \leq C_{m, \calIini} \cdot \dlt \calIini.
\end{equation}

The next step is to estimate $\dlt F_{si}$. Using Leibniz's rule for $\dlt$, we obtain the following formula for $\dlt F_{si}$:
\begin{equation*}
\dlt F_{si} = \calO(\rd_{x}^{(2)} (\dlt A)) + \calO(\dlt A, \rd_{x} A) + \calO(A, \rd_{x} (\dlt A)) + \calO(\dlt A, A, A).
\end{equation*}

From this formula and the estimates \eqref{eq:calTempGauge:est4Apre}, \eqref{eq:calTempGauge:est4Apre:Diff}, we obtain the following estimate for $\dlt F_{si}$ for $m \geq 0$ as before:
\begin{equation} \label{eq:calTempGauge:est4Fsi:Diff}
	\sup_{i} \nrm{\nb_{x}^{(m)} (\dlt F_{si})}_{\calL^{5/4,\infty}_{s} \calL^{2}_{x}} + \sup_{i} \nrm{\nb_{x}^{(m)} (\dlt F_{si}) }_{\calL^{5/4,2}_{s} \calL^{2}_{x}}
	\leq C_{m, \calIini} \cdot \dlt \calIini.
\end{equation}

Equipped with these estimates, we claim that the following estimates are true, for $m \geq 0$.
\begin{equation}\label{eq:calTempGauge:est4CovDFsi:Diff}
	\sup_{i} \nrm{\dlt (\widetilde{\calD}_{x}^{(m)} \widetilde{F}_{si})}_{\calL^{5/4,\infty}_{s} \calL^{2}_{x}} + \sup_{i} \nrm{\dlt (\widetilde{\calD}_{x}^{(m)} \widetilde{F}_{si})}_{\calL^{5/4,2}_{s} \calL^{2}_{x}}
	\leq C_{m, \calIini} \cdot \dlt \calIini,
\end{equation}
where $\dlt (\widetilde{\calD}_{x}^{(m)} \widetilde{F}_{si}) :=  \widetilde{\calD}_{x}^{(m)} \widetilde{F}_{si} - \widetilde{\calD'}_{x}^{(m)} \widetilde{F'}_{si}$, and accordingly $\dlt (\widetilde{\covD}_{x}^{(m)} \widetilde{F}_{si}) :=  \widetilde{\covD}_{x}^{(m)} \widetilde{F}_{si} - \widetilde{\covD'}_{x}^{(m)} \widetilde{F'}_{si}$.

For $m=0$, this is an easy consequence of the difference formula
\begin{align*}
	\widetilde{F}_{si} - \widetilde{F}'_{si} 
	= & U F_{si} U^{-1} - U' F'_{si}(U')^{-1} \\
	= & (\dlt U) F_{si} U^{-1} + U' (\dlt F_{si} )U^{-1} + U' F'_{si} (\dlt U^{-1}),
\end{align*}
and the estimates \eqref{eq:calTempGauge:est4U}, \eqref{eq:calTempGauge:est4Fsi}, \eqref{eq:calTempGauge:est4U:Diff} and \eqref{eq:calTempGauge:est4Fsi:Diff}. 

For $m=1$, we begin to see covariant derivatives, which we just write out in terms of ordinary derivatives and connection 1-forms $A_{i}$. Then we can easily check , using Leibniz's rule for $\dlt$, that the following difference formula holds.
\begin{align*}
\dlt (\widetilde{\covD}_{x}^{(m)} \widetilde{F}_{si})
=& U \covD_{x} F_{si} U^{-1} - U' \covD'_{x} F'_{si} (U')^{-1} \\
%=& U (\rd_{x} F_{si} )U^{-1} - U' (\rd_{x} F'_{si}) (U')^{-1} + U \LieBr{A_{x}}{F_{si}} U^{-1} - U' \LieBr{A'_{x}}{F'_{si}} (U')^{-1} \\
=& \calO(\dlt U, F_{s}, U^{-1}) + \calO(U, \dlt F_{s}, U^{-1}) + \calO(U, F_{s}, \dlt U^{-1}) \\
& + \calO(\dlt U, A, F_{s}, U^{-1}) + \calO(U, \dlt A, F_{s}, U^{-1}) + \calO(U, A, \dlt F_{s}, U^{-1}) + \calO(U, A, F_{s}, \dlt U^{-1}).
%= & (\dlt U) (\rd_{x} F_{si}) U^{-1} + U' (\rd_{x} (\dlt F_{si})) U^{-1} + U' (\rd_{x} F'_{si}) (\dlt U^{-1}) \\
%   & + (\dlt U) \LieBr{A_{x}}{F_{si}} U^{-1} + U' \LieBr{\dlt A_{x}}{F_{si}} U^{-1} + U' \LieBr{A'_{x}}{\dlt F_{si}} U^{-1} + U' \LieBr{A'_{x}}{F'_{si}} (\dlt U^{-1}).
\end{align*}
	
Using the estimates \eqref{eq:calTempGauge:est4U}, \eqref{eq:calTempGauge:est4Fsi}, \eqref{eq:calTempGauge:est4U:Diff}, \eqref{eq:calTempGauge:est4Fsi:Diff}, as well as \eqref{eq:calTempGauge:est4Apre}, \eqref{eq:calTempGauge:est4Apre:Diff} with interpolation, we obtain \eqref{eq:calTempGauge:est4CovDFsi:Diff} for $m=1$.

The cases $m \geq 2$ can be proved in an analogous fashion, namely first computing the difference formula for $\widetilde{\covD}_{x}^{(m)} \widetilde{F}_{si} - (\widetilde{\covD}'_{x})^{(m)} \widetilde{F}'_{si} $ by writing out all covariant derivatives and applying Leibniz's rule for $\dlt$, and then estimating using \eqref{eq:calTempGauge:est4U}, \eqref{eq:calTempGauge:est4Fsi}, \eqref{eq:calTempGauge:est4U:Diff}, \eqref{eq:calTempGauge:est4Fsi:Diff}, as well as \eqref{eq:calTempGauge:est4Apre}, \eqref{eq:calTempGauge:est4Apre:Diff} with interpolation. We leave the details to the interested reader.

Using \eqref{eq:calTempGauge:est4A:L2} and \eqref{eq:calTempGauge:est4A:Diff:L2}, we can easily substitute the covariant derivatives in \eqref{eq:calTempGauge:est4CovDFsi:Diff} by ordinary derivatives. This proves $\nrm{\nb_{x} (\dlt F_{s})}_{\calL^{5/4,p}_{s} \calH^{9}_{x}} \leq C_{\calIini} \cdot \dlt \calIini$ for $p =2, \infty$.

We are now only left to estimate the norms involving $\rd_{0}$. From Proposition \ref{prop:linCovHeat:Diff}, we have the following analogue of \eqref{eq:calTempGauge:est4F0i} for $\dlt \widetilde{F}_{0i}$ for $m \geq 0$.
\begin{equation} \label{eq:calTempGauge:est4F0i:Diff}
	\sup_{i} \nrm{\dlt \widetilde{F}_{0i}}_{\calL^{3/4,\infty}_{s} \dot{\calH}^{m}_{x}} + \sup_{i} \nrm{\dlt \widetilde{F}_{0i}}_{\calL^{3/4,2}_{s} \dot{\calH}^{m+1}_{x}}
	\leq C_{m, \calIini} \cdot \dlt \calIini.
\end{equation}
As before, since $( \rd_{0} \widetilde{A}_{i}- \rd_{0} \widetilde{A}'_{i} )(t=0, s=1) = (\widetilde{F}_{0i} - \widetilde{F}'_{0i}) (t=0, s=1)$,  this proves $\nrm{\rd_{0}(\dlt \Alow)}_{H^{30}_{x}} \leq C_{\calIini} \cdot \dlt \calIini$.

Finally, we turn to the estimate $\nrm{\nb_{0} (\dlt F_{s})}_{\calL^{5/4,p}_{s} \calH^{9}_{x}} \leq C_{\calIini} \cdot \dlt \calIini$ for $p=2, \infty$. For this purpose, note that we have the following difference versions of \eqref{eq:calTempGauge:DtFsi}, \eqref{eq:calTempGauge:Fs0}, \eqref{eq:calTempGauge:fund4A0}, as follows (in order).
\begin{equation*} 
\rd_{0} (\dlt \widetilde{F}_{si}) = \rd_{s} (\dlt \widetilde{F}_{0i}) + \rd_{i} (\dlt \widetilde{F}_{s0}) - \LieBr{\dlt \widetilde{A}_{0}}{\widetilde{F}_{si}} - \LieBr{\widetilde{A}'_{0}}{\dlt \widetilde{F}_{si}} + \LieBr{\dlt \widetilde{A}_{i}}{\widetilde{F}_{si}} + \LieBr{\widetilde{A}'_{i}}{\dlt \widetilde{F}_{si}},
\end{equation*}
\begin{equation*}
\dlt \widetilde{F}_{s0} = \rd^{\ell} (\dlt \widetilde{F}_{\ell 0} ) + \LieBr{\dlt \widetilde{A}^{\ell}}{\widetilde{F}_{\ell 0}} + \LieBr{(\widetilde{A}')^{\ell}}{\dlt \widetilde{F}_{\ell 0}},
\end{equation*}
\begin{equation*}
\dlt \widetilde{A}_{0} (s) = - \int_{s}^{1} \dlt \widetilde{F}_{s0}(s') \, \ud s'.
\end{equation*}
Taking the appropriate number of derivatives of the above equations and using the previous bounds, the desired estimate follows. We leave the easy details to the reader. \qedhere
\end{proof}

\section{Definition of norms and reduction of Theorem \ref{thm:dynEst}} \label{sec:redOfDynEst}
The purpose of the rest of the paper is to prove Theorem \ref{thm:dynEst}. In this section, which is of preliminary nature, we first introduce the various norms which will be used in the sequel, and reduce Theorem \ref{thm:dynEst} to six smaller statements: Propositions \ref{prop:est4a0}, \ref{prop:est4ai}, \ref{prop:est4Fs0:low} and \ref{prop:cont4FA}, and Theorems \ref{thm:AlowWave} and \ref{thm:FsWave}.

\subsection{Definition of norms} \label{subsec:defOfNorms}
In this subsection, we define the norms $\calA_{0}$, $\calAlow$, $\calF$ and $\calE$, along with their difference analogous.

Let $I \subset \bbR$ be a time interval. All the space-time norms below are taken over $I \times \bbR^{3}$.

The norms $\calA_{0}(I)$ and $\dlt \calA_{0}(I)$, which are used to estimate the gauge transform back to the temporal gauge $A_{0} = 0$ at $s=0$, are defined by
\begin{align*}
	\calA_{0}(I) := & \nrm{A_{0}}_{L^{\infty}_{t} L^{3}_{x}} + \nrm{\rd_{x} A_{0}}_{L^{\infty}_{t} L^{2}_{x}} + \nrm{A_{0}}_{L^{1}_{t} L^{\infty}_{x}} + \nrm{\rd_{x} A_{0}}_{L^{1}_{t} L^{3}_{x}} + \nrm{\rd_{x}^{(2)} A_{0}}_{L^{1}_{t} L^{2}_{x}}, \\
	\dlt \calA_{0}(I) := & \nrm{\dlt A_{0}}_{L^{\infty}_{t} L^{3}_{x}} + \nrm{\rd_{x} (\dlt A_{0})}_{L^{\infty}_{t} L^{2}_{x}} + \nrm{\dlt A_{0}}_{L^{1}_{t} L^{\infty}_{x}} + \nrm{\rd_{x} (\dlt A_{0})}_{L^{1}_{t} L^{3}_{x}} + \nrm{\rd_{x}^{(2)} (\dlt A_{0})}_{L^{1}_{t} L^{2}_{x}},
\end{align*}
where $A_{0}, \dlt A_{0}$ are evaluated at $s=0$.

The norms $\calAlow(I)$ and $\dlt \calAlow(I)$, which control the sizes of $\Alow_{i}$ and $\dlt \Alow_{i}$, respectively, are defined by
\begin{align*}
	\calAlow(I) := & \sup_{i} \nrm{\Alow_{i}}_{L^{\infty}_{t} \dot{H}^{31}_{x}} + \sup_{i} \nrm{\rd_{0} (\curl \Alow)_{i}}_{L^{\infty}_{t} \dot{H}^{29}_{x}} + \sum_{k=1}^{30} \sup_{i} \nrm{\Alow_{i}}_{\dot{S}^{k}},\\
	\dlt \calAlow(I) := &\sup_{i} \nrm{\dlt \Alow_{i}}_{L^{\infty}_{t} \dot{H}^{31}_{x}} + \sup_{i} \nrm{\rd_{0} (\curl (\dlt \Alow))_{i}}_{L^{\infty}_{t} \dot{H}^{29}_{x}} + \sum_{k=1}^{30} \sup_{i} \nrm{\dlt \Alow_{i}}_{\dot{S}^{k}}.
\end{align*}

Here, $(\rd_{x} \times B)_{i} := \eps_{i j k} \rd^{j} B^{k}$, where $\eps_{ijk}$ is the Levi-Civita symbol, i.e., the completely anti-symmetric 3-tensor on $\bbR^{3}$ with $\eps_{123} = 1$.

Next, let us define the norms $\calF(I)$ and $\dlt \calF(I)$, which control the sizes of $F_{si}$ and $\dlt F_{si}$, respectively.
\begin{align*}
	\calF(I) := & \sum_{k=1}^{10} \bb( \sup_{i} \nrm{F_{si}}_{\calL^{5/4, \infty}_{s} \dot{\calS}^{k}(0,1]} + \sup_{i} \nrm{F_{si}}_{\calL^{5/4, 2}_{s} \dot{\calS}^{k}(0,1]} \bb), \\
	\dlt \calF(I) := & \sum_{k=1}^{10} \bb( \sup_{i} \nrm{\dlt F_{si}}_{\calL^{5/4, \infty}_{s} \dot{\calS}^{k}(0,1]} + \sup_{i} \nrm{\dlt F_{si}}_{\calL^{5/4, 2}_{s} \dot{\calS}^{k}(0,1]} \bb).
\end{align*}

We remark that $\calF(I)$ (also $\dlt \calF(I)$) controls far less derivatives compared to $\calAlow(I)$. Nevertheless, it is still possible to close a bootstrap argument on $\calF + \calAlow$, thanks to the fact that $F_{si}$ satisfies a parabolic equation, which gives smoothing effects. The difference between the numbers of controlled derivatives, in turn, allows us to be lenient about the number of derivatives of $\Alow_{i}$ we use when studying the wave equation for $F_{si}$. We refer the reader to Remark \ref{rem:pEst4sptNrms} for a more detailed discussion.

For $t \in I$, we define $\calE(t)$ and $\dlt \calE(t)$, which control the sizes of low derivatives of $F_{s0}(t)$ and $\dlt F_{s0}(t)$, respectively, by
\begin{align*}
	\calE(t) :=& \sum_{m=1}^{3} \bb( \nrm{F_{s0}(t)}_{\calL^{1, \infty}_{s} \dot{\calH}^{m-1}_{x}(0,1]} + \nrm{F_{s0}(t)}_{\calL^{1, 2}_{s} \dot{\calH}^{m}_{x}(0,1]} \bb),\\
	\dlt \calE(t) := & \sum_{m=1}^{3} \bb( \nrm{\dlt F_{s0}(t)}_{\calL^{1, \infty}_{s} \dot{\calH}^{m-1}_{x}(0,1]} + \nrm{\dlt F_{s0}(t)}_{\calL^{1, 2}_{s} \dot{\calH}^{m}_{x}(0,1]} \bb).
\end{align*}

We furthermore define $\calE(I) := \sup_{t \in I} \calE(t)$ and $\dlt \calE(I) := \sup_{t \in I} \dlt \calE(t)$. 

\subsection{Statement of Propositions \ref{prop:est4a0} - \ref{prop:cont4FA} and Theorems \ref{thm:AlowWave}, \ref{thm:FsWave}}
For the economy of notation, we will omit the dependence of the quantities and norms on the time interval $(-T, T)$; in other words, all quantities and space-time norms below should be understood as being defined over the time interval $(-T, T)$ with $0 < T \leq 1$. 

\begin{proposition} [Improved estimates for $A_{0}$] \label{prop:est4a0}
Let $A_{\bfa}$, $A'_{\bfa}$ be regular solutions to \eqref{eq:HPYM} in the caloric-temporal gauge and $0 < T \leq 1$. Then the following estimates hold.
\begin{align} 
	\calA_{0}
	 \leq & C_{\calF, \calAlow} \cdot \calE + C_{\calF, \calAlow} \cdot (\calF + \calAlow)^{2}, \label{eq:est4a0} \\
	\dlt \calA_{0}
	\leq & C_{\calF, \calAlow} \cdot \dlt \calE + C_{\calF, \calAlow} \cdot (\calE + \calF + \calAlow) (\dlt \calF + \dlt \calAlow) . \label{eq:est4dltA0}
\end{align}
\end{proposition}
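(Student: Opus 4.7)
The central identity is the one afforded by the caloric-temporal gauge. Since $A_{s} \equiv 0$ on $I \times \bbR^{3} \times (0,1)$ we have $\rd_{s} A_{0} = F_{s0}$, and combined with $\Alow_{0} = A_{0}(s=1) = 0$ this integrates to
\[
A_{0}(t,x,s) = -\int_{s}^{1} F_{s0}(t,x,s')\, \ud s', \qquad \text{in particular,} \qquad A_{0}(t,x,0) = -\int_{0}^{1} F_{s0}(t,x,s')\, \ud s'.
\]
A second key observation is $F_{s0}(s=0) = -w_{0}(s=0) = 0$, which follows since $\Atemp_{\mu}$ solves \eqref{eq:hyperbolicYM}.

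The four summands of $\calA_{0}$ involving at most one spatial derivative of $A_{0}$ can be bounded directly by inserting the integral identity into each norm, applying Minkowski to exchange with the $s$-integral, and estimating the integrand using $\calE$ together with Sobolev embeddings and Gagliardo--Nirenberg (Lemma \ref{lem:absP:algEst}). The $s$-weights defining $\calE$ make each resulting $s$-integral converge, and since $T \leq 1$ the $L^{1}_{t}$ norms embed trivially into $L^{\infty}_{t}$ ones. This produces the linear $C_{\calF, \calAlow} \cdot \calE$ term; the dependence of the prefactor on $\calF + \calAlow$ enters only when converting between covariant and ordinary derivatives via smoothing bounds from Section \ref{sec:covYMHF}. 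The fifth summand $\nrm{\rd_{x}^{(2)} A_{0}}_{L^{1}_{t} L^{2}_{x}}$ needs more care, because the $\calE$-bound on $\nrm{F_{s0}(s)}_{\dot{H}^{2}_{x}}$ has the form $s^{-1}\calE$, which is not integrable on $(0,1]$. Here I would use a substitution-integration trick analogous to Step 7 of \S \ref{subsec:overview}: writing the parabolic equation \eqref{eq:covParabolic4w0} schematically as $\rd_{s} F_{s0} - \lap F_{s0} = \calN$ and integrating $-\int_{0}^{1} (\cdot)\, \ud s$ gives
\[
\lap A_{0}(t,x,0) \;=\; -F_{s0}(t,x,s=1) \;+\; \int_{0}^{1} \calN(t,x,s')\, \ud s',
\]
the boundary contribution at $s=0$ dropping out by $F_{s0}(s=0) = 0$. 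Ellipticity then converts $L^{2}_{x}$-bounds on $\lap A_{0}$ to bounds on $\rd_{x}^{(2)} A_{0}$; the boundary term is controlled by $\calE$ via its $\calL^{1,\infty}_{s}$-component at $s=1$, and the nonlinearity $\calN$, of schematic form $\calO(A, \rd F_{s0}) + \calO(F_{s}, F_{0}) + \calO(A, A, F_{s0})$, contributes partly as $C_{\calF, \calAlow} \cdot \calE$ (absorbed into the first term of the bound) and partly as $C_{\calF, \calAlow}(\calF + \calAlow)^{2}$. The latter uses the Strichartz-type $L^{4}_{t,x}$ components of the $\dot{S}^{k}$ norms inside $\calAlow$ for the $\calO(A, \rd F_{s0})$ piece, and the evolution of $F_{0\ell}$ in terms of its boundary datum $\Flow_{0\ell} = -\rd_{0} \Alow_{\ell} \in \calAlow$ for the $\calO(F_{s}, F_{0})$ piece.

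The difference estimate \eqref{eq:est4dltA0} is obtained by applying the formal Leibniz rule for $\dlt$ to every step above: the linear $C_{\calF, \calAlow} \cdot \calE$ contribution yields $C_{\calF, \calAlow} \cdot \dlt \calE$ together with corrections of the shape $\calE \cdot (\dlt \calF + \dlt \calAlow)$ that are absorbed into the second term on the right of \eqref{eq:est4dltA0}, while the quadratic $(\calF + \calAlow)^{2}$ linearizes as usual to $(\calF + \calAlow)(\dlt \calF + \dlt \calAlow)$. The main obstacle is the top-derivative estimate: executing the substitution-integration trick while keeping track of the numerous $s$-weights and the interplay of mixed $L^{q}_{t} L^{p}_{x}$, parabolic, and Strichartz structures, and in particular extracting the quadratic bound on $\int_{0}^{1} \calN\, \ud s$ cleanly enough for both the non-difference and difference versions. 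Given the abstract parabolic theory of Section \ref{sec:prelim} and the smoothing estimates of Section \ref{sec:covYMHF}, this is technical bookkeeping rather than conceptually new work, but it constitutes the bulk of the argument.
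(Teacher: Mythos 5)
Your overall framework is right: the caloric-temporal gauge identity $A_{0}(s)=-\int_{s}^{1}F_{s0}(s')\,\ud s'$, the vanishing $F_{s0}(s=0)=0$, and the substitution trick for the top-order $L^{1}_{t}L^{2}_{x}$ term (your identity $\lap A_{0}(s=0)=-\Flow_{s0}+\int_{0}^{1}{}^{(F_{s0})}\calN\,\ud s'$ matches the paper, up to a harmless sign typo in the paper). However, the treatment of the third and fourth summands of $\calA_{0}$, namely $\nrm{A_{0}(s=0)}_{L^{1}_{t}L^{\infty}_{x}}$ and $\nrm{\rd_{x}A_{0}(s=0)}_{L^{1}_{t}L^{3}_{x}}$, contains a genuine gap.

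You claim that ``the $s$-weights defining $\calE$ make each resulting $s$-integral converge'' after passing to $L^{\infty}_{t}$. They do not. From $\calE$ one gets, via Gagliardo--Nirenberg, only $\nrm{F_{s0}(t,s)}_{L^{\infty}_{x}}\leq C\nrm{F_{s0}(t,s)}_{\dot H^{1}_{x}}^{1/2}\nrm{F_{s0}(t,s)}_{\dot H^{2}_{x}}^{1/2}\leq C s^{-1}\calE$, and $\int_{0}^{1}s^{-1}\,\ud s$ diverges logarithmically, exactly like the naive estimate on $\nrm{\rd_{x}^{(2)}A_{0}}_{L^{2}_{x}}$ that you correctly flag as problematic. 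The fixed-time norm $\calE$ (which is $\sup_{t}\calE(t)$) is simply not strong enough here; its bounds cannot be upgraded to the needed $s$-integrability by H\"older in time. The paper's proof instead passes to $L^{2}_{t}L^{\infty}_{x}$ and $L^{2}_{t}L^{3}_{x}$ via H\"older (picking up a factor $T^{1/2}$), and then invokes the genuinely distinct $L^{2}_{t}$-parabolic smoothing estimates of Proposition~\ref{prop:pEst4Fs0:high}(2). Because the p-normalization of $L^{2}_{t}L^{p}_{x}$ carries $\ell_{0}$ that is $1/4$ larger than that of the corresponding $L^{\infty}_{t}L^{p}_{x}$, that proposition yields $\nrm{F_{s0}(s)}_{L^{2}_{t}L^{\infty}_{x}}\leq C s^{-3/4}\big(\calE+(\calF+\calAlow)^{2}\big)$, which is $s^{1/4}$ better and makes the $s$-integral converge. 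This is \emph{not} a consequence of $\calE$ plus H\"older in time: the estimate $\nrm{F_{s0}}_{\calL^{1,\infty}_{s}\calL^{2}_{t}\dot\calH^{k}_{x}}\lesssim\calE+(\calF+\calAlow)^{2}$ has a worse-by-$s^{1/4}$ weight than what $\calE$ furnishes via H\"older, so it is a genuinely independent output of the parabolic theory. Relatedly, your claim that these two summands are bounded by $C_{\calF,\calAlow}\cdot\calE$ alone (with the quadratic dependence ``entering only when converting covariant to ordinary derivatives via Section~\ref{sec:covYMHF}'') is not what the proof delivers; the $(\calF+\calAlow)^{2}$ contribution is unavoidable already at this step and is produced by Proposition~\ref{prop:pEst4Fs0:high}(2), not by gauge-transform smoothing estimates (which play no role in Proposition~\ref{prop:est4a0}). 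For the fifth summand, the rest of your outline follows the paper, though the Strichartz input belongs to the forcing term $\calO(F_{0\ell},F_{s\ell})$ rather than to $\calO(A,\rd F_{s0})$; the missing ingredient you would need to name is the combination of \eqref{eq:pEst4Fs0:high:pf:2:1} with Proposition~\ref{prop:pEst4Fs0:high}(2) for the $\int_{0}^{1}\calN$ piece.
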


\begin{proposition} [Improved estimates for $A_{i}$] \label{prop:est4ai}
Let $A_{\bfa}$, $A'_{\bfa}$ be regular solutions to \eqref{eq:HPYM} in the caloric-temporal gauge and $0 < T \leq 1$. Then the following estimates hold.
\begin{align*} 
	\sup_{i} \sup_{0 \leq s \leq 1} \nrm{A_{i}(s)}_{\SH^{1}} \leq & C_{\calF, \calAlow} \cdot (\calF + \calAlow), \\
	\sup_{i} \sup_{0 \leq s \leq 1} \nrm{\dlt A_{i}(s)}_{\SH^{1}} \leq & C_{\calF, \calAlow} \cdot (\dlt \calF + \dlt \calAlow).
\end{align*}
\end{proposition}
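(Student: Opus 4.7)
The plan is to prove this proposition via the weakly-parabolic energy method applied to $A_{i}$ in the caloric gauge $A_{s} = 0$, combined with an integration-by-parts identity exploiting the bi-invariance of the Lie algebra inner product. This is precisely the trick alluded to in the overview (\S\ref{subsec:overview}, Step 7), designed to overcome a logarithmic divergence that obstructs the naive approach.

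Since $A_{s} = 0$, the equation $\rd_{s} A_{i} = F_{si} = \covD^{\ell} F_{\ell i}$ expands to the weakly-parabolic equation
\begin{equation*}
\rd_{s} A_{i} - \lap A_{i} + \rd_{i} \rd^{\ell} A_{\ell} = N_{i}, \qquad N_{i} = \calO(A, \rd_{x} A) + \calO(A, A, A).
\end{equation*}
At $s = 1$ the bound is immediate from $A_{i}(1) = \Alow_{i}$ and $\nrm{\Alow_{i}}_{\SH^{1}} \leq \calAlow$. For $s \in [0,1)$, the identity $A_{i}(s) = \Alow_{i} - \int_{s}^{1} F_{si}(s')\, \ud s'$ fails to give a bounded estimate, since $\nrm{F_{si}(s)}_{\SH^{1}} \aleq s^{-1}\calF$ is not $\ud s$-integrable near zero, and invoking the $\calL^{5/4,2}_{s}$ piece of $\calF$ with Cauchy--Schwarz only reduces the divergence to $\sqrt{\log(1/s)}$. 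Instead, for each $\alpha \in \{0,1,2,3\}$, I would differentiate the weakly-parabolic equation by $\rd_{\alpha}$, pair with $\rd_{\alpha} A^{i}$, and integrate over $\bbR^{3} \times [s,1]$ at fixed $t$. The two second-order spatial terms on the left, after integration by parts, yield the nonnegative quantity $P(s',t) := \nrm{\rd_{x} \rd_{\alpha} A}_{L^{2}_{x}}^{2} - \nrm{\rd^{\ell} \rd_{\alpha} A_{\ell}}_{L^{2}_{x}}^{2} \geq 0$, producing
\begin{equation*}
\frac{1}{2}\nrm{\rd_{\alpha} A(s,t)}_{L^{2}_{x}}^{2} + \int_{s}^{1} P(s',t)\, \ud s' = \frac{1}{2}\nrm{\rd_{\alpha} \Alow(t)}_{L^{2}_{x}}^{2} - \int_{s}^{1} (\rd_{\alpha} N_{i}, \rd_{\alpha} A^{i})_{L^{2}_{x}}\, \ud s',
\end{equation*}
in which the positive gradient term on the left supplies room to absorb source contributions.

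The main obstacle is the source term. The dangerous contribution is $\int_{s}^{1} ([A^{\ell}, \rd_{\ell} \rd_{\alpha} A_{i}], \rd_{\alpha} A^{i})_{L^{2}_{x}}\, \ud s'$, which carries two derivatives on $A$. A single integration by parts in $\rd_{\ell}$, combined with the bi-invariance identity
\begin{equation*}
\sum_{i} ([X, \rd_{\alpha} A_{i}], \rd_{\alpha} A^{i}) = \bigl(X, \sum_{i}[\rd_{\alpha} A_{i}, \rd_{\alpha} A^{i}]\bigr) = 0
\end{equation*}
(which uses $[Y, Y] = 0$), rewrites it as $-\int_{s}^{1} ([A^{\ell}, \rd_{\alpha} A_{i}], \rd_{\ell} \rd_{\alpha} A^{i})_{L^{2}_{x}}\, \ud s'$. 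Cauchy--Schwarz bounds this by $\int_{s}^{1} \nrm{A}_{L^{\infty}_{x}} \nrm{\rd_{\alpha} A}_{L^{2}_{x}} \nrm{\rd_{x} \rd_{\alpha} A}_{L^{2}_{x}}\, \ud s'$. Using the decomposition $\nrm{\rd_{x} \rd_{\alpha} A}^{2} = P + \nrm{\rd^{\ell} \rd_{\alpha} A_{\ell}}^{2}$, a further Cauchy--Schwarz allows the $P$-piece to be absorbed into the positive gradient term, while the divergence piece $\nrm{\rd^{\ell} \rd_{\alpha} A_{\ell}}$ is controlled separately via $\rd_{s}(\rd^{\ell} A_{\ell}) = -[A^{\ell}, F_{s\ell}]$, a consequence of $\covD^{\ell} F_{s\ell} = 0$ in the caloric gauge. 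The $L^{\infty}_{x}$ norm of $A$ is controlled by Gagliardo--Nirenberg (Lemma \ref{lem:absP:algEst}) together with the preliminary Lemma \ref{lem:fundEst4A} (the easier version of this proposition with an extra $s$-weight), and the $\ud s'$-integral is handled using the $\calL^{5/4,2}_{s}$ structure of $\calF$ to avoid the logarithmic divergence. The remaining pieces of $\rd_{\alpha} N_{i}$, being strictly lower order, are handled by standard Sobolev and H\"older estimates combined with the $\calL^{5/4,2}_{s}$-bound.

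Taking the supremum in $t$ yields the $\nrm{\rd_{t,x} A_{i}(s)}_{L^{\infty}_{t} L^{2}_{x}}$ portion of $\nrm{A_{i}(s)}_{\SH^{1}}$. The $\abs{I}^{1/2}\nrm{\Box A_{i}(s)}_{L^{2}_{t,x}}$ portion is obtained by applying the same parabolic energy scheme to $\Box A_{i}$, which satisfies $\rd_{s} \Box A_{i} - \lap \Box A_{i} + \rd_{i} \rd^{\ell} \Box A_{\ell} = \Box N_{i}$, with $\nrm{\Box \Alow_{i}}_{L^{2}_{t,x}} \aleq T^{-1/2} \calAlow$ as the base-point datum. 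Finally, the difference estimate follows by applying the same scheme to $\dlt A_{i}$, whose equation is derived via the formal Leibniz rule: the symmetric piece $[A'^{\ell}, \rd_{\ell} \rd_{\alpha} (\dlt A_{i})]$ obeys the analogous bi-invariance identity, while the asymmetric piece $[\dlt A^{\ell}, \rd_{\ell} \rd_{\alpha} A_{i}]$ is controlled by combining the preliminary $\dlt A$-estimate with the non-difference bounds established above.
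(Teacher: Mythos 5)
Your overall strategy---differentiating the weakly-parabolic equation by $\rd_{\alpha}$, pairing with $\rd_{\alpha}A^{i}$, integrating over $\bbR^{3}\times[s,1]$ and exploiting integration by parts---is the paper's strategy. But your execution contains a sign error that breaks the coercivity/absorption argument you build on top of it, and your bi-invariance manoeuvre for the quadratic source is a tautology that does not actually reduce the derivative count.

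\textbf{The sign of the gradient term.} You write the identity as
\begin{equation*}
\tfrac{1}{2}\nrm{\rd_{\alpha}A(s)}_{L^{2}_{x}}^{2} \;+\; \int_{s}^{1}P(s')\,\ud s' \;=\; \tfrac12\nrm{\rd_{\alpha}\Alow}_{L^{2}_{x}}^{2} \;-\; \int_{s}^{1}(\rd_{\alpha}N_{i},\rd_{\alpha}A^{i})_{L^{2}_{x}}\,\ud s',
\end{equation*}
placing the nonnegative quantity $\int_{s}^{1}P$ on the left. But multiplying $\rd_{s}A_{i}=\lap A_{i}-\rd_{i}\rd^{\ell}A_{\ell}+N_{i}$ by $A^{i}$ and integrating by parts gives $\tfrac12\rd_{s}\nrm{A}_{L^{2}_{x}}^{2}=-\nrm{\rd_{x}A}_{L^{2}_{x}}^{2}+\nrm{\rd^{\ell}A_{\ell}}_{L^{2}_{x}}^{2}+\int AN = -P+\int AN$. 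Integrating \emph{backward} from $s'=1$ down to $s'=s$ (which is forced on you: $\Alow$ is the quantity you control, not $A(s=0)$) produces
\begin{equation*}
\tfrac{1}{2}\nrm{\rd_{\alpha}A(s)}_{L^{2}_{x}}^{2} \;=\; \tfrac12\nrm{\rd_{\alpha}\Alow}_{L^{2}_{x}}^{2} \;+\; \int_{s}^{1}P(s')\,\ud s' \;-\; \int_{s}^{1}(\rd_{\alpha}N_{i},\rd_{\alpha}A^{i})_{L^{2}_{x}}\,\ud s',
\end{equation*}
so $\int_{s}^{1}P$ sits on the \emph{right-hand side} with a $+$ sign. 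Compare this with the paper's displayed identity in the proof of this proposition, whose last two terms are exactly $+\int_{s}^{1}s'\nrm{\rd_{\ell}\rd_{t,x}A}^{2}$ and $-\int_{s}^{1}s'\nrm{\rd_{t,x}\rd^{\ell}A_{\ell}}^{2}$ on the right. This is intuitively clear: you are running a (weakly) parabolic flow \emph{backward} in $s$, so the gradient term dissipates in the wrong direction and supplies no coercivity. The positive quantity $\int_{s}^{1}P$ is something you must \emph{estimate}, not absorb into. Your plan to bound the dangerous source contribution $\nrm{A}_{L^{\infty}_{x}}\nrm{\rd_{\alpha}A}_{L^{2}_{x}}\nrm{\rd_{x}\rd_{\alpha}A}_{L^{2}_{x}}$ by Cauchy--Schwarzing the $\nrm{\rd_{x}\rd_{\alpha}A}^{2}$-factor against $\int_{s}^{1}P$ therefore does not close. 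The paper instead bounds the gradient term directly: $\int_{0}^{1}s\nrm{\rd_{x}\rd_{t,x}A(s)}^{2}_{L^{\infty}_{t}L^{2}_{x}}\,\tfrac{\ud s}{s}$ is exactly a squared $\calL^{1/4,2}_{s}\dot{\calS}^{2}$-type norm, which Lemma \ref{lem:fundEst4A} controls by $(\calF+\calAlow)^{2}$ because $\calF$ includes a $\calL^{5/4,2}_{s}$ piece. The square-integrability in $\ud s/s$ is precisely what replaces the logarithmically divergent $\ud s/s$-integrability of the naive approach---no coercivity is needed.

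\textbf{The bi-invariance manoeuvre is vacuous.} Write $I := \sum_{i}\int([A^{\ell},\rd_{\ell}\rd_{\alpha}A_{i}],\rd_{\alpha}A^{i})\,\ud x$. Integrating by parts in $\rd_{\ell}$ gives $I=-\sum_{i}\int([\rd_{\ell}A^{\ell},\rd_{\alpha}A_{i}],\rd_{\alpha}A^{i})-\sum_{i}\int([A^{\ell},\rd_{\alpha}A_{i}],\rd_{\ell}\rd_{\alpha}A^{i})$. The first term vanishes by $([X,Y],Y)=0$ as you note. But for the second, bi-invariance (specifically $([X,Y],Z)=-(Y,[X,Z])$, plus relabelling the summed index $i$) shows it equals $+I$, so the whole identity collapses to $I=I$. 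You have not lowered the derivative count on $A$: the term you end with is $\nrm{A}_{L^{\infty}}\nrm{\rd_{\alpha}A}_{L^{2}}\nrm{\rd_{x}\rd_{\alpha}A}_{L^{2}}$, the same regularity as what you started with. There is no miraculous cancellation here analogous to what happens in the Coulomb gauge (where $\rd^{\ell}A_{\ell}=0$ kills the term outright); this is why the extra structure of the problem enters elsewhere, not in this energy identity. Fortunately, as above, the bound $\nrm{A}_{L^{\infty}}\nrm{\rd_{\alpha}A}_{L^{2}}\nrm{\rd_{x}\rd_{\alpha}A}_{L^{2}}$ carries $s$-weight $s^{-3/4}$ (since $A\aeq s^{-1/4}$, $\rd_{\alpha}A\in L^{2}\aeq s^{0}$, $\rd_{x}\rd_{\alpha}A\in L^{2}\aeq s^{-1/2}$), and so $\int_{0}^{1}s\cdot s^{-3/4}\,\tfrac{\ud s}{s}$ is finite: the paper closes this term with a plain Cauchy--Schwarz in $s$ against the $\calL^{5/4,2}_{s}$ piece of $\calF$ and Young's inequality to absorb the lingering $\sup_{s}\nrm{\rd_{\alpha}A(s)}_{L^{2}}$ factor; no IBP in the nonlinearity is used. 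Your separate bound on $\rd^{\ell}A_{\ell}$ via the improved transport equation $\rd_{s}(\rd^{\ell}A_{\ell})=-[A^{\ell},F_{s\ell}]$ is a genuine structural fact (it is used in the proof of Theorem \ref{thm:FsWave}), but it is not needed here.

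With the sign corrected and the source term handled by a direct Cauchy--Schwarz/Young argument rather than the non-existent absorption, the remainder of your plan---the $\Box$-variant for the $L^{2}_{t,x}$-piece of the $\SH^{1}$ norm, and the analogous argument for differences via the Leibniz rule for $\dlt$---is sound and matches the paper.
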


\begin{proposition}[Estimates for $\calE$] \label{prop:est4Fs0:low}
Let $A_{\bfa}, A'_{\bfa}$ be regular solutions to \eqref{eq:HPYM} in the caloric-temporal gauge and $0 < T \leq 1$. Suppose furthermore that the smallness assumption
\begin{equation*}
	\calF + \calAlow \leq \dlt_{E},
\end{equation*}
holds for sufficiently small $\dlt_{E} > 0$. Then the following estimates hold.
\begin{align} 
	\calE \leq & C_{\calF, \calAlow} \cdot (\calF + \calAlow)^{2}, \label{eq:a0First:low:0} \\
	\dlt \calElow \leq & C_{\calF, \calAlow} \cdot (\calF+\calAlow) (\dlt \calF + \dlt \calAlow). \label{eq:a0First:low:1}
\end{align}
\end{proposition}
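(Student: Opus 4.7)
The strategy is to exploit that $F_{s0} = -w_0$ vanishes at $s = 0$ (as a consequence of the Yang-Mills constraint equation \eqref{eq:YMconstraint} satisfied by $A_\mu(s=0)$) and solves the covariant parabolic equation \eqref{eq:covParabolic4w0}. In the caloric gauge ($A_s = 0$), this unfolds to
\begin{equation*}
(\rd_s - \lap) F_{s0} = - 2 \LieBr{\tensor{F}{_s^\ell}}{F_{0\ell}} + 2 \LieBr{A^\ell}{\rd_\ell F_{s0}} + \LieBr{\rd^\ell A_\ell}{F_{s0}} + \LieBr{A^\ell}{\LieBr{A_\ell}{F_{s0}}},
\end{equation*}
with vanishing data at $s = 0$. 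I will apply the parabolic energy and smoothing estimates of Section \ref{subsec:prelim:absPth} (available for $X = L^2_x$ by Proposition \ref{prop:absP:application}) pointwise in $t$ at the \emph{non-scaling} weight $\ell = 1 > \ell_0 = 3/4$. The extra $s^{1/4}$ that distinguishes $\calE$ from an $\calF$-type scaling norm will be produced by the combination of vanishing initial data at $s=0$ and the fact that the nonlinearity, despite sitting at borderline scaling $s^{-2}$, ends up being quadratically small in $\calF + \calAlow$ when measured in $\calL^{2, p}_s \calH^{m-1}_x$ for $m = 1, 2, 3$ and $p = 1, 2$.

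The central task is to bound the nonlinearity. The principal piece $\LieBr{\tensor{F}{_s^\ell}}{F_{0\ell}}$ is handled by estimating $F_s$ directly through $\calF$, and $F_{0i}$ at $s \in (0, 1]$ by applying Proposition \ref{prop:linCovHeat} to the covariant parabolic equation satisfied by $F_{0i}$ (an instance of \eqref{eq:covParabolic4Fab}), combined with the Bianchi identity $\rd_s F_{0i} = \rd_i F_{s0} - \rd_0 F_{si} + \LieBr{A_i}{F_{s0}} - \LieBr{A_0}{F_{si}}$ integrated back from $s=1$ (where the datum $F_{0i}(1) = \rd_0 \Alow_i$ is controlled by $\calAlow$ since $\Alow_0 = 0$). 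The coefficients $A_i(s)$ and $\rd_x A_i(s)$ appearing in the linear-in-$F_{s0}$ terms of the nonlinearity will be controlled via the caloric identity $A_i(s) = \Alow_i - \int_s^1 F_{si}(s') \, \ud s'$ together with Proposition \ref{prop:est4ai} and the definition of $\calAlow$; this produces a contribution of the form $C_{\calF, \calAlow}(\calF + \calAlow) \nrm{F_{s0}}_{\calP^1 \calH^3_x}$, at which point the smallness hypothesis $\calF + \calAlow \leq \dlt_E$ permits absorption into the left-hand side. Combining these ingredients with H\"older for $\calL^{\ell, p}_s$ (Lemma \ref{lem:absP:Holder4Ls}), the Correspondence Principle, and Gagliardo-Nirenberg (Lemma \ref{lem:absP:algEst}), and iterating through $m = 1, 2, 3$ via the smoothing estimate portion of Theorem \ref{thm:absP:absPth}, yields \eqref{eq:a0First:low:0}.

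The difference bound \eqref{eq:a0First:low:1} is proved by the same procedure applied to $\dlt F_{s0}$, which again satisfies a parabolic equation with vanishing data at $s=0$. The differenced nonlinearity is produced by applying the formal Leibniz rule for $\dlt$ to each term of the original nonlinearity. Every resulting term contains exactly one $\dlt$-factor; pairing it with a $\dlt \calF + \dlt \calAlow$ bound and the remaining factors with $\calF + \calAlow$ bounds produces the desired size $(\calF + \calAlow)(\dlt \calF + \dlt \calAlow)$, once all the auxiliary difference bounds on $F_0$, $A$ and $\rd_x A$ (derived analogously to their non-difference versions above) have been assembled.

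The main obstacle I foresee is the systematic bookkeeping of $s$-weights at the critical scaling of the source. Pure scaling would make $\nrm{\LieBr{F_s}{F_0}}_{\calL^{2, 1}_s \calL^2_x} \sim \int_0^1 s^{-1} \, \ud s = +\infty$; this logarithmic divergence is defeated by trading the $\calL^{5/4, \infty}_s$ piece of the $\calF$-bound on $F_s$ against the $\calL^{3/4, 2}_s$ piece of the Proposition \ref{prop:linCovHeat}-bound on $F_0$ (or vice versa) through Lemma \ref{lem:absP:Holder4Ls}. Verifying that the weights line up exactly, and that the pairing survives after spatial derivatives have been distributed via the Correspondence Principle and Gagliardo-Nirenberg, will be the most delicate part of the argument; fortunately, the low number of derivatives controlled by $\calE$ (only three spatial derivatives) keeps the combinatorics manageable.
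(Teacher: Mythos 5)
Your outline misses the paper's central device, and the gap is not cosmetic. The paper's proof of this proposition (Proposition~\ref{prop:pEst4Fs0:low}, via which the statement is "an immediate consequence") introduces $E := \abs{\rd_x}^{-1/2} F_{s0}$ and runs the abstract parabolic theory on $E$ at the \emph{scaling} weight $\ell = \ell_0 = 3/4$ for $X = L^2_x$. You instead propose to work with $F_{s0}$ directly at $\ell = 1 > \ell_0 = 3/4$, invoking the parabolic energy/smoothing estimates of Definition~\ref{def:absP:pEst}. But the parabolic energy estimate~\eqref{eq:absP:pEst:1} at $\ell > \ell_0$ carries the term $C_X(\ell - \ell_0)\nrm{\psi}_{\calL^{\ell,2}_s \calX}$ on the right-hand side, which at $\ell = 1$ becomes $\tfrac{1}{4}C_X\nrm{F_{s0}}_{\calL^{1,2}_s \calL^2_x}$. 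This is the $\calL^2_s$-integrated zero-derivative norm, which is \emph{not} controlled by $\nrm{F_{s0}}_{\calP^1 \dot{\calH}^1_x}$ (that norm contains $\calL^{1,\infty}_s \calL^2_x$ and $\calL^{1,2}_s \dot{\calH}^1_x$, but not the in-between $\calL^{1,2}_s \calL^2_x$), so it can neither be absorbed nor estimated a priori, and the bootstrap of Theorem~\ref{thm:absP:absPth}(1) does not close. Indeed, Part (1) of that theorem is stated and proved only for $\ell = \ell_0$; this is precisely why the paper substitutes $E$, whose scaling weight in $L^2_x$ is exactly $\ell_0 = 3/4$, so that the $(\ell-\ell_0)$ term vanishes identically.

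The $\abs{\rd_x}^{-1/2}$ substitution does a second job you also need but don't account for: it moves the product estimate into $\dot{H}^{-1/2}_x$, where $\nrm{\phi_1\phi_2}_{\dot{H}^{-1/2}_x} \leq C\nrm{\phi_1}_{\dot{H}^{1/2}_x}\nrm{\phi_2}_{\dot{H}^{1/2}_x}$ distributes half a derivative to \emph{each} factor. Your plan instead rests on pairing $F_s \in L^2_x$ against $F_0 \in L^\infty_x$; the resulting $s$-weights pile up to $\nrm{F_s}_{\calL^{5/4,2}_s \calL^2_x}\nrm{F_0}_{\calL^{3/4,2}_s \calL^\infty_x}$ or similar, and the factor $\nrm{F_s}_{\calL^{5/4,2}_s \calL^2_x}$ is at pure scaling degree, so your "trading $\calL^{\infty}_s$ against $\calL^2_s$" does not by itself supply the integrability you claim. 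The $\dot{H}^{-1/2}_x$-based pairing in the paper, combined with Propositions~\ref{prop:lowEst4Fsi} and~\ref{prop:pEst4Fsi} to give quantitative sub-critical bounds on $\nrm{F_s}$, is what actually makes the nonlinearity land in the required $\calL^{2,p}_s$-space. Finally, your proposal to estimate $F_{0i}$ through Proposition~\ref{prop:linCovHeat} would require $\nrm{F_{0i}(s=0)}_{L^2_x}$, which is not an $\calF+\calAlow$ quantity; the paper uses Lemma~\ref{lem:fundEst4F0i} instead, which expresses $F_{0i}$ through the caloric-temporal gauge structure purely in terms of $F_{si}$, $\Alow_i$ and $F_{s0}$. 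These are the missing ingredients; without them the logarithmic divergence you flag as "the main obstacle" is not in fact defeated.
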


\begin{proposition}[Continuity properties of $\calF, \calAlow$] \label{prop:cont4FA}
Let $A_{\bfa}$, $A'_{\bfa}$ be regular solutions to \eqref{eq:HPYM} in the caloric-temporal gauge on some interval $I_{0} := (-T_{0}, T_{0})$. For $\calF = \calF(I), \calAlow = \calAlow(I)$ ($I \subset I_{0}$) and their difference analogues, the following continuity properties hold.
\begin{itemize}
\item The norms $\calF{(-T, T)}$ and $\calAlow{(-T, T)}$ are continuous as a function of $T$ (where $0 < T < T_{0}$).
\item Similarly, the norms $\dlt \calF{(-T, T)}$ and $\dlt \calAlow{(-T, T)}$ are continuous as a function of $T$.
\item We furthermore have
\begin{align*}
	\limsup_{T \to 0+} \bb( \calF{(-T, T)} + \calAlow{(-T, T)} \bb) \leq C \, \calI, \\
	\limsup_{T \to 0+} \bb( \dlt \calF{(-T, T)} + \dlt \calAlow{(-T, T)} \bb) \leq C \, \dlt \calI.
\end{align*}
\end{itemize}
\end{proposition}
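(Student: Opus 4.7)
The plan is to prove this essentially by unpacking definitions and invoking regularity of $A_{\bfa}, A'_{\bfa}$ to justify routine dominated-convergence arguments. The key observation is that every norm appearing in $\calF(-T,T)$, $\calAlow(-T,T)$ (and their difference analogues) is built out of $L^{\infty}_{t}$ and $L^{2}_{t,x}$-type norms over $(-T,T)$, combined with the outer weights $\calL^{5/4,p}_{s}$ on $(0,1]$. Since $A_{\bfa}, A'_{\bfa}$ are regular on $I_{0} \times \bbR^{3} \times [0,1]$, all integrands of interest (namely $\rd_{t,x}^{(\alp)} F_{si}$, $\rd_{t,x}^{(\alp)} \Alow_{i}$, $\Box \rd_{x}^{(k-1)} \Alow_{i}$, $\Box \rd_{x}^{(k-1)} F_{si}$, etc.) are $C^{\infty}$ in $(t,s)$ with values in $H^{\infty}_{x}$, and are uniformly bounded in every spatial Sobolev norm on any compact sub-interval $[-T_{1}, T_{1}] \times [0,1] \subset I_{0} \times [0,1]$.

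For continuity, I will fix $T_{1} < T_{0}$ and work on $T \in (0, T_{1})$. For each fixed $s \in (0,1]$, the regularity implies that $T \mapsto \nrm{\rd_{x}^{(k-1)} \rd_{t,x} \Alow_{i}}_{L^{\infty}_{t}((-T,T), L^{2}_{x})}$ and $T \mapsto \abs{I}^{1/2} \nrm{\Box \rd_{x}^{(k-1)} \Alow_{i}}_{L^{2}_{t,x}((-T,T) \times \bbR^{3})}$ are both continuous in $T$; hence so is $\nrm{\Alow_{i}}_{\dot{\calS}^{k}(-T,T)}$, and similarly for $\nrm{F_{si}}_{\dot{\calS}^{k}}$. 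The inner $L^{\infty}_{t}$-based norms $\nrm{\Alow_{i}}_{L^{\infty}_{t} \dot{H}^{31}_{x}}$, $\nrm{\rd_{0}(\curl \Alow)_{i}}_{L^{\infty}_{t} \dot{H}^{29}_{x}}$ are likewise continuous in $T$ by continuity of the sup of a continuous function. The outer $\calL^{5/4,p}_{s}$ norms (on $(0,1]$) then preserve this continuity by one more application of dominated convergence, using the regular solution's estimates as a uniform majorant in $s \in (0,1]$. The argument is identical for $\dlt \calF$, $\dlt \calAlow$.

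For the limit as $T \to 0+$, the $L^{\infty}_{t}$-type terms converge by continuity to their values at $t=0$, while the $L^{2}_{t,x}$-type terms (which enter $\dot{S}^{k}$ multiplied by $\abs{I}^{1/2} = (2T)^{1/2}$) vanish in the limit. Applying this to each summand in $\calAlow$ and $\calF$, and commuting with the outer $\calL^{5/4,p}_{s}$ norms as in the previous paragraph, one obtains
\begin{align*}
\limsup_{T \to 0+} \calAlow(-T,T)
\leq & \sup_{i} \nrm{\Alow_{i}(0)}_{\dot{H}^{31}_{x}} + \sup_{i} \nrm{\rd_{0}(\curl \Alow)_{i}(0)}_{\dot{H}^{29}_{x}} + \sum_{k=1}^{30} \sup_{i} \nrm{\rd_{t,x} \rd_{x}^{(k-1)} \Alow_{i}(0)}_{L^{2}_{x}},\\
\limsup_{T \to 0+} \calF(-T,T)
\leq & \sum_{k=1}^{10} \bb( \nrm{\nb_{t,x} F_{s}(0)}_{\calL^{5/4,\infty}_{s} \dot{\calH}^{k-1}_{x}} + \nrm{\nb_{t,x} F_{s}(0)}_{\calL^{5/4,2}_{s} \dot{\calH}^{k-1}_{x}} \bb).
\end{align*}
Each term on the right is controlled by $C \calI$: the $\calF$-contribution is precisely the first sum defining $\calI$, the $\dot{S}^{k}$-contributions of $\calAlow$ are dominated by $\sum_{k=1}^{31} \nrm{\rd_{t,x} \Alow(0)}_{\dot{H}^{k-1}_{x}}$, the $L^{\infty}_{t} \dot{H}^{31}_{x}$ term by $\nrm{\rd_{x} \Alow(0)}_{\dot{H}^{30}_{x}}$, and the $\rd_{0}(\curl \Alow)$-term by $\nrm{\rd_{0} \Alow(0)}_{\dot{H}^{30}_{x}}$, all of which sit inside $\calI$. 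The difference version proceeds identically with $\calI$ replaced by $\dlt \calI$.

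There is no substantive analytic obstacle in this proposition—it is purely a book-keeping exercise made possible by the regularity assumption on $A_{\bfa}, A'_{\bfa}$. The only mildly delicate point is the interchange between the outer $\calL^{5/4, p}_{s}$ integration and the limit $T \to 0+$, but this is handled by noting that the regular solution's spatial norms provide an $s$-integrable majorant uniform in $T \in (0, T_{1}]$, so that dominated convergence applies directly.
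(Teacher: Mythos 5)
Your proof is correct and takes essentially the same approach as the paper, which simply declares the proposition a triviality in view of the definitions of $\calF, \calAlow$ and the regularity of $A_{\bfa}, A'_{\bfa}$; you have carried out the straightforward book-keeping that this declaration implicitly entails. The identification of each $T\to 0^+$ limit with the corresponding piece of $\calI$ (in particular the exact match of $s$-weights between $\calL^{5/4,p}_{s}\dot{\calS}^{k}$ evaluated at $T=0$ and $\calL^{5/4,p}_{s}\dot{\calH}^{k-1}_{x}$ applied to $\nb_{t,x} F_{s}(t=0)$) checks out.
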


\begin{theorem} [Hyperbolic estimates for $\Alow_{i}$] \label{thm:AlowWave}
Let $A_{\bfa}$, $A'_{\bfa}$ be regular solutions to \eqref{eq:HPYM} in the caloric-temporal gauge and $0 < T \leq 1$. Then the following estimates hold.
\begin{align} 
	\calAlow \leq & C \calI + T \bb( C_{\calF, \calAlow} \cdot \calE + C_{\calE, \calF, \calAlow} \cdot (\calE + \calF + \calAlow)^{2} \bb), \label{eq:AlowWave:1} \\
	\dlt \calAlow \leq & C \dlt \calI + T \bb( C_{\calF, \calAlow} \cdot \dlt \calE + C_{\calE, \calF, \calAlow} \cdot (\calE + \calF + \calAlow) (\dlt \calE + \dlt \calF + \dlt \calAlow) \bb). \label{eq:AlowWave:2}
\end{align}
\end{theorem}

\begin{theorem} [Hyperbolic estimates for $F_{si}$] \label{thm:FsWave}
Let $A_{\bfa}$, $A'_{\bfa}$ be regular solutions to \eqref{eq:HPYM} in the caloric-temporal gauge and $0 < T \leq 1$. Then the following estimates hold.
\begin{align} 
	\calF \leq  & C \calI + T^{1/2} C_{\calE, \calF, \calAlow} \cdot (\calE + \calF + \calAlow)^{2}, \label{eq:FsWave:1} \\
	\dlt \calF \leq & C \dlt \calI + T^{1/2} C_{\calE, \calF, \calAlow} \cdot (\calE + \calF + \calAlow) (\dlt \calE + \dlt \calF + \dlt \calAlow). \label{eq:FsWave:2}
\end{align}
\end{theorem}

A few remarks are in order concerning the above statements. 

The significance of Propositions \ref{prop:est4a0} and \ref{prop:est4ai} is that they allow us to pass from the quantities $\calF$ and $\calAlow$ to the norms of $A_{i}$ and $A_{0}$ on the left-hand side of \eqref{eq:dynEst:0}. Unfortunately, a naive approach to any of these will fail, leading to a logarithmic divergence. The structure of \eqref{eq:HPYM}, therefore, has to be used in a crucial way in order to overcome this. 

%Proposition \ref{prop:est4Fsi:L3} is needed to prove the $L^{3}_{x}$ estimates \eqref{eq:dynEst:1} and \eqref{eq:dynEst:3} of Theorem \ref{thm:dynEst}. Its proof will be given in Subsection \ref{subsec:FsiFirst}.

Proposition \ref{prop:est4Fs0:low}, which will be proved in \S \ref{subsec:pEst4Fs0}, deserves some special remarks. This is a perturbative result for the parabolic equation for $F_{s0}$, meaning that we need some smallness to estimate the nonlinearity. However, the latter fact has the implication that the required smallness cannot come from the size of the time interval, but rather only from the size of the data ($\calF + \calAlow$) or the size of the $s$-interval. It turns out that this feature causes a little complication in the proof of global well-posedness in \cite{Oh:2012fk}. Therefore, in \cite{Oh:2012fk}, we will prove a modified version of Proposition \ref{prop:est4Fs0:low}, using more covariant techniques to analyze the (covariant) parabolic equation for $F_{0i}$, which allows one to get around this issue.

In this work, to opt for simplicity, we have chosen to fix the $s$-interval to be $[0,1]$ and make $\calI$ (therefore $\calF + \calAlow$) small by scaling, exploiting the fact that $\calI$ is subcritical with respect to the scaling of the equation (compare this with the smallness required for Uhlenbeck's lemma, which cannot be obtained by scaling). We remark, however, that it would have been just as fine to keep $\calI$ large and obtain smallness by shrinking the size of the $s$-interval.

The proof of Theorem \ref{thm:dynEst} will be via a bootstrap argument for $\calF + \calAlow$, and Proposition \ref{prop:cont4FA} provides the necessary continuity properties. In fact, Proposition \ref{prop:cont4FA} is a triviality in view of the simplicity of our function spaces and the fact that $A_{\bfa}, A'_{\bfa}$ are regular. On the other hand, Theorems \ref{thm:AlowWave} and \ref{thm:FsWave}, obtained by analyzing the hyperbolic equations for $\Alow_{i}$ and $F_{si}$, respectively, give the main driving force of the bootstrap argument. Observe that these estimates themselves do not require any smallness. This will prove to be quite useful in the proof of global well-posedness in \cite{Oh:2012fk}. 

As we need to use some results derived from the parabolic equations of \eqref{eq:HPYM}, we will defer the proofs of Propositions \ref{prop:est4a0} -  \ref{prop:cont4FA}, along with further discussion, until Section \ref{sec:pfOfProps}. The proofs of Theorems \ref{thm:AlowWave} and \ref{thm:FsWave} will be the subject of Section \ref{sec:wave}.

%\comment{Discuss each statement.}

\subsection{Proof of Theorem \ref{thm:dynEst}}
Assuming the above statements, we are ready to prove Theorem \ref{thm:dynEst}.
\begin{proof} [Proof of Theorem \ref{thm:dynEst}]
Let $A_{\bfa}$, $A'_{\bfa}$ be regular solutions to \eqref{eq:HPYM} in the caloric-temporal gauge, defined on $(-T, T) \times \bbR^{3} \times [0,1]$. As usual, $\calI$ will control the sizes of both $A_{\bfa}$ and $A'_{\bfa}$ at $t=0$, in the manner described in Theorem \ref{thm:idEst}.

Let us prove \eqref{eq:dynEst:0}. We claim that 
\begin{equation} \label{eq:dynEst:pf:0}
	\calF(-T, T) + \calAlow(-T, T) \leq B \calI
\end{equation}
for a large constant $B$ to be determined later, and $\calI < \dlt_{H}$ with $\dlt_{H} >0 $ sufficiently small. By taking $B$ large enough, we obviously have $\calF(-T', T') + \calAlow(-T',T') \leq B \calI$ for $T' >0$ sufficiently small by Proposition \ref{prop:cont4FA}. This provides the starting point of the bootstrap argument.

Next, for $0 < T' \leq T$, let us assume the following \emph{bootstrap assumption}:
\begin{equation} \label{eq:dynEst:pf:1}
	\calF(-T', T') + \calAlow(-T', T') \leq 2 B \calI.
\end{equation}

The goal is to improve this to $\calF(-T', T') + \calAlow(-T', T') \leq B \calI$. 

Taking $2B\calI$ to be sufficiently small, we can apply Proposition \ref{prop:est4Fs0:low} and estimate $\calE \leq C_{\calF, \calAlow} (\calF+\calAlow)^{2}$. (We remark that in order to close the bootstrap, it is important that $\calE$ is at least quadratic in $(\calF + \calAlow)$.) Combining this with Theorems \ref{thm:AlowWave} and \ref{thm:FsWave}, and removing the powers of $T'$ by using the fact that $T' \leq T \leq 1$, we obtain
\begin{equation*}
	\calF(-T', T') + \calAlow(-T', T') \leq C \calI + C_{\calF(-T', T'), \calAlow(-T', T')} (\calF(-T', T') + \calAlow(-T', T'))^{2}.
\end{equation*}

Using the bootstrap assumption \eqref{eq:dynEst:pf:1} and taking $2B\calI$ to be sufficiently small, we can absorb the last term into the left-hand side and obtain
\begin{equation*}
	\calF(-T', T') + \calAlow(-T', T') \leq C \calI.
\end{equation*}

Therefore, taking $B$ sufficiently large, we beat the bootstrap assumption, i.e., $\calF(-T', T') + \calAlow(-T', T') \leq B \calI$. Using this, a standard continuity argument gives \eqref{eq:dynEst:pf:0} as desired.

From \eqref{eq:dynEst:pf:0}, estimate \eqref{eq:dynEst:0} follows immediately by Propositions \ref{prop:est4a0}, \ref{prop:est4ai} and \ref{prop:est4Fs0:low}. 

Next, let us turn to \eqref{eq:dynEst:2}. By essentially repeating the above proof for $\dlt \calF + \dlt \calAlow$, and using the estimate \eqref{eq:dynEst:pf:1} as well, we obtain the following difference analogue of \eqref{eq:dynEst:pf:0}:
\begin{equation} \label{eq:dynEst:pf:2}
	\dlt \calF(-T, T) + \dlt \calAlow(-T, T) \leq C_{\calI} \cdot \dlt \calI.
\end{equation}

From \eqref{eq:dynEst:pf:0} and \eqref{eq:dynEst:pf:2}, estimate \eqref{eq:dynEst:2} follows by Propositions \ref{prop:est4a0}, \ref{prop:est4ai} and \ref{prop:est4Fs0:low}.  \qedhere

%By interpolation, we immediately see that $\nrm{\rd_{t} A_{i}}_{L^{3}_{x}}\leq \calAlow$ and $\nrm{\rd_{t} (\dlt A_{i})}_{L^{3}_{x}}\leq \dlt \calAlow$. Furthermore, by Proposition \ref{prop:est4Fsi:L3}, we also have
%\begin{equation*}
%	\nrm{\rd_{s} A_{i}}_{C_{t} L^{3}_{x}} \leq C s^{-3/4} D^{1}(\calF+\calAlow), \quad \nrm{\rd_{s} (\dlt A_{i})}_{C_{t} L^{3}_{x}} \leq C s^{-3/4} D^{0}(\calF+\calAlow) (\dlt \calF + \dlt \calAlow),
%\end{equation*}
%where we observe that $s^{-3/4}$ is integrable on $((0,1], \ud s)$. Starting from $\nrm{\Alow_{i}}_{L^{3}_{x}}$ and $\nrm{\dlt \Alow_{i}}_{L^{3}_{x}}$ at $(t=0, s=1)$ and integrating using the above bounds, as well as \eqref{eq:dynEst:pf:0} and \eqref{eq:dynEst:pf:1}, we obtain \eqref{eq:dynEst:1} and \eqref{eq:dynEst:3}. \qedhere
%\comment{Simple bootstrap argument.}
\end{proof}

\section{Parabolic equations of the hyperbolic-parabolic Yang-Mills system} \label{sec:pEst4HPYM}
In this section, we analyze the parabolic equations of \eqref{eq:HPYM} for the variables $F_{si}, F_{s0}$ and $w_{i}$. The results of this analysis provide one of the `analytic pillars' of the proof of Theorem \ref{thm:dynEst} that had been outlined in Section \ref{sec:redOfDynEst}, the other `pillar' being the hyperbolic estimates in Section \ref{sec:wave}. Moreover, the hyperbolic estimates in Section \ref{sec:wave} depend heavily on the results of this section as well. 

As this section is a bit long, let us start with a brief outline. Beginning in \S \ref{subsec:pEst4HPYM:prelim} with some preliminaries, we prove in \S \ref{subsec:pEst4HPYM:pEst4Fsi} smoothing estimates for $F_{si}$ (Proposition \ref{prop:pEst4Fsi}), which allow us to control higher derivatives of $\rd_{t,x} F_{si}$ in terms of $\calF$, provided that we control high enough derivatives of $\rd_{t,x} \Alow_{i}$. In \S \ref{subsec:pEst4HPYM:lowEst4Fsi}, we also prove that $F_{si}$ itself (i.e., without any derivative) can be controlled in $L^{\infty}_{t} L^{2}_{x}$ and $L^{4}_{t,x}$ by $\calF + \calAlow$ as well (Proposition \ref{prop:lowEst4Fsi}). Next, in \S \ref{subsec:pEst4Fs0}, we study the parabolic equation for $F_{s0}$. Two main results of this subsection are Propositions \ref{prop:pEst4Fs0:low} and \ref{prop:pEst4Fs0:high}. The former states that low derivatives of $F_{s0}$ (i.e., $\calE$) can be controlled \emph{under the assumption that $\calF + \calAlow$ is small}, whereas the latter says that once $\calE$ is under control, higher derivatives of $F_{s0}$ can be controlled (with out any smallness assumption) as long as high enough derivatives of $\Alow_{i}$ are under control. Finally, in \S \ref{subsec:pEst4HPYM:pEst4wi}, we derive parabolic estimates for $w_{i}$ (Proposition \ref{prop:pEst4wi}). Although these are similar to those proved for $F_{s0}$, it is important (especially in view of the proof of finite energy global well-posedness in \cite{Oh:2012fk}) to note that no smallness of $\calF + \calAlow$ is required in this part.

Throughout the section, we will always work with regular solutions $A_{\bfa}, A'_{\bfa}$ to \eqref{eq:HPYM} on $I \times \bbR^{3} \times [0,1]$, where $I = (-T, T)$.

\subsection{Preliminary estimates} \label{subsec:pEst4HPYM:prelim}
Let us begin with a simple integral inequality.
\begin{lemma} \label{lem:SchurTest}
For $\dlt > 0$ and $1 \leq q \leq p \leq \infty$, the following estimate holds. 
\begin{equation*}
	\nrm{\int_{s}^{1} (s/s')^{\dlt} f(s') \, \frac{\ud s'}{s'}}_{\calL^{p}_{s} (0, 1]} \leq C_{\dlt, p,q} \nrm{f}_{\calL^{q}_{s} (0,1]}.
\end{equation*}
\end{lemma}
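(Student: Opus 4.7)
The plan is to reduce the estimate to Young's convolution inequality by performing a logarithmic change of variables that converts the multiplicative structure of $\calL^{p}_{s}$ and the Haar measure $\ud s/s$ into the standard additive structure of $L^{p}$ with Lebesgue measure.

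Concretely, I would set $u = -\log s$ and $v = -\log s'$, so that $s, s' \in (0, 1]$ correspond to $u, v \in [0, \infty)$. Under this change, $\ud s'/s' = -\ud v$ and $\nrm{f}_{\calL^{p}_{s}(0, 1]} = \nrm{g}_{L^{p}([0, \infty))}$ where $g(v) := f(e^{-v})$. Moreover, the condition $s' \in [s, 1]$ translates to $v \in [0, u]$, and the kernel satisfies $(s/s')^{\dlt} = e^{-\dlt(u - v)}$. Extending $g$ by zero to $v < 0$, the operator on the left-hand side becomes
\begin{equation*}
  \tilde{T} g(u) = \int_{0}^{u} e^{-\dlt(u-v)} g(v) \, \ud v = (g * h)(u), \qquad h(w) := e^{-\dlt w} \mathbf{1}_{[0, \infty)}(w).
\end{equation*}

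Next, I would apply Young's convolution inequality $\nrm{g * h}_{L^{p}} \leq \nrm{h}_{L^{r}} \nrm{g}_{L^{q}}$, where $r \in [1, \infty]$ is determined by $1 + 1/p = 1/r + 1/q$. The hypothesis $1 \leq q \leq p \leq \infty$ guarantees $1/r = 1 - (1/q - 1/p) \in [0, 1]$, so such an $r$ exists. A direct computation gives $\nrm{h}_{L^{r}} = (r \dlt)^{-1/r}$ for $r < \infty$ (and $\nrm{h}_{L^{\infty}} = 1$), which is finite precisely because $\dlt > 0$. Translating back to the $\calL^{p}_{s}$ notation yields the desired inequality with $C_{\dlt, p, q} := \nrm{h}_{L^{r}}$.

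There is no real obstacle here; the lemma is essentially a Schur test / Young's inequality exercise once one identifies the correct group structure on $(0, 1]$. The positivity assumption $\dlt > 0$ is exactly what is needed to keep the kernel $h$ in every $L^{r}$, and the ordering $q \leq p$ is exactly what is needed for Young's inequality to be applicable.
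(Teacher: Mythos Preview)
Your proof is correct but takes a genuinely different route from the paper. You recognize the underlying convolution structure by the logarithmic change of variables and then apply Young's inequality in a single step, obtaining the explicit constant $C_{\dlt,p,q} = (r\dlt)^{-1/r}$. The paper instead argues by interpolation: it reduces to the three endpoint cases $(p,q) = (1,1)$, $(\infty,1)$, $(\infty,\infty)$ and verifies each directly via a Schur-type bound, checking that the kernel $\mathbf{1}_{\{s \le s'\}}(s/s')^{\dlt}$ has uniformly bounded row and column integrals with respect to $\ud s/s$. Your approach is cleaner and more conceptual, and makes transparent why $\dlt > 0$ and $q \le p$ are exactly the right hypotheses; the paper's approach is more hands-on and self-contained, not invoking Young by name. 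The two are of course closely related, since Young's inequality itself can be obtained from the same three endpoints by interpolation.
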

\begin{proof} 
This is rather a standard fact about integral operators. By interpolation, it suffices to consider the three cases $(p,q) = (1, 1), (\infty,1)$ and $(\infty, \infty)$. The first case follows by Fubini, using the fact that $\sup_{0 < s' \leq 1} \int_{0}^{1} 1_{[0,\infty)}(s'-s) (s/s')^{\dlt} \, \ud s / s \leq C_{\dlt}$, as $\dlt > 0$. On the other hand, the second and the third cases (i.e., $p=\infty$ and $q=1,\infty$) follow by H\"older, using furthermore the fact that $\sup_{0 < s, s' \leq 1} 1_{[0,\infty)}(s'-s) (s/s')^{\dlt} \leq 1$ and $\sup_{0 < s \leq 1} \int_{0}^{1} 1_{[0,\infty)}(s'-s) (s/s')^{\dlt} \, \ud s' / s' \leq C_{\dlt}$, respectively. \qedhere
\end{proof}

By the caloric-temporal gauge condition, we have $\rd_{s} A_{\mu} = F_{s \mu}$. Therefore, we can control $A_{\mu}$ with estimates for $F_{s\mu}$ and $\Alow_{\mu}$. The following two lemmas make this idea precise.

\begin{lemma} \label{lem:fundEst4A}
Let $X$ be a homogeneous norm of degree $2\ell_{0}$. Suppose furthermore that the caloric gauge condition $A_{s} = 0$ holds. Then for $k, \ell \geq 0$ and $1 \leq q \leq p \leq \infty$ such that $1/4 + k/2 + \ell - \ell_{0} > 0$, the following estimate holds.
\begin{equation*} 
\nrm{A_{i}}_{\calL^{1/4+\ell, p}_{s} \dot{\calX}^{k}(0,1]} 
\leq C (\nrm{F_{si}}_{\calL^{5/4, q}_{s} \dot{\calX}^{k}(0,1]} + \nrm{\Alow_{i}}_{\dot{X}^{k}}).
\end{equation*}
where $C$ depends on $p, q$ and $r(\ell, k, \ell_{0}) := 1/4 + k/2 + \ell - \ell_{0}$.
\end{lemma}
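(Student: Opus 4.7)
\smallskip

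\noindent\textbf{Proof proposal.} The plan is to exploit the caloric gauge condition $A_s = 0$ to write $A_i(s)$ explicitly as an integral of $F_{si}$, and then reduce the estimate to an application of Lemma \ref{lem:SchurTest}. Recall that in the caloric gauge, the formula $F_{si} = \partial_s A_i - \partial_i A_s + [A_s, A_i]$ collapses to $\partial_s A_i = F_{si}$. Integrating from $s$ to $s=1$ gives the fundamental identity
\begin{equation*}
	A_i(s) = \Alow_i - \int_s^1 F_{si}(s') \, \ud s',
\end{equation*}
on which the rest of the proof rests. By the triangle inequality it suffices to bound the $\Alow_i$-contribution and the $F_{si}$-contribution separately in the $\calL^{1/4+\ell,p}_s \dot{\calX}^k$ norm.

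For the contribution of $\Alow_i$, since this function is $s$-independent, unraveling the definitions of $\calL^{1/4+\ell,p}_s$ and $\dot{\calX}^k(s) = s^{k/2-\ell_0}\dot{X}^k$ yields
\begin{equation*}
	\nrm{\Alow_i}_{\calL^{1/4+\ell,p}_s \dot{\calX}^k(0,1]} = \nrm{\Alow_i}_{\dot{X}^k}\,\nrm{s^{\,r(\ell,k,\ell_0)}}_{\calL^p_s(0,1]},
\end{equation*}
and the weighted Lebesgue norm on the right is finite and bounded by $C_{r,p}$ precisely because the hypothesis $r(\ell,k,\ell_0) = 1/4 + k/2 + \ell - \ell_0 > 0$ is in force. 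This gives the $\Alow_i$-part of the estimate.

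For the $F_{si}$-contribution, the key is to rewrite the integral so that Lemma \ref{lem:SchurTest} applies with exponent $\delta = r(\ell,k,\ell_0) > 0$. Apply $\rd_x^{(k)}$, take the $X$-norm and let $g(s') := \nrm{\rd_x^{(k)} F_{si}(s')}_X$; then the quantity to estimate is $\nrm{s^{1/4+\ell+k/2-\ell_0} \int_s^1 g(s')\,\ud s'}_{\calL^p_s(0,1]}$. Inserting a factor of $1 = (s'/s')^{r}$ and rewriting $\ud s' = s'\,\ud s'/s'$, one obtains
\begin{equation*}
	s^{1/4+\ell+k/2-\ell_0}\int_s^1 g(s')\,\ud s' = \int_s^1 (s/s')^{r(\ell,k,\ell_0)}\, f(s')\,\frac{\ud s'}{s'},
\end{equation*}
with $f(s') := s'^{\,5/4+\ell+k/2-\ell_0}\, g(s') = s'^{\,\ell}\cdot s'^{\,5/4+k/2-\ell_0} g(s')$. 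Since $\ell \geq 0$ and the integration is over $s' \in (0,1]$, we have $\nrm{f}_{\calL^q_s(0,1]} \leq \nrm{s'^{\,5/4+k/2-\ell_0} g(s')}_{\calL^q_s(0,1]} = \nrm{F_{si}}_{\calL^{5/4,q}_s \dot{\calX}^k(0,1]}$. Applying Lemma \ref{lem:SchurTest} with $\delta = r(\ell,k,\ell_0) > 0$ and the given pair $1 \leq q \leq p \leq \infty$ yields the desired bound, with a constant depending only on $p$, $q$ and $r(\ell,k,\ell_0)$.

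The main (and only) conceptual point is the positivity of $r(\ell,k,\ell_0)$, which simultaneously ensures the finiteness of $\nrm{s^r}_{\calL^p_s(0,1]}$ for the $\Alow_i$-piece and provides the decaying kernel $(s/s')^r$ needed to invoke Lemma \ref{lem:SchurTest} for the $F_{si}$-piece; no further obstacle arises.
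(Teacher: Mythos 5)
Your proof is correct and takes essentially the same approach as the paper: both use $\rd_s A_i = F_{si}$ in the caloric gauge, integrate from $s$ to $1$, split into the $\Alow_i$ and $F_{si}$ contributions, use $\ell\ge 0$ to drop the extra weight $s'^{\ell}$, and conclude via Lemma \ref{lem:SchurTest} with exponent $r(\ell,k,\ell_0)>0$.
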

\begin{proof} 
By the caloric gauge condition $A_{s} = 0$, it follows that $\rd_{s} A_{i}= F_{si}$. By the fundamental theorem of calculus, we have
\begin{equation*}
	A_{i}(s) = -\int_{s}^{1} s' F_{si}(s') \, \frac{\ud s'}{s'} + \Alow_{i}.
\end{equation*}

Let us take the $\calL^{1/4+\ell, p}_{s} \dot{\calX}^{k}(0,1]$-norm of both sides. Defining $r(\ell, k, \ell_{0}) = 1/4 + k/2 + \ell - \ell_{0}$, we easily compute
%\begin{equation*}
%	s^{1/4+\ell} s'  \nrm{F_{si}(s')}_{\dot{\calX}^{k}(s)} = s^{(1/4)+\ell+(k/2)-\ell_{0}} s' \nrm{F_{si}(s')}_{\dot{X}^{k}} = (s/s')^{(1/4)+\ell+(k/2)-\ell_{0}} (s')^{\ell} (s')^{5/4} \nrm{F_{si}(s')}_{\dot{\calX}^{k}(s')}.
%\end{equation*}
\begin{align*}
	\nrm{\int_{s}^{1} s' F_{si}(s') \, \frac{\ud s'}{s'}}_{\calL^{1/4+\ell,p}_{s} \dot{\calX}^{k}(0,1]}
	= & \nrm{\int_{s}^{1}  (s/s')^{r(\ell, k, \ell_{0})} (s')^{\ell} (s')^{5/4}\nrm{F_{si}(s')}_{\dot{\calX}^{k}(s')} \, \frac{\ud s'}{s'}}_{\calL^{p}_{s}(0,1]} \\
	\leq & \nrm{\int_{s}^{1}  (s/s')^{r(\ell, k, \ell_{0})} (s')^{5/4}\nrm{F_{si}(s')}_{\dot{\calX}^{k}(s')} \, \frac{\ud s'}{s'}}_{\calL^{p}_{s}(0,1]}.
\end{align*}
where on the second line we used $\ell \geq 0$. Since $r > 0$, we can use Lemma \ref{lem:SchurTest} to estimate the last line by $C_{p, q, r}\nrm{F_{si}}_{\calL^{5/4,q}_{s} \dot{\calX}^{k}(0,1]}$.

On the other hand, $\Alow_{i}$ is independent of $s$, and therefore
\begin{equation*}
	\nrm{\Alow_{i}}_{\calL^{1/4+\ell, p}_{s} \dot{\calX}^{k}(0,1]} = \nrm{s^{r(\ell, k, \ell_{0})}}_{\calL^{p}_{s}(0,1]}\nrm{\Alow_{i}}_{\dot{X}^{k}} \leq C_{p, q, r} \nrm{\Alow_{i}}_{\dot{X}^{k}},
\end{equation*}
where the last inequality holds as $r > 0$. \qedhere
\end{proof}

The following analogous lemma for $A_{0}$, whose proof we omit, can be proved by a similar argument.
\begin{lemma} \label{lem:fundEst4A0}
Let $X$ be a homogeneous norm of degree $2\ell_{0}$. Suppose furthermore that the caloric-temporal gauge condition $A_{s} = 0$, $\Alow_{0} = 0$ holds. Then for $k, \ell \geq 0$ and $1 \leq q  \leq p \leq \infty$ such that $k/2 + \ell - \ell_{0} > 0$, the following estimate holds.
\begin{equation*} 
\nrm{A_{0}}_{\calL^{\ell, p}_{s} \dot{\calX}^{k}(0,1]} 
\leq C \nrm{F_{s0}}_{\calL^{1, q}_{s} \dot{\calX}^{k}(0,1]},
\end{equation*}
where $C$ depends on $p, q$ and $r'(\ell, k, \ell_{0}) := k/2 + \ell - \ell_{0}$.
\end{lemma}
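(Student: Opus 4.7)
\textbf{Proof proposal for Lemma \ref{lem:fundEst4A0}.} The plan is to mimic the proof of Lemma \ref{lem:fundEst4A} almost verbatim, with two simplifications and one bookkeeping check. First, the caloric gauge condition $A_{s}=0$ gives $F_{s0} = \rd_{s} A_{0} - \rd_{0} A_{s} + \LieBr{A_{s}}{A_{0}} = \rd_{s} A_{0}$, and the temporal part $\Alow_{0} = A_{0}(s=1) = 0$ of the caloric-temporal condition eliminates the boundary contribution. Hence the fundamental theorem of calculus in $s$ yields the clean identity
\begin{equation*}
	A_{0}(s) = -\int_{s}^{1} s' F_{s0}(s') \, \frac{\ud s'}{s'}.
\end{equation*}
There is no analogue of the $\Alow_{i}$ term that appeared in Lemma \ref{lem:fundEst4A}.

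Next I would apply $\rd_{x}^{(k)}$ under the integral, take the $X$-norm, and rewrite the resulting integral in terms of the p-normalized weights. Setting $r'(\ell, k, \ell_{0}) := k/2 + \ell - \ell_{0}$, a short computation gives
\begin{equation*}
	s^{r'} \, s' = (s/s')^{r'} \, (s')^{r' + 1} = (s/s')^{r'} \, (s')^{\ell} \, (s')^{1 + k/2 - \ell_{0}},
\end{equation*}
and since $0 < s' \leq 1$ and $\ell \geq 0$ we may drop the factor $(s')^{\ell} \leq 1$. Consequently
\begin{equation*}
	\nrm{A_{0}}_{\calL^{\ell, p}_{s} \dot{\calX}^{k}(0,1]} \leq \nrm{\int_{s}^{1} (s/s')^{r'} (s')^{1 + k/2 - \ell_{0}} \nrm{\rd_{x}^{(k)} F_{s0}(s')}_{X} \, \frac{\ud s'}{s'}}_{\calL^{p}_{s}(0,1]}.
\end{equation*}

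Finally, since the hypothesis $k/2 + \ell - \ell_{0} > 0$ is exactly $r' > 0$, I can invoke Lemma \ref{lem:SchurTest} with the function $g(s') := (s')^{1 + k/2 - \ell_{0}} \nrm{\rd_{x}^{(k)} F_{s0}(s')}_{X}$, whose $\calL^{q}_{s}(0,1]$ norm is precisely $\nrm{F_{s0}}_{\calL^{1, q}_{s} \dot{\calX}^{k}(0,1]}$. This yields the desired bound with $C = C(p, q, r')$. No step here is obstructive; the only subtlety is the careful tracking of the $s$- and $s'$-weights to isolate a clean $(s/s')^{r'}$ kernel so that Lemma \ref{lem:SchurTest} applies with the advertised gain from $q$ to $p$.
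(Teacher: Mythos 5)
Your proof is correct and is exactly the argument the paper intends when it says Lemma \ref{lem:fundEst4A0} "can be proved by a similar argument" to Lemma \ref{lem:fundEst4A}: you use the caloric-temporal gauge to reduce to the integral identity $A_{0}(s)=-\int_{s}^{1}F_{s0}\,\ud s'$ (with no boundary term since $\Alow_{0}=0$), redistribute the $s$-weights to expose the kernel $(s/s')^{r'}$, drop the harmless factor $(s')^{\ell}\leq 1$ using $\ell\geq 0$, and invoke Lemma \ref{lem:SchurTest} under the hypothesis $r'>0$. The bookkeeping of the exponents is accurate, and the identification of $g$'s $\calL^{q}_{s}$-norm with $\nrm{F_{s0}}_{\calL^{1,q}_{s}\dot{\calX}^{k}}$ is exactly right.
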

%\begin{proof} 
%We proceed as in the proof of Lemma \ref{lem:fundEst4A}. By the caloric gauge condition $A_{s}=0$, we have $\rd_{s} A_{0} = F_{s0}$. By the temporal gauge condition at $s=1$, we have $\Alow_{0} =0$. Therefore, by the fundamental theorem of calculus,
%\begin{equation*}
%	A_{0}(s) = -\int_{s}^{1} s' F_{s0}(s') \, \frac{\ud s'}{s'}.
%\end{equation*}
%
%Let us take the $\calL^{0+\ell, p}_{s} \dot{\calX}^{k}(0,1]$-norm of both sides. Defining $r'(\ell, k, \ell_{0}) = (k/2)+\ell-\ell_{0}$, we easily compute
%%\begin{equation*}
%%	s^{\ell} s'  \nrm{F_{s0}(s')}_{\dot{\calX}^{k}(s)} = s^{\ell+(k/2)-\ell_{0}} s' \nrm{F_{s0}(s')}_{\dot{X}^{k}} = (s/s')^{\ell+(k/2)-\ell_{0}} (s')^{\ell} s' \nrm{F_{si0(s')}}_{\dot{\calX}^{k}(s')}.
%%\end{equation*}
%\begin{align*}
%	\nrm{\int_{s}^{1} s' F_{s0}(s') \, \frac{\ud s'}{s'}}_{\calL^{0+\ell, p}_{s} \dot{\calX}^{k}(0,1]}
%	= & \nrm{\int_{s}^{1}  (s/s')^{r'(\ell, k, \ell_{0})} (s')^{\ell} s' \nrm{F_{s0}(s')}_{\dot{\calX}^{k}(s')} \, \frac{\ud s'}{s'}}_{\calL^{p}_{s}(0,1]} \\
%	\leq & \nrm{\int_{s}^{1}  (s/s')^{r'(\ell, k, \ell_{0})} s' \nrm{F_{s0}(s')}_{\dot{\calX}^{k}(s')} \, \frac{\ud s'}{s'}}_{\calL^{p}_{s}(0,1]} ,
%\end{align*}
%Since $r > 0$, we can apply Lemma \ref{lem:SchurTest} to estimate the last line by $C_{p,q, r'} \nrm{F_{s0}}_{\calL^{1,q}_{s} \dot{\calX}^{k}(0,1]}$. \qedhere
%\end{proof}

Some of the most frequently used choices of $X$ are $X=\dot{S}^{k}$ for Lemma \ref{lem:fundEst4A}, $X=L^{2}_{t} \dot{H}^{k}_{x}$ for Lemma \ref{lem:fundEst4A0}, and $X = \dot{H}^{k}_{x}, \dot{W}^{k,\infty}_{x}$ for both. Moreover, these lemmas will frequently applied to norms which can be written as a sum of such norms, e.g. $\calL^{\ell, p}_{s} \calH^{m}_{x}$, which is the sum of $\calL^{\ell, p}_{s} \dot{\calH}^{k}_{x}$ norms for $k=0, \ldots, m$.

As an application of the previous lemmas, we end this subsection with estimates for some components of the curvature $2$-form and its covariant derivative.
\begin{lemma} [Bounds for $F_{0i}$] \label{lem:fundEst4F0i}
Suppose that the caloric-temporal gauge condition $A_{s} = 0$, $\Alow_{0}=0$ holds. Then:
\begin{enumerate}
\item The following estimate holds for $2 \leq p \leq \infty$:
\begin{equation} \label{eq:fundEst4F0i:1}
\begin{aligned}
	\nrm{F_{0i}(t)}_{\calL^{3/4,p}_{s} \dot{\calH}^{1/2}_{x}}
	\leq & C_{p} \big( \nrm{\nb_{0} F_{si}(t)}_{\calL^{5/4,2}_{s} \dot{\calH}^{1/2}_{x}} + \nrm{\rd_{0} \Alow_{i}(t)}_{\dot{H}^{1/2}_{x}} + \nrm{F_{s0}(t)}_{\calL^{1,2}_{s} \dot{\calH}^{3/2}_{x}} \\
	& + \nrm{\nb_{x} F_{s0}(t)}_{\calL^{1,2}_{s} \calL^{2}_{x}} (\nrm{\nb_{x} F_{si}(t)}_{\calL^{5/4,2}_{s} \calL^{2}_{x}} + \nrm{\rd_{x} \Alow_{i}(t)}_{L^{2}_{x}}) \big).
\end{aligned}
\end{equation}

\item For any $2 \leq p \leq \infty$ and $k \geq 1$ an integer, we have
\begin{equation} \label{eq:fundEst4F0i:2}
\begin{aligned}
	\nrm{F_{0i}(t)}_{\calL^{3/4,p}_{s} \dot{\calH}^{k}_{x}} 
	\leq & C_{p, k} \big(  \nrm{\nb_{0} F_{si}(t)}_{\calL^{5/4,2}_{s} \dot{\calH}^{k}_{x}} + \nrm{\rd_{0} \Alow_{i}(t)}_{\dot{H}^{k}_{x}} 
	+ \nrm{F_{s0}(t)}_{\calL^{1,2}_{s} \dot{\calH}^{k+1}_{x}} \\ 		
	& \quad + \nrm{\nb_{x} F_{s0}(t)}_{\calL^{1,2}_{s} \calH^{k}_{x}} (\nrm{\nb_{x} F_{si}(t)}_{\calL^{5/4,2}_{s} \calH^{k}_{x}} + \nrm{\rd_{x} \Alow_{i}(t)}_{H^{k}_{x}}) \big).
\end{aligned}
\end{equation}

\item For any $2 \leq p \leq \infty$ and $k \geq 0$ an integer, we have
\begin{equation} \label{eq:fundEst4F0i:3}
\begin{aligned}
	\nrm{F_{0i}}_{\calL^{3/4,p}_{s} \calL^{4}_{t} \dot{\calW}^{k,4}_{x}} 
	\leq & C_{p, k} \big(  \nrm{F_{si}}_{\calL^{5/4,2}_{s} \dot{\calS}^{k+3/2}} + \nrm{\Alow_{i}}_{\dot{S}^{k+3/2}} 
	+ T^{1/4} \sup_{t \in I}\nrm{F_{s0}(t)}_{\calL^{1,2}_{s}  \dot{\calH}^{k+7/4}_{x}} \\ 	
	& \quad + T^{1/4} \sup_{t \in I} \nrm{\nb_{x} F_{s0}(t)}_{\calL^{1,2}_{s} \calH^{k+1}_{x}} (\nrm{F_{si}}_{\calL^{5/4,2}_{s} \widehat{\calS}^{k+1}} + \nrm{\Alow_{i}}_{\widehat{S}^{k+1}}) \big).
\end{aligned}
\end{equation}
\end{enumerate}
\end{lemma}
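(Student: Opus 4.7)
The plan is to derive a representation formula for $F_{0i}$ from the Bianchi identity in the caloric-temporal gauge, and then estimate each resulting term. Since $A_s = 0$, the Bianchi identity $\covD_s F_{0i} + \covD_0 F_{is} + \covD_i F_{s0} = 0$ collapses to $\rd_s F_{0i} = \rd_0 F_{si} - \rd_i F_{s0} + \LieBr{A_0}{F_{si}} - \LieBr{A_i}{F_{s0}}$, and $\Alow_0 = 0$ gives the boundary value $F_{0i}(s{=}1) = \rd_0 \Alow_i$. Integrating from $s$ to $1$ produces
\begin{equation*}
F_{0i}(s) = \rd_0 \Alow_i - \int_s^1 \bb( \rd_0 F_{si} - \rd_i F_{s0} + \LieBr{A_0}{F_{si}} - \LieBr{A_i}{F_{s0}} \bb)(s')\, \ud s',
\end{equation*}
and each of (1)--(3) will be obtained by applying the indicated $s$-weighted space-time norm term-by-term.

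For the linear integrands, I would rewrite $\int_s^1 (\cdots)(s')\, \ud s'$ in the form $\int_s^1 (s/s')^\dlt \cdot (s')^\mu \nrm{\cdot(s')}_X (\ud s'/s')$ with strictly positive weight $\dlt$ (equal to $1/4$ in case (1); to $k/2$ and $k/2 + 1/4$ in case (2), both positive for $k \geq 1$), and invoke Lemma \ref{lem:SchurTest} to pass from the $\calL^{5/4, 2}_s$ or $\calL^{1, 2}_s$ norm on the right to $\calL^{3/4, p}_s$ on the left for any $p \geq 2$. The $\rd_0 \Alow_i$ contribution is $s$-independent and immediate. For case (3), the $L^4_{t,x}$-Strichartz estimate of Proposition \ref{prop:prelim:est4SH}, applied slice-by-slice in $s'$, converts the $L^4_t \dot{W}^{k,4}_x$ norm of $\rd_0 F_{si}(s')$ into the $\dot{S}^{k+3/2}$-norm of $F_{si}(s')$, and similarly handles $\rd_0 \Alow_i$. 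The $F_{s0}$ integrand in (3) is handled by H\"older in $t$ (paying a factor $T^{1/4}$) and the Sobolev embedding $\dot{H}^{k+3/4}_x \hookrightarrow \dot{W}^{k,4}_x$, reducing to the fixed-$t$ norm $\nrm{F_{s0}(t)}_{\calL^{1,2}_s \dot{\calH}^{k+7/4}_x}$.

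For the commutator terms at the $\dot{\calH}^{1/2}_x$ level of (1), the key product estimate is $\nrm{\phi_1 \phi_2}_{\dot{H}^{1/2}_x} \leq C \nrm{\phi_1}_{\dot{H}^1_x} \nrm{\phi_2}_{\dot{H}^1_x}$, furnished by Lemma \ref{lem:homSob} with $(s_0, s_1, s_2) = (-1/2, 1, 1)$ (the condition $s_0 + s_1 + s_2 > \max$ is satisfied). A Cauchy--Schwarz calculation on $A_0(s') = -\int_{s'}^1 F_{s0}(s'')\, \ud s''$ (in the spirit of Lemma \ref{lem:fundEst4A0}) combined with the analogous identity for $A_i$ from Lemma \ref{lem:fundEst4A} then yields the uniform-in-$s'$ bounds
\begin{equation*}
\nrm{A_0(s')}_{\dot{H}^1_x} \leq C \nrm{\nb_x F_{s0}}_{\calL^{1,2}_s \calL^2_x}, \qquad \nrm{A_i(s')}_{\dot{H}^1_x} \leq C \bb( \nrm{\nb_x F_{si}}_{\calL^{5/4,2}_s \calL^2_x} + \nrm{\rd_x \Alow_i}_{L^2_x} \bb),
\end{equation*}
which are exactly the factors on the right-hand side of \eqref{eq:fundEst4F0i:1}. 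A further Schur-test computation with positive $\dlt$ on the remaining $\int_s^1 \nrm{F_{s\nu}(s')}_{\dot{H}^1_x}\, \ud s'$ closes the estimate. Part (2) is analogous with Leibniz, Lemma \ref{lem:prelim:sob}, and Lemma \ref{lem:absP:algEst} to absorb borderline $L^\infty_x$ factors at top order, and part (3) reduces to (1)--(2) via H\"older in $t$ and Sobolev embedding.

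The main bookkeeping obstacle is verifying positivity of all Schur-test exponents and choosing the product estimate in Lemma \ref{lem:homSob} so that, once combined with Lemmas \ref{lem:fundEst4A}--\ref{lem:fundEst4A0}, the resulting factor structure exactly matches the right-hand sides of \eqref{eq:fundEst4F0i:1}--\eqref{eq:fundEst4F0i:3}. The choice $s_1 = s_2 = 1$ in Lemma \ref{lem:homSob} is essential in (1); once this is identified, the remaining weight-tracking is routine.
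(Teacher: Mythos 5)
Your route through the Bianchi identity $\rd_s F_{0i} = \rd_0 F_{si} - \rd_i F_{s0} + \LieBr{A_0}{F_{si}} - \LieBr{A_i}{F_{s0}}$ and integration in $s$ is genuinely different from the paper, which works directly from the algebraic formula $F_{0i} = \rd_0 A_i - \rd_i A_0 + \LieBr{A_0}{A_i}$ and applies Lemmas \ref{lem:fundEst4A}, \ref{lem:fundEst4A0} to the three terms. Your formula is correct, the linear Schur-test bookkeeping for the $\rd_0 F_{si}$, $\rd_i F_{s0}$, and $\rd_0 \Alow_i$ pieces is right, and the claimed uniform bound $\sup_{s'}\nrm{A_0(s')}_{\dot{H}^1_x} \leq C\nrm{\nb_x F_{s0}}_{\calL^{1,2}_s\calL^2_x}$ is valid because $F_{s0}$ carries the extra $s^{1/4}$ weight (i.e.\ $F_{s0} \aeq s^{-1}$ rather than $s^{-5/4}$). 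The gap is in the companion claim $\sup_{s'}\nrm{A_i(s')}_{\dot{H}^1_x} \leq C\bb(\nrm{\nb_x F_{si}}_{\calL^{5/4,2}_s\calL^2_x} + \nrm{\rd_x\Alow_i}_{L^2_x}\bb)$, which you need for the $\LieBr{A_i}{F_{s0}}$ term: writing $A_i(s') = \Alow_i - \int_{s'}^1 F_{si}(s'')\,ds''$ and using $\nrm{F_{si}(s'')}_{\dot{H}^1_x} = (s'')^{-1}\bigl[(s'')^{5/4}\nrm{\nb_x F_{si}(s'')}_{\calL^2_x(s'')}\bigr]$, Cauchy--Schwarz gives $\nrm{A_i(s')}_{\dot{H}^1_x} \lesssim \nrm{\rd_x\Alow_i}_{L^2_x} + (\log 1/s')^{1/2}\,\nrm{\nb_x F_{si}}_{\calL^{5/4,2}_s\calL^2_x}$, which is not uniformly bounded. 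Equivalently, invoking Lemma \ref{lem:fundEst4A} here would require the borderline case $1/4 + k/2 + \ell - \ell_0 = 0$ (with $X = L^2_x$, $k=1$, $\ell = 0$, $p=\infty$), which the lemma explicitly excludes — this is exactly the logarithmic obstruction that the paper treats separately in Proposition \ref{prop:est4ai} with an integration-by-parts trick.

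The fix is not to pull $\sup_{s'}\nrm{A_i(s')}_{\dot{H}^1_x}$ out of the $\int_s^1 \cdots\,ds'$ integral, but to keep both factors inside and spend the spare $s^{1/4}$ weight built into $\nrm{\cdot}_{\calL^{3/4,p}_s\dot{\calH}^{1/2}_x}$ to absorb the $(\log 1/s')^{1/2}$, or equivalently to apply Lemma \ref{lem:fundEst4A} at a small positive $\ell$ (say $\ell = 1/8$) before doing the Schur/Cauchy--Schwarz step. This is precisely what the paper does in its own proof: it splits the weight as $s^{3/4} = s^{3/8}\cdot s^{1/4+1/8}$, estimating $\nrm{A_0}_{\calL^{3/8,\infty}_s\dot{\calH}^1_x}$ and $\nrm{A_i}_{\calL^{1/4+1/8,p}_s\dot{\calH}^1_x}$ separately so that both applications of Lemmas \ref{lem:fundEst4A0}, \ref{lem:fundEst4A} are strictly non-borderline. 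Once you incorporate this weight-splitting into the $\LieBr{A_i}{F_{s0}}$ commutator — and correspondingly in the Leibniz expansion for Part (2) — your argument closes; as written, it does not.
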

\begin{proof} 
Let us begin with the identity
\begin{equation*}
	F_{0i} = \rd_{0} A_{i} - \rd_{i} A_{0} + \LieBr{A_{0}}{A_{i}} = s^{-1/2} \nb_{0} A_{i} + s^{-1/2} \nb_{i} A_{0} + \LieBr{A_{0}}{A_{i}},
\end{equation*} 

Applying Lemma \ref{lem:fundEst4A} to $s^{-1/2} \nb_{0} A_{i}$ and Lemma \ref{lem:fundEst4A0} to $s^{-1/2}  \nb_{x} A_{0}$, the estimates \eqref{eq:fundEst4F0i:1} and  \eqref{eq:fundEst4F0i:2} are reduced to the product estimates
\begin{align}
	& \nrm{\LieBr{A_{0}}{A_{i}}(t)}_{\calL^{3/4,p}_{s} \dot{\calH}^{1/2}_{x}} 
	\leq C_{p} \nrm{\nb_{x} F_{s0}(t)}_{\calL^{1,2}_{s} \calL^{2}_{x}} (\nrm{\nb_{x} F_{si}(t)}_{\calL^{5/4,2}_{s} \calL^{2}_{x}} + \nrm{\rd_{x} \Alow_{i}(t)}_{L^{2}_{x}}), \label{eq:fundEst4F0i:pf:1} \\
	& \nrm{\LieBr{A_{0}}{A_{i}}(t)}_{\calL^{3/4,p}_{s} \dot{\calH}^{k}_{x}} 
	\leq C_{p} \nrm{\nb_{x} F_{s0}(t)}_{\calL^{1,2}_{s} \calH^{k}_{x}} (\nrm{\nb_{x} F_{si}(t)}_{\calL^{5/4,2}_{s} \calH^{k}_{x}} + \nrm{\rd_{x} \Alow_{i}(t)}_{H^{k}_{x}}), \label{eq:fundEst4F0i:pf:2} 
\end{align}
respectively. 

%All these estimates can be proved by the same strategy of first proving an $s$-independent product estimate, using the Correspondence Principle and H\"older for $\calL^{\ell, p}_{s}$ (Lemma \ref{lem:absP:Holder4Ls}) to derive an estimate for $\LieBr{A_{0}}{A_{i}}$, and then applying Lemmas \ref{lem:fundEst4A} and \ref{lem:fundEst4A0} to estimate $A_{i}$ and $A_{0}$ in terms of $F_{si}$, $\Alow_{i}$ and $F_{s0}$, respectively. 

Let us start with the product estimate
\begin{equation} \label{eq:fundEst4F0i:pf:4}
	\nrm{\phi_{1} \phi_{2}}_{\dot{H}^{1/2}_{x}} \leq C \nrm{\phi_{1}}_{\dot{H}^{1}_{x}} \nrm{\phi_{2}}_{\dot{H}^{1}_{x}},
\end{equation}
which follows from the product rule for homogeneous Sobolev norms (Lemma \ref{lem:homSob}). Applying the Correspondence Principle and Lemma \ref{lem:absP:Holder4Ls}, we obtain
\begin{equation*}
	\nrm{\LieBr{A_{0}}{A_{i}}(t)}_{\calL^{3/4,p}_{s} \dot{\calH}^{1/2}_{x}} \leq C_{p} \nrm{A_{0}(t)}_{\calL^{0+3/8,\infty}_{s} \dot{\calH}^{1}_{x}} \nrm{A_{i}(t)}_{\calL^{1/4+1/8,p}_{s} \dot{\calH}^{1}_{x}}
\end{equation*}

Note the extra weights of $s^{3/8}$ and $s^{1/8}$ for $A_{0}$ and $A_{i}$, respectively. Applying Lemma \ref{lem:fundEst4A0} to $A_{0}$ and Lemma \ref{lem:fundEst4A} to $A_{i}$, the desired estimate \eqref{eq:fundEst4F0i:pf:1} follows.

The other product estimate \eqref{eq:fundEst4F0i:pf:2} can be proved by a similar argument, this time starting with 
$\nrm{\phi_{1} \phi_{2}}_{\dot{H}^{1}_{x}} \leq C \nrm{\phi_{1}}_{\dot{H}^{5/4}_{x}} \nrm{\phi_{2}}_{\dot{H}^{5/4}_{x}}$,
(which follows again from Lemma \ref{lem:homSob}) instead of \eqref{eq:fundEst4F0i:pf:4}, and using Leibniz's rule to deal with the cases $k \geq 2$. 

Finally, let us turn to \eqref{eq:fundEst4F0i:3}. We use Lemma \ref{lem:fundEst4A} and Strichartz to control $s^{-1/2} \nb_{0} A_{i}$, and Lemma \ref{lem:fundEst4A0}, H\"older in time and Sobolev for $s^{-1/2} \nb_{x} A_{0}$. Then we are left to establish
\begin{equation} \label{eq:fundEst4F0i:pf:3}
	\nrm{\LieBr{A_{0}}{A_{i}}}_{\calL^{3/4,p}_{s} \calL^{4}_{t} \dot{\calW}^{k,4}_{x}}
	\leq C_{p} T^{1/4} \sup_{t \in I} \nrm{\nb_{x} F_{s0}(t)}_{\calL^{1,2}_{s} \calH^{k+1}_{x}} (\nrm{F_{si}}_{\calL^{5/4,2}_{s} \widehat{\calS}^{k+1}} + \nrm{\Alow_{i}}_{\widehat{S}^{k+1}}).
\end{equation}

To prove \eqref{eq:fundEst4F0i:pf:3}, one starts with
$\nrm{\phi_{1} \phi_{2}}_{L^{4}_{t} L^{4}_{x}} \leq C \abs{I}^{1/4} \nrm{\phi_{1}}_{L^{\infty}_{t} \dot{H}^{5/4}_{x}} \nrm{\phi_{2}}_{L^{\infty}_{t} \dot{H}^{1}_{x}}$,
(which follows via H\"older and Sobolev) instead of \eqref{eq:fundEst4F0i:pf:4}. Using Leibniz's rule, the Correspondence Principle and Lemma \ref{lem:absP:Holder4Ls}, we obtain for $k \geq 0$
\begin{equation*}
	\nrm{\LieBr{A_{0}}{A_{i}}}_{\calL^{3/4,p}_{s} \calL^{4}_{t} \dot{\calW}^{k,4}_{x}} 
	\leq C T^{1/4} \sum_{j=0}^{k} \nrm{A_{0}}_{\calL^{0+5/16,\infty}_{s} \calL^{\infty}_{t} \dot{\calH}^{j+5/4}_{x}} \nrm{A_{i}}_{\calL^{1/4+1/16, p}_{s} \calL^{\infty}_{t} \dot{\calH}^{k+1-j}_{x}}
\end{equation*}

%By interpolation and the Correspondence Principle, we also have
%\begin{equation}\label{eq:fundEst4F0i:pf:5}
%\nrm{A_{0}(t)}_{\calL^{0+3/8,\infty}_{s} \calL^{\infty}_{t} \dot{\calW}^{j,12}_{x}} \leq C \big( \sup_{t \in I} \nrm{A_{0}(t)}_{\calL^{0+3/8,\infty}_{s} \dot{\calH}^{j+1}_{x}}\big)^{3/4} \big( \sup_{t \in I} \nrm{A_{0}(t)}_{\calL^{0+3/8,\infty}_{s} \dot{\calH}^{j+2}_{x}} \big)^{1/4}.
%\end{equation}

Now we are in position to apply Lemmas \ref{lem:fundEst4A} and \ref{lem:fundEst4A0} to $A_{i}$ and $A_{0}$, respectively. Using furthermore $\nrm{\nb_{x} F_{si}}_{\calL^{5/4,2}_{s} \calL^{\infty}_{t} \calH^{k}_{x}} \leq \nrm{F_{si}}_{\calL^{5/4,2}_{s} \widehat{\calS}^{k+1}}$, $\nrm{\rd_{x} \Alow_{i}}_{L^{\infty}_{t} H^{k}_{x}} \leq \nrm{\Alow_{i}}_{\widehat{S}^{k+1}}$, \eqref{eq:fundEst4F0i:pf:3} follows. \qedhere
\end{proof}

By the same proof applied to $\dlt F_{0i}$, we obtain the following difference analogue of Lemma \ref{lem:fundEst4F0i}.
\begin{lemma} [Bounds for $\dlt F_{0i}$] \label{lem:fundEst4F0i:Diff}
Suppose that the caloric-temporal gauge condition $A_{s} = 0$, $\Alow_{0}=0$ holds (for both $A$ and $A'$). Then:
\begin{enumerate}
\item The following estimate holds for $2 \leq p \leq \infty$:
\begin{equation} \label{eq:fundEst4F0i:Diff:1}
\begin{aligned}
	\nrm{\dlt F_{0i}(t)}_{\calL^{3/4,p}_{s} \dot{\calH}^{1/2}_{x}}
	\leq & C_{p} ( \nrm{\nb_{0} (\dlt F_{si})(t)}_{\calL^{5/4,2}_{s} \dot{\calH}^{1/2}_{x}} + \nrm{\rd_{0} (\dlt \Alow_{i})(t)}_{\dot{H}^{1/2}_{x}} + \nrm{\dlt F_{s0}(t)}_{\calL^{1,2}_{s} \dot{\calH}^{3/2}_{x}} )\\
	& + C_{\nrm{\nb_{x} F_{si}(t)}_{\calL^{5/4,2}_{s} \calL^{2}_{x}}, \nrm{\rd_{x} \Alow_{i}(t)}_{L^{2}_{x}}} \cdot \nrm{\nb_{x} (\dlt F_{s0})(t)}_{\calL^{1,2}_{s} \calL^{2}_{x}} \\
	& + C_{\nrm{\nb_{x} F_{s0}(t)}_{\calL^{1,2}_{s} \calL^{2}_{x}}} \cdot (\nrm{\nb_{x} (\dlt F_{si}) (t)}_{\calL^{5/4,2}_{s} \calL^{2}_{x}} + \nrm{\rd_{x} (\dlt \Alow_{i})(t)}_{L^{2}_{x}}) 
\end{aligned}
\end{equation}

\item For any $2 \leq p \leq \infty$ and $k \geq 1$, we have
\begin{equation} \label{eq:fundEst4F0i:Diff:2}
\begin{aligned}
	\nrm{\dlt F_{0i}(t)}_{\calL^{3/4,p}_{s} \dot{\calH}^{k}_{x}} 
	\leq & C_{p, k} (  \nrm{\nb_{0} (\dlt F_{si})(t)}_{\calL^{5/4,2}_{s} \dot{\calH}^{k}_{x}} + \nrm{\rd_{0} (\dlt \Alow_{i})(t)}_{\dot{H}^{k}_{x}} )
	+ \nrm{\dlt F_{s0}(t)}_{\calL^{1,2}_{s} \dot{\calH}^{k+1}_{x}} \\ 		
	& + C_{\nrm{\nb_{x} F_{si}(t)}_{\calL^{5/4,2}_{s} \calH^{k}_{x}}, \nrm{\rd_{x} \Alow_{i}(t)}_{H^{k}_{x}}} \cdot \nrm{\nb_{x} (\dlt F_{s0})(t)}_{\calL^{1,2}_{s} \calH^{k}_{x}} \\
	& + C_{\nrm{\nb_{x} F_{s0}(t)}_{\calL^{1,2}_{s} \calH^{k}_{x}}} \cdot (\nrm{\nb_{x} (\dlt F_{si}) (t)}_{\calL^{5/4,2}_{s} \calH^{k}_{x}} + \nrm{\rd_{x} (\dlt \Alow_{i})(t)}_{H^{k}_{x}}).
\end{aligned}
\end{equation}

\item For any $2 \leq p \leq \infty$ and $k \geq 0$, we have
\begin{equation} \label{eq:fundEst4F0i:Diff:3}
\begin{aligned}
	\nrm{\dlt F_{0i}}_{\calL^{3/4,p}_{s} \calL^{4}_{t} \dot{\calW}^{k,4}_{x}} 
	\leq & C_{p, k} (  \nrm{\dlt F_{si}}_{\calL^{5/4,2}_{s} \dot{\calS}^{k+3/2}} + \nrm{\dlt \Alow_{i}}_{\dot{S}^{k+3/2}} 
	+ T^{1/4} \sup_{t \in I}\nrm{\dlt F_{s0}(t)}_{\calL^{1,2}_{s}  \dot{\calH}^{k+7/4}_{x}} ) \\ 	
	& + T^{1/4} C_{\nrm{F_{si}}_{\calL^{5/4,2}_{s} \widehat{\calS}^{k+1}}, \nrm{\Alow_{i}}_{\widehat{S}^{k+1}}} \cdot \sup_{t \in I} \nrm{\nb_{x} F_{s0}(t)}_{\calL^{1,2}_{s} \calH^{k+1}_{x}}  \\
	& + T^{1/4} C_{\sup_{t \in I} \nrm{\nb_{x} F_{s0}(t)}_{\calL^{1,2}_{s} \calH^{k+1}_{x}}} \cdot (\nrm{\dlt F_{si}}_{\calL^{5/4,2}_{s} \widehat{\calS}^{k+1}} + \nrm{\dlt \Alow_{i}}_{\widehat{S}^{k+1}}) .
\end{aligned}
\end{equation}
\end{enumerate}
\end{lemma}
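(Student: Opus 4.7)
The plan is to mimic the proof of Lemma \ref{lem:fundEst4F0i}, applying the formal Leibniz's rule for $\dlt$ to the defining formula for $F_{0i}$ and then invoking the same building-block lemmas. First, starting from
\begin{equation*}
	F_{0i} = \rd_{0} A_{i} - \rd_{i} A_{0} + \LieBr{A_{0}}{A_{i}},
\end{equation*}
I would take the $\dlt$-difference, which yields (schematically, via the formal Leibniz's rule for $\dlt$)
\begin{equation*}
	\dlt F_{0i} = s^{-1/2} \nb_{0} (\dlt A_{i}) + s^{-1/2} \nb_{i} (\dlt A_{0}) + \calO(\dlt A_{0}, A_{i}) + \calO(A_{0}, \dlt A_{i}),
\end{equation*}
where in the $\calO$-notation the unspecified factor may be either primed or unprimed (i.e.\ the second bilinear term covers both $\LieBr{A'_{0}}{\dlt A_{i}}$ and similar permutations).

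Next, the two linear-in-difference terms $s^{-1/2} \nb_{0} (\dlt A_{i})$ and $s^{-1/2} \nb_{i} (\dlt A_{0})$ are controlled exactly as in the proof of Lemma \ref{lem:fundEst4F0i}: I would apply Lemma \ref{lem:fundEst4A} (with $A_{i}$ replaced by $\dlt A_{i}$, $F_{si}$ by $\dlt F_{si}$, and $\Alow_{i}$ by $\dlt \Alow_{i}$) to the first, and Lemma \ref{lem:fundEst4A0} (with $A_{0}$ replaced by $\dlt A_{0}$ and $F_{s0}$ by $\dlt F_{s0}$) to the second. These lemmas are stated in a form that depends only on the target quantity, so they transfer directly to the difference setting and produce precisely the first line of the right-hand side in each of \eqref{eq:fundEst4F0i:Diff:1}--\eqref{eq:fundEst4F0i:Diff:3}.

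For the bilinear remainders $\calO(\dlt A_{0}, A_{i}) + \calO(A_{0}, \dlt A_{i})$, the plan is to reuse verbatim the three product estimates from the proof of Lemma \ref{lem:fundEst4F0i} (one for each of \eqref{eq:fundEst4F0i:pf:1}--\eqref{eq:fundEst4F0i:pf:3}): namely, the $\dot{H}^{1/2}$ product rule \eqref{eq:fundEst4F0i:pf:4}, its Leibniz'd $\dot{H}^{k}$ analogue, and the $L^{4}_{t} L^{4}_{x}$ H\"older-Sobolev bound, respectively. Applying each of these to $\calO(\dlt A_{0}, A_{i})$ places the $\dlt$ on the $A_{0}$ factor, which after Lemma \ref{lem:fundEst4A0} gives a norm of $\dlt F_{s0}$ multiplied by a factor involving $F_{si}, \Alow_{i}$ (which is bundled into a $C_{\nrm{\cdot}}$ constant); applying them to $\calO(A_{0}, \dlt A_{i})$ places the $\dlt$ on the $A_{i}$ factor, which after Lemma \ref{lem:fundEst4A} gives a norm of $\dlt F_{si}$ or $\dlt \Alow_{i}$ multiplied by a factor involving $F_{s0}$. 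Summing these two contributions reproduces exactly the second and third lines of the right-hand side of each of \eqref{eq:fundEst4F0i:Diff:1}--\eqref{eq:fundEst4F0i:Diff:3}.

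The only mild subtlety, which I expect to be the main bookkeeping obstacle, is to verify that the extra weights of $s$ extracted in the proof of Lemma \ref{lem:fundEst4F0i} (such as the $s^{3/8}$ and $s^{1/8}$ in the $\dot{H}^{1/2}$ case, and $s^{5/16}, s^{1/16}$ in the $L^{4}_{t} L^{4}_{x}$ case) remain positive and satisfy the hypothesis $r > 0$ of Lemmas \ref{lem:fundEst4A}--\ref{lem:fundEst4A0} when one factor is a difference and the other is not; this is automatic since the weights depend only on the homogeneous degree of the norms involved, which are unchanged under $\dlt$. Once this is confirmed, the rest of the argument is a direct transcription of the proof of Lemma \ref{lem:fundEst4F0i}, with all non-difference norm-bounds absorbed into non-decreasing constants of the form $C_{\nrm{\cdot}}$.
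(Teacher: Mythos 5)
Your proposal is correct and coincides with the paper's (implicit) argument: the paper simply remarks that Lemma \ref{lem:fundEst4F0i:Diff} follows ``by the same proof applied to $\dlt F_{0i}$,'' and your plan --- applying the formal Leibniz rule for $\dlt$ to $F_{0i}=\rd_0 A_i-\rd_i A_0+[A_0,A_i]$, controlling the linear-in-difference terms via Lemmas \ref{lem:fundEst4A}--\ref{lem:fundEst4A0} (which transfer because $\rd_s(\dlt A_\mu)=\dlt F_{s\mu}$ and $\dlt\Alow_0=0$ under the gauge condition), and handling the bilinear remainders by the same product estimates with the $\dlt$ on one factor at a time --- is exactly that argument. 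Your observation that the $s$-weight bookkeeping is unchanged under $\dlt$ (since homogeneity degrees are unaffected) is the right reason the transfer is automatic.
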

 
Next, we derive estimates for $\covD_{0} F_{ij} + \covD_{i} F_{0j}$.
\begin{lemma} [Bounds for $\covD_{0} F_{ij}$ and $\covD_{i} F_{0j}$] \label{lem:fundEst4DF}
Suppose that the caloric-temporal gauge condition $A_{s} = 0$, $\Alow_{0}=0$ holds.
\begin{enumerate}
\item For any $2 \leq p \leq \infty$ and $k \geq 0$, we have
\begin{equation} \label{eq:fundEst4DF:1}
\begin{aligned}
	& \nrm{\covD_{0} F_{ij}(t)}_{\calL^{5/4,p}_{s} \dot{\calH}^{k}_{x}} + \nrm{\covD_{i} F_{0j}(t)}_{\calL^{5/4,p}_{s} \dot{\calH}^{k}_{x}} \\
	& \qquad \leq  C_{p, k} \bb(  \nrm{\nb_{0} F_{si}(t)}_{\calL^{5/4,2}_{s} \dot{\calH}^{k+1}_{x}} + \nrm{\rd_{0} \Alow_{i}(t)}_{\dot{H}^{k+1}_{x}} 
	+ \nrm{F_{s0}(t)}_{\calL^{1,2}_{s} \dot{\calH}^{k+2}_{x}} \\ 		
	& \phantom{\qquad \leq } + (\nrm{\nb_{t,x} F_{si}(t)}_{\calL^{5/4,2}_{s} \calH^{k+1}_{x}} + \nrm{\rd_{t,x} \Alow_{i}(t)}_{H^{k+1}_{x}} 
	+ \nrm{\nb_{x} F_{s0}(t)}_{\calL^{1,2}_{s} \calH^{k+1}_{x}})^{2} \\
	& \phantom{\qquad \leq } + (\nrm{\nb_{t,x} F_{si}(t)}_{\calL^{5/4,2}_{s} \calH^{k+1}_{x}} + \nrm{\rd_{t,x} \Alow_{i}(t)}_{H^{k+1}_{x}} 
	+ \nrm{\nb_{x} F_{s0}(t)}_{\calL^{1,2}_{s} \calH^{k+1}_{x}})^{3} \bb).
\end{aligned}
\end{equation}

\item For any $2 \leq  p \leq \infty$ and $k \geq 0$, we have
\begin{equation} \label{eq:fundEst4DF:2}
\begin{aligned}
	& \nrm{\covD_{0} F_{ij}}_{\calL^{5/4,p}_{s} \calL^{4}_{t} \dot{\calW}^{k,4}_{x}} + \nrm{\covD_{i} F_{0j}}_{\calL^{5/4,p}_{s} \calL^{4}_{t} \dot{\calW}^{k,4}_{x}} \\
	& \qquad \leq  C_{p, k} \bb(  \nrm{F_{si}}_{\calL^{5/4,2}_{s} \dot{\calS}^{k+5/2}} + \nrm{\Alow_{i}}_{\dot{S}^{k+5/2}_{x}} 
	+ T^{1/4} \sup_{t \in I} \nrm{F_{s0}(t)}_{\calL^{1,2}_{s} \dot{\calH}^{k+11/4}_{x}} \\ 		
	& \phantom{\qquad \leq } + (\nrm{F_{si}}_{\calL^{5/4,2}_{s} \widehat{\calS}^{k+2}} + \nrm{\Alow_{i}}_{\widehat{S}^{k+2}} 
	+ \sup_{t \in I} \nrm{\nb_{x} F_{s0}(t)}_{\calL^{1,2}_{s} \calH^{k+1}_{x}})^{2} \\
	& \phantom{\qquad \leq } + (\nrm{F_{si}}_{\calL^{5/4,2}_{s} \widehat{\calS}^{k+2}} + \nrm{\Alow_{i}}_{\widehat{S}^{k+2}} 
	+ \sup_{t \in I} \nrm{\nb_{x} F_{s0}(t)}_{\calL^{1,2}_{s} \calH^{k+1}_{x}})^{3} \bb).
\end{aligned}
\end{equation}
\end{enumerate}
\end{lemma}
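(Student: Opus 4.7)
The plan is to follow the same strategy as the proof of Lemma \ref{lem:fundEst4F0i}, using the Bianchi identity to reduce the estimation of $\covD_0 F_{ij}$ to that of $\covD_i F_{0j}$. Indeed,
\begin{equation*}
\covD_0 F_{ij} + \covD_i F_{j0} + \covD_j F_{0i} = 0,
\end{equation*}
so $\covD_0 F_{ij} = \covD_i F_{0j} - \covD_j F_{0i}$, and it suffices to prove the claimed bounds for $\covD_i F_{0j}$.

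To estimate $\covD_i F_{0j}$, I would split it as $\covD_i F_{0j} = \rd_i F_{0j} + \LieBr{A_i}{F_{0j}}$. For the linear piece $\rd_i F_{0j}$, the desired estimates follow directly from Lemma \ref{lem:fundEst4F0i} applied one Sobolev order higher (that is, with $k$ replaced by $k+1$ in part (2), and at the appropriate higher-order analogue for part (1)); this produces exactly the linear contributions $\nrm{\nb_0 F_{si}}_{\calL^{5/4,2}_s \dot{\calH}^{k+1}_x}$, $\nrm{\rd_0 \Alow_i}_{\dot{H}^{k+1}_x}$ and $\nrm{F_{s0}}_{\calL^{1,2}_s \dot{\calH}^{k+2}_x}$ on the right-hand side of \eqref{eq:fundEst4DF:1}, and the corresponding ones for \eqref{eq:fundEst4DF:2}.

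The commutator term $\LieBr{A_i}{F_{0j}}$ is handled by a product estimate: use the homogeneous Sobolev product rule (Lemma \ref{lem:homSob}) together with the Correspondence Principle and H\"older for $\calL^{\ell,p}_s$ (Lemma \ref{lem:absP:Holder4Ls}) to distribute derivatives and $s$-weights between the two factors, then apply Lemma \ref{lem:fundEst4A} to convert $A_i$-norms into bounds for $F_{si}$ and $\Alow_i$, and Lemma \ref{lem:fundEst4F0i} (at order $k$, no gain needed) to bound $F_{0j}$ in terms of $F_{si}$, $\Alow_i$ and $F_{s0}$. Because $A_i$ enjoys an extra $s$-weight relative to $F_{0j}$ (see the associated $s$-weights in \S \ref{subsec:assocWght}), the exponents in Lemma \ref{lem:absP:Holder4Ls} will be strictly positive, so the $s$-integration converges cleanly. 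The linear-in-$F_{0j}$ terms of Lemma \ref{lem:fundEst4F0i} then contribute the quadratic terms on the right-hand side of \eqref{eq:fundEst4DF:1}/\eqref{eq:fundEst4DF:2}, while the quadratic commutator term $\LieBr{A_j}{A_0}$ inside $F_{0j}$ produces the cubic contributions.

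Part (2) of the lemma, involving the mixed space-time norm $\calL^{5/4,p}_s \calL^4_t \dot{\calW}^{k,4}_x$, would be treated in direct parallel with \eqref{eq:fundEst4F0i:3}: an H\"older in time produces the $T^{1/4}$ prefactor attached to the $F_{s0}$-contributions, and the $L^4_t \dot{W}^{k,4}_x$ control on $\rd_{t,x} A_i$ is obtained through the Strichartz-flavored $\dot{S}^{k+3/2}$ and $\widehat{S}^{k+1}$ norms already built into $\calF$ and $\calAlow$. The main bookkeeping obstacle, and the only real source of care in the argument, will be checking that all $s$-exponents satisfy the hypotheses of Lemma \ref{lem:absP:Holder4Ls} and that the Sobolev exponents in Lemma \ref{lem:homSob} match the critical relation $s_0 + s_1 + s_2 = 3/2$; once those are arranged, each subcase reduces to a routine application of the tools already developed in \S \ref{subsec:pEst4HPYM:prelim}.
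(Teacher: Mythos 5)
Your overall route --- Bianchi reduction $\covD_0 F_{ij} = \covD_i F_{0j} - \covD_j F_{0i}$, then splitting $\covD_i F_{0j} = \rd_i F_{0j} + \LieBr{A_i}{F_{0j}}$ and black-boxing Lemma~\ref{lem:fundEst4F0i} --- is a genuine reorganization of the argument, and it does produce Part (1) correctly: applying \eqref{eq:fundEst4F0i:2} at index $k+1$ yields exactly the linear and quadratic contributions that \eqref{eq:fundEst4DF:1} asks for. However, there is a gap in Part (2).

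The issue is that \eqref{eq:fundEst4F0i:3} at index $k+1$ produces the quadratic term
\begin{equation*}
T^{1/4}\sup_{t\in I}\nrm{\nb_{x}F_{s0}(t)}_{\calL^{1,2}_{s}\calH^{k+2}_{x}}\bigl(\nrm{F_{si}}_{\calL^{5/4,2}_{s}\widehat{\calS}^{k+2}}+\nrm{\Alow_{i}}_{\widehat{S}^{k+2}}\bigr),
\end{equation*}
which puts $\nb_{x}F_{s0}$ in $\calH^{k+2}_{x}$, whereas \eqref{eq:fundEst4DF:2} has it in $\calH^{k+1}_{x}$. Note the stated quadratic term is asymmetric: $F_{si}$ and $\Alow_{i}$ sit at order $k+2$ but $F_{s0}$ sits one order lower, at $k+1$. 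Black-boxing Lemma~\ref{lem:fundEst4F0i} at index $k+1$ cannot reproduce this asymmetry, because its proof (see \eqref{eq:fundEst4F0i:pf:3}) hard-codes the H\"older split $L^4_{t,x} \supset L^{\infty}_{t}\dot{H}^{5/4}_{x}\cdot L^{\infty}_{t}\dot{H}^{1}_{x}$ and distributes Leibniz derivatives onto the $A_0$ factor; at index $k+1$ the worst case puts $k+1$ derivatives on $A_0$, forcing $\nrm{A_0}_{\dot{\calH}^{k+9/4}_{x}}$, i.e.\ $\nb_x F_{s0}$ in $\calH^{k+2}_{x}$, not $\calH^{k+1}_{x}$.

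The paper instead expands $\covD_0 F_{ij}$ (resp.\ $\covD_i F_{0j}$) schematically in terms of $A_\mu$ \emph{before} applying H\"older, arriving at $s^{-1/2}\calO(A_0,\nb_x A)$ (and, for $\covD_i F_{0j}$, the additional $s^{-1/2}\calO(\nb_x A_0, A)$). It then applies $\nrm{\phi_1\phi_2}_{L^4_{t,x}}\leq\nrm{\phi_1}_{L^\infty_{t,x}}\nrm{\phi_2}_{L^4_{t,x}}$, always steering the $L^4_{t,x}$ slot (and hence the bulk of the $x$-derivatives) onto the spatial connection $A$, so that $A_0$ only ever needs $\calH^{k+1}_{x}$ control. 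For the extra term $\calO(\nb_x A_0, A)$ in $\covD_i F_{0j}$ one must in fact reverse the split (put $\nb_x^{(j+1)} A_0$ in $L^4_{t,x}$ and $\nb_x^{(k-j)} A$ in $L^{\infty}_{t,x}$) to land at $\nb_x A_0 \in \calH^{k+3/4}_{x}\subset\calH^{k+1}_{x}$. This freedom to tune the H\"older split per schematic term is exactly what you give up by treating Lemma~\ref{lem:fundEst4F0i} as a black box. To repair the argument, you would need to either re-prove a sharpened version of \eqref{eq:fundEst4F0i:3} that tracks $F_{s0}$-regularity and $F_{si}$/$\Alow$-regularity separately (asymmetric in $k$), or else unbundle $\rd_i F_{0j}$ into its $A_\mu$-components and estimate each directly as the paper does.
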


\begin{proof} 
The proof proceeds in a similar manner as Lemma \ref{lem:fundEst4F0i}. We will give a treatment of the contribution of the term $\nrm{\covD_{0} F_{ij}}$, and leave the similar case of $\nrm{\covD_{i} F_{0j}}$ to the reader.

Our starting point is the schematic identity
\begin{equation} \label{eq:fundEst4DF:pf:1}
	\covD_{0} F_{ij} = s^{-1} \calO(\nb_{0} \nb_{x} A) + s^{-1/2} \calO(A_{0}, \nb_{x} A) + s^{-1/2} \calO(A, \nb_{0} A) + \calO(A, A, A_{0}),
\end{equation} 
which can be checked easily by expanding $\covD_{0} F_{ij}$ in terms of $A_{\mu}$. 

The first term on the right-hand side of \eqref{eq:fundEst4DF:pf:1} is acceptable for both \eqref{eq:fundEst4DF:1} and \eqref{eq:fundEst4DF:2}, thanks to Lemma \ref{lem:fundEst4A}. Therefore, it remains to treat only the bilinear and trilinear terms in \eqref{eq:fundEst4DF:pf:1}.

Let us begin with the proof of \eqref{eq:fundEst4DF:1}.  For the bilinear terms (i.e., the second and the third terms), we start with the inequality 
$\nrm{\phi_{1} \phi_{2}}_{L^{2}_{x}} \leq C \nrm{\phi_{1}}_{\dot{H}^{1}_{x}} \nrm{\phi_{2}}_{\dot{H}^{1/2}_{x}}$, 
which follows from Lemma \ref{lem:homSob}. Applying Leibniz's rule, the Correspondence Principle and Lemma \ref{lem:absP:Holder4Ls}, we obtain for $k \geq 0$
\begin{align*}
	& \nrm{s^{-1/2} \calO(A_{0}, \nb_{x} A)(t)}_{\calL^{5/4,p}_{s} \dot{\calH}^{k}_{x}} + \nrm{s^{-1/2} \calO(A, \nb_{0} A)(t)}_{\calL^{5/4,p}_{s} \dot{\calH}^{k}_{x}} \\
	& \quad \leq C \nrm{\nb_{x}  A_{0}}_{\calL^{0+3/8,\infty}_{s} \calH^{k}_{x}} \nrm{\nb_{x}  A}_{\calL^{1/4+1/8,p}_{s} \calH^{k+1}_{x}} 
	+  \nrm{\nb_{x}  A}_{\calL^{1/4+1/8,\infty}_{s} \calH^{k}_{x}} \nrm{\nb_{0}  A}_{\calL^{1/4+1/8,p}_{s} \calH^{k+1}_{x}}.
\end{align*}

Applying Lemma \ref{lem:fundEst4A} to $A$ and Lemma \ref{lem:fundEst4A0} to $A_{0}$, we see that the bilinear terms on the right-hand side of \eqref{eq:fundEst4DF:pf:1} are also okay.

Finally, for the trilinear term, we start with the inequality
$\nrm{\phi_{1} \phi_{2} \phi_{3}}_{L^{2}_{x}} \leq C \nrm{\phi_{1}}_{\dot{H}^{1}_{x}} \nrm{\phi_{2}}_{\dot{H}^{1}_{x}} \nrm{\phi_{3}}_{\dot{H}^{1}_{x}}$.
By Leibniz's rule, the Correspondence Principle and Lemma \ref{lem:absP:Holder4Ls}, we obtain for $k \geq 0$
\begin{equation*}
	\nrm{\calO(A, A, A_{0})}_{\calL^{5/4,p}_{s} \dot{\calH}^{k}_{x}} \leq \nrm{\nb_{x} A}_{\calL^{1/4+1/6,\infty}_{s} \calH^{k}_{x}} \nrm{\nb_{x} A}_{\calL^{1/4+1/6,p}_{s} \calH^{k}_{x}} \nrm{\nb_{x} A_{0}}_{\calL^{0+5/12,\infty}_{s} \calH^{k}_{x}}.
\end{equation*}

Applying Lemma \ref{lem:fundEst4A} to $A$ and Lemma \ref{lem:fundEst4A0} to $A_{0}$, we see that the last term on the right-hand side of \eqref{eq:fundEst4DF:pf:1} is acceptable. This proves \eqref{eq:fundEst4DF:1}.

Next, let us prove \eqref{eq:fundEst4DF:2}, which proceeds in an analogous way. For the bilinear terms, we begin with the obvious inequality
$\nrm{\phi_{1} \phi_{2}}_{L^{4}_{t,x}} \leq C \nrm{\phi_{1}}_{L^{\infty}_{t,x}} \nrm{\phi_{2}}_{L^{4}_{t,x}}$. Applying Leibniz's rule, the Correspondence Principle, Lemma \ref{lem:absP:Holder4Ls} and Lemma \ref{lem:absP:algEst}, we obtain for $k \geq 0$
\begin{align*}
	& \nrm{s^{-1/2} \calO(A_{0}, \nb_{x} A)}_{\calL^{5/4,p}_{s} \calL^{4}_{t} \dot{\calW}^{k,4}_{x}} + \nrm{s^{-1/2} \calO(A, \nb_{0} A)}_{\calL^{5/4,p}_{s} \calL^{4}_{t} \dot{\calW}^{k,4}_{x}} \\
	&\qquad \leq C \sup_{t \in I} \nrm{\nb_{x}  A_{0}(t)}_{\calL^{0+3/8,\infty}_{s} \calH^{k+1}_{x}} \nrm{\nb_{x}  A}_{\calL^{1/4+1/8,p}_{s} \calL^{4}_{t} \calW^{k,4}_{x}}  \\
	& \phantom{\qquad \leq } + C\nrm{\nb_{x}  A}_{\calL^{1/4+1/8,\infty}_{s} \calL^{\infty}_{t} \calH^{k+1}_{x}} \nrm{\nb_{0}  A}_{\calL^{1/4+1/8,p}_{s} \calL^{4}_{t} \calW^{k,4}_{x}}.
\end{align*}

Using Strichartz and the Correspondence Principle, we can estimate $\nrm{\nb_{t,x}  A}_{\calL^{1/4+1/8,p}_{s} \calL^{4}_{t} \calW^{k,4}_{x}} \leq C \nrm{A}_{\calL^{1/4+1/8,p}_{s} \widehat{\calS}^{k+2}}$. Then applying Lemma \ref{lem:fundEst4A} to $A$ and Lemma \ref{lem:fundEst4A0} to $A_{0}$, it easily follows that the bilinear terms on the right-hand side of \eqref{eq:fundEst4DF:pf:1} are acceptable.

For the trilinear term, we start with the inequality
$\nrm{\phi_{1} \phi_{2} \phi_{3}}_{L^{4}_{t,x}} \leq C \nrm{\phi_{1}}_{L^{4}_{t} L^{12}_{x}} \nrm{\phi_{2}}_{L^{\infty}_{t} L^{12}_{x}} \nrm{\phi_{3}}_{L^{\infty}_{t} L^{12}_{x}}$. 
By Leibniz's rule, the Correspondence Principle and Lemma \ref{lem:absP:Holder4Ls}, we obtain
\begin{equation*}
\begin{aligned}
	\nrm{\calO(A, A, A_{0})}_{\calL^{5/4,p}_{s} \calL^{4}_{t} \dot{\calW}^{k,4}_{x}} 
	\leq & C \nrm{A}_{\calL^{1/4+1/6,p}_{s} \calL^{4}_{t} \calW^{k,12}_{x}} \nrm{A}_{\calL^{1/4+1/6,\infty}_{s} \calL^{\infty}_{t} \calW^{k,12}_{x}} \\
	& \cdot \sup_{t \in I} \nrm{A_{0}(t)}_{\calL^{0+5/12,\infty}_{s} \calW^{k,12}_{x}}.
\end{aligned}
\end{equation*}

Using Strichartz and the Correspondence Principle, let us estimate the first factor $\nrm{A}_{\calL^{1/4+1/6,\infty}_{s} \calL^{4}_{t} \calW^{k,12}_{x}}$ by $C \nrm{A}_{\calL^{1/4+1/6,p}_{s} \widehat{\calS}^{k+2}}$. Next, using interpolation and the Correspondence Principle, we estimate the second factor $\nrm{A}_{\calL^{1/4+1/6,\infty}_{s} \calL^{\infty}_{t} \calW^{k,12}_{x}}$ by $\nrm{\nb_{x}  A}_{\calL^{1/4+1/6,\infty}_{s} \calL^{\infty}_{t} \calH^{k+1}_{x}}$. Finally, for the last factor, let us estimate $\nrm{A_{0}(t)}_{\calL^{0+5/12, \infty}_{s} \calW^{k,12}_{x}} \leq C \nrm{\nb_{x} A_{0}(t)}_{\calL^{0+5/12, \infty}_{s} \calH^{k+1}_{x}}$. At this point, we can simply apply Lemma \ref{lem:fundEst4A} to $A$ and Lemma \ref{lem:fundEst4A0} to $A_{0}$, and conclude that the trilinear term is acceptable as well. This proves \eqref{eq:fundEst4DF:2}.
\end{proof}

Finally, by essentially the same proof, we can prove an analogue of Lemma \ref{lem:fundEst4DF} for $\dlt \covD_{0} F_{ij} := \covD_{0} F_{ij} - \covD'_{0} F'_{ij}$ and $\dlt \covD_{i} F_{0j} := \covD_{i} F_{0j} - \covD'_{i} F'_{0j}$, whose statement we give below.
\begin{lemma} [Bounds for $\dlt \covD_{0} F_{ij}$ and $\dlt \covD_{i} F_{0j}$] \label{lem:fundEst4DF:Diff}
Suppose that the caloric-temporal gauge condition $A_{s} = 0$, $\Alow_{0}=0$ holds (for both $A$ and $A'$).
\begin{enumerate}
\item For any $2 \leq p \leq \infty$ and $k \geq 0$, we have
\begin{equation} \label{eq:fundEst4DF:Diff:1}
\begin{aligned}
	& \nrm{\dlt \covD_{0} F_{ij}(t)}_{\calL^{5/4,p}_{s} \dot{\calH}^{k}_{x}} + \nrm{\dlt \covD_{i} F_{0j}(t)}_{\calL^{5/4,p}_{s} \dot{\calH}^{k}_{x}} \\
	& \qquad \leq  C ( \nrm{\nb_{t,x} (\dlt F_{si})(t)}_{\calL^{5/4,2}_{s} \calH^{k+1}_{x}} + \nrm{\rd_{t,x} (\dlt \Alow_{i})(t)}_{H^{k+1}_{x}} 
	+ \nrm{\nb_{x} (\dlt F_{s0})(t)}_{\calL^{1,2}_{s} \calH^{k+1}_{x}}),
\end{aligned}
\end{equation}
where $C = C_{p,k}(\nrm{\nb_{t,x} F_{si}(t)}_{\calL^{5/4,2}_{s} \calH^{k+1}_{x}}, \nrm{\rd_{t,x} \Alow_{i}(t)}_{H^{k+1}_{x}} ,\nrm{\nb_{x} F_{s0}(t)}_{\calL^{1,2}_{s} \calH^{k+1}_{x}})$ is positive and non-decreasing in its arguments.

\item For any $2 \leq  p \leq \infty$ and $k \geq 0$, we have
\begin{equation} \label{eq:fundEst4DF:Diff:2}
\begin{aligned}
	& \nrm{\dlt \covD_{0} F_{ij}}_{\calL^{5/4,p}_{s} \calL^{4}_{t} \dot{\calW}^{k,4}_{x}} + \nrm{\dlt \covD_{i} F_{0j}}_{\calL^{5/4,p}_{s} \calL^{4}_{t} \dot{\calW}^{k,4}_{x}} \\
	& \qquad \leq  C_{p, k} (  \nrm{\dlt F_{si}}_{\calL^{5/4,2}_{s} \dot{\calS}^{k+5/2}} + \nrm{\dlt \Alow_{i}}_{\dot{S}^{k+5/2}} 
	+ T^{1/4} \sup_{t \in I} \nrm{\dlt F_{s0}(t)}_{\calL^{1,2}_{s} \dot{\calH}^{k+11/4}_{x}}) \\ 		
	& \phantom{\qquad \leq } + C (\nrm{\dlt F_{si}}_{\calL^{5/4,2}_{s} \widehat{\calS}^{k+2}} + \nrm{\dlt \Alow_{i}}_{\widehat{S}^{k+2}} 
	+ \sup_{t \in I} \nrm{\nb_{x} (\dlt F_{s0})(t)}_{\calL^{1,2}_{s} \calH^{k+1}_{x}}),
\end{aligned}
\end{equation}
where $C =C_{p, k}(\nrm{F_{si}}_{\calL^{5/4,2}_{s} \widehat{\calS}^{k+2}}, \nrm{\Alow_{i}}_{\widehat{S}^{k+2}}, \sup_{t \in I} \nrm{\nb_{x} F_{s0}(t)}_{\calL^{1,2}_{s} \calH^{k+1}_{x}})$ on the last line is positive and non-decreasing in its arguments.
\end{enumerate}
\end{lemma}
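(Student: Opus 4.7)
The plan is to mirror the proof of Lemma \ref{lem:fundEst4DF} step-by-step, with the only substantive addition being the bookkeeping required by the formal Leibniz rule for $\dlt$. First I would note that Lemmas \ref{lem:fundEst4A} and \ref{lem:fundEst4A0} apply equally well to the differences $\dlt A_i$ and $\dlt A_0$, since both of these satisfy $\rd_s (\dlt A_i) = \dlt F_{si}$ and $\rd_s (\dlt A_0) = \dlt F_{s0}$ together with the same caloric-temporal gauge conditions; consequently, $\nrm{\dlt A_i}_{\calL^{1/4+\ell,p}_{s} \dot{\calX}^{k}}$ and $\nrm{\dlt A_0}_{\calL^{\ell,p}_{s} \dot{\calX}^{k}}$ are controlled in exactly the same way by $\nrm{\dlt F_{si}}$, $\nrm{\dlt \Alow_i}$ and $\nrm{\dlt F_{s0}}$, respectively.

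Next, I would take the starting schematic identity from the proof of Lemma \ref{lem:fundEst4DF}, namely
\begin{equation*}
	\covD_{0} F_{ij} = s^{-1} \calO(\nb_{0} \nb_{x} A) + s^{-1/2} \calO(A_{0}, \nb_{x} A) + s^{-1/2} \calO(A, \nb_{0} A) + \calO(A, A, A_{0}),
\end{equation*}
and apply the formal Leibniz rule for $\dlt$ to expand $\dlt \covD_{0} F_{ij}$ as a sum of analogous schematic expressions in which exactly one factor in each term carries a $\dlt$; an identical expansion applies to $\dlt \covD_{i} F_{0j}$. The linear term $s^{-1}\calO(\nb_0 \nb_x (\dlt A))$ is immediately controlled by Lemma \ref{lem:fundEst4A} applied to $\dlt A$, contributing the first two groups of terms on the right-hand side of \eqref{eq:fundEst4DF:Diff:1} and \eqref{eq:fundEst4DF:Diff:2}.

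For each of the bilinear and trilinear contributions, I would simply apply the same multilinear Sobolev inequality used in the proof of Lemma \ref{lem:fundEst4DF} (that is, $\nrm{\phi_1 \phi_2}_{L^2_x} \leq C \nrm{\phi_1}_{\dot H^1_x} \nrm{\phi_2}_{\dot H^{1/2}_x}$ and $\nrm{\phi_1 \phi_2 \phi_3}_{L^2_x} \leq C \nrm{\phi_1}_{\dot H^1_x}\nrm{\phi_2}_{\dot H^1_x}\nrm{\phi_3}_{\dot H^1_x}$ for part (1); the $L^4_{t,x}$-type variants for part (2)), and then invoke Leibniz's rule, the Correspondence Principle, Lemma \ref{lem:absP:Holder4Ls} and Lemma \ref{lem:absP:algEst} exactly as before. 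In each such term, two (or one) of the factors are undifferenced and get bounded via Lemmas \ref{lem:fundEst4A}, \ref{lem:fundEst4A0} in terms of the quantities controlled by $\calF$ and $\calAlow$ (for the unprimed solution; by symmetry one can also bound them by the primed data), yielding the multiplicative constants $C(\cdot)$ in the statement; the remaining factor carries a $\dlt$ and produces the linear dependence on $\dlt F_{si}$, $\dlt \Alow_i$, $\dlt F_{s0}$.

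There is no real obstacle here: the argument is a mechanical consequence of the non-difference case, and the only mild subtlety is the non-decreasing dependence on the arguments of $C$, which is built into the bounds by always using the unprimed norms to control non-difference factors (and invoking the triangle inequality when a primed quantity naturally appears, writing it as the unprimed plus its $\dlt$, which can be absorbed). I would therefore not grind through each schematic term individually, but simply assert that each bilinear and trilinear piece of the $\dlt$-expansion is estimated identically to the corresponding piece in Lemma \ref{lem:fundEst4DF}, with the $\dlt$ factor playing the role of that piece's active variable. The final inequalities \eqref{eq:fundEst4DF:Diff:1} and \eqref{eq:fundEst4DF:Diff:2} are then obtained by summing the contributions, and details are left to the reader as indicated at the end of the excerpt.
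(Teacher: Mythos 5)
Your proposal is correct and follows exactly the route the paper intends (the paper itself omits the proof, stating only that Lemma~\ref{lem:fundEst4DF:Diff} follows ``by essentially the same proof'' as Lemma~\ref{lem:fundEst4DF}): apply the formal Leibniz rule for $\dlt$ to the schematic identity for $\covD_0 F_{ij}$, control the linear piece $s^{-1}\calO(\nb_0\nb_x(\dlt A))$ via Lemma~\ref{lem:fundEst4A}, and estimate each multilinear piece with the same Sobolev inequalities, Correspondence Principle, Lemma~\ref{lem:absP:Holder4Ls} and Lemma~\ref{lem:absP:algEst}, with the sole $\dlt$-factor producing the linear dependence on the difference norms. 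One caveat worth flagging: the triangle-inequality trick you invoke to replace primed undifferenced factors by unprimed ones generates terms quadratic in differences (e.g.~$\nrm{\dlt A}\nrm{\dlt B}$) that are not absorbable into $C\cdot(\text{sum of }\dlt\text{-norms})$ with $C$ depending only on unprimed norms; the cleaner reading, consistent with the paper's stated convention of not distinguishing primed from unprimed variables in schematic expressions, is that the constants $C$ implicitly depend on both the primed and unprimed background norms.
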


\subsection{Parabolic estimates for $F_{si}$} \label{subsec:pEst4HPYM:pEst4Fsi}
Recall that $F_{si}$ satisfies a covariant parabolic equation. Under the caloric gauge condition $A_{s} = 0$, expanding covariant derivatives and $F_{i\ell}$, we obtain a semi-linear heat equation for $F_{si}$, which looks schematically as follows.
\begin{equation*} 
	{}^{(F_{si})}\calN := (\rd_{s} - \lap) F_{si}= s^{-1/2} \calO( A, \nb_{x} F_{s}) + s^{-1/2} \calO(\nb_{x} A, F_{s}) + \calO(A, A, F_{s}).
\end{equation*}

Note that $\calF$ already controls some derivatives of $F_{si}$. Starting from this, the goal is to prove estimates for higher derivative of $F_{si}$. 

\begin{proposition} \label{prop:pEst4Fsi}
Suppose $0 < T \leq 1$, and that the caloric-temporal gauge condition holds.
\begin{enumerate}
\item For any $k \geq 0$, we have
\begin{equation} \label{eq:pEst4Fsi:1}
	\nrm{\nb_{t,x} F_{si}(t)}_{\calL^{5/4,\infty}_{s} \dot{\calH}^{k}_{x}(0,1]} + \nrm{\nb_{t,x} F_{si}(t)}_{\calL^{5/4,2}_{s} \dot{\calH}^{k+1}_{x}(0,1]} 
	\leq C_{k, \calF, \nrm{\rd_{t,x} \Alow(t)}_{H^{k}_{x}}} \cdot \calF.
\end{equation}

\item For $1 \leq k \leq 25$, we have
\begin{equation} \label{eq:pEst4Fsi:2}
	\nrm{F_{si}}_{\calL^{5/4,\infty}_{s} \dot{\calS}^{k}(0,1]} + \nrm{F_{si}}_{\calL^{5/4,2}_{s} \dot{\calS}^{k}(0,1]} 
	\leq C_{\calF, \calAlow} \cdot \calF.
\end{equation}
\end{enumerate}
\end{proposition}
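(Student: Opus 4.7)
The plan is to apply Theorem \ref{thm:absP:absPth}(2) (the abstract parabolic smoothing estimate) to the semi-linear heat equation derived from the covariant parabolic equation \eqref{eq:covParabolic4Fsi} for $F_{si}$. Under the caloric gauge condition $A_s = 0$, expanding the covariant derivatives and the curvature $F_{i\ell}$ schematically yields
\begin{equation*}
(\rd_s - \lap) F_{si} = s^{-1/2} \calO(A, \nb_x F_s) + s^{-1/2} \calO(\nb_x A, F_s) + \calO(A, A, F_s) =: {}^{(F_{si})}\calN.
\end{equation*}
Since $\rd_{t,x}$ commutes with $\rd_s - \lap$, differentiating gives the same type of equation for $\rd_{t,x} F_{si}$, with $\rd_{t,x}$ distributed over the factors by Leibniz's rule. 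The key point throughout is that the connection $A_i$ appearing in the nonlinearity is expressible in terms of $F_{si}$ and $\Alow_i$ through Lemma \ref{lem:fundEst4A}, since $\rd_s A_i = F_{si}$ in the caloric gauge.

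For part (1), I would fix $t \in I$ and use the base norm $X = L^2_x$ (so $\ell_0 = 3/4$) together with $\ell = 5/4$. The base case $k \leq 9$ is immediate from the definition of $\calF$: the $\dot{\calS}^k$-norms in $\calF$ control, uniformly in $t$, the quantities $\nrm{\nb_{t,x} F_{si}(t)}_{\calL^{5/4,\infty}_s \dot{\calH}^{k-1}_x}$ and $\nrm{\nb_{t,x} F_{si}(t)}_{\calL^{5/4,2}_s \dot{\calH}^{k-1}_x}$. For $k \geq 10$, I proceed inductively by applying Theorem \ref{thm:absP:absPth}(2) to $\psi = F_{si}$: at each level $m$, Leibniz's rule, the Correspondence Principle, H\"older for $\calL^{\ell,p}_s$ (Lemma \ref{lem:absP:Holder4Ls}), and Gagliardo-Nirenberg (Lemma \ref{lem:absP:algEst}) reduce the estimate for $\nrm{{}^{(F_{si})}\calN}_{\calL^{5/4+1,2}_s \dot{\calH}^m_x}$ to a product of p-normalized norms of $A$, $\nb_x A$, $F_s$, $\nb_x F_s$. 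The top-order contribution (all derivatives on the highest-order factor of $F_s$) is absorbed as the $\eps$-term in \eqref{eq:absP:smth:1}, while all remaining contributions are estimated by $\calB_m$ after using Lemma \ref{lem:fundEst4A} to pass from $A$ to $F_{si}$ and $\Alow_i$.

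For part (2), the scheme is analogous but the base norm is now $X = \dot{S}^1$ (so $\ell_0 = 1/4$) and we work with full space-time norms. The base case for small $k$ is again directly contained in $\calF$. For larger $k$, I iterate Theorem \ref{thm:absP:absPth}(2): the relevant nonlinearity estimate in $\calL^{5/4+1,2}_s L^2_{t,x}$-type norms is obtained using Proposition \ref{prop:absP:application} (which verifies the parabolic smoothing hypothesis for $\dot{S}^1$), Strichartz inequalities (Proposition \ref{prop:prelim:est4SH}), and again Lemma \ref{lem:fundEst4A}. The range $k \leq 25$ is dictated by the number of derivatives of $\Alow_i$ in $\dot{S}^j$ that $\calAlow$ controls (up to $j = 30$), leaving a comfortable margin for the Sobolev embeddings used in the multilinear estimates.

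The main obstacle is the bookkeeping of derivatives in the nonlinearity: in part (1) the final constant must depend on $\Alow$ only through $\nrm{\rd_{t,x} \Alow(t)}_{H^k_x}$, matching the index on the left-hand side, while the smoothing naively seems to require $k+1$ derivatives of $A$ (hence of $\Alow$). This will be resolved by exploiting (i) the $s^{-1/2}$ prefactors in the bilinear terms of ${}^{(F_{si})}\calN$; (ii) the flexibility of Lemma \ref{lem:fundEst4A}, which permits any $\ell > 0$ extra $s$-weight when estimating $A_i$, as predicted by the sub-critical scaling discussion in \S \ref{subsec:assocWght}; and (iii) sending the highest-order derivative onto $F_s$ rather than onto $A$, so that the top-order term is absorbed as the $\eps$-term and only lower derivatives of $A$ (hence of $\Alow$) appear in $\calB_m$. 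Once these heuristics are made quantitative, the iteration formula \eqref{eq:absP:smth:3} completes both parts.
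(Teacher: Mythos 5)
Your proposal follows the paper's strategy closely: in both parts one iterates the abstract smoothing estimate of Theorem~\ref{thm:absP:absPth}(2) on the semi-linear heat equation for $F_{si}$ in the caloric gauge, estimating the nonlinearity by the Correspondence Principle, H\"older for $\calL^{\ell,p}_s$, Gagliardo-Nirenberg, and Lemma~\ref{lem:fundEst4A} (to trade $A_i$ for $F_{si}$ and $\Alow_i$); the base case comes from the definition of $\calF$; and your diagnosis of the bookkeeping issue and the three mechanisms resolving it match the paper's reasoning.

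One small correction worth making: in part (1) the paper's nonlinearity bound \eqref{eq:pEst4Fsi:pf:1} has no $\eps$-term at all, i.e., one takes $\eps=0$ in \eqref{eq:absP:smth:1}. The top-order contribution $\calO(A, \nb_x^{k}\nb_{t,x}F_{si})$, when estimated by Gagliardo-Nirenberg for $A$ in $\dot{\calH}^{3/2}_x\cap\calL^\infty_x$, lands in $\nrm{\nb_{t,x}F_{si}}_{\calL^{5/4,2}_s\calH^{k}_x}$, which is exactly at the $(m{+}1=k)$-derivative level that $\calB_m$ is allowed to depend on, so no absorption is required. The $\eps$-term is genuinely needed only in part (2), where the multilinear estimate $\nrm{\phi_1 \rd_x\phi_2}_{\dot S^1}\lesssim T^{1/2}\nrm{\phi_1}_{\dot S^{3/2}}\nrm{\phi_2}_{\dot S^{5/2}}+\cdots$ produces the fractional overshoot $\dot\calS^{k+3/2}$, which is interpolated into $\eps\nrm{F_s}_{\calL^{5/4,2}_s\dot\calS^{k+2}}$ by Cauchy--Schwarz. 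This is a cosmetic discrepancy and does not affect the viability of your plan.
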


Part (1) of the proposition states, heuristically, that in order to control $k+2$ derivatives of $F_{si}$ in the $\calL^{2}_{s}$ sense, we need $\calF$ and a control of $k+1$ derivatives of $\Alow_{i}$. This numerology is important for closing the bootstrap for the quantity $\calAlow$. On the other hand, in Part (2),  we obtain a uniform estimate in terms only of $\calF$ and $\calAlow$, thanks to the restriction of the range of $k$. We refer the reader to Remark \ref{rem:pEst4sptNrms} for more discussion.

\begin{proof} 
{\it Step 1: Proof of (1).}
Fix $t \in (-T, T)$. Let us start with the obvious inequalities 
\begin{equation} \label{eq:pEst4Fsi:pf:0}
\left\{
\begin{aligned}
	& \nrm{\rd_{t,x}(\phi_{1} \rd_{x} \phi_{2})}_{L^{2}_{x}} 
	\leq C \nrm{\rd_{t,x} \phi_{1}}_{\dot{H}^{1/2}_{x}} \nrm{\phi_{2}}_{\dot{H}^{1}_{x}} + C \nrm{\phi_{1}}_{L^{\infty}_{x}} \nrm{\rd_{t,x} \phi_{2}}_{\dot{H}^{1}_{x}}, \\
	& \nrm{\rd_{t,x}(\phi_{1} \phi_{2} \phi_{3})}_{L^{2}_{x}}  \leq C \sum_{\sgm}\nrm{\phi_{\sgm(1)}}_{\dot{H}^{4/3}_{x}} \nrm{\phi_{\sgm(2)}}_{\dot{H}^{4/3}_{x}} \nrm{\rd_{t,x} \phi_{\sgm(3)}}_{\dot{H}^{1/3}_{x}},
\end{aligned}
\right.
\end{equation}
where the sum $\sum_{\sgm}$ is over all permutations $\sgm$ of $\set{1,2,3}$. These can be proved by using Leibniz's rule, H\"older and Sobolev.

Using Leibniz's rule, the Correspondence Principle, Lemma \ref{lem:absP:Holder4Ls}, Gagliardo-Nirenberg (Lemma \ref{lem:absP:algEst}) and interpolation, the previous inequalities lead to the following inequalities for $k \geq 1$.
\begin{equation*}
%\left\{
\begin{aligned}
& \nrm{s^{-1/2} \nb_{t,x} \calO(\psi_{1}, \nb_{x} \psi_{2})}_{\calL^{5/4+1, 2}_{s} \dot{\calH}^{k-1}_{x}} 
+ \nrm{s^{-1/2} \nb_{t,x} \calO(\nb_{x} \psi_{1}, \psi_{2})}_{\calL^{5/4+1, 2}_{s} \dot{\calH}^{k-1}_{x}} \\
& \qquad \leq  C \nrm{\nb_{t,x} \psi_{1}}_{\calL^{1/4+1/4,\infty}_{s} \calH^{k}_{x}} \nrm{\nb_{t,x} \psi_{2}}_{\calL^{5/4,2}_{s} \calH^{k}_{x}}, \\
& \nrm{\nb_{t,x} \calO(\psi_{1}, \psi_{2}, \psi_{3})}_{\calL^{5/4+1, 2}_{s} \dot{\calH}^{k-1}_{x}} \\
& \qquad \leq  C \nrm{\nb_{t,x} \psi_{1}}_{\calL^{1/4+1/4, \infty}_{s} \calH^{k}_{x}} \nrm{\nb_{t,x} \psi_{2}}_{\calL^{5/4,2}_{s} \calH^{k}_{x}} \nrm{\nb_{t,x} \psi_{3}}_{\calL^{1/4+1/4, \infty}_{s} \calH^{k}}.
\end{aligned}
%\right.
\end{equation*}

Note the extra weight of $s^{1/4}$ for $\psi_{1}, \psi_{3}$. Put $\psi_{1} = A, \psi_{2} = F_{s}, \psi_{3} = A$, and apply Lemma \ref{lem:fundEst4A} (with $\ell > 0, p=\infty, q=2$ and $X=L^{2}_{x}$) for $\nrm{A}$. Then for $k \geq 1$, we have
\begin{equation} \label{eq:pEst4Fsi:pf:1}
	\begin{aligned}
\sup_{i} \nrm{\nb_{t,x} ({}^{(F_{si})} \calN)}_{\calL^{5/4+1, 2}_{s} \dot{\calH}^{k-1}_{x}} 
	\leq & C (\nrm{\nb_{t,x} F_{s}}_{\calL^{5/4, 2}_{s} \calH^{k}_{x}} + \nrm{\rd_{t,x} \Alow}_{H^{k}_{x}}) \nrm{\nb_{t,x} F_{s}}_{\calL^{5/4, 2}_{s} \calH^{k}_{x}} \\
	& + C (\nrm{\nb_{t,x} F_{s}}_{\calL^{5/4, 2}_{s} \calH^{k}_{x}} + \nrm{\rd_{t,x} \Alow}_{H^{k}_{x}})^{2} \nrm{\nb_{t,x} F_{s}}_{\calL^{5/4, 2}_{s} \calH^{k}_{x}}.
\end{aligned}
\end{equation}

Combining this with the obvious bound $\nrm{\nb_{t,x} F_{s}}_{\calL^{5/4,\infty}_{s} \calH^{1}_{x}} + \nrm{\nb_{t,x} F_{s}}_{\calL^{5/4,2}_{s} \calH^{2}_{x}} \leq \calF$, we obtain \eqref{eq:pEst4Fsi:1} from the second part of Theorem \ref{thm:absP:absPth}.

\vspace{0.1in}
{\it Step 2: Proof of (2).} We proceed in a similar fashion. The multilinear estimates are more complicated. On the other hand, as we are aiming to control derivatives of $F_{si}$ only up to order 25 whereas $\calAlow$ controls derivatives of $\Alow_{i}$ up to order 30, we can be relaxed on the number of derivatives falling on $\Alow_{i}$.

For $\eps > 0$, we claim that the following estimate for ${}^{(F_{si})} \calN$ holds for $1 \leq k \leq 24$.
\begin{equation} \label{eq:pEst4Fsi:pf:4}
	\sup_{i} \nrm{{}^{(F_{si})} \calN}_{\calL^{5/4+1,2}_{s} \dot{\calS}^{k}}
	\leq \eps \nrm{F_{s}}_{\calL^{5/4,2}_{s} \dot{\calS}^{k+2}} + \calB_{\eps, k, \calAlow}(\nrm{F_{s}}_{\calL^{5/4,2}_{s} \widehat{\calS}^{k+1}}) \nrm{F_{s}}_{\calL^{5/4,2}_{s} \widehat{\calS}^{k+1}}
\end{equation}
where $\calB_{\eps, k, \calA} (r)> 0$ is non-decreasing in $r > 0$. Then for $\eps > 0$ sufficiently small, the second part of Theorem \ref{thm:absP:absPth} can be applied. Combined with the obvious bound $\nrm{F_{s}}_{\calP^{5/4} \dot{\calS}^{2}} \leq \calF$, we obtain a bound for $\nrm{F_{s}}_{\calP^{5/4} \calS^{21}}$ which can be computed by \eqref{eq:absP:smth:3}. This leads to \eqref{eq:pEst4Fsi:2}, as desired.

Let us now prove the claim. We will begin by establishing the following multilinear estimates for $\dot{S}^{k}$:
\begin{equation} \label{eq:pEst4Fsi:pf:2}
\left\{
\begin{aligned}
	& \nrm{\phi_{1} \rd_{x} \phi_{2}}_{\dot{S}^{1}} 
	\leq T^{1/2} \nrm{\phi_{1}}_{\dot{S}^{3/2} } \nrm{\phi_{2}}_{\dot{S}^{5/2}} + \nrm{\phi_{1}}_{\dot{S}^{3/2} \cap L^{\infty}_{t,x}} \nrm{\phi_{2}}_{\dot{S}^{2}}, \\
	& \nrm{\phi_{1} \phi_{2} \phi_{3}}_{\dot{S}^{1}}
	\leq T^{1/2} \sum_{\sgm} \nrm{\phi_{\sgm(1)}}_{L^{\infty}_{t,x}} \nrm{\phi_{\sgm(2)}}_{\dot{S}^{3/2}} \nrm{\phi_{\sgm(3)}}_{\dot{S}^{3/2}} \\
	& \phantom{\nrm{\phi_{1} \phi_{2} \phi_{3}}_{\dot{S}^{1}}\leq}
	+ \sum_{\sgm} \nrm{\phi_{\sgm(1)}}_{\dot{S}^{1}} \nrm{\phi_{\sgm(2)}}_{\dot{S}^{1}} \nrm{\phi_{\sgm(3)}}_{\dot{S}^{2}}.
\end{aligned}
\right.
\end{equation}
where the sum $\sum_{\sgm}$ is over all permutations $\sgm$ of $\set{1,2,3}$.

For the first inequality of \eqref{eq:pEst4Fsi:pf:2}, it suffices to prove that $\nrm{\phi_{1} \rd_{x} \phi_{2}}_{L^{\infty}_{t} \dot{H}^{1}_{x}}$ and $T^{1/2} \nrm{\Box (\phi_{1} \rd_{x} \phi_{2})}_{L^{2}_{t,x}}$ can be controlled by the right-hand side. Using H\"older and Sobolev, we can easily bound the former by $\leq C \nrm{\phi_{1}}_{L^{\infty}_{t} \dot{H}^{3/2}_{x}} \nrm{\phi_{2}}_{L^{\infty}_{t} \dot{H}^{2}_{x}}$, which is acceptable. For the latter, using Leibniz's rule for $\Box$, let us further decompose
\begin{equation*}
	T^{1/2}\nrm{\Box (\phi_{1} \rd_{x} \phi_{2})}_{L^{2}_{t,x}} 
	\leq 2 T^{1/2}\nrm{\rd_{\mu} \phi_{1} \rd_{x} \rd^{\mu} \phi_{2}}_{L^{2}_{t,x}} 
	+ T^{1/2}\nrm{\Box \phi_{1} \rd_{x} \phi_{2}}_{L^{2}_{t,x}} + T^{1/2}\nrm{\phi_{1} \rd_{x} \Box\phi_{2}}_{L^{2}_{t,x}}.
\end{equation*}

Using H\"older and the $L^{4}_{t,x}$-Strichartz, we bound the first term by $\leq C T^{1/2}\nrm{\phi_{1}}_{\dot{S}^{3/2}} \nrm{\phi_{2}}_{\dot{S}^{5/2}}$, which is good. For the second term, let us use H\"older to put $\Box \phi_{1}$ in $L^{2}_{t} L^{3}_{x}$ and the other in $L^{\infty}_{t} L^{6}_{x}$. Then by Sobolev and the definition of $\dot{S}^{k}$, this is bounded by $\nrm{\phi_{1}}_{\dot{S}^{3/2}} \nrm{\phi_{2}}_{\dot{S}^{2}}$. Finally, for the third term, we use H\"older to estimate $\phi_{1}$ in $L^{\infty}_{t,x}$ and $\rd_{x} \Box \phi_{2}$ in $L^{2}_{t,x}$, which leads to a bound $\leq \nrm{\phi_{1}}_{L^{\infty}_{t,x}} \nrm{\phi_{2}}_{\dot{S}^{2}}$. This prove the first inequality of \eqref{eq:pEst4Fsi:pf:2}. 

The second inequality of \eqref{eq:pEst4Fsi:pf:2} follows by a similar consideration, first dividing $\nrm{\cdot}_{\dot{S}^{1}}$ into $\nrm{\cdot}_{L^{\infty}_{t} \dot{H}^{1}_{x}}$ and $\nrm{\Box (\cdot)}_{L^{2}_{t,x}}$, and then using Leibniz's rule for $\Box$ to further split the latter. We leave the details to the reader.

Let us prove \eqref{eq:pEst4Fsi:pf:4} by splitting ${}^{(F_{si})} \calN$ into its quadratic part $s^{-1/2} \calO(A, \nb_{x} F_{s}) + s^{-1/2} \calO(\nb_{x} A, F_{s})$ and its cubic part $\calO(A, A, F_{s})$. For the quadratic terms, we use the first inequality of \eqref{eq:pEst4Fsi:pf:2}, Leibniz's rule, the Correspondence Principle, Lemma \ref{lem:absP:Holder4Ls} and Lemma \ref{lem:absP:algEst}. Then for $k \geq 1$ we obtain
\begin{equation} \label{eq:pEst4Fsi:pf:3}
\begin{aligned}
& \nrm{s^{-1/2} \calO(\psi_{1}, \nb_{x} \psi_{2})}_{\calL^{5/4+1, 2}_{s} \dot{\calS}^{k}} 
\leq C T^{1/2} \sum_{p=0}^{k-1} \nrm{\psi_{1}}_{\calL^{\ell_{1}, p_{1}}_{s} \dot{\calS}^{3/2+p}} \nrm{\psi_{2}}_{\calL^{\ell_{2}, p_{2}}_{s} \dot{\calS}^{3/2+k-p}} \\
& \phantom{\nrm{s^{-1/2} \calO(\psi_{1}, \nb_{x} \psi_{2})}_{\calL^{5/4+1, 2}_{s} \dot{\calS}^{k}} \leq}
+ C \nrm{\psi_{1}}_{\calL^{\ell_{1}+1/8, p_{1}}_{s} \widehat{\calS}^{k+1}}  \nrm{\psi_{2}}_{\calL^{\ell_{2}+1/8, p_{2}}_{s} \widehat{\calS}^{k+1}}
\end{aligned}
\end{equation}
where $\ell_{1} + \ell_{2} = 3/2$ and $\frac{1}{p_{1}} + \frac{1}{p_{2}} = \frac{1}{2}$. Note that we have obtained an extra weight of $s^{1/8}$ for each factor in the last term.

Let $1 \leq k \leq 24$, and apply \eqref{eq:pEst4Fsi:pf:3} with $(\psi_{1}, \ell_{1}, p_{1}) = (A, 1/4, \infty)$, $(\psi_{2}, \ell_{2}, p_{2}) = (F_{s}, 2, 5/4)$ for $s^{-1/2} \calO(A, \nb_{x} F_{s})$ and vice versa for $s^{-1/2} \calO(\nb_{x} A, F_{s})$. We then apply Lemma \ref{lem:fundEst4A} with $X = \dot{S}^{1}$, $p=\infty$ and $q=2$ to control $\nrm{A}$ in terms of $\nrm{F_{s}}$ and $\nrm{\Alow}$ (here we use the extra weight of $s^{1/8}$). Next, we estimate $\nrm{\Alow}$ that arises by $\calAlow$, which is possible since we only consider $1 \leq k \leq 24$. As a result, we obtain the following inequality:
\begin{equation*}
\begin{aligned}
	\nrm{s^{-1} & \calO(A, \nb_{x} F_{s}) + s^{-1} \calO(\nb_{x} A, F_{s})}_{\calL^{5/4+1,2}_{s} \dot{\calS}^{k}} \\
	\leq & C T^{1/2} \sum_{p=0}^{k} (\nrm{F_{s}}_{\calL^{5/4,2}_{s} \dot{\calS}^{3/2+p}} + \calAlow) \nrm{F_{s}}_{\calL^{5/4,2}_{s} \dot{\calS}^{3/2+k-p}} 
	+ C (\nrm{F_{s}}_{\calL^{5/4,2}_{s} \widehat{\calS}^{k+1}} + \calAlow) \nrm{F_{s}}_{\calL^{5/4,2}_{s} \widehat{\calS}^{k+1}}.
\end{aligned}
\end{equation*}

The last term is acceptable. All summands of the first term on the right-hand side are also acceptable, except for the cases $p=0, k$. Let us first treat the case $p=0$. For $\eps > 0$, we apply Cauchy-Schwarz to estimate
\begin{align*}
	T^{1/2}  & (\nrm{F_{s}}_{\calL^{5/4,2}_{s} \dot{\calS}^{3/2}} + \calAlow) \nrm{F_{s}}_{\calL^{5/4,2}_{s} \dot{\calS}^{k+3/2}} \\
	& \leq (\eps/2) \nrm{F_{s}}_{\calL^{5/4,2}_{s} \dot{\calS}^{k+2}} + C_{\eps} T (\nrm{F_{s}}_{\calL^{5/4,2}_{s} \widehat{\calS}^{k+1}} + \calAlow)^{2} \nrm{F_{s}}_{\calL^{5/4,2}_{s} \widehat{\calS}^{k+1}}, 
\end{align*}
%\begin{align*}
%	T^{1/2}  & (\nrm{F_{s}}_{\calL^{5/4,2}_{s} \dot{\calS}^{k+3/2}} + \calAlow) \nrm{F_{s}}_{\calL^{5/4,2}_{s} \dot{\calS}^{3/2}} \\
%	& \leq (\eps/2) \nrm{F_{s}}_{\calL^{5/4,2}_{s} \dot{\calS}^{k+2}} + (C T^{1/2} \calAlow +  C_{\eps} T \nrm{F_{s}}_{\calL^{5/4,2}_{s} \widehat{\calS}^{k+1}}^{2}) \nrm{F_{s}}_{\calL^{5/4,2}_{s} \widehat{\calS}^{k+1}},
%\end{align*}

The case $p=k$ is similar. This proves \eqref{eq:pEst4Fsi:pf:4} for the quadratic terms $s^{-1/2} \calO(A, \nb_{x} F_{s}) + s^{-1/2} \calO(\nb_{x} A, F_{s})$.

Next, let us estimate the contribution of the cubic terms $\calO(A, A, F_{s})$. Starting from the second inequality of \eqref{eq:pEst4Fsi:pf:2} and applying Leibniz's rule, the Correspondence Principle, Lemma \ref{lem:absP:Holder4Ls} and Lemma \ref{lem:absP:algEst}, we obtain the following inequality:
\begin{equation*} 
\begin{aligned}
	\nrm{\calO(\psi_{1}, \psi_{2}, \psi_{3})}_{\calL^{5/4+1,2}_{s} \dot{\calS}^{k}}
	\leq & C T^{1/2} \prod_{j=1,2,3} \nrm{\psi_{j}}_{\calL^{\ell_{j}+1/12, p_{j}}_{s} \widehat{\calS}^{k+1}} 
	+ C\prod_{j=1,2,3} \nrm{\psi_{j}}_{\calL^{\ell_{j} + 1/6, p_{j}}_{s} \widehat{\calS}^{k+1}},
	\end{aligned}
\end{equation*}
for $\ell_{1} + \ell_{2} + \ell_{3} = 7/4$ and $\frac{1}{p_{1}} + \frac{1}{p_{2}} + \frac{1}{p_{3}} =\frac{1}{2}$. Note the extra weight of $s^{1/12}$ and $s^{1/6}$ for each factor in the first and second terms on the right-hand side, respectively. 

For $1 \leq k \leq 24$, let us put $(\psi_{1}, \ell_{1}, p_{1}) = (A, 1/4, \infty)$, $(\psi_{2}, \ell_{2}, p_{2}) = (A, 1/4, \infty)$ and $(\psi_{3}, \ell_{3}, p_{3}) = (F_{s}, 5/4, 2)$ in the last inequality, and furthermore apply Lemma \ref{lem:fundEst4A} with $X = \dot{S}^{1}$, $p=\infty$ and $q=2$ (which again uses the extra weights of powers of $s$) to control $\nrm{A}$ by $\nrm{F_{s}}$ and $\nrm{\Alow}$. Then estimating $\nrm{\Alow}$ by $\calAlow$ (which again is possible since $1 \leq k \leq 24$), we finally arrive at
\begin{equation*}
	\nrm{\calO(A, A, F_{s})}_{\calL^{5/4+1, 2}_{s} \dot{\calS}^{k}} 
	\leq C (1+T^{1/2}) (\nrm{F_{s}}_{\calL^{5/4,2}_{s} \widehat{\calS}^{k+1}} + \calAlow)^{2} \nrm{F_{s}}_{\calL^{5/4,2}_{s} \widehat{\calS}^{k+1}}
\end{equation*}
which is acceptable. This proves \eqref{eq:pEst4Fsi:pf:4}. \qedhere
\end{proof}

\begin{remark} \label{rem:pEst4sptNrms}
The fixed time parabolic estimate \eqref{eq:pEst4Fsi:1} will let us estimate $\calAlow$ in \S \ref{subsec:AlowWave} in terms of $\calF, \calAlow$, \emph{despite} the fact that $\calF$ controls a smaller number of derivatives (of $F_{si}$) than does $\calAlow$ (of $\Alow_{i}$). This is due to the smoothing property of the parabolic equation satisfied by $F_{si}$. It will come in handy in \S \ref{subsec:FsWave}, as it allows us to control only a small number of derivatives of $F_{si}$ to control $\calF$. 

Accordingly, the space-time estimate \eqref{eq:pEst4Fsi:2} (to be used in \S \ref{subsec:FsWave}) needs to be proved only for a finite range of $k$, which is taken to be smaller than the number of derivatives of $\Alow_{i}$ controlled by $\calAlow$. This allows us to estimate whatever $\nrm{\Alow_{i}}$ that arises by $\calAlow$; practically, we do not have to worry about the number of derivatives falling on $\Alow_{i}$. Moreover, we are also allowed to control (the appropriate space-time norm of) less and less derivatives for $F_{s0}$ and $w_{i}$ (indeed, see \eqref{eq:pEst4Fs0:high:2} and \eqref{eq:pEst4wi:3}, respectively), as long as we control enough derivatives to carry out the analysis in \S \ref{subsec:FsWave} in the end. Again, this lets us forget about the number of derivatives falling on $\Alow_{i}$ and $F_{si}$ (resp. $\Alow_{i}$, $F_{si}$ and $F_{s0}$) while estimating the space-time norms of $F_{s0}$ and $w_{i}$.
\end{remark}

By essentially the same proof, the following difference analogue of Proposition \ref{prop:pEst4Fsi} follows.
\begin{proposition} 
Suppose $0 < T \leq 1$, and that the caloric-temporal gauge condition holds.
\begin{enumerate}
\item Let $t \in (-T, T)$. Then for any $k \geq 0$, we have
\begin{equation} \label{eq:pEst4Fsi:Diff:1}
\begin{aligned}
	& \nrm{\nb_{t,x} (\dlt F_{si})(t)}_{\calL^{5/4,\infty}_{s} \dot{\calH}^{k}_{x}(0,1]} + \nrm{\nb_{t,x} (\dlt F_{si})(t)}_{\calL^{5/4,2}_{s} \dot{\calH}^{k+1}_{x}(0,1]}  \\
	& \qquad \leq C_{k, \calF, \nrm{\rd_{t,x} \Alow(t)}_{H^{k}_{x}}} \cdot (\dlt \calF + \nrm{\rd_{t,x} (\dlt \Alow)(t)}_{H^{k}_{x}}),
\end{aligned}
\end{equation}

\item For $1 \leq k \leq 25$, we have
\begin{equation} \label{eq:pEst4Fsi:Diff:2}
	\nrm{\dlt F_{si}}_{\calL^{5/4,\infty}_{s} \dot{\calS}^{k}(0,1]} +\nrm{\dlt F_{si}}_{\calL^{5/4,2}_{s} \dot{\calS}^{k}(0,1]} 
	\leq C_{\calF, \calAlow} \cdot (\dlt \calF + \dlt \calAlow).
\end{equation}
\end{enumerate}
\end{proposition}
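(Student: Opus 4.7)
The plan is to mimic the proof of Proposition \ref{prop:pEst4Fsi} applied to the equation satisfied by $\dlt F_{si}$. By subtracting the parabolic equations $(\rd_{s} - \lap) F_{si} = {}^{(F_{si})} \calN$ and $(\rd_{s} - \lap) F'_{si} = {}^{(F'_{si})} \calN$, we obtain
\begin{equation*}
    (\rd_{s} - \lap) (\dlt F_{si}) = \dlt ( {}^{(F_{si})} \calN ),
\end{equation*}
where, by the formal Leibniz rule for $\dlt$,
\begin{equation*}
    \dlt ( {}^{(F_{si})} \calN ) = s^{-1/2} \calO(\dlt A, \nb_{x} F_{s}) + s^{-1/2} \calO(A, \nb_{x} \dlt F_{s}) + s^{-1/2} \calO(\nb_{x} \dlt A, F_{s}) + s^{-1/2} \calO(\nb_{x} A, \dlt F_{s}) + \calO(\dlt A, A, F_{s}),
\end{equation*}
where the last term stands for all three ways of placing $\dlt$ on a factor of the cubic expression (and the factors $A$ vs.\ $A'$ are not distinguished, per the convention in \S \ref{sec:notations}).

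For Part (1), I would fix $t \in (-T,T)$ and reproduce verbatim the $L^{2}_{x}$-based multilinear estimates leading to \eqref{eq:pEst4Fsi:pf:1}, but with exactly one of the factors replaced by its $\dlt$-version in each term. Each bound yields a product of norms in which one factor is a $\dlt$-quantity. Applying Lemma \ref{lem:fundEst4A} to both $A$ and $\dlt A$ (using the extra $s^{1/4}$ weights exactly as before) and grouping terms, one obtains an analogue of \eqref{eq:absP:smth:1} of the form
\begin{equation*}
    \sup_{i} \nrm{\nb_{t,x} \dlt ({}^{(F_{si})} \calN)}_{\calL^{5/4+1, 2}_{s} \dot{\calH}^{k-1}_{x}} \leq \eps \nrm{\nb_{t,x} \dlt F_{s}}_{\calL^{5/4,2}_{s} \dot{\calH}^{k+1}_{x}} + \calB_{k}( \cdots )( \dlt \calF + \nrm{\rd_{t,x} (\dlt \Alow)(t)}_{H^{k}_{x}} ),
\end{equation*}
with $\calB_k$ a non-decreasing function of the \emph{non-difference} norms controlled by $\calF$ and $\nrm{\rd_{t,x} \Alow(t)}_{H^{k}_{x}}$. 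Combining with the trivial base bound $\nrm{\nb_{t,x} \dlt F_{s}}_{\calL^{5/4,\infty}_{s} \calH^{1}_{x}} + \nrm{\nb_{t,x} \dlt F_{s}}_{\calL^{5/4,2}_{s} \calH^{2}_{x}} \leq \dlt \calF$ and invoking the smoothing part of Theorem \ref{thm:absP:absPth}, the estimate \eqref{eq:pEst4Fsi:Diff:1} follows by an inductive application mirroring the original proof.

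For Part (2), the strategy is identical: I would first derive the $\dot{S}^{1}$ multilinear estimates \eqref{eq:pEst4Fsi:pf:2} with one factor replaced by a difference, which is a routine exercise since the estimates are multilinear and the only slightly nontrivial point is that for the $\Box(\cdot)$ contribution of a $\dlt$ factor, Leibniz in $\Box$ still splits into the same four pieces. The only place requiring care is controlling $\dlt A$ in $\widehat{\calS}^{k+1}$ via Lemma \ref{lem:fundEst4A} applied to $\dlt A$, which produces the combination $\dlt \calF + \dlt \calAlow$; since $1 \leq k \leq 25$, the $\Alow$-type contributions stay within the range controlled by $\calAlow$ (and $\dlt \calAlow$ for the difference factor), exactly as in Step 2 of the original proof. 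With these in hand, one obtains an analogue of \eqref{eq:pEst4Fsi:pf:4} for $\dlt ({}^{(F_{si})} \calN)$, and applying Theorem \ref{thm:absP:absPth} with $\eps > 0$ small yields \eqref{eq:pEst4Fsi:Diff:2}.

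The main (mild) obstacle is purely bookkeeping: each multilinear estimate in the original proof now produces several terms under the Leibniz rule for $\dlt$, and one must verify that in every such term the $\dlt$-factor lands on a norm that is actually controlled by $\dlt \calF + \dlt \calAlow$ (or, after using Lemma \ref{lem:fundEst4A}, by these plus lower $\dlt$-norms closed off by the parabolic smoothing theorem). No new analytic difficulty arises, because the linear factors $A$, $\nb_{x} A$, $F_{s}$, $\nb_{x} F_{s}$ appearing alongside the $\dlt$-factor are bounded via exactly the same estimates used in Proposition \ref{prop:pEst4Fsi}, whose constants are absorbed into the $C_{k, \calF, \nrm{\rd_{t,x} \Alow(t)}_{H^{k}_{x}}}$ and $C_{\calF, \calAlow}$ on the right-hand sides.
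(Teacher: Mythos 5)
Your proposal is correct and matches the paper's intent exactly: the paper supplies no explicit argument here, stating only that the result follows ``by essentially the same proof'' as Proposition \ref{prop:pEst4Fsi}, and that is precisely what you carry out --- form the parabolic equation for $\dlt F_{si}$, expand $\dlt({}^{(F_{si})}\calN)$ by the formal Leibniz rule for $\dlt$, replace one factor at a time by its $\dlt$-version in each multilinear estimate, invoke the difference analogue of Lemma \ref{lem:fundEst4A} (which holds since $\rd_s(\dlt A_i) = \dlt F_{si}$ in the caloric gauge), and close via the smoothing part of Theorem \ref{thm:absP:absPth} from the base bound in $\dlt\calF$. Your bookkeeping observations (that the non-$\dlt$ factors are bounded by the same non-difference constants, absorbed into $C_{k,\calF,\nrm{\rd_{t,x}\Alow(t)}_{H^k_x}}$ and $C_{\calF,\calAlow}$) are exactly the right sanity checks to make.
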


\subsection{Estimates for $F_{si}$ via integration} \label{subsec:pEst4HPYM:lowEst4Fsi}
We also need some estimates for $F_{si}$ without any derivatives, which we state below. The idea of the proof is to simply integrate the parabolic equation $\rd_{s} F_{si} = \lap F_{si} + {}^{(F_{si})} \calN$ backwards from $s=1$.

\begin{proposition} \label{prop:lowEst4Fsi}
Suppose $0 < T \leq 1$, and that the caloric-temporal gauge condition holds.
\begin{enumerate}
\item Let $t \in (-T, T)$. Then we have
\begin{equation} \label{eq:lowEst4Fsi:1}
	\nrm{F_{si}(t)}_{\calL^{5/4,\infty}_{s} \calL^{2}_{x}(0,1]} + \nrm{F_{si}(t)}_{\calL^{5/4,2}_{s} \calL^{2}_{x}(0,1]}
	\leq C_{\calF, \calAlow} \cdot (\calF + \calAlow).
\end{equation}

\item We have
\begin{equation} \label{eq:lowEst4Fsi:2}
	\nrm{F_{si}}_{\calL^{5/4,\infty}_{s} \calL^{4}_{t, x}(0,1]} + \nrm{F_{si}}_{\calL^{5/4,2}_{s} \calL^{4}_{t, x}(0,1]}
	\leq C_{\calF, \calAlow} \cdot (\calF + \calAlow).
\end{equation}
\end{enumerate}
\end{proposition}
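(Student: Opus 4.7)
The strategy is to integrate the parabolic equation $\rd_s F_{si} = \lap F_{si} + {}^{(F_{si})}\calN$ backwards from $s=1$:
\begin{equation*}
F_{si}(t, s) = F_{si}(t, 1) - \int_s^1 \bigl[ \lap F_{si}(t, s') + {}^{(F_{si})}\calN(t, s') \bigr] \, ds'.
\end{equation*}
I will bound each of the two terms separately, in the fixed-time $L^2_x$ norm (for Part (1)) and in the space-time $L^4_{t,x}$ norm (for Part (2)). The key observation is that the boundary value $F_{si}(s=1) = \covDlow^\ell \Flow_{\ell i}$ is explicit in $\Alow_i$ --- namely $\lap \Alow_i - \rd_i \rd^\ell \Alow_\ell$ plus multilinear corrections of the form $\calO(\Alow,\rd_x \Alow)$ and $\calO(\Alow,\Alow,\Alow)$ --- and is therefore directly controlled by $\calAlow$, using Sobolev and the Sobolev product estimate (Lemma~\ref{lem:homSob}) in the $L^\infty_t L^2_x$ case, and the $L^4_{t,x}$-Strichartz estimate of Proposition~\ref{prop:prelim:est4SH} together with the $\dot{S}^k$ content of $\calAlow$ in the $L^4_{t,x}$ case. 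After multiplication by the finite weights $s^{1/2}$ (for Part (1)) or $s^{3/4}$ (for Part (2)), the boundary-term contribution to the left-hand sides is bounded by $C_{\calAlow}\cdot \calAlow$.

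For the integral term, the plan is to set up a Schur test. Rewriting
\begin{equation*}
s^\alpha \int_s^1 \nrm{\rd_{s'} F_{si}(s')}_X \, ds' = \int_s^1 (s/s')^\alpha \, g(s') \, \frac{ds'}{s'}, \qquad g(s') := (s')^{\alpha+1}\nrm{\rd_{s'} F_{si}(s')}_X,
\end{equation*}
with $(\alpha, X) = (1/2, L^2_x)$ for Part (1) and $(\alpha, X) = (3/4, L^4_{t,x})$ for Part (2), I can invoke Lemma~\ref{lem:SchurTest} with $\delta = \alpha > 0$ to reduce the problem to bounding $g$ in $\calL^2_s(0,1]$. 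In both cases this amounts to controlling $\rd_s F_{si}$ in a norm of shape $\calL^{9/4,2}_s X$.

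To accomplish this, I will substitute $\rd_s F_{si} = \lap F_{si} + {}^{(F_{si})}\calN$ and estimate the two pieces. The linear piece $\lap F_{si}$ is controlled by Proposition~\ref{prop:pEst4Fsi}: Part (1) at $k=1$ handles the fixed-time $L^2_x$ case, while Part (2) combined with the $L^4_{t,x}$-Strichartz estimate of Proposition~\ref{prop:prelim:est4SH} handles the space-time case. For the nonlinear piece $s^{-1/2}\calO(A,\nb_x F_s) + s^{-1/2}\calO(\nb_x A, F_s) + \calO(A,A,F_s)$, multilinear product estimates modelled on \eqref{eq:pEst4Fsi:pf:1} for Part (1), and on \eqref{eq:pEst4Fsi:pf:3} together with its cubic counterpart for Part (2), do the job, with the factors of $A$ traded for $F_{si}$ and $\Alow_i$ through Lemma~\ref{lem:fundEst4A}. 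All told, this yields bounds of the form $C_{\calF,\calAlow}\cdot (\calF + \calAlow)$.

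The main technical obstacle is the bookkeeping of $s$-weights: one must verify that the Schur-test exponent $\delta = \alpha$ is strictly positive (it is, in both parts) and that all H\"older exponents for $\calL^{\ell,p}_s$ balance when combined with Lemma~\ref{lem:fundEst4A} on the $A$-factors. Fortunately, since $\calF$ controls up to ten derivatives of $F_{si}$ and $\calAlow$ controls up to thirty-one derivatives of $\Alow_i$, there is substantial slack and none of the numerology is sharp. The $L^4_{t,x}$ estimate in Part (2) requires one additional Strichartz step to pass from $\dot{\calS}^k$ bounds on $F_{si}$ to $L^4_{t,x}$ bounds on $\lap F_{si}$, but Proposition~\ref{prop:pEst4Fsi}(2) provides exactly what is needed.
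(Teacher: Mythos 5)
Your proposal is correct and follows essentially the same route as the paper: integrate the parabolic equation backwards from $s=1$, bound the boundary term $\Flow_{si}$ directly by $\calAlow$ (Sobolev/Strichartz on the explicit polynomial in $\Alow$), and handle the $s'$-integral via the Schur test (Lemma~\ref{lem:SchurTest}), reducing to $\calL^{9/4,2}_s$-control of $\lap F_{si}$ (from $\calF$) and of ${}^{(F_{si})}\calN$ (from the multilinear product estimates with the $A$-factors removed via Lemma~\ref{lem:fundEst4A}). The only cosmetic differences are that the paper applies the Schur test separately to the $\lap F_{si}$ and $\calN$ pieces rather than to $\rd_s F_{si}$ as a whole, and uses slightly lower-order product inequalities than those in \eqref{eq:pEst4Fsi:pf:1}--\eqref{eq:pEst4Fsi:pf:3}, but neither affects the substance of the argument.
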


\begin{proof} 
In the proof, all norms will be taken on the interval $s \in (0, 1]$. Let us start with the equation
\begin{equation*}
	\rd_{s} F_{si} = \lap F_{si} + {}^{(F_{si})} \calN.
\end{equation*} 

Using the fundamental theorem of calculus, we obtain for $0 < s \leq 1$ the identity
\begin{equation} \label{eq:lowEst4Fsi:pf:1}
	F_{si}(s) = \Flow_{si} - \int_{s}^{1} s' \lap F_{si}(s') \, \frac{\ud s'}{s'} - \int_{s}^{1} s' ({}^{(F_{si})} \calN(s')) \, \frac{\ud s'}{s'}.
\end{equation}

To prove \eqref{eq:lowEst4Fsi:1} and \eqref{eq:lowEst4Fsi:2}, let us either fix $t \in (-T, T)$ and take the $\calL^{5/4,p}_{s} \calL^{2}_{x}$ norm of both sides or just take the $\calL^{5/4,p}_{s} \calL^{4}_{t,x}$ norm, respectively. We will estimate the contribution of each term on the right-hand side of \eqref{eq:lowEst4Fsi:pf:1} separately.

For the first term on the right-hand side of \eqref{eq:lowEst4Fsi:pf:1}, note the obvious estimates $\nrm{\Flow_{si}(t)}_{\calL^{5/4,p}_{s} \calL^{2}_{x}} \leq C_{p} \nrm{\Flow_{si}(t)}_{L^{2}_{x}}$ and $\nrm{\Flow_{si}}_{\calL^{5/4,p}_{s} \calL^{4}_{t,x}} \leq C_{p} \nrm{\Flow_{si}}_{L^{4}_{t,x}}$. Writing out $\Flow_{si} = \calO(\rd_{x}^{(2)} \Alow) + \calO(\Alow, \rd_{x} \Alow) + \calO(\Alow, \Alow, \Alow)$, we see that
\begin{equation*}
\sup_{t \in (-T, T)} \nrm{\Flow_{si}(t)}_{L^{2}_{x}} + \nrm{\Flow_{si}}_{L^{4}_{t,x}} \leq C \calAlow + C\calAlow^{2} + C\calAlow^{3},
\end{equation*} 
which is acceptable.

For the second term on the right-hand side of \eqref{eq:lowEst4Fsi:pf:1}, let us apply Lemma \ref{lem:SchurTest} with $p=2, \infty$ and $q =2$ to estimate
\begin{align*}
	\nrm{\int_{s}^{1} s' \lap F_{si}(t, s') \, \frac{\ud s'}{s'}}_{\calL^{5/4,p}_{s} \calL^{2}_{x}}
	\leq & \nrm{\int_{s}^{1} (s/s')^{5/4} (s')^{5/4} \nrm{\nb^{(2)}_{x} F_{si}(t, s')}_{\calL^{2}_{x}(s')} \, \frac{\ud s'}{s'}}_{\calL^{p}_{s}} \\
	\leq & C_{p} \nrm{F_{si}(t)}_{\calL^{5/4, 2}_{s} \dot{\calH}^{2}_{x}} \leq C_{p} \calF.
\end{align*}

Similarly, for $p = 2, \infty$, we can prove $\nrm{\int_{s}^{1} s' \lap F_{si}(s') \frac{\ud s'}{s'}}_{\calL^{5/4,p}_{s} \calL^{4}_{t,x}} \leq C_{p} \nrm{F_{si}}_{\calL^{5/4,2}_{s} \calL^{4}_{t} \dot{\calW}^{2,4}_{x}} \leq C_{p} \calF$. Therefore, the contribution of the second term is okay.

Finally, for the third term on the right-hand side of \eqref{eq:lowEst4Fsi:pf:1}, let us first proceed as in the previous case to reduce
\begin{equation*}
\nrm{\int_{s}^{1} s' ({}^{(F_{si})}\calN(t, s')) \frac{\ud s'}{s'}}_{\calL^{5/4,p}_{s} \calL^{2}_{x}} \leq C_{p} \nrm{{}^{(F_{si})} \calN(t)}_{\calL^{5/4+1,2}_{s} \calL^{2}_{x}}.
\end{equation*}

Recall that ${}^{(F_{si})} \calN = s^{-1/2} \calO(A, \nb_{x} F_{s}) + s^{-1/2} \calO(\nb_{x} A, F_{s}) + \calO(A, A, F_{s})$. Starting from the obvious inequalities
\begin{equation*}
	\nrm{\phi_{1} \rd_{x} \phi_{2}}_{L^{2}_{x}} \leq C \nrm{\phi_{1}}_{\dot{H}^{1}_{x}} \nrm{\phi_{2}}_{\dot{H}^{3/2}_{x}}, \quad \nrm{\phi_{1} \phi_{2} \phi_{3}}_{L^{2}_{x}} \leq C \prod_{j=1,2, 3} \nrm{\phi_{j}}_{\dot{H}^{1}_{x}},
\end{equation*}
and applying the Correspondence Principle, Lemma \ref{lem:absP:Holder4Ls} and interpolation, we obtain
\begin{align*}
	& \nrm{s^{-1/2} \calO(A, \nb_{x} F_{s}) + s^{-1/2} \calO(\nb_{x} A, F_{s})}_{\calL^{5/4+1,2}_{s} \calL^{2}_{x}}
	\leq C \nrm{\nb_{x} A}_{\calL^{1/4+1/4,\infty}_{s} \calH^{1}_{x}} \nrm{\nb_{x} F_{s}}_{\calL^{5/4,2}_{s} \calH^{1}_{x}} \\
	& \nrm{\calO(A, A, F_{s})}_{\calL^{5/4+1,2}_{s} \calL^{2}_{x}}
	\leq C \nrm{A}_{\calL^{1/4+1/4, \infty}_{s} \dot{\calH}^{1}_{x}}^{2} \nrm{F_{s}}_{\calL^{5/4,2}_{s} \dot{\calH}^{1}_{x}}.
\end{align*}

Note the extra weight of $s^{1/4}$ on each factor of $A$. This allows us to apply Lemma \ref{lem:fundEst4A} (with $q=2$) to estimate $\nrm{A}$ in terms of $\nrm{F_{s}}$ and $\nrm{\Alow}$. From the definition of $\calF$ and $\calAlow$, it then follows that $\nrm{{}^{(F_{si})} \calN(t)}_{\calL^{5/4+1, 2}_{s} \calL^{2}_{x}} \leq C (\calF+\calAlow)^{2} + C(\calF+\calAlow)^{3}$ uniformly in $t \in (-T, T)$, which finishes the proof of \eqref{eq:lowEst4Fsi:1}.

Finally, as in the previous case, we have
\begin{equation*}
\nrm{\int_{s}^{1} s' ({}^{(F_{si})}\calN(s')) \frac{\ud s'}{s'}}_{\calL^{5/4,p}_{s} \calL^{4}_{t,x}} 
\leq C_{p} \nrm{{}^{(F_{si})}\calN(s')}_{\calL^{5/4+1,2}_{s} \calL^{4}_{t,x}}.
\end{equation*}

Using the inequalities
\begin{equation*}
	\nrm{\phi_{1} \rd_{x} \phi_{2}}_{L^{4}_{t,x}} \leq C \nrm{\phi_{1}}_{L^{\infty}_{t,x}} \nrm{\phi_{2}}_{\dot{S}^{3/2}}, \quad 
	\nrm{\phi_{1} \phi_{2} \phi_{3}}_{L^{4}_{t,x}} \leq C T^{1/4} \prod_{j=1,2, 3} \nrm{\phi_{j}}_{L^{\infty}_{t} L^{12}_{x}},
\end{equation*}
and proceeding as before using the Correspondence Principle, Lemmas \ref{lem:absP:Holder4Ls}, \ref{lem:absP:algEst} and \ref{lem:fundEst4A}, it follows that $\nrm{{}^{(F_{si})} \calN}_{\calL^{5/4+1, 2}_{s} \calL^{4}_{t,x}} \leq C (\calF+\calAlow)^{2} + C(\calF+\calAlow)^{3}$. This concludes the proof of \eqref{eq:lowEst4Fsi:2}. \qedhere
\end{proof}

Again with essentially the same proof, the following difference analogue of Proposition \ref{prop:lowEst4Fsi} follows.
\begin{proposition} \label{prop:lowEst4Fsi:Diff}
Suppose $0 < T \leq 1$, and that the caloric-temporal gauge condition holds.
\begin{enumerate}
\item Let $t \in (-T, T)$. Then we have
\begin{equation} \label{eq:lowEst4Fsi:Diff:1}
	\nrm{\dlt F_{si}(t)}_{\calL^{5/4,\infty}_{s} \calL^{2}_{x}(0,1]} + \nrm{\dlt F_{si}(t)}_{\calL^{5/4,2}_{s} \calL^{2}_{x}(0,1]}
	\leq C_{\calF, \calAlow} \cdot (\dlt \calF + \dlt \calAlow).
\end{equation}

\item We have
\begin{equation} \label{eq:lowEst4Fsi:Diff:2}
	\nrm{\dlt F_{si}}_{\calL^{5/4,\infty}_{s} \calL^{4}_{t, x}(0,1]} + \nrm{\dlt F_{si}}_{\calL^{5/4,2}_{s} \calL^{4}_{t, x}(0,1]}
	\leq C_{\calF, \calAlow} \cdot (\dlt \calF + \dlt \calAlow).
\end{equation}
\end{enumerate}
\end{proposition}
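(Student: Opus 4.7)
The strategy is to mimic the proof of Proposition \ref{prop:lowEst4Fsi}, taking the difference of the two parabolic equations $\rd_{s} F_{si} = \lap F_{si} + {}^{(F_{si})} \calN$ and $\rd_{s} F'_{si} = \lap F'_{si} + {}^{(F'_{si})} \calN$ and then integrating backwards from $s=1$. This gives the identity
\begin{equation*}
	\dlt F_{si}(s) = \dlt \Flow_{si} - \int_{s}^{1} s' \lap (\dlt F_{si})(s') \, \frac{\ud s'}{s'} - \int_{s}^{1} s' \, \dlt ({}^{(F_{si})} \calN)(s') \, \frac{\ud s'}{s'},
\end{equation*}
to which we apply either the $\calL^{5/4,p}_{s} \calL^{2}_{x}$ norm at a fixed $t \in (-T,T)$, or the $\calL^{5/4,p}_{s} \calL^{4}_{t,x}$ norm, for $p \in \set{2, \infty}$.

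The first term is handled by expanding $\dlt \Flow_{si} = \calO(\rd_{x}^{(2)} (\dlt \Alow)) + \calO(\dlt \Alow, \rd_{x} \Alow) + \calO(\Alow, \rd_{x} (\dlt \Alow)) + \calO(\dlt \Alow, \Alow, \Alow)$ via the formal Leibniz rule for $\dlt$, and then estimating each factor in either $L^{2}_{x}$ or $L^{4}_{t,x}$ via Sobolev, controlling everything by $C_{\calAlow} \cdot \dlt \calAlow$. The second term, involving $\lap (\dlt F_{si})$, is treated exactly as before by Lemma \ref{lem:SchurTest} (since $\lap$ is linear), which yields a bound of $C_{p} \dlt \calF$.

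For the third and main term, I would use the schematic expansion
\begin{equation*}
	\dlt ({}^{(F_{si})} \calN) = s^{-1/2} \calO(\dlt A, \nb_{x} F_{s}) + s^{-1/2} \calO(A, \nb_{x} (\dlt F_{s})) + s^{-1/2} \calO(\nb_{x} (\dlt A), F_{s}) + s^{-1/2} \calO(\nb_{x} A, \dlt F_{s}) + \calO(\dlt A, A, F_{s}) + \calO(A, A, \dlt F_{s}),
\end{equation*}
coming from the formal Leibniz rule for $\dlt$. Each term can then be estimated by Lemma \ref{lem:absP:Holder4Ls} combined with the Correspondence Principle applied to the same product inequalities used in the proof of Proposition \ref{prop:lowEst4Fsi} (namely $\nrm{\phi_{1} \rd_{x} \phi_{2}}_{L^{2}_{x}} \leq C \nrm{\phi_{1}}_{\dot{H}^{1}_{x}} \nrm{\phi_{2}}_{\dot{H}^{3/2}_{x}}$ and $\nrm{\phi_{1} \phi_{2} \phi_{3}}_{L^{2}_{x}} \leq C \prod_{j} \nrm{\phi_{j}}_{\dot{H}^{1}_{x}}$ for part (1), and their $L^{4}_{t,x}$ analogues via Strichartz for part (2)), with exactly the same distribution of $s$-weights. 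The key is that Lemma \ref{lem:fundEst4A}, applied to $\dlt A$ using $\rd_{s} (\dlt A_{i}) = \dlt F_{si}$ and the initial value $\dlt \Alow_{i}$ at $s=1$, controls $\nrm{\dlt A}$ by $\dlt \calF + \dlt \calAlow$, while $\nrm{A}$ itself is controlled by $\calF + \calAlow$.

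The only real obstacle is bookkeeping: one must keep track of which factor is primed versus unprimed in each $\calO$-term (though, per the schematic convention, this distinction is absorbed into the constants $C_{\calF, \calAlow}$), and make sure that every occurrence of a factor of $\dlt A$ or $\dlt F_{s}$ sits in the slot controlled by $\dlt \calF + \dlt \calAlow$. Since the constants on the right-hand side are allowed to depend on $\calF$ and $\calAlow$ in a non-decreasing manner, the final estimates collapse to the desired form
\begin{equation*}
	\nrm{\dlt F_{si}(t)}_{\calL^{5/4,p}_{s} \calL^{2}_{x}} + \nrm{\dlt F_{si}}_{\calL^{5/4,p}_{s} \calL^{4}_{t,x}} \leq C_{\calF, \calAlow} \cdot (\dlt \calF + \dlt \calAlow),
\end{equation*}
which is exactly \eqref{eq:lowEst4Fsi:Diff:1} and \eqref{eq:lowEst4Fsi:Diff:2}.
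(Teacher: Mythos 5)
Your proposal is correct and coincides with the paper's intended argument. The paper does not write out a proof of Proposition \ref{prop:lowEst4Fsi:Diff}; it states only that it follows ``again with essentially the same proof'' as Proposition \ref{prop:lowEst4Fsi}, and your outline is precisely what that means: take the difference of the two FTC identities, expand $\dlt \Flow_{si}$ and $\dlt({}^{(F_{si})}\calN)$ via the formal Leibniz rule for $\dlt$, run the second (Laplacian) term through Lemma \ref{lem:SchurTest}, and estimate the nonlinear terms by the same product inequalities, Lemma \ref{lem:absP:Holder4Ls}, and Lemma \ref{lem:fundEst4A} applied to $\dlt A$ (using $\rd_s(\dlt A_i)=\dlt F_{si}$ under the caloric gauge), always placing the single $\dlt$-factor in the slot that produces $\dlt\calF+\dlt\calAlow$ while the remaining factors are absorbed into $C_{\calF,\calAlow}$.
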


\subsection{Parabolic estimates for $F_{s0}$} \label{subsec:pEst4Fs0}
In this subsection, we will study the parabolic equation \eqref{eq:covParabolic4w0} satisfied by $F_{s0} = -w_{0}$. Let us define
\begin{equation*} 
	{}^{(F_{s0})} \calN := (\rd_{s} - \lap) F_{s0} = {}^{(F_{s0})} \calN_{\forcing} + {}^{(F_{s0})} \calN_{\linear} 
\end{equation*}
where
\begin{align*}
	{}^{(F_{s0})}\calN_{\linear} &= 2 s^{-1/2} \LieBr{A^\ell}{\nb_\ell F_{s0}} + s^{-1/2} \LieBr{\nb^\ell A_\ell}{F_{s0}} + \LieBr{A^{\ell}}{\LieBr{A_{\ell}}{F_{s0}}}, \\
	{}^{(F_{s0})}\calN_{\forcing} &=2 s^{-1/2} \LieBr{\tensor{F}{_{0}^{\ell}}}{F_{s \ell}}.
\end{align*}

Our first proposition for $F_{s0}$ is an {\it a priori} parabolic estimate for $\calE(t)$, which requires a smallness assumption of some sort\footnote{In our case, as we normalized the $s$-interval to be $[0,1]$, we will require directly that $\calF+\calAlow$ is sufficiently small. On the other hand, we remark that this proposition can be proved just as well by taking the length of the $s$-interval to be sufficiently small.}. 
%We remark that combining this proposition with its difference analogue Proposition \ref{prop:pEst4Fs0:low:Diff} immediately implies Proposition \ref{prop:est4Fs0:low}.

\begin{proposition} [Estimate for $\calE$] \label{prop:pEst4Fs0:low}
Suppose that the caloric-temporal gauge condition holds, and furthermore that $\calF + \calAlow < \dlt_{E}$ where $\dlt_{E} > 0$ is a sufficiently small constant. Then 
\begin{equation} \label{eq:pEst4Fs0:low:1}
	\sup_{t \in (-T, T)}\calE(t) \leq C_{\calF, \calAlow} \cdot (\calF + \calAlow)^{2},
\end{equation}
where $C_{\calF, \calAlow} = C(\calF, \calAlow)$ can be chosen to be continuous and non-decreasing with respect to both arguments.
\end{proposition}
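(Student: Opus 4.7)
The plan is to apply the abstract parabolic theory of \S \ref{subsec:prelim:absPth} to the covariant parabolic equation for $F_{s0}$, namely
\[
	(\rd_s - \lap) F_{s0} = {}^{(F_{s0})}\calN_{\linear} + {}^{(F_{s0})}\calN_{\forcing},
\]
exploiting the crucial vanishing datum $F_{s0}(t, s=0) = - w_0(t, s=0) = 0$ (which follows since $A_\mu(s=0)$ solves \eqref{eq:hyperbolicYM}). I will work at each fixed $t \in (-T,T)$ with $X = L^2_x$ and weight $\ell = 1$, which matches the associated $s$-weight of $F_{s0}$, and iterate up to three derivatives.

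First, I will bound the \emph{linear} part. Starting from product inequalities of the form $\nrm{\phi_1 \rd_x \phi_2}_{L^2_x} \le C\nrm{\phi_1}_{\dot{H}^1_x}\nrm{\phi_2}_{\dot{H}^{3/2}_x}$ and their trilinear analogues, applying the Correspondence Principle, Lemma \ref{lem:absP:Holder4Ls}, and Gagliardo--Nirenberg, then using Lemma \ref{lem:fundEst4A} (with a small extra $s$-weight on the $A$-factor) to trade each $A$ for $F_s$ and $\Alow$, I expect to get, for $p=1,2$ and $m=1,2,3$,
\[
	\nrm{{}^{(F_{s0})}\calN_{\linear}(t)}_{\calL^{2,p}_s \dot{\calH}^{m-1}_x} \leq C_{\calF,\calAlow}\cdot(\calF + \calAlow)\cdot \nrm{F_{s0}(t)}_{\calP^1 \calH^m_x}.
\]
The smallness hypothesis $\calF + \calAlow < \dlt_E$ will allow these contributions to be absorbed into the left-hand side in the parabolic estimate.

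Second, for the \emph{forcing} nonlinearity ${}^{(F_{s0})}\calN_{\forcing} = -2 s^{-1/2}[\tensor{F}{_0^\ell}, F_{s\ell}]$, I will decompose $F_{0\ell}$ using $F_{0\ell} = \rd_0 A_\ell - \rd_\ell A_0 + \LieBr{A_\ell}{A_0}$ together with the caloric-temporal identities $A_i(s) = \Alow_i - \int_s^1 F_{si}\,ds'$ and $A_0(s) = -\int_s^1 F_{s0}\,ds'$. This splits $F_{0\ell} = F_{0\ell}^{(\mathrm{I})} + F_{0\ell}^{(\mathrm{II})}$, where the piece $F_{0\ell}^{(\mathrm{I})}$ depends only on $\Alow_i$ and $F_{si}$ (via $\rd_0 A_\ell$), whereas $F_{0\ell}^{(\mathrm{II})}$ collects every term involving $F_{s0}$ (through $\rd_\ell A_0$ and the commutator $\LieBr{A_\ell}{A_0}$). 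The contribution of $F_{0\ell}^{(\mathrm{I})}$ is estimated, via multilinear estimates analogous to those underlying Lemma \ref{lem:fundEst4F0i}, by $C_{\calF,\calAlow}\cdot(\calF + \calAlow)^2$ and plays the role of the datum $D$ in Theorem \ref{thm:absP:absPth}(1). The contribution of $F_{0\ell}^{(\mathrm{II})}$ is linear in $F_{s0}$ with a coefficient bounded by $C_{\calF,\calAlow}(\calF + \calAlow)$, again absorbable under the smallness assumption.

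With the nonlinearity bound
\[
	\nrm{{}^{(F_{s0})}\calN(t)}_{\calL^{2,1}_s L^2_x} + \nrm{{}^{(F_{s0})}\calN(t)}_{\calL^{2,2}_s L^2_x} \leq C_{\calF,\calAlow}\,(\calF+\calAlow)\nrm{F_{s0}(t)}_{\calP^1 \calH^1_x} + C_{\calF,\calAlow}\,(\calF+\calAlow)^2,
\]
I will close the $m=1$ estimate by invoking the parabolic energy estimate \eqref{eq:absP:pEst:1} at $\ell=1$ (the boundary term $s_1^1\nrm{F_{s0}(s_1)}_{L^2_x}$ vanishes in the limit $s_1 \to 0$ thanks to $F_{s0}|_{s=0}=0$), controlling the benign $(\ell-\ell_0)$ lower-order term on the right by a Gronwall step as in the proof of Theorem \ref{thm:absP:absPth}(1), and taking $\dlt_E$ small enough to absorb the $F_{s0}$-dependent part of the nonlinearity. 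The higher derivative cases $m=2,3$ follow by iterating the parabolic smoothing estimate \eqref{eq:absP:pEst:2} at $\ell=1$, together with the analogous higher-derivative multilinear bounds, using the already-established control of lower-order derivatives of $F_{s0}$. The resulting constant $C_{\calF,\calAlow}$ is then clearly continuous and non-decreasing by inspection of the argument.

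The main obstacle I expect is the careful bookkeeping in the decomposition $F_{0\ell} = F_{0\ell}^{(\mathrm{I})} + F_{0\ell}^{(\mathrm{II})}$: every factor of $F_{s0}$ that appears on the right must be paired with a coefficient small in $(\calF+\calAlow)$, and this requires the weights to balance exactly so that Lemma \ref{lem:fundEst4A} can be applied with a strictly positive extra power of $s$. In particular, the trilinear term $\LieBr{A^\ell}{\LieBr{A_\ell}{F_{s0}}}$ in ${}^{(F_{s0})}\calN_{\linear}$ and the commutator $\LieBr{A_\ell}{A_0}$ piece of $F_{0\ell}^{(\mathrm{II})}$ (after being multiplied against $F_{s\ell}$) require the tightest accounting, since two or three $A$ (or $A_0$) factors must all be reduced to $F_s$ and $\Alow$ through Lemmas \ref{lem:fundEst4A} and \ref{lem:fundEst4A0} while still leaving enough positive $s$-weight in the integrand for the application of Lemma \ref{lem:absP:Holder4Ls}.
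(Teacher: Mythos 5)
Your overall strategy — apply the abstract parabolic theory to the equation for $F_{s0}$, exploiting $F_{s0}|_{s=0} = 0$, split the nonlinearity, extract a small coefficient in front of every $F_{s0}$-dependent term, and absorb by the smallness of $\calF + \calAlow$ — is indeed the right one, and your decomposition of the forcing term $F_{0\ell} = F_{0\ell}^{(\mathrm{I})} + F_{0\ell}^{(\mathrm{II})}$ is a reasonable hands-on substitute for invoking Lemma \ref{lem:fundEst4F0i}. However, there is a genuine gap in the place where you set up the parabolic energy estimate: you propose to work with $\psi = F_{s0}$, $X = L^2_x$ and weight $\ell = 1$, whereas $L^2_x$ has $\ell_0 = 3/4$. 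Part (1) of Theorem \ref{thm:absP:absPth} is formulated only at $\ell = \ell_0$, and this is not cosmetic: at $\ell = \ell_0$ the lower-order term $C_X(\ell - \ell_0)\nrm{\psi}_{\calL^{\ell,2}_s\calX}$ in \eqref{eq:absP:pEst:1} vanishes identically, which is what makes the Gronwall closure in the proof of Theorem \ref{thm:absP:absPth}(1) possible at all. At $\ell = 1 > 3/4$ that term has a fixed nonzero coefficient $1/4$, and the norm $\nrm{F_{s0}}_{\calL^{1,2}_s\calL^2_x}$ is \emph{not} controlled by $\nrm{F_{s0}}_{\calL^{1,\infty}_s\calL^2_x}$ (the implicit $s$-integral $\int_0^{s_0}\frac{ds}{s}$ diverges); and a Gronwall argument on $\sup_{s\le\subr}s^{1/4}\nrm{F_{s0}(s)}_{L^2_x}$ would require $\int_0^{s_0}C(s)^p\frac{ds}{s}<\infty$ for the kernel, which fails because your $C(s)$ would be the constant $1/4$. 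So the step you describe as ``controlling the benign $(\ell-\ell_0)$ lower-order term by a Gronwall step as in the proof of Theorem \ref{thm:absP:absPth}(1)'' does not go through as stated.

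The paper's workaround is precisely to change variables to $E := |\rd_x|^{-1/2}F_{s0}$, which satisfies $(\rd_s - \lap)E = s^{1/4}\abs{\nb_x}^{-1/2}({}^{(F_{s0})}\calN)$, so that the natural weight drops to $\ell_0 = 3/4$ (consistent with $E \aeq s^{-3/4}$ by associated $s$-weight bookkeeping). Then $\ell - \ell_0 = 0$, the troublesome lower-order term disappears, the boundary term is $\nrm{E(0)}_{L^2_x} = 0$, and Theorem \ref{thm:absP:absPth}(1) applies verbatim. The higher-derivative claims ($m=2,3$) then follow from Theorem \ref{thm:absP:absPth}(2), which, unlike Part (1), is stated for general $\ell$ because there the $(\ell-\ell_0+1/2)$ coefficient is simply absorbed into the non-decreasing function $\widetilde{\calB}_m$. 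If you want to keep your $F_{s0}$-based formulation rather than introducing $E$, you would have to supply a separate argument for why $\nrm{F_{s0}}_{\calL^{1,2}_s\calL^2_x}$ is controlled (e.g.\ by integrating the heat equation from $s=0$ and exploiting the vanishing datum and the quadratic structure of the source), but that amounts to redoing the abstract theory in a special case; changing to the $\dot{H}^{-1/2}_x$-based unknown is cleaner.
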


\begin{proof} 
Let us fix $t \in (-T, T)$. Define $E := \abs{\rd_{x}}^{-1/2} F_{s0}$, where $\abs{\rd_{x}}^{a} := (- \lap)^{a/2}$ is the fractional integration operator. From the parabolic equation for $F_{s0}$, we can derive the following parabolic equation for $E$:
\begin{equation*}
	(\rd_{s} - \lap) E =  s^{1/4} \abs{\nb_{x}}^{-1/2} ({}^{(F_{s0})}\calN),
\end{equation*}
where $\abs{\nb_{x}}^{a} := s^{a/2} \abs{\rd_{x}}^{a}$ is the p-normalization of $\abs{\rd_{x}}^{a}$. The idea is to work with the new variable $E$, and then translate to the corresponding estimates for $F_{s0}$ to obtain \eqref{eq:pEst4Fs0:low:1}.

We begin by making two claims. First, for every small $\eps, \eps' > 0$, by taking $\dlt_{E} >0$ sufficiently small, the following estimate holds for $p=1,2$ and $0 < \subr \leq 1$:
\begin{equation} \label{eq:pEst4Fs0:low:pf:1}
	\nrm{{}^{(F_{s0})} \calN}_{\calL^{2,p}_{s} \dot{\calH}^{-1/2}_{x}(0, \subr]} 
	\leq \eps \nrm{E}_{\calP^{3/4} \dot{\calH}^{2}_{x}(0, 1]} + C_{\calF, \calAlow} \cdot \big( \nrm{s^{1/4-\eps'}  E}_{\calL^{3/4,2}_{s} \dot{\calH}^{1}_{x}(0, \subr]} + (\calF + \calAlow)^{2} \big).
\end{equation}

Second, for $k = 1,2$, the following estimate holds.
\begin{equation} \label{eq:pEst4Fs0:low:pf:5}
	\nrm{{}^{(F_{s0})} \calN}_{\calL^{2,2}_{s} \dot{\calH}^{k-1/2}_{x}(0, 1]} 
	\leq \eps \nrm{E}_{\calP^{3/4} \dot{\calH}^{k+2}_{x}(0, 1]} + C_{\calF, \calAlow} \cdot \nrm{E}_{\calP^{3/4} \calH^{k+1}_{x}(0, 1]} + C_{\calF, \calAlow} \cdot (\calF + \calAlow)^{2}.
\end{equation}

Assuming these claims, we can quickly finish the proof. Note that $E = 0$ at $s=0$, as $F_{s0} = 0$ there, and that the left-hand side of \eqref{eq:pEst4Fs0:low:pf:1} is equal to $\nrm{s^{1/4} \abs{\nb_{x}}^{-1/2} ({}^{(F_{s0})} \calN)}_{\calL^{3/4+1,p}_{s} \calL^{2}_{x}(0, \subr]}$. Applying the first part of Theorem \ref{thm:absP:absPth}, we derive $\nrm{E}_{\calP^{3/4} \calH^{2}_{x}(0,1]} \leq C_{\calF, \calAlow} \cdot (\calF + \calAlow)^{2}$. Using the preceding estimate and \eqref{eq:pEst4Fs0:low:pf:1}, an application of the second part of Theorem \ref{thm:absP:absPth} then shows that $\nrm{E}_{\calP^{3/4} \calH^{4}_{x}(0,1]} \leq C_{\calF, \calAlow} \cdot (\calF + \calAlow)^{2}$. Finally, as $E = s^{1/4} \abs{\nb_{x}}^{-1/2} F_{s0}$, it is easy to see that $\calE(t) \leq \nrm{E}_{\calP^{3/4} \calH^{4}_{x}(0,1]}$, from which \eqref{eq:pEst4Fs0:low:1} follows.

To establish \eqref{eq:pEst4Fs0:low:pf:1} and \eqref{eq:pEst4Fs0:low:pf:5}, we split ${}^{(F_{s0})} \calN$ into ${}^{(F_{s0})} \calN_{\forcing}$ and ${}^{(F_{s0})} \calN_{\linear}$.

\pfstep{- Case 1: The contribution of ${}^{(F_{s0})} \calN_{\forcing}$}
In this case, we will work on the whole interval $(0, 1]$. Let us start with the product inequality
\begin{equation*}
	\nrm{\phi_{1} \phi_{2}}_{\dot{H}^{-1/2}_{x}} \leq C \nrm{\phi_{1}}_{\dot{H}^{1/2}_{x}} \nrm{\phi_{2}}_{\dot{H}^{1/2}_{x}},
\end{equation*}
which follows from Lemma \ref{lem:homSob}. Using Leibniz's rule, the Correspondence Principle and Lemma \ref{lem:absP:Holder4Ls}, we obtain for $0 \leq k \leq 2$
\begin{equation*} 
	\nrm{\calO(\psi_{1}, \psi_{2})}_{\calL^{2,p}_{s} \dot{\calH}^{k-1/2}_{x}}
	\leq C \sum_{j=0}^{k} \nrm{\psi_{1}}_{\calL^{3/4,r}_{s} \dot{\calH}^{j+1/2}_{x}} \nrm{\psi_{2}}_{\calL^{5/4,2}_{s} \dot{\calH}^{k-j+1/2}_{x}}.
\end{equation*}
where $\frac{1}{r} = \frac{1}{p} - \frac{1}{2}$. Let us put $\psi_{1} = F_{0\ell}$, $\psi_{2} = F_{s \ell}$. 

In order to estimate $\nrm{F_{0 \ell}}_{\calL^{3/4,r}_{s} \dot{\calH}^{1/2}_{x}}$ or $\nrm{F_{0 \ell}}_{\calL^{3/4,r}_{s} \dot{\calH}^{j+1/2}_{x}}$ with $j >0$, we apply \eqref{eq:fundEst4F0i:1} or (an interpolation of) \eqref{eq:fundEst4F0i:2} of Lemma \ref{lem:fundEst4F0i}, respectively. We then estimate $\nrm{F_{s\ell}}, \nrm{\Alow_{\ell}}$ which arise by $\calF, \calAlow$, respectively. (We remark that this is possible as $0 \leq k \leq 2$.) 

Next, to estimate $\nrm{F_{s\ell}}_{\calL^{5/4,2}_{s} \dot{\calH}^{1/2}_{x}}$, we first note, by interpolation, that it suffices to control $\nrm{F_{s\ell}}_{\calL^{5/4,2}_{s} \calL^{2}_{x}}$ and $\nrm{F_{s\ell}}_{\calL^{5/4,2}_{s} \dot{\calH}^{1}_{x}}$, to which we then apply Propositions \ref{prop:lowEst4Fsi} and \ref{prop:pEst4Fsi}, respectively. On the other hand, for $\nrm{F_{s\ell}}_{\calL^{5/4,2}_{s} \dot{\calH}^{k-j+1/2}_{x}}$ with $j < k$, we simply apply (after an interpolation) Proposition \ref{prop:pEst4Fsi}. Observe that all of $\nrm{\Alow}$ which arise can be estimated by $\calAlow$. As a result, for $1 \leq p \leq 2$ and $0 \leq k \leq 2$, we obtain 
\begin{equation*}
\begin{aligned}
	\nrm{{}^{(F_{s0})} \calN_{\forcing}}_{\calL^{2,p}_{s} \dot{\calH}^{k-1/2}_{x}(0,1]} 
	\leq &C_{\calF, \calAlow} \cdot (\calF + \calAlow) \sum_{j=0}^{k} (\nrm{F_{s0}}_{\calL^{1,2}_{s} \dot{\calH}^{1+j}_{x}(0,1]} + \nrm{F_{s0}}_{\calL^{1,2}_{s} \dot{\calH}^{3/2+j}_{x}(0,1]}) \\
	& + C_{\calF, \calAlow} \cdot (\calF + \calAlow)^{2}.
\end{aligned}
\end{equation*}

As $E = \abs{\rd_{x}}^{-1/2} F_{s0}$, note that 
\begin{equation*}
	\nrm{F_{s0}}_{\calL^{1,2}_{s} \dot{\calH}^{1+j}_{x}(0,1]} + \nrm{F_{s0}}_{\calL^{1,2}_{s} \dot{\calH}^{3/2+j}_{x}(0,1]} 
	= \nrm{E}_{\calL^{3/4,2}_{s} \dot{\calH}^{3/2+j}_{x}(0,1]} +\nrm{E}_{\calL^{3/4,2}_{s} \dot{\calH}^{2+j}_{x}(0,1]}.
\end{equation*}

Note furthermore that the right-hand side is bounded by $\nrm{E}_{\calP^{3/4} \calH^{2+j}_{x}(0,1]}$. Given $\eps > 0$, by taking $\dlt_{E} > 0$ sufficiently small (so that $\calF + \calAlow$ is sufficiently small), we obtain for $k=0$, $p=1,2$ 
\begin{equation*}
	\nrm{{}^{(F_{s0})} \calN_{\forcing}}_{\calL^{2,p}_{s} \dot{\calH}^{-1/2}_{x}(0,1]} 
	\leq  \eps \nrm{E}_{\calP^{3/4} \calH^{2}_{x}(0,1]} + C_{\calF, \calAlow} \cdot (\calF + \calAlow)^{2},
\end{equation*}
and for $k=1,2$ (taking $p=2$)
\begin{equation*}
	\nrm{{}^{(F_{s0})} \calN_{\forcing}}_{\calL^{2,2}_{s} \dot{\calH}^{k-1/2}_{x}(0,1]} 
	\leq  \eps \nrm{E}_{\calP^{3/4} \dot{\calH}^{k+2}_{x}(0,1]} + \eps \nrm{E}_{\calP^{3/4} \calH^{k+1}_{x}(0,1]} + C_{\calF, \calAlow} \cdot (\calF + \calAlow)^{2},
\end{equation*}
both of which are acceptable.

\pfstep{- Case 2: The contribution of ${}^{(F_{s0})} \calN_{\linear}$}
Let $\subr \in (0, 1]$; we will work on $(0, \subr]$ in this case. We will see that for this term, no smallness assumption is needed. 

Let us start with the inequalities
\begin{equation*}
%\left\{
\begin{aligned}
& \nrm{\phi_{1} \phi_{2}}_{\dot{H}^{-1/2}_{x}} \leq C \nrm{\phi_{1}}_{\dot{H}^{3/2}_{x} \cap L^{\infty}_{x}} \nrm{\phi_{2}}_{\dot{H}^{-1/2}_{x}}, \\
& \nrm{\phi_{1} \phi_{2}}_{\dot{H}^{-1/2}_{x}} \leq C \nrm{\phi_{1}}_{\dot{H}^{1/2}_{x}} \nrm{\phi_{2}}_{\dot{H}^{1/2}_{x}}, \\
& \nrm{\phi_{1} \phi_{2} \phi_{3}}_{\dot{H}^{-1/2}_{x}} \leq C \nrm{\phi_{1}}_{\dot{H}^{1}_{x}} \nrm{\phi_{2}}_{\dot{H}^{1/2}_{x}} \nrm{\phi_{3}}_{\dot{H}^{1}_{x}}.
\end{aligned}
%\right.
\end{equation*}

To prove the first inequality, note that it is equivalent to the product estimate $(\dot{H}^{3/2}_{x} \cap L^{\infty}_{x}) \cdot \dot{H}^{1/2}_{x} \subset \dot{H}^{1/2}_{x}$ by duality, which in turn follows from interpolation between $(\dot{H}^{3/2}_{x} \cap L^{\infty}_{x}) \cdot L^{2}_{x} \subset L^{2}_{x}$ and $(\dot{H}^{3/2}_{x} \cap L^{\infty}_{x}) \cdot \dot{H}^{1}_{x} \subset \dot{H}^{1}_{x}$. On the other hand, the second inequality was already used in the previous step. Finally, the third inequality is an easy consequences of the Hardy-Littlewood-Sobolev fractional integration $L^{3/2}_{x} \subset \dot{H}^{-1/2}_{x}$, H\"older and Sobolev.

Let $\eps' > 0$. Using the preceding inequalities, along with the Correspondence Principle and Lemma \ref{lem:absP:Holder4Ls}, we obtain the following inequalities for $p=1,2$ on $(0, \subr]$:
\begin{equation} \label{eq:pEst4Fs0:low:pf:4}
\left\{
\begin{aligned}
	& \nrm{s^{-1/2} \calO(\psi_{1}, \nb_{x} \psi_{2})}_{\calL^{2, p}_{s} \dot{\calH}^{-1/2}_{x}} + \nrm{s^{-1/2} \calO(\nb_{x} \psi_{1}, \psi_{2})}_{\calL^{2, p}_{s} \dot{\calH}^{-1/2}_{x}} \\
	& \quad \leq C \subr^{\eps'} \nrm{\psi_{1}}_{\calL^{1/4,\infty}_{s} (\dot{\calH}^{3/2}_{x} \cap \calL^{\infty}_{x})} \nrm{s^{1/4-\eps'} \, \psi_{2}}_{\calL^{1,2}_{s} \dot{\calH}^{1/2}_{x}}, \\
	& \nrm{\calO(\psi_{1}, \psi_{2}, \psi_{3})}_{\calL^{2, p}_{s} \dot{\calH}^{-1/2}_{x}}
	\leq C \subr^{\eps'} \nrm{\psi_{1}}_{\calL^{1/4+1/8, \infty}_{s} \dot{\calH}^{1}_{x}} \nrm{s^{1/4-\eps'} \, \psi_{2}}_{\calL^{1, 2}_{s} \dot{\calH}^{1/2}_{x}} \nrm{\psi_{3}}_{\calL^{1/4+1/8, \infty}_{s} \dot{\calH}^{1}_{x}}.
\end{aligned}
\right.
\end{equation}

We remark that the factors of $\subr^{\eps'}$, which can be estimated by $\leq 1$, arise due to an application of H\"older for $\calL^{\ell,p}_{s}$ (Lemma \ref{lem:absP:Holder4Ls}) in the case $p=1$. Taking $\psi_{1} = A$, $\psi_{2} = F_{s0}$, $\psi_{3}= A$ and using Lemmas \ref{lem:absP:algEst}, \ref{lem:fundEst4A} and the fact that $\nrm{s^{1/4-\eps'} F_{s0}}_{\calL^{1,2}_{s} \dot{\calH}^{1/2}_{x}} = \nrm{s^{1/4-\eps'} E}_{\calL^{3/4,2}_{s} \dot{\calH}^{1}_{x}}$, we see that
\begin{equation*} 
	\nrm{{}^{(F_{s0})} \calN_{\linear}}_{\calL^{2, p}_{s} \dot{\calH}^{-1/2}_{x} (0, \subr]} 
	\leq C_{\calF, \calAlow} \cdot \nrm{s^{1/4-\eps'} \, E}_{\calL^{3/4,2}_{s} \dot{\calH}^{1}_{x}(0, \subr]}.
\end{equation*}
for $p=1,2$. Combining this with Case 1, \eqref{eq:pEst4Fs0:low:pf:1} follows. 

Proceeding similarly, but this time applying Leibniz's rule to \eqref{eq:pEst4Fs0:low:pf:4}, choosing $p=2$ and $\subr =1$, we obtain for $k=1,2$
\begin{equation*}
	\nrm{{}^{(F_{s0})} \calN_{\linear}}_{\calL^{2,2}_{s} \dot{\calH}^{k-1/2}_{x}(0,1]} 
	\leq C_{\calF, \calAlow} \cdot \nrm{E}_{\calP^{3/4} \calH^{k+1}_{x} (0,1]},
\end{equation*}
(we estimated $s \leq 1$) from which, along with the previous case, \eqref{eq:pEst4Fs0:low:pf:5} follows. \qedhere
\end{proof}

Our next proposition for $F_{s0}$ states that once we have a control of $\calE(t)$, we can control higher derivatives of $F_{s0}$ \emph{without any smallness assumption}. 

\begin{proposition} [Parabolic estimates for $F_{s0}$] \label{prop:pEst4Fs0:high}
Suppose $0 < T \leq 1$, and that the caloric-temporal gauge condition holds.

\begin{enumerate}
\item Let $t \in (-T, T)$. Then for $m \geq 4$, we have
\begin{equation} \label{eq:pEst4Fs0:high:1}
\begin{aligned}
	& \nrm{F_{s0}(t)}_{\calL^{1,\infty}_{s} \dot{\calH}_{x}^{m-1}(0,1]} + \nrm{F_{s0}(t)}_{\calL^{1,2}_{s} \dot{\calH}_{x}^{m}(0,1]} \\
	& \qquad \leq C_{\calF, \nrm{\rd_{t,x} \Alow(t)}_{H^{m-2}_{x}}} \cdot \bb( \calE(t) +  (\calF + \nrm{\rd_{t,x} \Alow(t)}_{H^{m-2}_{x}})^{2} \bb).
\end{aligned}
\end{equation}

In particular, for $1 \leq m \leq 31$, we have
\begin{equation} \label{eq:pEst4Fs0:high:3}
	\nrm{F_{s0}(t)}_{\calL^{1,\infty}_{s} \dot{\calH}_{x}^{m-1}(0,1]} + \nrm{F_{s0}(t)}_{\calL^{1,2}_{s} \dot{\calH}_{x}^{m}(0,1]} 
	\leq C_{\calF, \calAlow} \cdot (\calE(t) + (\calF + \calAlow)^{2}).
\end{equation}

\item For $1 \leq m \leq 21$, we have
\begin{equation} \label{eq:pEst4Fs0:high:2}
	\nrm{F_{s0}}_{\calL^{1,\infty}_{s} \calL^{2}_{t} \dot{\calH}^{m-1}_{x}(0,1]} + \nrm{F_{s0}}_{\calL^{1,2}_{s} \calL^{2}_{t} \dot{\calH}^{m}_{x}(0,1]} 
	\leq C_{\calF, \calAlow} \cdot (\calE + (\calF + \calAlow)^{2}).
\end{equation}
\end{enumerate}
\end{proposition}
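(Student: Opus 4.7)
The strategy is to apply the smoothing estimate of Theorem \ref{thm:absP:absPth} part (2), with the base level of derivatives supplied by the definition of $\calE(t)$. In other words, $\calE(t)$ already gives us control of $F_{s0}(t)$ in $\calP^{1}\calH^{3}_{m_{0}}(0,1]$ (for a suitable base $m_{0}$ tuned to what $\calE$ controls), and we propagate this estimate up the derivative ladder using the parabolic equation $(\rd_{s}-\lap)F_{s0} = {}^{(F_{s0})}\calN_{\forcing} + {}^{(F_{s0})}\calN_{\linear}$. For Part (1) we work at fixed $t$ with the homogeneous norm $X = L^{2}_{x}$ (so $\ell_{0}=3/4$) and weight $\ell = 1$; for Part (2) we work with $X = L^{2}_{t,x}$ (so $\ell_{0}=1$) and again $\ell = 1$, both of which satisfy the parabolic energy and smoothing estimates by Proposition \ref{prop:absP:application}. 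The associated $s$-weight of $s^{-1}$ for $F_{s0}$ (see \S \ref{subsec:assocWght}) is precisely reflected in the choice $\ell = 1$.

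The heart of the matter is to verify the nonlinear hypothesis \eqref{eq:absP:smth:1} for ${}^{(F_{s0})}\calN$. I will split it as ${}^{(F_{s0})}\calN = {}^{(F_{s0})}\calN_{\forcing} + {}^{(F_{s0})}\calN_{\linear}$. For the forcing term $-2s^{-1/2}\LieBr{\tensor{F}{_{0}^{\ell}}}{F_{s\ell}}$, I would apply a product estimate in $\dot{H}^{m-2}_{x}$ (or $L^{2}_{t}\dot{H}^{m-2}_{x}$), then bound the $F_{0i}$-factor by Lemma \ref{lem:fundEst4F0i} (or its $L^{4}_{t,x}$ variant for Part (2), via Strichartz and H\"older in time), and the $F_{si}$-factor by Proposition \ref{prop:pEst4Fsi}. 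All occurrences of $\Alow$ that arise are estimated by $\calAlow$, which is allowed as long as $m-2 \leq 29$ (so $m \leq 31$ in Part (1), and an analogous restriction in Part (2)). Crucially, the forcing contribution produces no factor of $F_{s0}$ and gives the quadratic term $(\calF + \calAlow)^{2}$ (or $(\calF + \nrm{\rd_{t,x}\Alow(t)}_{H^{m-2}_{x}})^{2}$) appearing on the right-hand side of \eqref{eq:pEst4Fs0:high:1}--\eqref{eq:pEst4Fs0:high:2}.

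For the linear term ${}^{(F_{s0})}\calN_{\linear}$, which is linear in $F_{s0}$ with coefficients built out of $A$, I would use the schematic form $s^{-1/2}\calO(A, \nb_{x} F_{s0}) + s^{-1/2}\calO(\nb_{x} A, F_{s0}) + \calO(A, A, F_{s0})$ and apply Leibniz, H\"older for $\calL^{\ell,p}_{s}$ (Lemma \ref{lem:absP:Holder4Ls}), Gagliardo--Nirenberg (Lemma \ref{lem:absP:algEst}), and the Correspondence Principle. Crucially, by allocating an extra $s^{\eps'}$-weight to each $A$-factor before invoking Lemma \ref{lem:fundEst4A}, we can trade the $A$-norms for $\nrm{F_{si}}$ and $\nrm{\Alow}$. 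In this step, unlike in Proposition \ref{prop:pEst4Fs0:low}, we do \emph{not} need smallness, because we allow the structural constant $\calB_{m}$ in \eqref{eq:absP:smth:1} to depend arbitrarily on $(\calF, \calAlow)$ (or the relevant $\Alow$ norm in Part (1)); the extra derivative(s) of $F_{s0}$ coming from $\nb_{x}$ (and the extra $s^{\eps'}$-weight) will be exactly what is needed so that the $F_{s0}$-factor enters at one derivative lower than the output norm, allowing it to be absorbed into the inductive $\calB_{m}$-term of \eqref{eq:absP:smth:1}.

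The main obstacle will be the careful bookkeeping of the derivative count in the linear term: we must verify at each step that the $F_{s0}$-factor appearing on the right of the Leibniz-type estimates sits in a norm of the form $\calP^{1}\calH^{k+1}_{m_{0}}$ for $k \leq m-2$, so that it falls under the inductive hypothesis (i.e. into $\calB_{m-2}$), while the top-order piece $\calP^{1}\dot{\calH}^{m}_{x}$ is absorbed by the $\eps$-term using the smallness of $\eps$ permitted by Theorem \ref{thm:absP:absPth} part (2). Once \eqref{eq:absP:smth:1} is established with the base $m_{0} = 2$ (so that $\calP^{1}\dot{\calH}^{3}_{x}(0,1] \leq \calE(t)$), iterating \eqref{eq:absP:smth:3} yields \eqref{eq:pEst4Fs0:high:1}, and then \eqref{eq:pEst4Fs0:high:3} is the corollary when $m \leq 31$ (so $\nrm{\rd_{t,x}\Alow(t)}_{H^{m-2}_{x}} \leq \calAlow$). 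The estimate \eqref{eq:pEst4Fs0:high:2} follows by the identical scheme carried out with $X=L^{2}_{t,x}$ and using the $L^{4}_{t,x}$-Strichartz-based bounds for $F_{0i}$ from part (3) of Lemma \ref{lem:fundEst4F0i} wherever a space-time norm of $F_{0i}$ appears.
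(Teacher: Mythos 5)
Your overall framework (split ${}^{(F_{s0})}\calN$ into forcing and linear parts, control the linear part by trading $A$ for $F_{si},\Alow$ via Lemma \ref{lem:fundEst4A} with extra $s$-weights, iterate the smoothing estimate of Theorem \ref{thm:absP:absPth}) is the same as the paper's, but there are two concrete errors.

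First, your claim that ``the forcing contribution produces no factor of $F_{s0}$'' is wrong. The factor $F_{0\ell}$ in ${}^{(F_{s0})}\calN_{\forcing}$ is controlled via Lemma \ref{lem:fundEst4F0i}, and the right-hand side of \eqref{eq:fundEst4F0i:2} explicitly contains $\nrm{F_{s0}(t)}_{\calL^{1,2}_{s}\dot{\calH}^{k+1}_{x}}$ and $\nrm{\nb_{x}F_{s0}(t)}_{\calL^{1,2}_{s}\calH^{k}_{x}}$ terms (since $F_{0\ell} = \rd_{0}A_{\ell}-\rd_{\ell}A_{0}+\cdots$ and $A_{0}$ is built from $F_{s0}$). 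So the forcing contribution is genuinely linear in $\nb_{x}F_{s0}$ at one derivative below the target norm, and must be absorbed into the inductive $\calB_{m}$-term exactly like the linear piece. Your framework can accommodate this, but your stated reason for why the proof closes (``no $F_{s0}$-factor from forcing'') is not the reason, and would mislead you when verifying the derivative bookkeeping.

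Second, for Part (2) your ``identical scheme'' does not work as written: you start from the base $\calE(t)\geq\nrm{F_{s0}(t)}_{\calP^{1}\dot{\calH}^{3}_{x}}$, which is a fixed-time bound with $X=L^{2}_{x}$, whereas the iteration for \eqref{eq:pEst4Fs0:high:2} requires a base in the space-time norm $\calP^{1}\calH^{m_{0}+1}$ with $X=L^{2}_{t,x}$. That base is not automatically supplied by $\calE$. The correct route is to first establish the $m=1,2$ cases of \eqref{eq:pEst4Fs0:high:2} using part (1) of Theorem \ref{thm:absP:absPth} (the Gronwall-type a priori estimate, with the full weighted bound on ${}^{(F_{s0})}\calN_{\linear}$ over subintervals $(0,\subr]$), and only then invoke the smoothing estimate for $3\leq m\leq 21$. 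Relatedly, in the space-time forcing estimate one cannot leave a linear $\nb_{x}F_{s0}$-term: the paper eliminates it by plugging in the already-proved fixed-time estimate \eqref{eq:pEst4Fs0:high:3} from Part (1), so the forcing contribution to Part (2) closes directly to $C_{\calF,\calAlow}(\calE+\calF+\calAlow)(\calF+\calAlow)$. This dependence of Part (2) on Part (1) is missing from your plan.
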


Part (1) of the preceding proposition tells us that in order to control $m$ derivatives of $F_{s0}$ \emph{uniformly} in $s$ (rather than in the $\calL^{2}_{s}$ sense), we need to control $m$ derivatives of $\Alow_{i}$. This fact will be used in an important way to close the estimates for $\calAlow$ in \S \ref{subsec:AlowWave}. On the other hand, as in Proposition \ref{prop:pEst4Fsi}, the range of $k$ in Part (2) was chosen so that we can estimate whatever derivative of $\Alow$ which arises by $\calAlow$. 

 \begin{proof} 
\pfstep{Step 1: Proof of (1)}
Fix $t \in (-T, T)$. We will be working on the whole interval $(0,1]$. 

Note that \eqref{eq:pEst4Fs0:high:3} follows immediately from \eqref{eq:pEst4Fs0:high:1} and the definition of $\calE(t)$, as $\nrm{\rd_{t,x} \Alow_{i}}_{L^{\infty}_{t} H^{29}_{x}} \leq \calAlow$. In order to prove \eqref{eq:pEst4Fs0:high:1}, we begin by claiming that the following estimate holds for $k \geq 2$:
\begin{equation} \label{eq:pEst4Fs0:high:pf:1:1}
\nrm{{}^{(F_{s0})} \calN}_{\calL^{1+1, 2}_{s} \dot{\calH}^{k}_{x}}
\leq C_{\calF, \nrm{\rd_{t,x} \Alow}_{H^{k}_{x}}} \cdot \nrm{\nb_{x} F_{s0}}_{\calL^{1,2}_{s} \calH^{k}_{x}} + C_{\calF, \nrm{\rd_{t,x} \Alow}_{H^{k}_{x}}} \cdot (\calF + \nrm{\rd_{t,x} \Alow(t)}_{H^{k}_{x}})^{2}.
\end{equation}

Assuming the claim, we may apply the second part of Theorem \ref{thm:absP:absPth}, along with the bound $\nrm{F_{s0}}_{\calP^{1} \calH^{3}_{x}} \leq \calE(t)$, to conclude \eqref{eq:pEst4Fs0:high:1}.

To prove \eqref{eq:pEst4Fs0:high:pf:1:1}, we estimate the contributions of ${}^{(F_{s0})} \calN_{\forcing}$ and ${}^{(F_{s0})} \calN_{\linear}$ separately.

\pfstep{ - Case 1.1: The contribution of ${}^{(F_{s0})} \calN_{\forcing}$}
%Note that
%\begin{equation*}
%{}^{(F_{s0})} \calN_{\forcing} + {}^{(F_{s0})} \calN_{A_{0}} = -2 \LieBr{\tensor{F}{_{0}^{\ell}}}{F_{s\ell}}.
%\end{equation*}
We start with the simple inequality
$\nrm{\phi_{1} \phi_{2}}_{\dot{H}^{2}_{x}} \leq C \nrm{\phi_{1}}_{\dot{H}^{3/2}_{x} \cap L^{\infty}_{x}} \nrm{\phi_{2}}_{\dot{H}^{2}_{x}} + C \nrm{\phi_{1}}_{\dot{H}^{2}_{x}} \nrm{\phi_{2}}_{\dot{H}^{3/2}_{x} \cap L^{\infty}_{x}}$.
Applying Leibniz's rule, the Correspondence Principle, Lemma \ref{lem:absP:Holder4Ls} and Lemma \ref{lem:absP:algEst}, we get
\begin{equation*}
	\nrm{\calO(\psi_{1}, \psi_{2})}_{\calL^{1+1,2}_{s} \dot{\calH}^{k}_{x}}
	\leq C \nrm{\nb_{x} \psi_{1}}_{\calL^{3/4,\infty}_{s} \calH^{k-1}_{x}} \nrm{\nb_{x} \psi_{2}}_{\calL^{5/4,2}_{s} \calH^{k-1}_{x}},
\end{equation*}
for $k \geq 2$.

Let us put $\psi_{1} = F_{0\ell}$, $\psi_{2} = F_{s\ell}$, and apply Lemma \ref{lem:fundEst4F0i} to control $\nrm{\nb_{x} F_{0\ell}}_{\calL^{3/4,\infty}_{s} \calH^{k-1}_{x}}$ in terms of $\nrm{F_{s}}$, $\nrm{\Alow}$ and $\nrm{F_{s0}}$. Then we apply Proposition \ref{prop:pEst4Fsi} to estimate $\nrm{F_{s}}$ in terms of $\calF$ and $\nrm{\Alow}$. At this point, one may check that all $\nrm{\Alow}$, $\nrm{F_{s0}}$ that have arisen may be estimated by $\nrm{\rd_{t,x} \Alow}_{H^{k}_{x}}$ and $\nrm{\nb_{x} F_{s0}}_{\calL^{1,2}_{s} \calH^{k}_{x}}$, respectively. As a result, for $k \geq 2$, we obtain
\begin{equation*}
\begin{aligned}
	\nrm{{}^{(F_{s0})} \calN_{\forcing}}_{\calL^{1+1,2}_{s} \dot{\calH}^{k}_{x}} 
	\leq & C_{\calF, \nrm{\rd_{t,x} \Alow}_{H^{k}_{x}}} \cdot \nrm{\nb_{x} F_{s0}}_{\calL^{1,2}_{s} \calH^{k}_{x}} 
	+ C_{\calF, \nrm{\rd_{t,x} \Alow}_{H^{k}_{x}}} \cdot (\calF + \nrm{\rd_{t,x} \Alow}_{H^{k}_{x}}) \, \calF,
\end{aligned}\end{equation*}
which is good enough for \eqref{eq:pEst4Fs0:high:pf:1:1}.

\pfstep{ - Case 1.2: The contribution of ${}^{(F_{s0})} \calN_{\linear}$}
Here, let us start from \eqref{eq:linCovHeat:pf:0} in the proof of Proposition \ref{prop:linCovHeat}. Applying Leibniz's rule, the Correspondence Principle, Lemma \ref{lem:absP:Holder4Ls} and Lemma \ref{lem:absP:algEst}, we obtain
\begin{equation*} 
\left\{
\begin{aligned}
& \nrm{s^{-1/2} \calO(\psi_{1}, \nb_{x} \psi_{2})}_{\calL^{1+1, 2}_{s} \dot{\calH}^{k}_{x}} 
+ \nrm{s^{-1/2} \calO(\nb_{x} \psi_{1}, \psi_{2})}_{\calL^{1+1, 2}_{s} \dot{\calH}^{k}_{x}} \\
& \qquad \leq  C \nrm{\nb_{x} \psi_{1}}_{\calL^{1/4+1/4,\infty}_{s} \calH^{k}_{x}} \nrm{\nb_{x} \psi_{2}}_{\calL^{1,2}_{s} \calH^{k}_{x}}, \\
& \nrm{\calO(\psi_{1}, \psi_{2}, \psi_{3})}_{\calL^{5/4+1, 2}_{s} \dot{\calH}^{k}_{x}}
\leq  C \nrm{\nb_{x} \psi_{1}}_{\calL^{1/4+1/4, \infty}_{s} \calH^{k}_{x}} \nrm{\nb_{x} \psi_{2}}_{\calL^{1, 2}_{s} \calH^{k}_{x}} \nrm{\nb_{x} \psi_{3}}_{\calL^{1/4+1/4, \infty}_{s} \calH^{k+1}_{x}}.
\end{aligned}
\right.
\end{equation*}
for $k \geq 1$.

Note the extra weight of $s^{1/4}$ on $\psi_{1}, \psi_{3}$. Let us put $\psi_{1} = A, \psi_{2} = F_{s0}, \psi_{3} = A$, and apply Lemma \ref{lem:fundEst4A} to control $\nrm{A}$ in terms of $\nrm{F_{s}}$ and $\nrm{\Alow}$. Then using Proposition \ref{prop:pEst4Fsi}, we can control $\nrm{F_{s}}$  by $\calF$ and $\nrm{\Alow}$. Observe that all of $\nrm{\Alow}$ which have arisen can be estimated by $\nrm{\rd_{t,x} \Alow}_{H^{k}_{x}}$. As a result, we obtain the estimate
\begin{equation*}
	\nrm{{}^{(F_{s0})} \calN_{\linear}}_{\calL^{1+1,2}_{s} \dot{\calH}^{k}_{x}} \leq C_{\calF, \nrm{\rd_{t,x} \Alow}_{H^{k}_{x}}} \cdot \nrm{\nb_{x} F_{s0}}_{\calL^{1,2}_{s} \calH^{k}_{x}},
\end{equation*}
for $k \geq 1$. Combining this with the previous case, \eqref{eq:pEst4Fs0:high:pf:1:1} follows.

\pfstep{Step 2: Proof of (2)}
Let $0 \leq k \leq 19$, where the number $k$ corresponds to the number of times the equation $(\rd_{s} - \lap) F_{s0} = {}^{(F_{s0})}\calN$ is differentiated. We remark that its range has been chosen to be small enough so that every norm of $F_{si}$ and $\Alow_{i}$ that arises in the argument below can be controlled by $C_{\calF, \calAlow} \cdot (\calF + \calAlow)$ (by Propositions \ref{prop:pEst4Fsi} and \ref{prop:lowEst4Fsi}) and $\calAlow$, respectively.

We claim that for $\eps' > 0$ small enough, $0 \leq k \leq 19$ an integer, $1 \leq p \leq 2$ and $0 < \subr \leq 1$, the following estimate holds:
\begin{equation} \label{eq:pEst4Fs0:high:pf:2:1}
\nrm{{}^{(F_{s0})} \calN}_{\calL^{1+1,p}_{s} \calL^{2}_{t} \dot{\calH}^{k}_{x}(0,\subr]}
\leq C_{\calF, \calAlow} \, \cdot \nrm{s^{1/2-\eps'}\nb_{x} F_{s0}}_{\calL^{1,2}_{s} \calL^{2}_{t} \calH^{k}_{x}(0, \subr]}
+ C_{\calF, \calAlow} \cdot (\calE + \calF + \calAlow) (\calF + \calAlow). 
\end{equation}

Assuming \eqref{eq:pEst4Fs0:high:pf:2:1}, and taking $k=0$, $p=1,2$, we can apply the first part of Theorem \ref{thm:absP:absPth} to obtain \eqref{eq:pEst4Fs0:high:2} in the cases $m=1,2$. Then taking $1 \leq k \leq 19$ and $p=2$, we can apply the second part of Theorem \ref{thm:absP:absPth}, along with the bound \eqref{eq:pEst4Fs0:high:2} in the case $m=2$ that was just established, to conclude the rest of \eqref{eq:pEst4Fs0:high:2}.

As before, in order to prove \eqref{eq:pEst4Fs0:high:pf:2:1}, we treat the contributions of ${}^{(F_{s0})} \calN_{\forcing}$ and ${}^{(F_{s0})} \calN_{\linear}$ separately.

\pfstep{ - Case 2.1: The contribution of ${}^{(F_{s0})} \calN_{\forcing}$}
We claim that the following estimate holds for $0 \leq k \leq 19$ and $1 \leq p \leq2$:
\begin{equation} \label{eq:pEst4Fs0:high:pf:2:2}
\nrm{{}^{(F_{s0})} \calN_{\forcing}}_{\calL^{1+1,p}_{s} \calL^{2}_{t} \dot{\calH}^{k}_{x}(0,1]}
\leq C_{\calF, \calAlow} \cdot (\calE + \calF + \calAlow) (\calF + \calAlow).
\end{equation}

Note in particular that the right-hand side does not involve $\nrm{\nb_{x} F_{s0}}_{\calL^{1,2}_{s} \calL^{2}_{t} \calH^{k}_{x}}$. This is because we can use \eqref{eq:pEst4Fs0:high:3} to estimate whatever factor of $\nrm{F_{s0}}$ that arises in this case.

In what follows, we work on the whole $s$-interval $(0, 1]$. Starting from H\"older's inequality $\nrm{\phi_{1} \phi_{2}}_{L^{2}_{t,x}} \leq \nrm{\phi_{1}}_{L^{4}_{t,x}} \nrm{\phi_{2}}_{L^{4}_{t,x}}$ and using Leibniz's rule, the Correspondence Principle and Lemma \ref{lem:absP:Holder4Ls}, we obtain
\begin{equation*}
	\nrm{\calO(\psi_{1}, \psi_{2})}_{\calL^{1+1,p}_{s} \calL^{2}_{t} \dot{\calH}^{k}_{x} }
	\leq C \nrm{\psi_{1}}_{\calL^{3/4, r}_{s} \calL^{4}_{t} \calW^{k,4}_{x}} \nrm{\psi_{2}}_{\calL^{5/4,2}_{s} \calL^{4}_{t} \calW^{k,4}_{x}}.
\end{equation*}
where $\frac{1}{r} = \frac{1}{p} - \frac{1}{2}$. Let us put $\psi_{1} = F_{0 \ell}$, $\psi_{2} = F_{s \ell}$ and use Lemma \ref{lem:fundEst4F0i} to control $\nrm{F_{0\ell}}$ in terms of $\nrm{F_{s0}}$, $\nrm{F_{s\ell}}$ and $\nrm{\Alow_{\ell}}$. Then thanks to the assumption $0 \leq k \leq 19$, we can use \eqref{eq:pEst4Fs0:high:3}, the second part of Proposition \ref{prop:pEst4Fsi} and the definition of $\calAlow$ to control $\nrm{F_{s0}}$, $\nrm{F_{s\ell}}$ and $\nrm{\Alow_{\ell}}$ have arisen by $C_{\calF, \calAlow} \cdot (\calE + (\calF + \calAlow)^{2})$, $C_{\calF, \calAlow} \cdot \calF$ and $\calAlow$, respectively. 

On the other hand, to control $\nrm{F_{s\ell}}_{\calL^{5/4,2}_{s} \calL^{4}_{t} \calW^{k,4}_{x}}$, we first use Strichartz to estimate
\begin{equation*}
	\nrm{F_{s\ell}}_{\calL^{5/4,2}_{s} \calL^{4}_{t} \calW^{k,4}_{x}} 
	\leq  C\nrm{F_{s\ell}}_{\calL^{5/4,2}_{s} \calL^{4}_{t,x}} + C \nrm{F_{s\ell}}_{\calL^{5/4,2}_{s} \widehat{\calS}^{k+1/2}}
\end{equation*}
and then use Propositions \ref{prop:lowEst4Fsi} and \ref{prop:pEst4Fsi} to estimate the first and the second terms by $C_{\calF, \calAlow} \cdot (\calF + \calAlow)$ and $C_{\calF, \calAlow} \cdot \calF$, respectively. As a result, we obtain \eqref{eq:pEst4Fs0:high:pf:2:2} for $1 \leq p \leq 2$.

\pfstep{ - Case 2.2: The contribution of ${}^{(F_{s0})} \calN_{\linear}$}
Let $0 < \subr \leq 1$; we will work on the interval $(0, \subr]$ in this case. Let us begin with the following estimates, which follow immediately from \eqref{eq:linCovHeat:pf:0} by square integrating in $t$ and using H\"older:  
\begin{equation*}
\begin{aligned}
& \nrm{\phi_{1} \rd_{x} \phi_{2}}_{L^{2}_{t,x}} + \nrm{\rd_{x} \phi_{1} \phi_{2}}_{L^{2}_{t,x}}
\leq C \nrm{\phi_{1}}_{L^{\infty}_{t} (\dot{H}^{3/2}_{x} \cap L^{\infty}_{x})} \nrm{\phi_{2}}_{L^{2}_{t} \dot{H}^{1}_{x}}, \\
& \nrm{\phi_{1} \phi_{2} \phi_{3}}_{L^{2}_{t,x}}
\leq C \nrm{\phi_{1}}_{L^{\infty}_{t} \dot{H}^{1}_{x}} \nrm{\phi_{2}}_{L^{2}_{t} \dot{H}^{1}_{x}} \nrm{\phi_{3}}_{L^{\infty}_{t} \dot{H}^{1}_{x}}.
\end{aligned}
\end{equation*}

Using Leibniz's rule, the Correspondence Principle and Lemma \ref{lem:absP:Holder4Ls}, we obtain the following inequalities for $\eps' > 0$ small, $0 \leq k \leq 19$ and $1 \leq p \leq q \leq \infty$:
\begin{equation*}
%\left\{
\begin{aligned}
& \nrm{s^{-1/2} \calO(\psi_{1}, \nb_{x} \psi_{2})}_{\calL^{1+1, p}_{s} \calL^{2}_{t} \dot{\calH}^{k}_{x}} 
+ \nrm{s^{-1/2} \calO(\nb_{x} \psi_{1}, \psi_{2})}_{\calL^{1+1, p}_{s} \calL^{2}_{t} \dot{\calH}^{k}_{x}} \\
& \qquad \leq  C \subr^{\eps'} \bb( \sum_{j=0}^{k} \nrm{\nb_{x}^{(j)} \psi_{1}}_{\calL^{1/4,\infty}_{s} \calL^{\infty}_{t} (\dot{\calH}^{3/2}_{x} \cap \calL^{\infty}_{x})} \bb)\nrm{s^{1/4-\eps'} \nb_{x} \psi_{2}}_{\calL^{1,q}_{s} \calL^{2}_{t} \calH^{k}_{x}}, \\
& \nrm{\calO(\psi_{1}, \psi_{2}, \psi_{3})}_{\calL^{5/4+1, 2}_{s} \dot{\calH}^{k}_{x}} \\
& \qquad \leq  C \subr^{\eps'} \nrm{\nb_{x} \psi_{1}}_{\calL^{1/4+1/8, \infty}_{s} \calL^{\infty}_{t} \calH^{k}_{x}} \nrm{s^{1/4-\eps'} \nb_{x} \psi_{2}}_{\calL^{1, q}_{s} \calL^{2}_{t} \calH^{k}_{x}} \nrm{\nb_{x} \psi_{3}}_{\calL^{1/4+1/8, \infty}_{s} \calL^{\infty}_{t} \calH^{k}_{x}}.
\end{aligned}
%\right.
\end{equation*}

The factors $\subr^{\eps'}$ have arisen from applications of H\"older for $\calL^{\ell, p}_{s}$ (Lemma \ref{lem:absP:Holder4Ls}); we estimate them by $\leq 1$. Let us put $\psi_{1} = A$, $\psi_{2} = F_{s0}$ and $\psi_{3} = A$, and apply Lemma \ref{lem:fundEst4A} to control $\nrm{A}$ in terms of $\nrm{F_{s}}$ and $\calAlow$ (the latter thanks to the range of $k$). Then we apply Proposition \ref{prop:pEst4Fsi} to control $\nrm{F_{s}}$ in terms of $\calF$ and $\calAlow$ (again using the restriction of the range of $k$). As a result, we arrive at
\begin{equation*}
	\nrm{{}^{(F_{s0})} \calN_{\linear}}_{\calL^{1+1,p}_{s} \calL^{2}_{t} \dot{\calH}^{k}_{x}(0, \subr]}
	\leq C_{\calF, \calAlow} \nrm{s^{1/4-\eps'} \nb_{x} F_{s0}}_{\calL^{1,q}_{s} \calL^{2}_{t} \calH^{k}_{x}(0, \subr]},
\end{equation*}
for $\eps' > 0$ small, $0 \leq k \leq 19$ and $1 \leq p \leq q \leq \infty$. Taking $q=2$ and combining with the previous case, we obtain \eqref{eq:pEst4Fs0:high:pf:2:1} \qedhere
\end{proof}

The difference analogues of Propositions \ref{prop:pEst4Fs0:low} and \ref{prop:pEst4Fs0:high} can be proved in a similar manner, using the non-difference versions which have been just established. We give their statements below, omitting the proof.

\begin{proposition} [Estimate for $\dlt \calE$] \label{prop:pEst4Fs0:low:Diff}
Suppose that the caloric-temporal gauge condition holds, and furthermore that $\calF + \calAlow < \dlt_{E}$ where $\dlt_{E} > 0$ is sufficiently small. Then 
\begin{equation} \label{eq:pEst4Fs0:low:Diff1}
	\sup_{t \in (-T, T)} \dlt \calE(t) \leq C_{\calF, \calAlow} \cdot (\calF+\calAlow) (\dlt \calF + \dlt \calAlow).
\end{equation}
\end{proposition}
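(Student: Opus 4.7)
The plan is to mirror verbatim the structure of the proof of Proposition \ref{prop:pEst4Fs0:low}, applied to the difference variable. Define $\dlt E := \abs{\rd_{x}}^{-1/2} \dlt F_{s0}$, which vanishes at $s = 0$ because both $F_{s0}$ and $F'_{s0}$ vanish there (they equal $-w_{0}$, which is zero at $s=0$ by the derivation of \eqref{eq:HPYM}). Subtracting the parabolic equations for $F_{s0}$ and $F'_{s0}$ and applying $\abs{\rd_{x}}^{-1/2}$ yields
\begin{equation*}
(\rd_{s} - \lap) \dlt E = s^{1/4} \abs{\nb_{x}}^{-1/2}\bigl(\dlt {}^{(F_{s0})}\calN_{\linear} + \dlt {}^{(F_{s0})}\calN_{\forcing}\bigr),
\end{equation*}
where, by the formal Leibniz rule for $\dlt$, each piece expands into a sum of terms in which exactly one factor is replaced by its $\dlt$-difference and the remaining factors carry either primes or no primes.

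Next, I would establish the difference analogues of \eqref{eq:pEst4Fs0:low:pf:1} and \eqref{eq:pEst4Fs0:low:pf:5}. Concretely, for $\eps, \eps' > 0$ small and $\dlt_{E}$ taken correspondingly small, I claim that for $p = 1, 2$ and $0 < \subr \leq 1$,
\begin{equation*}
\nrm{\dlt {}^{(F_{s0})}\calN}_{\calL^{2,p}_{s} \dot{\calH}^{-1/2}_{x}(0,\subr]}
\leq \eps \nrm{\dlt E}_{\calP^{3/4} \dot{\calH}^{2}_{x}(0,1]} + C_{\calF, \calAlow}\bigl(\nrm{s^{1/4-\eps'} \dlt E}_{\calL^{3/4,2}_{s}\dot{\calH}^{1}_{x}(0,\subr]} + (\calF+\calAlow)(\dlt\calF+\dlt\calAlow)\bigr),
\end{equation*}
and the corresponding $\calL^{2,2}_{s}\dot{\calH}^{k-1/2}_{x}$ version for $k=1,2$ with extra control terms of the form $C_{\calF,\calAlow}\nrm{\dlt E}_{\calP^{3/4} \calH^{k+1}_{x}}$. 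The forcing contribution $\calO(\dlt F_{0\ell}, F_{s\ell}) + \calO(F'_{0\ell}, \dlt F_{s\ell})$ is estimated by starting from the product inequality $\nrm{\phi_{1} \phi_{2}}_{\dot{H}^{-1/2}_{x}} \leq C \nrm{\phi_{1}}_{\dot{H}^{1/2}_{x}} \nrm{\phi_{2}}_{\dot{H}^{1/2}_{x}}$ as in Case 1 of Proposition \ref{prop:pEst4Fs0:low}, and then invoking Lemma \ref{lem:fundEst4F0i:Diff} together with Lemma \ref{lem:fundEst4F0i} (applied to the primed solution) to handle $\dlt F_{0\ell}$ and $F'_{0\ell}$, while Propositions \ref{prop:pEst4Fsi}, \ref{prop:lowEst4Fsi} and their difference analogues (and the non-difference Proposition \ref{prop:pEst4Fs0:low} applied to $A'$) handle the remaining factors. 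For the linear part, the $\dlt$ falls either on an $A$-factor or on the $F_{s0}$-factor. In the former case, $\dlt A$ is controlled by Lemma \ref{lem:fundEst4A} applied to the identity $\rd_{s} \dlt A_{i} = \dlt F_{si}$ and to $\dlt \Alow_{i}$, yielding factors bounded by $\dlt \calF + \dlt \calAlow$; in the latter case, one retains a factor of $\dlt F_{s0}$ (i.e.\ $\dlt E$) and absorbs it, exactly as in Case 2 of Proposition \ref{prop:pEst4Fs0:low}, either into the $\calL^{3/4,2}_{s}\dot{\calH}^{1}_{x}$ Gronwall-type term (for $k = 0$) or into the higher-derivative inductive term (for $k = 1, 2$).

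Having established these two nonlinear estimates, the conclusion follows mechanically: Theorem \ref{thm:absP:absPth}(1) applied with $\psi = \dlt E$, $X = L^{2}_{x}$, $\ell_{0} = 3/4$, $D = C_{\calF,\calAlow}(\calF+\calAlow)(\dlt\calF+\dlt\calAlow)$ and $\dlt E(s=0) = 0$ yields $\nrm{\dlt E}_{\calP^{3/4} \calH^{2}_{x}(0,1]} \leq C_{\calF,\calAlow}(\calF+\calAlow)(\dlt\calF+\dlt\calAlow)$, after which Theorem \ref{thm:absP:absPth}(2) bootstraps the estimate up to $\nrm{\dlt E}_{\calP^{3/4} \calH^{4}_{x}(0,1]}$ with the same right-hand side. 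Since $\dlt \calE(t) \leq \nrm{\dlt E}_{\calP^{3/4} \calH^{4}_{x}(0,1]}$, taking the supremum over $t \in (-T, T)$ gives \eqref{eq:pEst4Fs0:low:Diff1}. The main technical obstacle is bookkeeping the linear terms of the form $\calO(\dlt A, F'_{s0})$: one must simultaneously extract the smallness factor $(\dlt\calF + \dlt\calAlow)$ from $\dlt A$ via Lemma \ref{lem:fundEst4A} and control $F'_{s0}$ by $\calE' \leq C_{\calF',\calAlow'}(\calF'+\calAlow')^{2}$ (using the non-difference Proposition \ref{prop:pEst4Fs0:low} for $A'$), which is precisely where the smallness hypothesis $\calF + \calAlow < \dlt_{E}$ is indispensable so that the forward bootstrap can be closed.
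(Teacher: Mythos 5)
The paper omits this proof, remarking only that the difference analogues of Propositions \ref{prop:pEst4Fs0:low} and \ref{prop:pEst4Fs0:high} "can be proved in a similar manner"; your proposal is precisely that reconstruction, applying the formal Leibniz rule for $\dlt$ to the nonlinearity, mirroring the two key parabolic inequalities for $\dlt E := \abs{\rd_x}^{-1/2}\dlt F_{s0}$, and closing via Theorem \ref{thm:absP:absPth}. The bookkeeping you carry out — splitting the forcing term into $\calO(\dlt F_{0\ell}, F_{s\ell}) + \calO(F'_{0\ell}, \dlt F_{s\ell})$, using Lemmas \ref{lem:fundEst4F0i}/\ref{lem:fundEst4F0i:Diff}, absorbing the $\dlt F_{s0}$-bearing terms via the $\eps$-and-Gronwall mechanism, and bounding the residual $F'_{s0}$ by the already-established non-difference Proposition \ref{prop:pEst4Fs0:low} — is exactly what the paper's indicated argument entails, so the proposal is correct and takes the same route.
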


\begin{proposition} [Parabolic estimates for $\dlt F_{s0}$] \label{prop:pEst4Fs0:high:Diff}
Suppose $0 < T \leq 1$, and that the caloric-temporal gauge condition holds. Then the following statements hold.
\begin{enumerate}
\item Let $t \in (-T, T)$. Then for $m \geq 4$, we have
\begin{equation} \label{eq:pEst4Fs0:high:Diff:1}
\begin{aligned}
	& \nrm{\dlt F_{s0}(t)}_{\calL^{1,\infty}_{s} \dot{\calH}_{x}^{m-1}(0,1]} + \nrm{\dlt F_{s0}(t)}_{\calL^{1,2}_{s} \dot{\calH}_{x}^{m}(0,1]} \\
	& \qquad \leq C_{\calF, \nrm{\rd_{t,x} \Alow(t)}_{H^{m-2}_{x}}} \cdot \dlt \calE(t) \\
	& \phantom{\qquad \leq } + C_{\calF, \nrm{\rd_{t,x} \Alow(t)}_{H^{m-2}_{x}}} \cdot  (\calE(t) + \calF + \nrm{\rd_{t,x} \Alow(t)}_{H^{m-2}_{x}}) (\dlt \calF + \nrm{\rd_{t,x} (\dlt \Alow)(t)}_{H^{m-2}_{x}}).
\end{aligned}
\end{equation}

In particular, for $1 \leq m \leq 31$, we have
\begin{equation} \label{eq:pEst4Fs0:high:Diff:3}
\begin{aligned}
	& \nrm{\dlt F_{s0}(t)}_{\calL^{1,\infty}_{s} \dot{\calH}_{x}^{m-1}(0,1]} + \nrm{\dlt F_{s0}(t)}_{\calL^{1,2}_{s} \dot{\calH}_{x}^{m}(0,1]} \\
	& \qquad \leq C_{\calF, \calAlow} \cdot \dlt \calE(t) + C_{\calF, \calAlow} \cdot  (\calE(t) + \calF + \calAlow) (\dlt \calF + \dlt \calAlow).
\end{aligned}
\end{equation}

\item For $1 \leq m \leq 21$, we have
\begin{equation} \label{eq:pEst4Fs0:high:Diff:2}
\begin{aligned}
	& \nrm{\dlt F_{s0}}_{\calL^{1,\infty}_{s} \calL^{2}_{t} \dot{\calH}_{x}^{m-1}(0,1]} + \nrm{\dlt F_{s0}}_{\calL^{1,2}_{s} \calL^{2}_{t} \dot{\calH}_{x}^{m}(0,1]} \\
	& \qquad \leq C_{\calF, \calAlow} \cdot \dlt \calE + C_{\calF, \calAlow} \cdot  (\calE + \calF + \calAlow) (\dlt \calF + \dlt \calAlow).
\end{aligned}
\end{equation}
\end{enumerate}
\end{proposition}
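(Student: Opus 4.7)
[Proof sketch of Propositions \ref{prop:pEst4Fs0:low:Diff} and \ref{prop:pEst4Fs0:high:Diff}]
My plan is to follow the non-difference proofs (of Propositions \ref{prop:pEst4Fs0:low} and \ref{prop:pEst4Fs0:high}) step by step, replacing each multilinear estimate by its difference analogue obtained from the \emph{formal Leibniz rule for} $\dlt$. The starting point is that, since $F_{s0}$ and $F_{s0}'$ both vanish identically at $s=0$, so does $\dlt F_{s0}$; subtracting the parabolic equations for $F_{s0}$ and $F_{s0}'$ gives
\begin{equation*}
	(\rd_s-\lap)(\dlt F_{s0}) = \dlt\bigl({}^{(F_{s0})}\calN_{\mathrm{forcing}}\bigr) + \dlt\bigl({}^{(F_{s0})}\calN_{\mathrm{linear}}\bigr),
\end{equation*}
with zero data at $s=0$. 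Thus Propositions \ref{prop:pEst4Fs0:low:Diff} and \ref{prop:pEst4Fs0:high:Diff} will follow from Theorem \ref{thm:absP:absPth} once I establish difference analogues of the nonlinearity estimates \eqref{eq:pEst4Fs0:low:pf:1}, \eqref{eq:pEst4Fs0:low:pf:5}, \eqref{eq:pEst4Fs0:high:pf:1:1}, \eqref{eq:pEst4Fs0:high:pf:2:1}, \eqref{eq:pEst4Fs0:high:pf:2:2}.

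For Proposition \ref{prop:pEst4Fs0:low:Diff}, I work with $\dlt E := \abs{\rd_x}^{-1/2} \dlt F_{s0}$, which solves the same parabolic equation as $E$ but with the nonlinearity replaced by $\dlt(\cdots)$ and zero data at $s=0$. Applying $\dlt$ to the forcing term produces a sum of bilinear expressions $\calO(\dlt F_{0\ell}, F_{s\ell})$ and $\calO(F_{0\ell}', \dlt F_{s\ell})$; the same product inequality used in Case 1 of Proposition \ref{prop:pEst4Fs0:low} applies, and the new factors $\dlt F_{0\ell}$ and $\dlt F_{s\ell}$ are controlled using Lemmas \ref{lem:fundEst4F0i:Diff} and Propositions \ref{prop:pEst4Fsi} (difference), \ref{prop:lowEst4Fsi:Diff} in place of the non-difference tools. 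Applying $\dlt$ to the linear term similarly yields $\calO(\dlt A, \nb_x F_{s0})$, $\calO(A', \nb_x(\dlt F_{s0}))$, $\calO(\dlt A, A, F_{s0}')$, etc.; each is handled by the same inequalities \eqref{eq:pEst4Fs0:low:pf:4} as in the non-difference case, with Lemma \ref{lem:fundEst4A} giving bounds for $\dlt A$ in terms of $\dlt F_s$ and $\dlt \Alow$. The smallness hypothesis $\calF+\calAlow<\dlt_E$ is used exactly as before to absorb the $\nrm{\dlt E}_{\calP^{3/4}\calH^2_x}$ factor into the left-hand side at the end. This gives \eqref{eq:pEst4Fs0:low:pf:1} and \eqref{eq:pEst4Fs0:low:pf:5} with one factor replaced by its $\dlt$-version and the other by either the primed or unprimed background object, yielding \eqref{eq:pEst4Fs0:low:Diff1}.

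For Proposition \ref{prop:pEst4Fs0:high:Diff}, I proceed in exactly the same way: apply $\dlt$ and Leibniz to \eqref{eq:pEst4Fs0:high:pf:1:1}, \eqref{eq:pEst4Fs0:high:pf:2:1}, \eqref{eq:pEst4Fs0:high:pf:2:2}. The background factors use the non-difference estimates from Propositions \ref{prop:pEst4Fsi}, \ref{prop:lowEst4Fsi}, \ref{prop:pEst4Fs0:high} and Lemma \ref{lem:fundEst4F0i} (evaluated on either $A$ or $A'$), while the $\dlt$-factor is controlled by their difference analogues (Lemmas \ref{lem:fundEst4F0i:Diff}, \ref{lem:fundEst4A}-for-$\dlt A$, Propositions \ref{prop:lowEst4Fsi:Diff} and the difference version \eqref{eq:pEst4Fsi:Diff:1}--\eqref{eq:pEst4Fsi:Diff:2}). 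For part (1), Part (2) of Theorem \ref{thm:absP:absPth} is applied to $\dlt F_{s0}$ starting from the input $\nrm{\dlt F_{s0}}_{\calP^1\calH^3_x}\le \dlt\calE(t)$; for part (2), I first establish the $m=1,2$ case via Part (1) of Theorem \ref{thm:absP:absPth} (no smallness needed because the linear piece of the nonlinearity carries the extra weight $s^{1/2-\eps'}$), and then iterate using Part (2) of the same theorem for $3\le m \le 21$. The restriction of $m$ is inherited from the non-difference proof and is precisely what allows every $\Alow$ or $F_{s}$ factor appearing to be absorbed by $\calAlow$ or $C_{\calF,\calAlow}\cdot\calF$.

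The main (and essentially only) technical obstacle will be book-keeping: each application of Leibniz for $\dlt$ produces several terms, each of which must be estimated using the correct mixture of difference and non-difference background tools, and the weights of $s$ must be tracked carefully to ensure the extra $s^{\eps'}$-factors still allow absorption via Theorem \ref{thm:absP:absPth}. However, no new analytic idea is needed beyond what already appears in the proofs of Propositions \ref{prop:pEst4Fs0:low} and \ref{prop:pEst4Fs0:high}, since the multilinear inequalities used there are symmetric in their factors and each factor can be independently replaced by a $\dlt$-version without losing any estimate. This is why the proof will be omitted in the final paper.
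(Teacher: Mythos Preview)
Your proposal is correct and matches the paper's intended approach exactly: the paper omits the proof entirely, stating only that ``the difference analogues of Propositions \ref{prop:pEst4Fs0:low} and \ref{prop:pEst4Fs0:high} can be proved in a similar manner, using the non-difference versions which have been just established,'' which is precisely the Leibniz-for-$\dlt$ plus abstract-parabolic-theory scheme you outline. Your identification of the starting input $\nrm{\dlt F_{s0}}_{\calP^{1}\calH^{3}_{x}} \leq \dlt \calE(t)$ for Part (1), the use of the difference lemmas (\ref{lem:fundEst4F0i:Diff}, \eqref{eq:pEst4Fsi:Diff:1}--\eqref{eq:pEst4Fsi:Diff:2}, Proposition \ref{prop:lowEst4Fsi:Diff}) for the $\dlt$-factors, and the non-difference estimates for the background factors, is exactly what the paper has in mind.
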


\subsection{Parabolic estimates for $w_{i}$} \label{subsec:pEst4HPYM:pEst4wi}
Here we study the parabolic equation \eqref{eq:covParabolic4w} satisfied by $w_{i}$. Let us define
\begin{equation*} 
	{}^{(w_{i})} \calN := (\rd_{s} - \lap) w_{i} = {}^{(w_{i})} \calN_{\forcing} + {}^{(w_{i})} \calN_{\linear} 
\end{equation*}
where
\begin{align*}
	{}^{(w_{i})}\calN_{\linear} &= 2 s^{-1/2} \LieBr{A^\ell}{\nb_\ell w_{i}} + s^{-1/2} \LieBr{\nb^\ell A_\ell}{w_{i}} + \LieBr{A^{\ell}}{\LieBr{A_{\ell}}{w_{i}}} + 2 \LieBr{\tensor{F}{_{i}^{\ell}}}{w_{\ell}}, \\
	{}^{(w_{i})}\calN_{\forcing} &= 2 \LieBr{F_{0 \ell}}{\covD^{\ell} F_{0 i} + \covD_{0} \tensor{F}{^{\ell}_{i}}}. 
\end{align*}

The following proposition proves parabolic estimates for $w_{i}$ that we will need in the sequel.
\begin{proposition} [Parabolic estimates for $w_{i}$] \label{prop:pEst4wi} 
Suppose $0 < T \leq 1$, and that the caloric-temporal gauge condition holds.
\begin{enumerate}
\item Let $t \in (-T, T)$. For $1 \leq m \leq 30$ we have
\begin{equation} \label{eq:pEst4wi:1}
\begin{aligned}
	\nrm{w_{i}(t)}_{\calL^{1,\infty}_{s} \dot{\calH}^{m-1}_{x}(0,1]} + \nrm{w_{i}(t)}_{\calL^{1,2}_{s} \dot{\calH}^{m}_{x}(0,1]} 
	\leq C_{\calE(t), \calF, \calAlow} \cdot  (\calE(t) + \calF + \calAlow)^{2}.
\end{aligned}
\end{equation}

In the case $m =31$, on the other hand, we have the following estimate.
\begin{equation} \label{eq:pEst4wi:2}
\begin{aligned}
	& \nrm{w_{i}(t)}_{\calL^{1,\infty}_{s} \dot{\calH}^{30}_{x}(0,1]} + \nrm{w_{i}(t)}_{\calL^{1,2}_{s} \dot{\calH}^{31}_{x}(0,1]} \\
	& \qquad \leq C_{\calE(t), \calF, \calAlow, \nrm{\rd_{0} \Alow(t)}_{\dot{H}^{30}_{x}}} \cdot  (\calE(t) + \calF + \calAlow + \nrm{\rd_{0} \Alow(t)}_{\dot{H}^{30}_{x}})^{2}.
\end{aligned}
\end{equation}
%\comment{Maybe it is more convenient to use only $\rd_{0} \, \div \Alow$.}

\item For $1 \leq m \leq 16$, we have
\begin{equation} \label{eq:pEst4wi:3}
	\nrm{w_{i}}_{\calL^{1,\infty}_{s} \calL^{2}_{t} \dot{\calH}^{m-1}_{x}(0,1]} + \nrm{w_{i}}_{\calL^{1,2}_{s} \calL^{2}_{t} \dot{\calH}^{m}_{x}(0,1]} 
	\leq C_{\calE, \calF, \calAlow} \cdot (\calE + \calF + \calAlow)^{2}.
\end{equation}

Furthermore, for $0 \leq k \leq 14$, we have the following estimate for ${}^{(w_{i})} \calN$.
\begin{equation} \label{eq:pEst4wi:4}
	\nrm{{}^{(w_{i})} \calN}_{\calL^{2,\infty}_{s} \calL^{2}_{t} \dot{\calH}^{k}_{x} (0,1]} + \nrm{{}^{(w_{i})} \calN}_{\calL^{2,2}_{s} \calL^{2}_{t} \dot{\calH}^{k}_{x} (0,1]}
	\leq C_{\calE, \calF, \calAlow} \cdot (\calE + \calF + \calAlow)^{2}.
\end{equation}
\end{enumerate}
\end{proposition}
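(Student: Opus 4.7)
The plan is to adapt the proof of Proposition~\ref{prop:pEst4Fs0:high} (for $F_{s0}$) to $w_i$, exploiting the key structural fact that $w_i \equiv 0$ at $s=0$ (since $A_\mu(s=0)$ solves \eqref{eq:hyperbolicYM}). This vanishing initial condition is precisely what lets us avoid the smallness hypothesis that was needed in Proposition~\ref{prop:est4Fs0:low} for the analogous low-regularity estimate of $F_{s0}$: when invoking Theorem~\ref{thm:absP:absPth}~(1) the data at $s=0$ vanishes, so the resulting exponential-of-$\int C(s)^p \, ds/s$ constant, though possibly large, is harmless and gets absorbed into $C_{\calE, \calF, \calAlow}$.

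As in the $F_{s0}$ argument, I would split ${}^{(w_i)}\calN = {}^{(w_i)}\calN_{\forcing} + {}^{(w_i)}\calN_{\linear}$ and estimate the two pieces separately at the appropriate $\calL^{1+1, p}_s \dot{\calH}^k_x$ and $\calL^{1+1,p}_s \calL^2_t \dot{\calH}^k_x$ levels. For the forcing term $-2[F_{0\ell}, \covD^\ell F_{0i} + \covD_0 \tensor{F}{^\ell_i}]$, Lemmas~\ref{lem:fundEst4F0i} and \ref{lem:fundEst4DF} reduce the task to bounds on $\calF$, $\calAlow$, $F_{s0}$ and $\covD F$; then Propositions~\ref{prop:pEst4Fsi}, \ref{prop:lowEst4Fsi}, and \ref{prop:pEst4Fs0:high} turn these into bounds by $C_{\calE, \calF, \calAlow} \cdot (\calE + \calF + \calAlow)^2$, provided the derivative count stays within the thresholds set by $\calAlow$ (i.e., $m \leq 30$ in Part~(1), $m \leq 16$ in Part~(2)). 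For the linear part ${}^{(w_i)}\calN_{\linear}$, the first three terms are handled exactly as in Cases~1.2 and 2.2 of the $F_{s0}$ proof (reading $w$ for $F_{s0}$): starting from the product estimates \eqref{eq:linCovHeat:pf:0} and \eqref{eq:pEst4Fs0:low:pf:4}, the Correspondence Principle together with Lemma~\ref{lem:fundEst4A} and the extra $s^{1/4}$-weights yields bounds of the form $C_{\calF,\calAlow} \cdot \nrm{s^{1/4 - \eps'} \nb_x w}_{\calL^{1,2}_s \calH^k_x}$; the new commutator $2[\tensor{F}{_i^\ell}, w_\ell]$ is lower-order and estimated similarly using Propositions~\ref{prop:pEst4Fsi} and \ref{prop:lowEst4Fsi}.

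With these ingredients in hand, the base case ($m=1$ in Part~(1), $m=1$ in Part~(2)) follows from Theorem~\ref{thm:absP:absPth}~(1) with $\psi = w$, $X = L^2_x$ (respectively $L^2_{t,x}$), vanishing data at $s=0$, $C(s) = C_{\calF,\calAlow} \cdot s^{1/4 - \eps'}$ (which is integrable against $ds/s$), $\eps=0$, and $D = C_{\calE,\calF,\calAlow} \cdot (\calE + \calF + \calAlow)^2$. The higher-regularity cases $2 \leq m \leq 30$ in Part~(1) and $2 \leq m \leq 16$ in Part~(2) then follow by the bootstrap machinery of Theorem~\ref{thm:absP:absPth}~(2), in exact parallel with Steps~1 and 2 of the proof of Proposition~\ref{prop:pEst4Fs0:high}. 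Estimate~\eqref{eq:pEst4wi:4} for ${}^{(w_i)}\calN$ itself is then a direct byproduct of the multilinear bounds, once \eqref{eq:pEst4wi:3} is in place.

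The main technical obstacle is the borderline case $m=31$ in \eqref{eq:pEst4wi:2}. At this top regularity level one derivative on $\covD_0 \tensor{F}{^\ell_i} + \covD^\ell F_{0i}$ can land on $\rd_0 \Alow$ at 30 spatial derivatives, which is \emph{not} controlled by $\calAlow$ (recall $\calAlow$ contains only $\nrm{\rd_0 (\curl \Alow)_i}_{L^\infty_t \dot{H}^{29}_x}$ among top-order temporal-derivative norms). The resolution is to promote $\nrm{\rd_0 \Alow(t)}_{\dot{H}^{30}_x}$ to an extra input and invoke Lemma~\ref{lem:fundEst4DF} at $k=29$ with this extra quantity replacing the deficient portion of $\calAlow$; this is exactly what produces the constant $C_{\calE(t), \calF, \calAlow, \nrm{\rd_0 \Alow(t)}_{\dot{H}^{30}_x}}$ in \eqref{eq:pEst4wi:2}. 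The linear-term bound and the abstract parabolic bootstrap are otherwise unchanged.
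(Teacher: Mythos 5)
There is a genuine gap in your Part~(1): you propose to apply Theorem~\ref{thm:absP:absPth}~(1) with $\psi = w$ and $X = L^{2}_{x}$, but the degree of homogeneity of $L^{2}_{x}$ is $\ell_{0} = 3/4$, whereas the weight level you actually want for $w_{i}$ in \eqref{eq:pEst4wi:1} is $\ell = 1$ (consistent with the associated $s$-weight $w_{\nu} \aeq s^{-1}$). Theorem~\ref{thm:absP:absPth}~(1) only produces an a priori bound at the level $\ell = \ell_{0}$; this is forced by the structure of the parabolic energy estimate \eqref{eq:absP:pEst:1}, whose term $C_{X}(\ell - \ell_{0}) \nrm{\psi}_{\calL^{\ell,2}_{s} \calX}$ can only be dropped when $\ell = \ell_{0}$. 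With $\psi = w$ the hypothesis \eqref{eq:absP:apriori:1} would require controlling $\nrm{{}^{(w_{i})}\calN}_{\calL^{7/4,p}_{s} \calL^{2}_{x}}$, i.e.\ $\nrm{s \cdot \nrm{\calN(s)}_{L^{2}_{x}}}_{\calL^{p}_{s}}$; since $\nrm{\calN(s)}_{L^{2}_{x}} \aeq s^{-5/4}$ (from $\calN \aeq s^{-2}$ and $L^{2}_{x} \aeq s^{3/4}$), the integrand is $\aeq s^{-1/4}$ and the required bound fails near $s = 0$. The fix is to regularize by a half derivative exactly as in the proof of Proposition~\ref{prop:pEst4Fs0:low}: set $v_{i} := \abs{\rd_{x}}^{-1/2} w_{i}$, so that $v_{i}$ has associated weight $s^{-3/4}$, matching $\ell_{0}$. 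The equation becomes $(\rd_{s} - \lap) v_{i} = s^{1/4} \abs{\nb_{x}}^{-1/2}({}^{(w_{i})}\calN)$, the multilinear estimates are carried out at the $\dot{\calH}^{-1/2}_{x}$ level (where the extra half derivative supplies the decisive factor of $s^{1/4}$), and after Theorem~\ref{thm:absP:absPth}~(1) one translates back via $\nrm{w}_{\calL^{1,p}_{s}\dot{\calH}^{k}_{x}} = \nrm{v}_{\calL^{3/4,p}_{s}\dot{\calH}^{k+1/2}_{x}}$; only the higher-$m$ bootstrap is done directly on $w_{i}$ via Theorem~\ref{thm:absP:absPth}~(2), which does permit $\ell \neq \ell_{0}$.

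The rest of your outline is correct: for Part~(2) the space $L^{2}_{t,x}$ has $\ell_{0} = 1$, which matches the $w_{i}$ level, so taking $\psi = w$ there is fine (and the paper indeed works directly with $w_{i}$ in that step). Your explanation of why no smallness hypothesis is needed (the vanishing data $w_{i}(s=0) = 0$, and the forcing term $-2\LieBr{F_{0\ell}}{\covD^{\ell}F_{0i} + \covD_{0}\tensor{F}{^{\ell}_{i}}}$ not feeding back into $w$ itself, so $\eps = 0$) and your treatment of the $m = 31$ borderline case via the extra input $\nrm{\rd_{0}\Alow(t)}_{\dot{H}^{30}_{x}}$ both match the paper.
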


\begin{remark} 
Note that Part (1) of Proposition \ref{prop:pEst4wi} does not require a smallness assumption, as opposed to Proposition \ref{prop:pEst4Fs0:low}. Moreover, in comparison with Proposition \ref{prop:pEst4Fs0:high}, we need $m$ derivatives of $\Alow$ (i.e., one more derivative) to estimate $m$ derivatives of $w$ uniformly in $s$. 
\end{remark}

\begin{proof} 
\pfstep{Step 1: Proof of (1), for $1 \leq m \leq 3$}
Fix $t \in (-T, T)$. Let us define $v_{i} := \abs{\rd_{x}}^{-1/2} w_{i}$. From the parabolic equation for $w_{i}$, we derive the following parabolic equation for $v_{i}$:
\begin{equation*}
	(\rd_{s} - \lap) v_{i} = s^{1/4} \abs{\nb_{x}}^{-1/2} ({}^{(w_{i})} \calN),
\end{equation*}
where the right-hand side is evaluated at $t$. Note that $\nrm{w_{i}}_{\calL^{1, p}_{s} \dot{\calH^{k}_{x}}} =\nrm{v_{i}}_{\calL^{3/4,p}_{s} \dot{\calH}^{k+1/2}_{x}}$. The idea, as in the proof of Proposition \ref{prop:pEst4Fs0:low}, is to derive estimates for $v_{i}$ and then to translate to the corresponding estimates for $w_{i}$ using the preceding observation.

We will make two claims: First, for $0 < \subr \leq 1$ and $1 \leq p \leq 2$, the following estimate holds.
\begin{equation} \label{eq:pEst4wi:pf:1}
\begin{aligned}
	\sup_{i} \nrm{{}^{(w_{i})} \calN}_{\calL^{2,p}_{s} \dot{\calH}^{-1/2}_{x} (0, \subr]} 
	\leq & C_{\calE(t), \calF, \calAlow} \cdot  \nrm{s^{1/4-\eps'} v}_{\calL^{3/4,2}_{s} \dot{\calH}^{1}_{x} (0, \subr]} \\
	& +C_{\calE(t), \calF, \calAlow} \cdot  (\calE(t) + \calF + \calAlow)^{2} .
\end{aligned}
\end{equation}

Second, for $k = 1,2$, the following estimate holds.
\begin{equation} \label{eq:pEst4wi:pf:2}
\begin{aligned}
	\sup_{i} \nrm{{}^{(w_{i})} \calN}_{\calL^{2,2}_{s} \dot{\calH}^{k-1/2}_{x} (0, 1]} 
	\leq & C_{\calE(t), \calF, \calAlow} \cdot  \nrm{v}_{\calP^{3/4} \calH^{k+1}_{x} (0, 1]} \\
	& +C_{\calE(t), \calF, \calAlow} \cdot  (\calE(t) + \calF + \calAlow)^{2} .
\end{aligned}
\end{equation}

Note that $\nrm{{}^{(w_{i})} \calN}_{\calL^{2,p}_{s} \dot{\calH}^{k}_{x}(0,\subr]} = \nrm{s^{1/4} \abs{\nb_{x}}^{-1/2} ({}^{(w_{i})} \calN)}_{\calL^{3/4+1,p}_{s} \dot{\calH}^{k+1/2}_{x}(0,\subr]}$. Assuming \eqref{eq:pEst4wi:pf:1} and using the preceding observation, we can apply the first part of Theorem \ref{thm:absP:absPth} to $v_{i}$ (note furthermore that $v_{i} = 0$ at $s=0$), from which we obtain a bound on $\nrm{v}_{\calP^{3/4} \calH^{2}_{x}}$. Next, assuming \eqref{eq:pEst4wi:pf:2} and applying the second part of Theorem \ref{thm:absP:absPth} to $v_{i}$, we can also control $\nrm{v}_{\calP^{3/4} \calH^{4}_{x}}$. Using the fact that $v_{i} = s^{1/4} \abs{\nb_{x}}^{-1/2} w_{i}$, \eqref{eq:pEst4wi:1} now follows.

We are therefore left with the task of establishing \eqref{eq:pEst4wi:pf:1} and \eqref{eq:pEst4wi:pf:2}. For this purpose, we divide ${}^{(w_{i})}  \calN = {}^{(w_{i})}  \calN_{\forcing} + {}^{(w_{i})}  \calN_{\linear}$, and treat each of them separately. 

\pfstep{- Case 1.1: Contribution of ${}^{(w_{i})} \calN_{\forcing}$}
In this case, we work on the whole interval $(0, 1]$. We start with the inequality
\begin{equation*}
	\nrm{\phi_{1} \phi_{2}}_{\dot{H}^{-1/2}_{x}} \leq \nrm{\phi_{1}}_{\dot{H}^{1}_{x}} \nrm{\phi_{2}}_{L^{2}_{x}},
\end{equation*}
which follows from Lemma \ref{lem:homSob}. Using Leibniz's rule, the Correspondence Principle and Lemma \ref{lem:absP:Holder4Ls}, we arrive at the following inequality for $k \geq 0$ and $\frac{1}{r} = \frac{1}{p} - \frac{1}{2}$:
\begin{equation*} 
	\nrm{\calO(\psi_{1}, \psi_{2})}_{\calL^{2,p}_{s} \dot{\calH}^{k-1/2}_{x}} \leq C \nrm{\nb_{x} \psi_{1}}_{\calL^{3/4,r}_{s} \calH^{k}_{x}} \nrm{\psi_{2}}_{\calL^{5/4,2}_{s} \calH^{k}_{x}}. 
\end{equation*}

Let us restrict to $0 \leq k \leq 2$ and put $\psi_{1} = F_{0 \ell}$, $\psi_{2} = \covD^{\ell} F_{0 i} + \covD_{0} \tensor{F}{^{\ell}_{i}}$. In order to estimate $\nrm{\nb_{x} F_{0\ell}}_{\calL^{3/4,r}_{s} \calH^{k}_{x}}$ and $\nrm{\covD^{\ell} F_{0 i} + \covD_{0} \tensor{F}{^{\ell}_{i}}}_{\calL^{5/4,2}_{s} \calH^{k}_{x}}$, we apply Lemmas \ref{lem:fundEst4F0i} (with $p=r$) and \ref{lem:fundEst4DF} (with $p=2$), respectively, from which we obtain an estimate of $\nrm{{}^{(w_{i})} \calN}_{\calL^{2,p}_{s} \dot{\calH}^{k-1/2}_{x}}$ in terms of $\nrm{F_{s0}}$, $\nrm{F_{s}}$ and $\nrm{\Alow}$. The latter two types of terms can be estimated by $\calF$ and $\calAlow$, respectively. Moreover, using Propositions \ref{prop:pEst4Fs0:high}, $\nrm{F_{s0}(t)}$ can be estimated by $\calE(t)$, $\calF$ and $\calAlow$. As a result, for $0 \leq k \leq 2$ and $1 \leq p \leq 2$, we obtain
\begin{equation*}
	\sup_{i} \nrm{{}^{(w_{i})} \calN(t)}_{\calL^{2,p}_{s} \dot{\calH}^{k-1/2}_{x}(0,1]}
	\leq C_{\calE(t), \calF, \calAlow} \cdot  (\calE(t) + \calF + \calAlow)^{2}.
\end{equation*} 
which is good enough for \eqref{eq:pEst4wi:pf:1} and \eqref{eq:pEst4wi:pf:2}.

\pfstep{- Case 1.2: Contribution of ${}^{(w_{i})} \calN_{\linear}$}
Note that ${}^{(w_{i})} \calN_{\linear}$ has the same schematic form as ${}^{(F_{s0})} \calN_{\linear}$. Therefore, the same proof as in Case 2 of the proof of Proposition \ref{prop:pEst4Fs0:low} gives us the estimates
\begin{equation*}
	\sup_{i} \nrm{{}^{(w_{i})} \calN(t)}_{\calL^{2,p}_{s} \dot{\calH}^{-1/2}_{x}(0, \subr]} 
	\leq C_{\calF, \calAlow} \cdot \nrm{s^{1/4-\eps'} \, v}_{\calL^{3/4,2}_{s} \dot{\calH}^{1}_{x}(0,\subr]},
\end{equation*}
for $p =1, 2$, $0 < \subr \leq 1$ and arbitrarily small $\eps' > 0$, and
\begin{equation*}
	\sup_{i} \nrm{{}^{(w_{i})} \calN(t)}_{\calL^{2,2}_{s} \dot{\calH}^{k-1/2}_{x}(0,1]} 
	\leq C_{\calF, \calAlow} \nrm{v}_{\calP^{3/4} \calH^{k+1}_{x}(0,1]},
\end{equation*}
for $k=1,2$. Combined with the previous case, we obtain \eqref{eq:pEst4wi:pf:1} and \eqref{eq:pEst4wi:pf:2}.

\pfstep{Step 2: Proof of (1), for $m \geq 4$}
By working with $v_{i}$ instead of $w_{i}$, we were able to prove the {\it a priori} estimate \eqref{eq:pEst4wi:1} for low $m$ by an application of Theorem \ref{thm:absP:absPth}. The drawback of this approach, as in the case of $F_{s0}$, is that the estimate that we derive is not good enough in terms of the necessary number of derivatives of $\Alow$. In order to prove \eqref{eq:pEst4wi:1} for higher $m$, and \eqref{eq:pEst4wi:2} as well, we revert back to the parabolic equation for $w_{i}$.

We claim that the following estimate holds for $k \geq 2$:
\begin{equation} \label{eq:pEst4wi:pf:3}
\begin{aligned}
	\sup_{i} \nrm{{}^{(w_{i})} \calN(t)}_{\calL^{1+1,2}_{s} \dot{\calH}^{k}_{x}(0,1]} 
	\leq & C_{\calF, \nrm{\rd_{t,x} \Alow}_{H^{k}_{x}}} \cdot \nrm{\nb_{x} w}_{\calL^{1,2}_{s} \calH^{k}_{x}(0,1]}\\
	& + C_{\calE(t), \calF, \nrm{\rd_{t,x} \Alow(t)}_{H^{k+1}_{x}}} (\calE(t) + \calF + \nrm{\rd_{t,x} \Alow(t)}_{H^{k+1}_{x}})^{2}.
\end{aligned}
\end{equation}

Assuming the claim, let us first finish the proof of (1). Note that for $0 \leq k \leq 29$, we have $\nrm{\rd_{t,x} \Alow}_{H^{k}_{x}} \leq \calAlow$. Therefore, every norm $\nrm{\rd_{t,x} \Alow}$ arising in \eqref{eq:pEst4wi:pf:3} for $2 \leq k \leq 28$ can be estimated by $\calAlow$. Using this, along with the estimate \eqref{eq:pEst4wi:1} for $1 \leq m \leq 3$ which has been established in Step 1, we can apply the second part of Theorem \ref{thm:absP:absPth} to conclude \eqref{eq:pEst4wi:1} for all $4 \leq m \leq 30$.

Note, on the other hand, that for $k= 30$ we only have $\nrm{\rd_{t,x} \Alow}_{H^{30}_{x}} \leq \calAlow + \nrm{\rd_{0} \Alow}_{\dot{H}^{30}_{x}}$. From \eqref{eq:pEst4wi:pf:3}, we therefore obtain the estimate
\begin{equation*}
\begin{aligned}
	\sup_{i} \nrm{{}^{(w_{i})} \calN(t)}_{\calL^{1+1,2}_{s} \dot{\calH}^{29}_{x}(0,1]} 
	\leq & C_{\calF, \calAlow} \cdot \nrm{\nb_{x} w}_{\calL^{1,2}_{s} \calH^{29}_{x}(0,1]}\\
	& + C_{\calE(t), \calF, \calAlow, \nrm{\rd_{0} \Alow(t)}_{\dot{H}^{30}_{x}}} (\calE(t) + \calF + \calAlow + \nrm{\rd_{0} \Alow(t)}_{\dot{H}^{30}_{x}})^{2}.
\end{aligned}
\end{equation*}

Combining this with the case $k=30$ of \eqref{eq:pEst4wi:1}, an application of the second part of Theorem \ref{thm:absP:absPth} gives \eqref{eq:pEst4wi:2}.

We are therefore only left to prove \eqref{eq:pEst4wi:pf:3}. As usual, we will treat ${}^{(w_{i})} \calN_{\forcing}$ and ${}^{(w_{i})} \calN_{\linear}$ separately, and work on the whole interval $(0, 1]$ in both cases. 

\pfstep{- Case 2.1: Contribution of ${}^{(w_{i})} \calN_{\forcing}$}
As in Case 1.1 in the proof of Proposition \ref{prop:pEst4Fs0:high}, we begin with the inequality $\nrm{\phi_{1} \phi_{2}}_{\dot{H}^{2}_{x}} \leq C \nrm{\phi_{1}}_{\dot{H}^{3/2}_{x} \cap L^{\infty}_{x}} \nrm{\phi_{2}}_{\dot{H}^{2}_{x}} + C \nrm{\phi_{1}}_{\dot{H}^{2}_{x}} \nrm{\phi_{2}}_{\dot{H}^{3/2}_{x} \cap L^{\infty}_{x}}$ and apply Leibniz's rule, the Correspondence Principle, Lemma \ref{lem:absP:Holder4Ls} and Lemma \ref{lem:absP:algEst}. As a result, for $k \geq 2$, we obtain
\begin{equation*}
	\nrm{\calO(\psi_{1}, \psi_{2})}_{\calL^{1+1,2}_{s} \dot{\calH}^{k}_{x}}
	\leq C \nrm{\nb_{x} \psi_{1}}_{\calL^{3/4,\infty}_{x} \calH^{k-1}_{x}} \nrm{\nb_{x} \psi_{2}}_{\calL^{5/4,2}_{x} \calH^{k-1}_{x}}
\end{equation*}

As in Case 1.1, we put $\psi_{1} = F_{0 \ell}$, $\psi_{2} = \covD^{\ell} F_{0i} + \covD_{0} \tensor{F}{^{\ell}_{i}}$, and apply Lemmas \ref{lem:fundEst4F0i} (with $p=\infty$) and \ref{lem:fundEst4DF} (with $p =2$), by which we obtain an estimate of $\nrm{{}^{(w_{i})} \calN_{\forcing}}_{\calL^{2,p}_{s} \dot{\calH}^{k}_{x}}$ in terms of $\nrm{F_{s0}}$, $\nrm{F_{si}}$ and $\nrm{\rd_{t,x} \Alow}$. Using Proposition \ref{prop:pEst4Fs0:high} and Proposition \ref{prop:pEst4Fsi} in order, we can estimate $\nrm{F_{s0}}$ and $\nrm{F_{si}}$ in terms of $\calE(t)$, $\calF$ and $\nrm{\rd_{t,x} \Alow}$. At this point, one may check that all $\nrm{\rd_{t,x} \Alow}$ that have arisen can be estimated by $\nrm{\rd_{t,x} \Alow(t)}_{H^{k+1}_{x}}$. As a result, we obtain the following estimate for $k \geq 2$:
\begin{equation*}
	\sup_{i} \nrm{{}^{(w_{i})} \calN(t)}_{\calL^{2,2}_{s} \dot{\calH}^{k}_{x}(0,1]}
	\leq C_{\calE(t), \calF, \nrm{\rd_{t,x} \Alow(t)}_{H^{k+1}_{x}}} (\calE(t) + \calF + \nrm{\rd_{t,x} \Alow(t)}_{H^{k+1}_{x}})^{2},
\end{equation*}
which is good.

\pfstep{- Case 2.2: Contribution of ${}^{(w_{i})} \calN_{\linear}$}
As ${}^{(w_{i})} \calN_{\linear}$ looks schematically the same as ${}^{(F_{s0})} \calN_{\linear}$, Step 1.2 of the proof of Proposition \ref{prop:pEst4Fs0:high} immediately gives
\begin{equation*}
	\sup_{i} \nrm{{}^{(w_{i})} \calN_{\linear}}_{\calL^{1+1,2}_{s} \dot{\calH}^{k}_{x}(0,1]} 
	\leq C_{\calF, \nrm{\rd_{t,x} \Alow}_{H^{k}_{x}}} \cdot \nrm{\nb_{x} w}_{\calL^{1,2}_{s} \calH^{k}_{x}(0,1]},
\end{equation*}
for $k \geq 1$. Combined with the previous case, this proves \eqref{eq:pEst4wi:pf:3}, as desired.

\pfstep{Step 3: Proof of (2)} 
Let $0 \leq k \leq 14$, where $k$ corresponds to the number of times the equation $(\rd_{s} - \lap) w_{i} = {}^{(w_{i})} \calN$ is differentiated. The range has been chosen so that Proposition \ref{prop:pEst4Fs0:high} can be applied to estimate every norm of $F_{s0}$ which arises in terms of $\calE$, $\nrm{F_{s}}$ and $\nrm{\calAlow}$, and furthermore so that all $\nrm{F_{s}}$ and $\nrm{\Alow}$ that arise can be estimated by $C_{\calF, \calAlow} \cdot \calF$ (by Proposition \ref{prop:pEst4Fsi}) and $\calAlow$, respectively.

We claim that for $\eps' > 0$ small enough, $0 \leq k \leq 14$ an integer, $1 \leq p \leq q \leq \infty$ and $0 < \subr \leq 1$, the following estimate holds:
\begin{equation} \label{eq:pEst4wi:pf:4}
	\sup_{i} \nrm{{}^{(w_{i})} \calN}_{\calL^{1+1, p}_{s} \calL^{2}_{t} \dot{\calH}^{k}_{x} (0, \subr]}
	\leq C_{\calF, \calAlow} \cdot \nrm{s^{1/4-\eps'} \nb_{x} w}_{\calL^{1,q}_{s} \calL^{2}_{t} \calH^{k}_{x}(0,\subr]} + C_{\calE, \calF, \calAlow} \cdot (\calE + \calF + \calAlow)^{2}.
\end{equation}

Assuming the claim, let us prove (2). Taking $k=0$, $p = 1,2$ and $q=2$, we may apply the first part of Theorem \ref{thm:absP:absPth} (along with the fact that $w = 0$ at $s=0$) to obtain \eqref{eq:pEst4wi:3} in the cases $m=1,2$. Combining this with \eqref{eq:pEst4wi:pf:4} in the cases $1 \leq k \leq 14$, $p=q=2$ and $\subr = 1$, we can apply the second part of Theorem \ref{thm:absP:absPth} to obtain the rest of \eqref{eq:pEst4wi:3}. Finally, considering \eqref{eq:pEst4wi:pf:4} with $0 \leq k \leq 14$ with $p=2, \infty$, $q=\infty$ and $\subr = 1$, and estimating $\nrm{\nb_{x} w}_{\calL^{1,\infty}_{s} \calH^{k}_{x}(0,1]}$ in the first term on the right-hand side by \eqref{eq:pEst4wi:3}, we obtain \eqref{eq:pEst4wi:4}, which finishes the proof of Part (2).

It therefore only remain to prove \eqref{eq:pEst4wi:pf:4}, for which we split ${}^{(w_{i})} \calN = {}^{(w_{i})} \calN_{\forcing} + {}^{(w_{i})} \calN_{\linear}$ as usual.

\pfstep{- Case 3.1: Contribution of ${}^{(w_{i})} \calN_{\forcing}$}
In this case, we work on the whole interval $(0, 1]$. 

Let us begin with the inequality $\nrm{\phi_{1} \phi_{2}}_{L^{2}_{t,x}} \leq \nrm{\phi_{1}}_{L^{4}_{t,x}} \nrm{\phi_{2}}_{L^{4}_{t,x}}$. Applying Leibniz's rule, the Correspondence Principle, Lemma \ref{lem:absP:Holder4Ls}, we obtain, for $k \geq 0$ and $1 \leq p \leq \infty$,
\begin{equation*}
	\nrm{\calO(\psi_{1}, \psi_{2})}_{\calL^{1+1,p}_{s} \calL^{2}_{t} \dot{\calH}^{k}_{x}}
	\leq C \nrm{\psi_{1}}_{\calL^{3/4, r_{1}}_{s} \calL^{4}_{t} \calW^{4, k}_{x}} \nrm{\psi_{2}}_{\calL^{5/4, r_{2}}_{s} \calL^{4}_{t} \calW^{4, k}_{x}}
\end{equation*}
where $\frac{1}{p} = \frac{1}{r_{1}} + \frac{1}{r_{2}}$. Since $p \geq 1$, we may choose $r_{1}, r_{2}$ so that $r_{1}, r_{2} \geq 2$. As before, let us take $\psi_{1} = F_{0\ell}$ and  $\psi_{2} = \covD^{\ell} F_{0i} + \covD_{0} \tensor{F}{^{\ell}_{i}}$ and apply Lemma \ref{lem:fundEst4F0i} (with $p=r_{1}$) and Lemma \ref{lem:fundEst4DF} (with $p=r_{2}$), respectively. Then we apply Proposition \ref{prop:pEst4Fs0:high} and Proposition \ref{prop:pEst4Fsi} in sequence, where we remark that both can be applied thanks to the restriction $0 \leq k \leq 14$. As a result, we obtain an estimate of $\nrm{{}^{(w_{i})} \calN_{\forcing}}_{\calL^{2,p}_{s} \calL^{2}_{t} \dot{\calH}^{k}_{x}}$ in terms of $\calE, \calF$ and $\nrm{\rd_{t,x} \Alow}$. One may then check that all terms that arise are at least quadratic in the latter three quantities, and furthermore that each $\nrm{\rd_{t,x} \Alow}$ which has arisen can be estimated by $\calAlow$, thanks again to the restriction $0 \leq k \leq 14$. In the end, we obtain, for $0 \leq k \leq 14$ and $1 \leq p \leq \infty$, the following estimate:
\begin{equation*}
	\sup_{i} \nrm{{}^{(w_{i})} \calN_{\forcing}}_{\calL^{1+1, p}_{s} \calL^{2}_{t} \dot{\calH}^{k}_{x} (0,1]} \leq C_{\calE, \calF, \calAlow} \cdot (\calE + \calF + \calAlow)^{2}.
\end{equation*}

\pfstep{- Case 3.2: Contribution of ${}^{(w_{i})} \calN_{\linear}$} As before, we utilize the fact that  ${}^{(w_{i})} \calN_{\linear}$ looks schematically the same as ${}^{(F_{s0})} \calN_{\linear}$. Consequently, Step 2.2 of the proof of Proposition \ref{prop:pEst4Fs0:high} implies
\begin{equation*}
	\sup_{i} \nrm{{}^{(w_{i})} \calN_{\linear}}_{\calL^{1+1,p}_{s} \calL^{2}_{t} \dot{\calH}^{k}_{x}(0,\subr]} 
	\leq C_{\calF, \calAlow} \cdot \nrm{s^{1/4-\eps'} \nb_{x} w}_{\calL^{1,r}_{s} \calL^{2}_{t} \calH^{k}_{x}(0,\subr]},
\end{equation*}
for $\eps' > 0$ small, $0 \leq k \leq 14$, $1 \leq p \leq r \leq \infty$ and $0 < \subr \leq 1$. Combined with the previous case, we obtain \eqref{eq:pEst4wi:pf:4}. \qedhere
\end{proof}

Again, by essentially the same proof, the following difference analogue of Proposition \ref{prop:pEst4wi} follows.
\begin{proposition} [Parabolic estimates for $\dlt w_{i}$] \label{prop:pEst4wi:Diff} 
Suppose $0 < T \leq 1$, and that the caloric-temporal gauge condition holds.
\begin{enumerate}
\item Let $t \in (-T, T)$. For $1 \leq m \leq 30$ we have
\begin{equation} \label{eq:pEst4wi:Diff:1}
\begin{aligned}
	\nrm{\dlt w_{i}(t)}_{\calL^{1,\infty}_{s} \dot{\calH}^{m-1}_{x}(0,1]} + \nrm{\dlt w_{i}(t)}_{\calL^{1,2}_{s} \dot{\calH}^{m}_{x}(0,1]} 
	\leq C_{\calE(t), \calF, \calAlow} \cdot  (\calE(t) + \calF + \calAlow) (\dlt \calE(t) + \dlt \calF + \dlt \calAlow).
\end{aligned}
\end{equation}

In the case $m =31$, on the other hand, we have the following estimate.
\begin{equation} \label{eq:pEst4wi:Diff:2}
\begin{aligned}
	\nrm{\dlt w_{i}(t)&}_{\calL^{1,\infty}_{s} \dot{\calH}^{30}_{x}(0,1]} + \nrm{\dlt w_{i}(t)}_{\calL^{1,2}_{s} \dot{\calH}^{31}_{x}(0,1]} \\
	\leq & C_{\calE(t), \calF, \calAlow, \nrm{\rd_{0} \Alow(t)}_{\dot{H}^{30}_{x}}} \cdot  (\calE(t) + \calF + \calAlow + \nrm{\rd_{0} \Alow(t)}_{\dot{H}^{30}_{x}}) \\
	& \times (\dlt \calE(t) + \dlt \calF + \dlt \calAlow + \nrm{\rd_{0} (\dlt \Alow)(t)}_{\dot{H}^{30}_{x}})
\end{aligned}
\end{equation}
%\comment{Maybe it is more convenient to use only $\rd_{0} \, \div \Alow$.}

\item For $1 \leq m \leq 16$, we have
\begin{equation} \label{eq:pEst4wi:Diff:3}
	\nrm{\dlt w_{i}}_{\calL^{1,\infty}_{s} \calL^{2}_{t} \dot{\calH}^{m-1}_{x}(0,1]} + \nrm{\dlt w_{i}}_{\calL^{1,2}_{s} \calL^{2}_{t} \dot{\calH}^{m}_{x}(0,1]} 
	\leq C_{\calE, \calF, \calAlow} \cdot (\calE + \calF + \calAlow)(\dlt \calE + \dlt \calF + \dlt \calAlow).
\end{equation}

Furthermore, for $0 \leq k \leq 14$, we have the following estimate for ${}^{(\dlt w_{i})} \calN := (\rd_{s} - \lap) (\dlt w_{i})$.
\begin{equation} \label{eq:pEst4wi:Diff:4}
	\nrm{{}^{(\dlt w_{i})} \calN}_{\calL^{2,\infty}_{s} \calL^{2}_{t} \dot{\calH}^{k}_{x} (0,1]} + \nrm{{}^{(\dlt w_{i})} \calN}_{\calL^{2,2}_{s} \calL^{2}_{t} \dot{\calH}^{k}_{x} (0,1]}
	\leq C_{\calE, \calF, \calAlow} \cdot (\calE + \calF + \calAlow) (\dlt \calE + \dlt \calF + \dlt \calAlow).
\end{equation}
\end{enumerate}
\end{proposition}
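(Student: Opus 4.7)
The plan is to mimic the three-step structure of the proof of Proposition \ref{prop:pEst4wi}, replacing each multilinear estimate by its difference version via the formal Leibniz rule for $\dlt$. Taking the difference of the parabolic equations for $w_i$ and $w'_i$ gives $(\rd_s - \lap)(\dlt w_i) = {}^{(\dlt w_i)}\calN$, and an application of the formal Leibniz's rule for $\dlt$ shows that ${}^{(\dlt w_i)}\calN = {}^{(\dlt w_i)}\calN_{\forcing} + {}^{(\dlt w_i)}\calN_{\linear}$ where each summand is a sum of terms with the same schematic structure as the corresponding term in ${}^{(w_i)}\calN$, but with $\dlt$ applied to exactly one factor at a time. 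Observe also that $\dlt w_i = 0$ at $s=0$, since $w_i = w'_i = 0$ there.

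For the low-$m$ cases $1 \leq m \leq 3$, set $\dlt v_i := \abs{\rd_x}^{-1/2} \dlt w_i$, which satisfies $(\rd_s - \lap)(\dlt v_i) = s^{1/4} \abs{\nb_x}^{-1/2}({}^{(\dlt w_i)}\calN)$ with zero data at $s=0$. The goal is to establish the difference analogues of \eqref{eq:pEst4wi:pf:1} and \eqref{eq:pEst4wi:pf:2}, from which both parts of Theorem \ref{thm:absP:absPth} give \eqref{eq:pEst4wi:Diff:1} for $1 \leq m \leq 3$. For the forcing contribution, apply the same multilinear inequality $\nrm{\phi_1 \phi_2}_{\dot{H}^{-1/2}_x} \lesssim \nrm{\phi_1}_{\dot{H}^1_x}\nrm{\phi_2}_{L^2_x}$, place $\dlt$ on one of the two factors $F_{0\ell}$ or $\covD^{\ell} F_{0i} + \covD_0 F^\ell_{\phantom{\ell}i}$, and estimate the $\dlt$-factor using Lemma \ref{lem:fundEst4F0i:Diff}, Lemma \ref{lem:fundEst4DF:Diff}, and Proposition \ref{prop:pEst4Fs0:high:Diff} while estimating the other factor by the corresponding non-difference Lemmas \ref{lem:fundEst4F0i}, \ref{lem:fundEst4DF}, and Proposition \ref{prop:pEst4Fs0:high}. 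For the linear contribution, since ${}^{(\dlt w_i)}\calN_{\linear}$ has exactly the same schematic structure (modulo Leibniz) as ${}^{(F_{s0})}\calN_{\linear}$ with $w$ replaced by $\dlt w$ (and some terms with $\dlt A$ in place of $A$), Case 2 of the proof of Proposition \ref{prop:pEst4Fs0:low} applies verbatim, with $\nrm{A}$ factors carrying $\dlt$ estimated via Lemma \ref{lem:fundEst4A} applied to $\dlt A_i$ (using the fundamental identity $\rd_s (\dlt A_i) = \dlt F_{si}$).

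For $4 \leq m \leq 30$, prove the difference analogue of \eqref{eq:pEst4wi:pf:3}, controlling $\sup_i \nrm{{}^{(\dlt w_i)}\calN(t)}_{\calL^{1+1,2}_s \dot{\calH}^k_x}$ by $C_{\calF,\calAlow}\nrm{\nb_x (\dlt w)}_{\calL^{1,2}_s \calH^k_x}$ plus a term linear in $\dlt \calE(t) + \dlt \calF + \nrm{\rd_{t,x}(\dlt \Alow)(t)}_{H^{k+1}_x}$. The iterative smoothing in the second part of Theorem \ref{thm:absP:absPth}, combined with the base case from the previous step, then yields \eqref{eq:pEst4wi:Diff:1} in this range. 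The case $m=31$ is identical except that $\nrm{\rd_{t,x}(\dlt \Alow)(t)}_{H^{30}_x}$ must be split as $\dlt \calAlow + \nrm{\rd_0(\dlt \Alow)(t)}_{\dot{H}^{30}_x}$, exactly as in the proof of \eqref{eq:pEst4wi:2}. For Part (2), follow Step 3 of the non-difference proof: establish the difference analogue of \eqref{eq:pEst4wi:pf:4} using the $L^4_{t,x}$-Strichartz variants of Lemmas \ref{lem:fundEst4F0i:Diff} and \ref{lem:fundEst4DF:Diff} together with Propositions \ref{prop:pEst4Fsi:Diff} and \ref{prop:pEst4Fs0:high:Diff} for the forcing term, and recycle Step~2.2 of the proof of Proposition \ref{prop:pEst4Fs0:high} for the linear term. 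Estimate \eqref{eq:pEst4wi:Diff:4} for ${}^{(\dlt w_i)}\calN$ then follows by feeding \eqref{eq:pEst4wi:Diff:3} back into the difference analogue of \eqref{eq:pEst4wi:pf:4}.

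The main obstacle is purely bookkeeping: every multilinear estimate in the non-difference proof becomes a sum over which factor carries the $\dlt$, and one must verify that in each summand the $\dlt$-factor sits in a norm controlled by one of the existing difference propositions/lemmas while the remaining factors fit into the non-difference estimates, all the while respecting the scaling weights so that Lemma \ref{lem:absP:Holder4Ls} and the Correspondence Principle apply. No genuinely new analytic input is required beyond the difference lemmas already collected in Sections \ref{subsec:pEst4HPYM:prelim}--\ref{subsec:pEst4Fs0}; the delicate point is the derivative counting in the $m=31$ case, which requires isolating the single $\rd_0\Alow$ at top order exactly as in the non-difference proof.
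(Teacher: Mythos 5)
Your plan is correct and matches what the paper itself intends: the paper states only that Proposition \ref{prop:pEst4wi:Diff} follows "by essentially the same proof" as Proposition \ref{prop:pEst4wi}, and you have accurately unpacked what that means — push $\dlt$ through the nonlinearity by the formal Leibniz rule, estimate the $\dlt$-carrying factor by the corresponding difference lemma (Lemmas \ref{lem:fundEst4F0i:Diff}, \ref{lem:fundEst4DF:Diff}, Propositions \ref{prop:pEst4Fs0:high:Diff} and the difference analogue of Proposition \ref{prop:pEst4Fsi}) and the remaining factors by the non-difference versions, noting that $\dlt w_i$ also vanishes at $s=0$ so the $\abs{\rd_x}^{-1/2}$ renormalization and the two parts of Theorem \ref{thm:absP:absPth} apply unchanged. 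The derivative-count split at $m=31$ and the treatment of the $\dlt A \cdot \nb w$ contributions (which migrate from the Gronwall side to the source side, bounded by the non-difference estimate for $w$ times a $\dlt$-quantity) are both handled correctly.
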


\section{Proofs of Propositions \ref{prop:est4a0} - \ref{prop:cont4FA}} \label{sec:pfOfProps}
In this section, we will sketch the proofs of Propositions \ref{prop:est4a0} - \ref{prop:cont4FA}. 
%We remark that this part may be read independently from other parts of the paper, as the techniques used here will not be used anywhere else.

\begin{proof}[Proof of Proposition \ref{prop:est4a0}]
We will give a proof of the non-difference estimate \eqref{eq:est4a0}, leaving the similar case of the difference estimate \eqref{eq:est4dltA0} to the reader.

In what follows, we work on the time interval $I = (-T, T)$. Recalling the definition of $\calA_{0}$, we need to estimate 
$\nrm{A_{0}(s=0)}_{L^{\infty}_{t} L^{3}_{x}}$, $\nrm{\rd_{x} A_{0}(s=0)}_{L^{\infty}_{t} L^{2}_{x}}$, $\nrm{A_{0}(s=0)}_{L^{1}_{t} L^{\infty}_{x}}$, $\nrm{\rd_{x} A_{0}(s=0)}_{L^{1}_{t} L^{3}_{x}}$ and $\nrm{\rd_{x}^{(2)} A_{0}(s=0)}_{L^{1}_{t} L^{2}_{x}}$ 
by the right-hand side of \eqref{eq:est4a0}.

Using $\rd_{s} A_{0} = F_{s0}$, the first two terms can be estimated simply by $C \calE$ as follows.
\begin{align*}
	& \nrm{A_{0}(s=0)}_{L^{\infty}_{t} L^{3}_{x}} + \nrm{\rd_{x} A_{0}(s=0)}_{L^{\infty}_{t} L^{2}_{x}}   \\
	& \quad  \leq  \sup_{t \in I} \int_{0}^{1} (s')^{1/2} (s')\nrm{F_{s0}(t, s')}_{\calL^{3}_{x}(s')} \frac{\ud s'}{s'} 
	+ \sup_{t \in I} \int_{0}^{1} (s')^{1/4} (s') \nrm{\nb_{x} F_{s0}(t, s')}_{ \calL^{2}_{x}(s')} \, \frac{\ud s'}{s'} \\
	& \phantom{\quad } \leq C \calE.
\end{align*}

For the next two terms, using H\"older in time, it suffices to estimate $\nrm{A_{0}(s=0)}_{L^{2}_{t} L^{\infty}_{x}}$, $\nrm{\rd_{x} A_{0}(s=0)}_{L^{2}_{t} L^{3}_{x}}$. Using \eqref{eq:pEst4Fs0:high:2} of Proposition \ref{prop:pEst4Fs0:high}, along with Gagliardo-Nirenberg, interpolation and Sobolev, these are estimated as follows.
\begin{align*}
	\nrm{A_{0}(s=0)}_{L^{2}_{t} L^{\infty}_{x}} + \nrm{\rd_{x} A_{0}(s=0)}_{L^{2}_{t} L^{3}_{x}} 
	\leq & \int_{0}^{1} (s')^{1/4} (s') \bb( \nrm{F_{s0}(s')}_{\calL^{2}_{t} \calL^{\infty}_{x}(s')} + \nrm{\nb_{x} F_{s0}(s')}_{\calL^{2}_{t} \calL^{3}_{x}(s')} \bb) \, \frac{\ud s'}{s'} \\
	\leq & C_{\calF, \calAlow} \cdot \calE + C_{\calF, \calAlow} \cdot (\calF+\calAlow)^{2}.
\end{align*}

Unfortunately, the same argument applied to the term $\nrm{\rd_{x}^{(2)} A_{0}(s=0)}_{L^{1}_{t} L^{2}_{x}}$ fails by a logarithm. In this case, we make use of the equations $\rd_{s} A_{0} = F_{s0}$ and the parabolic equation for $F_{s0}$. Indeed, let us begin by writing
\begin{align*}
	\lap A_{0}(s=0) 
	=& - \int_{0}^{1} \lap F_{s0}(s') \, \ud s' 
	= - \int_{0}^{1} \rd_{s} F_{s0}(s') \, \ud s' + \int_{s}^{1} {}^{(F_{s0})} \calN(s') \, \ud s' \\
	= & \Flow_{s0} + \int_{s}^{1} s' ({}^{(F_{s0})} \calN)(s') \, \frac{\ud s'}{s'},
\end{align*}
where on the last line, we used the fact that $F_{s0}(s=0) = -w_{0}(s=0) = 0$. Taking the $L^{2}_{t,x}$ norm of the above identity and applying triangle and Minkowski, we obtain
\begin{equation*}
	\nrm{\lap A_{0}(s=0)}_{L^{2}_{t,x}} \leq \nrm{\Flow_{s0}}_{L^{2}_{t,x}}  +  \int_{s}^{1} s' \nrm{{}^{(F_{s0})} \calN(s')}_{L^{2}_{t,x}} \, \frac{\ud s'}{s'}. 
\end{equation*}

The first term can be estimated using \eqref{eq:pEst4Fs0:high:2}, whereas the last term can be estimated by putting together  \eqref{eq:pEst4Fs0:high:pf:2:1} (in the proof of Proposition \ref{prop:pEst4Fs0:high}) and \eqref{eq:pEst4Fs0:high:2}. As a consequence, we obtain
\begin{equation*}
	\nrm{\lap A_{0}(s=0)}_{L^{2}_{t,x}} \leq C_{\calF, \calAlow} \cdot \calE + C_{\calF, \calAlow} \cdot (\calF+\calAlow)^{2}.
\end{equation*}

By a simple integration by parts\footnote{The fact that $A_{0}(t, s=0) \in H^{m}_{x}$ for any $m \geq 0$ can be used to show that the boundary terms vanish at the spatial infinity.}, it follows that $\nrm{\rd_{x}^{(2)} A_{0}(s=0)}_{L^{2}_{t, x}} \leq C \nrm{\lap A_{0}(s=0)}_{L^{2}_{t,x}}$. Then by H\"older in time, the desired $L^{1}_{t} L^{2}_{x}$-estimate follows. This completes the proof of \eqref{eq:est4a0}. \qedhere
\end{proof}

\begin{proof} [Proof of Proposition \ref{prop:est4ai}]
Again, we will only treat the non-difference case, as the difference case follows by essentially the same arguments.

The goal is to estimate $\sup_{i} \sup_{0 \leq s \leq 1} \nrm{A_{i}(s)}_{\SH^{1}}$ in terms of $\calF+\calAlow$. Note that, proceeding naively, one can easily prove the bound
\begin{equation} \label{eq:est4A:proof:0}
	\nrm{A_{i}(s)}_{\SH^{1}} \leq \int_{s}^{1} s' \nrm{F_{si}(s')}_{\SH^{1}} \, \frac{\ud s'}{s'} + \nrm{\Alow_{i}}_{\SH^{1}} \leq \abs{\log s}^{1/2} \calF + \calAlow .
\end{equation}

% In fact, such a bound sufficed for many purposes so far. However, In the end, it is essential that we remove this logarithm. 
 The essential reason for having a logarithm is that we have an \emph{absolute integral} of $\nrm{F_{si}(s')}_{\SH^{1}}$ in the inequality, whereas $\calF$ only controls its \emph{square integral}. The idea then is to somehow replace this absolute integral with a square integral, using the structure of the Yang-Mills system.

We start with the equation satisfied by $A_{i}$ under the condition $A_{s} = 0$. 
\begin{equation} \label{eq:est4A:proof:1}
	\rd_{s} A_{i} = \lap A_{i} - \rd^{\ell} \rd_{i} A_{\ell} + {}^{(A_{i})} \calN',
\end{equation}
where
\begin{equation*}
	{}^{(A_{i})} \calN' = \calO(A, \rd_{x} A) + \calO(A, A, A).
\end{equation*}

Fix $t \in (-T, T)$. Let us take $\rd_{t,x}$ of \eqref{eq:est4A:proof:1}, take the bi-invariant inner product\footnote{In fact, for the purpose of this argument, it is possible to use any inner product on $\LieAlg$ for which Leibniz's rule holds, so that integration by parts works.} with $\rd_{t,x} A_{i}$ and integrate over $\bbR^{3} \times [s, 1]$, for $0 < s \leq 1$. Summing up in $i$ and performing integration by parts, we obtain the following identity.
 \begin{align*} 
\frac 1 2 \sum_{i} \int \abs{\rd_{t,x} A_{i} (s)}^{2} \,\ud x  
 = & \frac 1 2 \sum_{i} \int \abs{\rd_{t,x} \Alow_{i}}^2 \, \ud x  - \sum_{i} \int_{s}^{1} \int s' (\rd_{t,x}({}^{(A_{i})} \calN'), \rd_{t,x} A_{i}) (s') \, \ud x \, \frac{\ud s'}{s'} \\
& + \sum_{i,\ell} \int_{s}^{1} \int s' \abs{\rd_{\ell} \rd_{t,x} A_{i}(s')}^2 \, \ud x \frac{\ud s'}{s'}  - \sum_{\ell} \int_{s}^{1} s' \abs{\rd_{t,x} \rd_{\ell} A_{\ell}(s')}^{2} \, \ud x  \, \frac{\ud s'}{s'}.
\end{align*}

Take the supremum over $0 \leq s \leq 1$, and apply Cauchy-Schwarz and H\"older to deal with the second term on the right-hand side. Then taking the supremum over $t \in (-T, T)$ and applying Minkowski, we easily arrive at the following inequality.
\begin{align*}
	\sup_{0 \leq s \leq 1} \nrm{\rd_{t,x} A(s)}_{L^{\infty}_{t} L^{2}_{x}} 
	\leq & C \nrm{\rd_{t,x} \Alow}_{L^{\infty}_{t} L^{2}_{x}} + C  \bb( \int_{0}^{1} s \nrm{\rd_{x} \rd_{t,x} A(s)}_{L^{\infty}_{t} L^{2}_{x}}^{2} \, \frac{\ud s}{s} \bb)^{1/2} \\
	& + C \sup_{i} \int_{0}^{1} s \nrm{\rd_{t,x}( {}^{(A_{i})} \calN')(s)}_{L^{\infty}_{t} L^{2}_{x}} \, \frac{\ud s}{s}.
\end{align*}

Similarly, taking $\Box$ of \eqref{eq:est4A:proof:1}, multiplying by $\Box A_{i}$, integrating over $(-T, T) \times \bbR^{3} \times [s, 1]$ and etc, we can also prove 
\begin{align*}
	 \sup_{0 \leq s \leq 1} \nrm{\Box A(s)}_{L^{2}_{t,x}} 
	\leq & C  \nrm{\Box \Alow}_{ L^{2}_{t,x}} + C  \bb( \int_{0}^{1} s \nrm{\rd_{x} \Box A(s)}_{L^{2}_{t,x}}^{2} \, \frac{\ud s}{s} \bb)^{1/2} \\
	& + C \sup_{i} \int_{0}^{1} s \nrm{\Box ({}^{(A_{i})} \calN')(s)}_{L^{2}_{t,x}} \, \frac{\ud s}{s}.
\end{align*}

Combining the last two inequalities and recalling the definition of the norm $\SH^{k}$, we get
\begin{equation*} 
	 \sup_{0 \leq s \leq 1} \nrm{ A(s)}_{\SH^{1}} 
	\leq C  \nrm{ \Alow}_{\SH^{1}} + C  \bb( \int_{0}^{1} s \nrm{A(s)}_{\SH^{2}}^{2} \, \frac{\ud s}{s} \bb)^{1/2} 
	+ C  \sup_{i} \int_{0}^{1} s \nrm{ {}^{(A_{i})} \calN'(s)}_{\SH^{1}} \, \frac{\ud s}{s}.
\end{equation*}

Applying Lemma \ref{lem:fundEst4A} (with $p=q=2$) to the second term on the right-hand side, we finally arrive at the following inequality.
\begin{equation} \label{eq:est4A:proof:2}
 \sup_{0 \leq s \leq 1} \nrm{ A(s)}_{\SH^{1}} 
	\leq C  \nrm{ \Alow}_{\SH^{1}} 
	+ C (\nrm{F_{s}}_{\calL^{5/4,2}_{s} \dot{\calS}^{2}} + \nrm{ \Alow}_{\SH^{2}} )
	+ C \sup_{i} \int_{0}^{1} s \nrm{ {}^{(A_{i})} \calN'(s)}_{\SH^{1}} \, \frac{\ud s}{s}.
\end{equation}

All terms on the right-hand side except the last term can be controlled by $C (\calF + \calAlow)$. Therefore, all that is left to show is that the last term on the right-hand side of \eqref{eq:est4A:proof:2} is okay. To this end, we claim
\begin{equation*} 
	\sup_{i} \int_{0}^{1} s \nrm{ {}^{(A_{i})} \calN'(s)}_{\SH^{1}} \, \frac{\ud s}{s} \leq C_{\calF+\calAlow} \cdot (\calF+\calAlow)^{2}.
\end{equation*}

Recalling the definition of the $\SH^{1}$ norm, we must bound the contribution of $\nrm{ \rd_{t,x} ({}^{(A_{i})} \calN')(s)}_{L^{\infty}_{t} L^{2}_{x}}$ and $T^{1/2} \nrm{ \Box({}^{(A_{i})} \calN')(s)}_{L^{2}_{t,x}}$. We will only treat the latter (which is slightly more complicated), leaving the former to the reader.

Using the product rule for $\Box$, we compute the schematic form of $\Box({}^{(A_{i})} \calN')$ as follows.
\begin{equation*}
\Box {}^{(A_{i})} \calN' = \calO(\rd^{\mu} A, \rd_{x} \rd_{\mu} A) + \calO(A, \rd^{\mu} A, \rd_{\mu}A) +  \calO(\Box A, \rd_{x} A) + \calO(A, \rd_{x} \Box A)  + \calO(A, A, \Box A). 
\end{equation*}

Let us treat each type in order. Terms of the first type are the most dangerous, in the sense that there is absolutely no extra $s$-weight to spare. Using Cauchy-Schwarz and Strichartz, we have
\begin{align*}
\int_{0}^{1} & s T^{1/2} \nrm{ \calO(\rd^{\mu} A(s), \rd_{x} \rd_{\mu} A(s))}_{L^{2}_{t,x}} \, \frac{\ud s}{s} \\
%\leq & T^{1/2}\bb( \int_{0}^{1} s^{1/2} \nrm{\rd^{\mu} A(s)}_{L^{4}_{t,x}}^{2} \, \frac{\ud s}{s} \bb)^{1/2} \bb( \int_{0}^{1} s^{3/2} \nrm{\rd_{x} \rd_{\mu} A(s)}_{L^{4}_{t,x}}^{2} \, \frac{\ud s}{s} \bb)^{1/2} \\
\leq & C T^{1/2} \bb( \int_{0}^{1} s^{1/2} \nrm{A(s)}_{\SH^{3/2}}^{2} \, \frac{\ud s}{s} \bb)^{1/2} \bb( \int_{0}^{1} s^{3/2} \nrm{A(s)}_{\SH^{5/2}}^{2} \, \frac{\ud s}{s} \bb)^{1/2}.
\end{align*}

Using Lemma \ref{lem:fundEst4A}, the last line can be estimated by $C(\calF + \calAlow)^{2}$, which is acceptable.

Terms of the second type can be treated similarly using H\"older, Strichartz and Lemma \ref{lem:fundEst4A}, being easier due to the presence of extra $s$-weights. We estimate these terms as follows.
\begin{align*}
\int_{0}^{1} & s T^{1/2} \nrm{ \calO(A, \rd^{\mu} A(s), \rd_{\mu} A(s))}_{L^{2}_{t,x}} \, \frac{\ud s}{s} \\
\leq & C T^{1/2} \int_{0}^{1} s^{1/4} \bb( s^{1/4} \nrm{A(s)}_{L^{\infty}_{t,x}} \bb) \bb(s^{1/4} \nrm{\rd^{\mu} A(s)}_{L^{4}_{t,x}}\bb) \bb( s^{1/4} \nrm{\rd_{\mu} A(s)}_{L^{4}_{t,x}} \bb) \, \frac{\ud s}{s} \\
%\leq & C T^{1/2} \int_{0}^{1} s^{1/4} \bb( s^{1/4} \nrm{A(s)}_{L^{\infty}_{t,x}} \bb) \bb(s^{1/4} \nrm{A(s)}_{\SH^{3/2}}\bb) \bb( s^{1/4} \nrm{ A(s)}_{\SH^{3/2}} \bb) \, \frac{\ud s}{s} \\
\leq & C T^{1/2} (\calF+\calAlow)^{3}.
\end{align*}

The remaining terms all involve the d'Alembertian $\Box$. For these terms, using H\"older, we always put the factor with $\Box$ in $L^{2}_{t,x}$ and estimate by the $\SH^{k}$ norm, whereas the other terms are put in $L^{\infty}_{t,x}$. We will always have some extra $s$-weight, and thus it is not difficult to show that
\begin{equation*}
\int_{0}^{1}  s T^{1/2} \nrm{ \calO(\Box A(s), \rd_{x} A(s))+\calO(A(s), \rd_{x} \Box A(s))}_{L^{2}_{t,x}} \, \frac{\ud s}{s} \leq  C (\calF+\calAlow)^{2},
\end{equation*}
\begin{equation*}
\int_{0}^{1}  s T^{1/2} \nrm{ \calO(A(s), A(s), \Box A(s))}_{L^{2}_{t,x}} \, \frac{\ud s}{s} \leq  C(\calF+\calAlow)^{3}.
\end{equation*}

As desired, we have therefore proved
\begin{equation*} 
	\sup_{i} \int_{0}^{1} s T^{1/2} \nrm{\Box( {}^{(A_{i})} \calN')(s)}_{L^{2}_{t,x}} \, \frac{\ud s}{s} \leq C_{\calF, \calAlow} \cdot (\calF+\calAlow)^{2}. \qedhere
\end{equation*}
\end{proof}

\begin{proof} [Proof of Proposition \ref{prop:est4Fs0:low}]
This is an immediate consequence of Propositions \ref{prop:pEst4Fs0:low} and \ref{prop:pEst4Fs0:low:Diff}. \qedhere
\end{proof}

\begin{proof} [Proof of Proposition \ref{prop:cont4FA}]
In fact, this proposition is a triviality in view of the simple definitions of the quantities $\calF, \calAlow, \dlt \calF, \dlt \calAlow$ and the fact that $A_{\bfa}$, $A'_{\bfa}$ are regular solutions to \eqref{eq:HPYM}. \qedhere
\end{proof}

\section{Hyperbolic estimates : Proofs of Theorems \ref{thm:AlowWave} and \ref{thm:FsWave}} \label{sec:wave}
The purpose of this section is to prove Theorems \ref{thm:AlowWave} and \ref{thm:FsWave}, which are based on analyzing the wave-type equations \eqref{eq:hyperbolic4Alow} and \eqref{eq:hyperbolic4F} for $\Alow_{i}$ and $F_{si}$, respectively. Note that the system of equations for $\Alow_{i}$ is nothing but the Yang-Mills equations with source in the temporal gauge. The standard way of solving this system (see \cite{Eardley:1982fb}) is by deriving a wave equation for $F_{\mu\nu}$; due to a technical point, however, we take a slightly different route, which is explained further in \S \ref{subsec:AlowWave}. The wave equation \eqref{eq:hyperbolic4F} for $F_{si}$, on the other hand, shares many similarities with that for $A_{i}$ in the Coulomb gauge. In particular, one can recover the null structure for the most dangerous bilinear interaction $\LieBr{A^{\ell}}{\rd_{\ell} F_{si}}$, which is perhaps the most essential structural feature of the caloric-temporal gauge which makes the whole proof work.

Throughout this section, we work with regular solutions $A_{\bfa}, A'_{\bfa}$ to \eqref{eq:HPYM} on $(-T, T) \times \bbR^{3} \times [0,1]$.

\subsection{Hyperbolic estimates for $\Alow_{i}$ : Proof of Theorem \ref{thm:AlowWave}} \label{subsec:AlowWave}
%The idea is to estimate $\Alow_{i}$ on $(-T', T')$ in terms of the size of the initial data $\calI$ and the bootstrapped quantities $\calF, \calAlow$, using the equation \eqref{eq:hyperbolicYM4Flow}. The upshot is that $\calI$ controls smooth norms of the initial data, and therefore one expects to be able to derive the required {\it a priori} estimates just by classical methods. On the other hand, we have the source terms $\wlow_{\nu}$, the control of which comes from analyzing the parabolic equations satisfies by $w_{\nu}$.

\subsubsection{Equations of motion for $\Alow_{i}$} 
Recall that at $s=1$, the connection coefficients $\Alow_{\mu} = A_{\mu}(s=1)$ satisfy the hyperbolic Yang-Mills equation with source, i.e.
\begin{equation} \label{eq:hyperbolicYM4Flow}
	\covDlow^\mu \Flow_{\nu \mu} = \wlow_{\nu} \hbox{ for $\nu = 0, 1, 2, 3$}.
\end{equation}
Furthermore, we have the temporal gauge condition $\Alow_0 = 0$.

Recall that $(\curl B)_{i} := \sum_{j,k} \eps_{ijk} \rd_{j} B_{k}$, where $\eps_{ijk}$ was the Levi-Civita symbol. In the proposition below, we record the equation of motion of $\Alow_{i}$, which are obtained simply by expanding \eqref{eq:hyperbolicYM4Flow} in terms of $\Alow_{i}$.

\begin{proposition} [Equations for $\Alow_{i}$] \label{prop:eqn4Alow}
The Yang-Mills equation with source \eqref{eq:hyperbolicYM4Flow} is equivalent to the following system of equations.
\begin{align}
	\rd_{0} (\rd^{\ell} \Alow_{\ell}) =& - \LieBr{\Alow^\ell}{\rd_{0} \Alow_\ell} + \wlow_0, \label{eq:transport4Alow} \\
\Box \Alow_i - \rd_i (\rd^{\ell} \Alow_{\ell}) =& - 2 \LieBr{\Alow^\ell}{\rd_\ell \Alow_i} + \LieBr{\Alow_i}{\rd^\ell \Alow_\ell} + \LieBr{\Alow^\ell}{\rd_i \Alow_\ell} - \LieBr{\Alow^\ell}{\LieBr{\Alow_\ell}{\Alow_i}} - \wlow_i. \label{eq:eqn4Alow}
\end{align}

Taking the curl (i.e., $\curl \cdot$) of \eqref{eq:eqn4Alow}, we obtain the following wave equation for $\curl \Alow$.
\begin{equation} \label{eq:wave4curlAlow}
\begin{aligned}
	\Box (\curl \Alow)_i = & - \curl (2 \LieBr{\Alow^\ell}{\rd_\ell \Alow_i} + \LieBr{\Alow_i}{\rd^\ell \Alow_\ell} + \LieBr{\Alow^\ell}{\rd_i \Alow_\ell}) \\
					& - \curl (\LieBr{\Alow^\ell}{\LieBr{\Alow_\ell}{\Alow_i}}) - (\curl \wlow)_i.
\end{aligned}\end{equation}
\end{proposition}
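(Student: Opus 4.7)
The proposition is a direct computation expanding the covariant Yang-Mills equation $\covDlow^\mu \Flow_{\nu \mu} = \wlow_\nu$ in components, exploiting the temporal gauge condition $\Alow_0 = 0$. The plan is to split into the cases $\nu=0$ and $\nu=i$, then apply $\curl$ to the spatial equation. I expect no real obstacles beyond bookkeeping; the main thing to keep track of is the Minkowski metric signature $(-+++)$, which produces a single sign for the $\covDlow^0$ term and turns $\rd_0^2 - \lap$ into $-\Box$.

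For the $\nu = 0$ component, I would first observe that $\Alow_0 = 0$ forces $\Flow_{00} = 0$ and $\Flow_{0 \ell} = \rd_0 \Alow_\ell$, so that $\covDlow^\mu \Flow_{0\mu} = \covDlow^\ell \Flow_{0\ell} = \rd^\ell(\rd_0 \Alow_\ell) + \LieBr{\Alow^\ell}{\rd_0 \Alow_\ell}$. Commuting $\rd_0$ and $\rd^\ell$ in the first term and rearranging yields \eqref{eq:transport4Alow}. For the $\nu = i$ component, I would split $\covDlow^\mu \Flow_{i \mu} = -\covDlow_0 \Flow_{i 0} + \covDlow^\ell \Flow_{i\ell}$. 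The temporal gauge makes $\covDlow_0 = \rd_0$, so the first piece contributes $\rd_0^2 \Alow_i$. For the second piece I would expand $\Flow_{i\ell} = \rd_i \Alow_\ell - \rd_\ell \Alow_i + \LieBr{\Alow_i}{\Alow_\ell}$, take $\rd^\ell$ and then add the commutator $\LieBr{\Alow^\ell}{\Flow_{i\ell}}$, organizing the six resulting bilinear and trilinear brackets according to their schematic type. Using the antisymmetry $\LieBr{\rd^\ell \Alow_i}{\Alow_\ell} = -\LieBr{\Alow^\ell}{\rd_\ell \Alow_i}$ and the identity $\LieBr{\Alow^\ell}{\LieBr{\Alow_i}{\Alow_\ell}} = -\LieBr{\Alow^\ell}{\LieBr{\Alow_\ell}{\Alow_i}}$ to consolidate the bilinear and trilinear terms, and recognizing $\rd_0^2 - \lap = -\Box$, gives \eqref{eq:eqn4Alow} after moving $\wlow_i$ to the right.

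Finally, for \eqref{eq:wave4curlAlow} I would apply $\curl$ to \eqref{eq:eqn4Alow}. Since $\curl$ and $\Box$ commute (both acting componentwise with constant-coefficient partial derivatives) and $\curl \rd_i (\rd^\ell \Alow_\ell) = 0$ because $\rd_i(\rd^\ell \Alow_\ell)$ is a pure gradient (with $\curl \, \rd_i \phi = \eps_{jki} \rd^j \rd^k \phi = 0$ by the antisymmetry of $\eps_{jki}$ against the symmetric $\rd^j \rd^k$), the left-hand side collapses to $\Box (\curl \Alow)_i$ and the right-hand side is simply the curl of the nonlinear and source terms of \eqref{eq:eqn4Alow}, yielding the stated wave equation. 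No estimates are required; the statement is purely an algebraic consequence of unwinding definitions under $\Alow_0 = 0$.
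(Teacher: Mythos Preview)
Your proposal is correct and follows exactly the approach the paper indicates: the paper does not provide a detailed proof of this proposition, stating only that the equations ``are obtained simply by expanding \eqref{eq:hyperbolicYM4Flow} in terms of $\Alow_{i}$,'' and your expansion carries this out accurately, including the correct handling of the signature, the consolidation of the bracket terms, and the vanishing of the curl of the gradient.
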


\begin{remark} 
The usual procedure of solving \eqref{eq:hyperbolicYM4Flow} in temporal gauge consists of first deriving the hyperbolic equation for $\Flow_{\nu \mu}$, using the Bianchi identity and \eqref{eq:hyperbolicYM4Flow}. Then one couples these equations with the transport equation
\begin{equation*}
	\Flow_{0i} = \rd_{0} \Alow_i,
\end{equation*}
(which follows just from the definition of $\Flow_{0i}$ and the temporal gauge condition $\Alow_{0} = 0$) and solves the system altogether. This is indeed the approach of Eardley-Moncrief \cite{Eardley:1982fb} and Klainerman-Machedon \cite{Klainerman:1995hz}. A drawback to this approach, however, is that it requires taking a $t$-derivative when deriving hyperbolic equations for $\Flow_{0i}$. In particular, one has to estimate $\rd_{0} \wlow_{i}$, which complicates matters in our setting.

The equations that we stated in Proposition \ref{prop:eqn4Alow} is the basis for a slightly different approach, which avoids taking $\rd_{0}$ at the expense of using a little bit of Hodge theory. We remark that such an approach had been taken by Tao \cite{Tao:2000vba}, but with greater complexity than here as \cite{Tao:2000vba} was concerned with lower regularity (but small data) solutions to \eqref{eq:hyperbolicYM}.
\end{remark}

\subsubsection{Proof of Theorem \ref{thm:AlowWave}}
In this part, we give a proof of Theorem \ref{thm:AlowWave}

\begin{proof} [Proof of Theorem \ref{thm:AlowWave}]
In the proof, we will work on the time interval $(-T, T)$, where $0 < T \leq 1$. We will give a rather detailed proof of \eqref{eq:AlowWave:1}. The difference analogue \eqref{eq:AlowWave:2} can be proved in an analogous manner, whose details we leave to the reader.

Let us begin with a few product estimates.
\begin{align} 
	\nrm{\calO(\Alow, \rd_{0} \Alow)}_{L^{\infty}_{t} \dot{H}^{m}_{x}} &\leq C \calAlow^{2}, &\hbox{ for $0 \leq m \leq 29$,} \label{eq:AlowWave:pf:1} \\
	\nrm{\calO(\Alow, \rd_{x} \Alow)}_{L^{\infty}_{t} \dot{H}^{m}_{x}} &\leq C \calAlow^{2}, &\hbox{ for $0 \leq m \leq 30$,} \label{eq:AlowWave:pf:2}\\
	\nrm{\calO(\Alow, \Alow, \Alow)}_{L^{\infty}_{t} \dot{H}^{m}_{x}} &\leq C \calAlow^{3}, &\hbox{ for $0 \leq m \leq 31$.} \label{eq:AlowWave:pf:3}
\end{align}

Each of these can be proved by Leibniz's rule, H\"older and Sobolev, as well as the fact that $\nrm{\rd_{x} \Alow}_{L^{\infty}_{t} H^{30}_{x}} + \nrm{\rd_{0} \Alow}_{L^{\infty}_{t} H^{29}_{x}} \leq \calAlow$. Using the same techniques, we can also prove the following weaker version of \eqref{eq:AlowWave:pf:1} in the case $m=30$:
\begin{equation} \label{eq:AlowWave:pf:4}
	\nrm{\calO(\Alow, \rd_{0} \Alow)}_{L^{\infty}_{t} \dot{H}^{30}_{x}} \leq C \calAlow^{2} + C \nrm{\Alow}_{L^{\infty}_{t,x}} \nrm{\rd_{x}^{(30)} \rd_{0} \Alow}_{L^{\infty}_{t} L^{2}_{x}}.
\end{equation}

Next, observe that $\nrm{\wlow_{0}}_{L^{\infty}_{t} \dot{H}^{m}_{x}} \leq \sup_{t \in (-T, T)} \nrm{F_{s0}(t)}_{\calL^{1, \infty}_{s} \dot{\calH}^{m}_{x}}$, where the latter can be controlled by \eqref{eq:pEst4Fs0:high:3} for $0 \leq m \leq 30$. Combining this with \eqref{eq:transport4Alow}, \eqref{eq:AlowWave:pf:1}, we obtain the following estimate for $0 \leq m \leq 29$:
\begin{equation} \label{eq:AlowWave:pf:5}
	\nrm{\rd_{0}(\rd^{\ell} \Alow_{\ell})}_{L^{\infty}_{t} \dot{H}^{m}_{x}} 
	\leq C_{\calF, \calAlow} \cdot \calE + C_{\calF, \calAlow} \cdot (\calF+\calAlow)^{2}.
\end{equation}

In the case $m=30$, replacing the use of \eqref{eq:AlowWave:pf:1} by \eqref{eq:AlowWave:pf:4}, we have
\begin{equation*}
	\nrm{\rd_{0}(\rd^{\ell} \Alow_{\ell})}_{L^{\infty}_{t} \dot{H}^{30}_{x}} 
	\leq C \calAlow \nrm{\rd_{x}^{(30)} \rd_{0} \Alow}_{L^{\infty}_{t} L^{2}_{x}} 
	+ C_{\calF, \calAlow} \cdot \calE + C_{\calF, \calAlow} \cdot (\calF+\calAlow)^{2}.
\end{equation*}

Recall the simple div-curl identity $\sum_{i,j} \nrm{\rd_{i} B_{j}}^{2} = \frac{1}{2}\nrm{\curl B}_{L^{2}_{x}}^{2} + \nrm{\rd^{\ell} B_{\ell}}^{2}$ with $B = \Alow(t)$. Using furthermore \eqref{eq:AlowWave:pf:5} with $m=29$ and the fact that $\calAlow$ controls $\nrm{\rd_{x}^{(29)} \rd_{0} (\curl \Alow)}_{L^{\infty}_{t} L^{2}_{x}}$, we obtain the following useful control on $\nrm{\rd_{x}^{(30)} \rd_{0} \Alow}_{L^{\infty}_{t} L^{2}_{x}}$:
\begin{equation} \label{eq:AlowWave:pf:6}
%\left\{
%\begin{aligned} 
	\nrm{\rd_{x}^{(30)} \rd_{0} \Alow}_{L^{\infty}_{t} L^{2}_{x}} 
%	\leq & C \nrm{\rd_{x}^{(29)} \rd_{0} (\curl \Alow)}_{L^{\infty}_{t} L^{2}_{x}} + C \nrm{\rd_{x}^{(29)} \rd_{0} \rd^{\ell} \Alow_{\ell}}_{L^{\infty}_{t} L^{2}_{x}} \\
	\leq C \calAlow + C_{\calF, \calAlow} \cdot \calE + C_{\calF, \calAlow} \cdot (\calF+\calAlow)^{2} .
%\end{aligned}
%\right.
\end{equation}

Therefore, \eqref{eq:AlowWave:pf:5} holds in the case $m=30$ as well, i.e.
\begin{equation*}
	\nrm{\rd_{0}(\rd^{\ell} \Alow_{\ell})}_{L^{\infty}_{t} \dot{H}^{30}_{x}} 
	\leq C_{\calF, \calAlow} \cdot \calE + C_{\calF, \calAlow} \cdot (\calF+\calAlow)^{2}.
\end{equation*}

Integrating \eqref{eq:AlowWave:pf:5} with respect to $t$ from $t=0$, we obtain for $0 \leq m \leq 30$
\begin{equation} \label{eq:AlowWave:pf:7}
	\nrm{\rd^{\ell} \Alow_{\ell}}_{L^{\infty}_{t} \dot{H}^{m}_{x}} \leq \calI + T\bb( C_{\calF, \calAlow} \cdot \calE + C_{\calF, \calAlow} \cdot (\calF+\calAlow)^{2} \bb).
\end{equation}

Next, observe that $\nrm{\wlow_{i}}_{L^{\infty}_{t} \dot{H}^{m}_{x}} \leq \sup_{t \in (-T, T)}\nrm{w_{i}(t)}_{\calL^{1,\infty}_{s} \dot{\calH}^{m}_{x}}$. Combining this observation with \eqref{eq:pEst4wi:1} and \eqref{eq:pEst4wi:2} from Proposition \ref{prop:pEst4wi}, as well as \eqref{eq:AlowWave:pf:6} to control $\nrm{\rd_{0} \Alow}_{\dot{H}^{30}_{x}}$, we have the following estimates for $0 \leq m \leq 30$:
\begin{equation} \label{eq:AlowWave:pf:8}
	\nrm{\wlow_{i}}_{L^{\infty}_{t} \dot{H}^{m}_{x}} \leq C_{\calE, \calF, \calAlow} \cdot (\calE + \calF+\calAlow)^{2}.
\end{equation}

We are now ready to finish the proof. Let $i=1,2,3$ and $1 \leq m \leq 30$. By the energy inequality and H\"older, we have
\begin{equation*}
	\nrm{\Alow_{i}}_{\dot{S}^{m}} \leq C \nrm{\rd_{t,x} \Alow(t=0)}_{\dot{H}^{m-1}_{x}} + C T \nrm{\Box \Alow_{i}}_{L^{\infty}_{t} \dot{H}^{m-1}_{x}}.
\end{equation*}
The first term is controlled by $C \calI$. To control the second term, apply \eqref{eq:eqn4Alow}, \eqref{eq:AlowWave:pf:2}, \eqref{eq:AlowWave:pf:3}, \eqref{eq:AlowWave:pf:8}. Furthermore, use \eqref{eq:AlowWave:pf:7} to control the contribution of $\rd_{i} \rd^{\ell} \Alow_{\ell}$. As a result, we obtain
\begin{equation} \label{eq:AlowWave:pf:9}
	\nrm{\Alow_{i}}_{\dot{S}^{m}}
	\leq C \calI + T \bb( C_{\calF, \calAlow} \cdot \calE + C_{\calE, \calF, \calAlow} \cdot (\calE + \calF + \calAlow)^{2} \bb),
\end{equation}
for $i=1,2,3$ and $1 \leq m \leq 30$.

Similarly, by the energy inequality and H\"older, we have
\begin{equation*}
	\nrm{(\rd \times \Alow)_{i}}_{\dot{S}^{30}} \leq C \nrm{\rd_{t,x} (\curl \Alow)(t=0)}_{\dot{H}^{29}_{x}} + C T \nrm{\Box (\curl \Alow)_{i}}_{L^{\infty}_{t} \dot{H}^{30}_{x}}.
\end{equation*}
The first term is again controlled by $C \calI$. To control the second term, we apply \eqref{eq:wave4curlAlow}, \eqref{eq:AlowWave:pf:2}, \eqref{eq:AlowWave:pf:3}, \eqref{eq:AlowWave:pf:8}; note that this time we do not need an estimate for $\rd^{\ell} \Alow_{\ell}$. We conclude
\begin{equation} \label{eq:AlowWave:pf:10}
	\nrm{(\curl \Alow)_{i}}_{\dot{S}^{30}} \leq C \calI + T \, C_{\calE, \calF, \calAlow} \cdot (\calE + \calF + \calAlow)^{2}. 
\end{equation}

Finally, using the div-curl identity, \eqref{eq:AlowWave:pf:7} and \eqref{eq:AlowWave:pf:10}, we have 
\begin{equation*}
\nrm{\Alow}_{\dot{H}^{31}_{x}} \leq C \calI + T\bb( C_{\calF, \calAlow} \cdot \calE + C_{\calF, \calAlow} \cdot (\calF+\calAlow)^{2} \bb).
\end{equation*}

This concludes the proof. \qedhere
\end{proof}

\subsection{Hyperbolic estimates for $F_{si}$ : Proof of Theorem \ref{thm:FsWave}} \label{subsec:FsWave}
Let us recall the hyperbolic equation \eqref{eq:hyperbolic4F} satisfied by $F_{si}$:
\begin{equation*}
 \covD^\mu \covD_\mu F_{s i} = 2 \LieBr{\tensor{F}{_s^\mu}}{F_{i \mu}} - \covD^\ell \covD_\ell w_i + \covD_i \covD^\ell w_\ell - {}^{(w_{i})} \calN.
\end{equation*}
Note that we have rewritten $2 \LieBr{\tensor{F}{_i^\ell}}{w_\ell}  + 2 \LieBr{F^{\mu \ell}}{\covD_\mu F_{i \ell} + \covD_\ell F_{i \mu}} = {}^{(w_{i})} \calN$ for convenience.

\subsubsection{Semi-linear wave equation for $F_{si}$}
Let us begin by rewriting the wave equation for $F_{si}$ in a form more suitable for our analysis. Writing out the covariant derivatives in \eqref{eq:hyperbolic4F}, we obtain the following semi-linear wave equation for $F_{si}$.
%\begin{align}
%\Box F_{si} = \, & 2 \LieBr{A_0}{\rd_t F_{si}}  - 2\LieBr{(\Acf)^\ell}{\rd_\ell F_{si}} - 2 \LieBr{(\Adf)^\ell}{\rd_\ell F_{si}} \label{eq:hyperbolic4Fsi:1}\\
%		     &+ \LieBr{\rd_t A_0}{F_{si}} - 2\LieBr{F_{i 0}}{F_{s0}} - \LieBr{\rd^\ell A_\ell}{F_{si}} + 2\LieBr{\tensor{F}{_i^\ell}}{F_{s \ell}} \label{eq:hyperbolic4Fsi:2}\\
%		     & + \LieBr{A_{0}}{\LieBr{A_{0}}{F_{si}}} - \LieBr{A^{\ell}}{\LieBr{A_{\ell}}{F_{si}}} \label{eq:hyperbolic4Fsi:3}\\
%		     &- 2 \LieBr{\tensor{F}{_0^\ell}}{\rd_0 F_{i \ell} + \rd_\ell F_{i 0}} - 2\LieBr{\tensor{F}{_{0}^{\ell}}}{ \LieBr{A_{0}}{F_{i\ell}} + \LieBr{A_{\ell}} {F_{i0}} }\label{eq:hyperbolic4Fsi:3} \\
%		     &+ \rd^\ell \rd_\ell w_i - \rd_i \rd^\ell w_\ell \label{eq:hyperbolic4Fsi:4}\\
%		     &+ 2 \LieBr{A^\ell}{\rd_\ell w_i} - 2\LieBr{A_i}{\rd^\ell w_\ell} + \LieBr{\rd^\ell A_\ell}{w_i} + \LieBr{\rd_i A^\ell}{w_\ell} + 2 \LieBr{\tensor{F}{_i^\ell}}{w_\ell} \label{eq:hyperbolic4Fsi:5}\\
%		     &+ \LieBr{A^{\ell}}{\LieBr{A_{\ell}}{w_{i}}} - \LieBr{A_{i}}{\LieBr{A^{\ell}}{w_{\ell}}}. \label{eq:hyperbolic4Fsi:6}
%\end{align}
\begin{equation*}
	\Box F_{si} = {}^{(F_{si})} \calM_{\quadratic} + {}^{(F_{si})} \calM_{\cubic} + {}^{(F_{si})} \calM_{w},
\end{equation*}
where
\begin{align*}
	{}^{(F_{si})} \calM_{\quadratic} :=  &   - 2\LieBr{A^\ell}{\rd_\ell F_{si}}  + 2 \LieBr{A_0}{\rd_{0} F_{si}}\\
		     &+ \LieBr{\rd_{0} A_0}{F_{si}}  - \LieBr{\rd^\ell A_\ell}{F_{si}} - 2\LieBr{\tensor{F}{_i^\ell}}{F_{s \ell}} + 2\LieBr{F_{i 0}}{F_{s0}}, \\
	{}^{(F_{si})} \calM_{\cubic} := & \LieBr{A_{0}}{\LieBr{A_{0}}{F_{si}}} - \LieBr{A^{\ell}}{\LieBr{A_{\ell}}{F_{si}}} \\
	{}^{(F_{si})} \calM_{w} := & - \covD^{\ell} \covD_{\ell} w_{i} + \covD_{i} \covD^{\ell} w_{\ell} - {}^{(w_{i})} \calN.
\end{align*}

The semi-linear equation for the difference $\dlt F_{si} := F_{si} - F'_{si}$ is then given by
\begin{equation*}
	\Box \dlt F_{si} = {}^{(\dlt F_{si})} \calM_{\quadratic} + {}^{(\dlt F_{si})} \calM_{\cubic} + {}^{(\dlt F_{si})} \calM_{w},
\end{equation*}
where ${}^{(\dlt F_{si})} \calM_{\quadratic} := {}^{(F_{si})} \calM_{\quadratic} - {}^{(F'_{si})} \calM_{\quadratic}$, ${}^{(\dlt F_{si})} \calM_{\cubic} := {}^{(F_{si})} \calM_{\cubic} - {}^{(F'_{si})} \calM_{\cubic}$ and ${}^{(\dlt F_{si})} \calM_{w} := {}^{(F_{si})} \calM_{w} - {}^{(F'_{si})} \calM_{w}$.  

\subsubsection{Estimates for quadratic terms}
We begin the proof of Theorem \ref{thm:FsWave} by estimating the contribution of quadratic terms.
\begin{lemma} [Estimates for quadratic terms] \label{lem:FsWave:quadratic}
Assume $0 < T \leq 1$. For $1 \leq m \leq 10$ and $p=2, \infty$, the following estimates hold.
\begin{equation} \label{eq:FsWave:quadratic}
	\sup_{i} \nrm{{}^{(F_{si})} \calM_{\quadratic}}_{\calL^{2,p}_{s} \calL^{2}_{t} \dot{\calH}^{m-1}_{x}(0,1]} 
	\leq C_{\calE, \calF, \calAlow} \cdot (\calE + \calF + \calAlow)^{2},
\end{equation}
\begin{equation} \label{eq:FsWave:quadratic:Diff}
	\sup_{i} \nrm{{}^{(\dlt F_{si})} \calM_{\quadratic}}_{\calL^{2,p}_{s} \calL^{2}_{t} \dot{\calH}^{m-1}_{x}(0,1]} 
	\leq C_{\calE, \calF, \calAlow} \cdot (\calE + \calF + \calAlow)(\dlt \calE + \dlt \calF + \dlt \calAlow).
\end{equation}
\end{lemma}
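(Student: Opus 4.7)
The plan is to estimate each of the six bilinear summands
\[
	\LieBr{A_0}{\rd_0 F_{si}},\; \LieBr{\rd_0 A_0}{F_{si}},\; \LieBr{\rd^\ell A_\ell}{F_{si}},\; \LieBr{\tensor{F}{_i^\ell}}{F_{s\ell}},\; \LieBr{F_{i0}}{F_{s0}},\; \LieBr{A^\ell}{\rd_\ell F_{si}}
\]
of ${}^{(F_{si})}\calM_{\quadratic}$ separately in $\calL^{2,p}_s \calL^2_t \dot{\calH}^{m-1}_x$ and then sum. For the first five terms, the associated $s$-weight table of \S\ref{subsec:assocWght} reveals that a genuine excess $s$-weight is available, so these terms surrender to a direct multilinear attack based on H\"older, Sobolev, the Correspondence Principle, and Lemma \ref{lem:absP:Holder4Ls}. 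In each case, my plan is to place the factor carrying $F_{si}$, $F_{s0}$, or a $t$-derivative that may be converted to $F_{s0}$ (via $A_0(s) = -\int_s^1 F_{s0}(s')\,\ud s'$, valid in the caloric-temporal gauge) in an $\calL^2_s$-based space-time norm, controlled by Propositions \ref{prop:pEst4Fsi}, \ref{prop:pEst4Fs0:high}, \ref{prop:lowEst4Fsi}, and Lemma \ref{lem:fundEst4F0i}; the remaining factor carrying $A_i$ or $A_0$ is placed in an $\calL^\infty_s$-based norm via Lemmas \ref{lem:fundEst4A}, \ref{lem:fundEst4A0}, \ref{lem:fundEst4F0i}. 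The restriction $1\le m\le 10$ ensures that every derivative falling on $\Alow, F_s,$ or $F_{s0}$ is absorbed into $\calAlow, \calF, \calE$, respectively, and the $s$-integration closes for both $p=2, \infty$.

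The main obstacle is the last term $-2\LieBr{A^\ell}{\rd_\ell F_{si}}$: the $s$-weight heuristics show that essentially no spare $s$-weight is available, so a direct attack would require control of more derivatives of $F_{si}$ than $\calF$ affords. My plan is to exploit the hidden null structure in the spirit of \cite{Klainerman:1994jb}. Split $A = (A-\Alow) + \Alow$; the $\Alow$-piece $\LieBr{\Alow^\ell}{\rd_\ell F_{si}}$ is $s$-independent in one factor and falls to the direct argument, invoking the $\widehat{S}^k$-control of $\Alow$ from $\calAlow$ together with the Strichartz estimate \eqref{eq:prelim:est4SH:Strichartz}. For the $(A-\Alow)$-piece, invoke the caloric-gauge identity $A_i(s) - \Alow_i = -\int_s^1 F_{si}(s')\,\ud s'$ to rewrite the contribution as
\[
	-\int_s^1 \LieBr{F_{s}^{\,\ell}(s')}{\rd_\ell F_{si}(s)}\,\ud s',
\]
and then use the identity $\covD^\ell F_{s\ell}=0$ (Appendix \ref{sec:HPYM}) together with the Hodge decomposition on $\bbR^3$ to split $F_{s\ell}(s') = F_{s\ell}^{\df}(s') + F_{s\ell}^{\cf}(s')$, where $F_{s\ell}^{\df}$ is divergence-free and the curl-free remainder $F_{s\ell}^{\cf} = -\rd_\ell \Delta^{-1}\LieBr{A^k}{F_{sk}}$ is schematically cubic-small and therefore falls under the brute-force treatment via Lemmas \ref{lem:fundEst4A}--\ref{lem:fundEst4A0} and Propositions \ref{prop:pEst4Fsi}, \ref{prop:lowEst4Fsi}. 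For the divergence-free piece, I will write $F_{s\ell}^{\df} = \eps_{\ell jk}\rd^j C^k$ with $C^k := -\Delta^{-1}(\curl F_s^{\df})^k$; the antisymmetry of $\eps$ then reorganizes the commutator $\LieBr{F_s^{\df,\ell}}{\rd_\ell F_{si}}$ into a sum of terms of the form $\eps^{\ell jk}Q_{j\ell}(C_k, F_{si})$ (with the matrix-multiplication ordering inherited from the original commutator), where $Q_{j\ell}$ is the Klainerman null form. The null-form estimate \eqref{eq:prelim:est4SH:nullform}, combined with $\nrm{C}_{\widehat{\calS}^{2}}\aleq\nrm{F_s}_{\widehat{\calS}^1}$ (from the one-derivative gain of $\Delta^{-1}\curl$) and a careful $s'$-integration against the $\calL^{5/4,2}_s$-control of $F_s$ afforded by $\calF$, yields the required $\calL^{2,p}_s L^2_{t,x}$-bound. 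For the higher-$m$ cases $2\le m\le 10$, extra spatial derivatives are distributed across the null form via Leibniz's rule, each extra derivative absorbed through the $\calAlow$-absorption mechanism.

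The difference estimate \eqref{eq:FsWave:quadratic:Diff} is obtained by applying the formal Leibniz rule for $\dlt$ to each of the six summands and repeating the above analysis, with the difference parabolic estimates \eqref{eq:pEst4Fsi:Diff:1}, \eqref{eq:lowEst4Fsi:Diff:1}--\eqref{eq:lowEst4Fsi:Diff:2}, and Propositions \ref{prop:pEst4Fs0:high:Diff}, \ref{prop:pEst4wi:Diff} substituted in place of their non-difference counterparts. The null-form analysis applies unchanged to both $\LieBr{\dlt A^\ell}{\rd_\ell F_{si}}$ and $\LieBr{(A')^\ell}{\rd_\ell \dlt F_{si}}$, using the Hodge decomposition of $\dlt F_s$ and $F'_s$ respectively and the same null-form bound \eqref{eq:prelim:est4SH:nullform}.
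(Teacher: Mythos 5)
Your overall architecture -- exploit $\covD^\ell F_{s\ell}=0$ to single out a "better than scaling" curl-free piece, and run the null-form argument on the divergence-free piece -- is the same as the paper's, but you instantiate it differently: the paper performs the Hodge decomposition on $A_i$ itself (controlling $A^{\cf}$ in $L^2_t L^\infty_x$ via the improved transport equation $\rd_s(\rd^\ell A_\ell) = -\LieBr{A^\ell}{F_{s\ell}}$ and running the null form on all of $A^{\df}$), whereas you split $A = \Alow + (A-\Alow)$, write $A-\Alow = -\int_s^1 F_s$, and Hodge-decompose $F_s(s')$. The two decompositions are closely related (your $(A-\Alow)^{\cf}$-piece and the paper's $A^{\cf}$-piece are the same object modulo $\Alow^{\cf}$), and the grouping you propose is viable.

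The gap is in the final step for the $F_s^{\df}$-piece. You propose to estimate $\nrm{C(s')}_{\widehat{\calS}^2}\aleq\nrm{F_s(s')}_{\widehat{\calS}^1}$ for each fixed $s'$ and then integrate over $s'\in(s,1]$ "against the $\calL^{5/4,2}_s$-control of $F_s$ afforded by $\calF$". This does not close: $\calF$ controls only the $\calL^2_{s'}$-norm of $s'\nrm{F_s(s')}_{\dot S^1}$, so Cauchy--Schwarz in $s'$ gives
\[
	\int_s^1 \nrm{F_s(s')}_{\dot S^1}\,\ud s' \leq \nrm{s'\nrm{F_s(s')}_{\dot S^1}}_{\calL^2_{s'}(s,1]}\cdot\nrm{1}_{\calL^2_{s'}(s,1]} \leq C\calF\cdot|\log s|^{1/2},
\]
and the residual $|\log s|^{1/2}$ makes the $\calL^{2,p}_s$-norm diverge for both $p=2$ and $p=\infty$. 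This is precisely the logarithmic loss the paper flags in the overview and in Proposition \ref{prop:est4ai}. The fix is to push the $s'$-integration inside the null form \emph{before} estimating: since $Q_{j\ell}$ is bilinear and $F_{si}(s)$ is independent of $s'$, the integrated expression is $Q_{j\ell}(W_k(s), F_{si}(s))$ with $W_k(s) := \int_s^1 C_k(s')\,\ud s' = -\lap^{-1}\big(\curl(\Alow - A(s))\big)_k$. The null form estimate \eqref{eq:prelim:est4SH:nullform} then reduces everything to $\sup_{0<s\le 1}\nrm{\Alow - A(s)}_{\dot S^1}$, which is \emph{not} accessible from Lemma \ref{lem:fundEst4A} (that lemma requires a strictly positive excess $s$-weight) but \emph{is} supplied by Proposition \ref{prop:est4ai} -- exactly the ingredient the paper invokes at this spot, proved by an integration-by-parts trick designed to circumvent this logarithm. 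Your write-up never invokes Proposition \ref{prop:est4ai}, so as stated the argument has a hole; once you replace the pointwise-in-$s'$ estimate by the integrated one and cite Proposition \ref{prop:est4ai}, the proposal is sound.
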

\begin{proof} 
We will give a rather detailed proof of \eqref{eq:FsWave:quadratic}. The other estimate \eqref{eq:FsWave:quadratic:Diff} may be proved by first using Leibniz's rule for $\dlt$ to compute ${}^{(\dlt F_{si})} \calM_{\quadratic}$, and then proceeding in an analogous fashion. We will omit the proof of the latter.

Let $1 \leq m \leq 10$ and $p = 2$ or $\infty$. We will work on the whole $s$-interval $(0, 1]$. Let us begin with an observation that in order to prove \eqref{eq:FsWave:quadratic}, it suffices to prove that each of the following can be bounded by $C_{\calE, \calF, \calAlow} \cdot (\calE + \calF + \calAlow)^{2}$:
\begin{equation*}
\left\{
\begin{aligned}
&\nrm{s^{-1/2} \LieBr{(A^{\cf})^{\ell}}{\nb_{\ell} \nb_{x}^{(m-1)} F_{si}}}_{\calL^{2,p}_{s} \calL^{2}_{t,x}}, \quad
\nrm{s^{-1/2} \LieBr{(A^{\df})^{\ell}}{\nb_{\ell} \nb_{x}^{(m-1)} F_{si}}}_{\calL^{2,p}_{s} \calL^{2}_{t,x}}, \\
&\sum_{j=1}^{m-1} \nrm{s^{-1/2} \LieBr{\nb_{x}^{(j)} A^{\ell}}{\nb_{\ell} \nb_{x}^{(m-1-j)} F_{si}}}_{\calL^{2,p}_{s} \calL^{2}_{t,x}}, \\
& \nrm{s^{-1/2} \LieBr{A_0}{\nb_{0} F_{si}}}_{\calL^{2,p}_{s} \calL^{2}_{t} \dot{\calH}^{m-1}_{x}}, \quad
\nrm{s^{-1/2} \LieBr{\nb_{0} A_0}{F_{si}}}_{\calL^{2,p}_{s} \calL^{2}_{t} \dot{\calH}^{m-1}_{x}}, \\
& \nrm{\LieBr{F_{i 0}}{F_{s0}}}_{\calL^{2,p}_{s} \calL^{2}_{t} \dot{\calH}^{m-1}_{x}}, \quad
\nrm{\LieBr{\tensor{F}{_i^\ell}}{F_{s \ell}}}_{\calL^{2,p}_{s} \calL^{2}_{t} \dot{\calH}^{m-1}_{x}}.
\end{aligned}
\right.
\end{equation*}

Here, $A^{\cf}$ and $A^{\df}$, called the \emph{curl-free} and the \emph{divergence-free} parts of $A$, respectively, constitute the \emph{Hodge decomposition} of $A$, i.e., $A_{i} = A^{\cf}_{i} + A^{\df}_{i}$. They are defined by the formulae
\begin{equation*}
	A^{\cf} := - (-\lap)^{-1} \rd_{i} \rd^{\ell} A_{\ell}, \quad A^{\df} := (-\lap)^{-1} (\rd \times (\rd \times A))_{i}.
\end{equation*}

Let us treat each of them in order.

\pfstep{- Case 1 : Proof of $\nrm{s^{-1/2} \LieBr{(A^{\cf})^{\ell}}{\nb_{\ell} \nb_{x}^{(m-1)} F_{si}}}_{\calL^{2,p}_{s} \calL^{2}_{t,x}} \leq C_{\calE, \calF, \calAlow} \cdot (\calE + \calF + \calAlow)^{2}$} 

We claim that the following estimate for $A^{\cf}$ holds.
\begin{equation} \label{eq:FsWave:quadratic:est4Acf}
	\nrm{A^{\cf} (s)}_{\calL^{1/4, \infty}_{s} \calL^{2}_{t} \calL^{\infty}_{x}} \leq C \calAlow + C_{\calF, \calAlow} \cdot (\calF + \calAlow)^{2}.
\end{equation}	

Note, on the other hand, that $\nrm{\nb_{\ell} \nb_{x}^{(m-1)} F_{si}}_{\calL^{5/4,p}_{s} \calL^{\infty}_{t} \calL^{2}_{x}} \leq \calF$ for $1 \leq m \leq 10$. Assuming the claim, the desired estimate then follows immediately by H\"older.

The key to our proof of \eqref{eq:FsWave:quadratic:est4Acf} is the covariant Coulomb condition satisfied by $F_{si}$
\begin{equation*}
	\covD^{\ell} F_{s\ell} = 0,
\end{equation*}
which was proved in Appendix \ref{sec:HPYM}. Writing out the covariant derivative $\covD^{\ell} = \rd^{\ell} + A^{\ell}$ and using the relation $F_{s\ell} = \rd_{s} A_{\ell}$, we arrive at the following \emph{improved transport equation} for $\rd^{\ell}A_{\ell}$.
\begin{equation} \label{eq:FsWave:quadratic:est4Acf:0}
	\rd_{s} \big( \rd^{\ell} A_{\ell} (s) \big) = - \LieBr{A^{\ell}(s)}{F_{s\ell}(s)}.
\end{equation}

Observe furthermore that $\nrm{A^{\cf}_{\ell}(s)}_{\calL^{1/4, \infty}_{s} \calL^{2}_{t} \calL^{\infty}_{x}} = \sup_{0 < s \leq 1} \nrm{A^{\cf}(s)}_{L^{2}_{t} L^{\infty}_{x}}$. Our goal, therefore, is to estimate the latter by using \eqref{eq:FsWave:quadratic:est4Acf:0}.
		
Using the fundamental theorem of calculus and Minkowski, we obtain, for $1 \leq r \leq \infty$, the inequality
\begin{equation}\label{eq:FsWave:quadratic:est4Acf:1}
	\sup_{0 < s \leq 1} \nrm{\rd^{\ell} A_{\ell}(s)}_{L^{2}_{t} L^{r}_{x}} \leq \nrm{\rd^{\ell} \Alow_{\ell}}_{L^{2}_{t} L^{r}_{x}} + \int_{0}^{1} \nrm {\LieBr{A^{\ell}(s)}{F_{s\ell}(s)}}_{L^{2}_{t} L^{r}_{x}} \, \ud s.
\end{equation}

Let us recall that $(A^{\cf})_{i} = (- \lap)^{-1} \rd_{i} \rd^{\ell} A_{\ell}$ by Hodge theory. It then follows that $\rd_{i} (A^{\cf})_{j} = R_{i} R_{j} (\rd^{\ell} A_{\ell})$, where $R_{i}, R_{j}$ are Riesz transforms. By elementary harmonic analysis \cite{MR0290095}, for $1 < r < \infty$, we have the inequality
\begin{equation*}
	\nrm{\rd_{x} A^{\cf}}_{L^{2}_{t} L^{r}_{x}} \leq C_{r} \nrm{\rd^{\ell} A_{\ell}}_{L^{2}_{t} L^{r}_{x}}.
\end{equation*}
		
On the other hand, using Sobolev and Gagliardo-Nirenberg, we have
\begin{equation*}
	\nrm{A^{\cf}}_{L^{2}_{t} L^{\infty}_{x}} 
	\leq C \nrm{\rd_{x} A^{\cf}}_{L^{2}_{t,x}}^{1/3} \nrm{\rd_{x} A^{\cf}}_{L^{2}_{t} L^{4}_{x}}^{2/3}.
\end{equation*}

As a result of these two inequalities, it suffices to bound the $L^{2}_{t,x}$ and $L^{2}_{t} L^{4}_{x}$ norms of $\rd^{\ell} A_{\ell}(s)$ using \eqref{eq:FsWave:quadratic:est4Acf:1}. For the first term on the right-hand side of \eqref{eq:FsWave:quadratic:est4Acf:1}, we obviously have 
\begin{equation*}
	\nrm{\rd^{\ell} \Alow_{\ell}}_{L^{2}_{t,x}} + \nrm{\rd^{\ell} \Alow_{\ell}}_{L^{2}_{t} L^{4}_{x}}
		\leq C T^{1/2} (\nrm{\rd^{\ell} \Alow_{\ell}}_{L^{\infty}_{t} L^{2}_{x}} + \nrm{\rd^{\ell} \Alow_{\ell}}_{L^{\infty}_{t} L^{4}_{x}})
		\leq C \calAlow.
\end{equation*}
by H\"older in time. Next, note that the second term on the right-hand side of \eqref{eq:FsWave:quadratic:est4Acf:1} is equal to $\nrm{\LieBr{A^{\ell}}{F_{s\ell}}}_{\calL^{\ell_{r},1}_{s} \calL^{2}_{t} \calL^{r}_{x}}$, where $\ell_{r} = \frac{5}{4} + \frac{3}{2r}$. In the case $r=2$, we estimate this, using Lemma \ref{lem:fundEst4A} and Proposition \ref{prop:lowEst4Fsi}, as follows.
\begin{equation*}
	\nrm{\LieBr{A^{\ell}}{F_{s\ell}}}_{\calL^{2,1}_{s} \calL^{2}_{t,x}}
	\leq C T^{1/2} \nrm{s^{1/4}}_{\calL^{2}_{s}} \nrm{A}_{\calL^{1/4, \infty}_{s} \calL^{\infty}_{t,x}} \nrm{F_{s}}_{\calL^{5/4,2}_{s} \calL^{\infty}_{t} \calL^{2}_{x}}
	\leq C_{\calF, \calAlow} \cdot (\calF + \calAlow)^{2}.
\end{equation*}

In the other case $r=4$, we proceed similarly, again using Lemma \ref{lem:fundEst4A} and Proposition \ref{prop:lowEst4Fsi}.
\begin{equation*}
	\nrm{\LieBr{A^{\ell}}{F_{s\ell}}}_{\calL^{13/8,1}_{s} \calL^{2}_{t} \calL^{4}_{x}}
	\leq C \nrm{s^{1/8}}_{\calL^{2}_{s}} \nrm{A}_{\calL^{1/4, \infty}_{s} \calL^{4}_{t} \calL^{\infty}_{x}} \nrm{F_{s}}_{\calL^{5/4,2}_{s} \calL^{4}_{t,x}}
	\leq C_{\calF, \calAlow} \cdot (\calF + \calAlow)^{2}. 
\end{equation*}

Combining these estimates, we obtain \eqref{eq:FsWave:quadratic:est4Acf}.
	
\pfstep{- Case 2 : Proof of $\nrm{s^{-1/2} \LieBr{(A^{\df})^{\ell}}{\nb_{\ell} \nb_{x}^{(m-1)} F_{si}}}_{\calL^{2,p}_{s} \calL^{2}_{t,x}} \leq C_{\calE, \calF, \calAlow} \cdot (\calE + \calF + \calAlow)^{2}$} 

	In this case, we \emph{cannot} estimate $A^{\df}$ in $L^{2}_{t} L^{\infty}_{x}$. Here, we need to look more closely into the exact form of the nonlinearity, and recover a \emph{null form}, \`a la Klainerman \cite{Klainerman:tc}, Christodoulou \cite{MR820070} and Klainerman-Machedon \cite{Klainerman:ei}. We remark that this is the only place where we utilize the null form estimate.
	
	For $B = B_{i}$ $(i=1,2,3)$, $\phi$ smooth and $B_{i}, \phi \in \dot{S}^{1}$, we claim that the following estimate holds for $0 < s \leq 1$ :
	\begin{equation} \label{eq:FsWave:quadratic:Adf:0}
		\nrm{\LieBr{(B^{\df})^{\ell}}{\rd_{\ell} \phi }}_{L^{2}_{t,x}} \leq C(\sup_{k} \nrm{B_{k}}_{\SH^{1}}) \nrm{\phi}_{\SH^{1}}.
	\end{equation}

	Assuming the claim, by the Correspondence Principle, we then obtain the estimate
	\begin{equation*}
		\nrm{s^{-1/2} \LieBr{(\calT^{\df})^{\ell}}{\nb_{\ell} \psi }}_{\calL^{2,p}_{s} \calL^{2}_{t,x}} \leq C(\sup_{k} \nrm{\calT_{k}}_{\calL^{1/4,\infty}_{s} \dot{\calS}^{1}}) \nrm{\psi}_{\calL^{5/4,p}_{s} \dot{\calS}^{1}},
	\end{equation*}
	for smooth $\calT = \calT_{i}(s)$ $(i=1,2,3)$ $\psi$ such that the right-hand side is finite. Let us take $\calT = A$, $\psi = \nb_{x}^{(m-1)} F_{si}$. By Proposition \ref{prop:est4ai}, we have $\nrm{A}_{\calL^{1/4,\infty}_{s} \dot{\calS}^{1}} \leq C_{\calF, \calAlow} \cdot (\calF + \calAlow)$, whereas by definition $\nrm{\nb_{x}^{(m-1)} F_{si}}_{\calL^{5/4,p}_{s} \dot{\calS}^{1}} \leq C \calF$ for $1 \leq m \leq 10$. The desired estimate therefore follows.

	Now, it is only left to prove \eqref{eq:FsWave:quadratic:Adf:0}. The procedure that we are about to describe is standard, due to Klainerman-Machedon \cite{Klainerman:1994jb}, \cite{Klainerman:1995hz}. We reproduce the argument here for the sake of completeness. 
	
	Let us first assume that $B_{i}$ is Schwartz in $x$ for every $t, s$. Then simple Hodge theory tells us that $B^{\df}_{i} = (\curl V)_{i}$, where
	\begin{equation*}
		V_{i}(x) := (-\lap)^{-1} (\curl B)_{i}(x) = \frac{1}{4 \pi} \int \bb( B(y) \times \frac{(x-y)}{\abs{x-y}^{3}}\bb)_{i} \ud y,
	\end{equation*}
	where we suppressed the variables $t, s$. Substituting $(B^{\df})^{\ell} = (\curl V)^{\ell}$ on the left-hand side of \eqref{eq:FsWave:quadratic:Adf:0}, we have
\begin{align*}
	\nrm{\sum_{j,k,\ell} \eps_{\ell j k} \LieBr{\rd_{j} V_{k}(s)}{\rd_{\ell} \psi(s)} }_{L^{2}_{t,x}}
	\leq & \frac 1 2 \sum_{j, k, \ell} \nrm{Q_{j\ell} (V_{k}(s), \psi)(s)}_{L^{2}_{t,x}} \\
	\leq & C ( \sup_{k} \nrm{V_{k}(s)}_{\SH^{2}} ) \nrm{\psi(s)}_{\SH^{1}} ,
\end{align*}
	where we remind the reader that $Q_{ij} (\phi, \psi) = \rd_{i}\phi \rd_{j} \psi - \rd_{j} \phi  \rd_{i} \psi$, and on the last line we used \eqref{eq:prelim:est4SH:nullform} of Proposition \ref{prop:prelim:est4SH} (null form estimate). Since $\rd_{j} V_{k} = (-\lap)^{-1} \rd_{j} (\curl B)_{i}$, and $\nrm{\cdot}_{\SH^{1}}$ is an $L^{2}_{x}$-type norm, we see that
	\begin{equation*}
		 \sup_{k} \nrm{V_{k}(s)}_{\SH^{2}} = \sup_{j,k} \nrm{\rd_{j} V_{k}(s)}_{\SH^{1}} \leq C \sup_{k} \nrm{B_{k}(s)}_{\SH^{1}},
	\end{equation*}
	from which \eqref{eq:FsWave:quadratic:Adf:0} follows, under the additional assumption that $B_{i}$ are Schwartz in $x$. Then, using the quantitative estimate \eqref{eq:FsWave:quadratic:Adf:0}, it is not difficult to drop the Schwartz assumption by approximation.  

\pfstep{- Case 3 : Proof of $\sum_{j=1}^{m-1} \nrm{s^{-1/2} \LieBr{\nb_{x}^{(j)} A^{\ell}}{\nb_{\ell} \nb_{x}^{(m-1-j)} F_{si}}}_{\calL^{2,p}_{s} \calL^{2}_{t,x}} \leq C_{\calE, \calF, \calAlow} \cdot (\calE + \calF + \calAlow)^{2}$} 

By the H\"older inequality $L^{4}_{t,x} \cdot L^{4}_{t,x} \subset L^{2}_{t,x}$, the Correspondence Principle and H\"older for $\calL^{\ell,p}_{s}$ (Lemma \ref{lem:absP:Holder4Ls}), we immediately obtain the estimate
\begin{align*}
\sum_{j=1}^{m-1} \nrm{s^{-1/2} \LieBr{\nb_{x}^{(j)} A^{\ell}}{\nb_{\ell} \nb_{x}^{(m-1-j)} F_{si}}}_{\calL^{2,p}_{s} \calL^{2}_{t,x}}
\leq C \sum_{j=1}^{m-1} \nrm{A}_{\calL^{1/4,\infty}_{s} \calL^{4}_{t} \dot{\calW}^{j,4}_{x}} \nrm{F_{si}}_{\calL^{5/4,p}_{s} \calL^{4}_{t} \dot{\calW}^{m-j,4}_{x}}.
\end{align*}

Let us apply Lemma \ref{lem:fundEst4A} to $\nrm{A}_{\calL^{1/4,\infty}_{s} \calL^{4}_{t} \dot{\calW}^{j,4}_{x}}$; as $1 \leq j \leq m-1 \leq 9$, this can be estimated by $C (\calF + \calAlow)$. On the other hand, as $1 \leq m-j \leq m-1 \leq 9$, $\nrm{F_{si}}_{\calL^{5/4,p}_{s} \calL^{4}_{t} \dot{\calW}^{m-j,4}_{x}}$ can be controlled by $C\calF$ via Strichartz. The desired estimate follows.

\pfstep{- Case 4 : Proof of $ \nrm{s^{-1/2} \LieBr{A_0}{\nb_{0} F_{si}}}_{\calL^{2,p}_{s} \calL^{2}_{t} \dot{\calH}^{m-1}_{x}} \leq C_{\calE, \calF, \calAlow} \cdot (\calE + \calF + \calAlow)^{2}$} 

By Leibniz's rule, the H\"older inequality $L^{2}_{t} L^{\infty}_{x} \cdot L^{\infty}_{t} L^{2}_{x} \subset L^{2}_{t,x}$, the Correspondence Principle and H\"older for $\calL^{\ell,p}_{s}$ (Lemma \ref{lem:absP:Holder4Ls}), we have
\begin{equation*}
\nrm{s^{-1/2} \LieBr{A_0}{\nb_{0} F_{si}}}_{\calL^{2,p}_{s} \calL^{2}_{t} \dot{\calH}^{m-1}_{x}}
\leq C \sum_{j=0}^{m-1} \nrm{\nb_{x}^{(j)} A_{0}}_{\calL^{0+1/4,\infty}_{s} \calL^{2}_{t} \calL^{\infty}_{x}} \nrm{\nb_{0} F_{s}}_{\calL^{5/4,p}_{s} \calL^{\infty}_{t} \dot{\calH}^{m-1-j}_{x}}.
\end{equation*}

Thanks to the extra weight of $s^{1/4}$ and the fact that $0 \leq j \leq m-1 \leq 9$, we can easily prove $\nrm{\nb_{x}^{(j)} A_{0}}_{\calL^{1/4,\infty}_{s} \calL^{2}_{t} \calL^{\infty}_{x}} \leq C_{\calE, \calF, \calAlow} \cdot (\calE + \calF + \calAlow)^{2}$ via Lemma \ref{lem:fundEst4A0}, Gagliardo-Nirenberg (Lemma \ref{lem:absP:algEst}) and Proposition \ref{prop:pEst4Fs0:high}. On the other hand, as $0 \leq m-1-j \leq 9$, we have $\nrm{\nb_{0} F_{s}}_{\calL^{5/4,p}_{s} \calL^{\infty}_{t} \dot{\calH}^{m-1-j}_{x}} \leq C \calF$. The desired estimate then follows.

\pfstep{- Case 5 : Proof of $ \nrm{s^{-1/2} \LieBr{\nb_{0} A_0}{F_{si}}}_{\calL^{2,p}_{s} \calL^{2}_{t} \dot{\calH}^{m-1}_{x}} \leq C_{\calE, \calF, \calAlow} \cdot (\calE + \calF + \calAlow)^{2}$} 

We claim that the following estimate for $\nb_{0} A_{0}$ holds for $0 \leq j \leq 9$.
\begin{equation} \label{eq:FsWave:quadratic:D0A0:0}
	\nrm{\nb_{x}^{(j)} \nb_{0} A_{0}}_{\calL^{0, \infty}_{s} \calL^{2}_{t} \calL^{\infty}_{x}} \leq C_{\calE, \calF, \calAlow} \cdot (\calE + \calF + \calAlow)^{2}.
\end{equation}

Assuming the claim, let us prove the desired estimate. As in the previous case, we have
\begin{equation*}
\nrm{s^{-1/2} \LieBr{\nb_{0} A_0}{F_{si}}}_{\calL^{2,p}_{s} \calL^{2}_{t} \dot{\calH}^{m-1}_{x}}
\leq C \sum_{j=0}^{m-1} \nrm{\nb_{x}^{(j)} \nb_{0} A_{0}}_{\calL^{1/4,\infty}_{s} \calL^{2}_{t} \calL^{\infty}_{x}} \nrm{F_{s}}_{\calL^{5/4,p}_{s} \calL^{\infty}_{t} \dot{\calH}^{m-1-j}_{x}}.
\end{equation*}

The factor $\nrm{\nb_{x}^{(j)} \nb_{0} A_{0}}_{\calL^{1/4,\infty}_{s} \calL^{2}_{t} \calL^{\infty}_{x}}$ can be controlled by \eqref{eq:FsWave:quadratic:D0A0:0}. For the other factor, we divide into two cases: For $1 \leq j \leq m-1 \leq 9$, we have $\nrm{F_{s}}_{\calL^{5/4,p}_{s} \calL^{\infty}_{t} \dot{\calH}^{m-1-j}_{x}} \leq C \calF$, whereas for $j=0$ we use Proposition \ref{prop:lowEst4Fsi}.  The desired estimate then follows.

To prove the claim, we begin with the formula $\rd_{0} A_{0} = - \int_{s}^{1} \rd_{0} F_{s0}(s') \, \ud s'$. Proceeding as in the proofs of the Lemmas \ref{lem:fundEst4A} and \ref{lem:fundEst4A0}, we obtain the estimate
\begin{equation*}
	\nrm{\nb_{x}^{(j)} \nb_{0} A_{0}}_{\calL^{0, \infty}_{s} \calL^{2}_{t} \calL^{\infty}_{x}} \leq C \nrm{\nb_{x}^{(j)} \nb_{0} F_{s0}}_{\calL^{1,2}_{s} \calL^{2}_{t} \calL^{\infty}_{x}}. 
\end{equation*}

In order to estimate the right-hand side, recall the identity $\rd_{0} F_{s0} = \rd^{\ell} w_{\ell} + \LieBr{A^{\ell}}{w_{\ell}} + \LieBr{A_{0}}{F_{s0}}$ from Appendix \ref{sec:HPYM}. It therefore suffices to prove
\begin{equation*}
	\nrm{\nb_{x}^{(j)} \nb^{\ell} w_{\ell}}_{\calL^{1,2}_{s} \calL^{2}_{t} \calL^{\infty}_{x}} 
	+ \nrm{\nb_{x}^{(j)} \LieBr{A^{\ell}}{w_{\ell}}}_{\calL^{3/2,2}_{s} \calL^{2}_{t} \calL^{\infty}_{x}}
	+ \nrm{\nb_{x}^{(j)} \LieBr{A_{0}}{F_{s0}}}_{\calL^{3/2,2}_{s} \calL^{2}_{t} \calL^{\infty}_{x}}
	\leq C_{\calE, \calF, \calAlow} \cdot (\calE + \calF + \calAlow)^{2},
\end{equation*}
for $0 \leq j \leq 9$.

By Gagliardo-Nirenberg (Lemma \ref{lem:absP:algEst}) and Proposition \ref{prop:pEst4wi}, we have $	\nrm{\nb_{x}^{(j)} \nb^{\ell} w_{\ell}}_{\calL^{1,2}_{s} \calL^{2}_{t} \calL^{\infty}_{x}} \leq C_{\calE, \calF, \calAlow} \cdot (\calE + \calF + \calAlow)^{2}$ for $0 \leq j \leq 9$. 

Next, by Leibniz's rule, H\"older, the Correspondence Principle and Lemma \ref{lem:absP:Holder4Ls}, we obtain
\begin{equation*}
	\nrm{\nb_{x}^{(j)} \LieBr{A^{\ell}}{w_{\ell}}}_{\calL^{3/2,2}_{s} \calL^{2}_{t} \calL^{\infty}_{x}}
	\leq C \sum_{j' = 0}^{j} \nrm{\nb_{x}^{(j')} A}_{\calL^{1/4+1/4,\infty}_{s} \calL^{\infty}_{t,x}} \nrm{\nb_{x}^{(j-j')} w}_{\calL^{1,2}_{s} \calL^{2}_{t} \calL^{\infty}_{x}}.
\end{equation*}

Note the extra weight of $s^{1/4}$ on the first factor. As $0 \leq j' \leq 9$, by Lemma \ref{lem:fundEst4A}, Gagliardo-Nirenberg (Lemma \ref{lem:absP:algEst}) and Proposition \ref{prop:pEst4Fsi}, we have $\nrm{\nb_{x}^{(j')} A}_{\calL^{1/2,\infty}_{s} \calL^{\infty}_{t,x}} \leq C_{\calF, \calAlow} \cdot (\calF + \calAlow).$ On the other hand, $\nrm{\nb_{x}^{(j-j')} w}_{\calL^{1,2}_{s} \calL^{2}_{t} \calL^{\infty}_{x}} \leq C_{\calE, \calF, \calAlow} \cdot (\calE + \calF + \calAlow)^{2}$ by Gagliardo-Nirenberg (Lemma \ref{lem:absP:algEst}) and Proposition \ref{prop:pEst4wi}. 

Finally, we can show $\nrm{\nb_{x}^{(j)} \LieBr{A_{0}}{F_{s0}}}_{\calL^{3/2,2}_{s} \calL^{2}_{t} \calL^{\infty}_{x}} \leq C_{\calE, \calF, \calAlow} \cdot (\calE + \calF + \calAlow)^{2}$ by proceeding similarly, with applications of Propositions \ref{prop:pEst4Fsi} and \ref{prop:pEst4wi} replaced by Proposition \ref{prop:pEst4Fs0:high}. We leave the details to the reader.

\pfstep{- Case 6 : Proof of $\nrm{\LieBr{F_{i 0}}{F_{s0}}}_{\calL^{2,p}_{s} \calL^{2}_{t} \dot{\calH}^{m-1}_{x}} \leq C_{\calE, \calF, \calAlow} \cdot (\calE + \calF + \calAlow)^{2}$}

By Leibniz's rule, H\"older, the Correspondence Principle and Lemma \ref{lem:absP:Holder4Ls}, we have
\begin{equation*}
\nrm{\LieBr{F_{i0}}{F_{s0}}}_{\calL^{2,p}_{s} \calL^{2}_{t} \dot{\calH}^{m-1}_{x}}
\leq C \sum_{j=0}^{m-1} \nrm{\nb_{x}^{(j)} F_{i0}}_{\calL^{3/4,2}_{s} \calL^{\infty}_{t,x}} \nrm{F_{s0}}_{\calL^{5/4,\infty}_{s} \calL^{2}_{t} \dot{\calH}^{m-1-j}_{x}}.
\end{equation*}

Using Gagliardo-Nirenberg (Lemma \ref{lem:absP:algEst}) and Lemma \ref{lem:fundEst4F0i}, combined with Propositions \ref{prop:pEst4Fs0:high}, \ref{prop:pEst4Fsi}, we can prove the following estimate for the first factor (for $0 \leq j \leq 9$):
\begin{equation*}
	\nrm{\nb_{x}^{(j)} F_{i0}}_{\calL^{3/4,2}_{s} \calL^{\infty}_{t,x}} 
	\leq C_{\calE, \calF, \calAlow} \cdot (\calE + \calF + \calAlow).
\end{equation*}

For the second factor, we simply apply Proposition \ref{prop:pEst4Fs0:high} to conclude $\nrm{F_{s0}}_{\calL^{5/4,\infty}_{s} \calL^{2}_{t} \dot{\calH}^{m-1-j}_{x}} \leq C_{\calF, \calAlow} \cdot \calE + C_{\calF, \calAlow} \cdot (\calF + \calAlow)^{2}$ for $0 \leq m-1-j \leq 9$, which is good.

\pfstep{- Case 7 : Proof of $\nrm{\LieBr{\tensor{F}{_{i}^{\ell}}}{F_{s\ell}}}_{\calL^{2,p}_{s} \calL^{2}_{t} \dot{\calH}^{m-1}_{x}} \leq C_{\calE, \calF, \calAlow} \cdot (\calE + \calF + \calAlow)^{2}$}

In this case, we simply expands out $F_{i\ell} = \rd_{i} A_{\ell} - \rd_{\ell} A_{i} + \LieBr{A_{i}}{A_{\ell}}$. Note that the first two terms give additional terms of the form already handled in Step 3, whereas the last term will give us cubic terms which can simply be estimated by using H\"older and Sobolev. For more details, we refer to the proof of Lemma \ref{lem:FsWave:cubic} below. \qedhere
\end{proof}

\subsubsection{Estimates for cubic terms}
The contribution of cubic terms are much easier to handle compared to quadratic terms. Indeed, we have the following lemma.

\begin{lemma} [Estimates for cubic terms] \label{lem:FsWave:cubic}
Assume $0 < T \leq 1$. For $1 \leq m \leq 10$ and $p=2, \infty$, the following estimates hold.
\begin{equation} \label{eq:FsWave:cubic}
	\sup_{i} \nrm{{}^{(F_{si})} \calM_{\cubic}}_{\calL^{2,p}_{s} \calL^{2}_{t} \dot{\calH}^{m-1}_{x} (0,1]} 
	\leq C_{\calE, \calF, \calAlow} \cdot (\calE + \calF + \calAlow)^{3},
\end{equation}
\begin{equation} \label{eq:FsWave:cubic:Diff}
	\sup_{i} \nrm{{}^{(\dlt F_{si})} \calM_{\cubic}}_{\calL^{2,p}_{s} \calL^{2}_{t} \dot{\calH}^{m-1}_{x} (0,1]} 
	\leq C_{\calE, \calF, \calAlow} \cdot (\calE + \calF + \calAlow)^{2} (\dlt \calE + \dlt \calF + \dlt \calAlow).
\end{equation}
\end{lemma}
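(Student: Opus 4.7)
The plan is to reduce both estimates \eqref{eq:FsWave:cubic} and \eqref{eq:FsWave:cubic:Diff} to a routine application of Hölder, the Correspondence Principle, and the fundamental bounds for $A$, $A_0$ already established in \S\ref{sec:pEst4HPYM}. Since we are working with purely cubic terms and $1 \le m \le 10$ is a bounded range of derivatives, the argument should be uniform and essentially free of subtleties, at the cost of some bookkeeping of $s$-weights.

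Schematically, ${}^{(F_{si})}\calM_{\cubic} = \calO(A_0, A_0, F_{si}) + \calO(A, A, F_{si})$. First I would establish the fixed-time product inequality
\[
\nrm{\phi_1 \phi_2 \phi_3}_{\dot{H}^{m-1}_x} \le C_m \sum_{j_1+j_2+j_3 = m-1} \nrm{\phi_1}_{\dot{W}^{j_1,\infty}_x \cap L^\infty_x} \nrm{\phi_2}_{\dot{W}^{j_2,\infty}_x \cap L^\infty_x} \nrm{\phi_3}_{\dot{W}^{j_3,2}_x}
\]
by Leibniz and Hölder, then square-integrate in $t$ putting $\phi_3$ in $L^2_t$ and $\phi_1, \phi_2$ in $L^\infty_t$. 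Applying the Correspondence Principle and Lemma \ref{lem:absP:Holder4Ls} (with a small extra $s$-weight $s^{\eps}$ apportioned to the two $L^\infty$ factors), this yields, for $1 \le m \le 10$ and $p = 2,\infty$,
\[
\nrm{\calO(\psi_1, \psi_2, \psi_3)}_{\calL^{2,p}_s \calL^2_t \dot{\calH}^{m-1}_x} \le C_m \sum_{j_1+j_2+j_3 = m-1} \nrm{\psi_1}_{\calL^{\ell_1+\eps,\infty}_s \calL^\infty_t (\dot{\calW}^{j_1,\infty}_x \cap \calL^\infty_x)} \nrm{\psi_2}_{\calL^{\ell_2+\eps,\infty}_s \calL^\infty_t (\dot{\calW}^{j_2,\infty}_x \cap \calL^\infty_x)} \nrm{\psi_3}_{\calL^{5/4,p}_s \calL^2_t \dot{\calH}^{j_3}_x},
\]
where $\ell_1 + \ell_2 = 3/4 - 2\eps$ are any nonnegative choices (e.g.\ $\ell_1 = \ell_2 = 1/4$ for the $\calO(A, A, F_{si})$ case, and $\ell_1 = \ell_2 = 0$ for $\calO(A_0, A_0, F_{si})$).

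Next I would substitute. For the term $\calO(A, A, F_{si})$ take $\psi_1 = \psi_2 = A$, $\psi_3 = F_{si}$: Lemma \ref{lem:fundEst4A} (combined with Gagliardo–Nirenberg, Lemma \ref{lem:absP:algEst}) reduces $\nrm{A}$ to $\nrm{F_{si}}$ plus $\nrm{\Alow_i}$ at the relevant derivative level, and Proposition \ref{prop:pEst4Fsi} together with $\nrm{\Alow_i}_{\dot{H}^k_x} \le \calAlow$ (valid because $j_1, j_2 \le 9$) gives the bound $C_{\calF,\calAlow}\cdot(\calF + \calAlow)$ for each $\nrm{A}$ factor. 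For the $\psi_3$ factor, Proposition \ref{prop:pEst4Fsi} (or Proposition \ref{prop:lowEst4Fsi} in the case $j_3 = 0$) yields $C_{\calF,\calAlow}\cdot \calF$ (plus $T^{1/2}$-small terms, all harmless). The product is $C_{\calF,\calAlow}\cdot(\calF + \calAlow)^3$, which is acceptable. For the term $\calO(A_0, A_0, F_{si})$ I would use Lemma \ref{lem:fundEst4A0} to reduce $\nrm{A_0}$ to $\nrm{F_{s0}}$ and then invoke Proposition \ref{prop:pEst4Fs0:high} (specifically \eqref{eq:pEst4Fs0:high:3}) to bound the latter by $C_{\calF,\calAlow}\cdot(\calE + (\calF + \calAlow)^2)$; multiplying by the $\psi_3$ bound produces $C_{\calE,\calF,\calAlow}\cdot(\calE + \calF + \calAlow)^3$. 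This establishes \eqref{eq:FsWave:cubic}.

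For the difference estimate \eqref{eq:FsWave:cubic:Diff}, I would apply the formal Leibniz rule for $\dlt$ to each cubic expression, producing three terms per monomial in which exactly one of $A, A_0, F_{si}$ is replaced by its $\dlt$-version (and the remaining two factors may be either primed or unprimed—this is absorbed into the $\calO$ notation). Each resulting trilinear expression is handled by exactly the same trichotomy as above, with the lone difference that the $\dlt$-factor is estimated using the difference analogues: Proposition \ref{prop:pEst4Fsi} (\eqref{eq:pEst4Fsi:Diff:1}--\eqref{eq:pEst4Fsi:Diff:2}) for $\dlt A$ via a $\dlt$-version of Lemma \ref{lem:fundEst4A}, and Proposition \ref{prop:pEst4Fs0:high:Diff} for $\dlt A_0$. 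The net effect is to replace one power of $(\calE + \calF + \calAlow)$ by $(\dlt \calE + \dlt \calF + \dlt \calAlow)$, giving exactly \eqref{eq:FsWave:cubic:Diff}. No step here is a genuine obstacle; the only mildly delicate point is keeping track of the weights $\ell_1, \ell_2$ so that Lemma \ref{lem:fundEst4A} applies (which requires the strict inequality $1/4 + \ell + k/2 > \ell_0$), and this is ensured by the small $\eps$-perturbation introduced above.
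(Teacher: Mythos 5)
Your approach is essentially the same as the paper's: write ${}^{(F_{si})}\calM_{\cubic}$ schematically as $\calO(A,A,F_{si}) + \calO(A_0,A_0,F_{si})$, apply Leibniz, the Correspondence Principle and H\"older for $\calL^{\ell,p}_s$, and then reduce each factor to the controlled quantities via Lemmas \ref{lem:fundEst4A}, \ref{lem:fundEst4A0} and the parabolic propositions. The decomposition differs in detail: the paper starts from $\nrm{\phi_1\phi_2\phi_3}_{L^2_{t,x}} \le CT^{1/2}\prod_i \nrm{\phi_i}_{L^\infty_t\dot{H}^1_x}$, placing all three factors in $L^\infty_t$ and paying $T^{1/2}$, whereas you keep the $F_{si}$ factor in $L^2_t$ and place the two connection factors in $L^\infty_{t,x}$ (via Gagliardo--Nirenberg), which buys you an extra $s^{1/4}$ in the $\calL_s$-budget. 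Your small-$\eps$ weight perturbation is then a clean substitute for the paper's invocation of Proposition \ref{prop:est4ai} in handling the borderline factor $\nrm{A}_{\calL^{1/4,\infty}_s \calL^\infty_t \dot{\calH}^1_x}$, which otherwise sits exactly at $r = 1/4 + k/2 + \ell - \ell_0 = 0$ where Lemma \ref{lem:fundEst4A} does not apply.

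One concrete slip: your parenthetical choice $\ell_1 = \ell_2 = 0$ for the term $\calO(A_0,A_0,F_{si})$ does not work. After Gagliardo--Nirenberg, the $j_1 = 0$ summand produces $\nrm{A_0}_{\calL^{\ell_1+\eps,\infty}_s \calL^\infty_t \dot{\calH}^1_x}$, and Lemma \ref{lem:fundEst4A0} with $X = L^\infty_t L^2_x$ (so $\ell_0 = 3/4$) requires $r' = k/2 + \ell - \ell_0 > 0$, i.e.\ $\ell > 1/4$ when $k=1$; with $\ell = \ell_1 + \eps = \eps$ this is violated. The fix is immediate --- take $\ell_1 = \ell_2 = 1/4$ also for the $A_0$ term, which is comfortably within the H\"older budget $\ell_1 + \ell_2 + 2\eps \le 3/4$ --- but it is worth noting because the slip comes from the tempting heuristic $A_0 \aeq s^0$; the heuristic gives the \emph{scaling-critical} weight, whereas Lemma \ref{lem:fundEst4A0} needs a strictly subcritical weight at the lowest derivative level, and the needed margin is exactly $1/4$ for $k=1$.
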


\begin{proof} 
As before, we give a proof of \eqref{eq:FsWave:cubic}, leaving the similar proof of the difference version \eqref{eq:FsWave:cubic:Diff} to the reader.

Let $1 \leq m \leq 10$ and $p = 2$ or $\infty$. As before, we work on the whole interval $(0,1]$. We begin with the obvious inequality
\begin{equation*}
	\nrm{\phi_{1} \phi_{2} \phi_{3}}_{L^{2}_{t,x}} \leq C T^{1/2} \prod_{i=1,2,3} \nrm{\phi_{i}}_{L^{\infty}_{t} \dot{H}^{1}_{x}},
\end{equation*}
which follows from H\"older and Sobolev. By Leibniz's rule, the Correspondence Principle and H\"older for $\calL^{\ell,p}_{s}$ (Lemma \ref{lem:absP:Holder4Ls}), we obtain
\begin{equation*}
\begin{aligned}
	\sup_{i} \nrm{{}^{(F_{si})} \calM_{\cubic}}_{\calL^{2,p}_{s} \calL^{2}_{t} \dot{\calH}^{m-1}_{x}} 
	\leq & C T^{1/2} \nrm{\nb_{x} A}_{\calL^{1/4,\infty}_{s} \calL^{\infty}_{t} \calH_{x}^{m-1}}^{2} \nrm{\nb_{x} F_{s}}_{\calL^{5/4,p}_{s} \calL^{\infty}_{t} \calH_{x}^{m-1}} \\
	& + C T^{1/2} \nrm{\nb_{x} A_{0}}_{\calL^{0+1/4,\infty}_{s} \calL^{\infty}_{t} \calH_{x}^{m-1}}^{2} \nrm{\nb_{x} F_{s}}_{\calL^{5/4,p}_{s} \calL^{\infty}_{t} \calH_{x}^{m-1}}.
\end{aligned}\end{equation*}

Note the obvious bound $\nrm{\nb_{x} F_{s}}_{\calL^{5/4,p}_{s} \calL^{\infty}_{t} \calH_{x}^{m-1}} \leq C\calF$. Applying Lemma \ref{lem:fundEst4A0} to $\nrm{A_{0}}$ (using the extra weight of $s^{1/4}$) and Proposition \ref{prop:pEst4Fs0:high}, we also obtain $\nrm{\nb_{x} A_{0}}_{\calL^{0+1/4,\infty}_{s} \calL^{\infty}_{t} \calH_{x}^{m-1}} \leq C_{\calF, \calAlow} \cdot \calE + C_{\calF, \calAlow} \cdot (\calF + \calAlow)^{2}$. Finally, we split $\nrm{\nb_{x} A}_{\calL^{1/4,\infty}_{s} \calL^{\infty}_{t} \calH_{x}^{m-1}}$ into $\nrm{A}_{\calL^{1/4,\infty}_{s} \calL^{\infty}_{t} \dot{\calH}_{x}^{1}}$ and $\nrm{\nb_{x}^{(2)} A}_{\calL^{1/4,\infty}_{s} \calL^{\infty}_{t} \calH_{x}^{m-2}}$ (where the latter term does not exist in the case $m = 1$). For the former we apply Proposition \ref{prop:est4ai}, whereas for the latter we apply Lemma \ref{lem:fundEst4A}. We then conclude $\nrm{\nb_{x} A}_{\calL^{1/4,\infty}_{s} \calL^{\infty}_{t} \calH_{x}^{m-1}} \leq C_{\calF, \calAlow} \cdot (\calF + \calAlow)$. Combining all these estimates, \eqref{eq:FsWave:cubic} follows. \qedhere
\end{proof}

\subsubsection{Estimates for terms involving $w_{i}$}
Finally,  the contribution of ${}^{(F_{si})} \calM_{w}$ is estimated by the following lemma.

\begin{lemma}[Estimates for terms involving $w_{i}$] \label{lem:FsWave:w}
Assume $0 < T \leq 1$. For $1 \leq m \leq 10$ and $p=2, \infty$, the following estimates hold.
\begin{equation} \label{eq:FsWave:w}
	\sup_{i} \nrm{{}^{(F_{si})} \calM_{w}}_{\calL^{2,p}_{s} \calL^{2}_{t} \dot{\calH}^{m-1}_{x}(0,1]} 
	\leq C_{\calE, \calF, \calAlow} \cdot (\calE + \calF + \calAlow)^{2},
\end{equation}
\begin{equation} \label{eq:FsWave:w:Diff}
	\sup_{i} \nrm{{}^{(\dlt F_{si})} \calM_{w}}_{\calL^{2,p}_{s} \calL^{2}_{t} \dot{\calH}^{m-1}_{x}(0,1]} 
	\leq C_{\calE, \calF, \calAlow} \cdot (\calE + \calF + \calAlow)(\dlt \calE + \dlt \calF + \dlt \calAlow).
\end{equation}
\end{lemma}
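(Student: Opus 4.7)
The approach parallels Lemmas \ref{lem:FsWave:quadratic} and \ref{lem:FsWave:cubic}, but the argument is substantially simpler because every summand of ${}^{(F_{si})}\calM_{w}$ carries at least one factor of $w$ (either explicitly or buried in ${}^{(w_{i})}\calN$); since $w$ itself satisfies a parabolic equation with zero Cauchy data at $s=0$ and hence enjoys the quadratic estimates of Proposition \ref{prop:pEst4wi}, the desired quadratic bound in $(\calE + \calF + \calAlow)$ is automatic. In particular, no null form structure needs to be recovered, in contrast to Case~2 of Lemma \ref{lem:FsWave:quadratic}.

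The plan is to split ${}^{(F_{si})}\calM_{w} = -\covD^{\ell}\covD_{\ell} w_{i} + \covD_{i} \covD^{\ell} w_{\ell} - {}^{(w_{i})}\calN$ into three pieces. First, the contribution of ${}^{(w_{i})}\calN$ is controlled directly by invoking \eqref{eq:pEst4wi:4} of Proposition \ref{prop:pEst4wi}, which gives the desired bound for $0 \leq k \leq 14$; since $0 \leq m-1 \leq 9$ in our range, this is immediate. Second, for the principal parts $\lap w_{i}$ and $\rd_{i}\rd^{\ell} w_{\ell}$ arising from expanding the covariant Laplacians, a direct weight count shows
\[
\nrm{\lap w_{i}}_{\calL^{2,p}_{s} \calL^{2}_{t} \dot{\calH}^{m-1}_{x}(0,1]} + \nrm{\rd_{i}\rd^{\ell} w_{\ell}}_{\calL^{2,p}_{s} \calL^{2}_{t} \dot{\calH}^{m-1}_{x}(0,1]}
\;\leq\; C \nrm{w}_{\calL^{1,p}_{s} \calL^{2}_{t} \dot{\calH}^{m+1}_{x}(0,1]},
\]
so Proposition \ref{prop:pEst4wi} Part (2) (applied with $m' = m+1 \leq 11 \leq 16$ for $p=2$, and with $m' = m+2 \leq 12 \leq 16$ for $p=\infty$) again closes the estimate.

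Third, the remaining lower-order terms obtained from expanding the covariant Laplacians have schematic form
\[
s^{-1/2}\calO(A, \nb_{x} w) + s^{-1/2}\calO(\nb_{x} A, w) + \calO(A, A, w).
\]
Each of these is handled by Leibniz's rule and the Correspondence Principle: starting from the elementary product inequalities $\nrm{\phi_{1}\rd_{x}\phi_{2}}_{L^{2}_{x}} \leq C\nrm{\phi_{1}}_{\dot{H}^{3/2}_{x}\cap L^{\infty}_{x}}\nrm{\phi_{2}}_{\dot{H}^{1}_{x}}$, its symmetric variant, and the trilinear H\"older--Sobolev $\nrm{\phi_{1}\phi_{2}\phi_{3}}_{L^{2}_{x}} \leq C\prod_{j}\nrm{\phi_{j}}_{\dot{H}^{1}_{x}}$, one applies the Correspondence Principle, Gagliardo--Nirenberg (Lemma \ref{lem:absP:algEst}), and Lemma \ref{lem:absP:Holder4Ls} with extra $s$-weights ($\geq s^{1/8}$ per factor of $A$) so that Lemma \ref{lem:fundEst4A} may be invoked to trade $\nrm{A}$ for $\nrm{F_{s}}$ and $\nrm{\Alow}$. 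Proposition \ref{prop:pEst4Fsi} then controls $\nrm{F_{s}}$ by $C_{\calF,\calAlow}\calF$ and the $\Alow$ factors by $\calAlow$ (all within the derivative range afforded by $m \leq 10$), while Proposition \ref{prop:pEst4wi} contributes the quadratic factor $(\calE + \calF + \calAlow)^{2}$ on the $w$ slot. Multiplying out yields the stated quadratic bound, with all cubic or quartic terms absorbed into $C_{\calE, \calF, \calAlow}$.

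The difference estimate \eqref{eq:FsWave:w:Diff} follows by the same template: write $\dlt\,{}^{(F_{si})}\calM_{w}$ using the formal Leibniz rule for $\dlt$, and bound each resulting term by distributing one $\dlt$ via the difference analogues (Propositions \ref{prop:pEst4wi:Diff}, \ref{prop:pEst4Fs0:high:Diff}, \ref{prop:pEst4Fsi}, etc.) while using the non-difference estimates on the remaining factors. The main potential obstacle is purely bookkeeping of $s$-weights; however, since the range $1 \leq m \leq 10$ is well below the derivative thresholds of all parabolic estimates we invoke, and since the factor of $w$ guarantees the required smallness via Proposition \ref{prop:pEst4wi}, no genuinely new difficulty arises beyond careful application of the machinery already developed.
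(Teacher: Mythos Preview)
Your proposal is correct and follows essentially the same approach as the paper: expand ${}^{(F_{si})}\calM_{w}$ schematically into the principal part $\lap w_{i}-\rd_{i}\rd^{\ell}w_{\ell}$, the term ${}^{(w_{i})}\calN$, and the lower-order interactions $\calO(A,\rd_{x}w)+\calO(\rd_{x}A,w)+\calO(A,A,w)$; then invoke \eqref{eq:pEst4wi:3}, \eqref{eq:pEst4wi:4} for the $w$ and ${}^{(w_{i})}\calN$ factors and Lemma \ref{lem:fundEst4A} together with Proposition \ref{prop:pEst4Fsi} for the $A$ factors. The only cosmetic difference is that the paper places the $A$ factors directly in $\calL^{1/4,\infty}_{s}\calL^{\infty}_{t,x}$ via the H\"older inequality $L^{\infty}_{t,x}\cdot L^{2}_{t,x}\subset L^{2}_{t,x}$, rather than going through the $\dot{H}^{3/2}_{x}\cap L^{\infty}_{x}$ product estimate you cite; the two are equivalent after Gagliardo--Nirenberg.
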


\begin{proof} 
As before, we will only give a proof of \eqref{eq:FsWave:w}, leaving the similar proof of \eqref{eq:FsWave:w:Diff} to the reader.

Let $1 \leq m \leq 10$ and $p = 2$ or $\infty$. We work on the whole interval $(0,1]$. Note that, schematically,
\begin{align*}
{}^{(F_{si})}\calM_{w} 
= & \rd^\ell \rd_\ell w_i - \rd_i \rd^\ell w_\ell + {}^{(w_{i})} \calN + \calO(A, \rd_{x} w) + \calO(\rd_{x} A, w) + \calO(A, A, w).
\end{align*}

By Leibniz's rule, the Correspondence Principle (from the H\"older inequality $L^{\infty}_{t,x} \cdot L^{2}_{t,x} \subset L^{2}_{t,x}$) and Lemma \ref{lem:absP:Holder4Ls}, we obtain the following estimate.
%\begin{aligned}
%	 \nrm{{}^{(F_{si})}\calM_{w}}_{\calL^{2,p}_{s} \calL^{2}_{t} \dot{\calH}^{m-1}_{x}} 
%	\leq &C \nrm{w}_{\calL^{1,p}_{s} \calL^{2}_{t} \dot{\calH}^{m+1}_{x}} +  C \nrm{{}^{(w_{i})} \calN}_{\calL^{2,p}_{s} \calL^{2}_{t} \dot{\calH}^{m+1}_{x}} \\
%	& + C \nrm{s^{-1/2} \calO(A, \nb_{x} w)}_{\calL^{2,p}_{s} \calL^{2}_{t} \dot{\calH}^{m-1}_{x}} 
%	+C \nrm{s^{-1/2} \calO(\nb_{x} A, w)}_{\calL^{2,p}_{s} \calL^{2}_{t} \dot{\calH}^{m-1}_{x}} \\
%	& + C \nrm{\calO(A, A, \nb_{x} w)}_{\calL^{2,p}_{s} \calL^{2}_{t} \dot{\calH}^{m-1}_{x}}
%\end{aligned}
\begin{equation} \label{eq:FsWave:w:pf:1}
\begin{aligned}
	& \nrm{{}^{(F_{si})}\calM_{w}}_{\calL^{2,p}_{s} \calL^{2}_{t} \dot{\calH}^{m-1}_{x}} \\
	& \qquad \leq C \nrm{w}_{\calL^{1,p}_{s} \calL^{2}_{t} \dot{\calH}^{m+1}_{x}} +  C \nrm{{}^{(w_{i})} \calN}_{\calL^{2,p}_{s} \calL^{2}_{t} \dot{\calH}^{m+1}_{x}} 
	+C \sum_{j=0}^{m} \nrm{\nb_{x}^{(j)} A}_{\calL^{1/4,\infty}_{s} \calL^{\infty}_{t,x}} \nrm{ w}_{\calL^{1,2}_{s} \calL^{2}_{t} \dot{\calH}^{m-j}_{x}} \\
	&\phantom{\qquad \leq}+ C \sum_{j, j' \geq 0, j+j' \leq m-1} \nrm{\nb_{x}^{(j)} A}_{\calL^{1/4,\infty}_{s} \calL^{\infty}_{t,x}}\nrm{\nb_{x}^{(j')} A}_{\calL^{1/4,\infty}_{s} \calL^{\infty}_{t,x}} \nrm{w}_{\calL^{1,2}_{s} \calL^{2}_{t} \dot{\calH}^{m-1-j-j'}_{x}}
\end{aligned}
\end{equation}

By Lemma \ref{lem:fundEst4A}, combined with Proposition \ref{prop:pEst4Fsi}, the following estimate holds for $0 \leq j \leq 10$.
\begin{equation} \label{eq:FsWave:w:pf:2}
	\nrm{\nb_{x}^{(j)} A}_{\calL^{1/4,\infty}_{s} \calL^{\infty}_{t,x}} \leq C_{\calF, \calAlow} (\calF + \calAlow).
\end{equation}

Now \eqref{eq:FsWave:w} follows from \eqref{eq:FsWave:w:pf:1}, \eqref{eq:FsWave:w:pf:2} and \eqref{eq:pEst4wi:3}, \eqref{eq:pEst4wi:4} of Proposition \ref{prop:pEst4wi}, thanks to the restriction $1 \leq m \leq 10$. \qedhere
\end{proof}

\subsubsection{Completion of the proof}
We are now prepared to give a proof of Theorem \ref{thm:FsWave}.
%We have finally developed enough preparations to give a quick proof of Theorem \ref{thm:FsWave}.

\begin{proof} [Proof of Theorem \ref{thm:FsWave}]
Let us begin with \eqref{eq:FsWave:1}. Recalling the definition of $\calF$, it suffices to show
\begin{equation*}
\nrm{F_{si}}_{\calL^{5/4,p}_{s} \dot{\calS}^{m}(0,1]} \leq C \calI + T^{1/2} C_{\calE, \calF, \calAlow} \cdot (\calE + \calF + \calAlow)^{2},
\end{equation*}
for $i=1,2,3$, $p= 2, \infty$ and $1 \leq m \leq 10$. Starting with the energy inequality and applying the Correspondence Principle, we obtain
\begin{equation*}
	\nrm{F_{si}}_{\calL^{5/4, p}_{s} \dot{\calS}^{m}} 
	\leq C \nrm{\nb_{t,x} F_{si}(t=0)}_{\calL^{5/4,p}_{s} \dot{\calH}^{m-1}_{x}} + C T^{1/2} \nrm{\Box F_{si}}_{\calL^{2, p}_{s} \calL^{2}_{t} \dot{\calH}^{m-1}_{x}}.
\end{equation*}

The first term on the right-hand side is estimated by $C\calI$. For the second term, as $\Box F_{si} = {}^{(F_{si})} \calM_{\quadratic} + {}^{(F_{si})} \calM_{\cubic} + {}^{(F_{si})} \calM_{w}$, we may apply Lemmas \ref{lem:FsWave:quadratic} -- \ref{lem:FsWave:w} (estimates \eqref{eq:FsWave:quadratic}, \eqref{eq:FsWave:cubic} and \eqref{eq:FsWave:w}, in particular) to conclude
\begin{equation*}
	\nrm{\Box F_{si}}_{\calL^{5/4+1, p}_{s} \calL^{1}_{t} \dot{\calH}^{m-1}_{x}} \leq T^{1/2} C_{\calE, \calF, \calAlow} (\calE + \calF + \calAlow)^{2},
\end{equation*}
which is good.

The proof of \eqref{eq:FsWave:2} is basically identical, this time controlling the initial data term by $C \dlt \calI$ and using \eqref{eq:FsWave:quadratic:Diff}, \eqref{eq:FsWave:cubic:Diff}, \eqref{eq:FsWave:w:Diff} in place of \eqref{eq:FsWave:quadratic}, \eqref{eq:FsWave:cubic}, \eqref{eq:FsWave:w}. \qedhere
\end{proof}

\begin{remark} 
Recall that in \cite{Klainerman:1995hz}, one has to recover two types of null forms, namely $Q_{ij}(\abs{\rd_{x}}^{-1} A, A)$ and $\abs{\rd_{x}}^{-1} Q_{ij}(A, A)$, in order to prove $H^{1}$ local well-posedness in the Coulomb gauge. An amusing observation is that we did not need to uncover the second type of null forms in our proof. 
%This raises the possibility that the caloric-type gauges for the Yang-Mills equation might offer even more advantageous structure than the Coulomb gauge, which was indeed the case for the wave map and Schr\"dingier equations (See \cite{?}).
\end{remark}

\appendix
\section{Derivation of covariant equations of \eqref{eq:HPYM}} \label{sec:HPYM}
Let $I \subset \bbR$ be an open interval, $s_{0} > 0$, and $A_{\bfa}$ a connection 1-form on $I \times \bbR^{3} \times [0,s_{0}]$ with coordinates $(t,x^{1}, x^{2}, x^{3}, s)$. Recall that a bold-faced latin index $\bfa$ runs over all the indices corresponding to $t, x^{1}, x^{2}, x^{3}, s$. As in the introduction, we may define the \emph{covariant derivative} $\covD_{\bfa}$ associated to $A_{\bfa}$. The commutator of the covariant derivatives in turn defines the \emph{curvature 2-form} $F_{\bfa \bfb}$. Note that the \emph{Bianchi identity} holds automatically:
\begin{equation} \label{eq:Bianchi4Fab:A}
	\covD_\bfa F_{\bfb \bfc} + \covD_\bfb F_{\bfc \bfa} + \covD_\bfc F_{\bfa \bfb} =0.
\end{equation}

In this appendix, we will consider a solution $A_{\bfa}$ to the \emph{hyperbolic-parabolic Yang-Mills system}, which we restate here for the reader's convenience: 
\begin{equation*} \tag{HPYM}
\left\{
\begin{aligned}
	F_{s \mu} &= \covD^{\ell} F_{\ell \mu} \hspace{.25in} \hbox{ on } \hspace{.1in} I \times \bbR^{3} \times [0,s_{0}], \\
	\covD^{\mu} F_{\mu \nu} &= 0 \hspace{.5in} \hbox{ along } I \times \bbR^{3} \times \set{0},
\end{aligned}
\right.
\end{equation*}

Let us also recall the definition of the Yang-Mills tension field $w_{\nu}$:
\begin{equation*} 
	w_{\nu} := \covD^{\mu} F_{\nu \mu}.
\end{equation*}

We will use the convention of using an underline to signify the variable being evaluated at $s=s_{0}$. For example, $\Alow_{\mu} = A_{\mu}(s=s_{0})$, $\covDlow_{\mu} B = \rd_{\mu} B + \LieBr{A_{\mu}(s=s_{0})}{B}$, $\wlow_{\mu} = w_{\mu}(s=s_{0})$ and etc. In the main body of the paper, $s_{0}$ is always set to be $1$ by scaling.

The main result of this appendix is the following theorem.
\begin{theorem}[Covariant equations of the hyperbolic-parabolic Yang-Mills system] \label{thm:A:covEqns}
Let $A_{\bfa}$ be a smooth solution to the hyperbolic-parabolic Yang-Mills system \eqref{eq:HPYM}. Then the following covariant equations hold.
\begin{align} 
	\covD^\ell F_{s\ell} = & 0,\label{eq:covCoulomb:A} \\
	\covD_0 F_{s0} = & - \covD_0 w_0 = - \covD^\ell w_\ell, \label{eq:D0Fs0:A} \\
 	\covD_s F_{\bfa \bfb} =& \covD^\ell \covD_\ell F_{\bfa \bfb} - 2\LieBr{\tensor{F}{_\bfa^\ell}}{F_{\bfb \ell}}, \label{eq:covParabolic4Fab:A} \\
	\covD_s w_\nu 
	= & \covD^\ell \covD_\ell w_\nu + 2 \LieBr{\tensor{F}{_\nu^\ell}}{w_\ell} + 2 \LieBr{F^{\mu \ell}}{\covD_{\mu} F_{\nu \ell} + \covD_{\ell} F_{\nu \mu}}, 
	\label{eq:covParabolic4w:A} \\
	 \covD^\mu \covD_\mu F_{s \nu} 
 	= & 2 \LieBr{\tensor{F}{_s^\mu}}{F_{\nu \mu}} - 2 \LieBr{F^{\mu \ell}}{\covD_\mu F_{\nu \ell} + \covD_\ell F_{\nu \mu}} 
		- \covD^\ell \covD_\ell w_\nu + \covD_\nu \covD^\ell w_\ell - 2 \LieBr{\tensor{F}{_\nu^\ell}}{w_\ell}. \label{eq:hyperbolic4F:A} \\
	\covDlow^{\mu}\Flow_{\nu \mu} = & \wlow_{\nu} \label{eq:hyperbolic4Alow:A}
\end{align}

Moreover, we have $w_{\nu}(s=0) = 0$.
\end{theorem}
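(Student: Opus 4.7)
The plan is to derive all seven identities from three basic tools: the Bianchi identity \eqref{eq:Bianchi4Fab:A}, the commutator formula $[\covD_\bfa, \covD_\bfb] B = \LieBr{F_{\bfa \bfb}}{B}$, and the defining equations of \eqref{eq:HPYM}. I would begin with the easiest statements: the vanishing $w_\nu(s=0) = 0$ is immediate from the second line of \eqref{eq:HPYM}, and \eqref{eq:hyperbolic4Alow:A} is just the definition of $w_\nu$ evaluated at $s = s_0$. For the covariant Coulomb identity \eqref{eq:covCoulomb:A}, I would combine the first equation of \eqref{eq:HPYM} with the antisymmetry of $F_{k \ell}$ to write
\begin{equation*}
\covD^\ell F_{s \ell} = \covD^\ell \covD^k F_{k \ell} = \tfrac{1}{2} [\covD^\ell, \covD^k] F_{k \ell} = \tfrac{1}{2} \LieBr{F^{\ell k}}{F_{k \ell}},
\end{equation*}
which vanishes after relabeling the dummy indices. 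The identical computation applied to $\covD^\nu w_\nu = \covD^\nu \covD^\mu F_{\nu \mu}$ yields $\covD^\nu w_\nu = 0$, which will be needed for \eqref{eq:D0Fs0:A}.

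The key structural identity is \eqref{eq:covParabolic4Fab:A}, the covariant heat equation for $F_{\bfa \bfb}$. For spatial $\bfa = i$, $\bfb = j$, I would apply Bianchi to the triple $(s, i, j)$ to obtain $\covD_s F_{ij} = \covD_i F_{sj} - \covD_j F_{si}$, substitute $F_{s \mu} = \covD^\ell F_{\ell \mu}$ from HPYM, and commute one covariant derivative past the other to produce both a Laplacian term $\covD^\ell \covD_\ell$ and a curvature commutator. A second application of the Bianchi identity converts $\covD_i F_{\ell j}$ into $\covD_\ell F_{ij}$ plus correction terms, which combine with the commutator corrections to yield the claimed right-hand side $\covD^\ell \covD_\ell F_{ij} - 2\LieBr{\tensor{F}{_i^\ell}}{F_{j \ell}}$. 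The cases where one or both of $\bfa, \bfb$ equals $s$ or $0$ are handled by the same template, using Bianchi and HPYM in the same manner.

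Once \eqref{eq:covParabolic4Fab:A} is in hand, the remaining identities follow by further manipulations. For \eqref{eq:D0Fs0:A}, HPYM gives $F_{s 0} = \covD^\ell F_{\ell 0} = -\covD^\ell F_{0 \ell} = -w_0$ (using $F_{00} = 0$), so $\covD_0 F_{s 0} = -\covD_0 w_0$, and $\covD_0 w_0 = \covD^\ell w_\ell$ follows from $\covD^\nu w_\nu = 0$ together with the Minkowski signature. For \eqref{eq:covParabolic4w:A}, I would differentiate $w_\nu = \covD^\mu F_{\nu \mu}$ by $\covD_s$, commute $\covD_s$ past $\covD^\mu$ (producing an $\LieBr{\tensor{F}{_s^\mu}}{F_{\nu \mu}}$ term), apply \eqref{eq:covParabolic4Fab:A} to $\covD_s F_{\nu \mu}$, and use Leibniz together with $\covD^\mu F_{\mu \ell} = -w_\ell$ to extract the $\LieBr{\tensor{F}{_\nu^\ell}}{w_\ell}$ and $\LieBr{F^{\mu \ell}}{\covD_\mu F_{\nu \ell} + \covD_\ell F_{\nu \mu}}$ pieces. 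For \eqref{eq:hyperbolic4F:A}, I would apply $\covD^\mu$ to the Bianchi consequence $\covD_\mu F_{s \nu} = \covD_\nu F_{s \mu} - \covD_s F_{\nu \mu}$, commute derivatives past the definition $\covD^\mu F_{\nu \mu} = w_\nu$, and collect the resulting quadratic and $w$-dependent terms.

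The main obstacle will be the bookkeeping of commutator corrections in \eqref{eq:covParabolic4Fab:A}--\eqref{eq:hyperbolic4F:A}: each swap of two covariant derivatives produces an $\LieBr{F_{\bfa \bfb}}{\cdot}$ term, and in \eqref{eq:covParabolic4w:A} and \eqref{eq:hyperbolic4F:A} several such swaps cascade, so Bianchi must be invoked repeatedly to rewrite mixed terms like $\covD_i F_{\ell j}$ into a form compatible with the target expression. The cleanest organization is probably to prove \eqref{eq:covParabolic4Fab:A} in a unified schematic fashion valid for all $\bfa, \bfb$, and then specialize when deriving \eqref{eq:covParabolic4w:A} and \eqref{eq:hyperbolic4F:A}; this isolates the combinatorial work in a single computation and avoids having to rediscover the same sign cancellations in each subsequent step.
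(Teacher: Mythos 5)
Your proposal is correct and takes essentially the same route as the paper: every identity is obtained from the Bianchi identity, the commutator formula $[\covD_\bfa, \covD_\bfb]B = \LieBr{F_{\bfa \bfb}}{B}$, and the defining equations of \eqref{eq:HPYM}, in the same order (covariant Coulomb condition first, then the uniform parabolic equation for $F_{\bfa\bfb}$, then the rest). Your derivation of \eqref{eq:D0Fs0:A} via the direct identity $F_{s0} = -w_0$ is a slightly shorter variant of the paper's computation through $\covD^\mu F_{s\mu} = \covD^\ell w_\ell$, but it lands in the same place.
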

\begin{remark} 
An inspection of the proof shows that \eqref{eq:covCoulomb:A}--\eqref{eq:hyperbolic4Alow:A} hold under the weaker hypothesis that $A_{\bfa}$ satisfies only \eqref{eq:dYMHF}. The last statement of the theorem is equivalent to \eqref{eq:hyperbolicYM} along $\set{s=0}$.
\end{remark}
\begin{proof} 
Let us begin with \eqref{eq:covCoulomb:A}. This is a consequence of the following simple computation:
\begin{align*}
\covD^\ell F_{s \ell} =  \covD^\ell \covD^k F_{k \ell} 
			= & \frac 1 2 \covD^\ell \covD^k F_{k \ell} + \frac 1 2 \covD^k \covD^\ell F_{k \ell} + \frac 1 2 \LieBr{F^{\ell k}}{F_{k \ell}} \\
			= & \frac 1 2 (\covD^\ell \covD^k + \covD^k \covD^\ell) F_{k \ell}  = 0,
\end{align*}
where we have used anti-symmetry of $F_{k \ell}$ for the last equality.

Next, in order to derive \eqref{eq:D0Fs0:A}, we first compute
\begin{equation*}
	\covD^{\mu} w_{\mu} = \covD^{\mu} \covD^{\nu} F_{\nu \mu} = 0,
\end{equation*}
by a computation similar to the preceding one. This give the second equality of \eqref{eq:D0Fs0:A}. In order to prove the first equality, we compute 
\begin{equation} \label{eq:divFsmu:A}
	\covD^{\mu }F_{s \mu} = \covD^{\mu} \covD^{\ell} F_{\ell \mu} 
	= \covD^{\ell} \covD^{\mu} F_{\ell \mu} + \LieBr{F^{\mu \ell}}{F_{\ell \mu}} 
	= \covD^{\ell} w_{\ell}
\end{equation}
and note that $\covD^{\mu} F_{s \mu} = \covD^{0} F_{s 0} = - \covD_{0} F_{s0}$ by \eqref{eq:covCoulomb:A}.

Next, let us derive \eqref{eq:covParabolic4Fab:A}. We begin by noting that the equation $F_{s \bfa} = \covD^{\ell} F_{\ell \bfa}$ holds for $\bfa = t,x,s$; for the last case, we use \eqref{eq:covCoulomb:A}. Using this and the Bianchi identity \eqref{eq:Bianchi4Fab:A}, we compute
\begin{align*}
	\covD_s F_{\bfa \bfb} = & \covD_{\bfa} F_{s \bfb} - \covD_{\bfb} F_{s \bfa} 
					 = \covD_{\bfa} \covD^\ell F_{\ell \bfb} - \covD_{\bfb} \covD^\ell F_{\ell \bfa} \\
					 = & \covD^\ell (\covD_{\bfa} F_{\ell \bfb} - \covD_{\bfb} F_{\ell \bfa}) + \LieBr{\tensor{F}{_{\bfa}^{\ell}}}{F_{\ell \bfb}} - \LieBr{\tensor{F}{_{\bfb}^{\ell}}}{F_{\ell \bfa}} \\
					 = & \covD^\ell \covD_\ell F_{\bfa \bfb} - 2\LieBr{\tensor{F}{_\bfa^\ell}}{F_{\bfb \ell}}. 
\end{align*}

In order to prove \eqref{eq:covParabolic4w:A}, we will use \eqref{eq:covParabolic4Fab:A}. We compute as follows.
\begin{align*}
	\covD_s w_\nu = & \covD_s \covD^\mu F_{\nu \mu} \\
%				= & \covD^\mu \covD_s F_{\nu \mu} + \LieBr{\tensor{F}{_s^\mu}}{F_{\nu \mu}} \\
				= & \covD^\mu \bb( \covD^\ell \covD_\ell F_{\nu \mu} - 2 \LieBr{\tensor{F}{_\nu^\ell}}{F_{\mu \ell}} \bb) + \LieBr{\tensor{F}{_s^\mu}}{F_{\nu \mu}} \\
				= & \covD^\ell \covD_\ell \bb( \covD^\mu F_{\nu \mu} \bb) + \LieBr{F^{\mu \ell}}{\covD_{\ell} F_{\nu \mu}} + \covD^\ell \LieBr{\tensor{F}{^\mu_\ell}}{F_{\nu \mu}} - 2 \covD^\mu \LieBr{\tensor{F}{_\nu^\ell}}{F_{\mu \ell}} + \LieBr{\tensor{F}{_s^\mu}}{F_{\nu \mu}} \\
%				= & \covD^\ell \covD_\ell w_\nu + 2 \LieBr{F^{\mu \ell}}{\covD_\ell F_{\nu \mu}} - \LieBr{\tensor{F}{_s^\mu}}{F_{\nu \mu}} +2 \LieBr{\tensor{F}{_\nu^\ell}}{w_\ell} - 2 \LieBr{\covD^\mu \tensor{F}{_\nu^\ell}}{F_{\mu \ell}} + \LieBr{\tensor{F}{_s^\mu}}{F_{\nu \mu}} \\
				= & \covD^\ell \covD_\ell w_\nu +2 \LieBr{\tensor{F}{_\nu^\ell}}{w_\ell} +2 \LieBr{F^{\mu \ell}}{\covD_{\mu} F_{\nu \ell} + \covD_{\ell} F_{\nu \mu}}.
\end{align*}

Note that \eqref{eq:hyperbolic4Alow:A} is exactly the definition of $w_{\nu}$ at $s=s_{}$. We are therefore only left to prove \eqref{eq:hyperbolic4F:A}. 

Here, the idea is to start with the Bianchi identity $0 = \covD_{\mu} F_{s \nu} + \covD_{s} F_{\nu \mu} + \covD_{\nu} F_{\mu s}$ and to take $\covD^{\mu}$ of both sides. The first term on the right-hand side gives the desired term $\covD^{\mu} \covD_{\mu} F_{s \nu}$. For the second term, we compute 
\begin{align*}
	\covD^{\mu} \covD_{s} F_{\nu \mu} 
	= \covD_{s} \covD^{\mu} F_{\nu \mu} + \LieBr{\tensor{F}{^{\mu}_{s}}}{F_{\nu \mu}}  
	= \covD_{s} w_{\nu} - \LieBr{\tensor{F}{_{s}^{\mu}}}{F_{\nu \mu}},
\end{align*}
and for the third term, we compute, using \eqref{eq:divFsmu:A},
\begin{align*}
	\covD^{\mu} \covD_{\nu} F_{\mu s} 
	= \covD_{\nu} \covD^{\mu} F_{\mu s} + \LieBr{\tensor{F}{^{\mu}_{\nu}}}{F_{\mu s}} 
	= - \covD_{\nu} \covD^{\ell} w_{\ell} - \LieBr{\tensor{F}{_{s}^{\mu}}}{F_{\nu \mu}} 
\end{align*}

Combining these with \eqref{eq:covParabolic4w:A}, we obtain \eqref{eq:hyperbolic4F:A}. \qedhere
\end{proof}

\section{Estimates for gauge transforms} \label{sec:gt}
In this section, we prove the two gauge transform lemmas stated in the main body of the paper, namely Lemmas \ref{lem:est4gt2temporal} and \ref{lem:est4gt2caloric}. Let us start with a general proposition concerning the solution to an ODE, which arises from the equations satisfied by the gauge transforms (and their differences).

\begin{proposition} [ODE estimates] \label{prop:est4gt}
Let $\Omg \subset \bbR$ be an interval, $\omg_{0} \in \Omg$, and $A = A(\omg, x)$, $F=F(\omg, x)$ a pair of $n \times n$ matrix valued functions on $\Omg \times \bbR^{3}$. Consider the following ODE for an $n \times n$ matrix-valued function $U = U(\omg, x)$:
\begin{equation} \label{eq:est4gt:hyp}
\left\{
\begin{aligned}
	\rd_{\omg} U =& A U + F, \\
	U(\omg_{0}) =& U_{0}.
\end{aligned}
\right.
\end{equation}

The following statements hold.

\begin{enumerate}
\item Suppose that $U_{0} \in C^{0}_{x} \cap \dot{W}^{1,3}_{x} \cap \dot{H}^{2}_{x}$, $F \in L^{1}_{\omg}(C^{0}_{x} \cap \dot{W}^{1,3}_{x} \cap \dot{H}^{2}_{x})(\Omg)$, and that there exists $D > 0$ such that
\begin{equation*}
	\nrm{A}_{L^{1}_{\omg} L^{\infty}_{x}(\Omg)} + \nrm{\rd_{x} A}_{L^{1}_{\omg} L^{3}_{x}(\Omg)} \leq D.
\end{equation*}

Suppose furthermore that
\begin{equation*}
	\hbox{either } \nrm{\rd_{x}^{(2)} A}_{L^{1}_{\omg} L^{2}_{x}(\Omg)} \leq D \hbox{ or } \sup_{\omg \in \Omg} \nrm{\int_{\omg_{0}}^{\omg} \lap A (\omg') \, \ud \omg'}_{L^{2}_{x}} \leq D.
\end{equation*}

Then there exists a unique solution $U$ to \eqref{eq:est4gt:hyp} which is continuous in $x$ and obeys the following estimates.
\begin{align}
	\nrm{U}_{L^{\infty}_{\omg} L^{\infty}_{x} (\Omg)} 
	\leq & e^{CD} \bb( \nrm{U_{0}}_{L^{\infty}_{x}} + \nrm{F}_{L^{1}_{\omg} L^{\infty}_{x}(\Omg)}\bb) , \label{eq:est4gt:1:1}\\
	\nrm{\rd_{x} U}_{L^{\infty}_{\omg} L^{3}_{x} (\Omg)} 
	\leq & e^{CD} \bb( \nrm{\rd_{x} U_{0}}_{L^{3}_{x}} + \nrm{\rd_{x} F}_{L^{1}_{\omg} L^{3}_{x}(\Omg)} 
		+ D(\nrm{U_{0}}_{L^{\infty}_{x}} + \nrm{F}_{L^{1}_{\omg} L^{\infty}_{x}(\Omg)})\bb), \label{eq:est4gt:1:2} \\
	\nrm{\rd_{x}^{(2)} U}_{L^{\infty}_{\omg} L^{2}_{x} (\Omg)} 
	\leq & e^{CD} \bb( \nrm{\rd_{x}^{(2)} U_{0}}_{L^{2}_{x}} + \nrm{\rd_{x}^{(2)} F}_{L^{1}_{\omg} L^{2}_{x}(\Omg)}
		+ D(\nrm{U_{0}}_{L^{\infty}_{x}} + \nrm{F}_{L^{1}_{\omg} L^{\infty}_{x}(\Omg)}) \bb). \label{eq:est4gt:1:3}
\end{align}

\item In addition to the hypotheses of part (1), suppose furthermore that $F \in L^{\infty}_{\omg} (L^{3}_{x} \cap \dot{H}^{1}_{x}) (\Omg)$ and
	\begin{equation*}
		\nrm{A}_{L^{\infty}_{\omg} L^{3}_{x}(\Omg)} + \nrm{\rd_{x} A}_{L^{\infty}_{\omg} L^{2}_{x}(\Omg)} \leq D.
	\end{equation*}
	
	Then the unique solution $U$ in (1) furthermore obeys
\begin{align}
	\nrm{\rd_{\omg} U}_{L^{\infty}_{\omg} L^{3}_{x} (\Omg)} 
	\leq & \nrm{F}_{L^{\infty}_{\omg} L^{3}_{x}(\Omg)}
		+ CD e^{CD} (\nrm{U_{0}}_{L^{\infty}_{x}} + \nrm{F}_{L^{1}_{\omg} L^{\infty}_{x}(\Omg)}), \label{eq:est4gt:2:1} \\
	\nrm{\rd_{x} \rd_{\omg} U}_{L^{\infty}_{\omg} L^{2}_{x} (\Omg)} 
	\leq & \nrm{\rd_{x} F}_{L^{\infty}_{\omg} L^{2}_{x}(\Omg)} \notag \\
		&+ CD e^{CD} (\nrm{U_{0}}_{L^{\infty}_{x}} + \nrm{\rd_{x} U_{0}}_{L^{3}_{x}} 
					+ \nrm{F}_{L^{1}_{\omg} L^{\infty}_{x}(\Omg)} + \nrm{\rd_{x} F}_{L^{1}_{\omg} L^{3}_{x}(\Omg)}). \label{eq:est4gt:2:2}
\end{align}

\item Finally, the same conclusions still hold if we replace the ODE by $\rd_{\omg} U = U A + F$.
\end{enumerate}
\end{proposition}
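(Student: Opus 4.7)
The plan is to prove the proposition by the usual procedure for linear ODEs with parameter: first establish existence and uniqueness, then derive the a priori estimates by differentiating in $x$ and applying Gronwall's inequality (Lemma \ref{lem:prelim:Gronwall}) at each successive level of regularity. Throughout, $A$, $F$ and $U$ should be viewed as matrix-valued, and the spatial variable $x$ plays the role of a (3-dimensional) parameter. Since the argument for $\rd_{\omg} U = U A + F$ is identical (up to reversing the order of matrix multiplication), I will focus on $\rd_{\omg} U = A U + F$.

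For Part (1), I would begin by noting that \eqref{eq:est4gt:hyp} is a linear ODE with $L^{1}_{\omg}$ coefficient $A$ in the Banach space $L^{\infty}_{x}$, so a standard Picard iteration yields a unique solution $U \in C^{0}_{\omg} L^{\infty}_{x}(\Omg)$. The pointwise-in-$\omg$ estimate
\ds{ \nrm{U(\omg)}_{L^{\infty}_{x}} \leq \nrm{U_{0}}_{L^{\infty}_{x}} + \nrm{F}_{L^{1}_{\omg} L^{\infty}_{x}} + \int_{\omg_{0}}^{\omg} \nrm{A(\omg')}_{L^{\infty}_{x}} \nrm{U(\omg')}_{L^{\infty}_{x}} \, \ud \omg' }
combined with Gronwall yields \eqref{eq:est4gt:1:1}. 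Next, differentiate \eqref{eq:est4gt:hyp} once in $x$:
\ds{ \rd_{\omg}(\rd_{x} U) = (\rd_{x} A) U + A (\rd_{x} U) + \rd_{x} F . }
Taking $L^{3}_{x}$ norms, using H\"older, the bound $\nrm{\rd_{x} A}_{L^{1}_{\omg} L^{3}_{x}} \leq D$ together with the previously obtained $L^{\infty}_{\omg,x}$ bound on $U$, and once more applying Gronwall (this time on $\nrm{\rd_{x} U(\omg)}_{L^{3}_{x}}$ with multiplier $\nrm{A(\omg)}_{L^{\infty}_{x}} \in L^{1}_{\omg}$), we obtain \eqref{eq:est4gt:1:2}.

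The main obstacle is \eqref{eq:est4gt:1:3}, because two alternative hypotheses are allowed. Differentiating twice in $x$ gives
\ds{ \rd_{\omg}(\rd_{x}^{(2)} U) = (\rd_{x}^{(2)} A) U + 2 (\rd_{x} A)(\rd_{x} U) + A (\rd_{x}^{(2)} U) + \rd_{x}^{(2)} F . }
Under the first alternative $\nrm{\rd_{x}^{(2)} A}_{L^{1}_{\omg} L^{2}_{x}} \leq D$, the term $(\rd_{x}^{(2)} A) U$ is controlled in $L^{1}_{\omg} L^{2}_{x}$ by $D \nrm{U}_{L^{\infty}_{\omg, x}}$; the bilinear term $(\rd_{x} A)(\rd_{x} U)$ is handled via H\"older and Sobolev as $\nrm{\rd_{x} A}_{L^{6}_{x}} \nrm{\rd_{x} U}_{L^{3}_{x}} \leq C \nrm{\rd_{x}^{(2)} A}_{L^{2}_{x}} \nrm{\rd_{x} U}_{L^{3}_{x}}$; and $\nrm{A (\rd_{x}^{(2)} U)}_{L^{2}_{x}} \leq \nrm{A}_{L^{\infty}_{x}} \nrm{\rd_{x}^{(2)} U}_{L^{2}_{x}}$ supplies the Gronwall driver. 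Under the second alternative only $\sup_{\omg} \nrm{\int_{\omg_{0}}^{\omg} \lap A \, \ud \omg'}_{L^{2}_{x}} \leq D$ is known, so one cannot bound $(\lap A) U$ pointwise in $\omg$. The key trick here is to rewrite
\ds{ \int_{\omg_{0}}^{\omg} (\lap A)(\omg') \, U(\omg') \, \ud \omg' = \bb( \int_{\omg_{0}}^{\omg} \lap A(\omg') \, \ud \omg' \bb) U(\omg) - \int_{\omg_{0}}^{\omg} \bb( \int_{\omg_{0}}^{\omg'} \lap A(\omg'') \, \ud \omg'' \bb) \rd_{\omg'} U(\omg') \, \ud \omg' , }
and then substitute $\rd_{\omg'} U = A U + F$ in the remaining integral. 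Combining this with the trivial identity $\lap = \rd^{\ell} \rd_{\ell}$ applied to the integrated form $\lap U = \lap U_{0} + \int_{\omg_{0}}^{\omg} \lap (A U + F)$, one converts the top-order contribution into expressions involving only $\sup \nrm{\int \lap A}_{L^{2}_{x}}$, $\nrm{\rd_{x} A}_{L^{1}_{\omg} L^{3}_{x}}$, $\nrm{A}_{L^{1}_{\omg} L^{\infty}_{x}}$ and lower-order norms of $U$, $F$. A final Gronwall application with driver $\nrm{A(\omg)}_{L^{\infty}_{x}} \in L^{1}_{\omg}$ then closes \eqref{eq:est4gt:1:3}.

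For Part (2), no further differential equation is needed: once \eqref{eq:est4gt:1:1}--\eqref{eq:est4gt:1:3} are in hand, we simply read off $\rd_{\omg} U = A U + F$ and $\rd_{x} \rd_{\omg} U = (\rd_{x} A) U + A (\rd_{x} U) + \rd_{x} F$ and estimate in $L^{\infty}_{\omg} L^{3}_{x}$ and $L^{\infty}_{\omg} L^{2}_{x}$ respectively, using H\"older together with the new hypotheses $\nrm{A}_{L^{\infty}_{\omg} L^{3}_{x}} + \nrm{\rd_{x} A}_{L^{\infty}_{\omg} L^{2}_{x}} \leq D$ (and for $A \rd_{x} U$, the Sobolev embedding $\nrm{A}_{L^{6}_{x}} \leq C \nrm{\rd_{x} A}_{L^{2}_{x}}$ paired with $\nrm{\rd_{x} U}_{L^{3}_{x}}$ from \eqref{eq:est4gt:1:2}), yielding \eqref{eq:est4gt:2:1}--\eqref{eq:est4gt:2:2}. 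Finally, the case $\rd_{\omg} U = U A + F$ follows by applying the already-proven statement to $U^{\mathrm{T}}$, which satisfies $\rd_{\omg} U^{\mathrm{T}} = A^{\mathrm{T}} U^{\mathrm{T}} + F^{\mathrm{T}}$ with the same norm bounds on $A^{\mathrm{T}}$, $F^{\mathrm{T}}$ as on $A$, $F$.
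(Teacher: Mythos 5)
Your proposal is correct and takes essentially the same approach as the paper. The Picard iteration for existence, the Gronwall arguments for \eqref{eq:est4gt:1:1}, \eqref{eq:est4gt:1:2} and the first alternative in \eqref{eq:est4gt:1:3}, the direct estimation for Part (2), and the transposition argument for the $\rd_\omg U = UA + F$ variant all match what the paper does (the paper simply remarks the last point). For the second alternative in \eqref{eq:est4gt:1:3}, the paper substitutes the integral form $U(\omg') = U_0 + \int_{\omg_0}^{\omg'}(AU+F)$ into the problematic $\int (\lap A)\, U$ term and then applies Fubini; your version integrates by parts in $\omg$ and substitutes the ODE $\rd_\omg U = AU + F$. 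The two manipulations are algebraically equivalent (they differ precisely by the identity $\int G' U = GU - \int G\, U'$ together with the ODE), so this is a cosmetic rather than substantive difference.

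One step you gloss over: you begin by differentiating the ODE twice in $x$, which produces $(\rd_x^{(2)}A)\, U$ with a \emph{general} second derivative, but the alternative hypothesis only controls $\int_{\omg_0}^{\omg} \lap A$, not $\int_{\omg_0}^{\omg} \rd_i\rd_j A$ for individual pairs $(i,j)$. You then silently pass to the integrated equation for $\lap U$. That switch is the right move, but to reach \eqref{eq:est4gt:1:3} — which bounds the full Hessian — you must record, as the paper does explicitly, that
\begin{equation*}
	\nrm{\rd_x^{(2)} U(\omg)}_{L^{2}_{x}} \leq C\, \nrm{\lap U(\omg)}_{L^{2}_{x}},
\end{equation*}
which follows from a spatial integration by parts (justified by cutting off at scale $R$ and using $U(\omg) \in L^{\infty}_{x}$, $\rd_x U(\omg) \in L^{3}_{x}$ to kill the boundary term as $R \to \infty$). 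Without this observation your argument only bounds $\lap U$, not $\rd_x^{(2)} U$.
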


\begin{proof} 
With the understanding that all norms are taken over $\Omg$, we will omit writing $\Omg$.

Mollifying $U_{0}$ and $A, F$ for every $\omg$, we may assume that $A$ and $F$ depends smoothly on $x$. By the standard ODE theory, there then exists a unique solution $U$ to \eqref{eq:est4gt:hyp} which is also smooth in $x$, and whose derivatives obey the equations obtained by differentiating \eqref{eq:est4gt:hyp}. We will establish the proposition for such smooth objects; by a standard approximation procedure, the original proposition will then follow.

\pfstep{Proof of (1)}
We begin by integrating \eqref{eq:est4gt:hyp} to recast it in the following equivalent form.
\begin{equation} \label{eq:est4gt:hyp:int}
	U(\omg) = U_{0} + \int_{\omg_{0}}^{\omg} F(\omg') \, \ud \omg' + \int_{\omg_{0}}^{\omg} A(\omg') U(\omg') \, \ud \omg'.
\end{equation}

Taking the supremum in $x$ of both sides of \eqref{eq:est4gt:hyp:int}, the first estimate \eqref{eq:est4gt:1:1} follows as a simple consequence of Gronwall's inequality. The second estimate \eqref{eq:est4gt:1:2} can be proved similarly (i.e., by Gronwall and the first estimate), by differentiating the equation \eqref{eq:est4gt:hyp:int} and taking the $L^{3}_{x}$ norm. Differentiating once more and taking the $L^{2}_{x}$ norm, the third estimate \eqref{eq:est4gt:1:3} easily follows (again by Gronwall and the previous estimates) \emph{if one assumes } $\nrm{\rd_{x}^{(2)} A}_{L^{1}_{\omg} L^{2}_{x}} \leq D$. The case of the alternative hypothesis $\sup_{\omg \in \Omg} \nrm{\int_{\omg_{0}}^{\omg} \lap A (\omg') \, \ud \omg'}_{L^{2}_{x}} \leq D$, on the other hand, is not as apparent, and requires a separate argument.

Here we follow the arguments in \cite[Proof of Theorem 7]{Klainerman:1995hz}. First, in order to control $\nrm{\rd_{x}^{(2)} U}_{L^{2}_{x}}$, observe that it suffices to control $\nrm{\lap U}_{L^{\infty}_{\omg} L^{2}_{x}}$, as a simple integration by parts\footnote{In order to deal with the boundary term, one may multiply $U$ by a smooth cut-off $\chi(x/R)$, and then take $R \to \infty$. Using the fact that $\nrm{U(\omg)}_{L^{\infty}_{x}} + \nrm{\rd_{x} U(\omg)}_{L^{3}_{x}} < \infty$, it can be shown that $\nrm{\rd_{i} \rd_{j} (U (\omg) \chi(x/R))}_{L^{2}_{x}} \to \nrm{\rd_{i} \rd_{j} U(\omg)}_{L^{2}_{x}}$.} shows that $\nrm{\rd_{x}^{(2)} U(\omg)}_{L^{2}_{x}} \leq C \nrm{\lap U (\omg)}_{L^{2}_{x}}$. Taking $\lap$ of \eqref{eq:est4gt:hyp:int}, we obtain
\begin{align*}
	\lap U(\omg) 
	= &\lap U_{0} + \int_{\omg_{0}}^{\omg} \lap F(\omg') \, \ud \omg' + \int_{\omg_{0}}^{\omg} \lap A(\omg') U(\omg') \, \ud \omg' \\
	& + 2 \int_{\omg_{0}}^{\omg} \rd^{\ell} A(\omg') \rd_{\ell} U(\omg') \, \ud \omg' + \int_{\omg_{0}}^{\omg} A(\omg') \lap U(\omg') \, \ud \omg'.
\end{align*}

Let us take the $L^{2}_{x}$ norm of both sides. The contribution of the first two terms are fine by hypotheses. In order to deal with the third term, we use the trick of using \eqref{eq:est4gt:hyp:int} and Fubini to rewrite it as
\begin{align*}
	&\nrm{\int_{\omg_{0}}^{\omg} \lap A(\omg') \bb( U_{0} + \int_{\omg_{0}}^{\omg'} F(\omg'') + A(\omg'') U(\omg'') \, \ud \omg'' \bb)}_{L^{2}_{x}} \\
	&\quad \leq \nrm{\int_{\omg_{0}}^{\omg} \lap A(\omg') \, \ud \omg'}_{L^{2}_{x}} \nrm{U_{0}}_{L^{\infty}_{x}}
	+ \nrm{\int_{\omg_{0}}^{\omg} \bb( \int_{\omg''}^{\omg} \lap A(\omg') \, \ud \omg' \bb) 
			\bb( F(\omg'') + A(\omg'') U(\omg'') \bb) \, \ud \omg''}_{L^{2}_{x}} \\
	& \quad \leq D \nrm{U_{0}}_{L^{\infty}_{x}} + D^{2} (\nrm{F}_{L^{1}_{\omg}L^{\infty}_{x}}+\nrm{U}_{L^{\infty}_{\omg} L^{\infty}_{x}})
	\leq C D e^{CD} (\nrm{U_{0}}_{L^{\infty}_{x}} + \nrm{F}_{L^{1}_{\omg} L^{\infty}_{x}} ),
\end{align*}
where we used \eqref{eq:est4gt:1:1} in the last inequality. For the last two terms, we first use H\"older and Sobolev to estimate
\begin{align*}
	&\nrm{2 \int_{\omg_{0}}^{\omg} \rd^{\ell} A(\omg') \rd_{\ell} U(\omg') \, \ud \omg' + \int_{\omg_{0}}^{\omg} A(\omg') \lap U(\omg') \, \ud \omg' }_{L^{2}_{x}} \\
	&\quad \leq C \int_{\omg_{0}}^{\omg} (\nrm{\rd_{x} A(\omg')}_{L^{3}_{x}} + \nrm{A(\omg')}_{L^{\infty}_{x}}) \nrm{\rd_{x}^{(2)} U(\omg')}_{L^{2}_{x}} \, \ud \omg',
\end{align*}
at which point we can use Gronwall's inequality to conclude \eqref{eq:est4gt:1:3}.

\pfstep{Proof of (2)}
Taking the $L^{3}_{x}$ norm of both sides of the equation $\rd_{\omg} U = A U + F$, the first estimate \eqref{eq:est4gt:2:1} follows immediately by triangle, H\"older, \eqref{eq:est4gt:1:1} and the hypothesis $\nrm{A}_{L^{\infty}_{\omg} L^{3}_{x}} \leq D$. To prove the second estimate \eqref{eq:est4gt:2:2}, we take the $L^{2}_{x}$ norm of both sides of the differentiated equation $\rd_{x} \rd_{\omg} U = (\rd_{x} A) U + A \rd_{x} U + \rd_{x} F$. Then we use triangle, H\"older, \eqref{eq:est4gt:1:1}, \eqref{eq:est4gt:1:2} and the hypotheses.
\end{proof}

Now we are ready to prove Lemmas \ref{lem:est4gt2temporal} and \ref{lem:est4gt2caloric}.
\begin{proof} [Proof of Lemma \ref{lem:est4gt2temporal}]
Recalling the definition of $\calA_{0}$, \eqref{eq:est4gt2temporal:V} follow immediately from Proposition \ref{prop:est4gt}, with $\omg = t$, $\Omg = (-T, T)$, $\omg_{0} = 0$, $A = A_{0}$, $F=0$ and $D = \calA_{0}(-T, T)$. For the difference analogue \eqref{eq:est4gt2temporal:dltV}, note that $\dlt V$ satisfies the ODE
\begin{equation*}
\rd_{t} (\dlt V) = (\dlt V) A_{0}  + V \dlt A_{0},
\end{equation*}
to which Proposition \ref{prop:est4gt} can also be applied, with $F =  V  \dlt A_{0}$. The appropriate bounds for $F$ can be proved easily by using $\dlt \calA_{0}(-T, T)$ and the previously established estimates for $V$. This proves \eqref{eq:est4gt2temporal:dltV}.

Finally, the corresponding estimates for $V^{-1}$ and $\dlt V^{-1}$ follows in the same manner once one observes that they satisfy the equations
\begin{equation*}
\rd_{t} V^{-1} = - A_{0} V^{-1}, \qquad \rd_{t} \dlt V^{-1} = - A_{0} \dlt V^{-1} - (\dlt A_{0}) V^{-1},
\end{equation*}
respectively. \qedhere
\end{proof}

\begin{proof} [Proof of Lemma \ref{lem:est4gt2caloric}]
By \eqref{eq:YMHF4A:smth4A} of Proposition \ref{prop:YMHF4A:smth4deT}, the coefficient matrix $A_{s} = \rd^{\ell} A_{\ell}$ satisfies 
\begin{equation} \label{eq:est4gt2caloric:pf:1}
	\nrm{s^{k/2} \rd_{x}^{(k)} A_{s}}_{L^{\infty}_{s} L^{2}_{x}} \leq C_{k, \nrm{\Aini}_{\dot{H}^{1}_{x}}} \nrm{\Aini}_{\dot{H}^{1}_{x}}
\end{equation}
for all integers $k \geq 0$. By interpolation and Gagliardo-Nirenberg, it then easily follows that $\nrm{A_{s}}_{L^{1}_{s} L^{\infty}_{x}} + \nrm{\rd_{x} A_{s}}_{L^{1}_{s} L^{3}_{x}}$ is bounded by the right-hand side of \eqref{eq:est4gt2caloric:pf:1}. On the other hand, by Corollary \ref{cor:YMHF4A:bnd4lapAs}, $\sup_{0 \leq s \leq 1} \nrm{\int_{s'}^{1} \lap A_{s}}_{L^{2}_{x}}$ is also bounded by the right-hand side of \eqref{eq:est4gt2caloric:pf:1}. Applying Proposition \ref{prop:est4gt} with $\omg = s$,  $\Omg = [0,1]$, $\omg_{0} = s_{0}$, $A = A_{s}$, $F = 0$ and $U_{0} = \mathrm{Id}$, we then obtain \eqref{eq:est4gt2caloric:U:1} and \eqref{eq:est4gt2caloric:U:2} for $m=2$.

Next, assuming $s_{0} = 1$, let us prove \eqref{eq:est4gt2caloric:U:2} for integers $m \geq 3$. We will proceed by induction on $m$. The estimate for $\nrm{\rd_{x}^{(2)} U}_{L^{\infty}_{s} L^{2}_{x}}$ that we just proved will serve as the base step. 

Fix $m \geq 3$. Suppose, for the purpose of induction, that we already have the estimates 
\begin{equation*}
	\nrm{s^{(k-2)/2} \rd_{x}^{(k)} U}_{L^{\infty}_{s} L^{2}_{x}} \leq C_{k, \nrm{\Aini}_{\dot{H}^{1}_{x}}} \nrm{\Aini}_{\dot{H}^{1}_{x}}
\end{equation*}
for $k = 2, 3, \ldots, (m-1)$. Let us differentiate the ODE $m$-times and integrate from $s$ to $1$. Note that $\rd_{x}^{(m)} U(1) = \rd_{x}^{(m)} \mathrm{Id} = 0$. Taking the $L^{2}_{x}$ norm and multiplying by $s^{(m-2)/2}$, we obtain
\begin{equation} \label{eq:est4gt2caloric:pf:2}
	s^{(m-2)/2} \nrm{\rd_{x}^{(m)} U(s)}_{L^{2}_{x}} \leq\int_{s}^{1} (s/s')^{(m-2)/2} (s')^{m/2} \nrm{\rd_{x}^{(m)} (U(s') A_{s} (s') )}_{L^{2}_{x}} \, \frac{ \ud s'}{s'}
\end{equation}

We remark that here we have written the line element in the scale-invariant form $(\ud s'/s')$, as it is more convenient for keeping track of $s'$-weights. 
Using Leibniz's rule, H\"older and the fact that $s/s' \leq 1$, we have
\begin{align*}
(s/s')^{(m-2)/2} & (s')^{m/2} \nrm{\rd_{x}^{(m)} (U(s') A_{s} (s') )}_{L^{2}_{x}} \\
\leq & C (s/s')^{(m-2)/2} \nrm{U (s')}_{L^{\infty}_{x}} \bb( (s')^{m/2}\nrm{\rd_{x}^{(m)} A_{s}(s')}_{L^{2}_{x}} \bb)  \\
	& + C (s/s')^{(m-2)/2} \nrm{\rd_{x} U (s')}_{L^{3}_{x}} \bb( (s')^{m/2} \nrm{\rd_{x}^{(m-1)} A_{s}(s')}_{L^{6}_{x}}  \bb)  \\
	&+ C (s')^{1/4} \sum_{k=0}^{m-2} \bb( (s')^{(m-k-2)/2}\nrm{\rd_{x}^{(m-k)} U(s')}_{L^{2}_{x}} \bb) \bb( (s')^{k/2 + 3/4} \nrm{\rd_{x}^{(k)} A_{s}(s')}_{L^{\infty}_{x}} \bb) .
\end{align*}

To estimate the first two terms on the right hand side, we use Sobolev and \eqref{eq:est4gt2caloric:pf:1} for $A_{s}$, and \eqref{eq:est4gt2caloric:U:1} for $U$. In order to estimate the last term, we use Gagliardo-Nirenberg and \eqref{eq:est4gt2caloric:pf:1} to estimate $A_{s}$, and the induction hypothesis for $U$. As a consequence, the right-hand side of \eqref{eq:est4gt2caloric:pf:2} is estimated by
\begin{equation*}
	C_{m, \nrm{\Aini}_{\dot{H}^{1}}}  \nrm{\Aini}_{\dot{H}^{1}} \bb( \int_{s}^{1} (s/s')^{(m-2)/2} \, \frac{\ud s'}{s'} + \int_{s}^{1} (s')^{1/4} \, \frac{\ud s'}{s'} \bb)
	+ C_{m} \int_{s}^{1} (s')^{1/4} \nrm{\rd_{x}^{(m)} U(s')}_{L^{2}_{x}} \, \frac{\ud s'}{s'}.
\end{equation*}

Note that the first term is uniformly bounded in $s$, since $m \geq 3$. The second term, on the other hand, can be treated via Gronwall's inequality as before, since $(s')^{1/4} \, \frac{\ud s'}{s'} = (s')^{-3/4} \, \ud s'$ is integrable on $[0, 1]$. As a result, we obtain $\nrm{s^{(m-2)/2} \rd_{x}^{(m)} U(s)}_{L^{2}_{x}} \leq C_{m, \nrm{\Aini}_{\dot{H}^{1}_{x}}} \cdot \nrm{\Aini}_{\dot{H}^{1}_{x}}$, as desired.

Next, as in the proof of Lemma \ref{lem:est4gt2temporal}, the corresponding estimates for $\dlt U$, $U^{-1}$, $\dlt U^{-1}$ can be proved in the same manner, based on the observation that these variables satisfy
\begin{equation*}
\rd_{t} (\dlt U) = (\dlt U) A_{s} + U \dlt A_{s} , \quad
\rd_{t} U^{-1} = -A_{s} U^{-1}, \quad 
\rd_{t} \dlt U^{-1} = -  A_{s} \dlt U^{-1} - (\dlt A_{s}) U^{-1},
\end{equation*}
respectively. We leave the precise details to the reader. \qedhere
\end{proof}
% ----------------------------------------------------------------

\bibliographystyle{amsplain}
%\bibliography{SJ-library}
% ----------------------------------------------------------------

\end{document}